\documentclass[12pt]{amsart}
\pdfoutput=1
\usepackage{amsmath}
\usepackage{amsthm}
\usepackage{amssymb}
\usepackage{mathtools}
\usepackage{amscd}
\usepackage{color}
\usepackage{microtype}
\usepackage{xspace}

\usepackage{tikz} 
\usepackage{tikz-cd}

\usepackage{nicematrix}

\usepackage{hyperref}
\usepackage{cleveref}
\usepackage{enumitem}
\usepackage{mathrsfs}

\hypersetup{
    colorlinks=true,
    linkcolor=black,
    filecolor=black,
    citecolor= black,
    urlcolor=black}

\newcommand{\m}{\mathfrak{m}}
\newcommand{\p}{\mathfrak{p}}
\newcommand{\q}{\mathfrak{q}}
\newcommand{\NN}{\mathbb{N}}
\newcommand{\ZZ}{\mathbb{Z}}
\newcommand{\RR}{\mathbb{R}}
\newcommand{\QQ}{\mathbb{Q}}

\newcommand{\cP}{\mathcal{P}}
\newcommand{\cS}{\mathcal{S}}
\newcommand{\cC}{\mathcal{C}}

\newcommand{\ces}{Ces{\`a}ro\xspace}

\DeclareMathOperator{\lcm}{lcm}

\DeclareMathOperator{\depth}{depth}
\DeclareMathOperator{\rank}{rank}
\DeclareMathOperator{\sdim}{sdim}
\DeclareMathOperator{\fsig}{s}
\DeclareMathOperator{\Fratio}{r_F}
\DeclareMathOperator{\ehk}{e_{HK}}
\DeclareMathOperator{\length}{len}

\DeclareMathOperator{\Ext}{Ext}

\DeclareMathOperator{\Hom}{Hom}
\DeclareMathOperator{\coker}{coker}
\DeclareMathOperator{\asn}{asn}
\DeclareMathOperator{\Spec}{Spec}

\DeclareMathOperator{\Supp}{Supp}
\DeclareMathOperator{\eh}{e}

\DeclareMathOperator{\Ann}{Ann}
\DeclareMathOperator{\Ass}{Ass}
\DeclareMathOperator{\Minh}{Minh}
\DeclareMathOperator{\Min}{Min}

\DeclareMathOperator{\FL}{FL}

\newcommand{\Fmult}[3]{\operatorname{m}_{#1}(#2 \leadsto #3)}
\newcommand{\FRmult}[4]{\operatorname{m}_{#1}^{#4}(#2 \leadsto #3)}
\newcommand{\gsig}[2]{\operatorname{s}(#1 \leadsto #2)}
\newcommand{\csig}[2]{\operatorname{\overline{s}}(#1 \leadsto #2)}
\newcommand{\surjsig}[2]{\operatorname{s_{surj}}(#1 \twoheadrightarrow #2)}
\newcommand*{\Mat}[1][]{\operatorname{\mathrm{Sum}^{#1}_F}}
\DeclareMathOperator{\find}{Ind_F}
\newcommand{\grk}[2]{\mathrm{h}^0_{#1}(#2)}
\newcommand{\fep}[1]{f^{(#1)}}
\newcommand{\ep}[1]{\mathfrak{f}^*_{#1}}

\newcommand{\tr}{\intercal}
\newcommand{\Hh}{\mathrm{H}}
\newcommand{\Id}{\mathrm{Id}}

\newtheorem{theorem}{Theorem}
\numberwithin{theorem}{section}
\newtheorem{lemma}[theorem]{Lemma}
\newtheorem{proposition}[theorem]{Proposition}
\newtheorem*{proposition*}{Proposition}
\newtheorem{question}[theorem]{Question}
\newtheorem{corollary}[theorem]{Corollary}
\newtheorem{conjecture}[theorem]{Conjecture}
\newtheorem{claim}[theorem]{Claim}
\newtheorem*{statement*}{Statement}
\newtheorem*{theorem*}{Theorem}
\newtheorem*{lemma*}{Lemma}
\newtheorem*{fact*}{Fact}
\newtheorem*{claim*}{Claim}

\newtheorem{theoremx}{Theorem}
 
\newtheorem{conjecturex}[theoremx]{Conjecture}

\theoremstyle{definition}
\newtheorem{definition}[theorem]{Definition}
\newtheorem{remark}[theorem]{Remark}
\newtheorem*{definition*}{Definition}
\newtheorem{example}[theorem]{Example}
\newtheorem{notation}[theorem]{Notation}
\newtheorem*{example*}{Example}
\numberwithin{equation}{theorem}

\title{A dynamical approach to Frobenius splitting}
\date{\today}
\author{Alapan Mukhopadhyay}
\address{Institute of Mathematics, CAG,
EPFL SB MATH
MA A2 383 (Bâtiment MA), 
Station 8,
CH-1015 Lausanne,
Switzerland}
\email{alapan.mathematics@gmail.com}
\author{Ilya Smirnov}
\address{BCAM -- Basque Center for Applied Mathematics, Bilbao, Spain \quad and \quad
IKERBASQUE, Basque Foundation for Science, Bilbao, Spain}
\email{ismirnov@bcamath.org}

\begin{document}

\maketitle
\begin{abstract} 
A dynamical system can be associated to the process of decomposing iterated Frobenius pushforwards of modules into indecomposable direct summands. Via this connection, we apply the theory of Markov chains to study asymptotic counts of multiplicities of indecomposable summands in the iterated Frobenius pushforwards of the given module, as the number of iterations grows. Our results recover and extend the existing results on the existence of the (generalized) F-signature due to Hashimoto--Symonds, Hashimoto--Kobayashi, Smith--Van den Bergh, Tucker, and Yao.  We also develop further the theory of F-contributors, the modules for which the multiplicity grows at the maximal possible rate. 

As a key algebraic input, we define and develop strongly F-regular modules, extending Hochster--Huneke's notion of strongly F-regular rings. We use strongly F-regular modules to prove convergence of the relevant sequences and algebraically characterize F-contributors, when the underlying ring admits a module-finite strongly F-regular extension or has dimension at most two.
\end{abstract}


\tableofcontents

\section{Introduction}
The Frobenius endomorphism of a ring $R$ of prime characteristic $p>0$ is one of the central tools in positive-characteristic commutative algebra and algebraic geometry. 
The Frobenius endomorphism, denoted by $F$, induces the \textit{Frobenius pushforward} endofunctor, denoted by $F_*$, on the associated category of finitely generated modules. The study of the indecomposable summands of $F^e_*R$, as $e$ varies, embodies the so-called ``Frobenius splitting methods'' in the study of singularities; see \cite{Smith_F_rational_rings_have_rational_singularities}, \cite{Hara_Watanabe_F_regular_and_F_pur_rings_vs_log_terminal_and_log_canonical_singularities}, the books \cite{Brion_Kumar_Frobenius_splitting_methods_in_geometry_and_representation_theory}, \cite{Schwede_Smith_} and the references therein. 
For example, two fundamental classes of singularities are defined in this way: \emph{F-split} rings are those for which the ring $R$ is a summand of $F^e_*R$ for some (equivalently, all) $e\geq 1$, and \emph{strongly F-regular} rings are characterized in \cite{HY22} by the property that $R$ is a direct summand of $F_*^e M$ for any nonzero torsion-free $R$-module $M$ and all sufficiently large $e$. 
These classes, which originate in \cite{HochsterRoberts, HochsterHuneke}, play an important role in commutative algebra and birational algebraic geometry.
Via \emph{reduction mod $p$} \cite{Hara_Watanabe_F_regular_and_F_pur_rings_vs_log_terminal_and_log_canonical_singularities, HaraYoshida, Takagi_Multiplier, MustataSrinivas}, they correspond (conjecturally, in the case of F-pure singularities) to the \emph{log canonical} and \emph{log terminal} singularities arising in the Minimal Model Program for algebraic varieties of characteristic zero (\cite{HaconXu}).

In order to recall some related results on indecomposable summands of $F^e_*M$ more precisely, let $(R, \m)$ denote a complete noetherian local domain in characteristic $p>0$ with maximal ideal $\m$ and perfect residue field. For finitely generated $R$-modules $M,N$, where $N$ is indecomposable, denote the multiplicity of $N$ in the direct sum decomposition of $F^e_*M$ by $\Fmult{e}{M}{N}$. Such decompositions were first studied by Smith and Van den Bergh (\cite{SmithVandenBergh}).

The maximal rank of a free module summand of $F^e_*M$, i.e.,  $\Fmult{e}{M}{R}$, has been studied within the theory of F-signature originating from \cite{SmithVandenBergh, HunekeLeuschke}. Tucker (\cite{Tucker}) shows that the sequence $(\Fmult{e}{M}{R}/p^{e\dim(R)})_e$ converges; the limit is called the \emph{F-signature} of $M$. The F-signature theory has found wide-ranging applications in singularity theory. For example, the F-signature of the ring characterizes its regularity or strong F-regularity; see \cite{AberbachLeuschke}, \cite{HunekeLeuschke}, \cite{BlickleSchwedeTucker}, \cite{JavierSchwedeTucker}, \cite{Ma_Polstra_Schwede_Tucker_F_signature_under_birational_morphism}.

Free direct summands are special since 
any surjection\footnote{\cite{SmirnovTucker2}
shows that the normalized number of surjections always converges.} onto a free module splits. In comparison to the F-signature theory,  very little is known about the multiplicities $\Fmult{e}{M}{N}$ for general $N$ and existing works are mainly directed towards proving convergence of the sequence $(\Fmult{e}{M}{N}/p^{e\dim(R)})_e$. Notable highlights are Yao's work \cite{Yao05} proving the convergence of  $(\Fmult{e}{M}{N}/p^{e\dim(R)})_e$, when additionally the isomorphism classes of indecomposable summands of $F^e_*M$, as $e$ varies, form a finite set, i.e., when $M$ has the so-called finite F-representation type (FFRT), a notion introduced by Smith--Van den Bergh (\cite{SmithVandenBergh}).
In another context, when $R$ is the invariant ring of a linear action of a finite group on a power series ring, Hashimoto--Symonds have used representation-theoretic tools to study the indecomposable summands of $F^e_*R$ and prove convergence of $(\Fmult{e}{R}{N}/p^{e\dim(R)})_e$ (\cite{HashimotoSymonds}).  However, the methods involved in these or the theory of F-signature do not extend towards studying $\Fmult{e}{M}{N}$ in full generality.

In this article, we bring tools and perspectives from the theory of Markov chains to analyze the indecomposable summands $N$ of $F^e_*M$. This allows us to translate relevant questions in commutative algebra into questions about Markov chains. Using this interplay, we not only derive asymptotic properties of $(\Fmult{e}{M}{N}/p^{e\dim(R)})_e$, but also provide a more algebraic characterization of those $N$ leading to a positive limit in cases including when $\dim(R) \leq 2$ or $R$ admits a strongly F-regular module-finite extension.

\begin{definition}
    Let $M, N$ be finitely generated $R$-modules where $N$ is indecomposable.   Following \cite{Yao05}, we say that $N$ is an \emph{F-contributor}\footnote{Differences between Yao's terminology and ours are addressed in \Cref{subsubsec: F contributors}. The indecomposability assumption on $N$ is made in the introduction for simplicity.} to $M$ if 
    \[ \limsup_{n \to \infty} p^{- n\dim (M)} \Fmult{n}{M}{N} > 0.\]
\end{definition}

The following are our main results regarding the convergence of 
$(p^{-e\dim(M)}\Fmult{e}{M}{N})_e$. 

\begin{theoremx}\label{intro first theorem}
    Let $M, N$ be finitely generated $R$-modules and $N$ be indecomposable. Then
    \begin{enumerate}
        \item The sequence $(p^{-e\dim(M)}\Fmult{e}{M}{N})_e$
        is \ces summable, i.e., the sequence
        \[
        n \mapsto \frac{1}{n}\sum_{e=1}^{n} p^{-e\dim(M)} \Fmult{e}{M}{N}
        \] converges. This follows from \Cref{th: existence of signatures in general} applied to $\p \in \Minh(M)$. 

        \item If there is an $e_0 \geq 1$ such that $N$ is a summand of both $F^{e_0}_*N$ and $F^{e_0+1}_*N$, then the sequence $e \mapsto p^{-e\dim(M)} \Fmult{e}{M}{N}$ converges; see \Cref{cor: aperiodicity implies existence of limit}.

        \item (Inherence) If $N$ is an F-contributor to $M$, then $N$ is an F-contributor to $N$. This follows from \Cref{th: existence of signatures in general}. 
\end{enumerate}
Moreover, we prove that the sequence $(p^{-e\dim(R)}\Fmult{e}{M}{N})_e$ converges in the following cases:
        \begin{enumerate}[label=(\alph*)]
            \item \label{intro part: module-finite SFR extension} $R$ is a module-finite subring of a strongly F-regular local ring; see \Cref{cor: F contributors are SFR when there is a module-finite SFR extension}.
            \item \label{intro part: dimension at most two} $\dim (R) \leq 2$; see \Cref{cor: two dimensional F-contributors}.
            
            \item $M$ has finitely many F-contributors (e.g., $M$ satisfies the FFRT property). This follows from \Cref{thm: F-signature for eFFRT}.
        \end{enumerate}    
\end{theoremx}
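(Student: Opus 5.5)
The plan is to encode Frobenius pushforward as a nonnegative linear operator on a countable state space and then use Markov chain theory. Let $\mathcal{I}$ be the set of isomorphism classes of indecomposable finitely generated $R$-modules; because $R$ is complete, Krull--Schmidt holds. For $L \in \mathcal{I}$, write $F_*L \cong \bigoplus_{N} N^{\oplus \Fmult{1}{L}{N}}$. Define the kernel
\[ P(L,N) = p^{-\dim(L)}\Fmult{1}{L}{N}, \]
restricted to modules with $\dim N = \dim L$, since lower-dimensional summands contribute $o(p^{e\dim M})$. Additivity of $F_*$ and Krull--Schmidt give $\Fmult{e}{M}{N} = \sum_L \Fmult{e-1}{M}{L}\,\Fmult{1}{L}{N}$. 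Hence the normalized multiplicities are exactly the entries of the matrix power $P^e$, applied to the initial vector given by the summands of $M$.

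The key estimate is that $P$ is substochastic with respect to a weight. Take $w(L)$ to be the generic rank, or a multiplicity $\eh(L)$ at $\p \in \Minh(M)$. Since $\eh(F_*L) = p^{\dim L}\eh(L)$ (perfect residue field), we get $\sum_N P(L,N)w(N) \le w(L)$, with equality up to lower-dimensional loss. After conjugating by $w$, $P$ becomes a substochastic Markov kernel on a countable state space. The quantity $p^{-e\dim M}\Fmult{e}{M}{N}$ is then $w(N)^{-1}$ times the probability that the chain started from the distribution of $M$ sits at $N$ at time $e$.

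Part (1) then follows from the standard Cesàro ergodic theorem for countable (sub)Markov chains. The time averages $\frac1n\sum_{e\le n} P^e(x,y)$ always converge: to $0$ if $y$ is transient or null recurrent, and to $\rho_{xy}/\mu_y$ if $y$ is positive recurrent, where $\rho_{xy}$ is the hitting probability. Since $M$ has finitely many summands, the initial distribution is a finite combination, so the Cesàro limit exists.

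For (2), the hypothesis $N \mid F^{e_0}_*N$ and $N \mid F^{e_0+1}_*N$ says that $P^{e_0}(N,N)>0$ and $P^{e_0+1}(N,N)>0$. So the return times of $N$ have gcd $1$, and $N$ is aperiodic. The renewal/ergodic theorem for aperiodic states then upgrades Cesàro convergence to genuine convergence of $P^e(x,N)$.

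For (3), a positive limsup forces $N$ to be positive recurrent, since transient or null states have $P^e(x,N)\to 0$. A positive recurrent state satisfies $\limsup_e P^e(N,N)>0$, which means $N$ is an F-contributor to itself.

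The main obstacle is justifying the Markov structure rigorously. First, the weight normalization must genuinely make $P$ substochastic: the loss to lower-dimensional summands needs to be handled via $\p\in\Minh(M)$. Second, the countable-chain theorems must apply even though the state space is infinite and leakage is possible.

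The statements (a)--(c) are deferred to the later cited results and would require the strongly F-regular module input. For (c), the route is as follows. Finitely many F-contributors means the recurrent classes are finite. Each finite recurrent class has a period $d$, and within a class with $N$ a summand of $F^{a}_*N$ one needs $d=1$. I would derive this by showing that an F-contributor is a summand of $F_*^eN$ for all large $e$, which I expect is the hard algebraic point addressed later in the paper.
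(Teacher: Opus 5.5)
Your argument for (1)--(3) is essentially the paper's. You restrict $\Mat(M)$ to the relevant block, use a length function as a positive right eigenvector, normalize to a stochastic matrix, and apply Chung's limit theorems; this is \Cref{pr: positive eigenvector of the block corresponding to a prime} and \Cref{th: existence of signatures in general}, where $\grk{\p}{L}=\length_{R_\p}(L_\p)$ for $\p\in\Minh(M)$. The ``leakage'' you flag does not occur: lower-dimensional summands have weight zero and $F_*$ never raises dimension, so the weight is an exact eigenvector and the normalized chain is genuinely stochastic.

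\textbf{The genuine gap is (a)--(c), for which you give no argument.} You rightly see that what is needed is that every relevant F-contributor has period one; then (2) applies, and non-contributors have limit $0$. But the Markov-chain formalism cannot deliver this. Nothing dynamical rules out a periodic positive recurrent class, which is exactly why convergence is left open in general (\Cref{conj: universality conjecture}). Your suggested route for (c), showing the contributor is a summand of $F^e_*N$ for all large $e$, is only a restatement of period one (\Cref{le: period one and splitting from large enough pushforward}), so it is circular.

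The paper supplies algebraic input in each case.

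For (c), \Cref{prop: FFRT contributors have period one} argues by contradiction. Suppose the finite irreducible $\Mat(X)$ had period $\omega>1$. The phase limits $s_{i,r}$ are rational, and one sets $N_r=\bigoplus_i M_i^{\oplus b_r s_{i,r}}$. Computing the F-surjective signature along each residue class mod $\omega$ gives $\asn(N_r\twoheadrightarrow N_j)=b_r/b_j$. Because $\asn$ is attained (\Cref{thm: asn achieved}), there are surjections in both directions, so $N_0^{\oplus kb_r}\cong N_r^{\oplus kb_0}$. Then $\{i : s_{i,0}>0\}=\{i : s_{i,r}>0\}$, which is impossible for an $M_i$ lying in a different periodic subclass from $X$.

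For (a) and (b), the point is that full-dimensional F-contributors are strongly F-regular modules, and these have period one by \Cref{th: further properties of SFR modules}.
In (a) this follows from universality of strongly F-regular modules (\Cref{th: universality of SFR modules}) together with finality of recurrent classes (\Cref{thm: infinite basic is final}). Together these put an indecomposable summand $L$ of $\iota_*S$ into the class of any contributor $X$, forcing $X\in\find(L)$.
In (b) it follows from \Cref{th: most splittings lift in dimension two}. The relevant $\Hom$ and $\Ext^1$ modules have dimension at most one, so their submodules need only $O(p^{e(\dim X-1+\alpha(R))})$ generators. Hence all but that many of the $\Theta(p^{e(\dim X+\alpha(R))})$ split copies of $X$ in $F^e_*X$ factor through $F^e_*f$, which is strong F-regularity.

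A minor point: in (c), ``recurrent classes are finite'' should say positive recurrent. Null recurrent classes are not excluded, though they contribute only zero limits.
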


Theorem~\ref{intro first theorem} generalizes all previous results \cite{Yao05, HashimotoSymonds, HashimotoKobayashi, HashimotoNakajima, Tucker, SmithVandenBergh} on the convergence of $(p^{-e\dim(M)}\Fmult{e}{M}{N})_e$ in specific cases. 
In particular, it provides a new proof of the existence of the F-signature which uses only the F-contributor and F-split properties of strongly F-regular rings and avoids Tucker's use of the Hilbert--Kunz theory in \cite{Tucker}.

In our dynamical approach, roughly speaking, the iterates $F^e_*$ of the endofunctor $F_*$ induce a dynamical system on the category of finitely generated $R$-modules, where $e$ keeps track of the `time'. Our main idea for systematically harnessing tools from dynamics is the use of a possibly infinite matrix $\Mat(M)$, called the F-decomposition matrix of $M$, that encodes the indecomposable summands of $F^e_*M$ and the numbers $\Fmult{e}{M}{N}$ as $e$ varies. This idea originates in the case where $\Mat(M)$ is finite: independently in \cite{SeibertFFRT} for Cohen--Macaulay modules over rings of finite Cohen--Macaulay type and in \cite{SmithVandenBergh}, in the case where $R$ is additionally strongly F-regular and has the FFRT property.
Let $\find(M)$ be the union of the isomorphism classes of indecomposable summands of $F^e_*M$ as $e$ varies over $\mathbb N_{>0}$.
For simplicity, we fix an order on the countable set $\find(M)$ and write $\find(M)= \{N_i\}_{i \in \mathbb N_{>0}}$. The $(i,j)$-th entry of $\Mat(M)$ is $\Fmult{1}{N_i}{N_j}$. 
Since the $(i,j)$-th entry of $\Mat(M)^e$ is $\Fmult{e}{N_i}{N_j}$, we may analyze the sequence $(\Fmult{e}{M}{N})_e$
by studying the powers $\Mat(M)^e$.
Ostensibly, handling all these $\Fmult{e}{N_i}{N_j}$, where $i, j \in \mathbb N_{>0}$, might appear more difficult than handling a single sequence $(\Fmult{e}{M}{N})_e$. But it is this additional information that, on the one hand, makes results from matrices with nonnegative entries applicable for analyzing summands of $F^e_*M$, and, on the other hand, allows one to exchange information on $(\Fmult{e}{M}{N_i})_e$ across different choices of $N_i \in \find(M)$.

In more detail, for every associated prime ideal $\p$ of $M$, we associate a Markov chain whose state space --- denoted by $\find(\p,M)$ --- consists of all modules $N \in \find(M)$ that have $\p$ as an associated prime; see \Cref{re: Markov chain associated to an associated prime}. The transition probabilities of this Markov chain are encoded by the $\find(\p,M) \times \find(\p,M)$ block --- denoted by $\Mat(\p,M)$ --- of $\Mat(M)$, after a suitable normalization.
The Markov chain framework leads to notions such as connected classes, which are subsets of $\find(\p,M)$, the convergence radius of $\Mat(\p,M)$ and of the connected classes, and the period of a module. 
Recognizing the relevance of these notions to understanding the sequences $(\Fmult{e}{M}{N})_e$ is a key component of the results in the following sections. 

The period of an indecomposable module is a positive integer or infinity. Its relevance is illustrated by its role in the following generalization of \Cref{intro first theorem}:

\begin{theoremx}\label{intro: periodic signature} \textup{(cf. \Cref{th: existence of signatures in general})}
  Suppose $N \in \find(M)$ has period $\omega< \infty$. Let $\p$ be a prime ideal of $R$ associated to $N$ and let $N' \in \find(\p,M)$ be in the connected class of $\Mat(\p,M)$ containing $N$; $N'$ could be isomorphic to $N$.
\begin{enumerate}
\item For any $t \in \mathbb Z$, the sequence $(\Fmult{t +e\omega}{M}{N'}/p^{(t+ e \omega)\dim(R/\p)})_e$ converges to some real number.

    In particular, the sequence $(\Fmult{e}{M}{N'}/p^{e\dim(R/\p)})_e$ has at most $\omega$-many limit points and it is \ces summable. When $\omega=1$, this sequence is convergent. 

    \item \label{part: intro positivity implies positivity of self signature} $\limsup_e\Fmult{e}{M}{N'}/p^{e\dim(R/\p)}$ is positive if and only if $\lim_e \frac{\Fmult{e\omega}{N}{N}}{p^{e\omega\dim(R/\p)}}$ is positive. This follows by combining \Cref{th: existence of signatures in general}(\ref{part: cesaro signature is scaled by entrance probability}) with \Cref{le: first entrance probability for any module}(\ref{part: class property of entrance proability module case}).
\end{enumerate}   
\end{theoremx}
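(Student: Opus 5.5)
We prove the theorem by viewing the matrix $\Mat(\p,M)$, suitably normalized, as a substochastic kernel on the countable state space $\find(\p,M)$, and applying renewal theory to first-passage decompositions. The normalization: for $N_i,N_j\in\find(\p,M)$ put
\[
P_{ij}=\frac{\Fmult{1}{N_i}{N_j}\,\lambda_\p(N_j)}{p^{\dim(R/\p)}\lambda_\p(N_i)},
\]
where $\lambda_\p(-)$ is the length of the localization at $\p$ of the $\Gamma_\p$-torsion part, i.e. $\length_{R_\p}(H^0_{\p R_\p}((-)_\p))$. Since localizing $F_*$ at $\p$ multiplies this length by $p^{\dim(R/\p)}$ (perfect residue field), and summands not having $\p$ as an associated prime contribute nothing, the row sums satisfy $\sum_j P_{ij}\le 1$. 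Consequently $(P^e)_{ij}=\Fmult{e}{N_i}{N_j}\lambda_\p(N_j)/(p^{e\dim(R/\p)}\lambda_\p(N_i))$. Thus, up to the fixed factors $\lambda_\p$, the normalized multiplicities are exactly the $e$-step transition probabilities of a (possibly defective) Markov chain, and the period $\omega$ of $N$ is the gcd of the return times $\{e:\Fmult{e}{N}{N}\neq0\}$.

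Everything rests on the first-passage decomposition. For $N'$ in the class of $N$, write
\[
u_e=\Fmult{e}{N'}{N'}\,p^{-e\dim(R/\p)},\qquad f_k=\text{first-return probability at time }k .
\]
These satisfy $u_e=\sum_{k=1}^{e}f_k u_{e-k}$ with $\sum_k f_k\le1$, and $f_k=0$ unless $\omega\mid k$. The discrete renewal theorem (Erd\H{o}s--Feller--Pollard), applied to the aperiodic sequence $(u_{e\omega})_e$, shows that $u_{e\omega}$ converges. The limit is $\omega/\mu$ when the chain is recurrent with mean return time $\mu$, and $0$ otherwise. Within a class all modules share the period $\omega$ and are reached from each other at times in fixed residue classes mod $\omega$.

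Next we pass from $N'$ to $M$ and to residues $t$. The entries of $M$ itself need not lie in $\find(\p,M)$; for example, $M$ may have several associated primes. So we first write $F^{e}_*M$ through its summands and then use a first-entrance decomposition into the class $C$ of $N$:
\[
\Fmult{t+e\omega}{M}{N'}=\sum_{s}\sum_{L\in C}a_s(L)\,\Fmult{t+e\omega-s}{L}{N'}.
\]
Here $a_s(L)$ counts first entrances into $C$ at $L$ at time $s$. After dividing by $p^{(t+e\omega)\dim(R/\p)}$, each term converges: within $C$, the sequence $\Fmult{r+e\omega}{L}{N'}/p^{(r+e\omega)\dim(R/\p)}$ is a convolution of first-passage probabilities with $u$, and hence converges along each residue class. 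The sum over $s$ and $L$ is dominated by the total normalized first-entrance mass. That mass is finite because $\sum_s\sum_L a_s(L)p^{-s\dim(R/\p)}\lambda_\p(L)\le\lambda_\p(M)$, by the same length computation restricted to entries into $C$. Dominated convergence then gives (1). Since there are only $\omega$ residues $t$, we obtain at most $\omega$ limit points and \ces summability, and for $\omega=1$ the sequence converges.

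For (2), the same formula shows that $\lim_e$ along residue $t$ equals the entrance weight times $\omega/\mu$ times the ratio $\lambda_\p(N')/\lambda_\p(N)$, up to positive factors. Hence the $\limsup$ is positive if and only if the class is positive recurrent, which happens if and only if $\lim_e\Fmult{e\omega}{N}{N}/p^{e\omega\dim(R/\p)}>0$. Here we use that $N'\in\find(M)$ guarantees positive entrance weight for some $t$.

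I expect the main obstacle to be this last interchange of limit and infinite sum over entrance times and states. It requires a uniform tail bound, which I would derive from the length inequality $\sum_j P_{ij}\le1$ summed along paths. A second, smaller difficulty is that the substochastic (transient or defective) case must be handled, since the classical renewal theorem is usually stated for proper distributions. I would handle it by noting that if $\sum f_k<1$ then $u$ is summable, so $u_e\to0$.
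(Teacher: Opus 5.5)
Your proposal is correct and follows essentially the paper's route: normalize $\Mat(\p,M)$ by the positive eigenvector $(\grk{\p}{L})_{L\in\find(\p,M)}$ to get a Markov chain (its row sums are in fact exactly $1$, not merely $\le 1$), then apply the first-entrance/renewal limit theorem, which the paper quotes from Chung (Theorem I.6.4) rather than rederiving it via Erd\H{o}s--Feller--Pollard and dominated convergence. The only difference is in passing from $\find(\p,M)$ to $M$: the paper uses the finite one-step identity $\Fmult{e+1}{M}{N'}=\sum_i \Fmult{1}{M}{N_i'}\,\Fmult{e}{N_i'}{N'}$ over the finitely many summands $N_i'$ of $F_*M$, so your infinite first-entrance sum over $M$, and the interchange of limit and sum it requires, is not needed.
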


The content and the implications of \Cref{intro: periodic signature} and \Cref{th: existence of signatures in general} are more general than, and in certain aspects different from, the existing results in the literature:
\begin{enumerate}
    \item[(i)]  \Cref{intro: periodic signature} yields a handle on the sequence $(\Fmult{e}{M}{N}/p^{e\dim(R/\p)})_e$ as compared to $(\Fmult{e}{M}{N}/p^{e\dim(M)})_e$, which is the only case considered in the literature. Whenever $\dim(M)> \dim(R/\p)$, consideration of the latter is not interesting since it is bound to converge to zero; see \Cref{eg: contributors at a non minimal prime} for an illustration.

    \item[(ii)]\label{part: intro emphasis on the contributor} For questions on the convergence and positivity of $\lim_e\Fmult{e}{M}{N}/p^{e\dim(R/\p)}$, \Cref{intro: periodic signature}  shifts the entire emphasis onto $N$ in contrast to the earlier emphasis on working with the parent module $M$.

    \item[(iii)] \label{part: intro moving withing a connected class} For two modules $N_1,N_2$ in the same connected class of $\Mat(\p,M)$, the  convergence of $(\Fmult{e}{M}{N_1}/p^{e\dim(R/\p)})_e$ and the positivity of the limsup imply the convergence of $(\Fmult{e}{M}{N_2}/p^{e\dim(R/\p)})_e$ and the positivity of the limsup, respectively; see \Cref{th: existence of signatures in general}(\ref{part: convergence depends only on class}).
\end{enumerate}

As an illustration of (ii), we point to: 

\begin{theoremx}\label{intro: FFRT contributor}\textup{(cf. \Cref{se: FFRT case})}
Let $N$ be an indecomposable $R$-module with the FFRT property, i.e., $\find(N)$ is a finite set. Then, for any finitely generated $R$-module $M$, the sequence $(\Fmult{e}{M}{N}/p^{e\dim(M)})_e$ is convergent.
\end{theoremx}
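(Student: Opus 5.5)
The plan is to reduce the question to a finite irreducible nonnegative matrix attached to $N$ and apply Perron--Frobenius theory, with the period of $N$ the only real difficulty. Since $\find(N)$ is finite, the block $\Mat(\p,N)$ for $\p \in \Ass(N)$ is a finite nonnegative integer matrix, and the connected class $\mathcal C$ of $N$ inside it is finite. If $\dim(R/\p) < \dim(M)$ for every $\p\in\Ass(N)$, I would argue directly that $\Fmult{e}{M}{N}/p^{e\dim(M)}\to 0$. The reason is that a summand $N$ of $F^e_*M$ contributes at least $\Fmult{e}{M}{N}\cdot \length(N_\p)$ to the length of $(F^e_*M)_\p$, and this length grows like $p^{e\dim(R/\p)}$ times a constant. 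So we may assume $N$ has an associated prime $\p$ with $\dim(R/\p)=\dim(M)$, and then normalizing by $p^{e\dim(M)}$ is the same as normalizing by $p^{e\dim(R/\p)}$.

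\Cref{th: existence of signatures in general} already gives convergence along each residue class modulo the period $\omega$ of $N$. What remains is to show that these finitely many limits coincide, or that they all vanish.

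First, finiteness of $\find(N)$ gives $\omega<\infty$. Indeed, $N \in \find(N)$ is only meaningful if $N$ recurs. If $N$ is not a summand of $F^e_*N$ for any $e$, then $N$ occurs in $F^e_*M$ only through transient paths. Then $\Fmult{e}{M}{N}/p^{e\dim(R/\p)} \to 0$, by the entrance-probability statement of \Cref{intro: periodic signature}(\ref{part: intro positivity implies positivity of self signature}): the limsup is positive iff the self-signature $\lim_e \Fmult{e\omega}{N}{N}/p^{e\omega\dim(R/\p)}$ is positive.

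Second, the class $\mathcal C$ is finite and irreducible, and its normalized matrix $A = p^{-\dim(R/\p)}\Mat(\p,N)|_{\mathcal C}$ has spectral radius at most $1$. Here I would use the length/rank bound, which says the entries of $A^e$ stay bounded. If the spectral radius is $<1$, everything tends to $0$ and we are done. If it equals $1$, Perron--Frobenius with period $\omega$ says $A^{e}$ cycles among $\omega$ limiting matrices. In that case the $\omega$ limits of $\Fmult{t+e\omega}{M}{N}/p^{(t+e\omega)\dim(R/\p)}$ are obtained by pairing the vector of ``entrance masses'' from $M$ into $\mathcal C$ with the cyclic rank-one projections.

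The main obstacle is ruling out genuine periodicity $\omega>1$ in the spectral-radius-one case. The Markov chain is only substochastic in general, but in this case the chain restricted to $\mathcal C$ is conservative: no mass leaks out. I would then use the additive constraint that the generic-rank/length at $\p$ of $F^e_*N'$ equals $p^{e\dim(R/\p)}$ times that of $N'$. This makes the left Perron vector the vector of lengths $\ell(N'_\p)$, and the entrance masses from $M$ are then governed by the total length of $F^e_*M$ at $\p$, which is exactly $p^{e\dim(R/\p)}$ times a constant. I expect this to force the mass distributed over the cyclic subclasses of $\mathcal C$ to equidistribute asymptotically. If that fails, my fallback is to show that conservativity plus F-splitting inside $\mathcal C$ yields aperiodicity directly. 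Concretely, I would show that $N$ being a summand of $F^{\omega}_*N$ forces $N$ to be a summand of $F^{\omega+1}_*N$, using that an element of $\mathcal C$ which is a summand of $F_*$ of some other element can be composed back to $N$. This is exactly the hypothesis of \Cref{cor: aperiodicity implies existence of limit}, which would then give convergence.
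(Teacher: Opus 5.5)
Your reductions are sound and match the paper's. The rank bound $\grk{\p}{N}\Fmult{e}{M}{N}\le p^{e\alpha(R_\p)}\grk{\p}{M}$ handles the case $\dim(R/\p)<\dim(M)$, and \Cref{le: infinite period implies zero limit} handles $\omega=\infty$. (Finiteness of $\find(N)$ does not force $\omega<\infty$: the $D_6^0$ ring in \Cref{subsec: F split rings} has $R\notin\find(R)$. Your next sentence covers that case anyway.) Then \Cref{th: existence of signatures in general} leaves only the case where $N$ is an F-contributor of period $\omega$, whose phase limits exist.

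The whole content of the theorem is that this $\omega$ equals $1$, and neither of your mechanisms proves it. Take $M=N$. Then $\Fmult{e}{N}{N}=0$ for $\omega\nmid e$, while $\Fmult{e\omega}{N}{N}/p^{e\omega\dim(N)}$ has a positive limit. So convergence for $M=N$ is \emph{equivalent} to $\omega=1$: all mass starts in the cyclic subclass of $N$ and keeps cycling, and no equidistribution can hold without aperiodicity.

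The length constraint you invoke is \Cref{pr: positive eigenvector of the block corresponding to a prime}; in the paper's convention the length vector is a positive \emph{right} eigenvector. It holds for every class of every F-decomposition matrix, periodic or not. For instance, $\begin{bmatrix}0&2\\2&0\end{bmatrix}$ has positive eigenvector $(1,1)^{\tr}$ with eigenvalue $2$ and no leakage, yet has period two. So rank bookkeeping plus Perron--Frobenius cannot exclude $\omega>1$.

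Your fallback is circular. By definition of the period, every chain of splittings from $N$ back to $N$ has length divisible by $\omega$. So ``composing back to $N$'' through members of the class can never exhibit $N$ as a summand of $F^{\omega+1}_*N$ unless you already know $\omega=1$.

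What is missing is an algebraic input tying $\Fmult{e}{-}{-}$ to an invariant known to converge along the \emph{full} sequence. The paper uses the F-surjective signature (\Cref{prop: FFRT contributors have period one}):
\begin{itemize}
\item With $\find(N)=\{M_1,\ldots,M_\ell\}$, the phase limits $s_{i,r}$ are rational by Seibert's lemma, so $Y_r\coloneqq\bigoplus_i M_i^{\oplus b_r s_{i,r}}$ makes sense, where $b_r$ is a common denominator of the $s_{i,r}$.
\item Since $\surjsig{N}{X}$ exists by Smirnov--Tucker, computing it along $e\equiv r \pmod \omega$ gives $\frac{1}{b_r}\asn(Y_r\twoheadrightarrow X)$. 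Hence $\asn(Y_r\twoheadrightarrow Y_j)=b_r/b_j$.
\item Because $\asn$ is attained (\Cref{thm: asn achieved}), there are surjections $Y_0^{\oplus kb_r}\twoheadrightarrow Y_r^{\oplus kb_0}\twoheadrightarrow Y_0^{\oplus kb_r}$. The composite is a surjective endomorphism of a finitely generated module, hence an isomorphism.
\item By KRS, the phase-$0$ and phase-$r$ limit vectors then have the same support. This contradicts positive recurrence for any module lying in a different cyclic subclass from $N$.
\end{itemize}
Yao's original route instead uses \Cref{thm: Yao span} to express $\Fmult{e}{M_j}{M_i}$ through lengths $\length(L_k\otimes_R F^e_*M_j)$, whose normalizations converge by Hilbert--Kunz theory. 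Once $\omega=1$ is known, \Cref{cor: aperiodicity implies existence of limit} finishes the proof.
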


\Cref{intro: FFRT contributor} follows because,
by \Cref{intro: periodic signature}, the limit is zero unless $\dim M = \dim N$ and $N$ is an F-contributor to itself, in which case $N$ has period one by \Cref{prop: FFRT contributors have period one}. This generalizes Yao's work \cite{Yao05}, which only dealt with the case where $M$ has the FFRT property and $N \in \find(M)$.

Next, we address the question of positivity of $\limsup_e\Fmult{e}{M}{N}/p^{e\dim(M)}$. For simplicity, unless otherwise said, in the rest of the introduction, we restrict ourselves to the case where both $M,N$ are torsion-free modules. In particular,  $\dim(M)= \dim(N)= \dim(R)$. We refer the reader to the following sections for discussion outside of this simplified setting.

In \Cref{intro: periodic signature}, (\ref{part: intro positivity implies positivity of self signature}) states a necessary condition for the positivity of the above limsup, namely the positivity of  $\limsup_e \Fmult{e}{N}{N}/p^{e\dim(R)}$. 
We conjecture that this necessary condition is, in fact, sufficient and provide evidence towards the conjecture. We call an indecomposable finitely generated torsion-free $R$-module $N$ an F-contributor\footnote{The indecomposability and torsion-freeness of F-contributors are assumed in the introduction for simplicity.} if $\limsup_e\Fmult{e}{N}{N}/p^{e\dim(N)}>0$.

\begin{conjecturex}[Universality of F-contributors] \label{intro conj: universality} Let $N$ be an F-contributor. Then $N \in \find(M)$ for any nonzero finitely generated torsion-free $R$-module $M$. 
 
 See \Cref{conj: universality conjecture} for a stronger version of this conjecture.   
\end{conjecturex}

 Note that, once an F-contributor $N$ is in $\find(M)$, $\limsup_e \Fmult{e}{M}{N}/p^{e\dim(R)}$ is positive. \Cref{intro conj: universality} can also be formulated in a more dynamical language:

\medskip 

\textit{If the category of finitely generated $R$-modules contains an indecomposable F-contributor $N$, then for any torsion-free $R$-module $M \neq 0$, $\Mat(M)$ has convergence radius $1/p^{\dim(R)}$ and exactly one final connected class, namely $\find(N)$.}

In general, a matrix with at most countably many rows and columns and nonnegative entries can have multiple final classes. So, \Cref{intro conj: universality} and the evidence towards it highlight a way in which matrices $\Mat(M)$, owing to their additional module-theoretic origin, are special among all matrices with nonnegative entries.

Indeed, our resolution of \Cref{intro conj: universality} in cases including those $R$ which admit a module-finite strongly F-regular ring extension or those with $\dim(R) \leq 2$ relies on algebraic tools. In this direction, our main contribution is the introduction and development of the notion of strongly F-regular modules in \Cref{se: strongly F regular modules}. Strongly F-regular modules provide a module-theoretic generalization\footnote{Blickle's notion of \emph{F-regular} modules \cite{Blickle} is a different generalization unrelated to this work.} of strongly F-regular rings introduced and developed by Hochster--Huneke (\cite{Hochster_Huneke_tight_closure_and_strong_F_regularity}, \cite{HochsterHuneke}). 
We prove \textit{universality of indecomposable strongly F-regular modules:} an indecomposable torsion-free strongly F-regular module $N$ splits from $F^e_*M$ for every nonzero torsion-free $M$ and some $e$ depending on $M$; see \Cref{th: universality of SFR modules}.
Then we resolve \Cref{intro conj: universality} in the aforementioned cases by proving strong F-regularity of indecomposable F-contributors; see \Cref{cor: F contributors are SFR when there is a module-finite SFR extension}, \Cref{cor: two dimensional F-contributors}. In fact, we show that if $R$ admits a module-finite strongly F-regular ring extension, the notions of strongly F-regular modules and F-contributors coincide.
The equivalence between these two notions in the aforementioned cases can be thought of as an analogue of Aberbach--Leuschke's characterization of strongly F-regular rings as F-contributors (\cite{AberbachLeuschke}); 
however, our method is different. 

In all cases where we prove \Cref{intro conj: universality}, we additionally prove a stronger result, namely a \textit{\ces rank formula}. For $R$-modules $M, X$, where $X$ is indecomposable, denote
\[
\csig{M}{X}_{(0)} = \lim_{n \to \infty} \frac{1}{n}\sum_{e=1}^{n} p^{-e\dim R} \Fmult{e}{M}{X},
\]
and recall that $\csig{M}{X}_{(0)} = \lim_{e \to \infty} p^{-e\dim R} \Fmult{e}{M}{X}$ whenever the latter exists.

\begin{theoremx}\label{intro additivity theorem}  Let $M, X$ be finitely generated $R$-modules, not necessarily torsion-free. Suppose $X$ is indecomposable and let $\rank_R(\bullet)$ denote the generic rank.
Then the \textit{\ces rank formula},
\[ \rank_R(X)\csig{M}{X}_{(0)}= \rank_R(M) \csig{X}{X}_{(0)},\]
holds if one of the following conditions is satisfied:
\begin{enumerate}
    \item $R$ is strongly F-regular; see \Cref{th: rank formula over sfr rings}.
    \item $\dim (R) \leq 2$; see \Cref{th: rank formula in dimension two}.
    \item $M$ has the FFRT property and $X$ is an indecomposable direct summand of $F_*^e M$ for some $e$; see \Cref{cor: FFRT rank formulas}.
    \item $M$ is indecomposable, $X \in \find(M)$ and $M \in \find(X)$, i.e., $X$ and $M$ are in the same connected class of $\Mat(M)$; see \Cref{lem: rank formula within connected classes}.
\end{enumerate}
Note that for a fixed indecomposable $X$, the validity of the \ces rank formula for all $M$ is equivalent to the additivity of the map $M \mapsto \csig{M}{X}_{(0)}$ over all short exact sequences of finitely generated modules; see \Cref{prop: characterize rank formula} and the cautionary \Cref{re: rank formula does not imply additivity}.
\end{theoremx}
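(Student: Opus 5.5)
The plan is to prove case (4) first by a purely Markov-chain argument, and then reduce cases (1)--(3) to it. In the reduced cases the extra algebraic input is the same each time: every F-contributor to $M$ lies in a single connected class, and no generic rank ``leaks'' out of that class.

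\emph{Case (4).} Let $d=\dim R$ and let $\cC$ be the common connected class of $M$ and $X$ in $\Mat((0),M)$. The key observation is that the generic-rank vector $r=(\rank_R N_i)_i$ is superharmonic for the normalized matrix $p^{-d}\Mat((0),M)$. Indeed, $\rank_R F_*N=p^{d}\rank_R N$ because the residue field is perfect, and the summands of $F_*N_i$ lying outside $\find((0),M)$ are torsion and so have rank zero. Hence
\[
\sum_j \Fmult{1}{N_i}{N_j}\rank_R N_j \le p^{d}\rank_R N_i .
\]
I split according to the class $\cC$.
\begin{itemize}
\item If $\cC$ is not positive recurrent at radius $p^{-d}$, the Ces\`aro limits of $p^{-ed}\Fmult{e}{N_i}{N_j}$ vanish for $N_i,N_j\in\cC$. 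Then both sides of the rank formula are $0$.
\item If $\cC$ is positive recurrent, the standard renewal/ergodic theory for irreducible nonnegative countable matrices gives a rank-one Ces\`aro limit $L_{ij}=u_iv_j$ on $\cC$. Here $u$ is the (unique up to scalar) positive harmonic vector and $v$ the invariant measure.
\end{itemize}
A nonnegative superharmonic function on a recurrent class is harmonic and proportional to $u$, so $u_i=c\,\rank_R N_i$. Therefore
\[
\frac{\csig{M}{X}_{(0)}}{\rank_R M}=c\,v_X=\frac{\csig{X}{X}_{(0)}}{\rank_R X}.
\]
This uses the Ces\`aro convergence of \Cref{th: existence of signatures in general}, which holds on each class.

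\emph{General reduction.} For arbitrary $M$, I will decompose the trajectory of the chain started at $M$ by its first entrance into $\cC\ni X$. This gives $\csig{M}{X}_{(0)}=\sum_{N\in\cC}\theta_N\,\csig{N}{X}_{(0)}$ for suitable first-entrance weights $\theta_N$, following \Cref{le: first entrance probability for any module}. By case (4), this equals $\frac{\csig{X}{X}_{(0)}}{\rank_R X}\cdot\rho$, where $\rho=\sum_N\theta_N\rank_R N$ is the rank absorbed by $\cC$. The formula therefore reduces to showing $\rho=\rank_R M$ whenever $\csig{X}{X}_{(0)}>0$; when $\csig{X}{X}_{(0)}=0$, both sides are $0$ by \Cref{th: existence of signatures in general}.

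Showing $\rho=\rank_R M$ is the main obstacle. It amounts to showing that the rank carried by summands which never reach $\cC$ has Ces\`aro density zero. Equivalently, there is a unique final class carrying positive rank-mass, i.e.\ universality in the form of \Cref{intro conj: universality}. I would handle the cases as follows.
\begin{itemize}
\item \emph{Case (1), $R$ strongly F-regular.} I would show that an indecomposable F-contributor $X$ is a strongly F-regular module. Its universality (\Cref{th: universality of SFR modules}) then puts $X$ in $\find(N)$ for every nonzero torsion-free $N$. Consequently every contributor class is reached from every state, so there is only one class carrying mass. A summand that never reaches $\cC$ must be torsion, which carries no rank. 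Mass escaping to infinity is excluded because $r$ is harmonic on the absorbing part.
\item \emph{Case (2), $\dim R\le 2$.} I would follow the same route via \Cref{cor: two dimensional F-contributors}. Here the reflexive/MCM structure in dimension two substitutes for the strongly F-regular extension.
\item \emph{Case (3), $M$ has FFRT.} The matrix $\Mat(M)$ is finite, so Perron--Frobenius theory applies directly. Any class containing an F-contributor to $M$ is final with radius $p^{-d}$. The rank vector then forces all positive rank-mass onto these final classes, and the rank formula for each of them follows from case (4).
\end{itemize}
Finally, for torsion or non-torsion-free $M$ and $X$, both sides only see the torsion-free quotient and generic ranks. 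When $X$ is torsion, $\csig{\cdot}{X}_{(0)}=0$ because the relevant growth rate is $p^{e\dim(R/\p)}$ with $\dim(R/\p)<d$, so both sides vanish.
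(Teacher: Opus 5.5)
Case (4) is correct. It is essentially \Cref{lem: rank formula within connected classes}: your uniqueness of the invariant vector on a recurrent class is the same fact as $\ep{kj}=x_k/x_j$ in \Cref{prop: properties of f}. Your reduction of the general case to ``the chain started at $M$ enters the class $\cC$ of $X$ with probability one'' is exactly \Cref{pr: comparison of notions of rank formulas and consequences} together with \Cref{re: probabilistic interpretation of rank formula and total rank formula}(2).

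The gap is in how you verify that probability-one statement in cases (1) and (2). Universality only says that every non-torsion state reaches $\cC$ with \emph{positive} probability. In a countable-state chain this does not give absorption with probability one, because mass can drift to infinity. Your appeal to harmonicity of $r$ cannot close this. The rank vector is always an exact eigenvector of $\Mat((0),M)$ (\Cref{pr: positive eigenvector of the block corresponding to a prime}), and this holds verbatim in \Cref{eg: failure of rank quantization}, where all the rank mass escapes. The paper says as much: for rings with a module-finite strongly F-regular extension, universality and a unique $p^{-\delta_R(R)}$-recurrent class are proved (\Cref{cor: unique maximal dimensional connected class}), yet the total rank formula, and hence the rank formula, is left open (see the comment after \Cref{conj: no null recurrence}).

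What you are missing is a quantitative anchor.
\begin{itemize}
\item In case (1) the paper uses Tucker's rank formula $\gsig{M}{R}=\rank_R(M)\gsig{R}{R}$, proved via F-splitting ideals. This says precisely that the entrance probability into $\find(R)$ is one. The paper then transports it to every $X\in\find(R)$ by \Cref{pr: comparison of notions of rank formulas and consequences}(\ref{part: cesaro rank formula is a class property of the target}); see \Cref{th: rank formula over sfr rings}.
\item In case (2) the paper does not go through universality at all. \Cref{th: most splittings lift in dimension two} shows that, along a generic isomorphism $F^{e_1}_*(X^{\oplus \grk{\p}{M}}\oplus L)\to F^{e_1}_*M^{\oplus \grk{\p}{X}}$, all but $O(p^{e(\dim X-1+\alpha(R))})$ of the split projections and injections lift. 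The lifting obstructions live in $\Ext$/$\Hom$ modules whose submodules need few generators. This gives $\grk{\p}{X}\Fmult{e}{M}{X}=\grk{\p}{M}\Fmult{e}{X}{X}+O(p^{e(\dim X-1+\alpha(R))})$ directly (\Cref{th: rank formula in dimension two}). Your ``reflexive/MCM structure'' supplies no such estimate.
\end{itemize}

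In case (3), finiteness does exclude escape to infinity, but it only places the mass on the union of the final classes of $\Mat((0),M)$. If there were two final classes, the mass from $M$ would split between them and the formula towards $X$ would fail. By \Cref{prop: total rank formula vs rank formula}, the rank formula is the total rank formula \emph{plus} uniqueness of the final class. That uniqueness is not a Perron--Frobenius consequence: a diagonal matrix with equal entries has a positive eigenvector and several final classes of maximal spectral radius. Your plan gives no argument for it. The paper obtains it from \Cref{prop: FFRT contributors in the same class}: via Yao's spanning theorem (\Cref{thm: Yao span}), signatures of FFRT modules are rational combinations of Hilbert--Kunz multiplicities, so they depend only on generic lengths. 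This is then used in \Cref{cor: FFRT contributors are supported at a prime} and \Cref{cor: FFRT rank formulas}.
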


We conjecture that the \ces rank formula always holds; see \Cref{conj: rank formula}. Our rank formula should remind the reader of similar rank formulas for other asymptotic invariants of singularities such as the Hilbert--Samuel multiplicity, Hilbert--Kunz multiplicity (\cite{WatanabeYoshida}) or the F-signature (\cite{Tucker}). For each of these three invariants, the corresponding rank formula largely follows from subadditivity of the length functions over short exact sequences. In contrast,
for $\csig{M}{X}_{(0)}$, the question of the \ces rank formula could not be turned into such a subadditivity statement due to a missing analogue of Frobenius degeneracy ideals.
Instead, we utilize the associated Markov chain formalism detailed in \Cref{sse: rank formula}. In general, the \ces rank formula for $\csig{M}{\bullet}_{(0)}$ corresponds to properties of the Markov chain induced by $\Mat((0), M)$ and hence its validity displays another distinctive feature of this Markov chain due to its module-theoretic origin; see \Cref{re: probabilistic interpretation of rank formula and total rank formula}.

The \ces rank formula has notable consequences. For example, if for some indecomposable $X$, $\csig{M}{X}_{(0)}$ is positive and $\rank_R(X)\csig{M}{X}_{(0)}= \rank_R(M) \csig{X}{X}_{(0)}$, then a \textit{total rank formula for $M$} holds (\Cref{prop: total rank formula vs rank formula}), i.e.,
\[\rank_R(M)= \sum_{\csig{M}{X'}_{(0)}>0} \csig{M}{X'}_{(0)} \rank_R(X').\]

When $R$ is strongly F-regular, this implication applied to $M \cong R$ reads
\[1= \sum_{X' \in \find(R)} \rank_R(X') \csig{R}{X'}_{(0)}.\]
This provides a way of estimating the classical F-signature $\csig{R}{R}_{(0)}$ from those $X' \in \find(R)$, where $X' \ncong R$. For further consequences of the rank formula and the total rank formula, derived using our Markov chain formalism, we refer to \Cref{pr: comparison of notions of rank formulas and consequences}, \Cref{re: interpretation of total rank formula} and \Cref{prop: consequences of the total rank formula}. 

Next, we turn our attention to the geometric ergodicity of the Markov chain induced by $\Mat(M)$. In our context, geometric ergodicity is equivalent to finding a geometric convergence rate of $(\Fmult{e}{M}{N}/p^{e\dim(R)})_e$, i.e., finding $0 \leq \lambda<1$ and a constant $C_{M,N}$ such that
\[|\lim_n\frac{\Fmult{n}{M}{N}}{p^{n\dim(R)}}-\frac{\Fmult{e}{M}{N}}{p^{e\dim(R)}}| \leq  C_{M,N}\lambda^e; \ \text{assuming convergence for simplicity}.\]
In the F-signature theory (i.e., $N \cong R$), $\lambda= 1/p$ works for the relevant convergence rate problem. 
Moreover, results proving certain uniformity for the accompanying constant, i.e., the analogue of $C_{M,N}$, have been crucial for proving semicontinuity of the limiting invariant; see \cite{PolstraTucker, Smirnov_Semicontinuity, Polstra}.
We prove geometric ergodicity towards F-contributors when $\dim(R) \leq 2$, or when $R$ is strongly F-regular and $N \in \find(R)$, the latter using the formalism developed in \Cref{subsec: convergence rates}. 
When $M$ has the FFRT property, geometric ergodicity with $\lambda= 1/p$ follows from \cite{Yao05} and this restricts possible eigenvalues of $\Mat(M)$; see \Cref{cor: Yao}.

\subsection*{Organization of the material} After briefly reviewing infinite-dimensional versions of the Perron--Frobenius theory and Markov chains in \Cref{se: recap of Perron-Frobenius theory}, we introduce and develop a dynamical system associated to the direct sum decomposition of the Frobenius pushforward of a module in \Cref{se: matrix associated to Frobenius decomposition} in the context of the Perron--Frobenius theory. In the subsections of \Cref{se: matrix associated to Frobenius decomposition}, we develop our general approach towards understanding F-contributors (\Cref{subsubsec: F contributors}), the rank formula (\Cref{sse: rank formula}) and the total rank formula (\Cref{sse: Total rank formula}). \Cref{se: FFRT case}, \Cref{se: strongly F regular modules}, and \Cref{se: dimension two} contain our results in commutative algebra showcasing applications of our dynamical theory developed in \Cref{se: matrix associated to Frobenius decomposition} to the FFRT setting, to rings admitting a module-finite strongly F-regular extension, and to rings of dimension at most two. \Cref{se: strongly F regular modules} briefly discusses the notion of a strongly F-regular module and its properties, which are also used in \Cref{se: dimension two}. \Cref{subsec: convergence rates} develops a general approach to proving convergence rates of the sequences defining F-splitting signatures, which is used in \Cref{se: strongly F regular modules}. Some conjectures on F-decomposition matrices and F-contributors are collected in \Cref{se: questions and conjectures}.

\subsection*{Acknowledgements}
We thank Devlin Mallory, Kevin Tucker and Yuji Yoshino for helpful exchanges and BCAM and EPFL for their hospitality during the authors' visits. 
The second-named author was supported by the Spanish Ministry of Science, Innovation and Universities by grants PCI2024-155055-2 funded by MICIU/AEI/10.13039/501100011033 and UE, PID2024-156181NB-C31 funded by MICIU/AEI/10.13039/501100011033 and FEDER, UE, RYC2020-028976-I funded by MICIU/AEI/10.13039/501100011033 and EI ESF ``ESF Investing in your future''. 

\subsection*{AI disclaimer}
The original draft of this article was completed without any direct assistance from any LLM. We prompted Claude for proofreading and checking mathematical correctness at the final stage of the preparation. As a byproduct, Claude suggested that we state \Cref{cor: F-contributor matrix is irreducible} and \Cref{rem: eigenvectors for F-split} and helped us simplify several arguments in \Cref{se: dimension two}. We take full responsibility for the mathematical content and final form of the article.

\section{Perron--Frobenius theory}\label{se: recap of Perron-Frobenius theory}

We briefly review the Perron--Frobenius theory for infinite matrices at a level of generality suitable for this article. Our recollection slightly exceeds the strict requirement of this article to provide algebraically minded readers with a more comprehensive understanding and for future referencing. The formulation below largely follows Vere-Jones \cite{VereJones} and draws inspiration from earlier works on discrete-time Markov chains; see \cite{Chung_Kai_Lai_Markov_Chains_with_stationary_transition_probabilities}, \cite{KolmogorovPaper} and the references therein. Another reference is Chapter 6 of \cite{SenetaBook}. 

The terminology and results introduced and recalled in this section are heavily used throughout the article.

\subsection{Infinite matrices}\label{sse: terminologies and notation for infinite matrices}

We develop a language of infinite matrices that is suitable for our purpose.

\begin{definition}\label{de: infinite matrix}
Let $\cS_1, \cS_2$ be two sets and $A$ be a ring. An $\cS_1 \times \cS_2$ matrix with entries in $A$ is a set map $\cS_1 \times \cS_2 \rightarrow A$.

For a set $\cS$ and a ring $A$, $\Id_\cS\colon \cS \times \cS \rightarrow A$ denotes the \textit{identity matrix} defined by $\Id_\cS(s,s')= 1$ if $s=s'$ and $\Id_\cS(s,s')$ is zero otherwise.
\end{definition}

For an $\cS_1 \times \cS_2$ matrix $T$, the notation
$T= (t_{ss'})_{ss'}$ means that $T(s, s') = t_{ss'}$ for $s \in \cS_1$ and $s' \in \cS_2$. For $s_1 \in \cS_1$, by the $s_1$-th row of $T$, we address the $\{s_1\} \times \cS_2$-matrix $(s_1,s') \mapsto t_{s_1s'}$. A column of $T$ is defined similarly. 

For $\cS_1' \subseteq \cS_1$, $\cS_2' \subseteq \cS_2$, the $\cS_1' \times \cS_2'$ matrix corresponding to the restriction map $T_{\mid \cS_1' \times \cS_2'}\colon \cS_1' \times \cS_2' \rightarrow A$ is called the $\cS_1' \times \cS_2'$ block of $T$, and is denoted by $T[\cS_1' \times \cS_2']$, or when $\cS_1'= \cS_2'$ by $T[\cS_1']$.

\begin{definition}
 An $\cS_1 \times \cS_2$ matrix $T$ is said to have \emph{finite rows} if any row of $T$ has only finitely many nonzero entries, i.e., for every $s \in \cS_1$, $T(s,s')$ is nonzero only for finitely many $s' \in \cS_2$. 
\end{definition}

Most of the infinite matrices in this work have finite rows. This property allows us to define products and powers. 

\begin{definition}\label{matrix product}
Let $A$ be a topological ring, $\cS_1, \cS_2, \cS_3$ be countable sets. Let $M_1$ and $M_2$ be $\cS_1 \times \cS_2$ and $\cS_2 \times \cS_3$ matrices with entries in $A$, respectively.  Then the \textit{product matrix} --- denoted $M_1M_2$ --- is given by the map $\cS_1\times \cS_3 \rightarrow A$, defined by $(s_1,s_3) \mapsto \sum_{s_2 \in \cS_2} M_1(s_1, s_2)M_2(s_2, s_3)$, whenever the sum is convergent for all $(s_1, s_3)$.
In our applications, $A \subseteq \mathbb C$ has the subspace topology and $M_1$ has finite rows, so the sum is finite and  $M_1M_2$ is defined.

Whenever we mention the product $M_1M_2$, we assume that the product is defined. For an $\cS \times \cS$ matrix $M$ and $n \in \mathbb{N}$, we define $M^n$ inductively as $M^0= \Id_{\cS}$, $M^1= M$ and $M^n= MM^{n-1}$.
\end{definition}

\subsection{Notations for matrices}

\begin{notation}
    Let $f\colon \NN \to \RR_{\geq 0}, g \colon \NN \to \RR_{\geq 0}$ 
    be two sequences. 
    We say that $g = O(f)$ if there exists a positive constant 
    $C$ such that $|g(k)| \leq Cf(k)$ for all $k \in \NN$. 
    We say that $g = \Theta(f)$ if $g = O(f)$ and 
    $\limsup_{k \to \infty} g(k)/f(k) > 0$.
\end{notation}

\begin{notation}
Let $\cS$ be a countable set. 
Let $T= (t_{ss'})_{ss'}$ be an $\cS \times \cS$ matrix and 
$f\colon \NN \to \RR_{\geq 0}$ be a sequence. Then
\begin{enumerate}
    \item Both $T^{(n)}_{ss'}$ and $t^{(n)}_{ss'}$ denote $T^n(s,s')$.
    
    \item A left (resp. right) eigenvector of $T$ with eigenvalue $\lambda \in \mathbb C$ is a nonzero $\{1\} \times \cS$-row (resp. $\cS \times \{1\}$ column) matrix $x$ with complex entries\footnote{Here we do not assume that $x$ has finite rows.}, such that $xT= \lambda x$ (resp. $Tx= \lambda x$).
    \item $T$ is \emph{nonnegative} (resp. \emph{positive}) if all its entries are nonnegative (resp. positive). 
    \item We write $T^k = O(f(k))$ (resp., $T^k = \Theta(f(k))$) to indicate 
    that $t_{ij}^{(k)} = O(f(k))$ (resp., $t_{ij}^{(k)} = \Theta(f(k))$) for all $i, j \in \cS$. Note that the bound is not uniform, i.e., we only require $|t_{ij}^{(k)}| \leq C(i,j) f(k)$ for a constant $C(i,j)$ depending on $i, j$. 
    \item Similarly, the convergence of a sequence of matrices means the entry-wise convergence, unless otherwise mentioned.
\end{enumerate}
\end{notation}

\subsection{Irreducible matrices}\label{sse: generalities on infinite irreducible matrices}
For the rest of \Cref{se: recap of Perron-Frobenius theory}, $\cS$ stands for a countable set  and $T$ denotes an $\cS \times \cS$ matrix.

\begin{definition}\label{de: irreducible matrix}
An $\cS \times \cS$ nonnegative matrix $T$ is called \emph{irreducible} if for any $i,j \in \cS$, there exist positive integers $n_1, n_2$,
such that $T^{n_1} ({i, j}) \neq 0$ and $T^{n_2} (j, i)  \neq 0$.   
\end{definition}

\begin{definition}\label{de: period of an irreducible matrix}
   The \emph{period} of an $\cS \times \cS$ irreducible matrix $T$ is  $\gcd \{k>0 \mid T^{(k)}_{ss} > 0\}$. It is independent of $s \in \cS$ thanks to the irreducibility of $T$. 
We call $T$ \emph{aperiodic} if its period is one.
\end{definition}

If $\omega \geq 1$ is the period of $T$, then we can divide $\cS$ into $\omega$ many disjoint subsets $C_\alpha$, where $0 \leq \alpha \leq \omega-1$, called 
\emph{periodic subclasses} of $T$. The classes are characterized by the property that if $i \in C_\alpha$
and $j \in C_\beta$ then  $T_{ij}^{(n)} \neq 0$ implies that $n \equiv \beta -\alpha \pmod \omega$. Alternatively, for a fixed $i \in \cS$ and an integer $\beta$, define $C_{\beta}(i)$ to be the set of indices $j$, such that $T^{(n)}_{ij} \neq 0$ implies $n \equiv \beta \pmod \omega$. Then the classes $\{C_{\beta}(i)\}_{\beta}$ and $\{C_{\alpha}\}_{\alpha}$ for $0 \leq \alpha, \beta \leq \omega-1$ coincide up to a permutation.

\subsubsection{Finite irreducible matrices}
The results on finite matrices mentioned below are used to conclude further structural properties of F-decomposition matrices of modules with FFRT in \Cref{se: FFRT case}.

\begin{theorem}[{\cite[Theorem~1.5, Theorem~1.7, Corollary to Theorem~1.7]{SenetaBook}}]\label{thm: finite Perron-Frobenius}
Let $n$ be a positive integer and suppose that $T$ is an $n\times n$ irreducible nonnegative matrix with period $\omega$. Then there exists an eigenvalue $r$, such that 
    \begin{enumerate}
        \item $r \in {\mathbb R}_{> 0}$;
        \item $r$ is a simple root of the characteristic polynomial;
        \item $r$ admits unique (up to multiplication) positive left and right eigenvectors;
        \item $r \geq |\lambda|$ for any other eigenvalue $\lambda$.
    \end{enumerate}

    It follows that $r$ is the spectral radius $\rho(T) \coloneqq \max \{|\lambda|\ |\ \lambda \in \mathbb C \text{ is an eigenvalue of $T$}\}$.
    
    Moreover, the spectrum is invariant upon multiplication by $e^{2\pi i/\omega}$. In particular, $\rho(T) e^{2k\pi i/\omega}$ is a simple eigenvalue of $T$ for each $k$ and these are the only eigenvalues of modulus $\rho(T)$.
\end{theorem}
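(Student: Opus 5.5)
The statement is the classical finite Perron--Frobenius theorem, which the paper quotes from Seneta. I would prove it by the standard route: positive matrices first, then irreducible ones, then the period.

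\emph{Step 1: a positive eigenvector.} Put $B = (\Id + T)^{n-1}$. Irreducibility gives, for every $i \neq j$, a path from $i$ to $j$ of length at most $n-1$, so $B$ is positive, and $B$ commutes with $T$. Define the Collatz--Wielandt function $r(x) = \min_{x_i>0} (Tx)_i/x_i$ on the simplex of nonnegative vectors. The composite $x \mapsto r(Bx/\|Bx\|)$ is continuous on the compact simplex and satisfies $r(Bx) \geq r(x)$. So $r := \sup r(x)$ is attained at some $z = Bx$ with $z > 0$. If $Tz - rz \geq 0$ were nonzero, applying $B$ would give $T(Bz) > r\,Bz$ strictly, which contradicts the maximality of $r$. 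Hence $Tz = rz$, $z > 0$, and $r > 0$ because $T$ has no zero row. The same argument applied to $T^\tr$ gives a positive left eigenvector.

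\emph{Step 2: dominance, uniqueness and simplicity.} If $Ty = \lambda y$ for some complex $y$, then $T|y| \geq |\lambda|\,|y|$, so $|\lambda| \leq r(|y|) \leq r$. Now take a real eigenvector $y$ for $r$ that is not proportional to $z$. Choose $c$ so that $z - cy \geq 0$ has a zero coordinate. Then $B(z - cy) = (1+r)^{n-1}(z - cy)$, and the left side is either zero or strictly positive. So $z - cy = 0$, and the geometric multiplicity is one. For algebraic simplicity, suppose a generalized eigenvector $w$ satisfies $(T - r)w = z$. Pairing with the positive left eigenvector $v$ gives $0 = v(T - r)w = vz > 0$, a contradiction. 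Therefore $r$ is a simple root, equal to $\rho(T)$, and the positive eigenvectors are unique up to scaling.

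\emph{Step 3: the peripheral spectrum.} This is the step where care is needed. Let $|\lambda| = r$ with $Ty = \lambda y$. From Step 2, $T|y| \geq r|y|$; pairing with $v$ forces equality, so $|y| = z$ after scaling. Write $y_j = e^{i\theta_j} z_j$. Equality in the triangle inequality for $\sum_j t_{ij} y_j = \lambda y_i$ forces $e^{i\theta_j} = (\lambda/r)\, e^{i\theta_i}$ whenever $t_{ij} > 0$. Following a closed path of length $k$ with $T^{(k)}_{ss} > 0$ gives $(\lambda/r)^k = 1$ for every such $k$, hence $(\lambda/r)^\omega = 1$. Conversely, let $D$ be the diagonal matrix with entry $e^{2\pi i \alpha/\omega}$ on the periodic subclass $C_\alpha$. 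Then $D^{-1} T D = e^{2\pi i/\omega}\, T$, because $T$ only maps $C_\alpha$ into $C_{\alpha+1}$. So the spectrum is invariant under multiplication by $e^{2\pi i/\omega}$, with multiplicities preserved. Each $r e^{2\pi i k/\omega}$ is therefore a simple eigenvalue, and these are the only eigenvalues of modulus $r$.
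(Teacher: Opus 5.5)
Your proof is correct and complete. One line is left implicit. The positive left eigenvector $v$ obtained from $T^\tr$ belongs a priori to the Perron root $r'$ of $T^\tr$, and you use $vT=rv$ both in the Jordan-chain step and in the pairing of Step~3. Note that $r'\,(vz)=vTz=r\,(vz)$ with $vz>0$ forces $r'=r$.

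The paper gives no proof of this theorem; it quotes Seneta, so the comparison is with the route behind that citation. As I understand Seneta's treatment, the Collatz--Wielandt maximization is carried out first for primitive matrices and then extended to irreducible ones using $(\Id+T)^{n-1}>0$. Your Steps~1--2 are essentially that Wielandt argument applied directly to the irreducible matrix.

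The real difference is Step~3. The cited results on the peripheral spectrum go through the cyclic block structure of a periodic irreducible matrix, the structure the paper records in \Cref{thm: periodic irreducible}. There $T^\omega$ is block diagonal along the periodic subclasses, with primitive blocks of Perron root $r^\omega$. Strict dominance in the primitive case then pins down the eigenvalues of $T$ of modulus $r$.

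You bypass the primitive theory with the equality case of the triangle inequality. A peripheral eigenvector has modulus equal to the Perron vector, and its phases advance by $\lambda/r$ along every edge. Hence $(\lambda/r)^k=1$ for every closed-walk length $k$, so $(\lambda/r)^\omega=1$. The similarity $D^{-1}TD=e^{2\pi i/\omega}T$ then gives the converse and carries the simplicity of $r$ to each $re^{2\pi ik/\omega}$.

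Your route is shorter and self-contained: it needs only the existence of the periodic subclasses, which the paper already states after \Cref{de: period of an irreducible matrix}. The normal-form route is longer, but it yields the primitivity of $T^\omega[C_\alpha]$ along the way. Together with the primitive limit theorem, that gives the convergence of $T^{k\omega+c}/\rho(T)^{k\omega+c}$ asserted in \Cref{thm: periodic irreducible}.
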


By \cite[Theorem~1.4]{SenetaBook} 
irreducible aperiodic finite matrices are exactly the finite \textit{primitive} matrices, i.e., those which have a power with positive entries.

The following result is a structure theorem for periodic matrices.

\begin{theorem}[{\cite[Theorem~1.3, Lemma~1.4]{SenetaBook}, \cite[9.2.4]{Handbook}}]\label{thm: periodic irreducible}
    Let $T$ be an irreducible $n \times n$ nonnegative matrix with period $\omega > 1$.
    Then
    \begin{enumerate}
        \item there is a partition of $\{1, \ldots, n\}$
        into disjoint subsets $C_0, \ldots, C_{\omega-1}$ such that for any index $i \in \{1, \ldots, n\}$ and $0 \leq a \leq \omega-1$   
        \[
        C_a(i) \coloneqq \{j \mid t^{(m\omega + a)}_{i,j} \neq 0, \forall m \gg 0\} \in \{C_0, \ldots, C_{\omega - 1}\}
        \]
        and $C_a(i) = C_a$ if and only if $i \in C_0$;
        \item $T^\omega[C_a]$ is primitive;
        \item for any $0 \leq c < \omega$ the limit
        $\lim_{k \to \infty} \frac{t^{(k\omega + c)}_{i,j}}{\rho(T)^{k\omega + c}}$ exists.
    \end{enumerate}
\end{theorem}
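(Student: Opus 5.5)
The plan is to use only \Cref{thm: finite Perron-Frobenius} together with elementary number theory, arguing in the order (1), (2), (3). Write $\rho = \rho(T)$.

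For (1), fix a base index $i_0$. The set $L = \{k > 0 \mid t^{(k)}_{i_0 i_0} > 0\}$ is closed under addition and has $\gcd$ equal to $\omega$. Hence, by the standard numerical semigroup lemma, $L$ contains every sufficiently large multiple of $\omega$. For each $j$, irreducibility gives some $n$ with $t^{(n)}_{i_0 j} > 0$. Any two such $n$ are congruent modulo $\omega$: if $t^{(n)}_{i_0 j} > 0$ and $t^{(m)}_{j i_0} > 0$, then $n + m \in L$, so $n \equiv -m \pmod{\omega}$. Define $C_a$ to be the set of $j$ whose residue $n \bmod \omega$ equals $a$. Then for $j \in C_a$ we get $t^{(m\omega + a)}_{i_0 j} > 0$ for all $m \gg 0$, by concatenating a fixed path from $i_0$ to $j$ with loops at $i_0$ of all large lengths $k\omega$. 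Nonnegativity gives $t^{(k+l)}_{ij} \geq t^{(k)}_{ih} t^{(l)}_{hj}$, which transports this to an arbitrary $i$: if $i \in C_b$, then $C_a(i) = C_{a+b}$ (indices mod $\omega$). This yields both claims of (1).

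For (2), put $S = T^\omega[C_a]$. Note that $T^\omega$ maps $C_a$ only to $C_a$, so $(S^m)_{ij} = t^{(m\omega)}_{ij}$. By (1) with offset $0$, these entries are positive for all $m \gg 0$ and all $i, j \in C_a$. Since $C_a$ is finite, one power of $S$ is strictly positive, so $S$ is primitive.

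For (3), reorder the indices by the classes $C_0, \ldots, C_{\omega-1}$. Then $T^\omega$ is block diagonal with primitive blocks $S_a$. The key step, and the main obstacle, is to show that each $S_a$ has spectral radius exactly $\rho^\omega$ and that this eigenvalue is simple and strictly dominant. From \Cref{thm: finite Perron-Frobenius}, the eigenvalues of $T$ of modulus $\rho$ are exactly $\rho e^{2\pi i k/\omega}$ for $0 \le k < \omega$, each simple. Their $\omega$-th powers all equal $\rho^\omega$, so this value has algebraic multiplicity exactly $\omega$ in $T^\omega$. Every other eigenvalue of $T^\omega$ has modulus strictly less than $\rho^\omega$.

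Each $S_a$ is primitive. By the Perron--Frobenius theorem for primitive matrices (\cite[Theorem~1.1]{SenetaBook}), $S_a$ has a simple, strictly dominant eigenvalue $r_a > 0$. Positivity of the right Perron vector $v$ of $T$ gives $S_a v|_{C_a} = \rho^\omega v|_{C_a}$ with $v|_{C_a} > 0$. A positive eigenvector of a primitive matrix must belong to its Perron root, so $r_a = \rho^\omega$. Then the primitive-matrix limit theorem gives
\[
\rho^{-m\omega} S_a^m \to \frac{x_a y_a^\tr}{y_a^\tr x_a},
\]
where $x_a$ and $y_a$ are the positive right and left eigenvectors of $S_a$.

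Finally, for $i \in C_b$ and arbitrary $j$ we have
\[
t^{(k\omega + c)}_{ij} = \sum_{h \in C_b} t^{(k\omega)}_{ih}\, t^{(c)}_{hj},
\]
which is a finite sum of convergent sequences. Dividing by $\rho^{k\omega + c}$, the limit exists, proving (3).
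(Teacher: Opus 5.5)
Your proof is correct and follows the standard argument in the sources the paper cites: residue classes of path lengths from a base index together with the additive-semigroup lemma (Seneta's Theorem~1.3 and Lemma~1.4), then primitivity of the diagonal blocks of $T^\omega$ and the primitive limit theorem; the paper gives no proof of its own beyond these citations. Two small points: you should note that each $C_a$ is nonempty, which follows because a closed walk at $i_0$ of positive length $k\omega$ passes through every residue class, and this is needed for the ``if and only if'' in (1); and your count of the algebraic multiplicity of $\rho^\omega$ in $T^\omega$ is never used, since you already obtain $r_a=\rho^\omega$ from the restriction of the positive Perron vector of $T$.
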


It follows from \Cref{thm: finite Perron-Frobenius} and \Cref{thm: periodic irreducible} that for an irreducible nonnegative finite matrix $T$,  $T^k/\rho(T)^k$
converges if and only if there is no eigenvalue $\lambda \neq \rho(T)$
with $|\lambda| = \rho(T)$. We extend the convergence result for $(T^k/\rho(T)^k)_k$ to any finite matrix, not necessarily with nonnegative entries or irreducible, via a more elementary argument. In addition, our approach also yields a convergence rate. 

The assumption $\rho(T)=1$ in the next result does not cause any loss in generality. For a nonnilpotent matrix $T$, one can divide by $\rho(T)$ to pass to this case. 

\begin{proposition}\label{prop: bound growth rate}
For a positive integer $n$, suppose that $T$ is an $n\times n$ complex matrix 
    such that $\rho(T) = 1$, $1$ is the unique eigenvalue of modulus $1$, and $1$ is a semisimple\footnote{That is, the algebraic and geometric multiplicities are equal.} eigenvalue. Set $r = \max \{|\lambda| \mid 1 \neq \lambda \in \Spec (T)\}$ and let 
    $s$ be the maximum of the algebraic multiplicities of eigenvalues $\lambda$ with $|\lambda| = r$. Then $T_\infty \coloneqq \lim_{k \to \infty} T^k$ exists and 
    \begin{enumerate}
        \item If $\Spec (T) = \{1\}$, then $T = T_\infty$.
        \item If $r = 0$, then $T^k = T_\infty$ for $k \geq n$.
        \item In the remaining cases, $T^k = T_\infty + O (k^{s-1} r^k)$.
    \end{enumerate}
\end{proposition}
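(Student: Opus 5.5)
We will reduce everything to the Jordan canonical form of $T$. Choose an invertible $P$ with $T = PJP^{-1}$, where $J$ is block diagonal with Jordan blocks $J_\lambda(m) = \lambda I_m + N_m$, $N_m$ the nilpotent shift. Since $T^k = PJ^kP^{-1}$ and conjugation by a fixed matrix preserves both entrywise convergence and bounds of the form $O(f(k))$ (each entry of $PXP^{-1}$ is a fixed linear combination of entries of $X$), it suffices to treat each Jordan block separately. Because $1$ is semisimple, every Jordan block for the eigenvalue $1$ is $1\times 1$, so the corresponding part of $J$ is an identity block $I$ and is unchanged under powers. Define $T_\infty = P(I \oplus 0)P^{-1}$, the identity on the $1$-eigenspace and zero on the blocks of the other eigenvalues; this is the spectral projection onto $\ker(T-1)$.

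For a block $J_\lambda(m)$ with $|\lambda|<1$, we expand by the binomial theorem, which applies because $\lambda I$ and $N_m$ commute and $N_m^m = 0$:
\[
J_\lambda(m)^k = \sum_{j=0}^{m-1} \binom{k}{j} \lambda^{k-j} N_m^j .
\]
If $\lambda \neq 0$, every entry is bounded by $C k^{m-1}|\lambda|^k$. Since $m$ is at most the algebraic multiplicity of $\lambda$, we have $m \leq s$ whenever $|\lambda| = r$, giving the bound $O(k^{s-1}r^k)$. For eigenvalues with $0 < |\lambda| < r$, the quantity $k^{m-1}|\lambda|^k$ is $o(r^k)$, hence also $O(k^{s-1}r^k)$. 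If $\lambda = 0$, then $J_0(m)^k = N_m^k = 0$ for $k \geq m$, and $m \leq n$. So every block with $\lambda \neq 1$ tends to $0$, which proves that $T_\infty$ exists and gives the estimate $T^k - T_\infty = P\bigl(0 \oplus J'^k\bigr)P^{-1}$, where $J'$ collects the blocks with $\lambda \neq 1$.

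The three cases now follow. If $\Spec(T) = \{1\}$, there are no other blocks, so $J = I$ and $T = T_\infty$. If $r = 0$, the only other eigenvalue is $0$, whose blocks have size at most $n$, so $T^k = T_\infty$ for $k \geq n$. Otherwise $r>0$ and the estimate above gives $T^k = T_\infty + O(k^{s-1}r^k)$.

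The only delicate point is the bookkeeping in this last case. The hypothesis $|\lambda|=1 \Rightarrow \lambda = 1$ is what guarantees that all the remaining eigenvalues have modulus strictly less than $1$. Blocks attached to eigenvalues of modulus exactly $r$ contribute the polynomial factor $k^{s-1}$, while blocks attached to smaller nonzero eigenvalues and nilpotent blocks are absorbed into that bound. There is no real obstacle beyond this.
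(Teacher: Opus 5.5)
Your proof is correct and follows essentially the same route as the paper. Both reduce via the Jordan form to single blocks, expand $(\lambda I+N)^k=\sum_{i<m}\binom{k}{i}\lambda^{k-i}N^i=O(k^{m-1}|\lambda|^k)$, use semisimplicity to make the blocks for $1$ trivial, and read off the three cases. The only difference is that you write out the case-by-case bookkeeping (blocks of modulus $r$, smaller nonzero eigenvalues, and nilpotent blocks), which the paper leaves implicit.
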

\begin{proof}
    Let $J$ be the Jordan normal form of $T$ and let $P$ be such that $T = P^{-1} J P$. Since $T^k = P^{-1} J^k P$, it suffices to prove the assertion for individual Jordan blocks. A Jordan block $\Lambda$ of size $m$ can be written as  $\lambda I + N$, where $I$ is the identity matrix and $N$ is a nilpotent matrix. Then for any $\lambda$ and
    $k \geq m$ 
    \begin{equation}\label{eq: powers of Jordan blocks}
        (\lambda I + N)^k = \sum_{i = 0}^{m-1} \binom{k}{i} \lambda^{k - i} N^i = O(|\lambda|^k k^{m-1}).
    \end{equation}
If $\lambda=1$, by our assumption $N=0$; if $|\lambda|<1$, $(\Lambda^k)_k$ converges to zero. Thus convergence of $(T^k)_k$ follows. The remaining assertions on convergence rate also follow from (\ref{eq: powers of Jordan blocks}).
    
\end{proof}

The next result concludes structural properties of $T$ from the convergence and convergence rate of $(T^k)_k$, and thus provides a partial converse to the above result.

\begin{corollary}\label{cor: matrix growth rate}
    Let $T$ be an $n\times n$ complex matrix 
    such that $T_\infty \coloneqq \lim_{k \to \infty} T^k$ exists, and is nonzero. Suppose $| T^k - T_\infty|\leq D r^k$ for some constants $D$ and $r$. Then 
    \begin{enumerate}
        \item $\rho(T) = 1$; $1$ is the unique eigenvalue of modulus $1$ and it is semisimple;
        \item $r \geq \mu \coloneqq \max \{|\lambda| \mid 1 \neq \lambda \in \Spec (T)\}$;
        \item any eigenvalue of $T$ with modulus $r$ must be semisimple.
    \end{enumerate}

\end{corollary}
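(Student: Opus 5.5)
We pass to the Jordan normal form $T = P^{-1}JP$. Since $T^k - T_\infty = P^{-1}(J^k - J_\infty)P$ with $J_\infty = \lim_k J^k = PT_\infty P^{-1}$, a bound $|T^k - T_\infty| \leq D r^k$ holds if and only if an analogous bound (with a different constant) holds for $J$. Moreover, $J^k$ is block diagonal, so convergence and the rate can be read off one Jordan block $\Lambda = \lambda I + N$ of size $m$ at a time, using formula (\ref{eq: powers of Jordan blocks}): $\Lambda^k = \sum_{i=0}^{m-1}\binom{k}{i}\lambda^{k-i}N^i$.

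\textbf{Step 1: behaviour of a single block.} The diagonal entries of $\Lambda^k$ are $\lambda^k$, so convergence of $(\Lambda^k)_k$ forces $|\lambda|<1$ or $\lambda = 1$. If $\lambda = 1$ and $m \geq 2$, the superdiagonal entry is $k$, which is unbounded. Hence every block with $\lambda = 1$ has size one.

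\textbf{Step 2: part (1).} By Step 1, $1$ is the only eigenvalue of modulus $\geq 1$, and it is semisimple. Blocks with $|\lambda|<1$ tend to $0$, so $J_\infty$ is the projection onto the $1$-blocks. Since $T_\infty \neq 0$, the eigenvalue $1$ actually occurs, and therefore $\rho(T) = 1$.

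\textbf{Step 3: parts (2) and (3).} For $\lambda \neq 1$ the limit of the block is $0$, so we need $|\Lambda^k| = O(r^k)$.
\begin{itemize}
\item Part (2): the diagonal entry $\lambda^k$ gives $|\lambda|^k \leq D' r^k$ for all $k$. If $\lambda \neq 0$ this forces $|\lambda| \leq r$ by letting $k\to\infty$. Taking the maximum over all such $\lambda$ gives $\mu \leq r$; the case $\mu = 0$ is trivial since $r \geq 0$ is implicit.
\item Part (3): if $|\lambda| = r > 0$ and $m \geq 2$, the $(1,2)$ entry $k\lambda^{k-1}$ has modulus $k r^{k-1}$. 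This is not $O(r^k)$, a contradiction. Hence such blocks have size one, i.e.\ $\lambda$ is semisimple.
\end{itemize}

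\textbf{Main obstacle.} The only delicate point is that the constant changes under conjugation by $P$. This is handled by equivalence of norms on $n\times n$ matrices: $|P^{-1}XP| \leq \|P^{-1}\|\,\|P\|\,|X|$. Because of this, entrywise bounds of the form $O(r^k)$ transfer in both directions between $T$ and $J$.
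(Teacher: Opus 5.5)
Your proposal is correct and follows essentially the paper's route: reduce to Jordan blocks and read everything off the expansion \eqref{eq: powers of Jordan blocks}. The only differences are cosmetic. For (2), the paper uses an eigenvector $v$ with $T_\infty v = 0$ instead of the diagonal entries $\lambda^k$. For (3), your argument assumes $r>0$, but the case $r=0$ is immediate: the hypothesis at $k=1$ forces $\Lambda = 0$, hence $m=1$.
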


\begin{proof}
Since $(T^k)_k$ is bounded and nonzero, $\rho(T)= 1$. The Jordan blocks of $T$ satisfy the hypothesis made about $T$. Since $T^k$ is convergent, \eqref{eq: powers of Jordan blocks} implies that any eigenvalue of $T$ with modulus one is one and one must be a semisimple eigenvalue. As for (2), take $v$ to be an eigenvector for an eigenvalue $\lambda \neq 1$. Since $T^k v = \lambda^k v$ converges, we must have $|\lambda| < 1$, so $T_\infty v = 0$.
Thus $|\lambda|^k |v| \leq |T_\infty - T^k| |v| \leq D r^k |v|$, so $|\lambda| \leq r$ as needed. 
Semisimplicity in the last claim follows from \eqref{eq: powers of Jordan blocks}.
\end{proof}

\begin{corollary}[{\cite[Theorem~1.2]{SenetaBook}}]\label{cor: limit for primitive}
Let $T$ be a primitive nonnegative matrix and 
take $v, u$ to be left and right $\rho(T)$-eigenvectors with positive entries such that $||u|| = ||v|| = 1$.
Then 
\[
(\rho (T))^{-n}T^n = u^\tau v/(v \cdot u) + O(n^{s-1} r^n\rho(T)^{-n})
\]
where $r = \max \{|\lambda| \mid \rho(T) \neq \lambda \in \Spec T\}$
and $s$ is the maximum of the algebraic multiplicities 
of eigenvalues $\lambda$ with $|\lambda| = r$.    
\end{corollary}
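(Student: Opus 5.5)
The plan is to reduce to \Cref{prop: bound growth rate} applied to the normalized matrix $S \coloneqq T/\rho(T)$, and then to identify the limit explicitly. First I would check the hypotheses. By \Cref{thm: finite Perron-Frobenius}, $\rho(T)>0$ is an eigenvalue and a simple root of the characteristic polynomial, so $1$ is a simple, hence semisimple, eigenvalue of $S$ and $\rho(S)=1$. Primitivity means $T$ is irreducible with period $\omega=1$. The last part of \Cref{thm: finite Perron-Frobenius} then says that the only eigenvalue of modulus $\rho(T)$ is $\rho(T)$ itself, so $1$ is the unique eigenvalue of $S$ of modulus one.

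Next I would apply \Cref{prop: bound growth rate} to $S$. Its $r$ is $\max\{|\lambda| : 1\neq\lambda\in\Spec S\}$, which equals $r/\rho(T)$ with $r$ as in the corollary, and the multiplicity index $s$ is unchanged. This gives $S^n = S_\infty + O(n^{s-1}(r/\rho(T))^n)$, which is exactly the claimed error term. The degenerate cases are consistent with this. When $r=0$, the proposition even gives eventual equality, which is compatible with the $O$-bound under the convention $0^0=1$ (or, in case (1), when $n=1$). One small point in case (1), $\Spec(S)=\{1\}$: there $S=S_\infty$ and the error is identically zero.

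It remains to identify $S_\infty = u^\tau v/(v\cdot u)$; I expect this to be the only real step. I would argue via the spectral projection. Since $S S_\infty = S_\infty S = S_\infty$ (pass to the limit in $S^{n+1}=S\cdot S^n=S^n\cdot S$), every column of $S_\infty$ is a right $1$-eigenvector of $S$ and every row is a left $1$-eigenvector. By simplicity of the eigenvalue (\Cref{thm: finite Perron-Frobenius}(3)), these eigenspaces are one-dimensional, spanned by $u^\tau$ (as a column) and $v$ (as a row), respectively. Hence $S_\infty = c\, u^\tau v$ for some scalar $c$.

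To determine $c$, I would use that $S_\infty$ is idempotent, since $S_\infty^2=\lim S^{2n}=S_\infty$, together with the fact that it is nonzero. It is nonzero because, from the Jordan decomposition in the proof of \Cref{prop: bound growth rate}, $S_\infty$ is the projection onto the $1$-eigenspace along the other generalized eigenspaces. Idempotency gives $c^2 (v\cdot u)\, u^\tau v = c\, u^\tau v$, so $c = 1/(v\cdot u)$. Here $v\cdot u>0$ because both vectors have positive entries, so the division is legitimate.

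With this identification the error estimate from the previous step is exactly the statement of the corollary, and the normalization $\|u\|=\|v\|=1$ plays no role beyond fixing the representatives.
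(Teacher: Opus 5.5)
Your proposal is correct and follows the derivation the paper intends: the paper gives no separate argument, but it cites \cite[Theorem~1.2]{SenetaBook} and places the statement right after \Cref{prop: bound growth rate}, so the proof is to normalize by $\rho(T)$, check that proposition's hypotheses via \Cref{thm: finite Perron-Frobenius} (simplicity of $\rho(T)$ and period one), and identify the limit as the rank-one projection $u^\tau v/(v\cdot u)$, which you do cleanly using $SS_\infty=S_\infty S=S_\infty$ and idempotency. One small citation slip: the one-dimensionality of the left and right $1$-eigenspaces comes from $\rho(T)$ being a simple root of the characteristic polynomial (part (2) of \Cref{thm: finite Perron-Frobenius}), not from the uniqueness of positive eigenvectors in part (3); your phrase ``by simplicity'' already names the right reason.
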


\subsubsection{Infinite irreducible matrices}
We return to the more general situation of countable $\cS \times \cS$ matrices (i.e., $\cS$ is a countable set) $T$ with nonnegative entries.

\begin{definition}
    Let $T$ be an $\cS \times \cS$ nonnegative matrix. The \textit{convergence radius} of $T$, denoted by $R(T)$, is
    \[\inf \left\{ \frac{1}{\limsup_{n \to \infty}(T^{(n)}_{ss'})^{1/n}} \mathrel{\Big|} s,s' \in \cS \right\},\]
    or, in other words, the infimum of the radius of convergence of the power series $\sum_{n=0}^{\infty} T^{(n)}_{ss'}z^n$, as $s,s'$ varies over $\cS$.

    When $T$ is additionally irreducible of period $\omega$, then for any $s \in \cS$, $0 \leq \alpha \leq \omega-1$ and $s' \in C_{\alpha}(s)$, 
    \[
        R(T) = 1/\lim_{n\to \infty} (T^{(\alpha+ n\omega)}_{ss'})^{1/(\alpha+n\omega)}.
    \]
\end{definition}

The insight of \cite{VereJones} is that 
a variant of the Perron--Frobenius theory can be developed for infinite nonnegative irreducible matrices where $R(T)$ takes the role of the reciprocal of spectral radius, whose use is crucial in the finite dimensional Perron--Frobenius theory.

\begin{definition}\label{de: transient and recurrent matrices}
    Let $T= (t_{ij})_{ij}$ be a nonnegative $\cS \times \cS$ matrix and $r$ be a nonnegative real number. We say that 
    $T$ is $r$-transient if $\sum_{n=0}^{\infty} T^{(n)}_{ss}r^n$
    converges for all $s$ (e.g., whenever $r < R(T)$) and that $T$
    is $r$-recurrent if $\sum_{n=0}^{\infty} T^{(n)}_{ss}r^n$ diverges for some $s$ (then $r \geq R(T)$). When $T$ is $r$-recurrent, we isolate two further cases:  
    $T$ is $r$-null recurrent if $\limsup_{n \to \infty} t_{ij}^{(n)} r^n = 0$ for all $i, j \in \cS$; $T$ is called $r$-positive recurrent if $\limsup_{n \to \infty} t_{ij}^{(n)} r^n \neq 0$ for some $i, j \in \cS$, including the case where the limits do not exist.

    When $T$ is irreducible, it suffices to check $r$-transience, null or positive recurrence on some pair $i, j \in \cS$. In particular, if $\limsup_{n \to \infty} t_{ij}^{(n)} r^n$ is nonzero for one choice of $i,j \in \cS$, then it is nonzero for all such choices.
\end{definition}

\begin{remark}
If $T$ is irreducible, then $T$ is positive recurrent for any $r>R(T)$. 
For this reason, we usually suppress the notation and just say that $T$ is \emph{positive or null recurrent} to refer to $R(T)$-positive or null recurrence.     
\end{remark}

\begin{theorem}[{\cite[Theorem~D, Criterion~III (p.375), Lemma 4.1]{VereJones}}] \label{thm: vere-jones eigenvectors}
        Let $T$ be an irreducible matrix. 
        Then $T$ is positive recurrent if and only if 
        there exist unique (up to scaling) positive left and right eigenvectors $a, b$ with the eigenvalue $1/R(T)$
        such that $a \cdot b = \sum a_i b_i$ is finite. 

        For some $\lambda>0$, if there are nonnegative nonzero left and right $\lambda$-eigenvectors $a, b$ with $a \cdot b < \infty$ then $R(T) = 1/\lambda$, $T$ is positive recurrent, and $a, b$ are positive.
\end{theorem}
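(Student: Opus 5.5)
The plan is to reduce everything to the classical renewal theory of a single state, following Vere-Jones. Fix a reference index $s \in \cS$ and write $R = R(T)$. For $i,j \in \cS$, introduce the taboo coefficients ${}_s t^{(n)}_{ij}$: the sum over paths of length $n$ from $i$ to $j$ that do not pass through $s$ at the intermediate steps. Put $f^{(n)} = {}_s t^{(n)}_{ss}$ (first return to $s$) and $F(z) = \sum_n f^{(n)} z^n$. The first-return decomposition $t^{(n)}_{ss} = \sum_{k} f^{(k)} t^{(n-k)}_{ss}$ gives $\sum_n t^{(n)}_{ss} z^n = 1/(1-F(z))$. Hence $R$-recurrence is equivalent to $F(R) = 1$, and $F(R) < 1$ in the transient case.

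\textbf{Forward direction.} Assume $T$ is positive recurrent, so in particular $F(R)=1$. Define
\[
a_j = \sum_{n\ge 1} {}_s t^{(n)}_{sj} R^n \quad (j \neq s), \qquad a_s = 1,
\]
\[
b_i = \sum_{n \ge 1} {}_s t^{(n)}_{is} R^n \quad (i \neq s), \qquad b_s = 1.
\]
Splitting paths at their last step (for $a$) or their first step (for $b$) gives $(aT)_j = a_j/R$ and $(Tb)_i = b_i/R$. In these computations the term that would otherwise be missing at $s$ is exactly $F(R)$, and it equals $1$ by recurrence. Irreducibility makes $a$ and $b$ positive, and one must also check that they are finite, which follows from $R$-recurrence.

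Next, expand $a\cdot b$ by concatenating a taboo path $s\to j$ with a taboo path $j \to s$. This gives $a\cdot b = \sum_n n f^{(n)} R^n = R F'(R)$, the mean recurrence time. The Erdős--Feller--Pollard renewal theorem, applied to the aperiodic renewal sequence $f^{(n\omega)}R^{n\omega}$, shows that
\[
t^{(n\omega)}_{ss} R^{n\omega} \longrightarrow \frac{\omega}{R F'(R)}.
\]
So positive recurrence (the limsup is nonzero) is equivalent to $RF'(R) < \infty$, i.e.\ to $a \cdot b < \infty$.

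\textbf{Uniqueness.} Show that $a$ is minimal among nonnegative $R$-subinvariant vectors normalized at $s$. Iterating $xT \le x/R$ and discarding the paths that revisit $s$ yields $x_j \ge x_s a_j$. Then $x - x_s a$ is a nonnegative subinvariant vector vanishing at $s$, hence zero by irreducibility, and this step uses $F(R)=1$. The argument for $b$ is symmetric. Consequently any positive $1/R$-eigenvector is a multiple of $a$ (resp.\ $b$), which also gives the converse implication of the first sentence.

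\textbf{Second sentence.} Suppose $a,b \ge 0$ are nonzero left and right $\lambda$-eigenvectors with $a \cdot b < \infty$.
\begin{enumerate}
\item Irreducibility and $aT^n = \lambda^n a$ force $a$ and $b$ to be positive.
\item Form the stochastic matrix $P_{ij} = t_{ij} b_j/(\lambda b_i)$. It is irreducible and has the summable invariant measure $\pi_i = a_i b_i$.
\item A standard Markov chain fact says an irreducible chain with a finite invariant measure is positive recurrent. Hence $p^{(n)}_{ss} = t^{(n)}_{ss}\lambda^{-n}$ is not summable and has positive limsup.
\item This gives $R \le 1/\lambda$. Conversely, $a_s t^{(n)}_{ss} \le (aT^n)_s = \lambda^n a_s$, so $R \ge 1/\lambda$.
\item Therefore $R = 1/\lambda$ and $T$ is positive recurrent.
\end{enumerate}

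The main obstacle I expect is the forward direction: verifying that the taboo series defining $a$ and $b$ are finite and are genuine eigenvectors (this is where $F(R)=1$ enters), and invoking the renewal theorem with care about the period $\omega$. The uniqueness step through minimality of subinvariant vectors is the other delicate point.
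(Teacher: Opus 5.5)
Your argument is correct and is essentially the classical Vere-Jones proof that the paper cites without reproducing: taboo vectors with $a\cdot b = RF'(R)$ plus the renewal theorem give the forward direction, minimality of subinvariant vectors gives uniqueness, and the transform $p_{ij}=t_{ij}b_j/(\lambda b_i)$ to a stochastic matrix with summable invariant measure $a_ib_i$ gives the second sentence. One small correction: your uniqueness step uses $F(R)=1$, so it cannot exclude transience and does not by itself give the converse of the first sentence; that converse is instead the special case $\lambda=1/R(T)$ of your second-sentence argument.
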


We record an infinite-dimensional analogue of parts of \Cref{thm: periodic irreducible} and \Cref{cor: limit for primitive}.

\begin{theorem}[{\cite[Theorem~D]{VereJones}, \cite{Moy}}]\label{thm: Vere-Jones limit exists}
    Let $T$ be an irreducible, positive recurrent $\cS\times \cS$-matrix of period $\omega$. Let $a = (a_i)$ and $b = (b_i)$ be left and right positive eigenvectors of $T$ with eigenvalue $1/R(T)$ respectively.
    Then for any $i, j \in \cS$, $0 \leq \alpha \leq \omega-1$,
    \[
    \lim_{n \to \infty} t_{ij}^{(\alpha + \omega n)}R(T)^{\alpha + \omega n} =
    \begin{cases}
    \omega a_j b_i/(a \cdot b) &\text{ if } j \in C_\alpha (i), \\
    0   &\text{ otherwise}.
    \end{cases}
    \]
    In particular, 
    \[
    \frac 1\omega \sum_{\alpha = 0}^{\omega-1} \lim_{n \to \infty} T^{\alpha + \omega n}R(T)^{\alpha + \omega n} = \frac{b a^\tr}{\sum_{k\in \cS} a_kb_k}.
    \]
\end{theorem}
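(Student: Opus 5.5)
The plan is to reduce to the classical limit theorem for positive recurrent Markov chains by a Doob $h$-transform with respect to the right eigenvector $b$. Put $R = R(T)$ and define the $\cS \times \cS$ matrix $P = (p_{ij})$ by
\[
p_{ij} = R\, t_{ij}\, \frac{b_j}{b_i}.
\]

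\textbf{Step 1: $P$ is a positive recurrent stochastic matrix with the same combinatorics as $T$.}
\begin{itemize}
\item Since $Tb = R^{-1} b$ and $b$ is positive, every row of $P$ sums to $1$, so $P$ is stochastic.
\item $P$ has the same zero pattern as $T$. Hence it is irreducible, has the same period $\omega$, and has the same periodic subclasses $C_\alpha(i)$.
\item By induction on $n$,
\[
p^{(n)}_{ij} = R^n\, t^{(n)}_{ij}\, \frac{b_j}{b_i}.
\]
So $\limsup_n p^{(n)}_{jj} > 0$ exactly when $T$ is positive recurrent, and likewise for divergence of $\sum_n p^{(n)}_{jj}$.
\item Set $\pi_j = a_j b_j / (a\cdot b)$. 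By \Cref{thm: vere-jones eigenvectors} we have $a \cdot b < \infty$, so $\pi$ is a probability vector.
\item $\pi$ is stationary: $\sum_i \pi_i p_{ij} = R\, (aT)_j\, b_j / (a\cdot b) = \pi_j$.
\item A stationary distribution exists, so the chain is positive recurrent in the probabilistic sense. Its mean return time to $j$ is $1/\pi_j < \infty$.
\end{itemize}

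\textbf{Step 2: the periodic limit theorem for $P$.} I will show that $p^{(\alpha+n\omega)}_{ij} \to \omega \pi_j$ if $j \in C_\alpha(i)$, and $\to 0$ otherwise.
\begin{itemize}
\item The vanishing case is immediate from the definition of the periodic subclasses.
\item For $i = j$ and $\alpha = 0$, pass to $P^\omega$ restricted to the subclass containing $j$. That chain is irreducible and aperiodic.
\item Let $f^{(n)}_{jj}$ be the first-return probabilities. By recurrence $\sum_n f^{(n)}_{jj} = 1$, and the mean return time is finite. The renewal identity $p^{(n)}_{jj} = \sum_{k} f^{(k)}_{jj}\, p^{(n-k)}_{jj}$ holds, and the Erd\H{o}s--Feller--Pollard renewal theorem gives $p^{(n\omega)}_{jj} \to \omega/\mu_j = \omega\pi_j$. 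The factor $\omega$ appears because the mean return time measured in units of $\omega$ steps is $\mu_j/\omega$.
\item For $i \neq j$ with $j \in C_\alpha(i)$, use the first-entrance decomposition
\[
p^{(\alpha+n\omega)}_{ij} = \sum_{k} f^{(\alpha+k\omega)}_{ij}\, p^{((n-k)\omega)}_{jj}.
\]
Irreducibility and recurrence give $\sum_k f^{(\alpha+k\omega)}_{ij} = 1$. Since $0 \le p^{(m)}_{jj} \le 1$, dominated convergence yields the limit $\omega \pi_j$.
\end{itemize}

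\textbf{Step 3: translate back to $T$.}
\[
t^{(\alpha+n\omega)}_{ij}\, R^{\alpha+n\omega} = \frac{b_i}{b_j}\, p^{(\alpha+n\omega)}_{ij} \longrightarrow \frac{b_i}{b_j}\,\omega\, \frac{a_j b_j}{a\cdot b} = \frac{\omega\, a_j b_i}{a\cdot b}
\]
when $j \in C_\alpha(i)$, and the limit is $0$ otherwise. For the averaged formula, note that for fixed $i, j$ exactly one $\alpha \in \{0, \dots, \omega-1\}$ has $j \in C_\alpha(i)$. The average of the $\omega$ limits is therefore $a_j b_i / (a\cdot b)$, which is the $(i,j)$ entry of $b a^{\tr} / \sum_k a_k b_k$.

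\textbf{Main obstacle.} The substantive input is the renewal theorem in Step 2: for an aperiodic recurrent renewal sequence with finite mean $\mu$, $p^{(n)} \to 1/\mu$. I would cite it from Feller, or prove it by the standard coupling of two independent copies of the aperiodic chain. A secondary point to check is the identification of mean return time with $1/\pi_j$. This uses uniqueness of the stationary distribution for an irreducible recurrent chain, which matches the uniqueness of the eigenvectors in \Cref{thm: vere-jones eigenvectors}.
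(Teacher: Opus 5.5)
Your proof is correct. The paper does not prove this statement; it imports it from Vere-Jones and Moy, so your argument supplies a self-contained route where the paper has only a citation.

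\textbf{How your route relates to the paper.} You reduce everything to the Erd\H{o}s--Feller--Pollard renewal theorem via the transform $p_{ij}=R\,t_{ij}b_j/b_i$. This is exactly the normalization the paper uses in \Cref{ssse: limit theorems for reducible matrices}. In that language, your Step~2 is the irreducible case of \Cref{th: results from Chung 's book on stochastic matrices}, with $\mathfrak f^*_{ij}(\alpha,\omega,R)=b_i/b_j$ for $j\in C_\alpha(i)$ (cf.\ \Cref{prop: properties of f}). The additional ingredient relative to that theorem is the identification $\lim_n R^{n\omega}t^{(n\omega)}_{jj}=\omega\pi_j$, where $\pi_j=a_jb_j/(a\cdot b)=1/\mu_j$.

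\textbf{What each approach buys.} The citation is shorter, and it places the result inside Vere-Jones's broader theory, which also covers the null and transient cases and the criterion $a\cdot b<\infty$. Your version shows that the renewal theorem is the only analytic input. It also makes the constant $\omega a_jb_i/(a\cdot b)$ transparent: it is $\omega$ times the stationary mass at $j$, transported back by the factor $b_i/b_j$.

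\textbf{Two points that deserve a line each.}
\begin{enumerate}
\item The renewal theorem needs $\gcd\{n : f^{(n\omega)}_{jj}>0\}=1$. This holds because the set of return times is the additive semigroup generated by the first-return times.
\item \Cref{thm: vere-jones eigenvectors}, as quoted, only asserts $a\cdot b<\infty$ for the distinguished eigenvectors. For arbitrary positive $a,b$ you should note that, in the recurrent case, positive $1/R$-eigenvectors are unique up to scaling. In your framework this is immediate.
\begin{itemize}
\item $b'/b$ is a positive harmonic function of the irreducible recurrent chain $P$, hence constant.
\item $(a_jb_j)_j$ is an invariant measure of $P$, hence proportional to the stationary law.
\item The stationary law exists because $\limsup_n p^{(n)}_{jj}>0$ forces $\mu_j<\infty$; if $\mu_j=\infty$, the renewal theorem gives $p^{(n\omega)}_{jj}\to 0$.
\end{itemize}
This last argument even yields $a\cdot b<\infty$ without appealing to \Cref{thm: vere-jones eigenvectors}.
\end{enumerate}
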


The converse to the theorem holds in the aperiodic case. 

\begin{lemma}\label{lem: limit and periodicity}
    Suppose that $T$ is an irreducible nonnegative $\cS \times \cS$-matrix. Then for some (equivalently all) $i \in \cS$ there is $r \in \RR_{> 0}$ such that $t_{ii}^{(n)}r^n$ converges to a positive real number if and only if $T$ is positive recurrent and aperiodic.
\end{lemma}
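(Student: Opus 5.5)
We argue the two implications separately, starting with the easy direction and then handling the converse by excluding transience, null recurrence and periodicity in turn.

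\emph{Positive recurrent and aperiodic implies convergence.} Take $r = R(T)$. By \Cref{thm: vere-jones eigenvectors}, positive recurrence gives positive left and right eigenvectors $a,b$ with eigenvalue $1/R(T)$ and $a\cdot b<\infty$. Applying \Cref{thm: Vere-Jones limit exists} with $\omega=1$, $\alpha=0$ and $j=i$ (so $C_0(i)$ is everything), we get
\[
t_{ii}^{(n)}R(T)^n \to a_i b_i/(a\cdot b) > 0,
\]
which is positive because $a,b$ are positive. This holds for every $i$, so the ``some'' and ``all'' versions agree in this direction.

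\emph{Convergence to a positive limit implies positive recurrence and aperiodicity.} Suppose $t_{ii}^{(n)}r^n \to c>0$ for some $i$ and some $r>0$.

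First we pin down $r$. Since $(t_{ii}^{(n)})^{1/n} \to 1/r$, the radius of convergence of $\sum_n t_{ii}^{(n)} z^n$ is $r$. For irreducible $T$ this radius equals $R(T)$ independently of $i$, by the formula in the definition of convergence radius; so $r = R(T)$.

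Next, recurrence: $\sum_n t_{ii}^{(n)}R(T)^n$ diverges because its terms tend to $c>0$, so $T$ is $R(T)$-recurrent. Positive recurrence then follows directly from \Cref{de: transient and recurrent matrices}, since $\limsup_n t_{ii}^{(n)}R(T)^n = c \neq 0$.

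Finally, aperiodicity. If the period $\omega$ were greater than $1$, then $t_{ii}^{(n)}=0$ whenever $\omega\nmid n$, because $i\in C_0(i)$ and nonzero entries occur only for $n\equiv 0 \pmod \omega$. The sequence $t_{ii}^{(n)}r^n$ would then vanish on infinitely many $n$, contradicting convergence to $c>0$. Hence $\omega=1$.

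\emph{Independence of $i$.} The ``equivalently all'' clause follows from the first direction: once $T$ is known to be positive recurrent and aperiodic, \Cref{thm: Vere-Jones limit exists} gives a positive limit for every $i$, with $r=R(T)$.

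The only step needing care is identifying $r$ with $R(T)$, which rests on the class property of the convergence radius for irreducible matrices. Everything else is direct bookkeeping from the cited results.
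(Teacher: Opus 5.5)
Your proof is correct and follows the paper's argument: you identify $r = R(T)$ via the limit of $n$-th roots, read off positive recurrence from the definition, deduce aperiodicity because $t_{ii}^{(n)} > 0$ for $n \gg 0$, and for the converse take $r = R(T)$ and apply \Cref{thm: Vere-Jones limit exists}. You also spell out two steps the paper leaves implicit: the divergence of $\sum_n t_{ii}^{(n)}R(T)^n$, which gives recurrence, and the ``equivalently all'' clause.
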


\begin{proof} 
    For the forward direction, we note that 
    $1/R(T) \coloneqq \lim_{n \to \infty} (t_{ii}^{(n)})^{1/n} = 1/r$
    and that $T$ is then positive recurrent by the definition. Since $t_{ii}^{(n)}r^n \to c > 0$, $t_{ii}^{(n)} > 0$ for $n \gg 0$, so $T$ is aperiodic.
    For the backward direction, one takes $r= R(T)$.
\end{proof}

\subsection{Connected classes}\label{sse: connected classes}
For a nonnegative $\cS \times \cS$ matrix $T$, which is not necessarily irreducible, we recall the notion of a \emph{connected class} of $T$. These subsets of $\cS$ correspond to strongly connected components of a certain directed graph $G(T)$ associated to $T$: the set of vertices of $G(T)$ 
is $\cS$ and there is a directed edge from $s$ to $t$ if and only if $T_{s, t}$ is nonzero. In other words, replacing nonzero entries of $T$ with $1$ gives the adjacency matrix of $G(T)$. Two vertices of $G(T)$ are called strongly connected if there are directed paths of positive length in $G(T)$ from $s$ to $t$ and from $t$ to $s$.

In the sequel, we freely use the following basic observation:

\begin{lemma}\label{le: connected class vs powers of matrix}
 Two vertices $s, t$ are strongly connected in $G(T)$ if and only if there are positive integers $m,n$ such that both $T^n(s, t)$ and $T^m(t, s)$ are nonzero.
\end{lemma}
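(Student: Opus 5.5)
The plan is to pass between walks in the graph $G(T)$ and entries of powers of $T$ by expanding matrix products entrywise, using only that $T$ is nonnegative. The key identity comes from \Cref{matrix product}: for every $n \geq 1$,
\[
T^n(s,t) = \sum_{s_1, \ldots, s_{n-1} \in \cS} T(s,s_1)T(s_1,s_2)\cdots T(s_{n-1},t).
\]
The sum is well defined because all terms are nonnegative; when $T$ has finite rows it is in fact a finite sum. I would prove this by induction on $n$, unwinding the recursion $T^n = T T^{n-1}$.

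Since every summand is nonnegative, $T^n(s,t) \neq 0$ holds exactly when some summand is nonzero. A summand is nonzero exactly when each factor $T(s_{k}, s_{k+1})$ is nonzero, with $s_0 = s$ and $s_n = t$. That condition says $s = s_0 \to s_1 \to \cdots \to s_n = t$ is a directed path of length $n$ in $G(T)$. So $T^n(s,t) \neq 0$ if and only if $G(T)$ contains a directed path of length $n$ from $s$ to $t$.

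Now apply this twice, once in each direction. A path of positive length $n$ from $s$ to $t$ exists iff $T^n(s,t) \neq 0$. A path of positive length $m$ from $t$ to $s$ exists iff $T^m(t,s) \neq 0$. Combining these two equivalences gives the lemma.

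The only point needing care is the infinite-sum case where $T$ lacks finite rows. There one must check that the expansion remains valid as a sum of nonnegative extended reals, so that "nonzero" still means "some term is positive." This holds because a sum of nonnegative terms is positive precisely when some term is positive, and the products are defined by assumption.
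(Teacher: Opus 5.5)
Your proof is correct and is the standard argument the paper has in mind: the paper states this as a basic observation without proof, and the intended justification is exactly yours, namely expanding $T^n(s,t)$ as a sum of nonnegative terms over walks of length $n$, so that $T^n(s,t)\neq 0$ exactly when some such walk exists. Read ``directed path'' as a walk, which is how the paper uses the term. If you insisted on simple paths instead, nothing changes, since any positive-length walk from $s$ to $t$ can be shortened to a positive-length simple path, or to a simple cycle when $s=t$.
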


\begin{definition}[Connected classes]\label{def: connected class}
We say that $s \in \cS$ forms an \textit{exceptional connected class}
if $s$ is not strongly connected to any vertex, including $s$, in $\cS$.

A subset $\cC \subseteq \cS$ is a \emph{connected class} (of $T$) 
if either $\cC = \{s\}$ is an exceptional connected class, or, for any $s \in \cC$, the set of vertices strongly connected to $s$ in $G(T)$ is $\cC$.

Thus for any $s \in \cS$, the \emph{connected class} of $T$ containing $s$  is either the set of vertices strongly connected to $s$ in $G(T)$ or the singleton set $\{s\}$, when there are no vertices strongly connected to $s$.
\end{definition}

When the connected class is singleton, our terminology does not allow us to infer whether the class is exceptional. But this does not cause any problem in practice.

\begin{definition}\label{de: transience recurrence of a connected class}
Let $\cC \subseteq \cS$ be a connected class.
If $\cC$ is exceptional, then $T[\cC]=0$. In this case, the \emph{period} of $\cC$ is defined to be infinity. Otherwise, $T[\cC]$ is an irreducible matrix and  the \emph{period} of $\cC$ or an element of $\cC$ is defined to be the period of $T[\cC]$.  

Similarly, for some $r \geq 0$, a connected class $\cC$ of $T$ is said to be \textit{$r$-transient}, or \textit{$r$-positive (or null) recurrent} if the matrix $T[\cC]$ is $r$-transient or $r$-positive (or null) recurrent, respectively. 

The \textit{convergence radius} of a connected class $\cC$ of $T$ is by definition the convergence radius of the matrix $T[\cC]$.
\end{definition}

The collection of connected classes imposes a partition on $\cS$.
The structure of $G(T)$ induces a partial order on the set of connected classes.  

\begin{definition}
    We say that $\cC_2 \preceq \cC_1$ if 
    there exist $i \in \cC_1, j \in \cC_2$ and a directed path (potentially of length zero) from $i$ to $j$.
    We call a connected class $\cC \subseteq \cS$ \emph{final} if it is a minimal element in the partial order $\preceq$. 
\end{definition}

By definition, if $\cC_1, \cC_2$ are distinct connected classes, 
then $t_{ij}^{(n)} = 0$ for all $n$ either for all $i \in \cC_1, j \in \cC_2$ or for all $i \in \cC_2, j \in \cC_1$. Thus, if $\cC$ is final and $i \in \cC$ then $t^{(n)}_{ij} \neq 0$ for some $n$ if and only if $j \in \cC$.

\begin{example}\label{ex: matrix d^1_5}
We illustrate the connected classes and the induced partial order on these for the following $\cS \times \cS$ matrix $T$, where $\cS= \{1, 2, \ldots, 6\}$,
\[
    T = \begin{bmatrix}
        1 & 0 & 0 & 1 & 0 & 1\\
        0 & 0 & 0 & 0 & 1 & 1\\
        0 & 0 & 0 & 0 & 1 & 1\\
        0 & 1 & 1 & 0 & 1 & 0\\
        0 & 0 & 0 & 0 & 2 & 2\\
        0 & 0 & 0 & 0 & 2 & 2
    \end{bmatrix}.
\]

Its only final class is $\{5, 6\}$. The resulting graph of the connected classes, where there is a directed path from $\cC_1$ to $\cC_2$ if $\cC_2 \preceq \cC_1$, is

\begin{center}
\begin{tikzpicture}[node distance={20mm}, thick, main/.style = {draw, circle}] 
\node[main] (1) {$1$}; 
\node[main] (2) [right of=1] {$4$}; 
\node[main] (3) [below right of=2] {$2$}; 
\node[main] (4) [above right of=2] {$3$}; 
\node[main] (5) [below right of=4] {$5, 6$}; 
\draw[->] (1) -- (2);
\draw[->] (2) -- (3);
\draw[->] (2) -- (4);
\draw[->] (3) -- (5);
\draw[->] (4) -- (5);
\draw[->] (1) to [out=325, in=245, looseness=1.8] (5); 
\draw[->] (2) -- (5); 
\end{tikzpicture} 
\end{center}
\end{example}

Drawing motivation from the finite matrix case, we introduce the notion of a basic recurrent class of a countably sized nonnegative matrix.

\begin{definition}\label{de: basic recurrent class}
Let $T$ be an $\cS \times \cS$ matrix with convergence radius $R(T)< \infty$. A connected class $\cC$ is called \emph{basic} or \emph{basic recurrent} if $T[\cC]$ is $R(T)$-recurrent. A basic class
$\cC$ is called \emph{basic positive (or null) recurrent} if $T[\cC]$ is positive (or null) recurrent. 
\end{definition}

The following result is a generalization of its known finite matrix version {\cite[Chapter 2, Theorem~3.10]{BermanPlemmons}}. This result is used repeatedly in the sequel. In practice, the hypothesis guaranteeing the existence of a positive eigenvector of the matrix involved is secured using inputs from algebra; see \Cref{pr: positive eigenvector of the block corresponding to a prime}. 

\begin{theorem}\label{thm: infinite basic is final}
Let $\cS$ be a countable set and $T$ be a nonnegative $\cS \times \cS$-matrix.
If there is a positive eigenvector $x$ of $T$ corresponding to the eigenvalue 
$\rho = 1/R(T)$, then any basic recurrent class $\cC$ is a final class. 
\end{theorem}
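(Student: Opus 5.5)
We argue by contradiction. Suppose $\cC$ is a basic recurrent class that is not final, and normalize so that $\rho = 1/R(T)$. Since $\cC$ is recurrent, it is not exceptional, so $T[\cC]$ is irreducible. Its convergence radius satisfies $R(T[\cC]) \geq R(T)$, because the entries of $T[\cC]^n$ are dominated by those of $T^n$. Recurrence of $T[\cC]$ at $r=R(T)$ then forces $R(T[\cC]) = R(T)$: if $R(T) < R(T[\cC])$, the series $\sum_n T[\cC]^{(n)}_{ss} R(T)^n$ would converge.

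Restrict the eigenvector equation $Tx = \rho x$ to the rows indexed by $\cC$. For $i \in \cC$,
\[
\rho x_i = \sum_{j \in \cC} t_{ij} x_j + \sum_{j \notin \cC} t_{ij} x_j .
\]
Hence $y := x|_{\cC}$ is a positive vector with $T[\cC] y \leq \rho y$, with strict inequality in every row $i \in \cC$ that has an edge leaving $\cC$.

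The next step is to use the fact that $\cC$ is not final. Then there is some $i_0 \in \cC$ with $t_{i_0 j} > 0$ for some $j \notin \cC$; otherwise every path starting in $\cC$ would stay in $\cC$, and $\cC$ would be minimal. Since $x_j > 0$, row $i_0$ gives a strict inequality: $(T[\cC]y)_{i_0} < \rho y_{i_0}$. Set $d := \rho y - T[\cC] y \geq 0$, which is nonzero with $d_{i_0} > 0$.

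This contradicts recurrence. Write $A = R(T)\,T[\cC]$, so that $Ay = y - R(T) d \le y$. Iterating gives
\[
y = A^{n} y + R(T) \sum_{k=0}^{n-1} A^{k} d \geq R(T) \sum_{k=0}^{n-1} A^{k} d
\]
for every $n$; every term is nonnegative, and all sums are finite or dominated because $T$ is nonnegative. Looking at coordinate $i_0$,
\[
y_{i_0} \geq R(T) \sum_{k} (A^{k})_{i_0 i_0} d_{i_0} = R(T)\, d_{i_0} \sum_k T[\cC]^{(k)}_{i_0 i_0} R(T)^k .
\]
So $\sum_k T[\cC]^{(k)}_{i_0 i_0} R(T)^k < \infty$, meaning $T[\cC]$ is $R(T)$-transient (for irreducible matrices it is enough to check one index). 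This contradicts $\cC$ being basic recurrent, so $\cC$ must be final.

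The main point needing care is the manipulation of infinite sums when iterating the inequality. It only requires nonnegativity, with possibly infinite sums interpreted as limits of monotone finite partial sums, so Tonelli-type arguments justify the inequalities. A second, minor point is that $R(T) < \infty$, so $\rho > 0$. If $\rho = 0$ the argument degenerates, but then the hypotheses exclude this case, since $T$ with a positive $0$-eigenvector and nonnegative entries would be zero.
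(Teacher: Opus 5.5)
Your proof is correct and follows the paper's argument: restrict $x$ to $\cC$ to get a subinvariant vector of $T[\cC]$, with strict inequality exactly when an edge leaves $\cC$, and then use recurrence to force equality. The only difference is that the paper cites \cite[Theorem~4.1]{VereJones} for the fact that a recurrent irreducible matrix has only invariant subinvariant vectors, whereas you prove the needed case directly by iterating $y = Ay + R(T)d$ to show $R(T)$-transience.
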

\begin{proof}
Since the entries of each row of $T$ are nonnegative, the restriction $x[\cC]$ of $x$ to $\cC$ is a $\rho^{-1}$-subinvariant vector of the minor $T[\cC]$ in the sense of \cite{VereJones}, 
\emph{i.e.}, $\rho^{-1} T[\cC] x[\cC] \leq x[\cC]$. 
If $\cC$ is basic recurrent, then 
$R(T[\cC]) = 1/\rho$ and $T[\cC]$ is recurrent, so
by \cite[Theorem~4.1, p.372]{VereJones} 
$x[\cC]$ must be a $\rho$-eigenvector. 

It remains to note that $\cC$ is final if and only if the equality holds, i.e., $x[\cC]$ is an eigenvector of $T[\cC]$ with eigenvalue $\rho$. For that, take $i \in \cC$ and write the $i$th row of $Tx$ as 
\[
(Tx)_i = (T[\cC]x[\cC])_i + \sum_{j \notin \cC} t_{ij}x_j
\]
and note that $\cC$ is final if and only if $t_{ij} = 0$
for all $i \in \cC, j \notin \cC$. 
\end{proof}

Existence of a positive eigenvector bounds the convergence radius of the matrix:

\begin{lemma}\label{lem: convergence radius from positive eigenvector}
Suppose that $T$ is a nonnegative matrix that has a positive right eigenvector $v= (v_s)_{s \in \cS}$ with 
an eigenvalue $\lambda > 0$. Then for any $i,j \in \cS$, $t_{i,j}^{(n)}= O(\lambda^n)$. Moreover, $R(T) \geq 1/\lambda$.
\end{lemma}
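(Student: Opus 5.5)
The plan is to iterate the eigenvector equation and compare a single coordinate with the whole sum. Since $Tv=\lambda v$, induction on $n$ gives $T^n v=\lambda^n v$. The induction is legitimate because all entries are nonnegative: the products are sums of nonnegative terms, so associativity of the possibly infinite sums follows by Tonelli's theorem for nonnegative series. Concretely, assuming $T^{n-1}v=\lambda^{n-1}v$, we have
\[
(T^n v)_i=\sum_k t_{ik}\sum_j t^{(n-1)}_{kj}v_j=\sum_k t_{ik}\lambda^{n-1}v_k=\lambda^n v_i,
\]
where rearranging the double sum is allowed by nonnegativity.

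Next, fix $i,j\in\cS$. Every term in $\sum_{s} t^{(n)}_{is}v_s=\lambda^n v_i$ is nonnegative, so
\[
t^{(n)}_{ij}v_j\le \lambda^n v_i .
\]
Since $v_j>0$, this gives $t^{(n)}_{ij}\le (v_i/v_j)\lambda^n$, which is $O(\lambda^n)$ with constant $C(i,j)=v_i/v_j$. This is exactly the non-uniform bound allowed by the paper's notation.

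For the convergence radius, take $n$-th roots: $(t^{(n)}_{ij})^{1/n}\le (v_i/v_j)^{1/n}\lambda$. The factor $(v_i/v_j)^{1/n}$ tends to $1$, so $\limsup_n (t^{(n)}_{ij})^{1/n}\le\lambda$. Hence the radius of convergence of $\sum_n t^{(n)}_{ij}z^n$ is at least $1/\lambda$ for every pair $(i,j)$, and taking the infimum over pairs gives $R(T)\ge 1/\lambda$. If some limsup is $0$, the corresponding radius is $\infty$, which does not affect the inequality.

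The only delicate point is that the matrix powers and the product $T^n v$ are well defined when $T$ need not have finite rows. Nonnegativity handles this: all sums lie in $[0,\infty]$, and the eigen-equation forces $(T^n v)_i$ to be finite. No other obstacle is expected.
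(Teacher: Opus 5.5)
Your proposal is correct and follows the paper's proof: both iterate the eigenvector relation to get $\lambda^n v_i = (T^n v)_i \geq t^{(n)}_{ij} v_j$, use positivity of $v_j$ to obtain the $O(\lambda^n)$ bound, and then read off $R(T) \geq 1/\lambda$. Your Tonelli justification for $T^n v = \lambda^n v$ and for finiteness of the entries spells out what the paper leaves implicit.
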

\begin{proof}
We have $\lambda^n v_i = (T^n v)_i \geq t_{i,j}^{(n)} v_j$. Positivity implies $t_{i,j}^{(n)}= O(\lambda^n)$, so $R(T) \geq 1/\lambda$.
\end{proof}

\subsection{Limit theorems for reducible matrices}\label{ssse: limit theorems for reducible matrices}
In this subsection, we assume that an $\cS \times \cS$ matrix $T= (t_{ss'})_{ss'}$ is nonnegative and has a positive right eigenvector $x= (x_s)_{s \in \cS}$ with eigenvalue $1/R$, for some $R > 0$.  We warn that $R$ is not necessarily the convergence radius of $T$. Under this assumption, we study the asymptotic behavior of the sequences $(R^nt^{(n)}_{ss'})_n$. The main advantage of the following approach through stochastic matrices lies in its capacity of handling matrices which are not necessarily irreducible, unlike some results in \Cref{sse: generalities on infinite irreducible matrices}.

The normalization
\[
p_{ij} \coloneqq R\frac{x_j}{x_i} t_{ij} 
\]
yields a row-stochastic matrix $P \coloneqq (p_{ij})_{ij}$, i.e., $\sum_{j \in \cS} p_{ij} = 1$ for all $i$. One can associate a Markov chain to the stochastic matrix $P$ \cite{SenetaBook} by thinking of the elements of $\cS$ as the \emph{states} and the matrix $P$ as the matrix encoding \textit{transition probabilities}: the entry $p^{(n)}_{i, j} = R^n t_{i,j}^{(n)} \frac{x_j}{x_i}$ is the probability of obtaining the state $j$ from the state $i$ at the $n$th step. This is a valid probabilistic interpretation since $\sum_jp^{(n)}_{i, j}=1$ for all $n$.

We also think of $(\cS, T)$ as a dynamical system, where $\cS$ consists of the states of the dynamical system, and $P$ dictates the evolution of the states with time: i.e., $P^n$ contains the information of the distribution of the states at time $n$.

Note that $P= R D^{-1}TD$, where $D$ is the diagonal matrix with $j$th diagonal entry $x_j$.
Hence, we may apply the theory of Markov chains (with a discrete parameter and countably many states) to study matrices such as $T$ and interpret its powers $T^e$, as $e$ (`the time') increases, in terms of `the evolution of this Markov chain as time grows'. For the ease of further referencing, below we quote the results about Markov chains after adapting directly in this matrix setting where $R$ is not necessarily one. Our presentation follows Chung's book \cite[Part I, Sections 6--7]{Chung_Kai_Lai_Markov_Chains_with_stationary_transition_probabilities}
which expands the foundational article of Kolmogorov \cite{KolmogorovPaper} (English translation is available in \cite{KolmogorovSelectedWorks})
where infinite Markov chains were first introduced.

\begin{definition}[First-entrance probabilities]\label{de: f}
For $i, j \in \cS$, define $(f^{(n)}_{ij})_{n \in \mathbb Z}$ to be the unique sequence of numbers satisfying: $f^{(n)}_{ij}=0$ for $n \leq 0$
and 
\[
t^{(n)}_{ij}= \sum_{e=1}^{n} f^{(e)}_{ij}t^{(n-e)}_{jj} \text{ for all } n \geq 1.
\]
We define 
$\mathfrak f ^*_{ij}(R) \coloneqq \sum_{n=1}^\infty R^n f^{(n)}_{ij}$
and for integers $\omega > 0, r$,
we further set
\[\mathfrak f^*_{ij}(r, \omega, R) \coloneqq \sum_{e=-\infty}^{\infty} R^{r+e\omega}f^{(r+ e\omega)}_{ij}.
\]
\end{definition}

\begin{remark}\label{re: first entrance probability}
When $T$ is stochastic, one can take $R=1$ and $x_j=1$ for all $j$. In this case, $T=P$ encodes the transition probabilities of a Markov chain. In this situation, $f^{(n)}_{ij}$ should be interpreted as the probability that the state $i$ arrives at $j$ at step $n$ for the first time, i.e., by avoiding $j$ except possibly at the beginning. Likewise, 
$\mathfrak f ^*_{ij}$ should be interpreted as the probability that the state $i$ arrives at $j$ at some step. Therefore, for $T$ as above, $x_j\ep{ij}(R)/x_i$ should be interpreted as the probability that the state $i$ arrives at state $j$ at some step in the Markov chain induced by $(\cS, T)$.
\end{remark}

The following result directly follows from \Cref{lem: convergence radius from positive eigenvector} and definitions.

\begin{lemma}\label{le: R recurrent implies basic recurrent}
Let $T= (t_{ij})_{ij}$ be as before.
\begin{enumerate}
    \item  Suppose that for some $i, j \in \cS$, $\sum_{n=0}^{\infty} t^{(n)}_{ij}R^n$ is divergent. Then $R(T)=R$.

    \item Every $R$-recurrent class of $T$ is a basic recurrent class of $T$.
\end{enumerate}

\end{lemma}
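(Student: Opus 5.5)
The plan is to use \Cref{lem: convergence radius from positive eigenvector} to get the inequality $R(T)\ge R$ in both parts, and then to force the reverse inequality from the divergence hypothesis, using only the definition of the convergence radius.

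\textbf{Part (1).} By \Cref{lem: convergence radius from positive eigenvector}, applied to the positive right eigenvector $x$ with eigenvalue $\lambda=1/R$, we have $t^{(n)}_{ij}=O(R^{-n})$ for all $i,j$, and $R(T)\geq R$. For the reverse inequality, suppose $R(T)>R$. By definition, $R(T)$ is the infimum over all pairs $(s,s')$ of the radii of convergence of $\sum_n t^{(n)}_{ss'}z^n$. So every such power series, in particular the one for our pair $(i,j)$, has radius of convergence at least $R(T)>R$. Since $t^{(n)}_{ij}\ge 0$, the series then converges absolutely at $z=R$. This contradicts the assumed divergence of $\sum_n t^{(n)}_{ij}R^n$, so $R(T)=R$.

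\textbf{Part (2).} Let $\cC$ be an $R$-recurrent connected class. By \Cref{de: transient and recurrent matrices}, $\sum_n T[\cC]^{(n)}_{ss}R^n$ diverges for some $s\in\cC$. Since $T$ is nonnegative and $\cC\subseteq\cS$, every path inside $\cC$ is also a path in $\cS$, so $T^{(n)}_{ss}\geq T[\cC]^{(n)}_{ss}$ termwise. Hence $\sum_n T^{(n)}_{ss}R^n$ diverges as well, and part (1) with $i=j=s$ gives $R(T)=R$. Then $T[\cC]$ being $R$-recurrent means exactly that it is $R(T)$-recurrent, so $\cC$ is basic recurrent by \Cref{de: basic recurrent class}. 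The hypothesis $R(T)<\infty$ in that definition holds because $R(T)=R<\infty$.

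The only mildly delicate point is the termwise comparison $T^{(n)}_{ss}\ge T[\cC]^{(n)}_{ss}$. It follows by expanding $T^{n}(s,s)$ as a sum over length-$n$ paths with nonnegative weights: the paths staying inside $\cC$ contribute exactly $T[\cC]^{(n)}_{ss}$. Everything else is immediate from the definitions.
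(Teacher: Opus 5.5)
Your proof is correct and follows the same route as the paper, which states only that the lemma follows directly from \Cref{lem: convergence radius from positive eigenvector} (giving $R(T)\geq R$) and the definitions. In the paper's terms, divergence at $R$ forces $R(T)\leq R$, and an $R$-recurrent class is then $R(T)$-recurrent. Your termwise comparison $T^{(n)}_{ss}\geq T[\cC]^{(n)}_{ss}$ is the one detail the paper leaves implicit, and it is handled correctly; for a non-exceptional class it is in fact an equality.
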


The following observations on $\mathfrak f^*_{ij}(R)$ are
largely immediate from the probabilistic interpretation. Their incarnations for F-decomposition matrices in \Cref{se: matrix associated to Frobenius decomposition} are crucial in our approach towards rank formula and total rank formula.

\begin{proposition}[{\cite[I.6]{Chung_Kai_Lai_Markov_Chains_with_stationary_transition_probabilities}}]\label{prop: properties of f}
Let $\cC$ be an $R$-recurrent\footnote{$T[\cC]$ then has convergence radius $R$, and hence is basic recurrent by \Cref{le: R recurrent implies basic recurrent}.} connected class of $T$ of period $\omega$ 
and $i, j\in \cS$. Then, for any $r \in \mathbb Z$, 
 \begin{enumerate}
     \item \label{part: periodic f is bounded above by subvariant vectors}  $\mathfrak f^*_{ij}(r, \omega, R) \leq \mathfrak f^*_{ij}(R) \leq x_i/x_j$. In particular, the two defining series converge.

    \item \label{part: recurrence in terms of first entrance} The connected class of $T$ containing $i \in \cS$  is $R$-recurrent if and only if $\mathfrak f^*_{ii}(R)=1$.
     
     \item \label{part: nonvanishing of eventual probability} $\mathfrak f^*_{ij}(R)$ is nonzero if and only if $t^{(n)}_{ij}$ is nonzero for some $n>0$.
     \item $\mathfrak f^*_{ij}(R) = \sum_{r=0}^{\omega-1}\mathfrak f^*_{ij}(r, \omega,R)$.
     
     \item \label{part: evetual entrance probability depends only on class} If $k, j \in \cC$, then $\frac{x_j}{x_i}\mathfrak f^*_{ij}(R) = \frac{x_k}{x_i}\mathfrak f^*_{ik}(R)$.  Thus $\mathfrak f^*_{kj}(R)= \frac{x_k}{x_j}\mathfrak f^*_{kk}(R)= \frac{x_k}{x_j}$.

     \item \label{part: eventual periodic entrance probability depends only on the periodic subclass} Suppose $k, j \in \cC$ and let $r' \in \mathbb Z$ be such that  $t^{(n)}_{jk}$ nonzero implies $n \equiv r'$ mod $\omega$. 
     Then $\frac{x_j}{x_i}\mathfrak f^*_{ij} (r, \omega, R) = \frac{x_k}{x_i}\mathfrak f^*_{ik}(r+r', \omega, R)$. 

     \item \label{part: sum of entrance probabilities to recurrent classes is at most one} \textup{(\cite[page 30, I.6.3]{Chung_Kai_Lai_Markov_Chains_with_stationary_transition_probabilities})} For every $R$-recurrent class $\cC'$, define $\tilde {\mathfrak f}^*_{i\mathcal C'}(R)= \frac{x_l}{x_i}\mathfrak f^*_{il}(R)$, for some $l \in \mathcal C'$. Then $\tilde {\mathfrak f}^*_{i\mathcal C'}(R)$ is independent of the choice of $l$ in $\mathcal C'$. Moreover,
     \[\sum_{\cC' \ \text{recurrent}}\tilde {\mathfrak f}^*_{i\mathcal C'}(R) \leq 1.\]
 \end{enumerate}    
\end{proposition}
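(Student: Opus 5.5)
The plan is to reduce everything to the stochastic matrix $P = R D^{-1} T D$ with entries $p_{ij} = R\,t_{ij}x_j/x_i$, and then transfer the classical Markov chain facts from \cite[I.6]{Chung_Kai_Lai_Markov_Chains_with_stationary_transition_probabilities}. The first step is to check that the first-entrance decomposition is compatible with this normalization. If $g^{(n)}_{ij}$ denotes the first-entrance probabilities of $P$, then $g^{(n)}_{ij} = R^n f^{(n)}_{ij}\,x_j/x_i$. This follows by multiplying the defining recursion $t^{(n)}_{ij}=\sum_e f^{(e)}_{ij}t^{(n-e)}_{jj}$ by $R^n x_j/x_i$ and using $p^{(m)}_{jj}=R^m t^{(m)}_{jj}$, together with uniqueness of the solution of the recursion. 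Consequently $\mathfrak f^*_{ij}(R)\,x_j/x_i$ is the probability $g^*_{ij}$ that the chain started at $i$ ever hits $j$, and each statement becomes a statement about $g$.

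With this dictionary, the individual parts go as follows.

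\emph{Part (1).} Since $g^*_{ij}\le 1$, we get $\mathfrak f^*_{ij}(R)\le x_i/x_j$. The periodic sum $\mathfrak f^*_{ij}(r,\omega,R)$ is a subseries of a convergent nonnegative series, so it is bounded by $\mathfrak f^*_{ij}(R)$.

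\emph{Part (4).} The integers split into residue classes mod $\omega$. Since $f^{(n)}=0$ for $n\le 0$, the series $\mathfrak f^*_{ij}(R)$ decomposes as $\sum_{r=0}^{\omega-1}\mathfrak f^*_{ij}(r,\omega,R)$.

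\emph{Part (2).} Since $g^*_{ii}=\mathfrak f^*_{ii}(R)$, this is the classical criterion: the state $i$ is recurrent iff $\sum_n p^{(n)}_{ii}=\infty$ iff $g^*_{ii}=1$. The renewal identity $\sum_n p^{(n)}_{ii}=1/(1-g^*_{ii})$ comes from generating functions of the recursion. Finally, $\sum_n p^{(n)}_{ii}=\sum_n R^n t^{(n)}_{ii}$, so divergence is exactly $R$-recurrence of the class of $i$.

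\emph{Part (3).} This is immediate. If some $t^{(n)}_{ij}>0$, take the least such $n$; then $f^{(n)}_{ij}=t^{(n)}_{ij}>0$, because earlier terms vanish. Conversely, positive $f$ forces positive $t$, by induction on the recursion.

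\emph{Parts (5) and (6).} These are the substantive ones. For $k,j$ in a recurrent class $\cC$, the chain is irreducible recurrent on $\cC$, so $g^*_{kj}=1$ for all $k,j\in\cC$ (recurrence of a class forces hitting probabilities within it to be one). Hence from $i$, hitting $j$ is the same event as hitting $k$, up to a null set: after hitting $j$ one hits $k$ almost surely, and vice versa. So $g^*_{ij}=g^*_{ik}$, which is (5). The second identity follows by taking $i=k$ and using (2).

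For (6) I would argue via last exits and first passages. Decompose the first entrance into $k$ as a first entrance into $j$ followed by a first passage from $j$ to $k$ avoiding... Pure first-passage composition is not exactly multiplicative, so instead I will use the strong Markov property. Write $\tau_j$ for the hitting time of $j$. Then the event "$\tau_k\equiv r+r'$ mod $\omega$" coincides almost surely with "$\tau_j\equiv r$ mod $\omega$", because every path from $j$ to $k$ has length $\equiv r'$ and every path from $k$ to $j$ has length $\equiv -r'$. This holds whichever of $j,k$ is hit first. Translating back through $g=R^n f\,x_j/x_i$ gives (6).

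\emph{Part (7).} By (5), $\tilde{\mathfrak f}^*_{i\cC'}$ is the probability of ever entering $\cC'$. Distinct recurrent classes are closed in $P$: a recurrent class has no positive-probability exit, by the usual argument, or by \Cref{thm: infinite basic is final} after noting that $x[\cC']$ gives $P$-row sums equal to one. So the events of entering distinct recurrent classes are disjoint, and their probabilities sum to at most $1$.

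The main obstacle I expect is (6). One must carefully justify the almost-sure equality of the two residue events, including the periodic bookkeeping in both orders of hitting, and confirm that closedness of recurrent classes holds in the $R$-normalized chain rather than only for $T[\cC]$.
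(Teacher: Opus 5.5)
Your proposal is correct and follows the paper's route. The paper gives no independent argument: it normalizes $T$ to the row-stochastic matrix $P=RD^{-1}TD$ and quotes Chung's Markov chain results, and your dictionary $g^{(n)}_{ij}=R^nf^{(n)}_{ij}x_j/x_i$, together with the standard renewal, strong Markov and closedness-of-recurrent-classes arguments, is exactly that transfer made explicit. In (2), state explicitly that $t^{(n)}_{ii}=T[\cC]^{(n)}_{ii}$, which holds because every closed path through $i$ stays inside its connected class; this is what lets you read $R$-recurrence of the class off the diagonal entries of $T^n$.
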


The next few results record asymptotic behaviour of powers of possibly reducible matrices. These provide an abstract framework for proving existence of F-splitting signatures and to analyze those for rank formula or total rank formula in \Cref{se: matrix associated to Frobenius decomposition}.

\begin{theorem}[{\cite[Theorem I.6.4]{Chung_Kai_Lai_Markov_Chains_with_stationary_transition_probabilities}}]\label{th: results from Chung 's book on stochastic matrices}
Let $\cC \subseteq \cS$ be a connected class of $T= (t_{ss'})_{ss'}$, $j \in \cC$, and $i \in \cS$. 
If $\cC$ is $R$-transient or null $R$-recurrent, 
then $\lim_{n \to \infty} R^n t^{(n)}_{ij}=0$.
Likewise, the limit is zero when $i$ and $j$ belong to different $R$-recurrent classes of $T$.

If $\cC$ is positive $R$-recurrent 
with period $\omega$, then 
for any integer $r$,
    \[\lim_{n \to \infty} R^{r+ n\omega}t^{(r+ n\omega)}_{ij}= \mathfrak f^*_{ij}(r, \omega, R)\lim_{n \to \infty} R^{n \omega} t^{(n\omega)}_{jj} \leq \frac{x_i}{x_j}\lim_{n \to \infty}R^{n \omega}t^{(n\omega)}_{jj}.\]
\end{theorem}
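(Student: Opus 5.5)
The plan is to reduce to the stochastic matrix $P = R D^{-1} T D$ and apply the discrete renewal theorem along the last-exit/first-entrance decomposition of \Cref{de: f}. Since $p^{(n)}_{ij} = R^n t^{(n)}_{ij} x_j/x_i$, every claim about $R^n t^{(n)}_{ij}$ becomes a claim about $p^{(n)}_{ij}$, up to the fixed factor $x_i/x_j$. Likewise the first-entrance coefficients transform as $R^n f^{(n)}_{ij}(T) = (x_i/x_j)\, f^{(n)}_{ij}(P)$. So we may assume $T=P$ is stochastic with $R=1$ and $x\equiv 1$; this is exactly the setting of Chung's Markov chain theory. We then rescale back at the end, and the rescaling produces the bound $\frac{x_i}{x_j}$ via \Cref{prop: properties of f}(\ref{part: periodic f is bounded above by subvariant vectors}).

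\textbf{Diagonal entries.} First we handle $p^{(n)}_{jj}$ for $j\in\cC$, using the renewal equation $p^{(n)}_{jj} = \sum_{e=1}^n f^{(e)}_{jj} p^{(n-e)}_{jj}$.

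If $\cC$ is transient, then $\sum_n p^{(n)}_{jj}<\infty$, so $p^{(n)}_{jj}\to 0$.

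If $\cC$ is recurrent, then $\sum_e f^{(e)}_{jj}=1$ by \Cref{prop: properties of f}(\ref{part: recurrence in terms of first entrance}). The gcd of the support of $f_{jj}$ equals the period $\omega$ of $\cC$. Passing to $n\omega$, the Erdős–Feller–Pollard renewal theorem gives
\[
\lim_n p^{(n\omega)}_{jj} = \omega/\mu_j, \qquad \mu_j=\sum_e e f^{(e)}_{jj}.
\]
This limit is positive exactly in the positive recurrent case and zero in the null case. Consistency with the paper's definition of positive/null recurrence, which is via $\limsup$, follows because the limit exists. Off-residue terms $p^{(n\omega+s)}_{jj}$ with $s\not\equiv 0 \pmod\omega$ vanish.

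\textbf{Off-diagonal entries.} For general $i$ we use $p^{(n)}_{ij} = \sum_{e} f^{(e)}_{ij} p^{(n-e)}_{jj}$ and pass to the limit along $n = r+m\omega$ by dominated convergence. The weights are summable, since $\sum_e f^{(e)}_{ij}\le 1$ (the stochastic form of \Cref{prop: properties of f}(\ref{part: periodic f is bounded above by subvariant vectors})), and $p^{(k)}_{jj}\le 1$. Only the terms with $e\equiv r \pmod \omega$ survive, because $p^{(k)}_{jj}=0$ unless $\omega\mid k$. Each surviving term tends to $f^{(e)}_{ij}\lim_m p^{(m\omega)}_{jj}$. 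Summing gives
\[
\lim_m p^{(r+m\omega)}_{ij} = \mathfrak f^*_{ij}(r,\omega,1)\lim_m p^{(m\omega)}_{jj}.
\]
In the transient or null cases the diagonal limit is $0$, so the whole limit is $0$; for the null case this is along each residue class, hence for the full sequence. Translating back via $D$ and $R$ yields the displayed formula together with the inequality.

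\textbf{Two recurrent classes.} Suppose $i$ and $j$ lie in different $R$-recurrent classes. A recurrent class is closed in the Markov chain $P$: recurrence forces a state to lead only to states that lead back to it. Hence $p^{(n)}_{ij}=0$ for all $n$ and the limit is $0$.

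\textbf{Main obstacle.} The substantive input is the periodic renewal theorem. The delicate points are two: that the support gcd of $f_{jj}$ equals the period of $T[\cC]$ (not of $T$ on a larger set), and matching the paper's positive/null recurrence definitions to the mean recurrence time. I would cite Chung I.6–I.7 for both, after checking that restricting paths avoiding $j$ does not change the gcd.
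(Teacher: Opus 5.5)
Your proposal is correct and follows the paper's route. The paper gives no independent argument: it normalizes $T$ to the stochastic matrix $P = RD^{-1}TD$ and quotes Chung's Theorem I.6.4. Your first-entrance decomposition, periodic renewal theorem, dominated convergence and closedness of recurrent classes are exactly the standard proof behind that citation, and the rescaling $R^n f^{(n)}_{ij}(T) = (x_i/x_j) f^{(n)}_{ij}(P)$ carries over both the limit formula and the bound $x_i/x_j$. The two points you flag as delicate are elementary. Return paths from $j$ stay inside $\cC$, and return times form the semigroup generated by first-return times, so the gcd matches the period of $T[\cC]$. Positive versus null recurrence matches the paper's $\limsup$ definition directly once the limit along multiples of $\omega$ exists.
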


Recall that the \ces limit of a sequence of complex numbers $(a_n)_n$ is the limit of the sequence
$N \mapsto \sum_{n=1}^{N}a_n/N$, whenever it exists. When $(a_n)_n$ is convergent, the \ces limit and the limit of $(a_n)_n$ coincide.

The next result directly follows from \Cref{th: results from Chung 's book on stochastic matrices}.

\begin{proposition}\label{pr: existence of cesaro limit for a stochastic matrix}
For any $i, j \in \cS$ the \ces limit of $(R^nt^{(n)}_{i j})_n$ exists.
If $j$ has period $\omega < \infty$,
then the \ces limit is
\[
\lim_{N \to \infty}
\frac{1}{N}\sum_{n=1}^{N} R^{n} t^{(n)}_{i j}
= 
\frac{1}{\omega} \sum_{r=0}^{\omega-1}\lim_{n \to \infty} R^{r+n\omega}t^{(r+ n\omega)}_{ij}= \frac {\mathfrak f^*_{ij}(R)}{\omega} \lim_{n \to \infty} R^{n\omega}t^{( n\omega)}_{jj}
\leq \frac{x_i}{x_j}
\frac{1}{\omega} 
\lim_{n \to \infty} R^{n\omega}t^{( n\omega)}_{jj}.
\]
If $\omega = \infty$, then the limit is zero. 
\end{proposition}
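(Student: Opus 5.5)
The plan is to derive everything from \Cref{th: results from Chung 's book on stochastic matrices}, splitting into cases according to the nature of the connected class $\cC$ containing $j$.

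\textbf{Case $\omega=\infty$, or $\cC$ transient or null recurrent.} Here $\cC$ is exceptional, or $R$-transient, or null $R$-recurrent. (An exceptional class $\{j\}$ has $T[\cC]=0$, so $\sum_n R^n t^{(n)}_{jj}=1<\infty$ and it is $R$-transient.) Then \Cref{th: results from Chung 's book on stochastic matrices} gives $R^n t^{(n)}_{ij}\to 0$. Hence the \ces limit exists and equals zero. This settles the claim for $\omega=\infty$. When $\omega<\infty$ it also agrees with the displayed formula, because $\lim_n R^{n\omega}t^{(n\omega)}_{jj}=0$ and the middle expression is a sum of zeros.

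\textbf{Case $\cC$ positive $R$-recurrent with period $\omega<\infty$.} For each residue $0\le r\le\omega-1$, \Cref{th: results from Chung 's book on stochastic matrices} shows that the subsequence $a_{r+n\omega}:=R^{r+n\omega}t^{(r+n\omega)}_{ij}$ converges to $L_r:=\ep{ij}(r,\omega,R)\,\lim_n R^{n\omega}t^{(n\omega)}_{jj}$. We then use the elementary fact that if a sequence splits into $\omega$ interleaved subsequences converging to $L_0,\dots,L_{\omega-1}$, its \ces means converge to $\frac1\omega\sum_r L_r$. Indeed, write the partial sum up to $N$ as a sum over residues. Each residue class contributes about $N/\omega$ terms, and their average tends to $L_r$ by ordinary \ces for convergent sequences. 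The boundedness of the terms, from \Cref{prop: properties of f}, absorbs the $O(1)$ discrepancy in the number of terms.

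This gives the first equality. For the second equality we sum $L_r$ over $r$ and use \Cref{prop: properties of f}(4), namely $\sum_r \ep{ij}(r,\omega,R)=\ep{ij}(R)$. The final inequality follows from \Cref{prop: properties of f}(1), namely $\ep{ij}(R)\le x_i/x_j$, together with $\lim_n R^{n\omega}t^{(n\omega)}_{jj}\ge 0$.

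\textbf{Main obstacle: the recurrent non-positive case.} The main point of care is that $R$ need not equal $R(T)$. Recurrence in the statement is $R$-recurrence relative to the given eigenvalue, so the case split must be made with respect to $R$, exactly as in \Cref{th: results from Chung 's book on stochastic matrices}. The only delicate sub-case is a class that is $R$-recurrent but not positive: it is covered by the null-recurrent clause. In that sub-case the limit $\lim_n R^{n\omega}t^{(n\omega)}_{jj}$ equals $0$ by the definition of null recurrence, so the formula remains consistent.
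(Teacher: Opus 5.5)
Your proposal is correct and follows the paper's route. The paper says only that the proposition follows directly from \Cref{th: results from Chung 's book on stochastic matrices}, and you supply the omitted details: the case split according to whether the class of $j$ is exceptional, $R$-transient, null or positive $R$-recurrent; the \ces averaging over residue classes mod $\omega$; and \Cref{prop: properties of f}(1),(4) for the last equality and inequality.

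One minor slip: the bound $R^n t^{(n)}_{ij}\le x_i/x_j$ comes from \Cref{lem: convergence radius from positive eigenvector}, not from \Cref{prop: properties of f}. It is not needed anyway, since each residue class contains $N/\omega+O(1)$ indices and its subsequence already converges.
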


\begin{theorem}[{\cite[Theorem I.7.1]{Chung_Kai_Lai_Markov_Chains_with_stationary_transition_probabilities}}]\label{th: matrix version of total rank formula}
If the $\cS \times \cS$-matrix $T$ is irreducible of period $\omega$ and positive $R$-recurrent, then
\begin{enumerate}
    \item For any periodic subclass $\mathcal C_r$ of $\cS$,
    \(\displaystyle \sum_{s \in \mathcal C_r} \lim_{N \to \infty}
    \frac{1}{N}\sum_{n=1}^{N} R^{n} t^{(n)}_{s s} = \frac{1}{\omega}.
    \)

    \item \(\displaystyle \sum_{s \in \mathcal S} \lim_{N \to \infty} \frac{1}{N}\sum_{n=1}^{N} R^{n} t^{(n)}_{s s}
    = 1.\)
\end{enumerate}

\end{theorem}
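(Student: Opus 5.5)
This result is cited from Chung's book, but it can also be derived from the earlier results in this section. The plan is to first reduce to the case where $T$ is itself a stochastic matrix. Since $T$ is irreducible and positive $R$-recurrent, $R = R(T)$: recurrence forces $R \geq R(T)$, and the definition of $R$-recurrence combined with \Cref{le: R recurrent implies basic recurrent} gives equality. Take $a, b$ to be the positive left and right $1/R(T)$-eigenvectors from \Cref{thm: vere-jones eigenvectors}, normalised so that $a\cdot b = 1$. Irreducibility allows us to choose $b$ in place of the given $x$ (up to scaling). The normalised matrix $P = R D^{-1} T D$, with $D = \mathrm{diag}(b)$, then satisfies $p^{(n)}_{ss} = R^n t^{(n)}_{ss}$. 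So the diagonal \ces limits are unchanged, and we may compute them using either $T$ or $P$.

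Next we compute each diagonal \ces limit using \Cref{thm: Vere-Jones limit exists}. For $s \in \cS$, we have $s \in C_0(s)$, and $t^{(n)}_{ss} = 0$ unless $\omega \mid n$. The theorem therefore gives $\lim_n R^{n\omega} t^{(n\omega)}_{ss} = \omega a_s b_s/(a\cdot b) = \omega a_s b_s$, while the subsequences along residues $\alpha \neq 0$ tend to $0$. Averaging as in \Cref{pr: existence of cesaro limit for a stochastic matrix} then gives
\[
\lim_{N\to\infty}\frac1N\sum_{n=1}^N R^n t^{(n)}_{ss} = \frac{1}{\omega}\cdot \omega a_s b_s = a_s b_s .
\]
Part (2) follows immediately, since $\sum_{s\in\cS} a_s b_s = a\cdot b = 1$.

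For part (1), we need to show that $\sum_{s \in \cC_r} a_s b_s = 1/\omega$ for each periodic subclass $\cC_r$. The key observation is that $T$ maps $\cC_r$ into $\cC_{r+1}$: $t_{ij} \neq 0$ with $i \in \cC_r$ forces $j \in \cC_{r+1}$, indices taken mod $\omega$. Write $\sigma_r = \sum_{s\in\cC_r} a_s b_s$. Using $aT = R^{-1}a$ and $Tb = R^{-1}b$, we obtain
\[
R^{-1}\sigma_{r+1} = \sum_{j\in\cC_{r+1}} (aT)_j b_j = \sum_{i\in\cC_r}\sum_{j\in\cC_{r+1}} a_i t_{ij} b_j = \sum_{i\in\cC_r} a_i (Tb)_i = R^{-1}\sigma_r .
\]
The exchange of summation is justified because all terms are nonnegative (Tonelli). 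Hence all the $\sigma_r$ are equal. They sum to $a\cdot b = 1$, so each equals $1/\omega$.

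The main obstacle is the bookkeeping rather than any deep step. Two points need care. First, we must make sure the given positive eigenvector $x$ (with eigenvalue $1/R$) agrees with the Vere-Jones eigenvector $b$ up to scaling. If this is awkward, the argument avoids it by working with $T$ and the unique positive eigenvectors directly, since the diagonal quantities do not depend on $x$. Second, we must check that the periodic-subclass mapping property $\cC_r \to \cC_{r+1}$ holds with the indexing conventions of \Cref{de: period of an irreducible matrix}; a relabeling only permutes the $\sigma_r$, which does not affect the conclusion.
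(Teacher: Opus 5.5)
Your proof is correct, but it takes a different route from the paper. The paper gives no argument of its own; it cites Chung, where the statement is proved probabilistically for the normalized stochastic matrix $P$, with the diagonal \ces limits identified as reciprocal mean recurrence times $1/m_{ss}$. The standard argument there obtains $\sum_{s\in\cC_r}\omega/m_{ss}\le 1$ from Fatou's lemma applied to $\sum_{j\in\cC_r}p^{(n\omega)}_{ij}=1$. It then upgrades this to equality via Chapman--Kolmogorov and dominated convergence, in effect showing that $(1/m_{ss})_s$ is a stationary probability vector.

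You instead read off the diagonal limits $a_sb_s/(a\cdot b)$ directly from \Cref{thm: Vere-Jones limit exists}. Then (2) is just the trace of the rank-one matrix $ba^{\intercal}/(a\cdot b)$. Part (1) follows from your shift-invariance identity $\sigma_{r+1}=\sigma_r$, which comes from pairing $aT=R^{-1}a$ with $Tb=R^{-1}b$ across consecutive cyclic subclasses. The Tonelli exchange and the finiteness of each $\sigma_r$ are justified as you say.

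What your route buys: it is shorter and uses only results the paper already recalls. Your first step also isolates that the section's standing positive $1/R$-eigenvector is what forces $R=R(T)$. This matters, because under the paper's conventions an irreducible $T$ counts as positive $r$-recurrent for every $r>R(T)$, and there the statement would fail (the \ces means diverge). The trade-off is that the substance is delegated to Vere-Jones's Theorem~D, whose proof already rests on the same stationary-distribution facts for positive recurrent chains. So your derivation is more economical rather than more elementary.

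Finally, you never use the passage to $P$ or the identification of $x$ with $b$. Since $p^{(n)}_{ss}=R^nt^{(n)}_{ss}$ for any positive eigenvector, and you compute everything with $T$, $a$, $b$, that paragraph can simply be dropped.
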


\section{F-decomposition matrices}\label{se: matrix associated to Frobenius decomposition}

We set up some notation and conventions which are used throughout the article.

\noindent\textbf{Notation and Conventions:} Throughout the article, $p>0$ denotes a prime integer. In this article all rings are assumed to be commutative and unital and, unless otherwise said, noetherian and containing $\mathbb F_p$. 

Let $S$ be a ring. For any prime ideal $\p$ in the prime spectrum $\Spec(S)$ of $S$, $\kappa(\p)$ denotes the residue field $S_\p/\p S_\p$. We denote a local ring $R$ with the unique maximal ideal $\m$ by $(R, \m)$ or $(R,\m, k)$ where $k = \kappa(\m)$.

For an $S$-module $N$, the support of $N$, denoted by $\Supp (N)$, is the set of prime ideals $\p$ of $S$, 
such that $N_\p \ncong 0$ and $\Min(N)$ (resp., $\Minh(N)$) denotes the set of minimal primes (resp., minimal primes of maximal dimension) in the support of $N$. The set of associated primes of the $S$-module $N$ is denoted by $\Ass(N)$.

Let $N$ be a finitely generated $S$-module. When $N$ is Artinian, 
$\length_S(N)$ or $\length(N)$, when $S$ is clear, denotes the length of $N$. For $\p \in \Spec (S)$, we denote 
\[
\grk{\p}{N}\coloneqq \length_{S_{\mathfrak p}} (\Hh_{\p S_{\p}}^0 (N_{\p}))= \length_{S_{\mathfrak p}} (\{x \in N_\p \mid \p^nx=0, \ \text{for some}\ n \in \mathbb N\}).\]  
Note that, $\grk{\p}{N} \neq 0$ if and only if $\p \in \Ass (N)$. 
If $S$ is reduced and $\p \in \Min(S)$, then 
\[\grk{\p}{N} = \length_{S_\p} (N_\p) = \dim_{\kappa(\p)} N_{\p}.\] 
If $S$ is a domain, we use the more common notation $\rank_S(N)$ instead of $\grk{(0)}{N}$.

\begin{definition}
    A module $N$ is a direct summand of a module $M$, 
    if there exists an injection $\phi\colon N \to M$ and  
    a surjection $\pi \colon M \to N$ such that $\pi \circ \phi = \Id_N$.
    We call $\phi$ a split injection and $\pi$ a split surjection, without explicitly mentioning a corresponding split surjection or injection, respectively.

    We say that $M \neq 0$ is indecomposable if it has no proper nonzero direct summand. 
\end{definition}

Unless mentioned, we assume that any ring $S$ satisfies the Krull--Remak--Schmidt (KRS for short) condition, i.e., for any finitely generated $S$-module $M$ and two direct sum decompositions into indecomposable $S$-modules, $M \cong \oplus_{i=1}^n M_i$ and $M \cong \oplus_{j=1}^m M'_j$, $m=n$, and after a permutation of $\{1, 2, \ldots, n\}$, $M_i \cong M'_i$ for all $1 \leq i \leq n$.  Recall that examples of rings satisfying KRS include any Henselian local ring, in particular any complete local ring, or a principal ideal domain.

\subsection{Frobenius basics}
A characteristic $p$ ring $S$ is called \textit{F-finite} if the Frobenius endomorphism $F_S\colon S \rightarrow S$, where $F_S(s)= s^p$, is a finite ring map.
For an $S$-module $M$ and $e \in \mathbb N$, its \textit{$e$-th Frobenius pushforward} $F_{S*}^eM$, denotes the $S$-module whose underlying abelian group is $M$; and the $S$-module structure is obtained from the given one by restriction of scalars through the $e$-th iteration of the Frobenius endomorphism of $S$.
By convention, the zeroth iteration is the identity morphism. Most often, we write $F$ and $F^e_*$ because the choice of $S$ is unambiguous. 
For an $m \in M$, we write $F^e_*m$ to denote the element $m$, but now considered inside the module $F^e_*M$. So for $s \in S$, $sF^e_*m= F^e_*s^{p^e}m$.

In this article, we will work with F-finite rings unless otherwise mentioned. This ensures finiteness of the number of indecomposable direct summands of all $F_*^e M$, when $M$ is finitely generated.

The following notation appears throughout the article while relating the length of a module and its iterated Frobenius pushforwards (\Cref{lem: rank pushforward}).

\begin{definition}\label{de: notation for alpha and delta}
    For an F-finite local ring $(R, \m)$, $\alpha(R):= \log_p[\kappa(\m): \kappa(\m)^p]$. For a finitely generated $S$-module $M$, we define
    \(\delta_S(M)= \max\{\alpha(S_{\p})|\ \p \in \Min(M)\}.\)
    We write $\delta(M)$ for $\delta_S(M)$ when $S$ is clear and note that $\delta_S(S/\p) = \alpha(S_\p)$.
\end{definition}

\begin{theorem}[Kunz, {\cite[Proposition~2.3]{Kunz2}}]\label{thm: Kunz rank}
    Let $R$ be an F-finite ring and $\p \subsetneq \q$ be prime ideals. Then $\alpha (R_\p) = \alpha (R_\q) + \dim (R_\q/\p R_\q)$. 

    In particular, if $R$ is local, $M$ is a finitely generated $R$-module,  and $\p \in \Minh(M)$, then 
    \[
    \delta_R(M) = \alpha (R_\p) = \dim(M)+ \alpha(R).
    \]
\end{theorem}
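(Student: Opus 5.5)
The plan is to reduce the main identity to the equicharacteristic case via the field extension given by Frobenius on the localization. Fix $\p \subsetneq \q$. After localizing at $\q$ we may assume $(R,\q)$ is local, since $\alpha$ only depends on the local ring and $R_\q$ is F-finite. Then $\alpha(R_\p)$ is defined as $\log_p[\kappa(\p):\kappa(\p)^p]$, and I will compute this degree by comparing the ranks of $F_*R/\p$ at the generic point and the length of $F_*$ applied to Artinian modules at the closed point.

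\textbf{Step 1 (reduction to $\dim R_\q/\p R_\q = 1$).} Choose a saturated chain $\p=\p_0\subsetneq\p_1\subsetneq\cdots\subsetneq\p_d=\q$ in $R/\p$. Here $d=\dim(R_\q/\p R_\q)$, which is available since F-finite rings are excellent, so $R/\p$ is catenary. The formula is additive along chains, so it suffices to treat $d=1$. Replacing $R$ by $(R/\p)_\q$, we may assume $R$ is a one-dimensional local F-finite domain with fraction field $K=\kappa(\p)$ and residue field $k=\kappa(\q)$. We want $[K:K^p]=p\,[k:k^p]$.

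\textbf{Step 2 (compute the generic rank of $F_*R$ two ways).} The $R$-module $F_*R$ is finitely generated and torsion-free, and its generic rank is $\dim_K F_*K=[K:K^p]$. Separately, the rank of a finitely generated module $N$ over a one-dimensional local domain can be read off from multiplicities: $\eh(\q,N)=\rank(N)\,\eh(\q,R)$. It therefore suffices to show
\[
\eh(\q,F_*R)=p\,[k:k^p]\,\eh(\q,R).
\]
Now $\length_R(F_*R/\q^{[n]}F_*R)$, or rather $\length_R(F_*R/\q^nF_*R)=\length_R(F_*(R/\q^{[p]n}\cdots))$, must be handled carefully. The key fact is $\length_R(F_*L)=[k:k^p]\,\length_R(L)$ for any finite-length $L$, which follows by induction on a composition series since $F_*k$ is a $k$-vector space of dimension $[k:k^p]$. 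Then
\[
\q^nF_*R=F_*(\q^{[p]})^n R,
\]
and $\length(R/(\q^{[p]})^n)\sim \eh(\q^{[p]},R)\,n$. Because $\q^{[p]}$ is $\q$-primary in dimension one, $\eh(\q^{[p]},R)=p\,\eh(\q,R)$; for example, reduce to a parameter $x$ with $\eh(x^p)=p\,\eh(x)$, using a minimal reduction after passing to an infinite residue field, which preserves the degree computation. Combining these gives the displayed equality, and hence $\alpha(R_\p)=\alpha(R_\q)+1$.

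\textbf{Step 3 (the ``in particular'').} Let $R$ be local and $\p\in\Minh(M)$. Then $\dim R/\p=\dim M$, and the main formula with $\q=\m$ gives $\alpha(R_\p)=\dim M+\alpha(R)$. This value is the same for every $\p\in\Minh(M)$ and is at least the value for other minimal primes $\p'$ of $M$, since $\dim R/\p'\le\dim M$. Hence the maximum defining $\delta_R(M)$ equals $\alpha(R_\p)$.

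\textbf{Main obstacle.} I expect the main obstacle to be justifying $\eh(\q^{[p]},R)=p\,\eh(\q,R)$ cleanly, together with the catenarity and excellence input. If the multiplicity route becomes messy, I would instead pass to the normalization, which is finite since $R$ is F-finite and excellent. The normalization is a semilocal Dedekind domain, and there $[K:K^p]=p\,[k':k'^p]$ is a direct computation with a uniformizer $t$, using the basis $t^i b_j$. I would then note that $[k':k'^p]=[k:k^p]$ for finite extensions $k'/k$.
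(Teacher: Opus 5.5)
The paper gives no proof of this statement; it quotes Kunz's Proposition~2.3, and the ``in particular'' is exactly the short deduction in your Step~3. Your argument is a correct, self-contained proof of the cited result. You reduce to a one-dimensional local F-finite domain and identify $[K:K^p]$ with the generic rank of $F_*R$. You then compute that rank from multiplicities, using $\q^nF_*R=F_*((\q^{[p]})^n)$ and $\length(F_*L)=[k:k^p]\length(L)$, which is the paper's \Cref{lem: rank pushforward}. What this buys over the citation is an elementary proof that uses only one-dimensional multiplicity theory. Two points are worth tightening.

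First, Step~1 needs neither excellence nor catenarity. Pick a chain $\p=\p_0\subsetneq\cdots\subsetneq\p_d=\q$ of maximal length $d=\dim(R_\q/\p R_\q)$; a chain of maximal length is automatically saturated, so the one-step case already gives the formula. In fact the one-step case shows that every saturated chain from $\p$ to $\q$ has length $\alpha(R_\p)-\alpha(R_\q)$. So catenarity of F-finite rings comes out of this theorem rather than going into it. Quoting excellence here is therefore unnecessary and, depending on how excellence of F-finite rings is proved, possibly circular. The same remark applies to your fallback through the normalization, whose finiteness also rests on excellence.

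Second, your ``main obstacle'' disappears if you use a single nonzero $x\in\q$ instead of $\q$. Since $x$ is a nonzerodivisor on $F_*R$, you get $\length(F_*R/xF_*R)=\length(F_*(R/x^pR))=p\,[k:k^p]\,\length(R/xR)$. On the other hand, any finitely generated torsion-free $N$ of rank $r$ satisfies $\length(N/xN)=r\,\length(R/xR)$: embed $R^{\oplus r}\hookrightarrow N$ with finite-length cokernel $C$, apply the snake lemma to multiplication by $x$, and use $\length(C/xC)=\length(0:_Cx)$. Together these give $[K:K^p]=p\,[k:k^p]$ with no reductions and no residue field extension.

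If you keep the $\q$-adic version, $\eh(\q^{[p]},R)=p\,\eh(\q,R)$ follows from the sandwich $x^pR\subseteq\q^{[p]}\subseteq\q^p$ for a principal reduction $xR$ of $\q$. Such a reduction exists after passing to $R[t]_{\q R[t]}$, and that extension preserves the lengths involved, so the step is harmless. Finally, the garbled sentence about $\length_R(F_*R/\q^{[n]}F_*R)$ should simply be replaced by the identity $\q^nF_*R=F_*((\q^{[p]})^n)$, which you then state correctly.
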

    
\begin{lemma}\label{lem: rank pushforward}
    Let $(R, \m)$ be an F-finite local ring of characteristic $p$ and $M$ be a finite length $R$-module.
    Then $\length (F_* M) = p^{\alpha(R)} \length (M)$.
\end{lemma}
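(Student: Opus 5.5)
The plan is to reduce to the case of a simple module by induction on length, and to compute the length of $F_*k$ directly from the definition of $\alpha(R)$.

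First, the functor $F_*$ is exact. It only changes the module structure by restriction of scalars along $F\colon R \to R$ and leaves the underlying abelian groups and maps unchanged. It also sends finite length modules to finite length modules. Indeed, if $\m^n M = 0$, then $\m^{n}F_*M = F_*(\m^{[p]})^{n}M \subseteq F_*\m^{pn}M = 0$. Moreover $F_*M$ is finitely generated because $R$ is F-finite. Since length is additive on short exact sequences, both sides of the claimed identity are additive in $M$.

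We then induct on $\length(M)$ using a composition series. Each subquotient is isomorphic to the residue field $k = \kappa(\m) = R/\m$, so the identity reduces to the single case $\length(F_*k) = p^{\alpha(R)}$.

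For $M = k$, the $R$-module structure on $F_*k$ factors through $R/\m = k$. This holds because $\m$ acts on $F_*k$ via $\m^{[p]} \subseteq \m$, which kills $k$. Hence the $R$-submodules of $F_*k$ are exactly its $k$-subspaces, where $k$ acts through $F$, i.e.\ $\lambda \cdot F_*x = F_*(\lambda^p x)$. Therefore
\[
\length_R(F_*k) = \dim_k F_*k = [k : k^p] = p^{\alpha(R)},
\]
using the definition of $\alpha(R)$ in \Cref{de: notation for alpha and delta}. The index $[k:k^p]$ is finite because $R$ is F-finite, since $F_*k$ is a quotient of the finitely generated module $F_*R$.

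There is no serious obstacle. The only point needing care is checking that the $R$-submodules of $F_*k$ coincide with the $k^p$-subspaces, i.e.\ that $\m$ annihilates $F_*k$, so that length equals $k$-dimension under the twisted structure.
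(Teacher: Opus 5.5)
Your proposal is correct and follows essentially the same route as the paper: a composition series plus exactness of $F_*$ reduces the claim to $\length(F_*\kappa(\m)) = [\kappa(\m):\kappa(\m)^p] = p^{\alpha(R)}$. The paper phrases the last step as $F_*\kappa(\m) \cong \kappa(\m)^{1/p}$, which is the same computation you carry out by noting that $\m$ kills $F_*k$.
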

\begin{proof}
    Let 
    $0 = M_0 \subsetneq M_1 \subsetneq \cdots \subsetneq M_\ell = M$
    be the composition series of $M$.
    Then $\ell = \length_{R} (M)$ and $M_{i}/M_{i - 1} \cong \kappa(\m)$ for all $i = 1, \ldots, \ell$.
    Since $F_*$ is exact, it follows that $F_* (M_{i}/M_{i - 1}) \cong F_* \kappa(\m) \cong \kappa(\m)^{1/p}$ and the conclusion follows. 
\end{proof}

\subsubsection{F-singularities and F-invariants} We recall some further terminologies and results from the literature which appear in this article.

\begin{definition}\label{de: F split SFR rings}
    Let $R$ be an F-finite ring of positive characteristic $p$.
    We say that $R$ is 
    \begin{enumerate}
        \item \emph{F-split}  if $R$ is a direct summand of $F_* R$;
        \item \emph{strongly F-regular} if, for any $f \in R$, which is not in any minimal prime ideals of $R$, there exists an $e$, such that the following composition is a split injection
    \[R \xrightarrow[]{F^e} F^e_*R \xrightarrow[]{F^e_*f}F^e_*R.\]
    \end{enumerate}
\end{definition}

\begin{definition}[Yao \cite{Yao06}, Aberbach--Enescu \cite{AberbachEnescu}]
Let $(R, \m)$ be an F-finite local ring of characteristic $p$. 
For a positive integer $e$ the $e$-th \emph{F-splitting ideal} of $R$ is defined as 
\[
I_e \coloneqq \{x \mid \phi (F_*^e x) \in \m, \, \forall \phi \in \Hom_R (F_*^e R, R)\}. 
\]
\end{definition}

The F-splitting ideals are connected with F-singularities: $R$ is F-split if and only if $I_e \neq R$ for all (equivalently some) $e$ and $R$ is strongly F-regular if and only if $\cap_{e \geq 1} I_e = 0$ \cite[Theorem~4.8]{AberbachEnescu}.

\begin{definition}[Aberbach--Enescu \cite{AberbachEnescu}]
If $R$ is F-split, the ideal $\cP = \cap_{e \geq 1} I_e$ is a prime ideal and 
is called the \emph{splitting prime} of $R$.
The \emph{splitting dimension} of $R$ is $\sdim (R) = \dim R/\cP$. 
\end{definition}

\begin{theorem}[\cite{AberbachEnescu, Yao06, BlickleSchwedeTucker}]
    Let $(R, \m)$ be an F-finite local ring of characteristic $p$. Then
    \begin{enumerate}
        \item $a_e (R) \coloneqq \max \{N \geq 0 \mid F_*^e R = \oplus^N R \oplus M\} = p^{e \alpha(R)}\length (R/I_e)$;
        \item $p^{-e (\sdim (R) + \alpha(R))}a_e (R)$ converges and its limit is positive if $R$ is F-split.
    \end{enumerate}    
\end{theorem}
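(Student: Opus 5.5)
For part (1), I will use the standard description of the free rank of a finitely generated module over a local ring. Let $N$ be a finitely generated $R$-module. Set
\[
N^{\flat}=\{n\in N \mid \phi(n)\in \m \text{ for all } \phi\in \Hom_R(N,R)\}.
\]
Then the maximal rank of a free direct summand of $N$ equals $\length_R(N/N^{\flat})$. The reason is as follows. A surjection $N\to R$ splits, so every $n\notin N^{\flat}$ generates a free summand. Choosing elements $n_1,\dots,n_a$ whose images form a basis of the $\kappa(\m)$-vector space $N/N^{\flat}$, together with maps $\phi_i$ satisfying $\phi_i(n_j)\equiv\delta_{ij}$ modulo $\m$, gives a split surjection $N\to R^a$. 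Conversely, any decomposition $N\cong R^a\oplus N'$ forces $\length(N/N^{\flat})\ge a$, and one checks that $N'\subseteq N^{\flat}$ when $a$ is maximal.

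Now apply this with $N=F^e_*R$. By definition of the $e$-th F-splitting ideal, $(F^e_*R)^{\flat}=F^e_*I_e$. Exactness of $F^e_*$ then gives $a_e(R)=\length_R(F^e_*(R/I_e))$. Iterating \Cref{lem: rank pushforward} $e$ times yields $\length_R(F^e_*(R/I_e))=p^{e\alpha(R)}\length(R/I_e)$, which proves (1).

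For part (2), I will first reduce to the splitting prime $\cP$. By Aberbach--Enescu, $\cP$ is compatible with every $\phi\in\Hom_R(F^e_*R,R)$ and $\cP\subseteq I_e$ for all $e$. One then checks that the F-splitting ideals of $\bar R=R/\cP$ are exactly $I_e/\cP$:
\begin{enumerate}
\item every $\bar\phi\in\Hom_{\bar R}(F^e_*\bar R,\bar R)$ lifts to some $\phi$, using F-finiteness and compatibility;
\item every $\phi$ descends to $\bar R$.
\end{enumerate}
The lifting step is the one I expect to need the most care; Blickle--Schwede--Tucker handle exactly this kind of statement. Granting it, (1) applied to $\bar R$ gives $a_e(R)=p^{e(\alpha(R)-\alpha(\bar R))}a_e(\bar R)$, and $\alpha(\bar R)=\alpha(R)$ because the two rings have the same residue field. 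Since $\dim\bar R=\sdim(R)$, \Cref{thm: Kunz rank} gives $\delta(\bar R)=\sdim R+\alpha(R)$. So the claim becomes the existence of the F-signature of $\bar R$, and its positivity when $\bar R$ is strongly F-regular. Moreover, $\bar R$ is strongly F-regular by Aberbach--Enescu, since its F-splitting ideals intersect to $0$.

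Convergence of $p^{-e\dim\bar R}a_e(\bar R)$ is Tucker's theorem. To obtain it directly, I would use $I_e^{[p]}\subseteq I_{e+1}$ and $I_{e+1}\subseteq I_e$, together with uniform Hilbert--Kunz-type bounds, to show
\[
\bigl|p^{-(e+1)d}\length(\bar R/I_{e+1})-p^{-ed}\length(\bar R/I_e)\bigr|=O(p^{-e})
\]
with $d=\dim\bar R$. This makes the normalized sequence Cauchy. Establishing this uniform constant is the main technical obstacle, and I would import it from Tucker's argument rather than reprove it. Positivity of the limit for the strongly F-regular ring $\bar R$ is the Aberbach--Leuschke theorem, which concludes (2).
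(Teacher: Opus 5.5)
Your part (1) is correct. Over a local ring the free rank of $N$ is $\length(N/N^\flat)$, and $(F^e_*R)^\flat=F^e_*I_e$ by definition; no KRS is needed.

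\textbf{The gap in part (2).} It is the claim that the F-splitting ideals of $\bar R=R/\cP$ are $I_e/\cP$, i.e.\ that every $\bar\phi\in\Hom_{\bar R}(F^e_*\bar R,\bar R)$ lifts to $R$. This is false in general.

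Descent of maps does hold, since $\cP$ is compatible with every $\phi$. It gives only $I_e(\bar R)\subseteq I_e/\cP$, hence $a_e(R)\le a_e(\bar R)$ and $\Fratio(R)\le\fsig(\bar R)$. This inclusion still shows $\bar R$ is strongly F-regular, but nothing more.

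If equality held, then regularity of $R/\cP$ would force $I_e=\cP+\m^{[p^e]}$ and $\Fratio(R)=1$. That contradicts the remark after \Cref{lem: ratio is 1}.

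Here is an explicit example. Let $R=k[[x,y,z]]/(z^2-x^2y)$ with $k$ perfect, $p$ odd, and $q=p^e$.
\begin{enumerate}
\item By Fedder's criterion, $I_e$ is the image of $(x^q,y^q,z^q):(z^2-x^2y)^{q-1}$.
\item The only monomial of $(z^2-x^2y)^{q-1}$ outside $(x^q,y^q,z^q)$ is $x^{q-1}y^{(q-1)/2}z^{q-1}$, and its coefficient is nonzero.
\item Hence $I_e=(x,z,y^{(q+1)/2})R$, $\cP=(x,z)$ and $\bar R=k[[y]]$.
\item So $a_e(R)=(q+1)/2$ while $a_e(\bar R)=q$, giving $\Fratio(R)=1/2<1=\fsig(\bar R)$.
\end{enumerate}
In particular, the map $F^e_*k[[y]]\to k[[y]]$ sending $F^e_*y^{q-1}\mapsto 1$ and the other monomials to $0$ does not lift. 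This lifting is not what Blickle--Schwede--Tucker prove.

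\textbf{What the reduction actually gives.} It gives a Cartier subalgebra. Let $\mathcal{D}_e\subseteq\Hom_{\bar R}(F^e_*\bar R,\bar R)$ be the image of $\Hom_R(F^e_*R,R)$.
\begin{enumerate}
\item $a_e(R)$ is the number of copies of $\bar R$ that split off $F^e_*\bar R$ via maps in $\mathcal{D}_e$.
\item The corresponding splitting ideals are exactly $I_e/\cP$.
\item Since $\bigcap_e I_e/\cP=0$, the pair $(\bar R,\mathcal{D})$ is strongly F-regular. In the example it is $(k[[y]],\tfrac12\operatorname{div}(y))$, whose F-signature is $1/2$.
\end{enumerate}
The missing ingredient is existence and positivity of the F-signature of such strongly F-regular pairs. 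That is the actual content of Blickle--Schwede--Tucker. Tucker's and Aberbach--Leuschke's theorems for the ring $\bar R$ give only the upper bound above.

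\textbf{A smaller point.} The containment $I_{e+1}\subseteq I_e$ yields nothing after normalizing, so your two containments do not give a two-sided $O(p^{-e})$ estimate. The standard argument instead uses the one-sided bound coming from $I_e^{[p]}\subseteq I_{e+1}$, which also holds for $\mathcal{D}$, together with the resulting almost-monotonicity.
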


\begin{definition}[\cite{HunekeLeuschke}, \cite{AberbachEnescu}]
Let $(R, \m)$ be an F-split F-finite local ring of characteristic $p$. The limits
\[\Fratio (R) \coloneqq \lim_{e \to \infty} p^{-e (\sdim (R) + \alpha(R))}a_e (R)
\text{ and }
\fsig(R) \coloneqq \lim_{e \to \infty} p^{-e \delta_R(R)}a_e (R)
\]
are called the \emph{F-splitting ratio} of $R$ and 
the \emph{F-signature} of $R$ respectively.
\end{definition}

By \cite{AberbachLeuschke} (see also \cite{PolstraTucker})
$\fsig(R) > 0$ if and only if $R$ is strongly F-regular. 
By \cite{BlickleSchwedeTucker} we have that $\Fratio (R) = 1$ implies that $R/\cP$ is regular (and is equivalent to the stronger condition of \Cref{lem: ratio is 1}).

\subsection{Matrices arising from Frobenius pushforwards} \label{sse: F decomposition matrix}

From now on, let $R$ be an F-finite ring of characteristic $p > 0$ that satisfies the KRS property, unless otherwise specified. For a finitely generated $R$-module $M$, we introduce the notion of the 
\textit{F-decomposition matrix}, and prove properties of this matrix. This possibly infinite matrix encodes information on the direct sum decompositions of $F^e_*M$, for all $e>0$. The F-decomposition matrix is our main tool for harnessing dynamical results of \Cref{se: recap of Perron-Frobenius theory} for understanding indecomposable summands of $F^e_*M$, in the subsequent sections. In what follows, we adopt our convention on infinite matrices set in \Cref{sse: terminologies and notation for infinite matrices}.

\begin{definition}\label{de: collection of Frobenius summands}
   Let $M$ be an $R$-module. The set of \textit{indecomposable F-summands of $M$}, denoted $\find(M)$, is the union of isomorphism classes of nonzero indecomposable $R$-summands of $F^e_*M$ as $e$ varies over the set of positive integers.

   By abusing terminology, we talk of elements of $\find(M)$ as modules.
\end{definition}

\begin{definition}\label{de: the notation for multiplicites of direct summands}
In this definition, we do not assume the KRS property. For two $R$-modules $M,N$ and $e \in \mathbb N$, we denote 
 \[
 \Fmult{e}{M}{N} = \sup \{n \in \NN \mid N^{\oplus n} \text{ is a direct summand of } F_*^e M\}. 
 \]
If the set on the right is unbounded, the supremum is taken to be infinity. We will use $\FRmult{e}{M}{N}{R}$ to highlight the base ring. 
\end{definition}

\begin{definition}\label{def: Summand matrix}
The \emph{F-decomposition} matrix of a finitely generated $R$-module $M$ 
is an $\find(M) \times \find(M) $ matrix $\Mat (M)$
defined by 
\[
\Mat (M)_{N, N'} = \Fmult{1}{N}{N'} \text{ for any } N, N' \in \find(M).  
\]

Since $\find(M)$ is countable, we sometimes, especially when  it is finite, order $\find(M)$ and write $\find(M)= \{M_i\}_{i = 1}^n$, where $n$ could be infinite. In such a context, we think of $\Mat (M) = (t_{ij})_{ij}$ as an $n \times n$ matrix where $F_* M_i = \bigoplus_{j = 1}^n M_j^{\oplus t_{ij}}$ for all $i$. 
\end{definition}

\begin{example}
    Example~\ref{ex: matrix d^1_5} is the F-decomposition matrix of 
    the $D^1_5$ singularity in characteristic $2$ given by the equation $z^2 + x^2y + y^2z + xyz$. The computation was performed with \cite{MallorySayrafi}. 
\end{example}

\begin{remark}
    Since $R$ is F-finite, $\Mat(M)$ has finite rows. In particular, $\Mat (M)^n$ is defined. 
\end{remark}

The matrix $\Mat (M)^e$ encodes the direct sum decomposition of $F_*^e M$ into indecomposables: 

\begin{proposition}\label{prop: entries of powers of the F decomposition matrix}
Suppose $R$ is an F-finite ring satisfying the KRS property and $M$ is a finitely generated $R$-module. 
Let $N, N' \in  \find(M)$ and view $v= (\Fmult{1}{M}{N})_{N \in \find(M)}$ as a row vector. Then for any $e \geq 0$

\begin{enumerate}
    \item the $(N, N')$th entry of $\Mat(M)^e$ is $\Fmult{e}{N}{N'}$, and 

    \item \label{part: splitting numbers from matrix powers} $\Fmult{e+1}{M}{N}$ is the $N$th entry of the row vector $v\Mat(M)^e$.
\end{enumerate}    
\end{proposition}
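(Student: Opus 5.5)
The plan is induction on $e$, using two facts: $F_*$ is an additive functor that commutes with finite direct sums, and decompositions into indecomposables are unique by the KRS assumption. The base case $e=0$ is trivial. $\Mat(M)^0=\Id_{\find(M)}$, and $F^0_*N=N$ is indecomposable, so its only indecomposable summand is $N$ itself, with multiplicity one. This gives $\Fmult{0}{N}{N'}=\delta_{N,N'}$. For part (2) with $e=0$, the claimed identity is just the definition of $v$.

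For part (1), suppose $F^e_*N \cong \bigoplus_{N''} (N'')^{\oplus \Fmult{e}{N}{N''}}$, where the sum runs over finitely many $N''$. Each $N''$ that occurs is an indecomposable summand of $F^e_*N$. Since $N$ is itself a summand of some $F^{k}_*M$, the module $N''$ is a summand of $F^{k+e}_*M$, and so $N''\in\find(M)$. Applying $F_*$ and using $F^{e+1}_*=F_*F^e_*$ together with additivity gives
\[
F^{e+1}_*N \cong \bigoplus_{N''} \bigoplus_{N'} (N')^{\oplus \Fmult{e}{N}{N''}\Fmult{1}{N''}{N'}}.
\]
Every $N'$ appearing here lies in $\find(M)$ by the same argument. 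This is a decomposition into indecomposables, so by KRS the multiplicity of $N'$ is $\sum_{N''}\Fmult{e}{N}{N''}\Fmult{1}{N''}{N'}$, which is the $(N,N')$ entry of $\Mat(M)^e\Mat(M)$. Because powers commute, this equals $\Mat(M)^{e+1}$. All sums are finite since the matrix has finite rows, so the products are well defined. Part (2) follows in the same way. Decompose $F_*M \cong \bigoplus_N N^{\oplus v_N}$, apply $F^e_*$, and substitute part (1). KRS then identifies the multiplicity of $N$ in $F^{e+1}_*M$ as $\sum_{N''} v_{N''}\Fmult{e}{N''}{N}=(v\Mat(M)^e)_N$.

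The only real subtlety is matching multiplicity with the supremum in \Cref{de: the notation for multiplicites of direct summands}. Suppose $N^{\oplus n}$ is a summand of $F^e_*M$. Completing $N^{\oplus n}$ to a full decomposition into indecomposables and comparing with the chosen decomposition via KRS shows that $n$ is at most the number of copies of $N$ in that decomposition. Hence the supremum equals the KRS multiplicity, and the count is finite because $R$ is F-finite. I expect no serious obstacle beyond this bookkeeping.
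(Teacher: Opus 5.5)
Your proof is correct and follows the paper's argument: induction on $e$, writing $F^{e+1}_*N \cong \bigoplus_L (F_*L)^{\oplus \Fmult{e}{N}{L}}$ and reading off multiplicities via KRS to get the $(N,N')$ entry of $\Mat(M)^e\Mat(M)=\Mat(M)^{e+1}$. Beyond the paper, you also check that summands stay in $\find(M)$, that the supremum equals the KRS multiplicity, and that powers commute for finite-row matrices, and you derive (2) directly from (1) instead of by a separate induction; the paper leaves these details implicit.
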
  

\begin{proof}
    We prove both the assertions by induction on $e$. For (1), recall that by our convention $\Mat(M)^0= \Id_{\find(M)\times \find(M)}$. The $e=1$ case follows from the definition of $\Mat(M)$. Since $F^{e+1}_*N \cong \oplus_{L \in \find(M)}F_*L^{\oplus \Fmult{e}{N}{L}}$,
    \[\Fmult{e+1}{N}{N'}= \sum_{L \in \find(M)}\Fmult{e}{N}{L}\Fmult{1}{L}{N'}.\]
    Since the right side of the last equation is also the $(N,N')$-th entry of $\Mat(M)^{e+1}$, the induction step follows.
    For (2), the $e=0$ case follows from the definition of $v$. The inductive step follows from a similar argument.
\end{proof}

In subsequent sections, in order to understand the indecomposable summands of $F^e_*M$, as $e$ grows, we use properties of $\Mat(M)^e$, as $e$ grows. To analyze the asymptotic properties of such matrix powers, we apply the Perron--Frobenius theory (\Cref{se: recap of Perron-Frobenius theory}) to $\Mat(M)$. The remainder of this section develops further properties of $\Mat(M)$ relevant from the point of view of Perron--Frobenius theory and freely uses the terminology set in \Cref{se: recap of Perron-Frobenius theory}. 
Our purpose requires working with specific blocks of $\Mat(M)$ associated to prime ideals of $R$, which we introduce now.

\begin{definition}
For a prime ideal $\mathfrak p$ of $R$, denote
\[\find(\mathfrak p, M)= \{N \in \find(M)|\ \mathfrak p \in \Ass(N)\}.\]
Denote the $\find(\p,M) \times \find(\p,M)$ minor $\Mat(M)[\find(\mathfrak p, M)]$ of $\Mat(M)$ by $\Mat(\mathfrak p, M)$. That is, $\Mat(\mathfrak p, M)_{NN'}= \Fmult{1}{N}{N'}$, for $N, N' \in \find(\mathfrak p, M)$.    
\end{definition}

For example, when $R$ is a domain and $M$ is torsion-free, $\find(\mathfrak p, M)$ is empty unless $\mathfrak p$ is zero, and $\find((0), M)= \find(M)$.
In general, $\find(\mathfrak p, M)$ is empty unless $\mathfrak p \in \Ass(M)$. In the sequel, separately working with $\Mat(\mathfrak p, M)$ is what allows us to develop the theory of F-splitting signatures when $R$ is not necessarily a domain or $M$ is not necessarily torsion-free.
A crucial input to handling such a general situation is the following observation on the existence of a suitable positive eigenvector of $\Mat(\mathfrak p, M)$, 
which allows us to apply the results of \Cref{ssse: limit theorems for reducible matrices}.

\begin{proposition}\label{pr: positive eigenvector of the block corresponding to a prime}
    The column vector $(\grk{\p}{N})_{N \in \find(\mathfrak p, M)}$ is a positive right eigenvector of $\Mat(\mathfrak p, M)$ with eigenvalue $p^{\alpha (R_\p)}$.
\end{proposition}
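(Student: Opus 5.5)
Positivity of the vector is immediate: for $N \in \find(\p,M)$ we have $\p \in \Ass(N)$, so $\grk{\p}{N} \neq 0$ by the remark after the definition of $\grk{\p}{\bullet}$. The content of the statement is therefore the eigenvector equation. For each $N \in \find(\p,M)$ we must show
\[
\sum_{N' \in \find(\p,M)} \Fmult{1}{N}{N'}\, \grk{\p}{N'} = p^{\alpha(R_\p)} \grk{\p}{N}.
\]

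The plan is to compute $\grk{\p}{F_*N}$ in two ways.

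\emph{First way: via the decomposition.} Write $F_*N \cong \bigoplus_{N' \in \find(M)} N'^{\oplus \Fmult{1}{N}{N'}}$. This uses KRS, and the fact that every indecomposable summand of $F_*N$ lies in $\find(M)$, because $N$ is a summand of some $F^e_*M$. Localization and $\Hh^0_{\p R_\p}$ commute with finite direct sums, so $\grk{\p}{\bullet}$ is additive on direct sums. Moreover $\grk{\p}{N'}=0$ whenever $\p \notin \Ass(N')$. Hence $\grk{\p}{F_*N}$ equals the left-hand side of the displayed identity, with only $N' \in \find(\p,M)$ contributing.

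\emph{Second way: via Frobenius.} We show $\grk{\p}{F_*N} = p^{\alpha(R_\p)}\grk{\p}{N}$ in three steps.

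\begin{enumerate}
\item \emph{Localization commutes with $F_*$.} $(F_*N)_\p \cong F_*(N_\p)$ as $R_\p$-modules. The point is that inverting $s$ on $F_*N$ amounts to inverting $s^p$ on $N$, and inverting $s^p$ is the same as inverting $s$.
\item \emph{Torsion commutes with $F_*$.} Inside $F_*(N_\p)$, the $\p R_\p$-power torsion is $F_*$ of the $\p R_\p$-power torsion of $N_\p$. Indeed, $\p^n$ annihilates $F_*x$ exactly when $\p^{[p]n}$ annihilates $x$, and $\p^{[p]}$ and $\p$ have the same radical. Thus $\Hh^0_{\p R_\p}(F_*N_\p) = F_*\Hh^0_{\p R_\p}(N_\p)$.
\item \emph{Length scaling.} The module $\Hh^0_{\p R_\p}(N_\p)$ has finite length over the F-finite local ring $R_\p$. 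Applying \Cref{lem: rank pushforward} gives the factor $p^{\alpha(R_\p)}$.
\end{enumerate}

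Comparing the two computations yields the eigenvector identity.

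The main obstacle is making step 2 precise, namely identifying the local cohomology of the pushforward as the pushforward of the local cohomology. The radical argument above settles it. One should also check that F-finiteness passes to $R_\p$, which holds since localization preserves finiteness of Frobenius. No other subtleties arise.
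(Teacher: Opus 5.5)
Your proof is correct and follows essentially the same route as the paper: positivity comes from $\p \in \Ass(N)$, and you evaluate the $N$th row of $\Mat(\p,M)$ applied to the vector as $\grk{\p}{F_*N}$ via additivity over the KRS decomposition, then apply \Cref{lem: rank pushforward} over the F-finite local ring $R_\p$. Your steps 1--2, which commute localization and $\Hh^0_{\p R_\p}$ with $F_*$, simply make explicit what the paper uses implicitly when it invokes that lemma.
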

\begin{proof}
    Let $h$ denote the column vector in the statement. 
    Recall that for a finitely generated module $M$, $\grk{\p}{M} \neq 0$ if and only if $\p \in \Ass (M)$. 
    Thus, $h$ is positive by the definition of $\find(\mathfrak p, M)$. For any $N \in \find(\mathfrak p, M)$, the $N$th row of 
    $\Mat(\p, M)h$ is 
    \[
    \sum_{N' \in \find(\p,M)} \Fmult{1}{N}{N'}\grk{\p}{N'} 
    = \sum_{N'\in \find(M)} \Fmult{1}{N}{N'}\grk{\p}{N'} = 
    \grk{\p}{F_*N}.
    \]
    By \Cref{lem: rank pushforward}, $ \grk{\p}{F_*N} = p^{\alpha (R_\p)}\grk{\p}{N}$ and the conclusion follows. 
\end{proof}

\begin{remark}\label{re: Markov chain associated to an associated prime}
Since $\Mat(\p,M)$ is a nonnegative matrix with a positive eigenvector $(\grk{\p}{N})_{N \in \find(\p,M)}$ with eigenvalue $p^{\alpha(R_\p)}$, one can associate a Markov chain to $$(\Mat(\p,M), (\grk{\p}{N})_{N \in \find(\p,M)}, p^{\alpha(R_\p)})$$ following the recipe of \Cref{ssse: limit theorems for reducible matrices}. 
\end{remark}

\subsubsection{Connected classes}

\begin{lemma}\label{lem: support of an irreducible class}
    Let $\cC \subseteq \find(M)$ be a connected class of $\Mat(M)$ in the sense of (\ref{sse: connected classes}) and let $N \in \cC$.
    Then 
    \begin{enumerate}
        \item a module $L \in \find(M)$, where $L \neq N$, is contained in $\cC$
        if and only if $L \in \find(N)$ and $N \in \find(L)$,
        \item  $\Mat(N)[\cC] = \Mat (M)[\cC]$, and if $\cC$ is not exceptional, $\cC \subseteq \find (N)$.
        \item $\Supp (N) = \Supp (L)$ for any $L \in \cC$, 
        \item $\Ass (N) = \Ass (L)$  for any $L \in \cC$,
        \item $\Min (N) = \Min (L)$  for any $L \in \cC$,
        \item $\depth_I (N) = \depth_I (L)$ for any $L \in \cC$ and any ideal $I$ of $R$. 
    \end{enumerate}
    In particular, $\Supp (\cC)$, $\Min(\cC)$, $\Ass(\cC)$, and $\depth_I (\cC)$ are well-defined by any representative.
\end{lemma}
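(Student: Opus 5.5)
The plan is to reduce everything to one basic observation, which I'll call transitivity of F-summands: if $L$ is a direct summand of $F^e_*N$, then $\find(L) \subseteq \find(N)$. Indeed, $F^{e'}_*L$ is a summand of $F^{e+e'}_*N$ because $F_*$ is additive and preserves direct sums. By \Cref{prop: entries of powers of the F decomposition matrix}, the condition $\Mat(M)^n(N,L) \neq 0$ for some $n>0$ says exactly that $L$ is a summand of some $F^n_*N$, i.e., $L \in \find(N)$.

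Part (1) then follows from \Cref{le: connected class vs powers of matrix} together with the definition of a connected class. For $L \neq N$, membership $L \in \cC$ means strong connectivity, which is $L \in \find(N)$ and $N \in \find(L)$. One also checks that $\find(N) \subseteq \find(M)$, since $N$ is a summand of some $F^e_*M$; this is transitivity again.

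For (2), the entries of $\Mat(N)$ and $\Mat(M)$ are both given by $\Fmult{1}{\cdot}{\cdot}$. So it suffices to check $\cC \subseteq \find(N)$ when $\cC$ is not exceptional. In that case $N$ is strongly connected to itself, so $N \in \find(N)$. Any other $L \in \cC$ lies in $\find(N)$ by (1). Hence the two blocks agree entrywise.

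For (3)–(6), the invariants should be unchanged by Frobenius pushforward and should pass to direct summands in a one-directional way. For $L \in \cC$ we have $L$ a summand of $F^n_*N$ and $N$ a summand of $F^m_*L$; in the exceptional case $L = N$ and there is nothing to prove. So it suffices to show the following: if $L$ is a summand of $F^n_*N$, then $\Supp L \subseteq \Supp N$, $\Ass L \subseteq \Ass N$, $\Min L \subseteq \Supp N$, and $\depth_I L \geq \depth_I N$. Applying this symmetrically gives equality.

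\begin{itemize}
\item \textbf{Support and associated primes.} As sets of primes, $\Supp F^n_*N = \Supp N$ and $\Ass F^n_*N = \Ass N$. This holds because $F^n$ induces a homeomorphism on $\Spec$ fixing each prime: for $\p$, the annihilator of $F^n_*x$ is $\{r : r^{p^n}\operatorname{ann}\ldots\}$, which is the radical-type preimage, and prime annihilators stay the same prime. Also $(F^n_*N)_\p = F^n_*(N_\p)$. Summands have smaller support and smaller $\Ass$.
\item \textbf{Minimal primes.} $\Min$ is determined by $\Supp$, so (5) follows from (3).
\item \textbf{Depth.} $\depth_I$ can be computed via vanishing of $\operatorname{Ext}^i(R/I,-)$, or via Koszul homology on generators of $I$. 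Koszul homology of $F^n_*N$ on $x_1,\dots,x_k$ equals $F^n_*$ of the Koszul homology of $N$ on $x_1^{p^n},\dots,x_k^{p^n}$, which vanishes in the same degrees because the ideals have the same radical. So $\depth_I F^n_*N = \depth_I N$. A direct summand has depth at least that of the whole module, since its local cohomology or Koszul homology is a summand. Equality then follows from the symmetric argument above.
\end{itemize}

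The main obstacle is the depth statement, specifically identifying $\depth_I(F^n_*N)$ with $\depth_I N$. I would handle it cleanly through local cohomology: $H^i_I(F^n_*N) \cong F^n_*H^i_{I^{[p^n]}}(N) = F^n_*H^i_I(N)$, using that local cohomology depends only on the radical. The inequality for summands then comes from the additivity of $H^i_I$.
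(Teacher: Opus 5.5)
Your proposal is correct and follows essentially the same route as the paper: (1) and (2) come from \Cref{le: connected class vs powers of matrix} and the definitions, and (3)--(6) come from taking $L$ a summand of $F^n_*N$ (and vice versa), using that $\Supp$, $\Ass$ and $\depth_I$ are unchanged by Frobenius pushforward and behave monotonically on summands, then concluding by symmetry. The one loose spot is $\Ass$: the inclusion you actually use is $\Ass(F^n_*N)\subseteq\Ass(N)$, and it holds because if $\q=\{r : r^{p^n}x=0\}$ is prime, then $\q=\sqrt{\Ann(x)}$, so $\q$ is minimal over $\Ann(x)$ and therefore lies in $\Ass(Rx)\subseteq\Ass(N)$.
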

\begin{proof}
    We may assume that $\cC$ is not exceptional (in the sense of (\ref{sse: connected classes})) since otherwise the claims are trivial.
    The first claim is immediate from \Cref{le: connected class vs powers of matrix} and the second follows readily from the first claim and definitions.
    For the remaining claims, take $e$ such that $L$ is a direct summand of $F_*^e N$. 
    Then $\depth_I (L) \geq \depth_I (F_*^e N) = \depth_I (N)$
    and, for $P$ being one of $\{\Supp, \Ass\}$, we have 
    $P (L) \subseteq P (F_*^e N) = P(N)$. The conclusion follows by symmetry. Last, if $\Supp (L) = \Supp (N)$, then $\Min(L) = \Min(N)$. 
\end{proof}

If $N \in \find (\p, M)$, then, using Definition~\ref{def: connected class}, 
the connected class containing $N$ can be defined using either $\Mat(M)$ or $\Mat(\p, M)$. The following result shows that there is no ambiguity.

\begin{proposition}\label{pr: entries of powers of the matrix for a prime}
    Let $N, N' \in \find(\p, M)$. Then for any $n \in \NN$,
    \[
    \Mat(\p, M)^n_{N, N'} = \Mat(M)^n_{N, N'} = \Fmult{n}{N}{N'}.
    \]
    In particular, the connected class $\cC \subseteq \find(M)$ of $\Mat(M)$ containing $N$
    coincides with the connected class of $\Mat(\p,M)$ containing $N$ and $\cC$ is a subset of $\find(\p, M)$.
\end{proposition}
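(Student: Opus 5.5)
The idea is to show that every path in $\Mat(M)$ joining two elements of $\find(\p,M)$ stays inside $\find(\p,M)$. Once that holds, the sums defining the matrix powers agree. By \Cref{prop: entries of powers of the F decomposition matrix}, $\Mat(M)^n_{N,N'} = \Fmult{n}{N}{N'}$, so only the first equality needs proof.

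The key algebraic observation is that associated primes can only be lost under Frobenius pushforward and summands, never gained. If $L$ is a direct summand of $F^e_*N$, then $\Ass(L) \subseteq \Ass(F^e_*N) = \Ass(N)$. The equality $\Ass(F^e_*N)=\Ass(N)$ holds because $F^e$ is a homeomorphism on $\Spec$ and annihilators of elements correspond: $\Ann_R(F^e_*x) = \{r \mid r^{p^e}x=0\}$, which is prime exactly when it is the radical of $\Ann(x)$ contracted. This is the same fact already used in \Cref{lem: support of an irreducible class}.

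Now expand
\[\Mat(M)^n_{N,N'}=\sum_{L_1,\dots,L_{n-1}\in\find(M)} t_{N L_1}t_{L_1L_2}\cdots t_{L_{n-1}N'}.\]
Consider a nonzero term. Each $L_k$ is a summand of $F^{k}_*N$, so $\Ass(L_k)\subseteq\Ass(N)$. Also $N'$ is a summand of $F^{n-k}_*L_k$, so $\p\in\Ass(N')\subseteq\Ass(L_k)$. Thus every $L_k$ lies in $\find(\p,M)$. Only indices in $\find(\p,M)$ contribute, so the sum equals $\Mat(\p,M)^n_{N,N'}$. This gives the first claim. There are no convergence issues, since the rows are finite.

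For the connected classes, \Cref{le: connected class vs powers of matrix} shows that strong connectivity is detected by nonvanishing of entries of matrix powers. These entries coincide for the two matrices, so for $N\in\find(\p,M)$ the sets of vertices strongly connected to $N$ agree. For this we also need that any $L\in\find(M)$ strongly connected to $N$ lies in $\find(\p,M)$. That follows from \Cref{lem: support of an irreducible class}(4): $\Ass(L)=\Ass(N)\ni\p$. The exceptional case is then consistent too, since $N$ is strongly connected to nothing in either graph. Hence $\cC\subseteq\find(\p,M)$.

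The only mildly delicate point is the justification that $\Ass$ is preserved by $F^e_*$. I would cite it as standard, or verify it via $\Ass(F^e_*N)$ computed through the homeomorphism $\Spec F^e$.
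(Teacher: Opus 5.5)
Your proof is correct and takes essentially the paper's approach. The paper inducts on $n$, peeling off the last factor and using that $\Mat(M)_{N_1N'}=0$ whenever $N_1\notin\find(\p,M)$, because $\p\in\Ass(N')\subseteq\Ass(F_*N_1)=\Ass(N_1)$; this is your path-sum observation, just applied one step at a time, and it likewise derives the connected-class statement from the first part together with \Cref{lem: support of an irreducible class}. Your extra inclusion $\Ass(L_k)\subseteq\Ass(N)$ is true but not needed, since only $\p\in\Ass(L_k)$ is used.
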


\begin{proof}
    We use induction on $n$. The base case of $n = 1$ holds by the definition. Assume that the assertion holds for some $n \geq 1$. Then for $N,N' \in \find(\p, M)$,
    \begin{equation*}
    \begin{split}
       &\Mat(M)^{n+1}_{NN'}\\
       &= \sum_{N_1 \in \find(M)} \Mat(M)^{n}_{NN_1}\Mat(M)_{N_1N'}\\
       &= \sum_{N_1 \in \find(\mathfrak p, M)} \Mat(M)^{n}_{NN_1}\Mat(M)_{N_1N'}  \,\  \bigl(\text{since}\ \Mat(M)_{N_1N'}=0 \ \text{if}\ N_1 \notin \find(\mathfrak p, M)\bigr)\\
       & = \sum_{N_1 \in \find(\mathfrak p, M)} \Mat(\p,M)^{n}_{NN_1}\Mat(\p,M)_{N_1N'}\,\,\,\, \bigl(\text{by induction hypothesis}\bigr)\\
       & = \Mat(\p,M)^{n+1}_{NN'}\ .
    \end{split}
    \end{equation*}
    The second claim follows from the first part and Lemma~\ref{lem: support of an irreducible class}. 
\end{proof}

We have the following bounds on the convergence radius. 

\begin{corollary}\label{cor: convergence radius is at least the power of dimension}
Let $M, N$ be finitely generated $R$-modules. Assume that $\p \in \Ass(N)$. Then
$\Fmult{e}{M}{N}= O(p^{e\alpha(R_\p)})$\textemdash this holds even when $R$ does not satisfy KRS. Moreover, the convergence radius of $\Mat(\p, M)$ is at least $p^{-\alpha(R_\p)}$ and the convergence radius of $\Mat(M)$ is at least $p^{-\delta (M)}$.  
\end{corollary}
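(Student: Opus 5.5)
The plan is to bound $\Fmult{e}{M}{N}$ by lengths at $\p$, and then read off the convergence radius bounds from the positive eigenvector of \Cref{pr: positive eigenvector of the block corresponding to a prime} via \Cref{lem: convergence radius from positive eigenvector}.

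For the first claim, suppose $N^{\oplus n}$ is a direct summand of $F^e_*M$. Localizing at $\p$ and applying the left exact functor $\Hh^0_{\p R_\p}$, which commutes with finite direct sums, we get
\[
n\,\grk{\p}{N} \leq \grk{\p}{F^e_*M}.
\]
Since $F^e_*$ commutes with localization and with taking $\p$-torsion, we have $\Hh^0_{\p R_\p}((F^e_*M)_\p) = F^e_*\Hh^0_{\p R_\p}(M_\p)$. The latter has finite length over $R_\p$, so iterating \Cref{lem: rank pushforward} over $R_\p$ gives $\grk{\p}{F^e_*M} = p^{e\alpha(R_\p)}\grk{\p}{M}$. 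As $\p \in \Ass(N)$, we know $\grk{\p}{N} > 0$, and therefore
\[
\Fmult{e}{M}{N} \leq p^{e\alpha(R_\p)}\,\grk{\p}{M}/\grk{\p}{N}.
\]
This argument uses only the existence of a splitting, so KRS is not needed. If $\p\notin \Ass(M)$, the bound gives $\Fmult{e}{M}{N} = 0$, which is consistent.

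For the radius of $\Mat(\p,M)$, \Cref{pr: positive eigenvector of the block corresponding to a prime} supplies the positive right eigenvector $(\grk{\p}{N})_N$ with eigenvalue $p^{\alpha(R_\p)}$. \Cref{lem: convergence radius from positive eigenvector} then gives $R(\Mat(\p,M)) \geq p^{-\alpha(R_\p)}$. Alternatively, this follows directly from the bound above combined with \Cref{pr: entries of powers of the matrix for a prime}.

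For $\Mat(M)$, take $N, N' \in \find(M)$; by \Cref{prop: entries of powers of the F decomposition matrix} the relevant entries are $\Fmult{e}{N}{N'}$. The main point is to choose a prime $\p \in \Ass(N')$ with $\alpha(R_\p) \leq \delta(M)$. Pick any $\p \in \Ass(N')$. Since $N'$ is a summand of some $F^k_*M$, we have $\p \in \Supp(M)$, so $\p$ contains some $\q \in \Min(M)$. By \Cref{thm: Kunz rank}, $\alpha(R_\p) \leq \alpha(R_\q) \leq \delta(M)$.

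Applying the first claim to the modules $N, N'$ gives $\Fmult{e}{N}{N'} = O(p^{e\alpha(R_\p)}) = O(p^{e\delta(M)})$. Hence every entry of $\Mat(M)^e$ has $\limsup(\cdot)^{1/e} \leq p^{\delta(M)}$, and so $R(\Mat(M)) \geq p^{-\delta(M)}$.

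The only delicate step is the commutation of $F^e_*$ with localization and $\p$-torsion, which is routine.
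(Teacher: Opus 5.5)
Your proposal is correct and is essentially the paper's proof. You apply $\grk{\p}{\bullet}$ to a splitting of $N^{\oplus n}$ from $F^e_*M$ to get $\grk{\p}{N}\Fmult{e}{M}{N}\le p^{e\alpha(R_\p)}\grk{\p}{M}$, read off the bound on $R(\Mat(\p,M))$, and for $\Mat(M)$ use that every associated prime $\p$ of a module in $\find(M)$ satisfies $\alpha(R_\p)\le\delta(M)$; your justifications of this inequality via \Cref{thm: Kunz rank} and of $\grk{\p}{F^e_*M}=p^{e\alpha(R_\p)}\grk{\p}{M}$ fill in details the paper leaves implicit.
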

\begin{proof}
Applying $\grk{\p}{\bullet}$ to the direct sum decomposition of $F^e_*M$, we conclude that
\[\grk{\p}{M}p^{e \alpha(R_\p)} \geq \grk{\p}{N}\Fmult{e}{M}{N}.\]
In particular for $N_1, N_2 \in \find(\p,M)$, $\Fmult{e}{N_1}{N_2}= O(p^{e\alpha(R_\p)})$. Therefore, the convergence radius of $\Mat(\p,M)$ is at least $p^{-\alpha(R_\p)}$. For the rest, it suffices to show that 
for any $M_1, M_2 \in \find(M)$, $ \Fmult{e}{M_1}{M_2}= O(p^{e\delta(M)})$. This follows from above since, for any $\p \in \Ass(M_2)$, $\alpha(R_\p) \leq \delta(M)$.
\end{proof}

We now study the properties of connected classes with the minimal 
radius of convergence allowed by Corollary~\ref{cor: convergence radius is at least the power of dimension}. The following lemma, essentially from \cite{Yao05}, is used to infer upon the depth of such classes.

\begin{lemma}\label{lemma: depth vs frequency of appearance}
    Let $M,N$ be finitely generated modules over an F-finite local ring $(R, \mathfrak m)$ (not necessarily KRS). 
    Then $\Fmult{e}{M}{N}= O(p^{e(\depth(N)+\alpha(R))}).$
\end{lemma}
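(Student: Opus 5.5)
We want $\Fmult{e}{M}{N}= O(p^{e(\depth(N)+\alpha(R))})$. The plan is to pass from splittings to a numerical invariant that is additive on direct sums, that is positive on $N$, and that grows at the desired rate on $F^e_*M$. Put $t=\depth(N)$. If $N=0$ there is nothing to prove. Otherwise we may assume $t<\infty$, and we choose a maximal $N$-regular sequence $x_1,\dots,x_t\in\m$ and write $I=(x_1,\dots,x_t)$. Then $\m\in\Ass(N/IN)$, so $\grk{\m}{N/IN}=\length(\Hh^0_\m(N/IN))\ge 1$.

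First we transfer the splitting. Suppose $N^{\oplus n}$ is a direct summand of $F^e_*M$. Since $\Hh^0_\m(-\otimes R/I)$ commutes with finite direct sums, we get
\[
n\,\length\bigl(\Hh^0_\m(N/IN)\bigr)\le \length\bigl(\Hh^0_\m(F^e_*M/IF^e_*M)\bigr).
\]
We also have $IF^e_*M=F^e_*(I^{[p^e]}M)$, where $I^{[p^e]}=(x_1^{p^e},\dots,x_t^{p^e})$. Since $\Hh^0_\m$ is compatible with $F^e_*$ and local cohomology is independent of the base, \Cref{lem: rank pushforward} (applied to finite length modules) gives
\[
\length\bigl(\Hh^0_\m(F^e_*(M/I^{[p^e]}M))\bigr)=p^{e\alpha(R)}\,\length\bigl(\Hh^0_\m(M/I^{[p^e]}M)\bigr).
\]
It therefore suffices to show that $\length(\Hh^0_\m(M/(x_1^{q},\dots,x_t^{q})M))=O(q^{t})$ with $q=p^e$.

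For this bound, which is the main step, I would argue by induction on $t$, proving that for any finitely generated $L$ and any elements $y_1,\dots,y_t$,
\[
\length\bigl(\Hh^0_\m(L/(y_1^{q},\dots,y_t^{q})L)\bigr)\le C\,q^t .
\]
The case $t=0$ is trivial. For the inductive step, set $L'=L/(y_1^q,\dots,y_{t-1}^q)L$, so that we need to bound $\Hh^0_\m(L'/y_t^qL')$. Filtering by $y_t^iL'$ writes $L'/y_t^qL'$ as an extension of the $q$ successive quotients $y_t^iL'/y_t^{i+1}L'$, each a quotient of $L'/y_tL'$. Since $\Hh^0_\m$ is only left exact, the estimate has to be handled with care. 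The cleaner route is to use the exact sequence $0\to (0:_{L'}y_t^q)\to L'\xrightarrow{y_t^q} L'\to L'/y_t^qL'\to 0$, together with a bound for $\Hh^0_\m$ of the cokernel in terms of $\Hh^0_\m(L')$ and $\Hh^1_\m(0:_{L'}y_t^q)$.

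An alternative that avoids this is to replace $\length\Hh^0_\m$ by a Koszul-type invariant. I would first try the following: $\length(\Hh^0_\m(L/JL))\le \length(L/(J+\m^{c})L)$ for a suitable $c$, and then use a uniform Artin--Rees statement to find $c$ independent of $q$ such that $\Hh^0_\m(L/J_qL)\cap \m^{c}q\,(L/J_qL)=0$. This is where the real difficulty lies. Once that is in hand, $\length(L/(J_q+\m^{cq})L)=O(q^{t}\cdot q^{\dim L - t})$ is not sharp enough, so I would take the approach of Yao \cite{Yao05} instead: bound the length through $\length(\Hh^0_\m(L/J_qL))\le \sum_i \length(\Hh^0_\m(L/(J_{q,i-1}+y_i^{q})L)\text{-pieces})$ by an induction on $t$ that peels off one element at a time. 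Each peeling multiplies the bound by $q$, because $L''/y^qL''$ has a filtration with $q$ factors that are quotients of $L''/yL''$, and $\Hh^0_\m$ of an extension has length at most the sum of the lengths of $\Hh^0_\m$ of its pieces. This gives $C\,q^{t}$ with $C=\length(\Hh^0_\m(L/(y_1,\dots,y_t)L))$, suitably adjusted at each stage. I expect carrying out this induction, ensuring that the constant does not depend on $q$ when several exponents are raised simultaneously, to be the main obstacle.

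Combining the bound with the two displays above gives $n\le C\,p^{e\alpha(R)}p^{et}$, which is the claim. Nothing in the argument uses KRS.
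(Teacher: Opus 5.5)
Your reduction to the estimate $\length\Hh^0_\m(M/I^{[q]}M)=O(q^t)$ is correct. The steps are: $\Hh^0_\m(N/IN)\neq 0$, additivity of $\Hh^0_\m(-\otimes_R R/I)$, and $\length\Hh^0_\m(F^e_*M/IF^e_*M)=p^{e\alpha(R)}\length\Hh^0_\m(M/I^{[q]}M)$. The gap is that this estimate, for an arbitrary $M$ on which $x_1,\dots,x_t$ need not be regular, carries the whole content of the lemma, and you do not prove it. It is a growth bound for the generalized Hilbert--Kunz function of a non-$\m$-primary ideal, not a routine fact. The uniform Artin--Rees input you consider (a $c$ independent of $q$ with $\m^{cq}$ killing $\Hh^0_\m(M/I^{[q]}M)$) is a linear-growth statement for the $\m$-primary components of Frobenius powers, which is a hard problem in its own right.

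The peeling induction also fails as written. The factors $y^iL''/y^{i+1}L''\cong L''/(yL''+(0:_{L''}y^i))$ are only quotients of $L''/yL''$, and $\length\Hh^0_\m$ is not monotone under quotients. For $R=k[[y,s]]$ and $L=R/(ys)$ one has $\Hh^0_\m(L/yL)=\Hh^0_\m(k[[s]])=0$. On the other hand, $\Hh^0_\m(L/y^qL)$ is spanned by the images of $y,\dots,y^{q-1}$ and has length $q-1$. So no bound of the form $q^t\,\length\Hh^0_\m(L/(y_1,\dots,y_t)L)$ can hold.

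Splitting off the torsion $0:_{L'}y^\infty$ (using Artin--Rees so it meets $y^qL'$ trivially) and bounding $0:_{\Hh^1_\m}y^q$ does settle $t=1$ for a \emph{fixed} module. In the induction, however, $L'=L/(y_1^q,\dots,y_{t-1}^q)L$ changes with $q$, so the torsion exponent and Artin--Rees constants are not uniform. This is exactly the obstacle you flag, and it is substantive, not bookkeeping.

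The way around it is to avoid passing to $N/IN$ and $\Hh^0_\m$ altogether. Instead, use Koszul homology with respect to a full system of parameters $\underline w=w_1,\dots,w_d$ of $R$, as the paper does:
\begin{itemize}
\item $H_{d-t}(\underline w;-)$ commutes with direct sums.
\item $H_{d-t}(\underline w;N)\neq 0$ by depth sensitivity, since $t=\depth N$.
\item $H_{d-t}(\underline w;F^e_*M)=F^e_*H_{d-t}(\underline w^{[q]};M)$ has length $p^{e\alpha(R)}\cdot O(q^{t})$ by the Hochster--Huneke--Roberts theorem.
\end{itemize}
Hence $\Fmult{e}{M}{N}\,\length H_{d-t}(\underline w;N)=O(p^{e(t+\alpha(R))})$.

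This is the right replacement for your invariant. Choose your regular sequence to be part of a system of parameters $(x_1,\dots,x_t,z_1,\dots,z_{d-t})$ of $R$, which is possible by prime avoidance. Then $H_{d-t}(\underline x,\underline z;N)\cong 0:_{N/IN}(\underline z)\subseteq\Hh^0_\m(N/IN)$. On the $M$ side, one then needs Koszul homology of $M$ itself, whose growth bound is a theorem. Your version instead needs $\Hh^0_\m$ of $M$ modulo a sequence that is not $M$-regular.
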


\begin{proof}
Let $\mathrm K_\bullet$ be the Koszul complex of $R$ with respect to a  system of parameters $(\underline{x})$. The Hochster--Huneke--Roberts theorem (\cite{Roberts}, \cite[Theorem 6.2]{HochsterHunekePhantom}) implies that, for any $t\geq 0$,
\[\length_R (\Hh_{\dim(R)-t}(\underline x, F^e_*M )) = \length_R (\Hh_{\dim(R)-t}(F^e_*M \otimes \mathrm K_\bullet))= O(p^{e(t+\alpha(R))}). \]
Set $\depth(N) = d$.
Since $\Hh_{\dim(R)-d}(\underline x, N)$ is nonzero and 
\[\Fmult{e}{M}{N}\length_R (\Hh_{\dim(R)-d}(\underline x, N)) \leq \length_R (\Hh_{\dim(R)-d}(\underline x, F^e_*M )) = O(p^{e(d+\alpha(R))}),\]
the conclusion follows. 
\end{proof}

\begin{proposition}\label{prop: properties of maximal recurrent classes for a prime}
    Suppose that there is a  $p^{-\alpha(R_\p)}$-recurrent class $\cC \subseteq \find (\p, M)$ for $\Mat(\p, M)$.
    Then $R(\Mat(\p, M)) = p^{-\alpha(R_\p)}$ and $\cC$ is a final class of $\Mat(\p,M)$.  For any $N \in \cC$ and any prime ideal $\q \supseteq \p$, we have 
    $\depth_\q (N_\q) = \dim (R_\q/\p R_\q)$. 
    In particular, $\Ass (\cC) \cap V(\p) = \{\p\}.$
\end{proposition}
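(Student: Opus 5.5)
The first two claims are formal consequences of the Perron--Frobenius material in \Cref{se: recap of Perron-Frobenius theory}. I would prove them first and then treat the depth statement separately.

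\emph{Step 1: radius and finality.} By \Cref{pr: positive eigenvector of the block corresponding to a prime}, $\Mat(\p,M)$ has the positive right eigenvector $(\grk{\p}{N})_N$ with eigenvalue $p^{\alpha(R_\p)}$. So we are in the setting of \Cref{ssse: limit theorems for reducible matrices} with $R = p^{-\alpha(R_\p)}$. Since $\cC$ is $R$-recurrent, $\sum_n \Fmult{n}{N}{N} p^{-n\alpha(R_\p)}$ diverges for some $N\in\cC$. By \Cref{pr: entries of powers of the matrix for a prime}, these are entries of powers of $\Mat(\p,M)$. \Cref{le: R recurrent implies basic recurrent} then gives $R(\Mat(\p,M)) = p^{-\alpha(R_\p)}$ and shows that $\cC$ is basic recurrent. \Cref{thm: infinite basic is final}, applied with the positive eigenvector above, then shows that $\cC$ is final.

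\emph{Step 2: depth.} Fix $N\in\cC$ and a prime $\q\supseteq\p$. Set $d=\dim(R_\q/\p R_\q)$. Since $\p R_\q\in\Ass(N_\q)$, the standard bound gives $\depth N_\q\le\dim R_\q/\p R_\q = d$. For the reverse inequality, recurrence gives $\limsup_n \Fmult{n}{N}{N}^{1/n} = p^{\alpha(R_\p)}$, or at least infinitely many large values of $\Fmult{n}{N}{N}p^{-n\alpha(R_\p)}$ along a subsequence.

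I then localize at $\q$. If $N$ is a direct summand of $F^n_*N$, then $N_\q$ is a direct summand of $F^n_*(N_\q)$, so $\FRmult{n}{N_\q}{N_\q}{R_\q} \ge \Fmult{n}{N}{N}$. The module $N_\q$ need not be indecomposable, and $R_\q$ need not be KRS, but \Cref{lemma: depth vs frequency of appearance} does not require either. It gives
\[\Fmult{n}{N}{N}\le \FRmult{n}{N_\q}{N_\q}{R_\q} = O\bigl(p^{n(\depth N_\q + \alpha(R_\q))}\bigr).\]
By \Cref{thm: Kunz rank}, $\alpha(R_\p) = \alpha(R_\q) + d$. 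If $\depth N_\q < d$, the right side would be $o(p^{n\alpha(R_\p)})$, indeed summable after dividing by $p^{n\alpha(R_\p)}$. That contradicts the divergence of $\sum_n \Fmult{n}{N}{N}p^{-n\alpha(R_\p)}$. Hence $\depth_\q(N_\q)=d$. Because Steps 1 and 2 use only the recurrence of $\cC$, the argument applies to every $N\in\cC$; \Cref{lem: support of an irreducible class} also shows directly that depth is a class invariant.

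\emph{Step 3: associated primes.} Suppose $\q\in\Ass(N)$ with $\q\supsetneq\p$. Then $\depth N_\q=0$. But Step 2 gives $\depth N_\q = d>0$, a contradiction. Since $\p\in\Ass(N)$ by definition of $\find(\p,M)$, this proves $\Ass(\cC)\cap V(\p)=\{\p\}$.

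The main point needing care is in Step 2: the divergence must be available for the specific module $N$, and the inequality $\FRmult{n}{N_\q}{N_\q}{R_\q}\ge\Fmult{n}{N}{N}$ must hold after localization. Both are straightforward once one checks that the $R$-recurrence of an irreducible class means divergence for every diagonal entry of the class, and that localization preserves direct summands.
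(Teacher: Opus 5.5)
Your proof is correct and follows essentially the same route as the paper. The positive eigenvector $(\grk{\p}{N})_N$ together with the recurrence of $\cC$ gives $R(\Mat(\p,M))=p^{-\alpha(R_\p)}$ and, via \Cref{thm: infinite basic is final}, the finality of $\cC$; localizing at $\q$ and combining \Cref{lemma: depth vs frequency of appearance} with Kunz's formula gives the depth equality. The only cosmetic difference is that you exclude $\depth N_\q<\dim(R_\q/\p R_\q)$ via the divergence of $\sum_n \Fmult{n}{N}{N}p^{-n\alpha(R_\p)}$, whereas the paper uses the exact growth rate along multiples of the period together with an $\varepsilon\to 0$ argument.
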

\begin{proof}
    The convergence radius $R(\Mat (\p, M)) = p^{-\alpha(R_\p)}$  by Corollary~\ref{cor: convergence radius is at least the power of dimension}.
    Thus, $\Mat (\p, M)$ has a positive eigenvector with eigenvalue  $1/R(\Mat (\p, M))$; see \Cref{pr: positive eigenvector of the block corresponding to a prime}. Hence, $\cC$ is final by Theorem~\ref{thm: infinite basic is final}.
    
    Let $\omega$ be the period of $\cC$. By the definition of the convergence radius, for any $\varepsilon > 0$, any $\q \in V(\p)$, and any $N \in \cC$, we have
    \[
    \limsup_{e \to \infty} (p^{\varepsilon -\alpha(R_\p)})^{e \omega} \Fmult{e\omega}{N_\q}{N_\q} \geq 
    \limsup_{e \to \infty} (p^{\varepsilon -\alpha(R_\p)})^{e \omega} \Fmult{e\omega}{N}{N} > 0.
    \]
    Therefore,
    $\alpha(R_\p) - \varepsilon \leq \depth_\q (N_\q) + \alpha(R_\q)$  by Lemma~\ref{lemma: depth vs frequency of appearance}. 
    Since $\alpha(R_\p) - \alpha(R_\q) = \dim (R_\q/\p R_\q)$ by Theorem~\ref{thm: Kunz rank}, 
    letting $\varepsilon \to 0$ shows that $\depth_\q (N_\q) \geq \dim (R_\q/\p R_\q)$. The opposite inequality follows since $\p$ is an associated prime of $N_\q$ \cite[Proposition~7, p.61]{Serre_Local_Algebra_ed1}. 
\end{proof}

\begin{corollary}\label{cor: properties of maximal recurrent classes in the whole matrix}
Suppose that $\Mat(M)$ has a $p^{-\delta(M)}$-recurrent class $\cC \subseteq \find (M)$.
Then $R(\Mat(M)) = p^{-\delta(M)}$, $\cC$ is a final class of $\Mat(M)$, and for any 
$\p \in \Ass (\cC)$, $\alpha(R_\p) = \delta(M)$.
Moreover, any $N \in \cC$ is Cohen--Macaulay. In addition, if $R$ is local, then for any $N \in \cC$, $\dim(N)= \dim(M)$.
\end{corollary}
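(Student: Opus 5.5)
The plan is to reduce to \Cref{prop: properties of maximal recurrent classes for a prime} by locating the class $\cC$ inside the block of a suitable associated prime.

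\textbf{Step 1: convergence radius and finality.} By \Cref{cor: convergence radius is at least the power of dimension}, $R(\Mat(M)) \geq p^{-\delta(M)}$. A $p^{-\delta(M)}$-recurrent class forces $R(\Mat(M)) \leq p^{-\delta(M)}$, so $R(\Mat(M)) = p^{-\delta(M)}$. For finality I would not look for a positive eigenvector of all of $\Mat(M)$. Instead:
\begin{itemize}
\item Fix $N \in \cC$ and $\p \in \Ass(N)$. By \Cref{lem: support of an irreducible class} and \Cref{pr: entries of powers of the matrix for a prime}, $\cC \subseteq \find(\p,M)$, and $\cC$ is a connected class of $\Mat(\p,M)$ with the same powers.
\item So $\cC$ is $p^{-\delta(M)}$-recurrent for $\Mat(\p,M)$, and hence $R(\Mat(\p,M)) \leq p^{-\delta(M)}$.
\item Also $R(\Mat(\p,M)) \geq p^{-\alpha(R_\p)} \geq p^{-\delta(M)}$, because $\alpha(R_\p) \leq \delta(M)$ for $\p \in \Ass(M)$. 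Indeed $\p$ contains some minimal prime $\p'$ of $M$, and by \Cref{thm: Kunz rank} $\alpha(R_\p) \leq \alpha(R_{\p'}) \leq \delta(M)$.
\item Therefore $\alpha(R_\p) = \delta(M)$, and $\cC$ is $p^{-\alpha(R_\p)}$-recurrent for $\Mat(\p,M)$.
\end{itemize}
This proves the claim $\alpha(R_\p)=\delta(M)$ for every $\p \in \Ass(\cC)$.

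\textbf{Step 2: finality in $\Mat(M)$.} Now \Cref{prop: properties of maximal recurrent classes for a prime} makes $\cC$ final in $\Mat(\p,M)$. This is the main obstacle: finality there only controls edges into $\find(\p,M)$. I must rule out $N \in \cC$ having a summand $L \in \find(M)$ of $F_*N$ with $\p \notin \Ass(L)$.

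\begin{itemize}
\item Since this holds for every $\p \in \Ass(\cC)$, it suffices to show that such an $L$ fails to have every associated prime of $\cC$.
\item Go back to the proof of \Cref{thm: infinite basic is final} using the eigenvector $h=(\grk{\p}{\cdot})$. The row identity $\sum_{N'} \Fmult{1}{N}{N'}\grk{\p}{N'} = p^{\alpha(R_\p)}\grk{\p}{N}$ sums over all of $\find(M)$.
\item Terms with $\p\notin\Ass(N')$ vanish, so the eigenvector equation gives no information about such $L$. I need a different argument.
\item Alternative: take the eigenvector $\sum_{\q \in \Ass(\cC)} \grk{\q}{\cdot}$. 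Each $\grk{\q}{\cdot}$ is an eigenvector of $\Mat(M)$ with eigenvalue $p^{\alpha(R_\q)} = p^{\delta(M)}$, and it is positive on the set $\mathcal S$ of modules having some $\q \in \Ass(\cC)$ as an associated prime.
\item Apply \Cref{thm: infinite basic is final} to the block $\Mat(M)[\mathcal S]$, which has this positive eigenvector of eigenvalue $p^{\delta(M)}$. Then $\cC$ is final in $\mathcal S$.
\item Any $L$ reachable from $\cC$ is a summand of some $F^e_*N$, so $\Ass(L) \subseteq \Ass(N)$. If $\Ass(L)$ is nonempty, it meets $\Ass(\cC)$, so $L \in \mathcal S$. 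Hence $L \in \cC$ by finality, and $\cC$ is final in $\Mat(M)$.
\end{itemize}

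\textbf{Step 3: Cohen--Macaulayness and dimension.} Take $\q \in \Supp(N)$. It contains some $\p \in \Ass(N)$, and \Cref{prop: properties of maximal recurrent classes for a prime} gives $\depth_\q N_\q = \dim(R_\q/\p R_\q)$. This is at least $\dim N_\q$, since $\dim N_\q = \max_{\p} \dim R_\q/\p R_\q$ over $\p \in \Ass(N)$, $\p\subseteq\q$. So $N_\q$ is Cohen--Macaulay.

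If $R$ is local, take $\p \in \Ass(N)$. Then \Cref{thm: Kunz rank} gives $\dim R/\p + \alpha(R) = \alpha(R_\p) = \delta(M) = \dim M + \alpha(R)$. Hence $\dim N \geq \dim M$, and the reverse inequality holds because $\Supp N \subseteq \Supp M$.
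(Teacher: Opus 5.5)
Your proof is correct and follows essentially the paper's route: you reduce everything to \Cref{prop: properties of maximal recurrent classes for a prime} at each $\p\in\Ass(\cC)$. Your Cohen--Macaulay step uses every associated prime inside $\q$ rather than comparing with $\dim M_\m$ via Kunz, which is a slightly more direct variant. The summed-eigenvector detour in Step 2 is unnecessary: your final bullet, $\emptyset\neq\Ass(L)\subseteq\Ass(\cC)$, already puts $L$ and $\cC$ in a common $\find(\p,M)$ where $\cC$ is final, and that is exactly the paper's argument.
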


\begin{proof}
    It is immediate from Corollary~\ref{cor: convergence radius is at least the power of dimension} that $R(\Mat(M)) = p^{-\delta(M)}$, and for any $ \p \in \Ass (\cC)$, $\alpha(R_\p) = \delta(M)$. If there were a connected class $\cC'$ such that $\cC' \preceq \cC$ (see \Cref{sse: connected classes}), then, as $\Ass (\cC') \subseteq \Ass (\cC)$ by Lemma~\ref{lem: support of an irreducible class}, $\cC \cup \cC' \subseteq \find (\p, M)$ for some $\p \in \Ass (\cC)$. Since $\cC$ is a final class of $\Mat (\p, M)$, $\cC'= \cC$; see Proposition~\ref{prop: properties of maximal recurrent classes for a prime}.

    For $N \in \cC$, we argue that $N_\m$ is Cohen--Macaulay for every maximal ideal $\m \in \Supp(N)$. Fix $\m \in \Supp(N)$. Let $\p$ be a prime ideal minimal in the set $\Spec(R_\m) \cap \Supp(N)$; then $\p$ is a minimal prime of $N$, and hence an associated prime of $N$. We have already shown $\alpha(R_\p)= \delta(M)$, so from \Cref{thm: Kunz rank} and the definition
    \begin{equation*}
    \begin{split}
      \alpha(R_\m)+ \dim(R_\m/\p R_\m) = \alpha(R_\p) = \delta(M) &\geq \sup \{\delta_{R_\q}(M_\q)\mid \q \subseteq \m, \q \in \Supp(M)\} 
      \\ &\geq 
      \delta_{R_\m} (M_\m) = \dim_{R_\m}(M_\m)+ \alpha(R_\m).
    \end{split}
    \end{equation*}
Since $\dim(M_\m) \geq \dim(R_\m/\p R_\m)$, the above inequality shows $\dim(M_\m) = \dim(R_\m/\p R_\m)$. Hence, \Cref{prop: properties of maximal recurrent classes for a prime}
    implies $ \depth_\m (N_\m) \geq \delta(M) - \alpha(R_\m) \geq \dim_{R_\m}(M_\m) \geq \dim_{R_\m}(N_\m)$. Hence, $N_\m$ is Cohen--Macaulay.

If $R$ is local, $\delta(M)= \dim(M)+ \alpha(R)$. Thus, we conclude from above that for any $\p$ as before, $\dim(R/\p)= \dim(M)= \dim(N)$.
\end{proof}

Following the notion of period of an irreducible matrix in \Cref{de: period of an irreducible matrix}, we introduce the notion of period of an indecomposable module.

\begin{definition}\label{de: period of indecomposable module}
    Let $N$ be an $R$-module. The period of $N$ is the positive integer which is the greatest common divisor of  $\{e \geq 1 |\ \Fmult{e}{N}{N} \neq 0\}$. If the latter set is empty,  we set the period to be infinity as a convention. 
    
    If $N$ is indecomposable and $N \in \find(N)$, the \emph{period} of $N$ is indeed the period of the connected class of $\Mat(N)$ containing $N$.
\end{definition}

The following result is straightforward:

\begin{lemma}\label{le: period one and splitting from large enough pushforward}
An $R$-module $N$ has period one if and only if $\Fmult{e}{N}{N}$ is nonzero for all large enough $e$.
\end{lemma}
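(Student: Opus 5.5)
Let $S = \{e \geq 1 \mid \Fmult{e}{N}{N} \neq 0\}$. The plan is to show that $S$ is closed under addition and then apply the standard numerical semigroup fact. By \Cref{de: period of indecomposable module}, $N$ has period one exactly when $\gcd(S) = 1$.

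\emph{Reverse direction.} If $\Fmult{e}{N}{N} \neq 0$ for all $e \geq e_0$, then both $e_0$ and $e_0+1$ lie in $S$. Since these are coprime, $\gcd(S) = 1$.

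\emph{Forward direction.} First I show $S$ is an additive semigroup. Suppose $a, b \in S$. Then $N$ is a direct summand of $F^a_*N$. Applying the additive exact functor $F^b_*$ shows that $F^b_*N$ is a direct summand of $F^{a+b}_*N$. Since $N$ is a direct summand of $F^b_*N$, transitivity of direct summands gives that $N$ is a direct summand of $F^{a+b}_*N$. Hence $a+b \in S$. No KRS hypothesis is needed for this step, since it uses only split injections and surjections.

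Next, since $\gcd(S) = 1$, there are finitely many elements $s_1, \dots, s_k \in S$ with $\gcd(s_1,\dots,s_k)=1$. By the classical Frobenius coin problem (Schur's theorem), every sufficiently large integer is a nonnegative integer combination of $s_1, \dots, s_k$ with at least one positive coefficient. Because $S$ is closed under addition, all sufficiently large integers therefore lie in $S$.

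There is no real obstacle; the only substantive point is the additivity of $S$ via $F^b_*$, and that step is immediate.
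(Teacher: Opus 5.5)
Your proposal is correct and is essentially the argument the paper has in mind: the paper gives no proof, calling the lemma straightforward, and the natural argument is exactly yours. The set $\{e \geq 1 \mid \Fmult{e}{N}{N} \neq 0\}$ is closed under addition, because the additive functor $F^b_*$ preserves direct summands and $F^b_*F^a_* = F^{a+b}_*$. A numerical semigroup of positive integers with $\gcd$ one then contains all sufficiently large integers.
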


\subsection{Existence and positivity of F-splitting signatures}\label{subsection: signatures and F-contributors}

In this section, we apply general properties of matrices with nonnegative entries and Markov chains to an F-decomposition matrix $\Mat(M)$, to analyze the asymptotic properties of its powers $(\Mat(M)^e)_e$, as $e$ approaches infinity. Consequently, we derive asymptotic properties of the sequences $(\Fmult{e}{M}{N})_e$ for any $N \in \find(M)$ (\Cref{th: existence of signatures in general}). Our work naturally leads to a theory of F-splitting signatures and its variants, which generalizes the classical theory of F-signatures --- the latter dealing with the $N=R$ case. 

\begin{definition}\label{de: generalized signature with a phase}
Let $M$ be a finitely generated $R$-module and $N \in \find(M)$ of period $\omega$. For any $\p \in \Ass(N)$ and integer $t$, we denote
\[\gsig{M}{N}_{t, \omega, \p} \coloneqq \lim_{e \to \infty}\frac{\Fmult{t+ e\omega}{M}{N}}{p^{(t+ e\omega)\alpha(R_\p)}},\]
and refer to $\gsig{M}{N}_{t, \omega, \p}$ as the \emph{F-splitting signature of $M$ towards $N$ with period $\omega$ and phase $t$ at $\p$}, provided the limit exists. Whenever we omit $\omega$ from the terminology and notation, it should be understood that $\omega$ is the period of $N$.

The \emph{\ces F-splitting signature of $M$ towards $N$ at $\p$} is defined to be the following limit, whenever it exists,
\[
\csig{M}{N}_{\p} = \lim_{n \to \infty} \frac{1}{n}\sum_{e=1}^{n}\Fmult{e}{M}{N}/p^{e\alpha(R_\p)}.
\]

We call the limit $\lim_{e \to \infty}\Fmult{e}{M}{N}/p^{e\alpha(R_\p)}$ (whenever it exists) the \emph{F-splitting signature of $M$ towards $N$ at $\p$} and denote it by $\gsig{M}{N}_\p$. Note that  $\gsig{M}{N}_\p = \csig{M}{N}_{\p}$ whenever the F-splitting signature exists.

Whenever $\lim_{e \to \infty}\Fmult{e}{M}{N}/p^{e\delta(M)}$ exists, we call the limit the \emph{F-splitting signature of $M$ towards $N$}, and denote it by $\gsig{M}{N}$. 
Likewise we use $\csig{M}{N}$ to denote the \ces limit of $\Fmult{e}{M}{N}/p^{e\delta(M)}$.

Whenever $\omega= \infty$, we define the quantities $p^{-(e\omega+t) \alpha(R_\p)}\Fmult{e\omega+t}{M}{N}$ to be zero. Consequently, $\gsig{M}{N}_{t, \omega, \p}= 0$.
\end{definition}

One of the main contents of \Cref{subsubsec: Existence of variants of signatures} is that the limits defining both $\gsig{M}{N}_{t, \omega, \p}$ and $\csig{M}{N}_{\p}$ exist.

\subsubsection{Existence}\label{subsubsec: Existence of variants of signatures}
As a first step, in \Cref{de: probability of entering a state for an indecomposable module} and \Cref{le: first entrance probability for any module}, we introduce suitable analogues of `first entrance probabilities' of  \Cref{de: f}. These play a crucial role in the existence of F-splitting signatures with phase (\Cref{th: existence of signatures in general}), the rank formula (\Cref{sse: rank formula}), and the total rank formula (\Cref{sse: Total rank formula}).

\begin{definition}\label{de: probability of entering a state for an indecomposable module}
Let $M$ and $N$ be two finitely generated $R$-modules. Assume that $N$ is indecomposable. Denote the period of $N$ by $\omega$. Define the sequence of numbers $(f^{(n)}_{M N})_n$ to be the unique sequence satisfying
\[
f^{(n)}_{M N}=0 \quad \text{if } n \leq 0; \qquad \Fmult{n}{M}{N}= \sum_{e=1}^{n}f^{(e)}_{M N}\Fmult{n-e}{N}{N} \quad \forall n \geq 1.
\]

For a prime ideal $\p$ of $R$ and an integer $t$, define
\[
\mathfrak f^*_{MN}(\p,t) \coloneqq \sum_{e=-\infty}^{\infty}\frac{f^{(t+ e\omega)}_{M N}}{p^{(t+ e\omega)\alpha(R_\p)}} \quad\text{ and }\quad \mathfrak f^*_{MN}(\p) \coloneqq \sum_{e=0}^{\infty} \frac{f^{(e)}_{M N}}{p^{e\alpha(R_\p)}}.
\]
When $\omega= \infty$, for all $t \in \mathbb Z$, $\mathfrak f^*_{MN}(\p,t)$ are zero by definition.
\end{definition}

\begin{remark}\label{re: probabilistic interpretation in the module case}
Suppose $\p \in \Ass(M)$. For any $N, N' \in \find(\p,M)$.
$\fep n_{N'N}$ are the `first-entrance probabilities' defined in \Cref{de: f}. After the suitable renormalization prescribed in \Cref{re: first entrance probability}, $\ep{N'N}(\p)$ is indeed the first entrance probability for the Markov chain associated to $\Mat(\p,M)$; see \Cref{re: Markov chain associated to an associated prime}. The notions of $\fep n_{MN}, \ep{MN}(\p)$ introduced above, provide the suitable analogue of `first entrance probabilities', even when $M$ is not indecomposable. These play a crucial role in the sequel, for example in \Cref{th: existence of signatures in general}, \Cref{pr: comparison of notions of rank formulas and consequences}.
    Moreover, the probabilistic interpretations of $\fep n_{MN}$, $\ep{MN}(\p)$ provide a probabilistic outlook towards the results where these appear. While we avoid probabilistic arguments, such interpretations have played crucial implicit roles in framing many results.
\end{remark}

\begin{lemma}\label{le: recursion on first entrance probability}
Let $M,N$ be as in Definition~\ref{de: probability of entering a state for an indecomposable module} and $\p$ be an associated prime of $N$. Then, for any integer $n \geq 2$,
\[\fep{n}_{MN}= \sum_{N' \in \find(M), N' \ncong N} \Fmult{1}{M}{N'} \fep{n-1}_{N'N}= \sum_{N' \in \find(\p,M), N' \ncong N} \Fmult{1}{M}{N'} \fep{n-1}_{N'N}.\]
\end{lemma}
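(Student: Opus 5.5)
We prove the recursion by a uniqueness argument: we construct a candidate sequence satisfying the defining convolution identity of \Cref{de: probability of entering a state for an indecomposable module}, and compare it with $(\fep{n}_{MN})_n$. Define $g^{(1)} \coloneqq \Fmult{1}{M}{N}$ and, for $n \geq 2$,
\[
g^{(n)} \coloneqq \sum_{N' \in \find(M),\, N' \ncong N} \Fmult{1}{M}{N'}\, \fep{n-1}_{N'N},
\]
with $g^{(n)} = 0$ for $n \leq 0$. The sum is finite because $F_*M$ has finitely many indecomposable summands. If $N \notin \find(N')$, the defining recursion for $N'$ gives $\fep{k}_{N'N}=0$ for all $k$, so those terms vanish. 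The defining identity $\Fmult{n}{M}{N} = \sum_{e=1}^n f^{(e)}_{MN}\Fmult{n-e}{N}{N}$ determines $f^{(n)}_{MN}$ inductively, because the coefficient of $f^{(n)}_{MN}$ is $\Fmult{0}{N}{N}=1$. It therefore suffices to verify $\Fmult{n}{M}{N} = \sum_{e=1}^n g^{(e)}\Fmult{n-e}{N}{N}$ for all $n \geq 1$.

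For the verification, decompose $F_*M \cong \bigoplus_{N'} N'^{\oplus \Fmult{1}{M}{N'}}$ and use KRS (as in \Cref{prop: entries of powers of the F decomposition matrix}) to get
\[
\Fmult{n}{M}{N} = \sum_{N'} \Fmult{1}{M}{N'}\,\Fmult{n-1}{N'}{N}.
\]
Split off the term $N' \cong N$, which contributes $\Fmult{1}{M}{N}\Fmult{n-1}{N}{N} = g^{(1)}\Fmult{n-1}{N}{N}$. For each $N' \ncong N$ and $n \geq 2$, expand using the defining identity for $N'$:
\[
\Fmult{n-1}{N'}{N}=\sum_{e=1}^{n-1}\fep{e}_{N'N}\Fmult{n-1-e}{N}{N}.
\]
For $n=1$ this term is $\Fmult{0}{N'}{N}=0$ since $N'\ncong N$, which is consistent with $\fep{0}_{N'N}=0$. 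Summing over $N'$ and reindexing $e \mapsto e+1$ produces exactly $\sum_{e=2}^{n} g^{(e)}\Fmult{n-e}{N}{N}$. This establishes the first equality.

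For the second equality, suppose $N' \notin \find(\p,M)$, i.e. $\p \notin \Ass(N')$. Since $\p\in\Ass(N)$ and associated primes are preserved under $F_*$ and pass to direct summands, $N$ can never be a summand of $F^k_*N'$. Hence $\Fmult{k}{N'}{N}=0$ for all $k\geq0$, and the inductive recursion gives $\fep{k}_{N'N}=0$ for all $k$. So these terms drop out.

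The only delicate point is bookkeeping. The convention $\Fmult{0}{N'}{N}=0$ for $N'\ncong N$ is needed, and the case $N$ indecomposable but $N\notin\find(N)$ causes no issue, because the definition of $f$ never uses the period.
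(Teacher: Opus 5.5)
Your proof is correct and follows the paper's argument. The paper likewise checks that the middle expression satisfies the defining convolution recursion, using $\Fmult{n}{M}{N}=\sum_{N'}\Fmult{1}{M}{N'}\Fmult{n-1}{N'}{N}$ (cf.\ \Cref{prop: entries of powers of the F decomposition matrix}), invokes uniqueness, and drops the terms with $\p\notin\Ass(N')$ for the reason you give; you have written out the bookkeeping the paper calls a straightforward calculation.
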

\begin{proof}
The rightmost equality above follows since $\fep e_{N'N}$ is zero unless $\p \in \Ass(N')$. A straightforward calculation using \Cref{prop: entries of powers of the F decomposition matrix}(\ref{part: splitting numbers from matrix powers}) shows that the middle terms satisfy the recurrence relations defining $\fep{n}_{MN}$ in \Cref{de: probability of entering a state for an indecomposable module}.    
\end{proof}

The next lemma allows us to use \Cref{prop: properties of f} to analyze $\ep{MN}(\p,t)$, even when $M$ is not indecomposable.
\begin{lemma}\label{le: first entrance probability for any module}
Let $M$ be a finitely generated $R$-module and suppose that $N \in \find(\p, M)$ is such that the connected class of $\Mat(M)$ containing $N$ is $p^{-\alpha(R_\p)}$-recurrent. Denote the period of $N$ by $\omega$.
Then 
\begin{enumerate}
\item \label{part: vanishing of the entrance probability}
If for some integer $t$, $\mathfrak f^*_{M N}(\p,t) = 0$,
then $\Fmult{r}{M}{N} = 0$ for all $r \equiv t \pmod \omega$.
\item \label{part: summation for entrance probabilities}
For any positive integer $e_0$ and integer $t$,
\[\mathfrak f^*_{MN}(\p,t) = \sum_{N_1 \in \find(\p, M)} \frac{\Fmult{e_0}{M}{N_1}}{p^{e_0\alpha(R_\p)}}\mathfrak f^*_{N_1 N}(\p, t-e_0).
\]

\item \label{part: upper bound on first entrance module case} $\mathfrak f^*_{MN}(\p) \leq \grk{\p}{M}/\grk{\p}{N}$.
\item \label{part: rank formula for entrance probabilities} For any integer $r'$ and module $N' \in \mathcal C_{r'}(N)$ (see \Cref{sse: generalities on infinite irreducible matrices}), and $t \in \mathbb Z$,
    $\grk{\p}{N} \mathfrak f^*_{MN}(\p,t)= \grk{\p}{N'} \mathfrak f^*_{MN'}(\p,t+r').$ 
\item \label{part: class property of entrance proability module case} For any module $N'$ in the connected class of $\Mat(\p,M)$ containing $N$,  $\grk{\p}{N} \mathfrak f^*_{MN}(\p)= \grk{\p}{N'} \mathfrak f^*_{MN'}(\p).$    
\end{enumerate}
\end{lemma}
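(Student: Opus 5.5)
The plan is to reduce every part to the Markov-chain statements of \Cref{prop: properties of f}, applied to $T=\Mat(\p,M)$ with positive eigenvector $x=(\grk{\p}{N'})_{N'}$ and $R=p^{-\alpha(R_\p)}$ (\Cref{pr: positive eigenvector of the block corresponding to a prime}). The first step is to express $f^{(n)}_{MN}$ through the matrix first-entrance numbers $f^{(n)}_{N_1N}$ of \Cref{de: f}. By \Cref{prop: entries of powers of the F decomposition matrix}(\ref{part: splitting numbers from matrix powers}), $\Fmult{n}{M}{N}=\sum_{N_1}\Fmult{e_0}{M}{N_1}\Fmult{n-e_0}{N_1}{N}$ for $n\ge e_0$. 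Only $N_1\in\find(\p,M)$ contribute, since $\Fmult{k}{N_1}{N}=0$ unless $\p\in\Ass(N_1)$, and the sum is finite. The matrix entries $\Fmult{k}{N_1}{N}$ agree with $T^{(k)}$ by \Cref{pr: entries of powers of the matrix for a prime}.

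For part (\ref{part: summation for entrance probabilities}), I substitute the defining recursion $\Fmult{k}{N_1}{N}=\sum_e f^{(e)}_{N_1N}\Fmult{k-e}{N}{N}$ and use uniqueness of the sequence defining $f_{MN}$. This identifies $f^{(n)}_{MN}=\sum_{N_1}\Fmult{e_0}{M}{N_1}f^{(n-e_0)}_{N_1N}$ for $n>e_0$.

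One point needs care: the terms with $n\le e_0$. Here I will interpret the formula with the convention $\fep{0}_{N_1N}=\delta_{N_1,N}$ (the identity contributes $\Fmult{0}{N_1}{N}$), and adapt it for small $n$. More cleanly, I will prove the $e_0=1$ case, which is essentially \Cref{le: recursion on first entrance probability} together with the $n=1$ term $f^{(1)}_{MN}=\Fmult{1}{M}{N}$, and then iterate. When iterating, the $N_1=N$ term is handled by $\mathfrak f^*_{NN}(\p,t)$, which equals $1$ or $0$ according to the phase, by recurrence and \Cref{prop: properties of f}(\ref{part: recurrence in terms of first entrance}). I expect this bookkeeping of the $N_1=N$ term and the phases to be the main obstacle. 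Dividing by $p^{n\alpha(R_\p)}$ and summing over $n\equiv t \pmod\omega$ then gives (\ref{part: summation for entrance probabilities}); all sums have nonnegative terms, so rearrangement is legitimate.

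The remaining parts follow from this decomposition.

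\textbf{Part (\ref{part: upper bound on first entrance module case}).} Take $e_0=1$ and sum over phases. Then $\mathfrak f^*_{MN}(\p)\le \sum_{N_1}\frac{\Fmult{1}{M}{N_1}}{p^{\alpha(R_\p)}}\cdot\frac{\grk{\p}{N_1}}{\grk{\p}{N}}$ by \Cref{prop: properties of f}(\ref{part: periodic f is bounded above by subvariant vectors}). The sum $\sum_{N_1}\Fmult{1}{M}{N_1}\grk{\p}{N_1}$ is at most $\grk{\p}{F_*M}=p^{\alpha(R_\p)}\grk{\p}{M}$ by \Cref{lem: rank pushforward}.

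\textbf{Part (\ref{part: rank formula for entrance probabilities}).} Apply the decomposition termwise with \Cref{prop: properties of f}(\ref{part: eventual periodic entrance probability depends only on the periodic subclass}). This gives $\grk{\p}{N}\mathfrak f^*_{N_1N}(\p,t-1)=\grk{\p}{N'}\mathfrak f^*_{N_1N'}(\p,t-1+r')$ for $N'$ in the same class, with the normalization of $x$ cancelling $\grk{\p}{N_1}$. Summing over $N_1$ gives the claim.

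\textbf{Part (\ref{part: class property of entrance proability module case}).} Sum (\ref{part: rank formula for entrance probabilities}) over $t$ modulo $\omega$, or argue directly via \Cref{prop: properties of f}(\ref{part: evetual entrance probability depends only on class}).

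\textbf{Part (\ref{part: vanishing of the entrance probability}).} Suppose $\mathfrak f^*_{MN}(\p,t)=0$. Then $f^{(n)}_{MN}=0$ for all $n\equiv t$, by nonnegativity of each $f^{(n)}_{MN}$; this nonnegativity follows from the decomposition above. In the recursion $\Fmult{r}{M}{N}=\sum_e f^{(e)}_{MN}\Fmult{r-e}{N}{N}$, the factor $\Fmult{r-e}{N}{N}$ is nonzero only if $\omega\mid r-e$. So for $r\equiv t$, every surviving $e$ satisfies $e\equiv t$, and its $f^{(e)}_{MN}$ vanishes, giving $\Fmult{r}{M}{N}=0$.
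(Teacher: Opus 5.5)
Your final route is the paper's proof. You establish (2) for $e_0=1$ from \Cref{le: recursion on first entrance probability} together with $\fep{1}_{MN}=\Fmult{1}{M}{N}$. You insert the $N_1\cong N$ term using $\ep{NN}(\p,0)=1$ (recurrence) and $\ep{NN}(\p,s)=0$ for $s\not\equiv 0 \pmod{\omega}$, and you induct on $e_0$. Then (3), (4), (5) follow from parts (1), (6), (5) of \Cref{prop: properties of f}, and (1) from the renewal recursion, exactly as you describe.

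You should, however, delete the pointwise identity $\fep{n}_{MN}=\sum_{N_1}\Fmult{e_0}{M}{N_1}\fep{n-e_0}_{N_1N}$ for $n>e_0$ from your first paragraph. It is false, and the defect is not confined to the terms with $n\le e_0$. Substituting the renewal equation into $\Fmult{n}{M}{N}=\sum_{N_1}\Fmult{e_0}{M}{N_1}\Fmult{n-e_0}{N_1}{N}$ gives a representation valid only for $n>e_0$, with no coefficients in degrees $\le e_0$. In the defining recursion, by contrast, the values $\fep{m}_{MN}$ with $m\le e_0$ contribute $\sum_{m\le e_0}\fep{m}_{MN}\Fmult{n-m}{N}{N}$ to every later $\Fmult{n}{M}{N}$. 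So uniqueness does not let you match coefficients.

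Concretely, the correct $e_0=1$ recursion excludes $N_1\cong N$. For $R$ regular and $M=N=R$ one has $\fep{2}_{RR}=0$, whereas your formula gives $\Fmult{1}{R}{R}\cdot\fep{1}_{RR}\neq 0$. Identity (2) holds only after summing over a residue class mod $\omega$, which is exactly where $\ep{NN}(\p,0)=1$ is needed.

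Likewise, in part (1), base the nonnegativity of $\fep{n}_{MN}$ on \Cref{le: recursion on first entrance probability} and the nonnegativity of the taboo quantities $\fep{n}_{N'N}$, not on the false identity.
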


\begin{proof}
Note that our assumption implies $\omega< \infty$. Since $\ep{MN}(\p, t)= \ep{MN}(\p, t+ \omega)$, we assume throughout that $t \in (0, \omega]$.

(\ref{part: vanishing of the entrance probability}) We have $\fep {t + e\omega}_{MN} = 0$ for all $e \in \ZZ$. Since $\Fmult{e}{N}{N} = 0$ when $e \not \equiv 0 \pmod \omega$, we have 
\[
\Fmult{t + n\omega}{M}{N} = \sum_{e \leq n}f^{(t + (n - e)\omega)}_{M N}\Fmult{e\omega}{N}{N} = 0.
\]

For (\ref{part: summation for entrance probabilities}),  by 
induction we may  assume that $e_0=1$. If $t=1$, then by \Cref{le: recursion on first entrance probability}
\begin{equation*}
\begin{split}
    \ep{MN}(\p, 1)
    &= \frac{\fep 1_{MN}}{p^{\alpha(R_\p)}}+ \sum_{e=1}^{\infty}\frac{f^{(1+ e\omega)}_{M N}}{p^{(1+ e\omega)\alpha(R_\p)}}\\
    &= \frac{\Fmult{1}{M}{N}}{p^{\alpha(R_\p)}}+ \sum_{e=1}^{\infty} \sum_{N' \in \find(\p,M), N' \ncong N} \frac{\Fmult{1}{M}{N'}}{p^{\alpha(R_\p)}} \frac{\fep{ e\omega}_{N'N}}{p^{e\omega \alpha(R_\p)}}. 
\end{split}   
\end{equation*}
So, by \text{\Cref{prop: properties of f}(\ref{part: recurrence in terms of first entrance})},
\begin{equation*}
\begin{split}
    \ep{MN}(\p, 1)
    & = \frac{\Fmult{1}{M}{N}}{p^{\alpha(R_\p)}}\ep{NN}(\p,0) + \sum_{N' \in \find(\p,M), N' \ncong N} \frac{\Fmult{1}{M}{N'}}{p^{\alpha(R_\p)}} \ep{N'N}(\p,0)\\
    & = \sum_{N' \in \find(\p,M), } \frac{\Fmult{1}{M}{N'}}{p^{\alpha(R_\p)}} \ep{N'N}(\p,0).
\end{split}   
\end{equation*}
In the remaining $\omega \geq t>1$ case, we have
\begin{equation*}
    \begin{split}
       \ep{MN}(\p, t)&=  \sum_{e=0}^{\infty} \sum_{N' \in \find(\p,M), N' \ncong N} \frac{\Fmult{1}{M}{N'}}{p^{\alpha(R_\p)}} \frac{\fep{t-1+ e\omega}_{N'N}}{p^{(t-1+e\omega)\alpha(R_\p)}}\,\, (\text{by \Cref{le: recursion on first entrance probability}}) \\
       & = \sum_{e=0}^{\infty} \sum_{N' \in \find(\p,M)} \frac{\Fmult{1}{M}{N'}}{p^{\alpha(R_\p)}} \frac{\fep{t-1+ e\omega}_{N'N}}{p^{(t-1+e\omega)\alpha(R_\p)}} \,\, (\text{since}\ \fep{t-1+e\omega}_{NN}=0)\\
       &= \sum_{N' \in \find(\p,M)} \frac{\Fmult{1}{M}{N'}}{p^{\alpha(R_\p)}} \ep{N'N}(\p, t-1).
    \end{split}
\end{equation*}

(\ref{part: upper bound on first entrance module case}) By \Cref{prop: properties of f} (\ref{part: periodic f is bounded above by subvariant vectors}),
for any $N_1 \in \find (\p, M)$ we have $\mathfrak f^*_{N_1 N}(\p, t) \leq \mathfrak f^*_{N_1 N}(\p) \leq \grk{\p}{N_1}/\grk{\p}{N}$.
Plugging this in (\ref{part: summation for entrance probabilities}) and summing over $t$, we obtain that
\[
\mathfrak f^*_{M N}(\p) 
= \sum_{N_1 \in \find(\p, M)} \frac{\Fmult{e_0}{M}{N_1}}{p^{e_0\alpha(R_\p)}} \mathfrak f^*_{N_1 N}(\p) 
\leq \sum_{N_1 \in \find(\p, M)} \frac{\Fmult{e_0}{M}{N_1}}{p^{e_0\alpha(R_\p)}} \frac{\grk{\p}{N_1}}{\grk{\p}{N}} = \frac{\grk{\p}{M}}{\grk{\p}{N}}.
\]

(\ref{part: rank formula for entrance probabilities}) follows from (\ref{part: summation for entrance probabilities}) after noting that by \Cref{prop: properties of f}(\ref{part: eventual periodic entrance probability depends only on the periodic subclass}), for any $N_1 \in \find(\p,M)$, 
\[\frac{\grk{\p}{N'}}{\grk{\p}{N_1}}\mathfrak f^{*}_{N_1N'}(\p,t+ r')= \frac{\grk{\p}{N}}{\grk{\p}{N_1}}\mathfrak f^{*}_{N_1N}(\p,t).\]
(\ref{part: class property of entrance proability module case}) follows from \Cref{prop: properties of f}, (\ref{part: evetual entrance probability depends only on class}).
\end{proof}

We now establish existence of the limits defining F-splitting signatures with phase and its \ces version. The following observation allows us to focus only on modules with finite period later on.

\begin{lemma}\label{le: infinite period implies zero limit}
    Let $M$ be a finitely generated $R$-module with $\p \in \Ass(M)$ and $N \in \find(\p, M)$. If the period $\omega$ of $N$ is infinity, then for any integer $t$, $\gsig{M}{N}_{t, \omega, \p}= \gsig{N}{N}_{0, \omega, \p}= \csig{M}{N}_\p=0$.
\end{lemma}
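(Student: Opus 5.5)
The equalities $\gsig{M}{N}_{t, \omega, \p}= \gsig{N}{N}_{0, \omega, \p}=0$ hold by the convention at the end of \Cref{de: generalized signature with a phase}: when $\omega=\infty$, every normalized term $p^{-(e\omega+t)\alpha(R_\p)}\Fmult{e\omega+t}{\bullet}{N}$ is declared to be zero. The real content is therefore the vanishing of the \ces limit $\csig{M}{N}_\p$, i.e.\ showing that $\frac1n\sum_{e=1}^n \Fmult{e}{M}{N}/p^{e\alpha(R_\p)} \to 0$.

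First I will use infinite period to simplify the defining recursion. If $\omega=\infty$, then $\Fmult{e}{N}{N}=0$ for all $e\geq 1$, while $\Fmult{0}{N}{N}=1$. The recursion in \Cref{de: probability of entering a state for an indecomposable module} then collapses to $\Fmult{n}{M}{N}=f^{(n)}_{MN}$. Since $N$ is then an exceptional class, it is not strongly connected to anything, so the standard first-entrance identities still apply. Equivalently, I could argue directly with the Markov chain of \Cref{re: Markov chain associated to an associated prime}: the class $\{N\}$ is exceptional, hence $p^{-\alpha(R_\p)}$-transient, since $\sum_n \Fmult{n}{N}{N} p^{-n\alpha(R_\p)} = 1 < \infty$.

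Next I will invoke \Cref{th: results from Chung 's book on stochastic matrices}, applied to $T=\Mat(\p,M)$ with the positive eigenvector $(\grk{\p}{N'})_{N'}$ from \Cref{pr: positive eigenvector of the block corresponding to a prime} and $R=p^{-\alpha(R_\p)}$. For every $N_1\in\find(\p,M)$ it gives $\Fmult{n}{N_1}{N}p^{-n\alpha(R_\p)}\to 0$, because the class of $N$ is transient. This has to be transferred from the summands $N_1$ to $M$ itself. Here I use \Cref{prop: entries of powers of the F decomposition matrix}(\ref{part: splitting numbers from matrix powers}), in the form given by \Cref{pr: entries of powers of the matrix for a prime}:
\[\Fmult{n+1}{M}{N}=\sum_{N_1\in\find(\p,M)}\Fmult{1}{M}{N_1}\Fmult{n}{N_1}{N}.\]
This is a finite sum over the summands of $F_*M$, so each term tends to zero after normalization, and the finitely many terms give $\Fmult{n}{M}{N}/p^{n\alpha(R_\p)}\to 0$. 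The \ces limit of a null sequence is zero, which proves $\csig{M}{N}_\p=0$.

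The only delicate step is making sure Chung's theorem applies to an exceptional (zero) class. If there is any doubt, I will argue directly. Every path from $N_1$ to $N$ visits $N$ exactly once, at its final step, so $\Fmult{n}{N_1}{N}=f^{(n)}_{N_1N}$. By \Cref{prop: properties of f}(\ref{part: periodic f is bounded above by subvariant vectors}), the series $\sum_n f^{(n)}_{N_1N}p^{-n\alpha(R_\p)}$ converges, bounded by $\grk{\p}{N_1}/\grk{\p}{N}$. The terms of a convergent series tend to zero, which gives the required vanishing.
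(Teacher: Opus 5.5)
Your proposal is correct and is essentially the paper's proof. The phase signatures vanish by the convention in \Cref{de: generalized signature with a phase}; \Cref{th: results from Chung 's book on stochastic matrices}, applied to $\Mat(\p,M)$ with the positive eigenvector $(\grk{\p}{N'})_{N'}$, forces $\Fmult{n}{N_1}{N}/p^{n\alpha(R_\p)}\to 0$ for every $N_1\in\find(\p,M)$; and the finite recursion $\Fmult{n+1}{M}{N}=\sum_{N_1}\Fmult{1}{M}{N_1}\Fmult{n}{N_1}{N}$ transfers this to $M$.

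One minor caveat concerns only your fallback, not the main argument. \Cref{prop: properties of f} is stated under an $R$-recurrence hypothesis, which need not hold here. So the bound $\sum_n \Fmult{n}{N_1}{N}p^{-n\alpha(R_\p)}\le \grk{\p}{N_1}/\grk{\p}{N}$ is better justified directly, for instance by noting that the copies of $N$ split off at different stages contribute disjointly to $\grk{\p}{F^m_*N_1}$.
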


\begin{proof}
The matrix $\Mat(\p,M)$ has a positive eigenvector for the eigenvalue $p^{\alpha(R_\p)}$; see \Cref{pr: positive eigenvector of the block corresponding to a prime}. \Cref{th: results from Chung 's book on stochastic matrices} applied to this setting implies $\csig{N'}{N}_\p=0$ for all $N' \in \find(\p,M)$. Since for $e \geq 1$, $\Fmult{e+1}{M}{N}= \sum_{N' \in \find(\p,M)}\Fmult{1}{M}{N'} \Fmult{e}{N'}{N}$, $\csig{M}{N}_\p$ is also zero. The rest follows from our convention set in \Cref{de: generalized signature with a phase}.
\end{proof}

\begin{theorem}\label{th: existence of signatures in general}
Let $R$ be an F-finite ring satisfying KRS and $M$ be a finitely generated $R$-module. Let $N \in \find(\p, M)$ have period $\omega$. Fix $t \in \ZZ$. 

Then the limits defining $\gsig{M}{N}_{t, \omega, \p}$ and  $\csig{M}{N}_{\p}$ exist.
In particular, the sequence $(\Fmult{e}{M}{N}/p^{e\alpha(R_\p)})_e$ has at most $\omega$ limit points.  

Moreover, the limits have the following properties:
\begin{enumerate}
    \item \label{part: zero self signature implies zero signature} if $\gsig{N}{N}_{0, \omega, \p} = 0$, then $\gsig{M}{N}_\p = 0$; 
    \item \label{part: the limit with period is a multiple of the self signature}  $\gsig{M}{N}_{t, \omega, \p} = \mathfrak f^*_{MN}(\p,t) \gsig{N}{N}_{0, \omega, \p}$;
    \item \label{part: cesaro signature is scaled by entrance probability} $\csig{M}{N}_\p = \frac{1}{\omega}\sum_{t=0}^{\omega-1}\gsig{M}{N}_{t, \omega, \p}= \frac{\mathfrak f^*_{MN}(\p)}{\omega} \gsig{N}{N}_{0, \omega, \p}= \ep{MN}(\p)\csig{N}{N}_\p$,
    where by our convention, any complex number divided by infinity is zero;
    \item \label{part: convergence depends only on class} Let $N' \in \find(\p,M)$ be in the connected class of $\Mat(\p,M)$ containing $N$. Then $(\Fmult{e}{M}{N}/p^{e\alpha(R_\p)})_e$ converges if and only if $(\Fmult{e}{M}{N'}/p^{e\alpha(R_\p)})_e$ converges, and the limsup of one of these sequences is positive if and only if the limsup of the other is.    
\end{enumerate}
\end{theorem}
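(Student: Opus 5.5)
The plan is to reduce everything to the Markov chain machinery of \Cref{ssse: limit theorems for reducible matrices} applied to $T=\Mat(\p,M)$. By \Cref{pr: positive eigenvector of the block corresponding to a prime}, $T$ has the positive right eigenvector $h=(\grk{\p}{N'})_{N'}$ with eigenvalue $p^{\alpha(R_\p)}$, so we may take $R=p^{-\alpha(R_\p)}$ there. If $\omega=\infty$ everything is zero by \Cref{le: infinite period implies zero limit}, so assume $\omega<\infty$.

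Next, split according to the nature of the connected class $\cC$ of $N$. If $\cC$ is $p^{-\alpha(R_\p)}$-transient or null recurrent, \Cref{th: results from Chung 's book on stochastic matrices} gives $\Fmult{e}{N_1}{N}p^{-e\alpha(R_\p)}\to 0$ for every $N_1\in\find(\p,M)$. In particular $\gsig{N}{N}_{0,\omega,\p}=0$. To transfer this to $M$, I would use \Cref{prop: entries of powers of the F decomposition matrix}(\ref{part: splitting numbers from matrix powers}):
\[\Fmult{e+1}{M}{N}=\sum_{N_1\in\find(\p,M)}\Fmult{1}{M}{N_1}\Fmult{e}{N_1}{N}.\]
This is a \emph{finite} sum, since $\Mat(M)$ has finite rows and terms with $\p\notin\Ass(N_1)$ vanish. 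So the limit is $0$, and all of (1)--(3) hold trivially with both sides zero.

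If $\cC$ is positive $p^{-\alpha(R_\p)}$-recurrent, the same finite-sum identity with $t$ shifted by one, combined with \Cref{th: results from Chung 's book on stochastic matrices} for each $N_1$, gives existence of
\[\lim_e \frac{\Fmult{t+e\omega}{M}{N}}{p^{(t+e\omega)\alpha(R_\p)}}=\sum_{N_1}\frac{\Fmult{1}{M}{N_1}}{p^{\alpha(R_\p)}}\,\mathfrak f^*_{N_1N}(\p,t-1)\,\gsig{N}{N}_{0,\omega,\p}.\]
\Cref{le: first entrance probability for any module}(\ref{part: summation for entrance probabilities}) with $e_0=1$ identifies the coefficient as $\mathfrak f^*_{MN}(\p,t)$, which is (2). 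The limit $\gsig{N}{N}_{0,\omega,\p}$ itself exists by \Cref{thm: Vere-Jones limit exists}/Chung applied to the irreducible block, since $\Fmult{e}{N}{N}$ depends only on $T[\cC]$ by finality (\Cref{prop: properties of maximal recurrent classes for a prime}). Then (1) follows from (2). Existence of the \ces limit and the at most $\omega$ limit points follow because the sequence along each residue class mod $\omega$ converges. Averaging over residues and summing $\mathfrak f^*_{MN}(\p,t)$ over $t\in\{0,\dots,\omega-1\}$ gives the first two equalities in (3). For the last equality, I would specialize $M=N$ with $\mathfrak f^*_{NN}(\p)=1$ (\Cref{prop: properties of f}(\ref{part: recurrence in terms of first entrance})) to get $\csig{N}{N}_\p=\gsig{N}{N}_{0,\omega,\p}/\omega$.

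For (4), $N$ and $N'$ share the class, period and recurrence type. In the non-positive case both limits are $0$. In the positive recurrent case, \Cref{le: first entrance probability for any module}(\ref{part: rank formula for entrance probabilities}) gives $\grk{\p}{N}\gsig{M}{N}_{t,\omega,\p}=\grk{\p}{N'}\gsig{M}{N'}_{t+r',\omega,\p}\cdot c$, where $c$ is the ratio $\gsig{N}{N}_{0,\omega,\p}\grk{\p}{N}/\bigl(\gsig{N'}{N'}_{0,\omega,\p}\grk{\p}{N'}\bigr)$. By \Cref{thm: Vere-Jones limit exists}, both self-limits equal $\omega a b/(a\cdot b)$, with $a$ proportional to the left eigenvector, so this ratio is a positive constant. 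Hence the limit points of the two sequences correspond under a cyclic shift of phases and a fixed positive scaling. Convergence means all $\omega$ phase limits coincide, which is preserved, and positivity of the limsup is likewise preserved.

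The main obstacle is the bookkeeping in (4): checking that the proportionality constant relating $\gsig{N}{N}_{0,\omega,\p}$ and $\gsig{N'}{N'}_{0,\omega,\p}$ is independent of the phase, so that the phase shift by $r'$ is a bijection on limit points.
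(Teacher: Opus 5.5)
Your proposal is correct and follows essentially the paper's own route: you work in $\Mat(\p,M)$ with the eigenvector $(\grk{\p}{N_1})_{N_1}$, dispose of $\omega=\infty$ and the non-positive-recurrent case, get the phase limits for summands $N_1\in\find(\p,M)$ from Chung's theorem, transfer them to $M$ via the finite identity for $\Fmult{e+1}{M}{N}$ and \Cref{le: first entrance probability for any module}(\ref{part: summation for entrance probabilities}), and deduce (4) from part (\ref{part: rank formula for entrance probabilities}) of that lemma. The only slip is cosmetic: the constant should be $c=\gsig{N}{N}_{0,\omega,\p}/\gsig{N'}{N'}_{0,\omega,\p}$, without the extra factor $\grk{\p}{N}/\grk{\p}{N'}$, and since neither self-signature depends on $t$, the ``main obstacle'' you flag is automatic.
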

\begin{proof}
We apply \Cref{prop: properties of f} and \Cref{th: results from Chung 's book on stochastic matrices} to the matrix $\Mat(\p,M)$ for the positive eigenvector $(\grk{\p}{N_1})_{N_1\in \find(\p,M)}$ with eigenvalue $p^{\alpha(R_\p)}$; see \Cref{pr: positive eigenvector of the block corresponding to a prime}. Thanks to \Cref{le: infinite period implies zero limit}, we assume that $\omega< \infty$ without loss of generality, from now on.

(\ref{part: cesaro signature is scaled by entrance probability}) follows from (\ref{part: zero self signature implies zero signature}), (\ref{part: the limit with period is a multiple of the self signature}) once we note that $\Fmult{e}{N}{N}$ is zero unless $\omega$ divides $e$. So, we just prove the existence of limits and (\ref{part: zero self signature implies zero signature}), (\ref{part: the limit with period is a multiple of the self signature}).

 First, we prove the desired statements when $M$ is replaced by any $N' \in \find(\p,M)$. 
When the connected class of $\Mat(\p,M)$ containing $N$ is not $p^{-\alpha(R_\p)}$-positive recurrent, all the limits defining periodic signatures and \ces signature are zero by \Cref{th: results from Chung 's book on stochastic matrices}. Hence, the desired statements follow.
So, while proving the statements for $N'$, assume the connected class of $\Mat(\p,M)$ containing $N$ is $p^{-\alpha(R_\p)}$-positive recurrent.
In this case, (\ref{part: zero self signature implies zero signature}), (\ref{part: the limit with period is a multiple of the self signature}) follow from  \Cref{th: results from Chung 's book on stochastic matrices}.

To get the statements for $M$, let $N'_1, \ldots, N'_r$ be the distinct modules in $\find(\p,M)$ which are summands of $F_*M$.  We use:

\begin{enumerate}
    \item[(i)] $\Fmult{e+1}{M}{N}= \sum_{i=1}^r \Fmult{1}{M}{N'_i} \Fmult{e}{N'_i}{N}$, for any $e \geq 1$.
\end{enumerate}

When the connected class of $\Mat(\p,M)$ containing $N$ is not $p^{-\alpha(R_\p)}$-positive recurrent, $(\Fmult{e}{N'_i}{N}/p^{e\alpha(R_\p)})_e$ converges to zero for any $i$. Thus by (i), all periodic signatures or the \ces signature of $M$ towards $N$ at $\p$ are zero.
Therefore, without loss of generality, we assume that the connected class of $\Mat(\p,M)$ containing $N$ is $p^{-\alpha(R_\p)}$-positive recurrent. 
It remains to check the existence of the limits defining $\gsig{M}{N}_{t,\omega,\p}$, $\csig{M}{N}_\p$ and (\ref{part: the limit with period is a multiple of the self signature}). All these follow, since for any $t \in \mathbb Z$, (i) and the case for $N' \in \find(\p,M)$ imply
\begin{equation*}
\begin{split}
 \lim_{e \to \infty} \frac{\Fmult{t+e \omega}{M}{N}}{p^{(t+e \omega)\alpha(R_\p)}}&= \sum_{i=1}^r \frac{\Fmult{1}{M}{N'_i}}{p^{\alpha(R_\p)}} \gsig{N'_i}{N}_{t-1, \omega,\p}  \\
 &= \sum_{i=1}^r \frac{\Fmult{1}{M}{N'_i}}{p^{\alpha(R_\p)}}\ep{N'_iN}(\p, t-1) \gsig{N}{N}_{0, \omega,\p}\\
 & = \ep{MN}(\p, t)\gsig{N}{N}_{0, \omega,\p}
\end{split} 
\end{equation*}

(\ref{part: convergence depends only on class}) If the connected class containing $N$ and $N'$ is not positive recurrent, then $\gsig{N}{N}_{0,\omega,\p} = \gsig{N'}{N'}_{0,\omega,\p} = 0$, and both sequences converge to zero by (\ref{part: zero self signature implies zero signature}). Otherwise $\omega < \infty$, so let $r$ be such that $\Fmult{e}{N}{N'} \neq 0$ implies $e \equiv r$ modulo $\omega$. By \Cref{le: first entrance probability for any module}, $\ep{MN}(\p,t)= \grk{\p}{N'}\ep{MN'}(\p, t+r)/\grk{\p}{N}$ for every $t$. By (\ref{part: the limit with period is a multiple of the self signature}), each of the two sequences converges if and only if the corresponding function $t \mapsto \ep{M\bullet}(\p,t)$ is constant, and these two conditions are equivalent. Finally, since $N, N' \in \find(\p,M)$, by (\ref{part: the limit with period is a multiple of the self signature}) the limsup of either sequence is positive if and only if $\gsig{N}{N}_{0,\omega,\p}>0$, i.e., if and only if the class is positive recurrent.

\end{proof}

Combining with \Cref{le: first entrance probability for any module} we get an upper bound on splitting signatures. 

\begin{corollary}[Rank inequality]\label{cor: rank inequalities}
Let $R$ be a ring of prime characteristic $p>0$, $\p \in \Spec(R)$, and $M$ be a finitely generated $R$-module. Let $N \in \find(\p, M)$ have period $\omega$. 
Then for any $t \in \ZZ$ $\gsig{M}{N}_{t, \omega, \p} \leq \frac{\grk{\p}{M}}{\grk{\p}{N}} \gsig{N}{N}_{0, \omega, \p}$, 
and
\[
\csig{M}{N}_\p \leq \frac{\grk{\p}{M}}{\omega \grk{\p}{N}} \gsig{N}{N}_{0, \omega, \p} = \frac{\grk{\p}{M}}{\grk{\p}{N}} \csig{N}{N}_{\p}.
\]
\end{corollary}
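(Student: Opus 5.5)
The plan is to derive both inequalities from \Cref{th: existence of signatures in general} combined with the upper bound on first-entrance probabilities in \Cref{le: first entrance probability for any module}.

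First, dispose of the degenerate cases. If $\omega = \infty$, every quantity vanishes by \Cref{le: infinite period implies zero limit}, so there is nothing to prove. Similarly, if the connected class of $\Mat(\p,M)$ containing $N$ is not $p^{-\alpha(R_\p)}$-positive recurrent, then the proof of \Cref{th: existence of signatures in general} shows that $\gsig{M}{N}_{t,\omega,\p}$ and $\csig{M}{N}_\p$ are zero. Both sides of each inequality are nonnegative, so the claim holds trivially.

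So assume the class is $p^{-\alpha(R_\p)}$-positive recurrent, and in particular recurrent. Then the hypotheses of \Cref{le: first entrance probability for any module} hold, and part (\ref{part: upper bound on first entrance module case}) gives $\ep{MN}(\p) \leq \grk{\p}{M}/\grk{\p}{N}$. All the terms $f^{(n)}_{MN}$ are nonnegative. This follows either from the recursion in \Cref{le: recursion on first entrance probability} together with the nonnegativity of the Markov first-entrance probabilities $\fep n_{N'N}$, or from the fact that the sums converge absolutely in the proof of the lemma. Hence
\[
\ep{MN}(\p,t) \leq \ep{MN}(\p) \leq \grk{\p}{M}/\grk{\p}{N}
\]
for every $t$. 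Substituting this into \Cref{th: existence of signatures in general}(\ref{part: the limit with period is a multiple of the self signature}), namely $\gsig{M}{N}_{t,\omega,\p} = \ep{MN}(\p,t)\,\gsig{N}{N}_{0,\omega,\p}$, gives the first inequality, since $\gsig{N}{N}_{0,\omega,\p} \geq 0$.

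For the \ces statement, use part (\ref{part: cesaro signature is scaled by entrance probability}) of the same theorem:
\[
\csig{M}{N}_\p = \frac{\ep{MN}(\p)}{\omega}\gsig{N}{N}_{0,\omega,\p} \leq \frac{\grk{\p}{M}}{\omega\grk{\p}{N}}\gsig{N}{N}_{0,\omega,\p}.
\]
Applying part (\ref{part: cesaro signature is scaled by entrance probability}) with $M = N$ gives $\csig{N}{N}_\p = \frac{1}{\omega}\gsig{N}{N}_{0,\omega,\p}$. The reason is that $\fep{n}_{NN}$ is the first-return sequence, and recurrence forces $\ep{NN}(\p) = 1$ by \Cref{prop: properties of f}(\ref{part: recurrence in terms of first entrance}). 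Alternatively, one can compute directly that $\Fmult{e}{N}{N}$ vanishes off multiples of $\omega$, so its \ces average is $\frac{1}{\omega}$ times the limit along multiples of $\omega$. This gives the stated equality.

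The only delicate step is the nonnegativity of $f^{(n)}_{MN}$, which is needed to compare the phase-restricted sum $\ep{MN}(\p,t)$ with the full sum $\ep{MN}(\p)$. I expect it to follow from \Cref{le: recursion on first entrance probability} by induction on $n$. For the base case, $\fep 1_{MN} = \Fmult{1}{M}{N} \geq 0$. For the inductive step, each $\fep{n-1}_{N'N}$ is the genuine first-entrance quantity of \Cref{de: f} for the nonnegative matrix $\Mat(\p,M)$, and that quantity counts paths that avoid $N$, so it is nonnegative.
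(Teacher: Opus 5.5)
Your proposal is correct and is essentially the paper's argument: the paper also combines parts (\ref{part: the limit with period is a multiple of the self signature}) and (\ref{part: cesaro signature is scaled by entrance probability}) of \Cref{th: existence of signatures in general} with the bound $\ep{MN}(\p,t) \leq \ep{MN}(\p) \leq \grk{\p}{M}/\grk{\p}{N}$ from \Cref{le: first entrance probability for any module}, and it gets $\csig{N}{N}_\p = \gsig{N}{N}_{0,\omega,\p}/\omega$ from the vanishing of $\Fmult{e}{N}{N}$ off multiples of $\omega$. The paper leaves two points implicit that you make explicit: the separate treatment of the infinite-period and non-positive-recurrent cases (where the lemma's recurrence hypothesis is unavailable), and the nonnegativity of $\fep{n}_{MN}$ via \Cref{le: recursion on first entrance probability}.
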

\begin{proof}
    Since $\gsig{N}{N}_{t, \omega, \p} = 0$
    whenever $t$ is not divisible by $\omega$, $\csig{N}{N}_\p= \gsig{N}{N}_{0, \omega, \p}/\omega$. Since $\ep{MN}(\p, t) \leq \ep{MN}(\p)$, the rest is immediate from \Cref{le: first entrance probability for any module}.
\end{proof}

\begin{corollary}\label{cor: aperiodicity implies existence of limit}
Let $X$ be an indecomposable finitely generated $R$-module of period one and $\p \in \Ass(X)$. Then for any finitely generated module $M$, the F-splitting signature of $M$ towards $X$ at $\p$ exists. In particular, the F-splitting signature of $M$ towards $X$ exists.
\end{corollary}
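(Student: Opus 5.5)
The plan is to reduce to \Cref{th: existence of signatures in general} with $\omega=1$. There is one subtlety: that theorem assumes $N \in \find(\p,M)$, while here $X$ is an arbitrary indecomposable module of period one with $\p\in\Ass(X)$.

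First I handle the case where $X$ is not a summand of any $F^e_*M$. Then $\Fmult{e}{M}{X}=0$ for all $e\geq 1$, so the limit exists and equals zero. Otherwise $X\in\find(M)$, and since $\p\in\Ass(X)$ we have $X\in\find(\p,M)$.

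Period one means $\omega=1$. By \Cref{th: existence of signatures in general}, the limit defining $\gsig{M}{X}_{t,1,\p}$ exists for every $t$. With $\omega=1$ the subsequence $(t+e)_e$ is just a tail of the full sequence. Hence $\lim_e \Fmult{e}{M}{X}/p^{e\alpha(R_\p)}$ exists, and it equals $\gsig{M}{X}_{0,1,\p}=\ep{MX}(\p)\gsig{X}{X}_{0,1,\p}$. Equivalently, the sequence has at most $\omega=1$ limit points and is bounded by \Cref{cor: convergence radius is at least the power of dimension}, so it converges. This gives the F-splitting signature at $\p$.

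For the ``in particular'' I need the normalization by $p^{e\delta(M)}$. I take $\p\in\Ass(X)$; if $X\in\find(M)$ then $\Ass(X)\subseteq\Ass(M)$, so $\alpha(R_\p)\le\delta(M)$. This holds because $\p$ contains a minimal prime $\q$ of $M$, and \Cref{thm: Kunz rank} gives $\alpha(R_\p)\le\alpha(R_\q)\le\delta(M)$. If $\alpha(R_\p)=\delta(M)$, the limit is the one just obtained. If $\alpha(R_\p)<\delta(M)$, then $\Fmult{e}{M}{X}/p^{e\delta(M)} = \bigl(\Fmult{e}{M}{X}/p^{e\alpha(R_\p)}\bigr)p^{-e(\delta(M)-\alpha(R_\p))}$. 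This is a convergent (hence bounded) sequence times one tending to zero, so it converges to $0$.

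The main obstacle is only this bookkeeping of the prime $\p$, namely the comparison $\alpha(R_\p)\le\delta(M)$ via Kunz's formula and the cases $X\notin\find(M)$ or $\omega=\infty$. The latter cannot occur, since period one is finite. The substantive convergence is entirely contained in the earlier theorem.
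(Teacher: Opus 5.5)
Your proposal is correct and follows the paper's implicit argument: the corollary is stated without proof as the $\omega=1$ case of \Cref{th: existence of signatures in general}. Your extra bookkeeping is exactly what the paper leaves to the reader, namely the trivial case $X\notin\find(M)$ and, for the ``in particular'', the comparison $\alpha(R_\p)\le\delta(M)$ via \Cref{thm: Kunz rank}; the same inequality appears in the proofs of \Cref{cor: convergence radius is at least the power of dimension} and \Cref{lem: radius of convergence for contributors}.
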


\subsubsection{F-contributors}\label{subsubsec: F contributors}
For a finitely generated $R$-module $M$, we study those $N \in \find(\p,M)$ such that $(\Fmult{e}{M}{N})_e$ has a maximum possible geometric growth rate, under the name of F-contributors and their variants. 
Our notion of F-contributors at a prime ideal, in retrospect, generalizes Yao's notion of F-contributors introduced in \cite{Yao05}. Our characterization of F-contributors in terms of basic positive recurrent classes of the F-decomposition matrix (\Cref{thm: F-contributor at a prime via matrices}) is a key new feature which allows us to characterize all torsion-free F-contributors over a strongly F-regular ring or their finite group quotient in \Cref{se: strongly F regular modules}.

\begin{definition}\label{de: F-contributor at a prime}
    Let $R$ be an F-finite ring, $\p$ be a prime ideal of $R$, and 
    $M$ be a finitely generated $R$-module. A module $X$ is called an \textit{F-contributor to $M$} at $\p$ if $\p \in \Ass(X)$ and $\limsup_{e \to \infty} \Fmult{e}{M}{X}/p^{e\alpha (R_\p)}$ is positive.
    An $R$-module $X$ is called an \textit{F-contributor to $M$} if $\limsup_{e \to \infty} \Fmult{e}{M}{X}/p^{e\delta (M)}$ is positive.

    If $M = X$, we will simply say that $X$ is an \textit{F-contributor} (resp., an \textit{F-contributor at $\p$}). 
\end{definition}

By \Cref{cor: convergence radius is at least the power of dimension}, for an $X \in \find(\p,M)$, $\Fmult{e}{M}{X}= O(p^{e\alpha(R_\p)})$. Thus an F-contributor $X$ to $M$ (at $\p$) splits from $F^e_*M$ at a maximum possible geometric rate as $e$ varies.

Our notion of an F-contributor to a module is akin to Yao's definition. In contrast, an F-contributor does not refer to any parent module. Moreover, the framework of F-contributor at a prime ideal includes modules $X$, where $\Fmult{e}{X}{X}$ has a geometric growth rate less than $p^{\delta(X)}$, unlike Yao's notion.


The following result equivalently characterizes F-contributors at a prime (and hence F-contributors) via F-decomposition matrices. This implies special properties of F-contributors at a prime thanks to \Cref{prop: properties of maximal recurrent classes for a prime}.

\begin{theorem}\label{thm: F-contributor at a prime via matrices}
    Let $X$ be an indecomposable finitely generated $R$-module, $M$ be a finitely generated $R$-module, and $\p \in \Spec(R)$. 
    Consider the following conditions:
    \begin{enumerate}
            \item[(1a)] $X$ is an F-contributor to $M$ at $\p$. 
            \item[(1b)] $X \in \find(\p, M)$, $R(\Mat(\p, M)) = p^{-\alpha(R_\p)}$, and the connected class of $\Mat(\p, M)$ containing $X$ is basic positive recurrent.
            \item[(2a)] \label{part: 2a} $X$ is an F-contributor at $\p$. 
            \item[(2b)]\label{part: 2b} $X \in \find (\p, X)$, $R(\Mat(\p, X)) = p^{-\alpha(R_\p)}$, and $\Mat(\p, X)$ is irreducible positive recurrent.
            \item[(3)] for any finitely generated $R$-module $N$ such that $X \in \find(N)$, $X$ is an F-contributor to $N$ at $\p$ 
            and the connected class of $\Mat (\p, N)$ containing $X$ 
            is $\find (\p, X)$. 
    \end{enumerate}
    Then (1a) $\Leftrightarrow$ (1b) $\Rightarrow$ 
    (2a) $\Leftrightarrow$ (2b) $\Rightarrow$ (3). 
\end{theorem}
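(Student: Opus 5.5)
The plan is to prove the implications in the order listed, using \Cref{th: existence of signatures in general}, \Cref{thm: infinite basic is final}, \Cref{prop: properties of maximal recurrent classes for a prime} and \Cref{pr: entries of powers of the matrix for a prime}.

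\emph{(1a)$\Rightarrow$(1b).} The first task is to show $X \in \find(\p,M)$. Positivity of the limsup forces $\Fmult{e}{M}{X}\neq 0$ for some $e$, so $X\in\find(M)$; together with $\p\in\Ass(X)$ this gives $X\in\find(\p,M)$. By \Cref{th: existence of signatures in general}, all periodic signatures of $M$ towards $X$ are multiples of $\gsig{X}{X}_{0,\omega,\p}$. Hence positivity forces $\omega<\infty$ and $\gsig{X}{X}_{0,\omega,\p}>0$.
\begin{itemize}
\item Since $\Fmult{e\omega}{X}{X}$ is the $(X,X)$ entry of $\Mat(\p,M)^{e\omega}$, the series $\sum_e \Fmult{e}{X}{X}p^{-e\alpha(R_\p)}$ diverges. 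So the class $\cC$ of $X$ is $p^{-\alpha(R_\p)}$-recurrent.
\item It is moreover positive recurrent, because the limsup is nonzero.
\item By \Cref{le: R recurrent implies basic recurrent}, $R(\Mat(\p,M))=p^{-\alpha(R_\p)}$ and $\cC$ is basic.
\end{itemize}

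\emph{(1b)$\Rightarrow$(1a).} Positive recurrence of $\cC$ at $R=p^{-\alpha(R_\p)}$ gives $\gsig{X}{X}_{0,\omega,\p}>0$ by \Cref{lem: limit and periodicity} applied to $\Mat(\p,M)^{\omega}[\cC]$, or directly by \Cref{thm: Vere-Jones limit exists}. It remains to see that $\ep{MX}(\p,t)>0$ for some $t$. Since $X\in\find(M)$, some $\Fmult{e}{M}{X}\neq 0$, so by \Cref{le: first entrance probability for any module}(\ref{part: vanishing of the entrance probability}) the corresponding $\ep{MX}(\p,t)\neq 0$. Then \Cref{th: existence of signatures in general}(\ref{part: the limit with period is a multiple of the self signature}) gives a positive subsequential limit.

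\emph{(1b)$\Rightarrow$(2a), and (2a)$\Leftrightarrow$(2b).} The signature of $X$ towards itself only involves the class $\cC$. By \Cref{lem: support of an irreducible class} the blocks agree, $\Mat(X)[\cC]=\Mat(M)[\cC]$, so $\gsig{X}{X}_{0,\omega,\p}>0$ computed in $M$ equals the one computed in $X$; this gives (2a). Now (2a) is (1a) with $M=X$, hence equivalent to (1b) with $M=X$. Upgrading to (2b) requires irreducibility of $\Mat(\p,X)$, which I expect to be the main obstacle. By \Cref{prop: properties of maximal recurrent classes for a prime} the class $\cC\ni X$ is final in $\Mat(\p,X)$. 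Every element of $\find(\p,X)$ is reachable from $X$, since it is a summand of some $F^e_*X$. Finality says nothing in $\cC$ reaches outside $\cC$, so every reachable element lies in $\cC$; hence $\find(\p,X)=\cC$ and $\Mat(\p,X)$ is irreducible. For (2b)$\Rightarrow$(2a), apply (1b)$\Rightarrow$(1a) with $M=X$.

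\emph{(2)$\Rightarrow$(3).} Let $N$ be given with $X\in\find(N)$. By \Cref{pr: entries of powers of the matrix for a prime}, $\find(X)\subseteq\find(N)$, and the powers of $\Mat(\p,N)$ restricted to $\find(\p,X)$ agree with $\Fmult{n}{\cdot}{\cdot}$. Hence $\Mat(\p,N)[\find(\p,X)]=\Mat(\p,X)$ is irreducible positive recurrent at $p^{-\alpha(R_\p)}$. By \Cref{cor: convergence radius is at least the power of dimension} we have $R(\Mat(\p,N))\geq p^{-\alpha(R_\p)}$, and recurrence forces equality. So $\find(\p,X)$ is contained in a basic positive recurrent class of $\Mat(\p,N)$. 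This class is final by \Cref{thm: infinite basic is final} and \Cref{pr: positive eigenvector of the block corresponding to a prime}, so it consists of modules reachable from $X$, and therefore equals $\find(\p,X)$. Finally, (1b)$\Rightarrow$(1a) with $M=N$ shows $X$ is an F-contributor to $N$ at $\p$.
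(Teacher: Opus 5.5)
Your proposal is correct and follows essentially the same route as the paper. It uses \Cref{th: existence of signatures in general} for (1a)$\Rightarrow$(1b)/(2a), finality from \Cref{prop: properties of maximal recurrent classes for a prime} together with reachability from $X$ to get irreducibility of $\Mat(\p,X)$, and the identification of the class of $X$ in $\Mat(\p,N)$ with $\find(\p,X)$ for (3). The only real difference is (1b)$\Rightarrow$(1a), where the paper avoids first-entrance probabilities and Vere-Jones' limit theorem: positive recurrence of the class means by definition $\limsup_e \Fmult{e}{X}{X}/p^{e\alpha(R_\p)}>0$, and $\Fmult{e_0+e}{M}{X}\geq \Fmult{e}{X}{X}$ once $X$ is a summand of $F^{e_0}_*M$. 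This shorter argument also sidesteps your appeal to \Cref{lem: limit and periodicity} for $\Mat(\p,M)^{\omega}[\cC]$, which is not irreducible when $\omega>1$.
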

\begin{proof}

    (1a) $\Rightarrow$ (2a). By definition, $\p \in \Ass(X)$. \Cref{th: existence of signatures in general}(\ref{part: zero self signature implies zero signature}) implies $X$ is an F-contributor at $\p$.

    (1a) $\Rightarrow$ (1b): 
    Both conditions imply that $\p \in \Ass (X)$. By \Cref{cor: convergence radius is at least the power of dimension}, $R(\Mat(\p,M))\geq p^{-\alpha(R_\p)}$. By (2a), the radius of convergence of $\sum_{n\geq 0}\Fmult{n}{X}{X}z^n$ is $p^{-\alpha(R_\p)}$. Hence $R(\Mat(\p,M))= p^{-\alpha(R_\p)}$. (2a) implies that the concerned connected class of $\Mat(\p,M)$ is $p^{-\alpha(R_\p)}$-positive recurrent, and hence basic positive recurrent.

    (1a) $\Leftarrow$ (1b): (1b) implies $\limsup_e \Fmult{e}{X}{X}/p^{e\alpha(R_\p)}$ is positive. Since $X \in \find(\p,M)$, $\p \in \Ass(X)$ and $\limsup_e \Fmult{e}{M}{X}/p^{e\alpha(R_\p)}$ is also positive. So (1a) follows.

    (2a) $\Leftrightarrow$ (2b): For the backward direction, note that, since $\Mat(\p,X)$ is irreducible and $X \in \find(\p,X)$, the connected class of $\Mat(\p,X)$ containing $X$ is $\find(\p,X)$, and the latter is positive recurrent by our very assumption.
    Thus (2a) follows by applying the equivalence (1a) $\Leftrightarrow$ (1b). 
    For the forward direction, thanks to (1a) $\Leftrightarrow$ (1b), it remains to check the irreducibility of $\Mat(\p,X)$. To this end, (2a) implies that the connected class of $\Mat(\p,X)$ containing $X$, denoted by $\cC$, is final by \Cref{prop: properties of maximal recurrent classes for a prime}.
    Since $X \in \cC$, $\find(\p,X) \subseteq \cC$. Thus $\find(\p,X) = \cC$. Hence $\Mat(\p,X)= \Mat(\p,X)[\cC]$ is irreducible; cf. 
    \Cref{de: transience recurrence of a connected class}.

    (2b) $\Rightarrow$ (3). By definition, $X \in \find(\p, N)$.
    Let $\cC \subseteq \find(\p, N)$ be the connected class of $\Mat(\p, N)$ containing $X$. By \Cref{lem: support of an irreducible class}, $\cC$ is also the connected class of $\Mat(\p,X)$ containing $X$. The irreducibility assumption in (2b) implies $\cC= \find(\p,X)$. 
    By (2b) $\limsup_e \Fmult{e}{X}{X}/p^{e\alpha(R_\p)}$ is positive. Since $X \in \find(N)$, $\limsup_e \Fmult{e}{N}{X}/p^{e\alpha(R_\p)}$ is also positive, and thus $X$ is an F-contributor to $N$ at $\p$.  
\end{proof}

Combining \Cref{thm: F-contributor at a prime via matrices} and \Cref{th: existence of signatures in general} we immediately get the following characterization of F-contributors to a module as summands which are themselves F-contributors. 

\begin{corollary}\label{cor: positive ces signature -> F-contributor}
    Let $R$ be a ring of prime characteristic $p>0$, $M$ be a finitely generated $R$-module, and $N \in \find(\p, M)$. 
    Then $\csig M N_\p > 0$ if and only if $N$
    is an F-contributor at $\p$. 
\end{corollary}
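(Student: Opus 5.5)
We prove the two directions separately, combining \Cref{th: existence of signatures in general}(\ref{part: cesaro signature is scaled by entrance probability}) with the matrix characterization in \Cref{thm: F-contributor at a prime via matrices}. First, $N \in \find(\p, M)$ gives $\p \in \Ass(N)$, so \Cref{th: existence of signatures in general} applies. It yields the existence of $\csig{M}{N}_\p$ and the factorization
\[
\csig{M}{N}_\p = \ep{MN}(\p)\,\csig{N}{N}_\p = \frac{\ep{MN}(\p)}{\omega}\,\gsig{N}{N}_{0,\omega,\p},
\]
where $\omega$ is the period of $N$. If $\omega=\infty$, \Cref{le: infinite period implies zero limit} shows that both sides of the equivalence fail: $\csig{M}{N}_\p=0$, and $\Fmult{e}{N}{N}=0$ for all $e\ge 1$, so $N$ is not an F-contributor at $\p$. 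Hence we may assume $\omega<\infty$.

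($\Rightarrow$) Suppose $\csig{M}{N}_\p>0$. A sequence whose limsup is $0$ has \ces limit $0$, so $\limsup_e \Fmult{e}{M}{N}/p^{e\alpha(R_\p)}>0$. Together with $\p \in \Ass(N)$, this says $N$ is an F-contributor to $M$ at $\p$, i.e. (1a) of \Cref{thm: F-contributor at a prime via matrices} holds. The implication (1a) $\Rightarrow$ (2a) then shows that $N$ is an F-contributor at $\p$. Alternatively, positivity of the product above forces $\gsig{N}{N}_{0,\omega,\p}>0$ directly.

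($\Leftarrow$) Suppose $N$ is an F-contributor at $\p$. Since $\Fmult{e}{N}{N}=0$ unless $\omega\mid e$, the limsup defining this condition is realized along multiples of $\omega$, and the limit $\gsig{N}{N}_{0,\omega,\p}$ exists by \Cref{th: existence of signatures in general}. Therefore $\gsig{N}{N}_{0,\omega,\p}>0$, i.e. $\csig{N}{N}_\p>0$. By the factorization, it remains to show $\ep{MN}(\p)>0$. For this we use that $N \in \find(M)$, so there is some $e_0\ge 1$ with $\Fmult{e_0}{M}{N}>0$. By (2a) $\Rightarrow$ (2b), the class of $N$ is positive recurrent, so \Cref{le: first entrance probability for any module} applies. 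Take $t=e_0$ in its part (\ref{part: vanishing of the entrance probability}): if $\ep{MN}(\p,e_0)$ were zero, then $\Fmult{e_0}{M}{N}=0$, a contradiction. Hence $\ep{MN}(\p)\ge \ep{MN}(\p,e_0)>0$, and so $\csig{M}{N}_\p>0$.

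The main point requiring care is the converse direction. The reduction to $\ep{MN}(\p)>0$ has to go through the vanishing criterion of \Cref{le: first entrance probability for any module}, and that lemma needs the recurrence hypothesis on the class of $N$. The recurrence hypothesis is supplied by the F-contributor assumption through (2b) of \Cref{thm: F-contributor at a prime via matrices}. Once that is in place, the rest is bookkeeping with the factorization from \Cref{th: existence of signatures in general}.
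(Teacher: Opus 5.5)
Your proof is correct and follows essentially the paper's route: the corollary comes from combining \Cref{thm: F-contributor at a prime via matrices} with the factorization $\csig{M}{N}_\p = \ep{MN}(\p)\csig{N}{N}_\p$ in \Cref{th: existence of signatures in general}. The only variation is in the converse. The paper's intended argument goes through (2a)~$\Rightarrow$~(3): this gives $\limsup_e \Fmult{e}{M}{N}/p^{e\alpha(R_\p)}>0$, so some phase limit is positive and therefore so is the \ces limit. You instead show $\ep{MN}(\p)>0$ using the vanishing criterion of \Cref{le: first entrance probability for any module}, and both arguments work.
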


\begin{lemma}\label{lem: radius of convergence for contributors}
    If $X$ is an indecomposable F-contributor to $M$, then $R(\Mat(M)) = p^{-\delta (M)}$  and $X$ is an F-contributor to $M$ at a prime ideal $\p$ such that $\delta (M) = \alpha (R_\p)$. 
\end{lemma}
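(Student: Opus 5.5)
The plan is to reduce everything to the prime-by-prime block structure of $\Mat(M)$ and use the growth bound of \Cref{cor: convergence radius is at least the power of dimension}. Since $X$ is an F-contributor to $M$, we have $\limsup_e \Fmult{e}{M}{X}/p^{e\delta(M)}>0$. In particular $\Fmult{e}{M}{X}\neq 0$ for some $e$, so $X \in \find(M)$ and $\Ass(X)$ is nonempty.

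First, pick $\p \in \Ass(X)$ with $\alpha(R_\p)$ maximal among the associated primes of $X$. By \Cref{cor: convergence radius is at least the power of dimension}, $\Fmult{e}{M}{X}=O(p^{e\alpha(R_\p)})$ for this (indeed for every) $\p\in\Ass(X)$. Next we need $\alpha(R_\p)\le \delta(M)$. Since $X$ is a summand of $F^e_*M$, we get $\p \in \Ass(F^e_*M)=\Ass(M)\subseteq \Supp(M)$. So $\p$ contains some $\q\in\Min(M)$, and \Cref{thm: Kunz rank} gives $\alpha(R_\p)\le\alpha(R_\q)\le\delta(M)$.

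Now compare growth rates. If $\alpha(R_\p)<\delta(M)$, then $\Fmult{e}{M}{X}/p^{e\delta(M)}=O(p^{e(\alpha(R_\p)-\delta(M))})\to 0$, contradicting the F-contributor hypothesis. Hence $\alpha(R_\p)=\delta(M)$. The two normalizations $p^{e\alpha(R_\p)}$ and $p^{e\delta(M)}$ then coincide, so the limsup in the definition of an F-contributor to $M$ at $\p$ is the same positive number. Together with $\p\in\Ass(X)$, this shows $X$ is an F-contributor to $M$ at $\p$.

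For the convergence radius, apply \Cref{thm: F-contributor at a prime via matrices} (1a)$\Rightarrow$(1b): $R(\Mat(\p,M))=p^{-\alpha(R_\p)}=p^{-\delta(M)}$. To pass to the whole matrix, use $\Mat(\p,M)^n_{N,N'}=\Mat(M)^n_{N,N'}$ from \Cref{pr: entries of powers of the matrix for a prime}. By definition the convergence radius is an infimum over entries, so $R(\Mat(M))\le R(\Mat(\p,M))=p^{-\delta(M)}$. The reverse inequality $R(\Mat(M))\ge p^{-\delta(M)}$ is \Cref{cor: convergence radius is at least the power of dimension}, so $R(\Mat(M))=p^{-\delta(M)}$.

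The only step needing care is showing $\alpha(R_\p)\le\delta(M)$ for associated primes of $X$, since $\delta$ is defined via minimal primes of $M$. This is handled by monotonicity of $\alpha$ from Kunz's theorem, together with $\Ass(X)\subseteq\Ass(M)$, which holds because $X$ is a summand of $F^e_*M$.
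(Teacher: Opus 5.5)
Your proof is correct and follows essentially the same argument as the paper. You use $\Ass(X)\subseteq\Ass(M)$ together with Kunz's theorem to get $\alpha(R_\p)\le\delta(M)$, the growth bound $\Fmult{e}{M}{X}=O(p^{e\alpha(R_\p)})$ to force equality, and then the chain $p^{-\delta(M)}\le R(\Mat(M))\le R(\Mat(\p,M))=p^{-\alpha(R_\p)}$; the only cosmetic difference is that choosing $\p$ with $\alpha(R_\p)$ maximal is unnecessary, since, as you note, the argument works for every $\p\in\Ass(X)$.
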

\begin{proof}
    Let $\p \in \Ass (X)$ be arbitrary.
    Since $X \in \find(M)$, $\Ass (X) \subseteq \Ass (M)$, 
    so $\alpha(R_\p) \leq \delta(M)$ by \Cref{thm: Kunz rank}.
    On the other hand, by \Cref{cor: convergence radius is at least the power of dimension} we have $\alpha(R_\p) \geq \delta(M)$, 
    so the equality holds. Now, by \Cref{cor: convergence radius is at least the power of dimension} and 
    by \Cref{thm: F-contributor at a prime via matrices}
    we have 
    \[
    p^{- \delta (M)} \leq R(\Mat(M))  \leq R(\Mat(\p, M))
    = p^{-\alpha(R_\p)} \leq p^{-\delta(M)}
    \]
    and the equality must hold throughout.
\end{proof}

\begin{corollary}\label{cor: F-contributor matrix is irreducible}
    If $X$ is an indecomposable F-contributor, then $R(\Mat(X)) = p^{-\delta (X)}$  and $\Mat(X)$ is irreducible positive recurrent.
\end{corollary}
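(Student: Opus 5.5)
We take $M = X$ in \Cref{lem: radius of convergence for contributors} and then use the equivalence (2a) $\Leftrightarrow$ (2b) of \Cref{thm: F-contributor at a prime via matrices}. The remaining point is to pass from the block $\Mat(\p, X)$ to the whole matrix $\Mat(X)$.

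\textbf{Step 1: the convergence radius and the prime.} Since $X$ is an F-contributor to itself, \Cref{lem: radius of convergence for contributors} gives $R(\Mat(X)) = p^{-\delta(X)}$. It also produces a prime $\p$ with $\delta(X) = \alpha(R_\p)$ such that $X$ is an F-contributor at $\p$. In particular $\p \in \Ass(X)$.

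\textbf{Step 2: irreducibility of the block.} By (2a) $\Rightarrow$ (2b) in \Cref{thm: F-contributor at a prime via matrices}, we have $X \in \find(\p, X)$, the radius $R(\Mat(\p, X))$ equals $p^{-\alpha(R_\p)}$, and $\Mat(\p, X)$ is irreducible positive recurrent. So $\find(\p, X)$ is a single connected class $\cC$ containing $X$, and it is $p^{-\delta(X)}$-recurrent.

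\textbf{Step 3: $\cC$ is all of $\find(X)$.} This is the main point: showing that no element of $\find(X)$ lies outside $\find(\p, X)$. By \Cref{cor: properties of maximal recurrent classes in the whole matrix}, applied with $M = X$ to the recurrent class $\cC$ (note $\Mat(X)[\cC] = \Mat(\p, X)$ by \Cref{pr: entries of powers of the matrix for a prime}), $\cC$ is a final class of $\Mat(X)$.

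Now take any $L \in \find(X)$. Then $L$ is a summand of $F^e_*X$ for some $e$. Since $X \in \cC$, finality says that every state reachable from $X$ lies in $\cC$, so $L \in \cC$. Hence $\find(X) = \cC$.

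\textbf{Conclusion.} By Step 3, $\Mat(X) = \Mat(X)[\cC] = \Mat(\p, X)$. This matrix is irreducible and positive recurrent by Step 2, with convergence radius $p^{-\alpha(R_\p)} = p^{-\delta(X)}$.
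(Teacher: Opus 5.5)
Your proof is correct, and it differs from the paper's only in how Step 3 is carried out.

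\textbf{The paper's route.} The paper extracts from the \emph{proof} of \Cref{lem: radius of convergence for contributors} that $\alpha(R_\p)=\delta(X)$ for \emph{every} $\p\in\Ass(X)$. Hence $X$ is an F-contributor at each such $\p$, and every block $\Mat(\p,X)$ is irreducible by (2a) $\Rightarrow$ (2b). For an arbitrary $N\in\find(X)$, it then picks some $\p\in\Ass(N)\subseteq\Ass(X)$ and observes that $N\in\find(\p,X)$, which is the class of $X$. Finally, \Cref{pr: entries of powers of the matrix for a prime} transfers this to $\Mat(X)$.

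\textbf{Your route.} You work only with the single prime supplied by the statement of that lemma. You absorb the rest of $\find(X)$ using finality of the recurrent class in the whole matrix (\Cref{cor: properties of maximal recurrent classes in the whole matrix}), together with reachability of every $L\in\find(X)$ from $X$.

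\textbf{Comparison.} The two routes rest on the same mechanism. The proof of that corollary performs exactly the associated-prime bookkeeping the paper does directly: $\Ass$ of any downstream class lies in $\Ass(\cC)$, where $\alpha(R_\p)=\delta$, and $\cC$ is final in each such block $\Mat(\p,X)$. Your version has the small advantage of citing only stated results rather than the proof of a lemma. The paper's version is more self-contained and avoids routing through the finality statement.
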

\begin{proof}
    By the proof of \Cref{lem: radius of convergence for contributors}, $\delta(X) = \alpha(R_\p)$ for every $\p \in \Ass(X)$. Thus, for any $\p \in \Ass(X)$,  $X$ is an F-contributor at $\p$ and  
    \Cref{thm: F-contributor at a prime via matrices}, (2a) $\Rightarrow$ (2b), shows that $\Mat(\p, X)$ is irreducible. Take $N \in \find(X)$ and $\p \in \Ass(N)$. Then $N \in \find(\p,X)$ and thus $N$ is in the connected class of $\Mat(\p,X)$ containing $X$.
    Since the connected classes of $\Mat(\p,X)$ and $\Mat(X)$ containing $N$ are equal by \Cref{pr: entries of powers of the matrix for a prime}, $\Mat(X)$ has exactly one connected class, namely the one containing $X$. So irreducibility follows. Positive recurrence holds since $X$ is an F-contributor.
\end{proof}

We now derive special properties of F-contributors from Proposition~\ref{prop: properties of maximal recurrent classes for a prime} and 
Corollary~\ref{cor: properties of maximal recurrent classes in the whole matrix}. In particular, we have:

\begin{corollary}\label{cor: Cohen--Macaulayless of contributors}
    If $X$ is an indecomposable F-contributor to $M$ then $X$ is an F-contributor and is Cohen--Macaulay.
\end{corollary}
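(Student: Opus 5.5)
The plan is to reduce to the prime-local statements already proved and then read off the two conclusions from the structural results on basic recurrent classes. Let $X$ be an indecomposable F-contributor to $M$.

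\emph{Step 1: localize the contribution at a prime.} By \Cref{lem: radius of convergence for contributors}, $R(\Mat(M)) = p^{-\delta(M)}$, and $X$ is an F-contributor to $M$ at some $\p \in \Ass(X)$ with $\alpha(R_\p) = \delta(M)$. In particular $X \in \find(\p, M)$.

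\emph{Step 2: $X$ is an F-contributor.} By \Cref{thm: F-contributor at a prime via matrices}, (1a) $\Rightarrow$ (2a), $X$ is an F-contributor at $\p$, so
\[
\limsup_e \Fmult{e}{X}{X}/p^{e\alpha(R_\p)} > 0 .
\]
It remains to compare $\alpha(R_\p)$ with $\delta(X)$. Since $\p \in \Ass(X)$ lies in the support of $X$, some minimal prime $\q \subseteq \p$ of $X$ satisfies $\alpha(R_\q) \geq \alpha(R_\p)$ by \Cref{thm: Kunz rank}. Hence $\delta(X) \geq \alpha(R_\p)$. Conversely, $\Supp(X) \subseteq \Supp(M)$ because $X$ is a summand of some $F^e_*M$. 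Every minimal prime of $X$ contains a minimal prime of $M$, so again by Kunz, $\delta(X) \leq \delta(M) = \alpha(R_\p)$. Therefore $\delta(X) = \alpha(R_\p)$, and the displayed limsup is exactly the one in the definition of an F-contributor. So $X$ is an F-contributor.

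\emph{Step 3: $X$ is Cohen--Macaulay.} By (1a) $\Rightarrow$ (1b), the connected class $\cC$ of $\Mat(\p,M)$ containing $X$ is basic positive recurrent, hence $p^{-\alpha(R_\p)}$-recurrent. By \Cref{pr: entries of powers of the matrix for a prime}, the powers of $\Mat(\p,M)$ and $\Mat(M)$ agree on $\find(\p,M)$, so $\cC$ is also a connected class of $\Mat(M)$. It is $p^{-\delta(M)}$-recurrent because $\delta(M) = \alpha(R_\p)$. \Cref{cor: properties of maximal recurrent classes in the whole matrix} then says every module in $\cC$ is Cohen--Macaulay, in particular $X$.

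An alternative for Step 3 is to apply the same corollary to $\Mat(X)$ using \Cref{cor: F-contributor matrix is irreducible}, but the route above avoids re-deriving recurrence.

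The main obstacle is the bookkeeping in Step 2, namely showing $\delta(X) = \delta(M)$ so that the two normalizations coincide. Everything else is a direct citation. If the Kunz comparison with minimal primes becomes awkward in the non-local case, the fallback is to use \Cref{cor: convergence radius is at least the power of dimension} applied to $\Mat(X)$: it gives $\Fmult{e}{X}{X} = O(p^{e\delta(X)})$, and combined with the positive limsup at rate $p^{e\alpha(R_\p)}$ this forces $\delta(X) \geq \alpha(R_\p)$. The reverse inequality comes from the support inclusion as above.
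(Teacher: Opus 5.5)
Your proposal is correct and follows the paper's proof. The paper likewise gets $\p$ from \Cref{lem: radius of convergence for contributors}, uses (1a)$\Rightarrow$(2a), squeezes $\delta(M)\geq\delta(X)\geq\alpha(R_\p)$ to conclude $\delta(X)=\alpha(R_\p)$, and then invokes \Cref{cor: properties of maximal recurrent classes in the whole matrix} for Cohen--Macaulayness; your explicit Kunz justification of the two inequalities and your choice to apply that corollary to the class of $X$ in $\Mat(M)$ are harmless variations in bookkeeping.
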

\begin{proof}
   By Lemma~\ref{lem: radius of convergence for contributors} and  Theorem~\ref{thm: F-contributor at a prime via matrices}, $X$ is an F-contributor at a prime $\p \in \Ass(X)$ such that $\delta(M)= \alpha(R_\p)$. Since $\delta(M) \geq \delta(X) \geq \alpha(R_\p)$, $\delta(X)= \alpha(R_\p)$, and we are done by Corollary~\ref{cor: properties of maximal recurrent classes in the whole matrix}.
\end{proof}

\begin{proposition}\label{prop: associated primes and splitting}
    Let $R$ be an F-finite ring and $X$ be a finitely generated $R$-module.

    \begin{enumerate}
        \item \label{part: local cohomology modules are killed by the first power} If there is a finitely generated module $M$ such that $\Fmult{e}{M}{X}>0$ for infinitely many positive integers $e$, then $I\Hh^0_I(X)=0$ for any ideal $I$. In particular, $\Ann(X)$ is a radical ideal.

        \item 
        If $X$ is an F-contributor at $\p$ and $R$ is local, then for any ideal $I$ such that $\Hh^0_I(X)$ is nonzero, $\dim(R/I) \geq \dim (R/\p)$. In particular, there are no associated primes of $X$ properly containing $\p$.
    \end{enumerate}
\end{proposition}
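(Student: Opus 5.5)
For part (1), fix an ideal $I$ and write $H=\Hh^0_I(X)$. The plan is to show that whenever $X$ is a direct summand of $F^e_*M$ with $e$ large, the submodule $IH$ of $X$ must vanish. Choose generators $x_1,\dots,x_s$ of $I$ and pick $n$ with $I^nH=0$; this is possible because $H\subseteq X$ is finitely generated. Functoriality of $\Hh^0_I$ gives that $H$ is a direct summand of $\Hh^0_I(F^e_*M)$. Since $F^e_*$ is exact and $\sqrt{I^{[p^e]}}=\sqrt I$, we have $\Hh^0_I(F^e_*M)=F^e_*\Hh^0_I(M)$.

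The key observation is that $x\cdot F^e_*m=F^e_*(x^{p^e}m)$. Now $\Hh^0_I(M)$ is killed by some fixed power $I^{c}$, with $c$ independent of $e$. Take $e$ so large that $p^e\geq c$. Then $x_i^{p^e}$ kills $\Hh^0_I(M)$ for each $i$, and so $I$ kills $F^e_*\Hh^0_I(M)$. This is in fact stronger than needed: $I$ annihilates all of $\Hh^0_I(F^e_*M)$. Because $H$ is a summand of a module killed by $I$, we get $IH=0$. Since $\Fmult{e}{M}{X}>0$ for infinitely many $e$, such a large $e$ exists.

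For the consequence about $\Ann(X)$, I use the same argument with the whole module instead of torsion. Suppose $x^k X=0$. Then $x^k$ kills $F^e_*M$ only if $x^{kp^e}$ kills $M$, so that route does not directly help. Instead apply the first statement with $I=(x)$. Since $X$ is killed by $x^k$, we have $\Hh^0_{(x)}(X)=X$, and therefore $xX=0$. Hence $\Ann(X)$ is radical.

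For part (2), suppose $X$ is an F-contributor at $\p$ with $R$ local, and $\Hh^0_I(X)\neq0$. I will compare growth rates. First, $\Hh^0_I(X)$ is a summand of $\Hh^0_I(F^e_*X)=F^e_*\Hh^0_I(X)$ with multiplicity at least $\Fmult{e}{X}{X}$. Taking $\grk{\q}{\cdot}$ at a prime $\q$ that is minimal in the support of $\Hh^0_I(X)$ and has maximal $\dim R/\q$ gives
\[
\Fmult{e}{X}{X}\,\grk{\q}{\Hh^0_I(X)}\leq p^{e\alpha(R_\q)}\grk{\q}{\Hh^0_I(X)},
\]
using the rank pushforward lemma. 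Hence $\Fmult{e}{X}{X}=O(p^{e\alpha(R_\q)})$. The F-contributor hypothesis gives $\limsup_e \Fmult{e}{X}{X}/p^{e\alpha(R_\p)}>0$, which forces $\alpha(R_\q)\geq\alpha(R_\p)$. By Kunz's theorem this means $\dim R/\q\geq\dim R/\p$. Finally, $\q\supseteq I$, so $\dim R/I\geq\dim R/\q\geq\dim R/\p$.

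For the last claim, take $I=\q'$ for an associated prime $\q'\supsetneq\p$. Then $\Hh^0_{\q'}(X)\neq0$, and the bound gives $\dim R/\q'\geq\dim R/\p$, which is impossible since $\q'$ properly contains $\p$. The main subtlety, as I see it, is the uniformity of $c$ in part (1) and checking that $\Hh^0_I$ commutes with $F^e_*$; both are routine.
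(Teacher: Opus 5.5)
Your proof is correct. Part (1) is essentially the paper's argument: for large $e$ the ideal $I$ kills $F^e_*\Hh^0_I(M)=\Hh^0_I(F^e_*M)$, and $\Hh^0_I(X)$ is a direct summand of this module.

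Part (2) takes a genuinely different route. The paper starts from $\limsup_e \Fmult{e}{\Hh^0_I(X)}{\Hh^0_I(X)}/p^{e\alpha(R_\p)}>0$ and applies \Cref{lemma: depth vs frequency of appearance} to get $\depth \Hh^0_I(X)\geq \dim R/\p$. That lemma rests on the Hochster--Huneke--Roberts growth bound for Koszul homology of $F^e_*M$. The paper then concludes using $\dim\geq\depth$.

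You instead bound the multiplicity by the generalized rank at an associated prime $\q$ of $\Hh^0_I(X)$, which is exactly \Cref{cor: convergence radius is at least the power of dimension}. You then turn $\alpha(R_\q)\geq\alpha(R_\p)$ into $\dim R/\q\geq\dim R/\p$ using Kunz's formula, \Cref{thm: Kunz rank}.

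Your route is more elementary and self-contained. The choice of $\q$ minimal of maximal dimension is unnecessary: any $\q\in\Ass(\Hh^0_I(X))$ works. So you actually show that every associated prime of $\Hh^0_I(X)$ satisfies $\dim R/\q\geq\dim R/\p$.

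The paper's route gives the stronger statement $\depth \Hh^0_I(X)\geq\dim R/\p$. This implies your conclusion, since $\depth N\leq \dim R/\q$ for any $\q\in\Ass(N)$. It also matches how the depth lemma is used elsewhere in the paper, e.g. in \Cref{prop: properties of maximal recurrent classes for a prime}. For the proposition as stated, however, your argument suffices.
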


\begin{proof}
(\ref{part: local cohomology modules are killed by the first power}) 
Choose $e_0$ large enough so that $IF^e_*\Hh^0_I(M)=0$ for all $e \geq e_0$. Since $\Hh^0_I(X)$ is a summand of $F^e_*\Hh^0_I(M)$ for some $e \geq e_0$, $I\Hh^0_I(X) = 0$.

(2) Applying the $\Hh^0_I(-)$ functor to the decomposition of $F^e_*X$, we conclude that 
$$\limsup_{e \to \infty} \Fmult{e}{\Hh^0_I(X)}{\Hh^0_I(X)}/p^{e\alpha(R_\p)}>0.$$ 
If $\Hh^0_I(X)$ is nonzero, its depth must be at least $\dim(R/\p)$ by \Cref{lemma: depth vs frequency of appearance}. Thus $\dim(\Hh^0_I(X))$ is at least $\dim(R/\p)$. Since $I \subseteq \Ann(\Hh^0_I(X))$, $\dim(\Hh^0_I(X)) \leq \dim(R/I)$.
\end{proof}

We study the behavior of F-contributors under the Grothendieck duality. As a consequence, we produce examples of F-contributors from F-contributors at a prime. We refer to \cite{stacks-project} for dualizing complex and local duality.

\begin{proposition}\label{prop: Contributors under Grothendieck duality}
Let $(R, \m)$ be a complete local domain, $X$ be an F-contributor at a prime ideal $\p$. Let $\omega_R^\bullet$ be a normalized dualizing complex of $R$. Then,

\begin{enumerate}
    \item \label{part: Grothendieck dual of contributors at a prime} $\Ext_R^{- \dim(R/\p)}(X, \omega_R^\bullet)$ is an F-contributor of dimension $\dim(R/\p)$.

    \item \label{part: Grothendieck dual of contributors} For an F-contributor $N$, $\Ext_R^{- \dim(N)}(N, \omega_R^\bullet)$ is an F-contributor.
\end{enumerate}
\end{proposition}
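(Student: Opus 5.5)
The plan is to use local duality to turn the decomposition of $F^e_*X$ into a decomposition of $F^e_*$ of the dual module. For a finitely generated module $N$ over the complete local domain $R$, write $D^i(N) \coloneqq \Ext_R^{-i}(N, \omega_R^\bullet)$, and set $d \coloneqq \dim(R/\p)$.

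The first step is Grothendieck duality for the finite map $F^e$. Since $R$ is F-finite, $(F^e)^!\omega_R^\bullet \cong \omega_R^\bullet$ up to the standard normalization shift; for a local ring with $\alpha(R)$ fixed, the normalized complex is preserved. This gives natural isomorphisms
\[
F^e_* D^i(N) \cong D^i(F^e_* N),
\]
that is, $D^i$ commutes with Frobenius pushforward. Because $D^i$ is additive, the decomposition $F^e_*X \cong X^{\oplus m_e} \oplus Y_e$ with $m_e = \Fmult{e}{X}{X}$ yields
\[
F^e_* D^d(X) \cong D^d(X)^{\oplus m_e} \oplus D^d(Y_e).
\]
Hence $\Fmult{e}{D^d(X)}{D^d(X)} \geq m_e$, even if $D^d(X)$ decomposes further; we may replace $D^d(X)$ by an indecomposable summand $Z$ and still get $\Fmult{e}{D^d(X)}{Z}\ge m_e$ when $Z$ appears in $D^d(X)$.

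The second step is to check that $D^d(X) \neq 0$ and has dimension exactly $d$. Since $\p \in \Ass(X)$ and $\dim(R/\p) = d$, local duality shows that $D^d(X)$ is Matlis dual to $\Hh^d_\m(X)$ and has dimension at most $d$. It remains to see that $\Hh^d_\m(X) \neq 0$. Here I would use \Cref{prop: properties of maximal recurrent classes for a prime}: $\depth X_\q = \dim(R_\q/\p R_\q)$ for $\q \supseteq \p$, and in particular $\depth X = d$. Together with \Cref{prop: associated primes and splitting}(2), which says no associated prime of $X$ properly contains $\p$ and every nonvanishing $\Hh^0_I(X)$ has $\dim(R/I)\ge d$, this gives $\Hh^d_\m(X) \neq 0$, since the depth is attained. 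Thus $\dim D^d(X) = d$. The inclusion $\p\in\Supp$ of the dual should come from localizing at $\p$: $D^d(X)_\p$ is dual to $\Hh^0_{\p}(X_\p)\neq0$.

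The third step is the growth estimate. By \Cref{thm: Kunz rank}, $\alpha(R_\q) = d + \alpha(R) = \delta(D^d(X))$ for a minimal prime $\q$ of $D^d(X)$ of dimension $d$, and $\alpha(R_\p)=d+\alpha(R)$. Hence $\limsup_e m_e/p^{e\alpha(R_\p)} > 0$ gives positive limsup of $\Fmult{e}{D}{Z}/p^{e\delta(D)}$ for some indecomposable summand $Z$ of $D = D^d(X)$, by pigeonhole over its finitely many summands. By \Cref{cor: Cohen--Macaulayless of contributors} applied to $Z$ as an F-contributor to $D$, $Z$ is an F-contributor. If $D$ is indecomposable this is the claim; otherwise I would interpret the statement summandwise, or argue that all summands are F-contributors since each summand $Z'$ of $D$ satisfies $\Fmult{e}{Z'}{Z'}\ge$ the number of $Z'$-copies in $D^{\oplus m_e}$.

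Part (2) follows from part (1) with $\p \in \Minh(N)\cap\Ass$, using \Cref{lem: radius of convergence for contributors} to get $\alpha(R_\p)=\delta(N)$ and $\dim(R/\p)=\dim N$.

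The main obstacle is the nonvanishing and dimension claim in the second step, that is, showing $\Hh^d_\m(X)\neq0$ with the correct support. The duality-commutes-with-$F_*$ step is standard but requires care with the normalization shift by $\alpha(R)$.
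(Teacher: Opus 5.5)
Your first two steps are the paper's argument. Grothendieck duality along $F^e$ gives $F^e_*D\cong D^{\oplus m_e}\oplus D^d(Y_e)$ for $D=D^d(X)$. Localizing at $\p$, where $(\omega_R^\bullet)_\p[-d]$ is a normalized dualizing complex of $R_\p$, local duality turns $\Hh^0_{\p R_\p}(X_\p)\neq 0$ into $D_\p\neq 0$.

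That localization already gives everything step 2 needs: $D\neq 0$, $\p\in\Min(D)\subseteq\Ass(D)$, and $\dim D=d$, because $\dim D\leq d$. So the depth detour is unnecessary. It also cites a proposition about indecomposable modules in a recurrent class, while the statement does not assume $X$ indecomposable. Likewise, in part (2) Kunz's formula gives $\alpha(R_\p)=\delta(N)$ for $\p\in\Minh(N)$ directly, without the lemma on contributors.

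The real problem is your last step, which rests on a misreading of the definition. In the paper an F-contributor need not be indecomposable: $D$ is an F-contributor if $\limsup_e \Fmult{e}{D}{D}/p^{e\delta(D)}>0$, where $\Fmult{e}{D}{D}$ counts copies of $D$ itself. So your own inequality $\Fmult{e}{D}{D}\geq m_e$, together with $\delta(D)=d+\alpha(R)=\alpha(R_\p)$, finishes the proof immediately. Falling back to one indecomposable summand, or reading the statement summandwise, proves something weaker than what is claimed.

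Your fallback claim is also false as stated. You assert that each summand $Z'$ of $D$ satisfies $\Fmult{e}{Z'}{Z'}\geq$ the number of copies of $Z'$ in $D^{\oplus m_e}$. Those copies live in $F^e_*D=\bigoplus_j F^e_*Z_j$ and need not come from $F^e_*Z'$. The correct summandwise statement is different, and needs no pigeonhole: every indecomposable summand $Z$ of $D$ satisfies $\Fmult{e}{D}{Z}\geq m_e$. Hence $Z$ is an F-contributor to $D$, and therefore an F-contributor by the paper's corollary that an indecomposable F-contributor to a module is itself an F-contributor.
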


\begin{proof}
(\ref{part: Grothendieck dual of contributors at a prime}) For a finitely generated $R$-module $M$, denote $K^i(M)= \Ext_R^{-i}(M, \omega_R^\bullet)$. The module $K^{\dim(R/\p)}(X)$ has dimension at most $\dim(R/\p)$, thus $\delta(K^{\dim(R/\p)}(X)) \leq \alpha(R_\p)$; cf. \Cref{thm: Kunz rank}. We argue that $\p \in \Ass(K^{\dim(R/\p)}(X))$.
For that, note, $(\omega_R^\bullet)_\p[-\dim(R/\p)]$ is a normalized dualizing complex of $R_\p$, as $R$ is an excellent domain. We combine this observation with local duality for $R_\p$ to conclude 
\[K^{\dim(R/\p)}(X)^{\wedge}_\p \cong 
\Hom_{R_\p} (\Hh^0_{\p R_\p}(X_\p), E_{R_\p}(\kappa(\p))).\]
Since the object on the right has $\p R_\p$ as an associated prime, $\p R_\p \in \Ass(K^{\dim(R/\p)}(X)^{\wedge}_\p)$. Therefore, $\p \in \Ass(K^{\dim(R/\p)}(X))$. Hence, $\dim(K^{\dim(R/\p)}(X))= \dim(R/\p)$ and $\alpha(R_\p)= \delta(K^{\dim(R/\p)}(X))$. Thus, it remains to check
\[\Fmult{e}{K^{\dim(R/\p)}(X)}{K^{\dim(R/\p)}(X)}= \Theta(p^{e\alpha(R_\p)}).\]
To that end, apply $\Hom_R(\bullet, \omega_R^{\bullet})$ to the direct sum decomposition $F^e_*X \cong X^{\oplus \Fmult{e}{X}{X}} \oplus \cdots$, and take cohomology at degree $-\dim(R/\p)$ to conclude that
\[F^e_* K^{\dim(R/\p)}(X)\cong K^{\dim(R/\p)}(X)^{\oplus \Fmult{e}{X}{X}} \oplus \cdots ; \]
here we have used Grothendieck duality along $F^e$ to compute the left side. Since $\Fmult{e}{X}{X}= \Theta(p^{e\alpha(R_\p)})$, we are done by 
\Cref{cor: convergence radius is at least the power of dimension}.

For (\ref{part: Grothendieck dual of contributors}), use part (\ref{part: Grothendieck dual of contributors at a prime}) for a prime ideal in $\Supp(N)$ of dimension $\dim(N)$.
\end{proof}

\subsection{Properties of F-splitting signature}\label{subsection properties of signatures}

\subsubsection{Rank formula}\label{sse: rank formula}
We will now investigate when equality holds in Corollary~\ref{cor: rank inequalities}. 
This is partially prompted 
by a property of the classical F-signature:
\[\gsig{M}{R}= \rank_R(M)\gsig{R}{R}; \quad \text{\cite[Theorem 4.11]{Tucker}}.\]
This rank formula uses the theory of F-splitting ideals, which has no known extension to our setting. Therefore, we use the F-decomposition matrix instead. Using the theory developed here, we prove a rank formula towards any contributor of dimension $\dim(R)$ when $R$ is strongly F-regular (\Cref{th: rank formula over sfr rings}) and in dimension up to two (\Cref{th: rank formula in dimension two}).

\begin{definition}\label{de: rank formula}
Let $N$ be a finitely generated indecomposable $R$-module of period $\omega$. For a finitely generated $R$-module $M$ and $\p \in \Ass(N)$, if
\[\csig{M}{N}_{\p}= \frac{\grk{\p}{M}\csig{N}{N}_{\p}}{\grk{\p}{N}},\]
we say that the \emph{\ces rank formula for $M$ towards $N$ at $\p$} holds. If for some integer $t$,
\[\gsig{M}{N}_{t, \omega, \p}= \frac{\grk{\p}{M}\gsig{N}{N}_{0, \omega, \p}}{\grk{\p}{N}}\]
holds, we say that \emph{the rank formula for $M$ towards $N$ with phase $t$ and period $\omega$ at $\p$} holds.

When the period of $N$ is one, the two notions of rank formula coincide. 
\end{definition}

General properties of $\Mat(M)$ guarantee the rank formula within connected classes.

\begin{lemma}\label{lem: rank formula within connected classes}
Suppose $\p \in \Ass(M)$. For any two modules $N',N$ in the same connected class of $\Mat(\p,M)$, the \ces rank formula for $N'$ towards $N$ at $\p$ holds.
\end{lemma}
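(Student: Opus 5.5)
Using \Cref{th: existence of signatures in general}(\ref{part: cesaro signature is scaled by entrance probability}), we have $\csig{N'}{N}_\p = \ep{N'N}(\p)\,\csig{N}{N}_\p$. The claim therefore reduces to checking that $\ep{N'N}(\p)\,\csig{N}{N}_\p = \frac{\grk{\p}{N'}}{\grk{\p}{N}}\csig{N}{N}_\p$.

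We split into cases according to the nature of the connected class $\cC$ of $\Mat(\p,M)$ containing $N$ and $N'$, viewed with the positive eigenvector $(\grk{\p}{L})_L$ and eigenvalue $p^{\alpha(R_\p)}$ from \Cref{pr: positive eigenvector of the block corresponding to a prime}.

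\emph{Case 1: $\cC$ is not $p^{-\alpha(R_\p)}$-positive recurrent.} This covers the exceptional case, where necessarily $N'=N$ and the period is infinite, as well as the transient and null recurrent cases. Here \Cref{le: infinite period implies zero limit} and \Cref{th: results from Chung 's book on stochastic matrices} give $\csig{N}{N}_\p = 0$. Applying the same reasoning to the pair $(N',N)$ gives $\csig{N'}{N}_\p=0$. Both sides of the formula vanish.

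\emph{Case 2: $\cC$ is $p^{-\alpha(R_\p)}$-recurrent.} Then \Cref{le: first entrance probability for any module}(\ref{part: class property of entrance proability module case}) applies, with $M$ replaced by $N'$. Alternatively, one can invoke \Cref{prop: properties of f}(\ref{part: evetual entrance probability depends only on class}) directly for the Markov chain of $\Mat(\p,M)$. This gives
\[\ep{N'N}(\p) = \frac{\grk{\p}{N'}}{\grk{\p}{N}}\,\ep{N'N'}(\p).\]
By \Cref{prop: properties of f}(\ref{part: recurrence in terms of first entrance}), recurrence means $\ep{N'N'}(\p)=1$. We must check that $\ep{N'N'}(\p)$ matches the chain's normalized first return probability, since the eigenvector ratio $x_{N'}/x_{N'}=1$. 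Substituting yields the formula.

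The only delicate point is a bookkeeping check. The module-level quantity $\ep{N'N}(\p)$ of \Cref{de: probability of entering a state for an indecomposable module} must agree with the matrix-level $\mathfrak f^*_{N'N}(R)$ of \Cref{de: f}, taken with $R=p^{-\alpha(R_\p)}$. This agreement holds because \Cref{pr: entries of powers of the matrix for a prime} identifies the relevant matrix powers with $\Fmult{n}{N'}{N}$, so the defining recursions coincide.
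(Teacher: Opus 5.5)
Your proposal is correct and follows essentially the same route as the paper. Both arguments dispose of the non-positive-recurrent case, where all the \ces limits vanish. Both then combine $\csig{N'}{N}_\p=\ep{N'N}(\p)\csig{N}{N}_\p$ from \Cref{th: existence of signatures in general} with the class identity $\ep{N'N}(\p)=\grk{\p}{N'}/\grk{\p}{N}$, which is \Cref{prop: properties of f}(\ref{part: evetual entrance probability depends only on class}). Your detour through \Cref{le: first entrance probability for any module} with $M$ replaced by $N'$, together with $\ep{N'N'}(\p)=1$, is just an unpacked form of that identity.
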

\begin{proof}
Without loss of generality, we may assume that the connected class in question is $p^{-\alpha(R_\p)}$-positive recurrent; otherwise, all involved \ces limits are zero. It then follows from \Cref{th: existence of signatures in general} and \Cref{prop: properties of f}(\ref{part: evetual entrance probability depends only on class}) that
\[\csig{N'}{N}_{\p}= \mathfrak f^*_{N'N}(\p)\csig{N}{N}_{\p}= \frac{\grk{\p}{N'}}{\grk{\p}{N}}\csig{N}{N}_{\p}. \qedhere\]
\end{proof}

The next result compares the two notions of rank formula and derives formal consequences of each. Since both the \ces rank formula and the rank formula for a phase towards $N$ hold trivially when $\gsig{N}{N}_{0, \omega, \p} = 0$, we may assume that $\gsig{N}{N}_{0, \omega, \p}$ is positive.

\begin{proposition}\label{pr: comparison of notions of rank formulas and consequences}
Let $M$ be a finitely generated $R$-module and $N \in \find(\p, M)$ have period $\omega$. Assume that $\gsig{N}{N}_{0, \omega, \p}$ is positive. In the following assertions, the rank formulas are evaluated at $\p$.
\begin{enumerate}
    \item \label{part: rank formula for phase implies Cesaro and vanishing of phases} If the rank formula for $M$ towards $N$ for some phase $t$ holds, then the \ces rank formula for $M$ towards $N$ holds and $\Fmult{r}{M}{N}=0$ unless $r \equiv t \pmod \omega$.

    \item \label{part: rank formula at phase implies rank formula for all summands} If the rank formula for $M$ towards $N$ holds for some phase $t$, then for any $N' \in \find(\p, M)$ such that $\Fmult{e}{M}{N'}>0$, the rank formula for $N'$ towards $N$ holds for the phase $t-e$.

    \item \label{part: cesaro rank formula for a module implies that for its summands} The \ces rank formula for $M$ towards $N$ holds if and only if for any $N' \in \find(\p, M)$, the \ces rank formula for $N'$ towards $N$ holds. 

    \item \label{part: Cesaro rank formula implies a unique recurrent class} If the \ces rank formula for $M$ towards $N$ holds, then $\Mat(\p,M)$ has convergence radius $p^{-\alpha(R_\p)}$ and exactly one basic recurrent class, namely the one containing $N$. In particular, there is no null $p^{-\alpha(R_\p)}$-recurrent class. 

    \item \label{part: cesaro rank formula is a class property of the target} The \ces rank formula for $M$ towards $N$ holds if and only if for any $N' \in \find(\p,M)$ with positive $\csig{N'}{N'}_{\p}$, the \ces rank formula for $M$ towards $N'$ holds. 
\end{enumerate}
\end{proposition}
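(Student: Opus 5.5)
The plan is to reduce everything to the identities of \Cref{th: existence of signatures in general} and \Cref{le: first entrance probability for any module}. Since $\gsig{N}{N}_{0,\omega,\p}>0$, the class $\cC$ of $N$ is $p^{-\alpha(R_\p)}$-positive recurrent with $\omega<\infty$, and for every $t$ and every $M$ we have $\gsig{M}{N}_{t,\omega,\p}=\ep{MN}(\p,t)\gsig{N}{N}_{0,\omega,\p}$ and $\csig{M}{N}_\p=\ep{MN}(\p)\csig{N}{N}_\p$. The rank formulas then become statements about first-entrance quantities:
\[
\text{phase-$t$ rank formula} \iff \ep{MN}(\p,t)=\frac{\grk{\p}{M}}{\grk{\p}{N}}, \qquad \text{\ces rank formula} \iff \ep{MN}(\p)=\frac{\grk{\p}{M}}{\grk{\p}{N}}.
\]

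For (1): we have $\ep{MN}(\p)=\sum_{r=0}^{\omega-1}\ep{MN}(\p,r)$, each term is nonnegative, and the total is at most $\grk{\p}{M}/\grk{\p}{N}$ by \Cref{le: first entrance probability for any module}(\ref{part: upper bound on first entrance module case}). If one phase already attains this bound, then all the other phases vanish and the \ces formula holds. The vanishing of $\Fmult{r}{M}{N}$ for $r\not\equiv t$ then follows from \Cref{le: first entrance probability for any module}(\ref{part: vanishing of the entrance probability}).

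For (2) and (3): use the summation formula \Cref{le: first entrance probability for any module}(\ref{part: summation for entrance probabilities}) with $e_0=e$,
\[
\ep{MN}(\p,t)=\sum_{N_1}\frac{\Fmult{e}{M}{N_1}}{p^{e\alpha(R_\p)}}\,\ep{N_1N}(\p,t-e),
\]
and its summed version. Each term satisfies $\ep{N_1N}\le \grk{\p}{N_1}/\grk{\p}{N}$, and $\sum_{N_1}\Fmult{e}{M}{N_1}\grk{\p}{N_1}=p^{e\alpha(R_\p)}\grk{\p}{M}$. Hence equality on the left forces equality in every term with positive coefficient; this gives (2) and the forward direction of (3). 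For the converse of (3), I would decompose $F_*M$ and apply the summed formula with $e_0=1$. Every $N'\in\find(\p,M)$ appears in some $F^e_*M$, and the summands of $F_*M$ lying in $\find(\p,M)$ carry the full $\grk{\p}{}$-weight, because modules without $\p$ as an associated prime have $\grk{\p}{}=0$.

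For (4): \Cref{le: R recurrent implies basic recurrent} gives convergence radius $p^{-\alpha(R_\p)}$, since $\cC$ is recurrent. Suppose there were another $p^{-\alpha(R_\p)}$-recurrent class $\cC'$. Pick $N''\in\cC'$ with $N''\in\find(\p,M)$, so $\Fmult{e}{M}{N''}>0$ for some $e$. By (3) the \ces rank formula holds for $N''$ towards $N$, i.e.\ $\ep{N''N}(\p)=\grk{\p}{N''}/\grk{\p}{N}$. Since $\cC'$ is recurrent, $\ep{N''N''}(\p)=1$ as well. This contradicts \Cref{prop: properties of f}(\ref{part: sum of entrance probabilities to recurrent classes is at most one}), because the normalized entrance probabilities into $\cC$ and $\cC'$ would then sum to $2>1$. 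Hence $\cC$ is the unique basic recurrent class. Null recurrent classes would count among the recurrent classes in that sum, so they are excluded too. This step, checking that the normalization in \Cref{prop: properties of f} matches ours (with $x=\grk{\p}{\bullet}$), is the one I expect to need the most care.

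For (5): if $\csig{N'}{N'}_\p>0$, then the class of $N'$ is positive recurrent, and by (4) it must be $\cC$. The class property \Cref{le: first entrance probability for any module}(\ref{part: class property of entrance proability module case}) gives $\grk{\p}{N}\ep{MN}(\p)=\grk{\p}{N'}\ep{MN'}(\p)$, so the two rank formulas are equivalent. The converse direction is immediate by taking $N'=N$.
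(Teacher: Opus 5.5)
Your proposal is correct and follows the paper's proof: both reduce the rank formulas to the identities $\ep{MN}(\p,t)=\grk{\p}{M}/\grk{\p}{N}$ and $\ep{MN}(\p)=\grk{\p}{M}/\grk{\p}{N}$, then use the summation formula and the termwise upper bounds of \Cref{le: first entrance probability for any module}. The only variation is in (4): you rule out a second recurrent class using the bound in \Cref{prop: properties of f}(\ref{part: sum of entrance probabilities to recurrent classes is at most one}), whereas the paper uses (3) to show that $\cC$ is the only final class and then \Cref{thm: infinite basic is final} to conclude that every basic recurrent class is final — both rest on recurrent classes being closed, and your normalization with $x=\grk{\p}{\bullet}$ does match.
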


\begin{proof}
By \Cref{le: infinite period implies zero limit}, $\omega< \infty$. By \Cref{th: existence of signatures in general}
    and \Cref{le: first entrance probability for any module}
    the rank formula for $M$ towards $N$ at $\p$  
    at phase $t$ (resp., the \ces rank formula) holds if and only if 
    $\mathfrak f^*_{MN}(\p,t)= \grk{\p}{M}/\grk{\p}{N}$ (resp., $\mathfrak f^*_{MN}(\p)= \grk{\p}{M}/\grk{\p}{N}$). 
    From now on, we will work with these equivalent characterizations. 

    In (\ref{part: rank formula for phase implies Cesaro and vanishing of phases}),  \Cref{le: first entrance probability for any module} shows that 
    $\grk{\p}{M}/\grk{\p}{N} = \mathfrak f^*_{MN}(\p) = \mathfrak f^*_{MN}(\p,t)$, so the claim follows from 
    \Cref{le: first entrance probability for any module}(\ref{part: vanishing of the entrance probability}).

    (\ref{part: rank formula at phase implies rank formula for all summands}) For any $e \geq 1$, rewrite $\grk{\p}{M}/\grk{\p}{N}= \grk{\p}{F^e_*M}/p^{e\alpha(R_\p)}\grk{\p}{N}=\mathfrak f^*_{MN}(\p,t)$ and use \Cref{le: first entrance probability for any module} to conclude 
\begin{equation*}
\sum_{\substack{N_1 \in \find(\p,M)\\ \Fmult{e}{M}{N_1} \neq 0}}\frac{\Fmult{e}{M}{N_1}\grk{\p}{N_1}}{p^{e\alpha(R_\p)}\grk{\p}{N}}
= 
\mathfrak f^*_{MN}(\p,t) = \sum_{N_1 \in \find(\p,M)}\frac{\Fmult{e}{M}{N_1}\mathfrak f^*_{N_1N}(\p,t-e)}{p^{e\alpha(R_\p)}}.
\end{equation*}
By \Cref{le: first entrance probability for any module}, $\mathfrak f^*_{N_1N}(\p,t-e) \leq \mathfrak f^*_{N_1N}(\p)  \leq \grk{\p}{N_1}/\grk{\p}{N}$, so the right-hand side of the above equality is at most the left-hand side. Thus, $\mathfrak f^*_{N_1N}(\p,t-e) = \grk{\p}{N_1}/\grk{\p}{N}$ for all $N_1$ such that $\Fmult{e}{M}{N_1} \neq 0$.  

(\ref{part: cesaro rank formula for a module implies that for its summands}) is analogous to (\ref{part: rank formula at phase implies rank formula for all summands}). 

(\ref{part: Cesaro rank formula implies a unique recurrent class}) By the rank formula, $N$ is an F-contributor to $M$ at $\p$. So, by \Cref{thm: F-contributor at a prime via matrices} 
the connected class $\cC$ of $N$ is basic positive recurrent and 
$R(\Mat(\p,M)) = p^{-\alpha(R_\p)}$.
Likewise, by (\ref{part: cesaro rank formula for a module implies that for its summands}), $\cC$ is the only final class. 
By \Cref{pr: positive eigenvector of the block corresponding to a prime} and \Cref{thm: infinite basic is final}, any basic 
recurrent class $\cS$ of $\Mat(\p,M)$ is final, hence $\cS = \cC$.

(\ref{part: cesaro rank formula is a class property of the target}) 
Assume that $\mathfrak f^*_{MN}(\p)= \grk{\p}{M}/\grk{\p}{N}$. For any $N' \in \find(\p,M)$ such that $\csig{N'}{N'}_{\p} > 0$, $N$ and $N'$ belong to the same connected class by (\ref{part: Cesaro rank formula implies a unique recurrent class}). So,
$\mathfrak f^*_{MN'}(\p)= \grk{\p}{M}/\grk{\p}{N'}$ by \Cref{le: first entrance probability for any module}, (\ref{part: class property of entrance proability module case}). For the converse direction, set $N'=N$.     
\end{proof}

The validity of the rank formula puts constraints on the contributor:

\begin{proposition}\label{pr: rank formula implies ann is prime}
Let $X$ be an indecomposable F-contributor at $\p$. Suppose there is a finitely generated $R$-module $M$ such that $\p \in \Supp(M)$, $\p^nM=0$ for some $n$,  and \ces rank formula for $M$ towards $X$ at $\p$ holds. Then, $\Ann(X)= \p$. 

In particular, if the \ces rank formula at $\p$ towards $X$ holds for any finitely generated $R$-module, then $\Ann(X)=\p$.    
\end{proposition}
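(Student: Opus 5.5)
The plan is to use the summand-propagation part of \Cref{pr: comparison of notions of rank formulas and consequences}(\ref{part: cesaro rank formula for a module implies that for its summands}), together with the fact that annihilators only grow under Frobenius pushforward. Since $X$ is an F-contributor at $\p$, we have $\csig{X}{X}_\p>0$. The rank formula then gives $\csig{M}{X}_\p = \grk{\p}{M}\csig{X}{X}_\p/\grk{\p}{X}$. This is positive because $\p\in\Supp(M)$ and $\p^nM=0$ force $M_\p$ to have finite length and be nonzero, so $\grk{\p}{M}=\length(M_\p)>0$. In particular, $X\in\find(\p,M)$.

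The first containment is $\p\subseteq\Ann(X)$. We have $X$ a summand of $F^e_*M$ for infinitely many $e$, and $\p^{[p^e]}$-type arguments show that $\p$ kills $F^e_*M$ once $p^e\ge n$. Indeed, for $r\in\p$ and $e$ large, $rF^e_*m=F^e_*(r^{p^e}m)=0$. Hence $\p X=0$. (Alternatively, \Cref{prop: associated primes and splitting}(\ref{part: local cohomology modules are killed by the first power}) with $I=\p$ gives this, since $X=\Hh^0_\p(X)$ follows from $\p^n$ killing.) Since also $\p\in\Ass(X)$, we get $\sqrt{\Ann X}=\p$ and $\p\subseteq \Ann(X)$.

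The reverse containment is the main step. By \Cref{pr: comparison of notions of rank formulas and consequences}(\ref{part: cesaro rank formula for a module implies that for its summands}), the \ces rank formula also holds for every $N'\in\find(\p,M)$ towards $X$. Choose $e$ large so that $\p F^e_*M=0$. Then $F^e_*M$ is an $R/\p$-module. Its $\p$-torsion-free part modulo everything: decompose $F^e_*M$ over $R$; each summand $N'$ is killed by $\p$, hence is an $R/\p$-module, and those with $\p\in\Ass$ lie in $\find(\p,M)$. The rank formula for such $N'$ says $\csig{N'}{X}_\p>0$, so $X\in\find(N')$ (or $X\cong N'$ in the class). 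The point is that if $\Ann(X)\supsetneq \p$, pick $r\in\Ann(X)\setminus \p$. Then consider $N'$ chosen so that it carries the ``$\p$-torsion-free'' part. More concretely, I would instead apply the rank formula to the module $M' = R/\p$-type summand: use additivity-flavored reasoning with $h^0_\p$. The rank formula for $M$ gives $\ep{MX}(\p)=\grk{\p}{M}/\grk{\p}{X}$, i.e.\ the full $\p$-length of $M$ is eventually carried by copies of $X$ (and its class). Then localize at $\p$: $\grk{\p}{M}p^{e\alpha}=\grk{\p}{F^e_*M}\ge \sum_{N'}\Fmult{e}{M}{N'}\grk{\p}{N'}$. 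The rank formula forces the contribution of modules in the class of $X$ to exhaust this asymptotically.

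To conclude, I would compare with $rM$ or $M/rM$. Since $r\notin\p$, multiplication by $r$ is an isomorphism on $M_\p$. But every module in the class of $X$ has the same support and annihilator radical; moreover, $r$ kills $X$ and hence (by \Cref{lem: support of an irreducible class}, and since class members are summands of pushforwards of $X$, whose annihilator contains $\Ann(X)$ after Frobenius) kills all class members. So in $F^e_*M$ the summands in the class of $X$ contribute to $\grk{\p}{\cdot}$ a part killed by $r$. Yet $(F^e_*M)_\p$ has $r$ acting invertibly, and a nonzero $R_\p$-module killed by a unit is zero. This forces $X_\p=0$, contradicting $\p\in\Ass(X)$. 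Hence, in fact, $r$ must lie in $\p$, and already the first step suffices.

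On reflection, the contradiction above does not use the rank formula at all, which suggests I have mis-stated something. The genuine obstacle is that $\Ann(X)\subseteq\p$ already holds trivially because $\p\in\Ass(X)$, so $\Ann(X)\subseteq\p$ always. Thus the proof reduces to $\p X=0$. That follows from the rank formula via $X\in\find(M)$, combined with $F^e_*M$ being killed by $\p$ for large $e$, and $X$ appearing in $F^e_*M$ for arbitrarily large $e$; positivity of $\csig{M}{X}_\p$ gives infinitely many such $e$. The ``in particular'' follows by taking $M=R_\p/\p R_\p$-style module $R/\p$, for which $\p\in\Supp$ and $\p M=0$.
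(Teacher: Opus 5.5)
Your final argument is correct and is essentially the paper's. The rank formula together with $\grk{\p}{M}=\length(M_\p)>0$ gives $\csig{M}{X}_\p>0$, so $X$ is a summand of $F^e_*M$ for some $e$ with $p^e\ge n$. Then $\p X=0$ because $\p F^e_*M=0$, and $\Ann(X)\subseteq\p$ is automatic from $\p\in\Ass(X)$.

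The paper instead concludes via $\sqrt{\Ann(X)}=\p$, and for the last claim uses $M=\Hh^0_\p(X)$ rather than your $R/\p$; both choices work. The middle detour, which tries to prove the ``reverse containment'' by contradiction, is superfluous and should be deleted from the write-up.
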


\begin{proof}
Choose $e$ large enough such that $\p F^e_*M=0$ and $X$ is a summand of $F^e_*M$. Then $\p X=0$. Hence, any minimal prime in $\Supp(X)$ contains $\p$. So, the minimal prime must be $\p$. Therefore the radical of $\Ann(X)$ is $\p$. Since $\p \subseteq \Ann(X)$, $\Ann (X)= \p$.

If the \ces rank formula holds for any finitely generated module, it holds for the module $\Hh^0_\p(X)$. Thus the previous part applies.    
\end{proof}

\begin{proposition}
  \label{prop: characterize rank formula}
Let $\p$ be a minimal prime ideal of $R$. Let $X$ be an indecomposable F-contributor at $\p$. Then 
the \ces rank formula towards $X$ at $\p$ holds for any finitely generated $R$-module if and only if the function $M \mapsto \csig{M}{X}_\p$ is additive in short exact sequences of finitely generated $R$-modules.   
\end{proposition}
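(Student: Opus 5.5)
Since $\p$ is a minimal prime, the function $M \mapsto \grk{\p}{M} = \length_{R_\p}(M_\p)$ is additive on short exact sequences, because localization at $\p$ is exact and length is additive. The forward direction is therefore immediate. If the \ces rank formula holds for every $M$, then $\csig{M}{X}_\p = \grk{\p}{M}\csig{X}{X}_\p/\grk{\p}{X}$ for all finitely generated $M$. This is a constant multiple of an additive function, hence additive.

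For the converse, assume $\phi(M) \coloneqq \csig{M}{X}_\p$ is additive. These limits exist by \Cref{th: existence of signatures in general}, applied with $M$ replaced by $M \oplus X$ if $X \notin \find(M)$; if $X\notin\find(M)$ then $\phi(M)=0$. The plan is to show that $\phi$ and $\grk{\p}{\bullet}$ are proportional.

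\begin{itemize}
\item \emph{Modules with $M_\p=0$.} Here $\Fmult{e}{M}{X}=0$ for all $e$: if $X$ were a summand of $F^e_*M$, then $X_\p$ would be a summand of $(F^e_*M)_\p=F^e_*(M_\p)=0$, whereas $\p\in\Ass(X)$ forces $X_\p\neq 0$. Hence $\phi(M)=0=\grk{\p}{M}$.

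\item \emph{Reduction via a filtration.} Take a prime filtration of $M$ with factors $R/\q_i$. Additivity of both $\phi$ and $\grk{\p}{\bullet}$ reduces the claim to $M=R/\q$. Since $\p$ is minimal, $(R/\q)_\p\neq 0$ only when $\q=\p$, so everything reduces to the single module $R/\p$, which has $\grk{\p}{R/\p}=1$.

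\item \emph{Pinning down $\phi(R/\p)$.} We need $\phi(R/\p)=\csig{X}{X}_\p/\grk{\p}{X}$. Apply the filtration argument to $M=X$ itself, using the previous step:
\[
\csig{X}{X}_\p=\phi(X)=\grk{\p}{X}\,\phi(R/\p).
\]
Since $\grk{\p}{X}>0$, this gives the required value of $\phi(R/\p)$.
\end{itemize}

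Running the same filtration argument on an arbitrary $M$ then yields $\phi(M)=\grk{\p}{M}\,\phi(R/\p)$, which is exactly the \ces rank formula. The case $M=X$ does not even need the F-contributor hypothesis.

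The only genuinely delicate point is making sure $\phi$ is well defined and additive on all finitely generated modules, including those with $X\notin\find(M)$; this is handled by the convention $\phi(M)=0$ together with \Cref{th: existence of signatures in general}. The reduction through prime filtrations is routine once additivity is granted, so I expect no real obstacle.
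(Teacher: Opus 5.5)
Your proof is correct and follows essentially the same route as the paper. For the forward direction you use additivity of $\grk{\p}{\bullet}$, which holds because $\p$ is minimal. For the converse you use a prime filtration, the vanishing of $\csig{R/\q}{X}_\p$ for $\q\neq\p$, and evaluation at $M=X$ to pin down $\csig{R/\p}{X}_\p$. The only difference is that the paper first discards $M/\Hh^0_\p(M)$ via $\p\notin\Ass$ and then filters $\Hh^0_\p(M)$, whereas you filter $M$ directly and kill the other factors via $M_\p=0$, which is slightly more streamlined.
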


\begin{proof}
    First, assume that the rank formula holds. To prove the additivity, it suffices to argue that $\bullet \mapsto \grk{\p}{\bullet}$ is additive over short exact sequences of finitely generated modules. Since $\p$ is minimal, $R_\p$ is Artinian, so $\Hh^0_{\p R_\p}(M_\p) = M_\p$. Since length is additive, the desired additivity of $\bullet \mapsto \csig{\bullet}{X}_\p$ follows.
    
    Assume additivity. Since $\p$ is not an associated prime of $M/\Hh^0_\p(M)$, and $\p \in \Ass(X)$, $\csig{M/\Hh^0_\p(M)}{X}_\p$ is zero. Additivity implies $\csig{M}{X}_\p= \csig{\Hh^0_\p(M)}{X}_\p$. 
Take a prime cyclic filtration on $\Hh^0_\p(M)$ and apply $\csig{\bullet}{X}_\p$. Since for any prime ideal $\q \neq \p$, $\p \notin \Ass(R/\q)$, $\csig{R/\q}{X}_\p=0$. Additivity implies $\csig{\Hh^0_\p(M)}{X}_\p$ is $\csig{R/\p}{X}_\p$ times the multiplicity of $R/\p$ in the prime cyclic filtration.
Since any prime in $\Supp(\Hh^0_\p(M))$ contains $\p$, by localizing the filtration at $\p$, we conclude that the multiplicity is $\grk{\p}{M}$. 
Thus $\csig{M}{X}_\p= \grk{\p}{M}\csig{R/\p}{X}_\p$. Applying this to $M= X$, we get $\csig{M}{X}_\p= \grk{\p}{M}\csig{X}{X}_\p/\grk{\p}{X}$.
\end{proof}

In contrast with \Cref{prop: characterize rank formula}, when the prime is not minimal, it is possible that \ces rank formula holds, but the additivity over short exact sequences fails; see \Cref{re: rank formula does not imply additivity}.

The following lemma concerns splitting of simple modules from Frobenius pushforwards of any module, and proves the rank formula towards simple modules. 

\begin{lemma}\label{le: dimension zero contributor}
 Let $S$ be an F-finite ring, where the category of finitely generated $S$-modules do not necessarily satisfy the KRS condition. Let $\m$ be a maximal ideal of $S$. Then, for any finitely generated $S$-module $M$ and large enough $e$,
    \[\Fmult{e}{M}{S/\m}= \grk{\m}{M} p^{e\alpha(S_\m)}.\]
\end{lemma}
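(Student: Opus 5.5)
The plan is to separate the $\m$-torsion part of $F^e_*M$ and show that, for $e$ large, it is a semisimple module and a direct summand of $F^e_*M$. For the counting we may localize at $\m$: the summands $S/\m$ of a module are detected after localizing at $\m$, since $S/\m$ is supported only at $\m$. The length bookkeeping is then done by \Cref{lem: rank pushforward}. Note that KRS is not needed: we only produce one explicit decomposition, and then bound the maximal number of copies of $S/\m$ from above by length.

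Step 1 is the upper bound. Suppose $(S/\m)^{\oplus n}$ is a direct summand of $F^e_*M$. Applying $\Hh^0_\m(-)$, which commutes with direct sums and with $F^e_*$ (as $F^e_*$ is exact and $\m^{[p^e]}$ and $\m$ have the same radical), gives $n \leq \length_{S_\m}(\Hh^0_{\m}(F^e_*M)_\m) = \length(F^e_*\Hh^0_\m(M)_\m)$. By \Cref{lem: rank pushforward}, iterated, this equals $p^{e\alpha(S_\m)}\grk{\m}{M}$. This bound holds for every $e$.

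Step 2 is the lower bound for large $e$. Set $T=\Hh^0_\m(M)$, which is annihilated by $\m^c$ for some $c$. Choose $e_1$ with $p^{e_1}\geq c$. Then for $e\geq e_1$ we have $\m F^e_*T = F^e_*(\m^{[p^e]}T)=0$, so $F^e_*T$ is an $S/\m$-vector space. It is finitely generated because $S$ is F-finite, and its dimension is $\grk{\m}{M}p^{e\alpha(S_\m)}$ by the length computation of Step 1.

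It remains to show that $F^e_*T$ splits off $F^e_*M$ for large $e$; this is the main obstacle. By Artin--Rees, choose $k$ with $T\cap \m^kM=0$; this is possible since $T$ has finite length and $\bigcap_k(\m^kM+T)$-type arguments, or simply Krull's intersection theorem applied to $T$, give $\m^kM\cap T\subseteq \m^{k-c'}T=0$. Take $e$ with $p^e\geq k$ and $e\geq e_1$. Then $\m F^e_*M=F^e_*(\m^{[p^e]}M)\subseteq F^e_*(\m^kM)$, which meets $F^e_*T$ trivially.

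Consequently $F^e_*T \to F^e_*M/\m F^e_*M$ is injective. Its image is an $S/\m$-subspace of the $S/\m$-vector space $F^e_*M/\m F^e_*M$, so it has a complement. Composing the projection $F^e_*M\to F^e_*M/\m F^e_*M$ with the projection onto this image, and then with the inverse isomorphism back to $F^e_*T$, gives an $S$-linear retraction of the inclusion $F^e_*T\subseteq F^e_*M$. This shows that $F^e_*T\cong (S/\m)^{\oplus \grk{\m}{M}p^{e\alpha(S_\m)}}$ is a direct summand of $F^e_*M$, which combined with Step 1 yields the equality.
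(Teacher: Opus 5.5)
Your proof is correct and follows the paper's argument: Artin--Rees makes $\m F^e_*M = F^e_*(\m^{[p^e]}M)$ meet $F^e_*\Hh^0_\m(M)=\Hh^0_\m(F^e_*M)$ trivially for large $e$, so this $S/\m$-vector space embeds into $F^e_*M/\m F^e_*M$ and splits off, and the matching upper bound comes from the observation that any copy of $S/\m$ splitting from $F^e_*M$ lies in the $\m$-torsion, followed by a length count. The aside in Step 2 about Krull's intersection theorem and ``$\bigcap_k(\m^kM+T)$-type arguments'' is unnecessary: the inclusion $\m^kM\cap T\subseteq \m^{k-c'}T$ is exactly the Artin--Rees lemma, and that alone suffices.
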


\begin{proof}
Note that, $\m F^e_*M \cap \Hh^0_\m(F^e_*M)= F^e_*\m^{[p^e]}M \cap F^e_*\Hh^0_\m(M)= F^e_*(\m^{[p^e]}M \cap \Hh^0_\m(M))$. There exists an $r \in \mathbb N$, such that $\m^r\Hh^0_\m(M)=0$.
Thus, using Artin-Rees lemma, we conclude that for large enough $e$,  $\m^{[p^e]}M \cap \Hh^0_\m(M)=0$. Also, for large enough $e$, $\m\Hh^0_\m(F^e_*M)=0$. That is, for any large enough $e$, the $S$-module structure on $\Hh^0_\m(F^e_*M)$ comes from the natural $S/\m$-vector space structure on $\Hh^0_\m(F^e_*M)$. Consider the composition
\[\Hh^0_\m(F^e_*M) \xrightarrow[]{\text{inclusion}} F^e_*M \xrightarrow[]{\text{quotient}} \frac{F^e_*M}{\m F^e_*M}.\]
It follows from the discussion above that for any large enough $e$, the composition is an injective $S$-linear map between two $S/\m$-modules. Thus, the composition admits an $S/\m$-linear and hence an $S$-linear splitting projection. Hence, $\Hh^0_\m(F^e_*M)$ is an $S$-module summand of $F^e_*M$, for large enough $e$.  Since $\dim_{S/\m}\Hh^0_\m(F^e_*M)= \grk{\m}{M}p^{e\alpha(S_\m)}$, for large enough $e$ and any copy of $S/\m$ splitting from $F^e_*M$ must split from the submodule $\Hh^0_\m(F^e_*M)$. Hence, the lemma is established.
\end{proof}

\begin{remark}\label{re: rank formula does not imply additivity}
For a maximal ideal $\m$ in an F-finite ring $R$ which is not minimal, the map $\bullet \mapsto \grk{\m}{\bullet}$ is not additive over short exact sequences of finitely generated modules. Thus, although rank formula at $\m$ holds towards $R/\m$, the additivity may fail.
\end{remark}

\subsubsection{Total rank formula}\label{sse: Total rank formula}
We study another property of the decomposition of iterated Frobenius pushforwards of modules related to F-splitting signatures. For any ring $R$, a finitely generated $R$-module $M$, write
\[F^e_*M= \oplus_{N \in \find(M)}N ^{\oplus \Fmult{e}{M}{N}},\ \text{for any}\ e \geq 1.\]
For a prime ideal $\p$ of $R$, applying the map $\grk{\p}{\bullet}$, one gets
\[\grk{\p}{M}= \frac{\grk{\p}{F^e_*M}}{p^{e\alpha(R_\p)}}= \sum_{N \in \find(M)}\frac{\Fmult{e}{M}{N}}{p^{e\alpha(R_\p)}} \grk{\p}{N}.\]
In the sequel, we investigate the validity of a limiting version of the above formula, under the nomenclature of the total rank formula.

\begin{definition}\label{de: rank quantization}
Let $M$ be a finitely generated $R$-module. For a prime ideal $\p$ of $R$, if 
\[\grk{\p}{M}= \sum_{N \in \find(\p, M)}\csig{M}{N}_{\p} \grk{\p}{N}\]
holds, we say that the \emph{total rank formula} at $\p$ holds for $M$ (or simply that the total rank formula holds for $M$, when $\p$ is clear from context).
\end{definition}

In general, the total rank formula might not hold, although a total rank inequality holds. 

\begin{example}\label{eg: failure of rank quantization}
The total rank formula might fail for some modules. For example, let $R$ be the completion of the coordinate ring of a curve of genus two or higher considered in \Cref{eg: a matrix with no final class}, and take $M=R$. Since $p^{-e\delta(R)}\Fmult{e}{R}{N} \to 0$ for any $N\in \find(R)$, the total rank formula at the zero ideal cannot hold for $R$. 

Note that $\find(M)= \find((0), R)$ contains no F-contributors. In \Cref{prop: total rank formula vs rank formula}, we show that in the presence of an F-contributor, the total rank formula holds for modules in the connected class of the contributor and in \Cref{cor: total rank formula in dimension upto two}, we show that in dimension at most two, presence of a contributor implies the total rank formula. This and other evidence leads up to \Cref{conj: total rank formula}. 
\end{example}

\begin{proposition}[Total rank inequality]\label{cor: rank semiadditive}
Let $M$ be a finitely generated $R$-module. Then for any prime ideal $\p \subset R$
\[
\sum_{N \in \find(\p, M)} \csig{M}{N}_{\p} \grk{\p}{N} \leq \grk{\p}{M}.
\]
\end{proposition}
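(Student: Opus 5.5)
The plan is to prove the inequality first for finite partial sums and then pass to the limit, using the identity recorded just before \Cref{de: rank quantization}. Fix a finite subset $\mathcal F \subseteq \find(\p, M)$. For every $e \geq 1$, applying $\grk{\p}{\bullet}$ to the decomposition of $F^e_*M$ and dropping the nonnegative terms indexed by modules outside $\mathcal F$ gives
\[
\sum_{N \in \mathcal F} \frac{\Fmult{e}{M}{N}}{p^{e\alpha(R_\p)}}\, \grk{\p}{N} \leq \frac{\grk{\p}{F^e_*M}}{p^{e\alpha(R_\p)}} = \grk{\p}{M}.
\]
Here \Cref{lem: rank pushforward}, applied over $R_\p$ to the finite length module $\Hh^0_{\p R_\p}(M_\p)$, gives $\grk{\p}{F^e_*M} = p^{e\alpha(R_\p)}\grk{\p}{M}$. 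This is the same computation as in \Cref{pr: positive eigenvector of the block corresponding to a prime}. Note also that modules $N \notin \find(\p,M)$ contribute $\grk{\p}{N}=0$, so restricting the index set to $\find(\p,M)$ loses nothing.

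Next, average this inequality over $e = 1, \dots, n$ and let $n \to \infty$. The sum over $\mathcal F$ is finite, and by \Cref{th: existence of signatures in general} each \ces limit $\csig{M}{N}_\p$ exists for $N \in \find(\p,M)$. So the limit of the averaged left-hand side is $\sum_{N \in \mathcal F} \csig{M}{N}_\p \grk{\p}{N}$, and this is bounded by $\grk{\p}{M}$.

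Finally, all terms are nonnegative and $\find(\p,M)$ is countable. Taking the supremum over finite $\mathcal F$ therefore gives the full series bound. The only subtle point is avoiding an interchange of the limit in $n$ with the infinite sum; restricting to finite subsets first circumvents this, since mass may escape to infinity, which is exactly why only an inequality holds.

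Alternatively, one could use \Cref{th: existence of signatures in general}(\ref{part: cesaro signature is scaled by entrance probability}) together with \Cref{prop: properties of f}(\ref{part: sum of entrance probabilities to recurrent classes is at most one}), but the direct Fatou-type argument above seems cleaner.
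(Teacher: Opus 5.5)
Your proof is correct and is essentially the paper's argument. The paper defines the weighted \ces averages $f_e(N)=\frac{1}{e}\grk{\p}{N}\sum_{n=1}^{e}p^{-n\alpha(R_\p)}\Fmult{n}{M}{N}$, notes that $\sum_{N\in\find(\p,M)}f_e(N)=\grk{\p}{M}$ for every $e$, and applies Fatou's lemma for series. Your truncation to a finite $\mathcal F$ followed by a supremum is just the standard proof of that Fatou inequality written out.
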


\begin{proof}
For any $N \in \find(\p,M)$, define the sequence of numbers $(f_e(N))_e$ by
\[f_e(N)\coloneqq \frac 1 e \grk{\p}{N} \sum_{n=1}^{e} p^{-n\alpha(R_\p)}\Fmult{n}{M}{N}.
\]
By \Cref{th: existence of signatures in general}, $\lim_{e \to \infty} f_e(N) = \grk{\p}{N}\csig{M}{N}_{\p}$.
On the other hand, for any $e \geq 1$
\[\sum_{N \in \find(\p,M)} f_e(N)= \grk{\p}{M}.\]
Thus, taking the limit in the equation above and applying Fatou's lemma for series, we conclude 
\[\sum_{N \in \find(\p,M)}\grk{\p}{N}\csig{M}{N}_{\p}= \sum_{N \in \find(\p,M)} \lim_{e \to \infty} f_e(N)\leq \liminf_{e \to \infty} \sum_{N \in \find(\p,M)} f_e(N)= \grk{\p}{M}.\]
\end{proof}

In the rest of this subsubsection, we show that total rank formula has strong consequences. 
The first immediate one, contained in the next remark, roughly says that validity of the total rank formula for $M$ at $\p$ forces low geometric growth of $\Fmult{e}{M}{N}$, if $N$ is not an F-contributor at $\p$. In fact, if the total rank formula holds, $\Mat(\p,M)$ cannot have a $p^{-\alpha(R_\p)}$-null recurrent class; see \Cref{prop: consequences of the total rank formula}.

\begin{remark}\label{re: interpretation of total rank formula}
Let $\cS \subseteq \find (\p, M)$
consist of those modules which are not F-contributors at $\p$.
By the proof of \Cref{cor: rank semiadditive}, if the total rank formula holds then
\[\lim_{n \to \infty}  \sum_{N \in \cS} \grk{\p}{N}\left(\frac{\sum_{e=1}^{n}\Fmult{e}{M}{N}/p^{e\alpha(R_\p)}}{n}\right)=0.\]
That is, not only does the sequence $(\Fmult{n}{M}{N}/p^{n\alpha(R_\p)})_n$ converge to zero, but the rate of convergence is sufficiently fast that the weighted sum on the left of the above equation also converges to zero.

Moreover, if $\lim_{e \to \infty} \Fmult{e}{M}{N} p^{-e\alpha(R_\p)}$ exists for every $N \in \find (\p,M)$ (e.g. when every F-contributor at $\p$ has period one), the same holds for the terms themselves, by the same proof.
\end{remark}

\begin{lemma}\label{lem: total rank formula implies total rank formula for summands}
Let $\p$ be a prime ideal of a ring $R$, and let $M$ be a finitely generated $R$-module. Then the following are equivalent: 
\begin{enumerate}
    \item the total rank formula at $\p$ holds for $M$; 
    \item there is $e \geq 0$ such that the total rank formula at $\p$ holds for all indecomposable summands of $F_*^e M$;
    \item the total rank formula at $\p$ holds for all $L \in \find(\p,M)$.
\end{enumerate}
\end{lemma}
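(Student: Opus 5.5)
The whole argument rests on one identity relating the Cesàro signatures of $M$ to those of its pushforward summands. For $e \geq 0$ and $N \in \find(\p,M)$, I claim
\[
\csig{M}{N}_\p = \sum_{L \in \find(\p,M)} \frac{\Fmult{e}{M}{L}}{p^{e\alpha(R_\p)}} \csig{L}{N}_\p .
\]
To see this, write $F^{e+n}_*M = F^n_*(F^e_*M)$. Since $F^n_*L$ has no summand with $\p$ as an associated prime unless $\p \in \Ass(L)$, this gives $\Fmult{e+n}{M}{N} = \sum_{L} \Fmult{e}{M}{L}\Fmult{n}{L}{N}$, a finite sum. Dividing by $p^{(e+n)\alpha(R_\p)}$ and taking Cesàro limits yields the identity, because a finite shift does not change a Cesàro limit. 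I also use the trivial identity $\grk{\p}{M} = \sum_L p^{-e\alpha(R_\p)}\Fmult{e}{M}{L}\grk{\p}{L}$, and the total rank inequality (\Cref{cor: rank semiadditive}) applied to each $L$.

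Next I compute the defect. Set
\[
D(L) \coloneqq \grk{\p}{L} - \sum_{N \in \find(\p,L)} \csig{L}{N}_\p \grk{\p}{N} \geq 0 .
\]
For $L \in \find(\p,M)$ we have $\find(\p,L) \subseteq \find(\p,M)$. Moreover $\csig{L}{N}_\p = 0$ for $N \notin \find(L)$, so the sum may be taken over all of $\find(\p,M)$. Multiply the identity above by $\grk{\p}{N}$, sum over $N$, and interchange the order of summation; this is legitimate since all terms are nonnegative and the outer sum over $L$ is finite. The result is
\[
D(M) = \sum_{L \in \find(\p,M)} \frac{\Fmult{e}{M}{L}}{p^{e\alpha(R_\p)}} D(L).
\]
Thus $D(M)$ is a positive combination of the $D(L)$ over the $\p$-relevant summands of $F^e_*M$. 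Indecomposable summands $L$ of $F^e_*M$ with $\p \notin \Ass(L)$ have $\grk{\p}{L} = 0$ and $\find(\p,L) = \emptyset$, so the total rank formula holds for them trivially.

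The equivalences then follow from this formula.
\begin{itemize}
\item (1)$\Rightarrow$(3): given $L \in \find(\p,M)$, choose $e$ with $\Fmult{e}{M}{L} > 0$. Since $D(M) = 0$ and all terms are nonnegative, $D(L) = 0$.
\item (3)$\Rightarrow$(2): this is clear, and holds for every $e \geq 1$, by the remark about non-$\p$ summands.
\item (2)$\Rightarrow$(1): plug $D(L) = 0$ into the formula for the given $e$. For $e = 0$ the statement is immediate after decomposing $M$ into indecomposables, using the same additivity of both sides over direct sums.
\end{itemize}

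The only delicate point is the interchange of the infinite sum over $N$ with the Cesàro limit. I avoid it by applying the Cesàro limit termwise to the finite sum over $L$ for each fixed $N$, and only afterwards summing the nonnegative series over $N$ by Tonelli.
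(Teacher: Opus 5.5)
Your proposal is correct and is essentially the paper's argument: both express the defect as the nonnegative combination $D(M)=\sum_L p^{-e\alpha(R_\p)}\Fmult{e}{M}{L}\,D(L)$ and then conclude from the total rank inequality. The only difference is how the expansion of $\csig{M}{N}_\p$ is obtained: the paper uses first-entrance probabilities, namely $\csig{M}{X}_\p=\ep{MX}(\p)\csig{X}{X}_\p$ together with part (2) of \Cref{le: first entrance probability for any module}. You instead derive it directly from $\Fmult{e+n}{M}{N}=\sum_L \Fmult{e}{M}{L}\Fmult{n}{L}{N}$ and the shift-invariance of Ces\`aro limits, which is more elementary and avoids the recurrence hypothesis of that lemma.
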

\begin{proof} 
Since $\csig{M}{X}_{\p}= \mathfrak f^*_{MX}(\p)\csig{X}{X}_{\p}$ by \Cref{th: existence of signatures in general}, we obtain that
\begin{equation}\label{eq: sum of total rank inequalities}
        \sum_{X \in \find(\p,M)}\grk{\p}{X}\csig{M}{X}_{\p}= \sum_{X \in \find(\p,M)}\grk{\p}{X}\mathfrak f^*_{MX}(\p)\csig{X}{X}_{\p}.
\end{equation}
 
Substitute the expression for $\mathfrak f^*_{MX}(\p)$ from \Cref{le: first entrance probability for any module} into the right-hand side of (\ref{eq: sum of total rank inequalities}) and then apply (\ref{eq: sum of total rank inequalities}) 
again to get, for any $e \in \mathbb N$, 
    \begin{equation}\label{eq: expaning total rank inequality}
        \begin{split}
            &\sum_{X \in \find(\p,M)}\grk{\p}{X}\csig{M}{X}_{\p}  = \sum_{N \in \find(\p,M)}\frac{\Fmult{e}{M}{N}}{p^{e\alpha(R_\p)}}\sum_{X \in \find(\p,M)}\grk{\p}{X}\csig{N}{X}_{\p}\\
           & \leq \sum_{N \in \find(\p,M)}\frac{\Fmult{e}{M}{N}}{p^{e\alpha(R_\p)}} \grk{\p}{N}= \grk{\p}{M}, \ \text{cf. \Cref{cor: rank semiadditive} for the inequality}.
        \end{split}
    \end{equation} 
It is easy to see from above that the total rank formula for $M$ at $\p$ holds if and only if it holds for all indecomposable $N$ such that 
$\Fmult{e}{M}{N} \neq 0$. All the equivalences now follow. 
\end{proof}

\begin{proposition}\label{prop: consequences of the total rank formula}
    Let $M$ be a finitely generated $R$-module and $\p \in \Ass (M)$.
    If the total rank formula at $\p$ holds for $M$, then
    \begin{enumerate}
        \item $\find(\p,M)$ contains an F-contributor at $\p$. 
        In particular, $\Mat(\p,M)$ has convergence radius $p^{-\alpha(R_\p)}$. \label{part: total rank -> F-contributor}
        \item $\Mat(\p,M)$ does not have any $p^{-\alpha(R_\p)}$-null recurrent classes. \label{part: total rank -> no null basic}
        \item Final classes of $\Mat(\p,M)$ are precisely 
        positive $p^{-\alpha(R_\p)}$-recurrent classes, i.e., 
        consist of F-contributors at $\p$. \label{part: total rank -> final = positive basic}
    \end{enumerate}
\end{proposition}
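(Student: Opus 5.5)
We plan to work throughout with the identity
\[
\grk{\p}{M} = \sum_{X \in \find(\p,M)} \grk{\p}{X}\,\csig{M}{X}_{\p}
\]
and with the factorization $\csig{M}{X}_{\p} = \mathfrak f^*_{MX}(\p)\,\csig{X}{X}_{\p}$ from \Cref{th: existence of signatures in general}. We also use the fact that $\csig{X}{X}_{\p}>0$ exactly when $X$ is an F-contributor at $\p$; this follows from \Cref{cor: positive ces signature -> F-contributor} applied with $M=X$, or from the definitions.

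For (\ref{part: total rank -> F-contributor}), note that $\p \in \Ass(M)$ gives $\grk{\p}{M}>0$. The total rank formula then forces some term $\csig{M}{X}_\p\grk{\p}{X}$ to be positive. For such an $X$, \Cref{cor: positive ces signature -> F-contributor} shows that $X$ is an F-contributor at $\p$. The implication (1a)$\Rightarrow$(1b) of \Cref{thm: F-contributor at a prime via matrices} then gives $R(\Mat(\p,M)) = p^{-\alpha(R_\p)}$.

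For (\ref{part: total rank -> no null basic}) and (\ref{part: total rank -> final = positive basic}), we first reduce via \Cref{lem: total rank formula implies total rank formula for summands}: the total rank formula holds for every $L \in \find(\p,M)$. Take any final class $\cC$ of $\Mat(\p,M)$ and any $L \in \cC$. Applying part (\ref{part: total rank -> F-contributor}) to $L$, with $\p \in \Ass(L)$ since $L \in \find(\p,M)$, we find an F-contributor at $\p$ in $\find(\p,L)$. Since $\cC$ is final, $\find(\p,L)$ is contained in $\cC$ by \Cref{pr: entries of powers of the matrix for a prime} together with finality. Hence $\cC$ contains an F-contributor, and by \Cref{thm: F-contributor at a prime via matrices} its class, which is $\cC$, is basic positive recurrent. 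Conversely, any positive $p^{-\alpha(R_\p)}$-recurrent class is final by \Cref{prop: properties of maximal recurrent classes for a prime}. This proves (\ref{part: total rank -> final = positive basic}).

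For (\ref{part: total rank -> no null basic}), a null recurrent class $\cC'$ is basic recurrent by \Cref{le: R recurrent implies basic recurrent}. It is therefore final by \Cref{thm: infinite basic is final}, using the positive eigenvector of \Cref{pr: positive eigenvector of the block corresponding to a prime}. By (\ref{part: total rank -> final = positive basic}) it would then be positive recurrent, a contradiction.

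The main subtlety is the step "final class contains a contributor". It requires checking that $\find(\p,L)$ stays inside the final class of $\Mat(\p,M)$ containing $L$, and that the F-contributor produced by part (\ref{part: total rank -> F-contributor}) for $L$ lies in $\find(\p,L)$. The second point is immediate, since part (\ref{part: total rank -> F-contributor}) only sums over $\find(\p,L)$. For the first point, if $L$ forms an exceptional singleton class, then $\find(\p,L)\subseteq\cC=\{L\}$ together with exceptionality would make $\find(\p,L)$ empty. That contradicts part (\ref{part: total rank -> F-contributor}) for $L$, so this case cannot occur.
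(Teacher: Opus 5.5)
Your proof is correct and follows the paper's argument. Part (1) comes from a positive Ces\`aro signature together with \Cref{cor: positive ces signature -> F-contributor} and \Cref{thm: F-contributor at a prime via matrices}; parts (2) and (3) come from passing the total rank formula to members of a final class via \Cref{lem: total rank formula implies total rank formula for summands} and from the fact that recurrent classes are final by \Cref{thm: infinite basic is final}. Your explicit check that the contributor produced for $L$ lies in $\find(\p,L)\subseteq\cC$, and that $\cC$ cannot be exceptional, fills in a step the paper leaves implicit.
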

\begin{proof}
Since $\grk{\p}{M} > 0$, there is $N \in \find(\p, M)$ such that $\csig{M}{N}_{\p} > 0$. Rest of (\ref{part: total rank -> F-contributor}) 
now follows from Corollary~\ref{cor: positive ces signature -> F-contributor} and \Cref{thm: F-contributor at a prime via matrices}. 

By \Cref{pr: positive eigenvector of the block corresponding to a prime} and \Cref{thm: infinite basic is final}, any $p^{-\alpha(R_\p)}$-recurrent class of $\Mat(\p,M)$ is final.
Now, if $N$ belongs to a final class $\cC \subseteq \find(\p, M)$, then the total rank formula holds for $N$ by \Cref{lem: total rank formula implies total rank formula for summands}, hence $\cC$ is positive recurrent by (\ref{part: total rank -> F-contributor}). This shows (\ref{part: total rank -> no null basic}) and (\ref{part: total rank -> final = positive basic}). 
\end{proof}

The next result compares the validity of the total rank formula and the \ces rank formula. This is used later to prove total rank formula when the ring is strongly F-regular (\Cref{cor: total rank formula in the SFR case}) or has dimension at most two (\Cref{cor: total rank formula in dimension upto two}).

\begin{proposition}\label{prop: total rank formula vs rank formula}
Let $M$ be a finitely generated $R$-module and $\p \in \Ass(M)$. Then, there exists an F-contributor $N$ at $\p$,
such that the \ces rank formula for $M$ towards $N$ at $\p$ holds if and only if the total rank formula at $\p$ holds for $M$ and $\Mat(\p,M)$ has exactly one final class.

In particular, if $N$ is an indecomposable F-contributor at $\p$, then the total rank formula at $\p$ holds for every $N' \in \find(\p,N)$.
\end{proposition}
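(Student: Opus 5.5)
We reduce both directions to statements about the first-entrance quantities $\mathfrak f^*_{M\bullet}(\p)$, using the identity from the proof of \Cref{pr: comparison of notions of rank formulas and consequences}. Namely, for an F-contributor $N$ at $\p$, the \ces rank formula for $M$ towards $N$ holds if and only if $\mathfrak f^*_{MN}(\p)=\grk{\p}{M}/\grk{\p}{N}$.

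\emph{Forward direction.} Suppose $N$ is an F-contributor at $\p$ and the \ces rank formula holds for $M$ towards $N$.

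First, $N\in\find(\p,M)$, since $\csig{M}{N}_\p>0$. By \Cref{pr: comparison of notions of rank formulas and consequences}(\ref{part: Cesaro rank formula implies a unique recurrent class}), the class $\cC$ of $N$ is the unique basic recurrent class.

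Next we show $\cC$ is the only final class. By part (\ref{part: cesaro rank formula for a module implies that for its summands}) of that proposition, every $N'\in\find(\p,M)$ satisfies $\mathfrak f^*_{N'N}(\p)=\grk{\p}{N'}/\grk{\p}{N}>0$. By \Cref{prop: properties of f}(\ref{part: nonvanishing of eventual probability}), $N$ is therefore reachable from every $N'$. So any final class must contain $N$, and $\cC$ is the unique final class.

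For the total rank formula, we evaluate the sum $\sum_{X\in\find(\p,M)}\grk{\p}{X}\csig{M}{X}_\p$. Only positive recurrent classes contribute, and $\cC$ is the only one, so the sum equals
\[
\sum_{X\in\cC}\grk{\p}{X}\mathfrak f^*_{MX}(\p)\csig{X}{X}_\p .
\]
By \Cref{le: first entrance probability for any module}(\ref{part: class property of entrance proability module case}), each term satisfies $\grk{\p}{X}\mathfrak f^*_{MX}(\p)=\grk{\p}{N}\mathfrak f^*_{MN}(\p)=\grk{\p}{M}$. The sum therefore becomes $\grk{\p}{M}\sum_{X\in\cC}\csig{X}{X}_\p$.

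Finally, \Cref{th: matrix version of total rank formula}(2), applied to the irreducible positive recurrent matrix $\Mat(\p,M)[\cC]$ with $R=p^{-\alpha(R_\p)}$, gives $\sum_{X\in\cC}\csig{X}{X}_\p=1$. We should check that the entries of $\Mat(\p,M)[\cC]$ agree with $\Fmult{n}{X}{X}$ because $\cC$ is final. This proves the total rank formula.

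\emph{Converse.} Assume the total rank formula holds for $M$ and $\Mat(\p,M)$ has exactly one final class $\cC$.

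By \Cref{prop: consequences of the total rank formula}(\ref{part: total rank -> final = positive basic}), $\cC$ is positive recurrent, and it is the only recurrent class. Pick $N\in\cC$; it is an F-contributor at $\p$. Running the computation above backwards gives
\[
\grk{\p}{M}=\grk{\p}{N}\,\mathfrak f^*_{MN}(\p)\sum_{X\in\cC}\csig{X}{X}_\p=\grk{\p}{N}\,\mathfrak f^*_{MN}(\p),
\]
which is exactly the \ces rank formula.

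The second direction has the cleaner proof: every recurrent class is final by \Cref{thm: infinite basic is final}, so uniqueness of the final class forces uniqueness of the recurrent class, and the total rank formula then pins down $\mathfrak f^*_{MN}(\p)$.

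\emph{The "in particular".} Take $M=N$ and a class $\cC\ni N$. By \Cref{thm: F-contributor at a prime via matrices}(2b), $\Mat(\p,N)$ is irreducible, so it has one final class. The \ces rank formula for $N$ towards $N$ is trivial, so the total rank formula holds for $N$. \Cref{lem: total rank formula implies total rank formula for summands} then gives it for every $N'\in\find(\p,N)$.

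The main obstacle is the bookkeeping identifying $\csig{X}{X}_\p$ with the Markov-chain \ces averages in \Cref{th: matrix version of total rank formula}. The averages there are taken for the stochastic normalization of $\Mat(\p,M)[\cC]$, but diagonal entries are unaffected by the diagonal conjugation, so the identification should go through.
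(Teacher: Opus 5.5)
Your proof is correct and follows essentially the paper's route: you reduce to the unique positive recurrent class $\cC$ of $\Mat(\p,M)$, use that $\grk{\p}{X}\mathfrak f^*_{MX}(\p)$ is constant on $\cC$ together with \Cref{th: matrix version of total rank formula}, and invoke \Cref{prop: consequences of the total rank formula} for the converse. The differences are cosmetic. In the converse you solve for $\mathfrak f^*_{MN}(\p)$ directly via part (5) of \Cref{le: first entrance probability for any module} rather than squeezing the rank inequality of \Cref{cor: rank inequalities}; you get uniqueness of the final class from reachability of $N$ rather than from \Cref{prop: consequences of the total rank formula}; and for the last assertion you take $M=N$ and then use \Cref{lem: total rank formula implies total rank formula for summands}, whereas the paper applies the first part to each $N'$ via \Cref{lem: rank formula within connected classes}.
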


\begin{proof}
First, suppose that for some F-contributor at $\p$, denoted by $N$, the \ces rank formula for $M$ towards $N$ at $\p$ holds. 
Let $\cC \subseteq \find (\p, M)$ be the connected class containing $N$. 
By \Cref{pr: comparison of notions of rank formulas and consequences}(\ref{part: Cesaro rank formula implies a unique recurrent class}), $\Mat(\p,M)$ has convergence radius $p^{-\alpha(R_\p)}$ and $\cC$ is the only $p^{-\alpha(R_\p)}$-recurrent class. Thus, by \Cref{thm: F-contributor at a prime via matrices}, $\cC$ is precisely the set of $L \in \find(\p, M)$
such that $\csig{M}{L}_{\p} > 0$ and, by \Cref{pr: comparison of notions of rank formulas and consequences}(\ref{part: cesaro rank formula is a class property of the target}), \ces rank formula for $M$ towards $L$ at $\p$ holds. Therefore, by applying first 
the rank formula and then \Cref{th: matrix version of total rank formula}, we conclude
\begin{equation*}
        \sum_{L \in \find(\p,M)} \grk{\p}{L}\csig{M}{L}_{\p}
        = \sum_{L \in \cC} \grk{\p}{M}\csig{L}{L}_{\p}
        = \grk{\p}{M}, 
\end{equation*}
i.e., the total rank formula at $\p$ holds for $M$. The uniqueness of the final class of $\Mat(\p, M)$ follows from \Cref{prop: consequences of the total rank formula}.

We now prove the converse. By \Cref{prop: consequences of the total rank formula}, $\Mat(\p,M)$ has convergence radius $p^{-\alpha(R_\p)}$ and has unique basic recurrent class $\cC$. By 
\Cref{prop: consequences of the total rank formula}(\ref{part: total rank -> no null basic}), $\cC$ is positive recurrent. 
Therefore, use rank inequality (\Cref{cor: rank inequalities}) in the total rank formula to obtain
\[
\grk{\p}{M} = \sum_{N \in \find(\p,M)} \grk{\p}{N}\csig{M}{N}_{\p} = \sum_{N \in \cC} \grk{\p}{N}\csig{M}{N}_{\p} \leq \sum_{N\in \cC} \grk{\p}{M}\csig{N}{N}_{\p}.
\]
By \Cref{th: matrix version of total rank formula}, 
the rightmost term equals $\grk{\p}{M}$. Thus, for any $N \in \cC$, the rank inequality must be an equality, i.e.,  
$\grk{\p}{N}\csig{M}{N}_{\p} = \grk{\p}{M}\csig{N}{N}_{\p}$. This implies rank formula.

For the last assertion, $\find(\p,N)$ is the connected class of $\Mat(\p,N)$ containing $N$ by \Cref{thm: F-contributor at a prime via matrices},  $(2a) \Rightarrow (2b)$. Hence, for $N' \in \find(\p,N)$, the \ces rank formula for $N'$ towards the F-contributor $N$ holds by \Cref{lem: rank formula within connected classes}, and the first part applies to $M = N'$.
\end{proof}

\begin{remark}\label{re: probabilistic interpretation of rank formula and total rank formula}
In terms of the Markov chain associated to $\Mat(\p,M)$ (see \Cref{re: Markov chain associated to an associated prime}), the rank formulas have the following meaning.
\begin{enumerate}
    \item The total rank formula for $M$ at $\p$ is a natural analogue of the nondissipativeness of this chain; see \cite{Chung_Kai_Lai_Markov_Chains_with_stationary_transition_probabilities}. The same notion is called \emph{positive Tweedie recurrence} in \cite[Definition 18.8]{Kuntz} and goes back to Kendall \cite{Kendall_on_non-dissipative_Markoff_chains_with_an_enumerable_infinity_of_states}.
    \item Let $N \in \find(\p,M)$ with $\csig{N}{N}_\p>0$, so that the connected class of $N$, namely $\find(\p,N)$, is positive recurrent. Then the \ces rank formula for $M$ towards $N$ at $\p$ holds if and only if every state $M' \in \find(\p,M)$ enters $\find(\p,N)$ with probability one. Indeed, by \Cref{pr: comparison of notions of rank formulas and consequences}(\ref{part: cesaro rank formula for a module implies that for its summands}), it is equivalent to the \ces rank formula for every $M' \in \find(\p,M)$ towards $N$. By \Cref{th: existence of signatures in general}(\ref{part: cesaro signature is scaled by entrance probability}), this means $\ep{M'N}(\p)= \grk{\p}{M'}/\grk{\p}{N}$, and $\grk{\p}{N}\ep{M'N}(\p)/\grk{\p}{M'}$ is the probability that $M'$ enters $\find(\p,N)$; see \Cref{prop: properties of f}(\ref{part: sum of entrance probabilities to recurrent classes is at most one}).
    \item Combining (2) with \Cref{prop: total rank formula vs rank formula}, the rank formula for $M$ at $\p$ towards some F-contributor at $\p$ is equivalent to the \emph{positive Harris recurrence} of the chain, i.e., nondissipativeness with exactly one essential class; see \cite[Definition 18.8]{Kuntz}.
\end{enumerate}
\end{remark}

\subsection{Examples}

We include some examples of F-decomposition matrices, and corresponding F-contributors at different associated primes. The following result is used to produce higher dimensional examples from a given example.

\begin{lemma}\label{le: adjoining variable}
    Let $(R, \m)$ be an F-finite complete local ring, $M$ be a finitely generated $R$-module and $R[z]$ be the polynomial ring in the variable $z$. Let $M[[z]]$ be the $R[[z]]$-module given by the $zR[z]$-adic completion of the $R[z]$-module $M \otimes_RR[z]$. Denote the Frobenius endomorphisms of $R[[z]]$ and $R$ by $F$ and $F_R$ respectively.
    \begin{enumerate}
        \item If $M$ is an indecomposable $R$-module, then $M[[z]]$ is an indecomposable $R[[z]]$-module.

        \item If for some $e \in \mathbb N$, $F_{R*}^eM \cong \oplus_{j=1}^{m}M_j$, where $M_j$ are indecomposable $R$-modules, not necessarily mutually nonisomorphic, then 
        \[F^e_*M[[z]] \cong \oplus_{j=1}^{m}\oplus_{i=0}^{p^{e}-1}M_j[[z]]F^e_*z^i,\]
        where $M_j[[z]]F^e_*z^i$ is isomorphic to $M_j[[z]]$ as an $R[[z]]$-module; the only role of $F^e_*z^i$ is to distinguish different copies of $M_j[[z]]$.
    \end{enumerate}
\end{lemma}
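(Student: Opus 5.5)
We prove part (2) first, since part (1) will use it. Write $S = R[[z]]$. Since $S$ is regular over $R$ in the variable $z$, $F^e_*S$ is a free $F^e_*R[[z^{p^e}]]$-module-like object: more precisely, $F^e_* S \cong \bigoplus_{i=0}^{p^e-1} (F^e_{R*}R)[[z]]\cdot F^e_*z^i$ as $S$-modules. Here a power series $\sum_n a_n z^n$ is grouped according to $n \bmod p^e$, and $z$ acts on $F^e_*S$ as multiplication by $z^{p^e}$. This holds because $R$ is complete and F-finite, so $F^e_{R*}R$ is finitely generated over $R$ and the completions commute.

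The same decomposition works with $M$ in place of $R$. Since $M[[z]] \cong M \otimes_R R[[z]]$ for finitely generated $M$ over the noetherian ring $R$, we obtain
\[F^e_*M[[z]] \cong \bigoplus_{i=0}^{p^e-1} (F^e_{R*}M)[[z]]\, F^e_*z^i.\]
Substituting $F^e_{R*}M \cong \oplus_j M_j$ and using that $(-)[[z]]$ commutes with finite direct sums gives the formula in (2). The step to verify is that the $S$-module structure on each piece $\{\sum_n m_n z^{np^e+i}\}$ is the one of $(F^e_{R*}M)[[z]]$. Explicitly, for $r\in R$ and $m \in M$ we have $r\cdot F^e_*(m z^{np^e+i}) = F^e_*(r^{p^e} m z^{np^e+i})$ and $z\cdot F^e_*(m z^{np^e+i}) = F^e_*(m z^{(n+1)p^e+i})$. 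So the map $\sum_n (F^e_{R*}m_n) z^n \mapsto F^e_*\sum_n m_n z^{np^e+i}$ is an $S$-isomorphism, which is continuous for the $z$-adic topologies.

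For (1), I would use that $S$ is complete local, hence KRS holds, and that indecomposability of a finitely generated module is equivalent to $\operatorname{End}_S(M[[z]])$ being local. We have $\operatorname{End}_S(M[[z]]) \cong \operatorname{End}_R(M)\otimes_R R[[z]] \cong \operatorname{End}_R(M)[[z]]$ by flatness of $R\to R[[z]]$ and finite presentation. Since $\operatorname{End}_R(M)$ is a local ring, with nonunits forming an ideal $J$, the ring $\operatorname{End}_R(M)[[z]]$ is local. Indeed, a power series $\sum \phi_n z^n$ is invertible iff $\phi_0$ is invertible (invert the constant term and use the $z$-adic geometric series), so the nonunits are exactly $J + z\operatorname{End}_R(M)[[z]]$, which is an ideal. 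A local endomorphism ring has no nontrivial idempotents, so $M[[z]]$ is indecomposable.

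The main obstacle I expect is the bookkeeping in (2). One must check that the decomposition of $F^e_*S$ into the $p^e$ residue classes is compatible with completion, that is, that $F^e_*(M\otimes_R R[[z]])$ agrees with the completed tensor description. This uses finite generation of $F^e_{R*}M$, which follows from F-finiteness. It also uses that the $zR[z]$-adic completion in the statement coincides with $M\otimes_R R[[z]]$, which I would cite from the standard fact that completion is exact on finitely generated modules.
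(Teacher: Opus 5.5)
Your proposal is correct. Part (2) is the same as the paper's proof: your map $\sum_n (F^e_{R*}m_n)z^n \mapsto F^e_*\sum_n m_n z^{np^e+i}$ is exactly the paper's $\phi_1$. The paper likewise only checks $R[[z]]$-linearity and bijectivity before substituting the decomposition of $F^e_{R*}M$.

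For part (1) you take a different route, although both arguments start from the same identification $\operatorname{End}_{R[[z]]}(M[[z]])\cong \operatorname{End}_R(M)[[z]]$. The paper writes an endomorphism as $\widehat\phi$ with coefficient maps $\delta_n\phi$ and then works directly with an idempotent. Comparing coefficients in $\widehat\phi^2=\widehat\phi$ gives $\sum_{\alpha=0}^{n}\delta_\alpha\phi\circ\delta_{n-\alpha}\phi=\delta_n\phi$. Hence $\delta_0\phi$ is an idempotent of $\operatorname{End}_R(M)$, so it is $0$ or $\mathrm{Id}$, and induction on $n$ kills every $\delta_n\phi$ with $n\geq 1$.

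That argument uses only that $\operatorname{End}_R(M)$ has no nontrivial idempotents, so completeness of $R$ plays no role in the paper's proof of (1). Your argument instead relies on the theorem that an indecomposable finitely generated module over a complete (Henselian) local ring has a local endomorphism ring. That genuinely uses the completeness hypothesis, which is available here. In return you get the stronger conclusion that $\operatorname{End}_{R[[z]]}(M[[z]])$ is itself local.

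Two cosmetic points. First, despite your opening sentence, your proof of (1) never uses (2). Second, the appeal to KRS for $R[[z]]$ is unnecessary, since a local endomorphism ring already rules out nontrivial idempotents.
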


\begin{proof}
    (1) Since the direct summands of $M[[z]]$ correspond to idempotent elements in $\text{End}_{R[[z]]}(M[[z]])$, it suffices to prove that such an idempotent element is either zero or identity. Recall the canonical isomorphisms:
    $$\Hom_{R}(M, M[[z]]) \cong \Hom_{R[z]}(M\otimes_RR[z], M[[z]]) \cong \Hom_{R[[z]]}(M[[z]], M[[z]]),$$
    where the isomorphism sends $\phi \in \Hom_{R}(M, M[[z]])$ to
    $\widehat \phi \in \Hom_{R[[z]]}(M[[z]], M[[z]])$ given by 
    $$\widehat \phi(\sum_{n=0}^{\infty}x_nz^n)= \sum_{n=0}^{\infty} \phi(x_n)z^n, \ \forall (x_n)_{n \in \mathbb N} \in \prod_{\mathbb N}M.$$
  For any $\phi$ as before, let $(\delta_n\phi)_{n \in \mathbb N}$ be the unique collection of maps in $\Hom_R(M,M)$ such that
  \[\phi(x)= \sum_{n=0}^{\infty} \delta_n\phi(x)z^n, \forall x \in M.\]
  Now suppose that $\widehat \phi$ is idempotent. A straightforward calculation shows that $(\widehat \phi)^2= \widehat \phi$ if and only if $\widehat \phi \phi= \phi$. Since $\widehat \phi$ is $(z)R[[z]]$-adically continuous,
  \[\widehat \phi \phi(x)= \sum_{n=0}^\infty \widehat \phi(\delta_n\phi(x))z^n= \sum_{n=0}^\infty (\sum_{\alpha=0}^{n}\delta_\alpha  \phi \circ \delta_{n-\alpha} \phi(x))z^n= \sum_{n=0}^{\infty} \delta_n\phi(x)z^n, \forall x \in M.\]
Hence, for any $n \in \mathbb N$,
\begin{equation*}
    \sum_{\alpha=0}^{n}\delta_\alpha  \phi \circ \delta_{n-\alpha} \phi = \delta_n\phi.
\end{equation*}
  
In particular $(\delta_0\phi)^2= \delta_0\phi$. Since $M$ is indecomposable $\delta_0\phi$ is either zero or identity. In either case, using the last equation and induction on $n$, it follows that $\delta_n\phi$ is zero for all $n \geq 1$. Therefore, $\widehat \phi$ is either zero or the identity.

(2) Consider the map
\[\phi_1:\oplus_{i=0}^{p^{e}-1}F^e_{R*}M[[z]]F^e_*z^i \rightarrow F^e_*M[[z]],\]
where $F^e_*z^i$'s play a similar bookkeeping role as in the statement of the lemma, and,
$$\phi_1((\sum_{n=0}^\infty F_{R*}^e m_nz^n)F^e_*z^i)= F_{*}^e\sum_{n=0}^\infty  m_nz^{np^e+i}, \ \forall \ (\sum_{n=0}^\infty F_{R*}^e m_nz^n)F^e_*z^i \in F^e_{R*}M[[z]]F^e_*z^i, \forall i .$$
It is straightforward to verify that $\phi_1$ is $R[[z]]$-linear and bijective. Now, we use an $R$-linear isomorphism $F_{R*}^eM \cong \oplus_{j=1}^{m}M_j$, to get an $R[[z]]$-linear isomorphism
\[F_{R*}^eM[[z]]F^e_*z^i \cong \oplus_{j=1}^{m}M_j[[z]]F^e_*z^i,\]
for each $0 \leq i \leq p^e-1$. So (2) follows.
\end{proof}

\begin{example}\label{eg: contributors at a non minimal prime}
    For a positive integer $m$ and an F-finite field $k$, consider the quotient of the power series ring:
    \[R'= \frac{k[[x, y_1, \ldots, y_m]]}{(x^2, xy_1, xy_2, \ldots, xy_m)}\]
 One has a primary decomposition $(x^2, xy_1, xy_2, \ldots, xy_m)= (x,y_1, \ldots, y_m)^2 \cap (x)$. Denote the image of $f= f(x, y_1, \ldots, y_m)$ in $R'$ by $\bar f$. As $k$ vector spaces, $R' = k\bar x \oplus k[[\bar y_1, \ldots, \bar y_m]]$.
 Note $k\bar x$ and $k[[\bar y_1, \ldots, \bar y_m]]$ underlie the indecomposable $R'$-modules $R'/(\bar x,\bar y_1, \ldots, \bar y_m)\cong 0:_{R'} (\bar x,\bar y_1, \ldots, \bar y_m)R'$ and $R'/(\bar x)R' \cong k[[y_1, \ldots, y_m]]$, respectively. Moreover, as $R'$-modules,
 \begin{equation*}
 \begin{split}
     F_*R' \cong F_*\frac{R'}{(\bar x, \bar y_1, \ldots, \bar y_m)R'} \oplus F_*\frac{R'}{(\bar x)R'}& \cong F_*k \oplus F_*k[[y_1, \ldots, y_m]]\\ & \cong k^{\oplus p^{\alpha(R')}} \oplus k[[y_1, \ldots, y_m]]^{\oplus p^{(m+ \alpha(R'))}}.
 \end{split}
 \end{equation*}
 Therefore, it follows that $F^e_*R' \cong k^{\oplus p^{e\alpha(R')}} \oplus k[[y_1, \ldots, y_m]]^{\oplus p^{e(m+ \alpha(R'))}}.$
 So we conclude that $\find(R')= \{R'/(\bar x,\bar y_1, \ldots, \bar y_m)R' \cong k, R'/(\bar x)R' \cong k[[y_1, \ldots, y_m]]\}$ and
 \[\Mat(R')= \begin{bmatrix}
     p^{\alpha(R')} & 0\\
     0 & p^{\dim(R')+\alpha(R')} 
 \end{bmatrix} \ .\]
\textbf{Contributors:} $R'/(\bar x,\bar y_1, \ldots, \bar y_m) \cong k$ is a contributor to $R'$ at the zero-dimensional associated prime $(\bar x,\bar y_1, \ldots, \bar y_m)$, and $R'/(\bar x)R'$ is a contributor to $R'$ (at $(\bar x)R'$).
 
\end{example}

\begin{example}\label{eg: contributor at an associated prime of prescribed dimension}
    For a positive integer $n$, take \[R= \frac{k[[x, y_1, \ldots, y_m, z_1, \ldots, z_n]]}{(x^2, xy_1, xy_2, \ldots, xy_m)}.\]
    Using \Cref{le: adjoining variable} and \Cref{eg: contributors at a non minimal prime}, we conclude that
    $$\find (R)= \{\frac{R}{(\bar x,\bar y_1, \ldots, \bar y_m)R} \cong \frac{R'}{(\bar x,\bar y_1, \ldots, \bar y_m)R'}[[z_1, \ldots, z_n]], \frac{R}{(\bar x)R} \cong \frac{R'}{(\bar x)R'}[[z_1, \ldots, z_n]]\}$$
   Therefore, $\find(\p,R)$ is nonempty if and only if $\p= (\bar x)R$ or $\p= (\bar x,\bar y_1, \ldots, \bar y_m)R$. $\find(\p,R)$ for these two primes represent the two connected classes of $\Mat(R)$.  Moreover  \[\Mat(R)= \begin{bmatrix}
     p^{n+ \alpha(R)} & 0\\
     0 & p^{\dim(R)+\alpha(R)} 
 \end{bmatrix} \ .\]
 So, both of the connected classes are not exceptional, $p^{-\alpha(R_\p)}$-positive recurrent and final. The convergence radius of $\Mat(R)$ is $p^{-\dim(R)- \alpha(R)}$ and the connected class $\{R/(\overline{x})\}$ is basic positive recurrent.
\end{example}

\begin{example} \label{eg: a matrix with no final class}
Let $X$ be a smooth projective curve of genus at least two over an algebraically closed field $k$ of characteristic $p>0$. Let $\mathcal L$ be an ample invertible sheaf. Let $R$ be the corresponding $\mathbb N$-graded section ring,
\[R= \underset{j 
\in \mathbb N}{\bigoplus}H^0(X, \mathcal L ^j).\]
Denote the completion of $R$ at the unique homogeneous maximal ideal by $\widehat R$. We describe $\find(\widehat R)$ and $\Mat(\widehat R)$ in \Cref{le: DM for higher genus curves}, and record its further properties below.

To describe $\find(\widehat R)$, we set some convention. For $\beta \in [0, p^n-1]$, denote $\beta+ p^n\mathbb Z 
\in \mathbb Z/ p^n\mathbb Z$ by $(\beta, p^n)$. Endow
$$
S \coloneqq \underset{n \geq 1}{\coprod}\frac{\mathbb Z} {p^n\mathbb Z}$$
with the total order induced by: 
\begin{gather*}
\text{If}\,\, m<n,\,\, (\gamma, p^m) < (\beta, p^n),\, 
\forall \gamma, \beta; \,
\text{if}\,\, \beta_1< \beta_2, (\beta_1, p^n)<(\beta_2, p^n).
\end{gather*}

For $n \geq 1$ and an integer $\beta \in [0, p^n-1]$, set \[M_{\beta,p^n}= F^n_*\underset{m 
\in \mathbb N}{\bigoplus}R_{\beta + mp^n}.\]
Then $M_{\beta,p^n}$ is an  $\mathbb N$-graded module and is an $R$-submodule of $F^n_*R$. 

\begin{lemma}\label{le: DM for higher genus curves}
 The $R$-modules $M_{\beta,p^n}$, for any $n \geq 1$, $\beta \in [0, p^n-1]$, have the following properties:

\begin{enumerate}
    \item The $R$-module $M_{\beta,p^n}$ is the section module of the sheaf $F^n_{\ast}\mathcal L^{\beta}$ on $X$ with respect to $
    \mathcal L$. 

    \item The $\widehat R$-module $M_{\beta,p^n} \otimes_R \widehat R$ is indecomposable. 

    \item For different  $(\beta, p^n)$'s the corresponding $\widehat R$ modules $M_{\beta,p^n} \otimes_R \widehat R$ are not isomorphic.

    \item The decomposition of the $\widehat R$-module $F_*(M_{\beta,p^n}
    \otimes \widehat R)$ into a direct sum of pairwise nonisomorphic indecomposable $\widehat R$-modules is 

    \[F_*(M_{\beta,p^n}
    \otimes_R \widehat R ) \cong \underset{0 \leq j \leq p-1}{\bigoplus}M_{\beta+jp^n,p^{n+1}} \otimes \widehat R.\]

    \item The decomposition of $F^n_\ast \widehat R$ into a direct sum of indecomposable $\widehat R$-modules is given by

    \[F^n_\ast \widehat R \cong \underset{0 \leq \beta \leq p^n-1}{\bigoplus}M_{\beta, p^n} \otimes_R \widehat R.\]

    \item $\find(\widehat R)= \{M_{\beta,p^n} \otimes_R \widehat R\}_{(\beta,p^n) \in S }$. The $((\alpha, p^n), (\beta, p^m))$-th entry of the $S \times S$-matrix $\Mat (\widehat R)$ is nonzero if and only if $m=n+1$ and $\beta= \alpha+ j p^n$ for some $j \in [0,p-1]$, and the nonzero entry is one. 
\end{enumerate}
\end{lemma}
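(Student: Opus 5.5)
The plan is to work geometrically on $X$: the section ring turns graded $R$-modules into sheaves on $X$, and for a smooth curve the Frobenius pushforward of a line bundle is a vector bundle whose stability controls indecomposability. We take the six parts in order.

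\textbf{Part (1).} We identify graded pieces. The degree-$j$ part of $M_{\beta,p^n}$, as a graded $R$-module with $R$ acting through $F^n$, is $H^0(X,\mathcal L^{\beta+jp^n})$. Since $(F^n)^*\mathcal L\cong \mathcal L^{p^n}$, the projection formula gives
\[
H^0(X,\mathcal L^{\beta+jp^n}) = H^0\bigl(X, F^n_*(\mathcal L^{\beta})\otimes \mathcal L^{j}\bigr).
\]
Hence $M_{\beta,p^n}=\bigoplus_j H^0(X,F^n_*\mathcal L^\beta\otimes\mathcal L^j)$ is the section module of $F^n_*\mathcal L^\beta$.

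\textbf{Part (5).} Splitting $F^n_*R$ by residues of degrees modulo $p^n$ gives the graded decomposition $F^n_*R=\bigoplus_\beta M_{\beta,p^n}$. Completion is exact, so tensoring with $\widehat R$ yields the displayed decomposition.

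\textbf{Part (4).} The same bookkeeping applies to $F_*M_{\beta,p^n}$: splitting the degrees $\beta+mp^n$ by $m$ modulo $p$ gives the summands $M_{\beta+jp^n,p^{n+1}}$. This is a direct computation once we note $F_*F^n_*=F^{n+1}_*$.

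\textbf{Parts (2) and (3): the main obstacle.} For a graded module $M$ of depth $2$, such as a section module of a vector bundle on a curve, completion should preserve indecomposability and distinctness. The reason is that $\operatorname{End}$ of the completion is the completion of the graded endomorphism ring, which is $\bigoplus_j \Hom(\mathcal E,\mathcal E\otimes\mathcal L^j)$. So it suffices to show:
\begin{enumerate}
\item $F^n_*\mathcal L^\beta$ is indecomposable, indeed has $\operatorname{End}=k$;
\item $\Hom(F^n_*\mathcal L^\beta, F^m_*\mathcal L^\gamma\otimes\mathcal L^j)$ has no isomorphisms unless $(\beta,p^n)=(\gamma,p^m)$ and $j=0$;
\item degree-$0$ idempotents lift, so local graded endomorphism rings yield local completed ones.
\end{enumerate}
For (i) we invoke the stability of Frobenius pushforwards of line bundles on curves of genus $\ge 2$ (Lange--Pauly, Sun): $F^n_*\mathcal L^\beta$ is stable, hence simple.

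For (iii) we note that non-isomorphic graded modules could still become isomorphic after completion only via a shift. We rule this out in two steps:
\begin{itemize}
\item The rank of $F^n_*\mathcal L^\beta$ is $p^n$, which distinguishes different $n$.
\item For fixed $n$, the degrees $\deg F^n_*\mathcal L^\beta=\beta\deg\mathcal L+(p^n-1)(g-1)$ differ across $\beta$ by less than a full twist $p^n\deg\mathcal L$. So no twist $\mathcal L^j$ with $j\neq 0$ matches them, and for $j=0$ stable bundles of different degree are non-isomorphic.
\end{itemize}
The fiddliest point will be making the completion argument rigorous. Here we would use that $\widehat R$-isomorphism classes of completed graded reflexive modules over a graded ring with $R_0=k$ correspond to graded isomorphism classes up to shift.

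\textbf{Part (6).} This follows by combining (2)--(5). By (5), $\find(\widehat R)$ is exactly the set of the $M_{\beta,p^n}\otimes\widehat R$. By (3) these are pairwise distinct. Reading off (4) gives the entries of $\Mat(\widehat R)$: the entry is $1$ precisely when $m=n+1$ and $\beta=\alpha+jp^n$, and $0$ otherwise.
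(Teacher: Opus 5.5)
Your proposal is correct and follows the paper's proof. The paper also derives indecomposability from the stability of $F^n_*\mathcal L^\beta$ (Lange--Pauly, Sun), passes between graded and completed modules via Hashimoto--Yang (Theorem~3.8, Corollary~3.9) — the correspondence you invoke — and separates different $n$ by generic rank. The only differences are cosmetic: you reach indecomposability of the completion through the graded endomorphism ring $\bigoplus_j \Hom(\mathcal E,\mathcal E\otimes\mathcal L^j)$, whose degree-zero part is $k$, and you separate different $\beta$ using $\deg F^n_*\mathcal L^\beta$, whereas the paper compares $h^0(\mathcal L^{\beta+jp^n})$ via Riemann--Roch for large $j$.
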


\begin{proof}
    (1) follows from a direct calculation. For (2), recall that the $\mathcal O_X$-module $F^n_\ast \mathcal L ^ \beta$ is a stable vector bundle by applying \cite[Proposition~1.2]{Lange_Pauly_on_Frobenius_destabilized_rank_2_vector_bundles_over_curves} and \cite[Theorem~2.2]{SunDirectImage08} inductively. So, it cannot be a direct sum of two nonzero $
    \mathcal O_X$-modules. Therefore, the $R$-module $M_{\beta,p^n}$, as a graded module, cannot be a direct sum of two nonzero graded $R$-modules. Recall that the $\widehat R$-module $M_{\beta,p^n} \otimes_R \widehat R$ is decomposable if and only if the $R$-module $M_{\beta,p^n}$, as a graded module, is a direct sum of two nonzero graded $R$-modules; see \cite[Theorem 3.8]{HashimotoYang23}. So, $M_{\beta,p^n} \otimes_R \widehat R$ is indecomposable. \\

    \noindent For (3), recall that  $M_{\beta,p^n} \otimes_R \widehat R$ and $M_{\gamma,p^m} \otimes \widehat R$ are isomorphic as $\widehat R$-modules if and only if there is an integer $l$ such that $M_{\beta,p^n}$ and $M_{\gamma,p^m}(l)$ are isomorphic as graded $R$-modules; see \cite[Corollary 3.9]{HashimotoYang23}.  The generic ranks of the $R$-modules $M_{\beta,p^n}$ and $M_{\gamma,p^m}(l)$ are $p^n$ and $p^m$, respectively, by (1). So $M_{\beta,p^n}$ and $M_{\gamma,p^m}(l)$ are isomorphic only if $m=n$. Now, a graded isomorphism between $M_{\beta,p^n}$ and $M_{\gamma,p^n}(l)$ will yield 

    $$\dim_k H^0(X, \mathcal L^{\beta+ jp^n})= \dim_k H^0(X, \mathcal L^{\gamma+ (j+l)p^n}) \,\,\, \text{for all j}.$$

    Large enough powers of $
    \mathcal L$ do not have any cohomologies in positive degrees. So  Riemann--Roch and the above equation imply that for all large enough $j$,

     \[(\beta+ jp^n)\deg(\mathcal L)= (\gamma+ lp^n + jp^n)\deg(\mathcal L).\]

     Since $\deg(\mathcal L)$ is positive and both $
     \beta$ and $\gamma$ are less than $p^n$, $\gamma= \beta$.\\

     \noindent The verification of (4) is straightforward. For (5) note 

     \[F^n_\ast R = \underset{0 \leq \beta \leq p^n-1}{\bigoplus}M_{\beta, p^n}.\]

      We get the desired direct sum decomposition in (5) by tensoring with $\widehat R$. (6) follows from the previous parts.
\end{proof}

We record some further properties of $\Mat(\widehat R)$.

\begin{enumerate}
    
    \item The connected classes of $\Mat(\widehat R)$ are all exceptional: For $(\gamma, p^m)\leq (\beta, p^n)$, \Cref{le: DM for higher genus curves}, (4) shows that there is no path in the graph $G(\Mat(\widehat R))$ from $(\beta, p^n)$ to $(\gamma, p^m)$; see \Cref{def: connected class}.

    \item The multiplication on the left by $\Mat({\widehat{R}})$ on $\underset{\mathbb{N}}{\prod}\mathbb {C}$, where the elements of the latter are thought of as column vectors, has any complex number as an eigenvalue. Moreover, an eigenvalue admits an eigenvector in $l^\infty (\mathbb C)$ (resp. in $l^1(\mathbb C)$) if and only if the eigenvalue has absolute value at most $p$ (resp. strictly less than 1).

    In particular, the spectral radius of $\Mat(\widehat R)$ for its action on $l^\infty(\mathbb C)$ or $l^1(\mathbb C)$ is different from the reciprocal of its convergence radius --- the convergence radius is infinity. This is in contrast with the case of finite size matrices with nonnegative entries.

    \noindent \textit{Proof:} Think of the column vectors as elements of $\underset{S}{\prod} \mathbb C$. For $v=(v_{(\beta, p^n)})_{(\beta, p^n) \in S}$, $$(\Mat(\widehat{R})v)_{(\beta, p^n)}= \sum_{j=0}^{p-1}v_{(\beta+jp^n, p^{n+1})}.$$
Therefore, $v$ is an eigenvector with eigenvalue $\lambda$ if and only if,
\begin{equation}\label{eq: equation for eigenvector}
\lambda v_{(\beta, p^n)}= \sum_{j=0}^{p-1}v_{(\beta+jp^n, p^{n+1})}.
\end{equation}
For any $\lambda \in \mathbb C$, the unique vector $v$ satisfying: $v_{(0,p)}=1$, $v_{(\beta, p)}=0$ otherwise, and $v_{(\beta+jp^n, p^{n+1})}= \lambda v_{(\beta, p^{n})}/p$ for any $n \geq 1$ and $j \in [0,p-1]$, is an eigenvector with eigenvalue $\lambda$. When $|\lambda|\leq p$ (resp. $<1$), this eigenvector is in $l^\infty(\mathbb C)$ (resp. $l^1 (\mathbb C)$). For an eigenvector $v$ with eigenvalue $\lambda$, assume $v_{(\beta,p^n)}$ is nonzero. Let $S'$ be the subset of elements of $\underset{m \geq n}\sqcup \mathbb Z/p^m\mathbb Z \subseteq S$ which map to $\beta+p^n\mathbb Z$ via reduction mod $p^n$. Then for any $m \geq n$
\[\underset{(\alpha,p^m) \in S'}{\sum}v_{(\alpha,p^m)}= \lambda^{m-n}v_{(\beta,p^n)}, \ \text{by} \ \eqref{eq: equation for eigenvector}.\]
When $|\lambda| \geq 1$, the series $\sum_m \lambda^{m-n}v_{(\beta,p^n)}$ is divergent. So there is no $l^1(\mathbb C)$ eigenvector when $|\lambda| \geq 1$. The last equation implies that for any $m \geq n$, $\max \{|v_{(\alpha,p^m)}| \ |\ (\alpha,p^m) \in S'\}$ is at least $(|\lambda|/p)^{m-n}|v_{(\beta,p^n)}|$. Therefore, there is no $l^\infty(\mathbb C)$ eigenvector when $|\lambda|>p.$

    \item The multiplication on the right by $\Mat({\widehat{R}})$ on $\underset{\mathbb{N}}{\prod}\mathbb {C}$, where the elements of the latter are thought of as row vectors, has no complex eigenvalue.

    Indeed, let $v$ be a row eigenvector with eigenvalue $\lambda \neq 0$. Since the first $p+p^2+ \cdots+p^n$ columns of $\Mat(\widehat R)^n$ are zero and $v\Mat(\widehat R)^n= \lambda^n v$, the first $p+p^2+ \cdots+p^n$ columns of $v$ must be zero for any $n$; forcing $v$ to be zero. A direct calculation shows that zero is not an eigenvalue for the right multiplication. 
\end{enumerate}  
\end{example}

\subsection{Beyond the Krull--Remak--Schmidt condition}\label{subsec: beyond KRS}
Most of the results preceding and following this subsection are proven under the KRS assumption on the relevant categories of finitely generated modules. We point out that many of those can be extended to the case when KRS is not satisfied using \Cref{le: passing to completion}. We demonstrate this by extending the existence of F-splitting signatures with periods and phases, i.e., parts of \Cref{th: existence of signatures in general}, in \Cref{prop: limit in the non KRS case}.

In this subsection, $(R, \m)$ denotes an F-finite local ring. We do not assume that the category of finitely generated $R$-modules satisfies the KRS condition. For any finitely generated $R$-module $M$, denote its $\m$-adic completion by $\widehat M$. The following result follows from a well-known result (\cite[Proposition]{Wiegand_Failure_or_Krull_Schmidt_for_direct_sums_of_copies_of_modules}), once we note: For any finitely generated $R$-module $M$ and any $e \in \mathbb N$, the completion of $F^e_*M$ is isomorphic to $F^e_{\widehat R*}\widehat M$.

\begin{lemma}\label{le: passing to completion} (cf. \cite{Wiegand_Failure_or_Krull_Schmidt_for_direct_sums_of_copies_of_modules}, Proposition)
    Let $M$ and $N$ be finitely generated $R$-modules over an F-finite local ring. Then $\FRmult{e}{M}{N}{R}= \FRmult{e}{\widehat M}{\widehat N}{\widehat R}$ for all $e \in \mathbb N$.
\end{lemma}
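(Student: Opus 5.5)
The plan is to reduce both sides of the equality $\FRmult{e}{M}{N}{R}= \FRmult{e}{\widehat M}{\widehat N}{\widehat R}$ to a statement about direct summands and completion, and then invoke Wiegand's result. First, note that the $\m$-adic completion of $F^e_*M$ is $F^e_{\widehat R*}\widehat M$. Since $R$ is F-finite, $F^e_*M$ is a finitely generated $R$-module, and $\m$-adic completion of finitely generated modules is $-\otimes_R\widehat R$. The $\m$-adic topology on $F^e_*M$ coincides with the $\m^{[p^e]}$-adic topology on $M$, hence with the $\m$-adic topology on $M$. So $\widehat{F^e_*M}\cong F^e_{\widehat R*}\widehat M$ as $\widehat R$-modules. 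This reduces the claim to showing that, for finitely generated $R$-modules $A=F^e_*M$ and $N$ and every $n\geq 0$, $N^{\oplus n}$ is a direct summand of $A$ if and only if $\widehat N^{\oplus n}$ is a direct summand of $\widehat A$.

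The forward direction is immediate: completion is an additive functor, so it preserves split injections. For the converse I would cite Wiegand's Proposition, which says that for finitely generated modules over a local ring, $X\mid Y$ if and only if $\widehat X\mid \widehat Y$. If one wants to argue it directly, the argument goes as follows. Suppose $\widehat X$ is a summand of $\widehat Y$, witnessed by maps $\widehat X\to\widehat Y\to\widehat X$ composing to the identity. Since $\Hom_{\widehat R}(\widehat X,\widehat Y)=\Hom_R(X,Y)\otimes\widehat R$, both maps can be approximated modulo $\m$ by maps $f\colon X\to Y$ and $g\colon Y\to X$ coming from $R$, in such a way that $gf\equiv \Id$ modulo $\m\,\mathrm{End}(X)$. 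Then $\widehat{gf}$ is an automorphism of $\widehat X$ by Nakayama, and since $\widehat R$ is faithfully flat over $R$, $gf$ is itself an automorphism of $X$. Hence $(gf)^{-1}g$ splits $f$.

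Apply this with $X=N^{\oplus n}$ and $Y=F^e_*M$. The two sets $\{n\mid N^{\oplus n} \text{ is a summand of } F^e_*M\}$ and $\{n\mid \widehat N^{\oplus n} \text{ is a summand of } F^e_{\widehat R*}\widehat M\}$ then coincide, so their suprema agree, including the case where both are infinite. The main obstacle is the descent of splittings from $\widehat R$ to $R$ in the converse direction, and this is precisely what Wiegand's result supplies. The remaining point to check carefully is the topology comparison that identifies $\widehat{F^e_*M}$ with $F^e_{\widehat R*}\widehat M$, and there F-finiteness is used.
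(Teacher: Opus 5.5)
Your proposal is correct and follows the paper's route: the paper also identifies the completion of $F^e_*M$ with $F^e_{\widehat R*}\widehat M$, and then applies Wiegand's result that $X$ is a direct summand of $Y$ if and only if $\widehat X$ is a direct summand of $\widehat Y$. Your direct sketch of the descent step (approximate the maps modulo $\m$, then use Nakayama to see that $gf$ is an automorphism) and your topology comparison are sound; they fill in details that the paper leaves to the citation.
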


\begin{proposition}\label{prop: limit in the non KRS case}
Let $M$ and $N$ be a finitely generated $R$-modules. Let $\omega$ be the least common multiple of the periods of the indecomposable $\widehat R$-module summands of $\widehat N$; the least common multiple is taken to be infinity if one of the periods is infinity. Then, the sequence
\[
(\frac{\Fmult{t+ e\omega}{M}{N}}{p^{(t+e\omega)\delta_R(N)}})_e
\]
converges for any $t \in \mathbb Z$. When $\omega= \infty$, each term of the sequence is zero by definition.
\end{proposition}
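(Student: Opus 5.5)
We pass to the completion and reduce to \Cref{th: existence of signatures in general} summand by summand.

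By \Cref{le: passing to completion}, $\Fmult{e}{M}{N}=\FRmult{e}{\widehat M}{\widehat N}{\widehat R}$ for all $e$, and $\widehat R$ is complete, hence satisfies KRS and is F-finite. So we may replace $R,M,N$ by $\widehat R,\widehat M,\widehat N$; note $\delta_R(N)=\delta_{\widehat R}(\widehat N)$ since both equal $\dim N+\alpha(R)$ by \Cref{thm: Kunz rank} (dimension and residue field are unchanged). Write $\widehat N\cong\bigoplus_{i=1}^s X_i^{\oplus a_i}$ with $X_i$ pairwise nonisomorphic indecomposables. Over a KRS ring, $\widehat N^{\oplus n}$ is a summand of $F^e_*\widehat M$ if and only if $n a_i\le \Fmult{e}{\widehat M}{X_i}$ for every $i$. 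Hence
\[
\Fmult{e}{\widehat M}{\widehat N}=\min_{1\le i\le s}\left\lfloor \frac{\Fmult{e}{\widehat M}{X_i}}{a_i}\right\rfloor .
\]
Dividing by $p^{e\delta}$ with $\delta=\delta(\widehat N)$ removes the floor in the limit, so it suffices to show that each sequence $\Fmult{t+e\omega}{\widehat M}{X_i}/p^{(t+e\omega)\delta}$ converges; the minimum of finitely many convergent sequences converges.

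First suppose $\omega<\infty$, and fix $i$. If $X_i\notin\find(\widehat M)$, the sequence is eventually zero. Otherwise choose $\p\in\Ass(X_i)$; then $X_i\in\find(\p,\widehat M)$. Let $\omega_i$ be the period of $X_i$; since $\omega_i\mid\omega$, we have $t+e\omega=t+e'\omega_i$ with $e'=e\omega/\omega_i$. Thus $(\Fmult{t+e\omega}{\widehat M}{X_i}/p^{(t+e\omega)\alpha(R_\p)})_e$ is a subsequence of the convergent sequence defining $\gsig{\widehat M}{X_i}_{t,\omega_i,\p}$ from \Cref{th: existence of signatures in general}. Since $\alpha(R_\p)\le\delta(\widehat N)$, rescaling by $p^{(t+e\omega)(\alpha(R_\p)-\delta)}$ gives either the same limit (equality case) or a convergent sequence times a factor tending to $0$, hence limit $0$. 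This proves convergence for every $i$.

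If $\omega=\infty$, the statement declares the terms to be zero, so there is nothing to prove. For consistency one can note that some $X_j$ has infinite period, so \Cref{le: infinite period implies zero limit} gives $\Fmult{e}{\widehat M}{X_j}=o(p^{e\delta})$ along the full sequence, and the minimum also tends to $0$.

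The main point requiring care is the identification of $\Fmult{e}{\widehat M}{\widehat N}$ with the minimum of floors. This uses KRS over $\widehat R$: a summand $\widehat N^{\oplus n}$ of $F^e_*\widehat M$ decomposes into indecomposables that must match multiplicities with the decomposition of $F^e_*\widehat M$. Another point is the choice of $\p$. A single prime suffices for each $X_i$, because the comparison between exponents only uses $\alpha(R_\p)\le\delta(\widehat N)$, which holds by \Cref{thm: Kunz rank} for every $\p\in\Supp(\widehat N)$.
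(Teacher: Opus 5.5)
Your route is the paper's: pass to $\widehat R$ via \Cref{le: passing to completion}, use KRS to write $\Fmult{e}{\widehat M}{\widehat N}=\min_i\lfloor \Fmult{e}{\widehat M}{X_i}/a_i\rfloor$, and apply \Cref{th: existence of signatures in general} to each indecomposable summand $X_i$. You do this at a prime $\p\in\Ass(X_i)$, take the subsequence $t+e\omega=t+e'\omega_i$, and rescale using $\alpha(R_\p)\le\delta(\widehat N)$.

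There is one gap. The claim that ``dividing by $p^{(t+e\omega)\delta}$ removes the floor in the limit'' only holds when $\delta=\delta(\widehat N)>0$. Now $\delta=0$ exactly when $\dim N=0$ and $\alpha(R)=0$, for example $N=R/\m$ with perfect residue field. In that case there is no normalization at all. Convergence of real sequences $x^{(i)}_e$ does not by itself imply convergence of $\min_i\lfloor x^{(i)}_e/a_i\rfloor$, because floors of a convergent sequence can oscillate. For instance, $x_e=a_i+(-1)^e/e$ gives floors alternating between $0$ and $1$. This is why the paper treats $\delta_R(N)=0$ as a separate case.

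The repair within your framework is short. When $\delta=0$, your own argument (with $\p=\m$ and $\alpha(R_\m)=0$) shows that each $\Fmult{t+e\omega}{\widehat M}{X_i}$ converges. These sequences are integer-valued, so each is eventually constant, and hence so is the minimum of the floors.

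The paper argues this case more directly. If the sequence is nonzero infinitely often, \Cref{prop: associated primes and splitting} gives $\m N=0$, so $N$ is a direct sum of $\length(N)$ copies of $R/\m$. Then \Cref{le: dimension zero contributor} shows $\Fmult{e}{M}{N}$ is eventually equal to $\lfloor \grk{\m}{M}/\length(N)\rfloor$, which also identifies the limit. With this case added, your proof is complete.
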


\begin{proof}
We may assume $\omega< \infty$. Since $R$ is local and $\dim(N)= \dim(\widehat N)$, $\delta_R(N)= \delta_{\widehat R}(\widehat N)$. So, using \Cref{le: passing to completion}, we can replace $(R,M,N)$ by $(\widehat R, \widehat M, \widehat N)$ and assume that $R$ is complete, without loss of generality.
Suppose $N \cong \oplus_{i=1}^n N_i^{\oplus a_i}$, where $N_i$'s are pairwise nonisomorphic and indecomposable, and $a_i$ is positive for any $i$. Note that,
\[\Fmult{e}{M}{N}= \min \left\{ \left\lfloor \frac{\Fmult{e}{M}{N_i}}{a_i} \right \rfloor \mid 1\leq i \leq n \right\}, \ \forall e \in \mathbb N.\]

Assume $\delta_R(N)$ is positive. For any $i$,  $\delta_R(N)\geq \delta_R(N_i)$ and $(\Fmult{t + e\omega}{M}{N_i}/p^{(t+e\omega)\delta_R(N_i)})_e$ converges, so  
\[\lim_{e \to \infty}\frac{\Fmult{t+ e\omega}{M}{N}}{p^{(t+e\omega)\delta_R(N)}}=  \min \left \{ \lim_{e \to \infty} \frac{\Fmult{t+e\omega}{M}{N_i}}{a_ip^{(t+e\omega)\delta_R(N)}}  \mid 1\leq i \leq n \right \}.\]

In the remaining case, where $\delta_R(N)=0$, both $\dim(N)$ and $\alpha(R)$ must be zero. Hence $\m^{r'} N=0$ for some $r'$. We stick to the only interesting case where $\Fmult{t+ e\omega}{M}{N}$ is nonzero for infinitely many $e$. 
In this case $\m N=0$ by \Cref{prop: associated primes and splitting}. Thus, $N$ is isomorphic to a direct sum of copies of the residue field $R/\m$. It follows easily from \Cref{le: dimension zero contributor} that the sequence $\Fmult{e}{M}{N}$ is eventually constant with value $\lfloor \grk{\m}{M}/ \length(N)\rfloor$.
\end{proof}

\section{Revisiting the FFRT case}\label{se: FFRT case}
We now study modules $M$ for which $\find (M)$ is finite and modules with finitely many F-contributors.

\subsection{Finite Frobenius representation type}
We recall that $M$ has the FFRT property if $\find(M)$ is a finite set. Under this assumption, Yao has already established the existence of the limit defining F-splitting signatures. We recall Yao's result after a suitable modification. 

\begin{theorem}[{\cite[Theorems~A and C]{Yao05}}]\label{thm: Yao main}
Let $(R, \m)$ be an F-finite local ring that satisfies the KRS condition and $M$ be a finitely generated $R$-module that has the FFRT property.
Then $\lim_{e \to \infty} p^{-e\delta_R(M)} \Mat(M)^e$ exists 
and is a nonzero matrix with entries in $\QQ_{\geq 0}$. 
In particular for every $\p' \in \Minh(M)$, there is an indecomposable F-contributor to $M$ which has $\p'$ as an associated prime and for any F-contributor $X$ to $M$, the limit
\[
\gsig{M}{X}= \lim_{e \to \infty} \frac{\Fmult{e}{M}{X}}{p^{e \delta_R(M)}} \in \QQ_{> 0}.
\]
\end{theorem}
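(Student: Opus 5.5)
The statement is attributed to Yao, but I would re-derive it inside the finite-matrix Perron--Frobenius framework of \Cref{se: recap of Perron-Frobenius theory}. Put $T=\Mat(M)$, a finite nonnegative integer matrix, and $\delta=\delta_R(M)$.

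\textbf{Step 1: spectral radius and boundedness.} By \Cref{cor: convergence radius is at least the power of dimension}, $T^e=O(p^{e\delta})$. Hence $\rho(T)\le p^\delta$, and every eigenvalue of modulus $p^\delta$ is semisimple. To see $\rho(T)=p^\delta$, pick $\p'\in\Minh(M)$. Then $\grk{\p'}{F^e_*M}=p^{e\delta}\grk{\p'}{M}$ by \Cref{lem: rank pushforward} and \Cref{thm: Kunz rank}. Since $\find(M)$ is finite, some $N\in\find(\p',M)$ has $\Fmult{e}{M}{N}\ge c\,p^{e\delta}$ for infinitely many $e$, so $\rho(T)\ge p^\delta$. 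Applying this to each $\p'\in\Minh(M)$ gives an indecomposable $N$ with $\p'\in\Ass(N)$ and $\limsup p^{-e\delta}\Fmult{e}{M}{N}>0$, i.e.\ an F-contributor to $M$ with $\p'$ associated. This yields the "in particular" existence claim, and it also shows the limit matrix is nonzero once convergence is established.

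\textbf{Step 2: no peripheral roots of unity other than $1$ (the main obstacle).} By \Cref{prop: bound growth rate}, convergence of $p^{-e\delta}T^e$ is equivalent to $p^\delta$ being the only eigenvalue of modulus $p^\delta$. The eigenvalues on the circle of radius $p^\delta$ come from basic classes, which are final by \Cref{thm: infinite basic is final} together with \Cref{pr: positive eigenvector of the block corresponding to a prime}. So it suffices to show that every F-contributor $X$ has period one; this is the content of the later \Cref{prop: FFRT contributors have period one}, which I would prove here directly. I would reduce to $X\in\find(X)$ finite and $X$ an F-contributor. By \Cref{cor: F-contributor matrix is irreducible}, $\Mat(X)$ is irreducible with spectral radius $p^\delta$.

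Now use the arithmetic of integers. The characteristic polynomial of the integer matrix $\Mat(X)$ is monic in $\ZZ[t]$. If the period $\omega$ exceeds $1$, \Cref{thm: finite Perron-Frobenius} says $\zeta p^\delta$ is a simple eigenvalue for each $\omega$-th root of unity $\zeta$. That alone is not a contradiction, so an extra input is needed. The natural candidate is the algebraic one. Localizing at $\p\in\Ass(X)$ and passing to length, $\Hh^0_{\p}(-)_\p$ of each summand in $\find(\p,X)$ is a module of finite length over the Artinian ring $R_\p/\p^nR_\p$. There \Cref{le: dimension zero contributor}-type arguments force $\kappa(\p)$ to split from $F^e_*$ of every such module for all large $e$. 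Pulling this back should show that $\Fmult{e}{X}{X}\neq0$ for two consecutive values of $e$, which gives aperiodicity.

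If that route fails, I would try Yao's original approach. It uses the fact that $\Fmult{e}{M}{X}$ satisfies a linear recurrence, which I would combine with a direct argument that $X$ is a summand of $F^{e}_*X$ and $F^{e+1}_*X$ for large $e$. That argument goes through $F$-splitting of $X$ at $\p$.

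\textbf{Step 3: rationality.} Once convergence is known, $L=\lim p^{-e\delta}T^e$ is the spectral projector of $p^{-\delta}T$ onto the eigenvalue $1$. It is therefore a polynomial in $p^{-\delta}T$ with rational coefficients; equivalently, the left and right $p^\delta$-eigenvectors can be chosen rational because $T$ is rational. So $L$ has entries in $\QQ_{\ge0}$. Finally, $\gsig{M}{X}=\sum_N \Fmult{1}{M}{N}p^{-\delta}L_{N,X}$ is rational, and it is positive for an F-contributor $X$ by \Cref{th: existence of signatures in general} and \Cref{thm: F-contributor at a prime via matrices}.
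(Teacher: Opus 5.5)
Your Steps 1 and 3 are sound and agree in substance with the paper. The pigeonhole argument on $\grk{\p'}{F^e_*M}=p^{e\delta}\grk{\p'}{M}$ over the finite set $\find(\p',M)$ is exactly the paper's closing argument. Writing the limit as the spectral projector $g(p^{-\delta}T)/g(1)$, with $g\in\QQ[t]$ the cofactor of $t-1$ in the minimal polynomial, is a valid replacement for the citation of Seibert's lemma. Your reduction of convergence to aperiodicity of the classes of spectral radius $p^{\delta}$ (which consist of F-contributors) is also correct.

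The gap is Step 2. You rightly call it the main obstacle, but you do not prove it, and the route you sketch cannot work. Every associated prime $\p$ of an F-contributor $X$ satisfies $\alpha(R_\p)=\delta$, so each $Y_\p$ with $Y\in\find(\p,X)$ has finite length. Hence for $e\gg0$, $F^e_*(Y_\p)$ is killed by $\p R_\p$ and is just a direct sum of copies of $\kappa(\p)$, whatever periodic subclass $Y$ lies in. Localization therefore retains only the rank vector $(\grk{\p}{N})_N$, which is the Perron eigenvector and is compatible with any period. A splitting of $\kappa(\p)$ from $F^e_*(X_\p)$ over $R_\p$ yields no map $X\to F^e_*X$ over $R$, so there is nothing to pull back. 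The fallback does not help either. A linear recurrence for $\Fmult{e}{M}{X}$ is automatic from finiteness of $T$ and is compatible with period $\omega>1$ (e.g.\ a sequence equal to $p^{e}$ for even $e$ and $0$ for odd $e$). And showing that $X$ splits from both $F^e_*X$ and $F^{e+1}_*X$ is precisely the claim to be proved.

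What is missing is a module-theoretic input that rules out oscillation between phases. Yao's input is \Cref{thm: Yao span}. The vectors $(\length(M_1\otimes_R L),\ldots,\length(M_\ell\otimes_R L))$ with $\length(L)<\infty$ span $\QQ^\ell$. Hence each $\Fmult{e}{M_j}{M_i}$ is a fixed rational combination of generalized Hilbert--Kunz functions $\length(L_k\otimes_R F^e_*M_j)$, and these converge after dividing by $p^{e\delta(M_j)}$ \cite{Seibert}. So convergence, and with it aperiodicity, comes without any period analysis (compare \Cref{cor: Yao}).

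The paper proves \Cref{prop: FFRT contributors have period one} via F-surjective signatures instead. Assuming period $\omega>1$, it proceeds as follows.
\begin{enumerate}
\item Take the rational phase limits $s_{i,r}$ and build $N_r=\bigoplus_i M_i^{\oplus b_rs_{i,r}}$.
\item Evaluate the existing limit $\surjsig{M}{X}$ \cite{SmirnovTucker2} along each residue class $e\equiv r\pmod{\omega}$, which gives $\asn(N_r\twoheadrightarrow N_j)=b_r/b_j$.
\item Since asn is attained (\Cref{thm: asn achieved}), obtain surjections $N_0^{\oplus kb_r}\twoheadrightarrow N_r^{\oplus kb_0}\twoheadrightarrow N_0^{\oplus kb_r}$.
\item A surjective endomorphism of a finitely generated module is an automorphism, so $N_0^{\oplus kb_r}\cong N_r^{\oplus kb_0}$, and therefore $\{i\mid s_{i,0}>0\}=\{i\mid s_{i,r}>0\}$.
\item This is impossible for a summand lying in a periodic subclass different from that of $X$: its phase-$0$ limit vanishes, while positive recurrence makes its own phase limit positive.
\end{enumerate}
Until you supply one of these ingredients, or an equivalent one, Step 2 and hence the existence of the limit remain unproved.
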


When $R$ is strongly F-regular with the FFRT property, 
\cite{SmithVandenBergh} deduced 
a similar statement using the Perron--Frobenius theory. This pioneering treatment is simpler than Yao's because strong F-regularity immediately implies aperiodicity; see \Cref{cor: aperiodicity implies existence of limit}. In the general FFRT case, 
matrix theory is augmented with the following result from commutative algebra: 

\begin{theorem}[{proof of \cite[Theorem~3.11]{Yao05}}]\label{thm: Yao span}
    Let $(R, \m)$ be a local ring satisfying KRS and $M_1, \ldots, M_n$ be pairwise nonisomorphic indecomposable modules. Then $\mathbb{Q}^n$ is spanned by the vectors 
    \[
    \{
    [\length (M_1 \otimes_R L), \ldots, \length (M_n \otimes_R L)] \mid \length (L) < \infty
    \}.
    \]
\end{theorem}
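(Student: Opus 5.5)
Assume for contradiction that there is a nonzero vector $c = (c_1, \ldots, c_n) \in \QQ^n$ orthogonal to every vector $[\length (M_1 \otimes_R L), \ldots, \length (M_n \otimes_R L)]$ with $\length(L) < \infty$; equivalently, $\sum_i c_i \length(M_i \otimes_R L) = 0$ for every finite length $L$. Clearing denominators and separating the positive and negative coefficients, we obtain nonzero finitely generated modules $A = \oplus_{c_i>0} M_i^{\oplus c_i}$ and $B = \oplus_{c_i<0} M_i^{\oplus |c_i|}$ with no common indecomposable summand (at least one of them nonzero), such that $\length(A\otimes_R L) = \length(B \otimes_R L)$ for every finite length $L$. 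Since $\length(A \otimes_R L) = \length(\Hom_R(A, L^\vee)^\vee)$ via Matlis duality, and $L \mapsto L^\vee$ is a bijection on finite length modules, this is equivalent to $\length \Hom_R(A, W) = \length \Hom_R(B, W)$ for all finite length $W$. Our goal is to contradict KRS by showing $A \cong B$.

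The key step is an Auslander-type argument: a finitely generated module $A$ over a local ring is determined up to isomorphism by the function $W \mapsto \length\Hom_R(A, W)$ on finite length modules. The plan is to reduce to modules over the Artinian rings $R/\m^t$. For each $t$, set $A_t = A/\m^t A$ and $B_t = B/\m^t B$. Any $W$ killed by $\m^t$ satisfies $\Hom_R(A, W) = \Hom_{R/\m^t}(A_t, W)$. Over the Artin algebra $R/\m^t$, Auslander's theorem (the lengths of $\Hom$ into all finite length modules determine a finitely generated module; proved via almost split sequences, or directly by Bongartz's argument) gives $A_t \cong B_t$ for all $t$.

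Next, pass from $A/\m^tA \cong B/\m^tB$ for all $t$ to $\widehat A \cong \widehat B$. This is the standard result of Guralnick: if $A/\m^t A\cong B/\m^tB$ for a suitable large $t$, then $\widehat A \cong \widehat B$. Alternatively, one can use a compactness or inverse-limit argument on the nonempty finite-type sets of isomorphisms, using that the images of $\Hom(A_{t'}, B_{t'})$ in $\Hom(A_t,B_t)$ stabilize by Artin--Rees. Finally, by \cite{Wiegand_Failure_or_Krull_Schmidt_for_direct_sums_of_copies_of_modules}-type descent, or simply because KRS holds and the indecomposable decompositions of $A, B$ complete compatibly, $\widehat A \cong \widehat B$ forces $A$ and $B$ to share an indecomposable summand. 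This contradicts the disjointness of the supports of $c$ unless $A = B = 0$, so $c = 0$.

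The main obstacle is the last descent step when $R$ is merely KRS rather than complete. Indecomposables $M_i$ might fail to stay indecomposable after completion, so one must argue that, under KRS, $\widehat A\cong\widehat B$ implies $A\cong B$. For this I would invoke the fact that $\widehat A \cong \widehat B$ implies $A$ is a direct summand of $B^{\oplus k}$ and vice versa (Wiegand's proposition). KRS then compares multiplicities of each $M_i$ and yields $A\cong B$. The Auslander step over $R/\m^t$ is classical and I would cite it.
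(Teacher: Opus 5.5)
Your proof is correct. The paper gives no argument of its own for this statement; it only points to the proof of Yao's Theorem~3.11. Your route is the standard proof of this fact:
\begin{itemize}
\item a nonzero rational relation produces modules $A,B$ with no common indecomposable summand and $\length(A\otimes_R L)=\length(B\otimes_R L)$ for every finite-length $L$;
\item Matlis duality turns this into $\length\Hom_R(A,W)=\length\Hom_R(B,W)$ for every finite-length $W$;
\item Auslander's theorem over each Artin algebra $R/\m^t$ gives $A/\m^tA\cong B/\m^tB$;
\item lifting gives an isomorphism $A\cong B$, and KRS then forces $c=0$.
\end{itemize}

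Two side remarks should be fixed, although the main line does not depend on them.

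First, the justification ``the indecomposable decompositions of $A,B$ complete compatibly'' is not valid, for exactly the reason you give in your last paragraph: completion can split indecomposables. What you need is the descent statement. The version used in \Cref{le: passing to completion} gives $A\mid B$ and $B\mid A$ with no extra copies, hence $A\cong B$ under KRS. Even the weaker $A\mid B^{\oplus k}$ would suffice, because you only need that a nonzero $A$ shares an indecomposable summand with $B$.

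Second, the detour through $\widehat R$ is unnecessary, because Guralnick's lifting already works over $R$ itself:
\begin{itemize}
\item Applying $\Hom_R(-,B)$ to a presentation $R^{\oplus b_1}\to R^{\oplus b_0}\to A\to 0$ gives a map $\beta\colon B^{\oplus b_0}\to B^{\oplus b_1}$ with $\ker\beta=\Hom_R(A,B)$.
\item Artin--Rees gives a constant $c$ with $\operatorname{Im}\beta\cap\m^tB^{\oplus b_1}\subseteq\beta(\m^{t-c}B^{\oplus b_0})$ for all $t\geq c$. Hence every homomorphism $A\to B/\m^tB$ agrees modulo $\m^{t-c}B$ with some $\psi\in\Hom_R(A,B)$.
\item For $t$ large, start from an isomorphism $A/\m^tA\cong B/\m^tB$. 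The resulting $\psi$ is surjective by Nakayama, and symmetrically there is a surjection $B\to A$.
\item A surjective endomorphism of a finitely generated module is bijective, so $\psi$ is an isomorphism.
\end{itemize}
Thus $A\cong B$ directly, and KRS is needed only in the final step.
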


We will provide a new proof of Theorem~\ref{thm: Yao main} 
that will avoid Theorem~\ref{thm: Yao span} by using the theory of \emph{F-surjective signatures} built in \cite{SmirnovTucker2}. 

\begin{definition}[\cite{Hashimoto}]
    Let $M, N$ be two finitely generated modules. 
    The asymptotic surjective number of $M$ onto $N$ is the limit
    \[
    \asn (M \twoheadrightarrow N) \coloneqq \lim_{m \to \infty} \frac{
    \max\{k \mid \text{there is an $R$-linear surjection}\ M^{\oplus m} \twoheadrightarrow N^{\oplus k}\}
    }{m}.
    \]    
\end{definition}
Hashimoto (\cite{Hashimoto}) shows that the limit exists and 
\cite{SmirnovTucker2} shows that the limit defining $\asn (M \twoheadrightarrow N)$ is in fact the maximum value of the sequence involved: 

\begin{theorem}[{\cite[Theorem~2.4]{SmirnovTucker2}}]\label{thm: asn achieved}
Let $(R, \m)$ be a local ring and $M, N$ be finitely generated $R$-modules.
Then there exists $m \geq 1$ such that 
    \(
      \asn (M \twoheadrightarrow N) = \max \{n \mid M^{\oplus m} \twoheadrightarrow N^{\oplus n}\}/m.    
    \)
   
\end{theorem}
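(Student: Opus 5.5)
The plan is to reduce the statement to a problem in linear algebra over the residue field $k=\kappa(\m)$, and then to show that the normalized optimum of that problem is attained at a finite stage. Set $a_m \coloneqq \max\{n \mid M^{\oplus m} \twoheadrightarrow N^{\oplus n}\}$. Taking direct sums of surjections shows that $a_{m+m'} \geq a_m + a_{m'}$. Fekete's lemma then gives $\asn(M \twoheadrightarrow N)=\lim_m a_m/m=\sup_m a_m/m$. So the whole content of the statement is that this supremum is a maximum.

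\textbf{Reduction to linear algebra.} Write $A = M/\m M$ and $B = N/\m N$, and let $V \subseteq \Hom_k(A,B)$ be the image of $\Hom_R(M,N)$. This image is a $k$-subspace, since $\m$ acts trivially on it. An $R$-linear map $M^{\oplus m} \to N^{\oplus n}$ is an $n \times m$ matrix $(\phi_{ij})$ with entries in $\Hom_R(M,N)$. By Nakayama's lemma it is surjective if and only if the reduced block matrix $(\bar\phi_{ij})\colon A^{\oplus m} \to B^{\oplus n}$ is surjective, where $\bar\phi_{ij} \in V$. Hence $a_m$ depends only on the triple $(A,B,V)$. Concretely, $a_m$ is the largest $n$ such that some element of $V \otimes_k \mathrm{Mat}_{n\times m}(k)$ has full row rank $n\dim B$.

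\textbf{Stabilization.} Choose a basis $v_1,\dots,v_r$ of $V$. Then $a_m$ becomes a question about the matrix space $\mathcal V = \langle v_1,\dots,v_r\rangle$ and its blow-ups $\mathcal V^{\{n,m\}} = \{\sum_i v_i \otimes X_i \mid X_i \in \mathrm{Mat}_{n\times m}(k)\}$. I would invoke the regularity theory of blow-ups of matrix spaces (Ivanyos--Qiao--Subrahmanyam, Derksen--Makam). It gives the following facts.
\begin{enumerate}
\item The maximal rank of $\mathcal V^{\{d,d\}}$ equals $d \cdot \mathrm{ncrk}(\mathcal V)$ for $d$ large.
\item $\mathrm{ncrk}$ is computed by a shrunk-subspace (Hall-type) certificate: a subspace $U \subseteq A$ with $\dim U - \dim \mathcal V(U)$ maximal.
\end{enumerate}
The rectangular version gives a formula for $\sup_m a_m/m$ as the minimum over subspaces $U$ of a ratio of the form $(\dim A - \dim U)/(\dim B - \dim \mathcal V U)$, or a similar expression. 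This minimum is a rational number with denominator at most $\dim B$. It is then attained by an explicit block construction at a finite $m$, obtained by blowing up by a common multiple of the relevant dimensions. If $k$ is finite, I would first pass to a large finite extension, or to the faithfully flat extension $R[t]_{\m[t]}$ to get an infinite residue field; I would then check that $a_m$ does not change, using that surjections are detected by nonvanishing of minors and are thus a Zariski-open condition.

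\textbf{Main obstacle.} I expect the hardest step to be proving the duality formula: that the supremum equals the combinatorial minimum over subspaces, and that the upper bound given by each $U$ is actually achieved. The upper bound is easy, because a surjection $A^m \to B^n$ must carry $U^m$ into $(\mathcal V U)^n$, which bounds $n$. The lower bound, i.e. constructing surjections that attain the bound, is where I would rely on the regularity lemma for blow-ups, rather than reprove it. Once that formula is in hand, both the rationality of $\asn(M \twoheadrightarrow N)$ and the attainment at some $m \geq 1$ follow immediately.
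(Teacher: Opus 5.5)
The paper does not prove this statement. It is quoted from Smirnov--Tucker, so there is no internal argument to compare against, and I judge yours on its own terms.

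\textbf{What is correct.} The Fekete step, the Nakayama reduction to the triple $(A,B,V)$, the Hall-type upper bound, and your formula $\sup_m a_m/m=\min_U(\dim A-\dim U)/(\dim B-\dim\mathcal V U)$ are all correct. Moreover, the step you single out as the main obstacle is short once your facts (1) and (2) are granted; apply them to $\mathcal W=\mathcal V\otimes_k\mathrm{Mat}_{n\times m}(k)\subseteq\Hom_k(A^{\oplus m},B^{\oplus n})$.
\begin{itemize}
\item Let $W\subseteq A\otimes_k k^m$, and let $U\subseteq A$ be the smallest subspace with $W\subseteq U\otimes_k k^m$. Applying the maps $v\otimes E_{\ell j}$ shows $\mathcal W W=(\mathcal V U)^{\oplus n}$, while $\dim W\le m\dim U$.
\item So the shrunk-subspace formula gives $\mathrm{ncrk}(\mathcal W)=n\dim B$ exactly when $n(\dim B-\dim\mathcal V U)\le m(\dim A-\dim U)$ for all $U$.
\item Fact (1) then yields $d$ and a surjective element of $\mathcal W^{\{d,d\}}=\mathcal V\otimes_k\mathrm{Mat}_{nd\times md}(k)$, i.e.\ $a_{md}\ge nd$.
\end{itemize}
Writing your minimum as $n/m$ gives attainment at $md$. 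No separate rectangular duality theorem is needed.

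\textbf{The step that fails: finite residue fields.} Surjectivity being a nonempty Zariski-open condition does not make $a_m$ invariant under enlarging $k$. Over a finite field a nonzero polynomial can vanish at every rational point, and this does occur for modules.

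Take $R=\mathbb F_2[x_1,\dots,x_4]/(x_1,\dots,x_4)^2$. Let $M$ have $\mathbb F_2$-basis $a_1,a_2,a_3,\sigma_1,\sigma_2,\sigma_3,\sigma,\tau$, with $x_ia_i=\sigma_i$ and $x_ia_j=0$ for $i\le 3$, $j\neq i$, and $x_4a_1=\sigma$, $x_4a_2=\tau$, $x_4a_3=\sigma+\tau$. Let $N$ have basis $b_1,b_2,b_3,\sigma_1',\sigma_2',\sigma_3',\sigma'$, with $x_ib_i=\sigma_i'$ and $x_ib_j=0$ for $i\le 3$, $j\neq i$, and $x_4b_j=\sigma'$ for all $j$. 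All other products are zero.
\begin{itemize}
\item The actions of $x_1,x_2,x_3$ force every element of $V$ to be diagonal.
\item The action of $x_4$ forces the third diagonal entry to be the sum of the first two, so $V=\{\mathrm{diag}(\lambda,\mu,\lambda+\mu)\}$.
\item No element of $V$ is invertible over $\mathbb F_2$, so $a_1=0$. Over $R[t]_{\m[t]}$, however, $\mathrm{diag}(1,t,1+t)$ gives $a_1=1$.
\end{itemize}

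\textbf{How to fix it.} Let $[K:k]=s$. Replacing every entry of the coefficient matrices by its image under the regular representation $K\hookrightarrow\mathrm{Mat}_s(k)$ is just restriction of scalars. So a surjective $n\times m$ block matrix over $V\otimes_k K$ becomes a surjective $ns\times ms$ block matrix over $V$. Hence $a_m(k)\le a_m(K)$ and $a_{ms}(k)\ge s\,a_m(K)$.

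So run the argument above over $\bar k$, where (1) holds. The witness at level $m$ is defined over some finite extension $K$ of $k$. It follows that the supremum over $k$ equals the one over $\bar k$ and is attained at $ms$.

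In the example, let $C$ be the companion matrix of $z^2+z+1$. Then $\mathrm{diag}(1,0,1)\otimes I_2+\mathrm{diag}(0,1,1)\otimes C$ is invertible over $\mathbb F_2$. So $\asn(M\twoheadrightarrow N)=1$ is attained at $m=2$ but not at $m=1$.
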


\begin{proposition}\label{prop: FFRT contributors have period one}
    Let $M$ be an $(R, \m)$-module with FFRT. Then any F-contributor to $M$ has period one.
\end{proposition}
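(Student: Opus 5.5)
The plan is to argue by contradiction using the finite Perron--Frobenius theory, with F-surjective signatures replacing Yao's spanning result (\Cref{thm: Yao span}). Let $X$ be an indecomposable F-contributor to $M$. By \Cref{cor: Cohen--Macaulayless of contributors} $X$ is an F-contributor, and by \Cref{cor: F-contributor matrix is irreducible} $\Mat(X)$ is irreducible and positive recurrent with $R(\Mat(X))=p^{-\delta(X)}$. It is a finite matrix, since $\find(X)\subseteq\find(M)$ is finite. Suppose its period $\omega$ is larger than one, and let $C_0\ni X, C_1,\dots,C_{\omega-1}$ be the periodic subclasses from \Cref{thm: periodic irreducible}.

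By \Cref{thm: periodic irreducible}(3) and \Cref{thm: Vere-Jones limit exists}, for each phase $a$ the rescaled powers $p^{-(k\omega+a)\delta(X)}\Fmult{k\omega+a}{X}{N}$ converge to positive numbers $c_{a,N}$ for $N\in C_a$, and to $0$ for $N\notin C_a$. Informally, $F^{k\omega+a}_*X$ is approximately $\bigoplus_{N\in C_a}N^{\oplus c_{a,N}p^{(k\omega+a)\delta}}$, up to a part that is $o(p^{e\delta})$. The key step is to evaluate an invariant that sees all of $F^e_*X$ uniformly in $e$ and is additive enough to be compared along different phases. I would use $\asn(\,\cdot\,\twoheadrightarrow X)$.

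On one hand, the F-surjective signature theory of \cite{SmirnovTucker2} should give that $p^{-e\delta}\asn(F^e_*X\twoheadrightarrow X)$ converges as $e\to\infty$, not merely along residue classes. This is because surjections from Frobenius pushforwards are controlled by ideal-theoretic data (images of $\Hom(F^e_*X,X)$) that behave monotonically in $e$. On the other hand, using \Cref{thm: asn achieved} and the near-decomposition above, the limit along phase $a$ equals $\sum_{N\in C_a}c_{a,N}\asn(N\twoheadrightarrow X)$. Comparing phase $0$ with phase $a$ gives an identity between these weighted sums.

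The terms are not independent. Applying $F^{a}_*$ to the phase-$0$ approximation and using the Perron eigenvector relations, the phase-$a$ weights are the images of the phase-$0$ weights under the block $\Mat(X)^a[C_0\times C_a]$. Feeding this into the identity, I expect to obtain $\asn(N\twoheadrightarrow X)\cdot\grk{\p}{X}=\grk{\p}{N}$ for some $N\in C_a$, with $\p\in\Ass(X)$. In other words, some power of $N$ surjects onto $X$ at the maximal rate permitted by generic rank.

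The main obstacle is the last step: upgrading such a rank-maximal surjection $N^{\oplus m}\twoheadrightarrow X^{\oplus n}$ to a split surjection. A split surjection would give $X\in\find(N)$ in Frobenius degree $\equiv -a \pmod\omega$ together with a summand in degree $0$, so that $\Fmult{e}{X}{X}\neq0$ for some $e\not\equiv0\pmod\omega$. This contradicts $\omega>1$. To get the splitting, I would first try to use that $X$ is an F-contributor. Its kernel $K$ has generic rank zero in the limit, and after pushing forward by $F^e_*$ the surjection restricted to the copies of $X$ appearing with positive density must split, because otherwise the positive-density copies of $X$ in $F^e_*N$ would account for less than the full $\csig{X}{X}$. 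If this approach fails, the fallback is to replace $\asn$ by lengths $\length(-\otimes L)$ for suitable finite-length $L$, mimicking Yao's argument only on the finite set $\find(X)$.
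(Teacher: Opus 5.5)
\textbf{Gap.} Your overall plan is the paper's: play the full-sequence convergence of the F-surjective signature of \cite{SmirnovTucker2} against the phase-dependent limits. The proof breaks, however, at the step where you ``expect to obtain'' $\asn(N\twoheadrightarrow X)\,\grk{\p}{X}=\grk{\p}{N}$ for some $N\in C_a$. With $X$ as the only target, all you learn is that $\surjsig{X}{X}$ takes the same value along phase $0$ and along phase $a$.

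Your formula for that value is also wrong. $\asn$ is only superadditive in the source. For instance, over $k[[x,y]]$ with $Y=R/(x)\oplus R/(y)$, each summand has $\asn(\,\cdot\twoheadrightarrow Y)=0$, but the sum has $\asn=1$. So the phase-$a$ value is $\frac1b\asn\bigl(\bigoplus_{N\in C_a}N^{\oplus bc_{a,N}}\twoheadrightarrow X\bigr)$ for a common denominator $b$, which already requires the $c_{a,N}$ to be rational.

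Even granting additivity, the identity does not force what you want. Put $x_N=\asn(N\twoheadrightarrow X)\grk{\p}{X}/\grk{\p}{N}\in[0,1]$ and $w_{a,N}=c_{a,N}\grk{\p}{N}/\grk{\p}{X}$; the weights sum to $1$ over each $C_a$. The identity then only says $\sum_{C_0}w_{0,N}x_N=\sum_{C_a}w_{a,N}x_N$ with $x_X=1$. This is consistent with $x_N<1$ for every $N\in C_a$, as long as other members of $C_0$ have small $x_N$. Expressing the phase-$a$ weights as the image of the phase-$0$ weights under $\Mat(X)^a$ merely rewrites the same identity. So nothing in the argument produces a rank-maximal surjection.

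Conversely, your ``main obstacle'' is not one, and the splitting heuristic you sketch is not a proof anyway. Modules in the class share their associated primes, and all of these are minimal (\Cref{cor: properties of maximal recurrent classes in the whole matrix}). The positive eigenvectors $\grk{\q}{\bullet}$, $\q\in\Ass(X)$, are proportional on the finite irreducible matrix. Hence a surjection $N^{\oplus m}\twoheadrightarrow X^{\oplus n}$ with matching generic ranks has zero kernel, and KRS gives $N\cong X$ directly.

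\textbf{Missing idea.} The paper uses the phase-limit modules themselves as targets (and sources).
\begin{enumerate}
\item By \cite[Lemma~2.4]{SeibertFFRT} the phase limits $s_{i,r}$ are rational, so $N_r=\bigoplus_i M_i^{\oplus b_r s_{i,r}}$ is an honest module.
\item Computing $\surjsig{X}{N_j}$ along phase $j$ and along phase $r$ gives $\asn(N_r\twoheadrightarrow N_j)=b_r/b_j$.
\item \Cref{thm: asn achieved} then supplies surjections $N_0^{\oplus kb_r}\twoheadrightarrow N_r^{\oplus kb_0}$ and back. Their composite is a surjective endomorphism, hence an automorphism, so $N_0^{\oplus kb_r}\cong N_r^{\oplus kb_0}$.
\item By KRS the phase-$0$ and phase-$r$ limits have the same support. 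This is impossible for $\omega>1$, since a module in $C_r$ has positive phase-$r$ limit and zero phase-$0$ limit.
\end{enumerate}

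\textbf{Your fallback.} It can be made into a correct proof by invoking \Cref{thm: Yao span}. Each $\Fmult{e}{X}{N}$ is then a fixed rational combination of lengths $\length(L_k\otimes F^e_*X)$, whose normalizations converge along all $e$; this forces the phase limits to coincide. That is Yao's original route, which the paper's proof is designed to avoid, and in your proposal it is only gestured at.
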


\begin{proof}
Let $X$ be an indecomposable F-contributor to $M$. By \Cref{cor: Cohen--Macaulayless of contributors}, $X$ is an F-contributor, and since $\find(X) \subseteq \find(M)$, it satisfies FFRT. Replacing $M$ by $X$, we may assume that $\Mat(M)$ is irreducible and $M \in \find(M)$; see \Cref{cor: F-contributor matrix is irreducible}.
    For any $\p \in \Ass (M)$, $\Mat (M) = \Mat (\p, M)$ by 
    Lemma~\ref{lem: support of an irreducible class}, so $\Mat(M)$ has a positive eigenvector.  Let $\find (M) = \{M_1, \ldots, M_\ell\}$.
    
    By contradiction, let the period of $\Mat (M)$ be $\omega > 1$.   
    By \Cref{th: existence of signatures in general}, for any 
    integers $0 \leq r < \omega$ and $1 \leq i \leq \ell$
    the limit 
    \[
    s_{i, r} \coloneqq  \lim_{n \to \infty} \frac{\Fmult{n\omega + r}{M}{M_i}}{p^{(n\omega+r)\delta_R(M)}}
    \]
    exists. Moreover, since $\Mat(M)$ has integer entries, 
    $s_{i, r} \in \mathbb{Q}$ by \cite[Lemma~2.4]{SeibertFFRT} applied to $\Mat(M)^{\omega}$.
    Let $b_r$ be the common denominator of all $s_{i, r}$
    and define $N_r = \bigoplus_{i = 1}^\ell M_i^{b_r s_{i,r}}$. 
    
    Since the limit in the definition of the F-surjective signature
    \[
    \surjsig {M}{X} \coloneqq \lim_{e \to \infty} \frac{\max \{n \mid \text{ there exists } F_*^e M \twoheadrightarrow \oplus^n X\}}{p^{e \delta_R(M)}}
    \]
    exists by \cite{SmirnovTucker2}, we may compute it along any subsequence. 
    In particular, using the subsequence $e = n\omega + r$ we compute, 
    following the argument of \cite[Proposition~4.10]{SmirnovTucker2},
    that $\surjsig {M}{X} = \frac{1}{b_r} \asn (N_r \twoheadrightarrow X)$. 
    Taking $X = N_j$, 
    we obtain that $\asn (N_r \twoheadrightarrow N_j) = b_r/b_j$ for all $0 \leq r, j < \omega$.  
    By Theorem~\ref{thm: asn achieved}, we may find $k$ such that 
    there are surjections
    \[
        \oplus^{k b_r} N_0 \twoheadrightarrow
        \oplus^{kb_0} N_r \text{ and } 
        \oplus^{kb_0} N_r \twoheadrightarrow \oplus^{k b_r} N_0
    \] 
    for all $0 < r < \omega$. 
    The composition of the two surjections is a surjective endomorphism of $N_0^{\oplus k b_r}$, which must be an automorphism. Hence, $\oplus^{k b_r} N_0 \cong \oplus^{kb_0} N_r$. 
Thus $\{i \mid s_{i, 0} > 0\} = \{i \mid s_{i, r} > 0\}$. Choose $M_i$ such that $M$ and $M_i$ belong to different periodic subclasses of $\Mat(M)$; see the text following \Cref{de: period of an irreducible matrix}. Let $0<r< \omega$ be such that $\Fmult{e}{M}{M_i}>0$ implies $e \equiv r$ modulo $\omega$.
Since $\Mat(M)$ is positive recurrent $s_{i,r}>0$.
Our earlier observation implies $s_{i,0}>0$.
Thus $M_i$ and $M$ must be in the same periodic subclass, contradicting our $\omega>1$ assumption.
\end{proof}

\begin{proof}[Proof of \Cref{thm: Yao main}]
We argue that all the entries of the sequence of matrices $$(p^{-e\delta(M)}\Mat(M)^e)_e$$ converge. For $M_i, M_j \in \find(M)$, if $M_j$ is not an F-contributor to  $M$, then $(\Fmult{e}{M_i}{M_j}/p^{e\delta(M)})_e$ must converge to zero. When $M_j$ is an F-contributor to $M$, it must have period one by \Cref{prop: FFRT contributors have period one}. Choose $\p \in \Minh(M_j) \cap \Minh(M)$; see \Cref{lem: radius of convergence for contributors}. Thus $(p^{-e\delta(M)}\Fmult{e}{M_i}{M_j})_e$ must be convergent by \Cref{cor: aperiodicity implies existence of limit} applied at $\p$. 
The rationality follows from the existence of the limit by \cite[Lemma~2.4]{SeibertFFRT}.
For any $\p' \in \Minh(M)$,
\[p^{e\alpha(R_{\p'})}\grk{\p'}{M}= p^{e\delta(M)}\grk{\p'}{M}= \grk{\p'}{F^e_*M}= \sum_{N \in \find(\p',M)}\Fmult{e}{M}{N}\grk{\p'}{N}.\]
Now divide both the sides by $p^{e\delta(M)}$ and take limit. Since $\find(\p',M)$ is finite, the limit commutes with sum. Since $\grk{\p'}{M}>0$, there must be an $N \in \find(\p',M)$, such that $\lim_{e}p^{-e\delta(M)}\Fmult{e}{M}{N}$ is positive.
\end{proof}

\subsection{Classes of F-contributors}
We will now proceed to study F-contributors of modules that have FFRT. The following consequence of Yao's proof of Theorem~\ref{thm: Yao main} serves as our tool. We also derive consequences on the eigenvalues of the F-decomposition matrices of modules satisfying FFRT from here. Such a restriction on possible eigenvalues displays a way these F-decomposition matrices are special among all possible finite matrices with nonnegative entries.

\begin{corollary}\label{cor: Yao}
    Let $(R, \m)$ be an F-finite local ring and $M$ be a finitely generated $R$-module that satisfies FFRT. 
    Let $\find (M) = \{M_1, \ldots, M_\ell\}$.
    Then, for $1 \leq i,k \leq \ell$ there exist finite length $R$-modules $L_k$ and rational numbers $c_{i,k}$, such that for all $j \in \{1, \ldots, \ell\}$ and all $e \geq 1$ we have
    \[
    \Fmult{e}{M_j}{M_i} = \sum_{k = 1}^\ell c_{i, k} \length (L_{k} \otimes_R F_*^e M_j) = 
    \gsig{M_j}{M_i} p^{e \delta(M_j)} + O(p^{e(\delta(M_j)- 1)}).
    \]
    In particular, 
    if $\lambda$ is an eigenvalue of $\Mat(M)$ different from $p^{\delta(M)}$ then 
    $|\lambda| \leq p^{\delta(M) - 1}$ and any eigenvalue of $\Mat(M)$ with modulus $p^{\delta(M)-1}$ is semisimple.
\end{corollary}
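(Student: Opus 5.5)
The plan is to reconstruct Yao's argument, using \Cref{thm: Yao span} as the key algebraic input, and then read off the spectral consequence from \Cref{cor: matrix growth rate}.

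\emph{Step 1: expressing $\Fmult{e}{M_j}{M_i}$ through lengths.} Apply \Cref{thm: Yao span} to the pairwise nonisomorphic indecomposables $M_1,\dots,M_\ell$. It gives finite length modules $L_1,\dots,L_\ell$ such that the vectors $v_k=[\length(M_1\otimes L_k),\dots,\length(M_\ell\otimes L_k)]$ form a basis of $\QQ^\ell$. Equivalently, the matrix $A=(\length(M_m\otimes L_k))_{m,k}$ is invertible over $\QQ$. Let $c_{i,k}$ denote the entries of $A^{-1}$, arranged so that $\sum_k c_{i,k}\length(M_m\otimes L_k)=\delta_{im}$. Since $F^e_*M_j\cong\bigoplus_m M_m^{\oplus\Fmult{e}{M_j}{M_m}}$ (all summands lie in $\find(M)$ because $M_j\in\find(M)$) and length is additive on direct sums,
\[\length(L_k\otimes F^e_*M_j)=\sum_m \Fmult{e}{M_j}{M_m}\length(M_m\otimes L_k).\]
Multiplying by $c_{i,k}$ and summing over $k$ yields the first equality, with coefficients independent of $j$ and $e$.

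\emph{Step 2: the asymptotic expansion.} Each $\length(L_k\otimes F^e_*M_j)=\length(F^e_*(F^{e*}L_k\otimes M_j))=p^{e\alpha(R)}\length(F^{e*}L_k\otimes M_j)$ by \Cref{lem: rank pushforward}. The quantity $\length(F^{e*}L_k\otimes M_j)$ is a generalized Hilbert--Kunz function of $M_j$ with respect to a finite length module. Its expansion is $\ehk(L_k,M_j)p^{e\dim M_j}+O(p^{e(\dim M_j-1)})$, by the standard Seibert/Hochster--Huneke--Roberts bounds, as in the proof of \Cref{lemma: depth vs frequency of appearance}. Together with $\delta(M_j)=\dim M_j+\alpha(R)$ from \Cref{thm: Kunz rank}, this gives $\Fmult{e}{M_j}{M_i}=c\,p^{e\delta(M_j)}+O(p^{e(\delta(M_j)-1)})$. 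The leading coefficient $c$ is then necessarily $\gsig{M_j}{M_i}$, and the limit exists. I expect this step to be the main obstacle: one needs the error term $O(p^{e(d-1)})$ for lengths of Frobenius-twisted tensor products with an arbitrary finite length module, not just with $R/I$. My first attempt is to filter $L_k$ by copies of $k$ and reduce to $R/\m$, but $F^{e*}$ is only right exact. The fallback is the Hochster--Huneke--Roberts estimate on Tor terms, which is again $O(p^{e(d-1)})$.

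\emph{Step 3: eigenvalues.} Put $T=p^{-\delta(M)}\Mat(M)$. Since $\delta(M_j)\le\delta(M)$, Step 2 gives entrywise $|T^e-T_\infty|\le Dp^{-e}$, where $T_\infty$ is the limit of \Cref{thm: Yao main}, which is nonzero. By \Cref{cor: matrix growth rate} with $r=p^{-1}$, the following hold: $1$ is the only eigenvalue of $T$ of modulus $1$; every other eigenvalue has modulus at most $p^{-1}$; and eigenvalues of modulus exactly $p^{-1}$ are semisimple. Rescaling by $p^{\delta(M)}$ gives the claim about $\Mat(M)$.
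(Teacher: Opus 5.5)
Your proposal matches the paper's proof: Yao's span theorem gives the $L_k$ and $c_{i,k}$, the error term is Seibert's estimate $|p^{e\delta(M_j)}\ehk(L_k;M_j)-\length(L_k\otimes_R F^e_*M_j)| = O(p^{e(\delta(M_j)-1)})$, and the eigenvalue statement follows from \Cref{cor: matrix growth rate}. The paper simply cites Seibert's estimate, so you need no filtration or Tor argument for Step 2; the only other difference is that you read off the existence of the limit and its equality with $\gsig{M_j}{M_i}$ directly from the expansion, whereas the paper quotes this from \Cref{thm: Yao main}.
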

\begin{proof}
Using Theorem~\ref{thm: Yao span}, for $1 \leq i,k \leq \ell$, choose finite length $R$-modules $L_k$ and rational numbers $c_{i,k}$, such that for $1 \leq i \leq \ell$, the standard basis vector
    \[
    e_i = \sum_{k = 1}^\ell c_{i, k} \begin{bmatrix}
        \length (M_1 \otimes_R L_{k}) & \cdots & \length (M_\ell \otimes_R L_{k}) 
    \end{bmatrix} 
    ^\tr.\]
 For any  $j \in \{1, \ldots, \ell\}$ and $e \geq 1$,  $\Fmult{e}{M_j}{M_i}= e_j^\tr \Mat(M)^e e_i$. Thus the first equality in the desired equation follows by plugging in the above expression for $e_i$. The $O(p^{e(\delta(M_j)-1)})$ term in the second equality is obtained from the first one by using:
    \[
    |p^{e \delta (M_j)}\ehk (L_{k}; M_j)  - \length (L_{k} \otimes F_*^e M_j) | = O (p^{e (\delta (M_j)-1)});\ \text{see \cite[Proposition~2(a)]{Seibert}}.\]
The first term of the second equality is already established in \Cref{thm: Yao main} or \Cref{prop: FFRT contributors have period one}.
The remaining desired conclusion on the eigenvalue of $\Mat(M)$ follows from \Cref{cor: matrix growth rate}.
\end{proof}

The following extends \cite[Corollary~3.9]{HashimotoNakajima}
from irreducible MCM modules over a quotient singularity 
to arbitrary FFRT modules and rings which are not domains. 

\begin{proposition}\label{prop: FFRT contributors in the same class}
    Let $(R, \m)$ be an F-finite local ring.
    Let $M, N$ be two finitely generated $R$-modules that satisfy FFRT and such that $\delta(M) = \delta(N)$.
    If $\length(M_\p) = \length (N_\p)$ for all $\p \in \Spec(R)$ such that $\alpha(R_\p) = \delta(M)$, 
    then $\gsig{M}{X} = \gsig{N}{X}$ for any indecomposable $R$-module $X$.
\end{proposition}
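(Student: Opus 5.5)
We will use the formula of \Cref{cor: Yao}, which expresses splitting numbers as combinations of lengths of tensor products with finite length modules, and then apply the Hilbert--Kunz associativity formula. Two reductions come first. If $X$ is not isomorphic to an indecomposable summand of $F^e_*M$ for any $e$, then $\Fmult{e}{M}{X}=0$ for all $e$; similarly for $N$. So we fix $X$ indecomposable. We work with the FFRT module $M\oplus N$, so that $\find(M\oplus N)=\find(M)\cup\find(N)=\{M_1,\dots,M_\ell\}$ is finite. We may assume $X=M_i$ for some $i$, since otherwise both signatures vanish. We also have $\delta(M\oplus N)=\delta(M)=\delta(N)$, and $\gsig{M}{X}$ and $\gsig{N}{X}$ exist by \Cref{thm: Yao main} applied to $M\oplus N$; if needed, we pass to the completion via \Cref{le: passing to completion} to secure KRS.

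The key step is to extend \Cref{cor: Yao} from the modules $M_j$ to $M$ and $N$ themselves. Since $F^e_*M \cong \bigoplus_j M_j^{\oplus \Fmult{1}{M}{M_j}}$ after one step, or directly by additivity of $\length(L_k\otimes_R F^e_*(-))$ over direct sums (writing $F^{e}_*M$ via $F_*M$), we get
\[
\Fmult{e}{M}{M_i}=\sum_{k=1}^{\ell} c_{i,k}\,\length(L_k\otimes_R F^e_*M)\quad\text{for } e\ge 1,
\]
and likewise for $N$, with the same $c_{i,k}$ and $L_k$. To verify this, decompose $F^{e}_*M=F^{e-1}_*(F_*M)$ and use the identity of \Cref{cor: Yao} for each $M_j$ summand of $F_*M$. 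The Hilbert--Kunz associativity formula, as in Seibert's estimate cited in \Cref{cor: Yao}, then gives
\[
\lim_{e\to\infty}\frac{\length(L_k\otimes_R F^e_*M)}{p^{e\delta(M)}}=\ehk(L_k;M)=\sum_{\p:\ \alpha(R_\p)=\delta(M)}\length(M_\p)\,\ehk(L_k;R/\p).
\]
Here the $p^{e\alpha}$ normalization from \Cref{lem: rank pushforward} is absorbed into $\delta$ via \Cref{thm: Kunz rank}, and only primes $\p\in\Supp(M)$ with $\alpha(R_\p)=\delta(M)$ contribute, because the other components grow more slowly.

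Under the hypothesis $\length(M_\p)=\length(N_\p)$ for all such $\p$, and with $\delta(M)=\delta(N)$, the right-hand sides agree for $M$ and $N$. Hence $\gsig{M}{M_i}=\sum_k c_{i,k}\ehk(L_k;M)=\gsig{N}{M_i}$.

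The main obstacle is checking the associativity formula with the correct index set. The primes counted must be exactly those $\p$ with $\alpha(R_\p)=\delta(M)$, which in the local case are those with $\dim R/\p=\dim M$. Lower-dimensional pieces of a prime filtration must contribute $o(p^{e\delta})$. We would prove this by filtering $M$ by prime cyclic modules $R/\q$, using additivity of the Hilbert--Kunz limit up to lower-order terms, together with $\length(L\otimes F^e_*R/\q)=O(p^{e\alpha(R_\q)})$ and \Cref{thm: Kunz rank}.
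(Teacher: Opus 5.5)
Your proposal is correct and is essentially the paper's proof. Both arguments use \Cref{cor: Yao} to write splitting numbers as fixed rational combinations of $\length(L_k\otimes_R F^e_*(-))$, pass to Hilbert--Kunz multiplicities, and invoke the associativity formula so that only the lengths $\length(M_\p)$ at primes with $\alpha(R_\p)=\delta(M)$ survive. The only difference is bookkeeping: you apply the linear identity directly to $M$ and $N$, by additivity of $\length(L_k\otimes_R F^e_*(-))$ over the decomposition of $F^e_*M$. The paper instead passes to $R/(\Ann(M)\cap\Ann(N))$, applies the identity to the $M_j$, and reassembles through $F_*M\cong\bigoplus_j M_j^{\oplus a_j}$ and \Cref{lem: rank pushforward}.
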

\begin{proof}
    Let $\overline{R} \coloneqq R/(\Ann(M) \cap \Ann (N))$ and note that $\delta_{\overline{R}}(\overline{R}) = \delta(M) = \delta(N)$ and $\alpha(\overline{R}) = \alpha(R)$. In the new ring, $\length(M_\p) = \length (N_\p)$ for all $\p \in \Minh(\overline{R})$. We now replace $R$ with $\overline{R}$.
    
    Let $\{M_i\}_{i = 1}^m = \find (M \oplus N)$. Without loss of generality, it suffices to show that $\gsig{M}{M_1} = \gsig{N}{M_1}$. 
    Taking limits in \Cref{cor: Yao}, we obtain $c_i \in \QQ$ and finite length $R$-modules $L_i$ such that
    \begin{align*}
        \gsig{M_j}{M_1} &= \sum_{i = 1}^m c_i \ehk (L_i; M_j)
    = \sum_{i = 1}^m c_i \sum_{\p \in \Minh(M_j)} \ehk (L_i; R/\p) \length ((M_j)_\p)
    \\ &= \sum_{\p \in \Minh(M_j)}\length ((M_j)_\p)  \sum_{i = 1}^m c_i \ehk (L_i; R/\p).
    \end{align*}

    Write $F_* M = \bigoplus_{j=1}^m M_j^{\oplus a_j}$ and take any $\q \in \Minh(R)$.
    Since $\Fmult{e+1}{M}{M_1}= \sum_{j=1}^m a_j\Fmult{e}{M_j}{M_1}$, it follows from the above that
\begin{equation*}
  \begin{split}
       \gsig{M}{M_1}&= \sum_{j=1}^ma_j \gsig{M_j}{M_1}_\q/p^{\delta_R(R)}\\
       &= \sum_{\p \in \Minh(R)} \frac{\sum_{j=1}^m a_j\length_{R_\p} ((M_j)_\p)}{p^{\delta_R(R)}}\sum_{i = 1}^m c_i \ehk (L_i; R/\p)\\
       &= \sum_{\p \in \Minh(R)} \length_{R_\p} (M_\p) \sum_{i = 1}^m c_i \ehk (L_i; R/\p), \,\, \text{by \Cref{lem: rank pushforward}}.
    \end{split}   
\end{equation*}
A similar formula for $\gsig{N}{M_1}$ shows that $\gsig{M}{M_1}= \gsig{N}{M_1}$.
\end{proof}

\begin{corollary}\label{cor: FFRT additivity}
    Let $(R, \m)$ be an F-finite local ring, $X$ be a finitely generated indecomposable $R$-module, 
    and suppose that $0 \to M_1 \to M_2 \to M_3 \to 0$
    is an exact sequence of finitely generated $R$-modules 
    satisfying FFRT. 
    Then $\gsig{M_2}{X} = \gsig{M_1 \oplus M_3}{X}$. 
    In particular, if $\dim (M_1) = \dim (M_3)$ then     
    $\gsig{M_2}{X} = \gsig{M_1}{X} + \gsig{M_3}{X}$.
\end{corollary}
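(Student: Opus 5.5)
The natural tool here is \Cref{prop: FFRT contributors in the same class}, applied to $M = M_2$ and $N = M_1 \oplus M_3$. Both modules satisfy FFRT: $M_2$ by hypothesis, and $M_1 \oplus M_3$ because $F^e_*(M_1\oplus M_3) = F^e_*M_1 \oplus F^e_*M_3$, so $\find(M_1\oplus M_3) = \find(M_1)\cup\find(M_3)$ is finite.

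First we check $\delta(M_2) = \delta(M_1\oplus M_3)$. By exactness of localization, $\Supp(M_2) = \Supp(M_1)\cup\Supp(M_3) = \Supp(M_1\oplus M_3)$. Hence the two modules have the same minimal primes, and \Cref{de: notation for alpha and delta} gives equal $\delta$.

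Next we compare lengths at the relevant primes. Fix $\p$ with $\alpha(R_\p) = \delta(M_2)$ and $\p\in\Supp(M_2)$. By \Cref{thm: Kunz rank}, $\alpha$ strictly decreases along proper specialization. So $\p$ is a minimal prime of $M_2$, since otherwise some $\q\subsetneq\p$ in the support would give $\alpha(R_\q)>\delta(M_2)$, contradicting the definition of $\delta$. The same argument applied to $M_1$ and $M_3$ shows each localization at $\p$ has finite length. Length is additive on the localized exact sequence $0\to (M_1)_\p\to (M_2)_\p\to (M_3)_\p\to 0$, so $\length((M_2)_\p) = \length((M_1)_\p)+\length((M_3)_\p) = \length((M_1\oplus M_3)_\p)$. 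At primes outside the support both sides vanish. \Cref{prop: FFRT contributors in the same class} now yields $\gsig{M_2}{X} = \gsig{M_1\oplus M_3}{X}$.

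For the additivity claim, assume $\dim M_1 = \dim M_3$. Since $R$ is local, \Cref{thm: Kunz rank} gives $\delta(M_1)=\delta(M_3)=\delta(M_1\oplus M_3)$. Then
\[
\frac{\Fmult{e}{M_1\oplus M_3}{X}}{p^{e\delta(M_1\oplus M_3)}}
= \frac{\Fmult{e}{M_1}{X}}{p^{e\delta(M_1)}} + \frac{\Fmult{e}{M_3}{X}}{p^{e\delta(M_3)}},
\]
where the numerators add by KRS for indecomposable $X$ (for decomposable $X$ one first reduces to indecomposable summands, or simply works with indecomposable $X$ as stated). Each term on the right converges by \Cref{thm: Yao main} together with the remark that limits towards non-contributors are zero; more precisely, \Cref{prop: FFRT contributors have period one} and \Cref{cor: aperiodicity implies existence of limit} give convergence. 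Passing to the limit gives the formula.

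The main point needing care is the second step, namely that primes with $\alpha(R_\p)=\delta(M_2)$ lie in $\Minh$ and are minimal for every module involved. This is exactly what makes the lengths finite and additive; the rest is routine.
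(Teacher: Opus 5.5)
Your proposal is correct and follows the paper's route. The paper likewise applies \Cref{prop: FFRT contributors in the same class} to $M_2$ and $M_1\oplus M_3$, checking its hypotheses by localizing the exact sequence; your verification of equal $\delta$ and equal lengths at primes with $\alpha(R_\p)=\delta(M_2)$ just spells that check out, and the additivity clause then follows from $\Fmult{e}{M_1\oplus M_3}{X}=\Fmult{e}{M_1}{X}+\Fmult{e}{M_3}{X}$ (KRS), $\delta(M_1)=\delta(M_3)=\delta(M_1\oplus M_3)$, and the existence of the individual limits.
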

\begin{proof}
    By localizing the exact sequence, we see that $M_2$ and $M_1 \oplus M_3$ satisfy the assumptions of Proposition~\ref{prop: FFRT contributors in the same class}.
\end{proof}

\begin{corollary}\label{cor: FFRT contributors are supported at a prime}
    Let $(R, \m)$ be an F-finite local ring and $M$ be an indecomposable F-contributor satisfying FFRT. Then $\Ann(M)= \p$ for some $\p \in \Minh(M)$. 
\end{corollary}
\begin{proof}
    By passing to $\overline{R} = R/\Ann(M)$, we may assume that $\dim (R) = \dim (M)$.
    If $N$ satisfies FFRT with $\find(N) = \{N_1, \ldots, N_h\}$ and $\p \in \Ass(N)$,
    then $\Hh_\p^0 (N)$ also satisfies FFRT by indecomposable summands of $\Hh_\p^0 (N_i)$. In particular, among these summands there is an F-contributor of dimension $\dim R/\p$ (Theorem~\ref{thm: Yao main}). This contributor is an $R/\p$-module (\Cref{prop: associated primes and splitting}).

    Now suppose that  $\{\p_1, \ldots, \p_n\} = \Minh(R) \cap \Supp(M)$.
    By the first paragraph, for each $1 \leq i \leq n$, 
    there is an indecomposable F-contributor $X_i$ such that $\p_i = \Ann (X_i)$. 
    Let $r_i = \dim_{\kappa (\p_i)} (X_i)_{\p_i}$ and set $r = \lcm \{r_1, \ldots, r_n\}$.
    Then $M^{\oplus r}$ and $\bigoplus_{i = 1}^n X_i^{\oplus \length (M_{\p_i}) r/r_i}$ satisfy the assumptions of Proposition~\ref{prop: FFRT contributors in the same class}.
    Thus, $M \in \find (X_i)$ for some $i$ by the Krull--Remak--Schmidt property. Since $\dim (M) = \dim (R)$ we must have $\Ann (M) = \p_i$.
\end{proof}

\begin{corollary}\label{cor: finiteness of FFRT modules}
    Let $(R, \m)$ be an F-finite local ring and $\p$ be a prime ideal of $R$. Let
    $\cS_\p$ be the set of isomorphism classes of indecomposable F-contributors $X$ such that $X$ satisfies FFRT, $\p \in \Supp (X)$, and $\dim (X) = \dim (R/\p)$.
    Then $\cS_\p$ is finite and if $M, N \in \cS_\p$ then $N \in \find(M)$.
\end{corollary}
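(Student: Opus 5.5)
The plan is to first prove the second assertion, that any two members of $\cS_\p$ are related, and then deduce finiteness from it. Take $M, N \in \cS_\p$. By \Cref{cor: FFRT contributors are supported at a prime}, $\Ann(M) = \q_M$ and $\Ann(N) = \q_N$ are primes in $\Minh(M)$ and $\Minh(N)$ respectively. Both are contained in $\p$ (since $\p \in \Supp$) and satisfy $\dim(R/\q_M) = \dim(M) = \dim(R/\p) = \dim(R/\q_N)$. Since $R$ is local and $\q_M \subseteq \p$ with equal dimension, we get $\q_M = \p$, and likewise $\q_N = \p$. Thus both $M$ and $N$ are faithful modules over the domain $R/\p$ (torsion-free, since $\Ass = \{\p\}$ by \Cref{prop: properties of maximal recurrent classes for a prime}), with generic ranks $r_M, r_N > 0$.

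Next apply \Cref{prop: FFRT contributors in the same class} to $M^{\oplus r_N}$ and $N^{\oplus r_M}$. Both satisfy FFRT and have $\delta = \alpha(R_\p)$. The only prime $\q$ with $\alpha(R_\q) = \delta$ in their support is $\p$, and at $\p$ the lengths agree, equal to $r_Mr_N$. For primes outside the support both lengths vanish; for primes $\q \supsetneq \p$ we have $\alpha(R_\q) < \alpha(R_\p)$ by \Cref{thm: Kunz rank}. This is the one check I expect to need care: confirming that the hypothesis ``for all $\p$ with $\alpha(R_\p) = \delta$'' really only involves $\p$. We conclude $r_N\gsig{M}{X} = r_M\gsig{N}{X}$ for every indecomposable $X$. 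Taking $X = M$, we have $\gsig{M}{M} > 0$: the limit exists and is positive by \Cref{prop: FFRT contributors have period one} and the F-contributor hypothesis. Hence $\gsig{N}{M} > 0$, so $M$ is a summand of some $F^e_*N$, i.e. $M \in \find(N)$. Symmetrically, $N \in \find(M)$.

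For finiteness, fix one $M_0 \in \cS_\p$ (if $\cS_\p$ is empty there is nothing to prove). Then every $N \in \cS_\p$ lies in $\find(M_0)$, which is finite because $M_0$ has FFRT. Hence $\cS_\p \subseteq \find(M_0)$ is finite. The main obstacle is the identification $\Ann = \p$ together with the verification of the hypotheses of \Cref{prop: FFRT contributors in the same class}. If the dimension argument for $\q_M = \p$ fails in a non-catenary situation, I would instead pass to $R/\p$ and argue via $\Minh$ directly.
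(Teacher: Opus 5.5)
Your proposal is correct and is essentially the paper's argument: use \Cref{cor: FFRT contributors are supported at a prime} to get $\Ann(M)=\Ann(N)=\p$, apply \Cref{prop: FFRT contributors in the same class} to $M^{\oplus r_N}$ and $N^{\oplus r_M}$ to transfer positivity of the self-signature, and deduce finiteness from $\cS_\p\subseteq\find(M_0)$. Your catenarity worry is unnecessary: in any Noetherian ring, $\q\subsetneq\p$ forces $\dim(R/\q)>\dim(R/\p)$, because $\q$ can be prepended to any chain of primes starting at $\p$.
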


\begin{proof}
    Let $M, N \in \cS_\p$. By \Cref{cor: FFRT contributors are supported at a prime}, $\Ann (M)= \Ann(N)= \p$, so we will work over the ring $R/\p$. Since $\length_{R_\p} (M^{\oplus \length_{R_\p}N_\p}) = \length_{R_\p} (N^{\oplus \length_{R_\p}M_\p})$, and $\gsig{N}{N}$ is positive, we get that $\gsig{M}{N}$ is positive by \Cref{prop: FFRT contributors in the same class}. Since $M$ satisfies FFRT, the rest follows.
\end{proof}

\begin{corollary}\label{cor: FFRT rank formulas}
    Let $(R, \m)$ be an F-finite local ring, $M$ be a finitely generated $R$-module satisfying FFRT, and $\p \in \Minh(M)$. Then the rank formula towards every $X \in \find(\p, M)$ at $\p$ and the total rank formula at $\p$ hold for $M$. 
\end{corollary}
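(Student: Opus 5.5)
The plan is to obtain the total rank formula first and then deduce the rank formula towards each $X$ from \Cref{prop: total rank formula vs rank formula}. Since $M$ has FFRT, $\find(\p,M)$ is finite. By \Cref{thm: Yao main} the limits $\gsig{M}{N}$ exist for every $N\in\find(M)$. Because $\p\in\Minh(M)$, \Cref{thm: Kunz rank} gives $\alpha(R_\p)=\delta(M)$, so $\csig{M}{N}_\p=\gsig{M}{N}$ for $N\in\find(\p,M)$. Now use the exact identity from \Cref{sse: Total rank formula}:
\[\grk{\p}{M}=\sum_{N\in\find(\p,M)}\frac{\Fmult{e}{M}{N}}{p^{e\alpha(R_\p)}}\grk{\p}{N}.\]
Only terms with $N\in\find(\p,M)$ contribute, since $\grk{\p}{N}=0$ otherwise. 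The sum is finite, so passing to the limit in $e$ commutes with it, and this yields the total rank formula for $M$ at $\p$ directly, exactly as at the end of the proof of \Cref{thm: Yao main}.

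By \Cref{lem: total rank formula implies total rank formula for summands}, the total rank formula at $\p$ then holds for every $L\in\find(\p,M)$. \Cref{prop: total rank formula vs rank formula} converts this into the \ces rank formula towards an F-contributor, provided $\Mat(\p,M)$ has exactly one final class. Also note the reduction for targets: if $X\in\find(\p,M)$ is not an F-contributor at $\p$, then $\csig{X}{X}_\p=0$. By \Cref{cor: rank inequalities} both sides of the rank formula vanish, so it holds trivially. Hence it suffices to treat targets $X$ that are F-contributors at $\p$.

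The main obstacle is uniqueness of the final class, equivalently showing that all F-contributors in $\find(\p,M)$ lie in one connected class. Here FFRT is essential, and the tool is \Cref{cor: finiteness of FFRT modules}. Any F-contributor $X\in\find(\p,M)$ at $\p$ is an F-contributor in the paper's sense, with $\dim X=\dim(R/\p)$ by \Cref{cor: properties of maximal recurrent classes in the whole matrix} and \Cref{lem: radius of convergence for contributors}. It satisfies FFRT because $\find(X)\subseteq\find(M)$, and $\p\in\Supp(X)$. So $X$ lies in the finite set $\cS_\p$, and any two members of $\cS_\p$ lie in each other's $\find$; that is, they are in the same connected class. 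By \Cref{prop: consequences of the total rank formula}, the final classes of $\Mat(\p,M)$ are exactly the positive recurrent classes, which consist of F-contributors at $\p$. Hence there is exactly one final class.

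\Cref{prop: total rank formula vs rank formula} then gives the \ces rank formula for $M$ towards that contributor. \Cref{pr: comparison of notions of rank formulas and consequences}(\ref{part: cesaro rank formula is a class property of the target}) extends it to every F-contributor $X\in\find(\p,M)$. Since these contributors have period one by \Cref{prop: FFRT contributors have period one}, the \ces rank formula and the rank formula coincide, which completes the argument.
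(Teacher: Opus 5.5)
Your argument is correct, but it runs in the opposite direction from the paper's proof.

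The paper proves the rank formula first. It applies \Cref{prop: FFRT contributors in the same class} to $M^{\oplus r}$ and $\bigoplus_i X_i^{\oplus \length(M_{\p_i})r/r_i}$, where $X_1=X$ and the $X_i$ are indecomposable F-contributors with $\Ann(X_i)=\p_i$ for the primes $\p_i\in\Minh(M)$. Since $\gsig{X_i}{X_1}=0$ for $i>1$, it reads off $\gsig{M}{X}=\frac{\length(M_\p)}{\length(X_\p)}\gsig{X}{X}$ in one step. It then gets the total rank formula from the forward direction of \Cref{prop: total rank formula vs rank formula}, with uniqueness of the final class coming out as a byproduct.

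You instead obtain the total rank formula directly, by exchanging the limit with the finite sum over $\find(\p,M)$. You must then use the converse direction of \Cref{prop: total rank formula vs rank formula}, so uniqueness of the final class becomes an input you have to prove separately. You do this correctly via \Cref{cor: finiteness of FFRT modules} and \Cref{prop: consequences of the total rank formula}. You then finish with \Cref{pr: comparison of notions of rank formulas and consequences}(\ref{part: cesaro rank formula is a class property of the target}) and period one.

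Both routes rest on the same deep input. \Cref{cor: finiteness of FFRT modules} is itself proved with \Cref{prop: FFRT contributors in the same class}, hence with Yao's spanning result \Cref{thm: Yao span}.

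What your route buys is a clean separation of the two ingredients:
\begin{itemize}
\item The total rank formula for an FFRT module is purely a finiteness statement. Using Cesàro limits in place of limits, the same argument gives it at every associated prime.
\item \Cref{thm: Yao span} enters only through connectivity, namely that all F-contributors at $\p$ lie in one connected class.
\end{itemize}
This is exactly the ``nondissipative plus one essential class'' picture of \Cref{re: probabilistic interpretation of rank formula and total rank formula}. The paper's route is shorter and produces the explicit formula immediately.
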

\begin{proof}
   Without loss of generality, $X$ is an F-contributor to $M$ at $\p$. Let $\{\p_1, \ldots, \p_n\} = \Minh(M)$ with $\p_1 = \p$. We 
   follow the proof of \Cref{cor: FFRT contributors are supported at a prime} and apply 
    Proposition~\ref{prop: FFRT contributors in the same class}
    to $\bigoplus_{i = 1}^n X_i^{\oplus \length (M_{\p_i}) r/r_i}$ and $M^{\oplus r}$
    with $X = X_1$ (possible by \Cref{cor: FFRT contributors are supported at a prime}). 
    Since $\gsig{X_i}{X_1} = 0$ for $i > 1$, by \Cref{cor: FFRT contributors are supported at a prime} it follows that 
    \[
    \gsig{M}{X_1} = \length (M_{\p_1})/\length (X_{\p_1}) \gsig{X_1}{X_1}. 
    \]    
Recall that $R(\Mat(\p,M)) \geq p^{-\alpha(R_\p)}$ by \Cref{cor: convergence radius is at least the power of dimension}. Since there is an F-contributor to $M$ which has $\p$ as an associated prime by \Cref{thm: Yao main}, equality holds in the last inequality and the matrix $\Mat(\p,M)$ has a basic positive recurrent class. Hence, the total rank formula now holds by \Cref{prop: total rank formula vs rank formula}.
\end{proof}

\subsection{Modules of essentially finite Frobenius representation type}\label{sse: essentially FFRT}
The property of having finitely many F-contributors is a natural weakening 
of the FFRT property.

\begin{definition}
    We say that an $R$-module $M$ is of essentially finite Frobenius representation type if a) it has an F-contributor and b) every F-contributor to $M$ has the FFRT property. 
\end{definition}

\begin{proposition}\label{prop: equivalent definition EFFRT}
    The following are equivalent:
    \begin{enumerate}
        \item $M$ is of essentially FFRT; \label{part: M is eFFRT}
        \item The set of indecomposable F-contributors to $M$ is finite and nonempty; \label{part: M finitely F-contributors}
        \item $R(\Mat(M)) = p^{-\delta_R(M)}$, $\Mat(M)$ has a basic positive recurrent class, and every basic positive recurrent class of $\Mat(M)$ is finite. \label{part: M has finitely F-contributors spectral}
    \end{enumerate}    
\end{proposition}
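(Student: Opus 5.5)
I plan to prove (1)$\Rightarrow$(2)$\Rightarrow$(3)$\Rightarrow$(1). The organizing tool is the correspondence between indecomposable F-contributors to $M$ and basic positive recurrent classes of $\Mat(M)$, which I first record as a lemma. Let $X$ be an indecomposable F-contributor to $M$. By \Cref{lem: radius of convergence for contributors}, $R(\Mat(M)) = p^{-\delta(M)}$ and $X$ is an F-contributor to $M$ at some $\p$ with $\alpha(R_\p)=\delta(M)$. By \Cref{thm: F-contributor at a prime via matrices}, (1a)$\Rightarrow$(1b), the class of $X$ in $\Mat(\p,M)$, which is also its class in $\Mat(M)$ by \Cref{pr: entries of powers of the matrix for a prime}, is positive $p^{-\delta(M)}$-recurrent. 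It is therefore basic positive recurrent in $\Mat(M)$.

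Conversely, let $\cC$ be a basic positive recurrent class of $\Mat(M)$ with $R(\Mat(M))=p^{-\delta(M)}$. By \Cref{cor: properties of maximal recurrent classes in the whole matrix}, $\cC$ is final and $\alpha(R_\p)=\delta(M)$ for every $\p\in\Ass(\cC)$. Take $N\in\cC$ and such a $\p$. Then $N\in\find(\p,M)$, and positive recurrence gives $\limsup_e \Fmult{e}{N}{N}/p^{e\alpha(R_\p)}>0$. Since $N\in\find(M)$, also $\limsup_e\Fmult{e}{M}{N}/p^{e\delta(M)}>0$. Hence every element of $\cC$ is an F-contributor to $M$. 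So the set of indecomposable F-contributors to $M$ is exactly the union of the basic positive recurrent classes when $R(\Mat(M))=p^{-\delta(M)}$, and it is empty otherwise.

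(2)$\Rightarrow$(3) is immediate from this lemma. Nonemptiness gives $R(\Mat(M))=p^{-\delta(M)}$ and a basic positive recurrent class, and each such class is contained in the finite set of contributors. (3)$\Rightarrow$(2) is not needed directly, but it also follows, once we show there are only finitely many basic positive recurrent classes.

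(3)$\Rightarrow$(1). By the lemma, $M$ has an F-contributor. Let $X$ be any indecomposable F-contributor to $M$ and let $\cC$ be its class, which is finite by hypothesis. By \Cref{cor: Cohen--Macaulayless of contributors}, $X$ is an F-contributor, and by \Cref{cor: F-contributor matrix is irreducible}, $\Mat(X)$ is irreducible. So $\find(X)$ is a single connected class containing $X$. By \Cref{lem: support of an irreducible class}(2), this class coincides with $\cC$, since classes of $\Mat(X)$ and $\Mat(M)$ agree on $\find(X)\subseteq\find(M)$. Thus $\find(X)=\cC$ is finite, i.e.\ $X$ has FFRT. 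If the definition of F-contributor in (1) is not restricted to indecomposables, one reduces to indecomposable summands.

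(1)$\Rightarrow$(2) is the main obstacle, because it requires finiteness of the number of classes. Nonemptiness holds by assumption. Each indecomposable contributor $X$ has FFRT, and as above its class is $\find(X)$, so each class is finite. To bound the number of classes, I would use the weighted count over a prime $\p$ with $\alpha(R_\p)=\delta(M)$: by \Cref{th: matrix version of total rank formula}, each positive recurrent class $\cC\subseteq\find(\p,M)$ satisfies $\sum_{L\in\cC}\grk{\p}{L}\csig{L}{L}_\p\geq 1$, up to the normalization of \Cref{th: matrix version of total rank formula} against the eigenvector $\grk{\p}{\bullet}$. Combining this with the total rank inequality \Cref{cor: rank semiadditive} and $\csig{M}{L}_\p=\ep{ML}(\p)\csig{L}{L}_\p$ gives only a weighted bound, so I would instead argue via FFRT. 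Any contributor $X$ at such a $\p$ lies in the finite set $\cS_\p$ of \Cref{cor: finiteness of FFRT modules}, after checking $\p\in\Supp(X)$ and $\dim X=\dim R/\p$ via \Cref{cor: properties of maximal recurrent classes in the whole matrix} in the local case. Since there are finitely many $\p\in\Ass(M)$, finiteness follows. The step I expect to require care is matching the hypotheses of \Cref{cor: finiteness of FFRT modules}, in particular locality and the dimension equality.
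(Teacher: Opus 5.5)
Your proposal is correct and is essentially the paper's argument. The crucial step (1)$\Rightarrow$(2) is exactly the paper's appeal to the finiteness of $\cS_\p$ from \Cref{cor: finiteness of FFRT modules} over the finitely many relevant primes (those with $\alpha(R_\p)=\delta(M)$, which lie in $\Minh(M)$ in the local setting), and your (3)$\Rightarrow$(1) via the irreducibility of $\Mat(X)$ from \Cref{cor: F-contributor matrix is irreducible} is the paper's (2)$\Rightarrow$(1); your lemma identifying indecomposable F-contributors to $M$ with the union of the basic positive recurrent classes just spells out the ``direct translation'' the paper invokes for (3), and the abandoned weighted-count detour can be dropped.
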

\begin{proof}
    By \Cref{cor: F-contributor matrix is irreducible},  
    the set of indecomposable F-contributors to $M$ is also
    the union of $\find(X)$ where $X$ ranges through F-contributors to $M$, hence (\ref{part: M finitely F-contributors}) $\Rightarrow$ (\ref{part: M is eFFRT}). 
    The converse holds by Corollary~\ref{cor: finiteness of FFRT modules}, since any F-contributor to $M$ will belong to $\cS_\p$ for some $\p \in \Minh(M)$. 
    (\ref{part: M has finitely F-contributors spectral}) 
    is a direct translation of (\ref{part: M is eFFRT}) via 
    \Cref{thm: F-contributor at a prime via matrices} and \Cref{cor: F-contributor matrix is irreducible}.
\end{proof}

\begin{example}
Hashimoto--Singh \cite[Theorem~3.1, Remark~3.3]{HashimotoSingh} provide an example of an invariant subring which is of essentially FFRT (by \cite[Theorem~5.1]{HashimotoSymonds}) but does not have FFRT. 
\end{example}

\begin{remark}
We expect that a ring that has a faithful finitely generated module that satisfies the FFRT property (e.g., a regular overring) should be essentially FFRT. 
Since FFRT and essentially FFRT coincide for a strongly F-regular ring (the matrix is irreducible), 
this property would give a different explanation for  \cite[Proposition~5.1]{HashimotoSingh} that shows that strongly F-regular invariant subrings have FFRT.
\end{remark}

Our interest in this property is that it allows one to reduce the convergence of $\Mat(M)^n$ to the FFRT case, where we may use Yao's work.

\begin{theorem}\label{thm: F-signature for eFFRT}
Let $(R, \m)$ be an F-finite local ring and $M$ be a finitely generated $R$-module of essentially finite Frobenius representation type. Then $\lim_{n \to \infty} p^{-n\delta_R(M)}\Mat(M)^n$ exists.
\end{theorem}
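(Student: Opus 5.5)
The plan is to show that every entry of $p^{-n\delta_R(M)}\Mat(M)^n$ converges, i.e., that for all $N_1, N_2 \in \find(M)$ the sequence $p^{-n\delta_R(M)}\Fmult{n}{N_1}{N_2}$ converges. Fix such $N_1, N_2$ and split into two cases according to whether $N_2$ is an F-contributor to $M$.

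\emph{Case 1: $N_2$ is not an F-contributor to $M$.} The goal is to show the sequence tends to zero. If $N_2$ had a prime $\p\in\Ass(N_2)$ with $\alpha(R_\p)=\delta_R(M)$, apply \Cref{th: existence of signatures in general} at $\p$ to $N_1$. Then the Ces\`aro limit $\csig{N_1}{N_2}_\p$ exists and equals $\ep{N_1N_2}(\p)\csig{N_2}{N_2}_\p$. Moreover, the periodic subsequences converge to multiples of $\gsig{N_2}{N_2}_{0,\omega,\p}$. This last quantity is zero because $N_2$ is not an F-contributor at $\p$; otherwise $N_2\in\find(M)$ would make it an F-contributor to $M$ by \Cref{thm: F-contributor at a prime via matrices}, (2b)$\Rightarrow$(3). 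Hence the full sequence tends to $0$ by part (\ref{part: zero self signature implies zero signature}) of \Cref{th: existence of signatures in general}. If instead every $\p\in\Ass(N_2)$ has $\alpha(R_\p)<\delta_R(M)$, then \Cref{cor: convergence radius is at least the power of dimension} gives $\Fmult{n}{N_1}{N_2}=O(p^{n\alpha(R_\p)})=o(p^{n\delta_R(M)})$.

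\emph{Case 2: $N_2$ is an F-contributor to $M$.} By \Cref{prop: equivalent definition EFFRT}, $N_2$ has FFRT. By \Cref{cor: Cohen--Macaulayless of contributors}, $N_2$ is itself an F-contributor, so it is an F-contributor to $N_2$, which has FFRT. Then \Cref{prop: FFRT contributors have period one} applied to the module $N_2$ shows that $N_2$ has period one. By \Cref{lem: radius of convergence for contributors}, choose $\p\in\Ass(N_2)$ with $\alpha(R_\p)=\delta_R(M)$. Now \Cref{cor: aperiodicity implies existence of limit} (equivalently \Cref{th: existence of signatures in general} with $\omega=1$) applied to $N_1$ gives convergence of $p^{-n\alpha(R_\p)}\Fmult{n}{N_1}{N_2}=p^{-n\delta_R(M)}\Fmult{n}{N_1}{N_2}$. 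This holds whether or not $\p\in\Ass(N_1)$: the identity $\Fmult{e+1}{N_1}{N_2}=\sum\Fmult{1}{N_1}{N'}\Fmult{e}{N'}{N_2}$ over the finitely many $N'\in\find(\p,N_1)$ reduces the question to modules in $\find(\p,M)$, as in the proof of \Cref{th: existence of signatures in general}.

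The main obstacle is the bookkeeping with primes in Case 1. We must make sure that the normalization $p^{\delta_R(M)}$ matches $p^{\alpha(R_\p)}$ exactly for the relevant primes, and that "not an F-contributor to $M$" really forces $\gsig{N_2}{N_2}_{0,\omega,\p}=0$ at every associated prime of maximal $\alpha$. The key input here is \Cref{thm: F-contributor at a prime via matrices}(3) together with $N_2\in\find(M)$. The FFRT input enters only through period one, via \Cref{prop: FFRT contributors have period one}. Aperiodicity is what upgrades the Ces\`aro convergence, which holds in general, to genuine convergence.
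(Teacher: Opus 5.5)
Your proposal is correct and follows essentially the same route as the paper. Non-contributors are disposed of via \Cref{th: existence of signatures in general}. Each F-contributor to $M$ has FFRT, is an F-contributor by \Cref{cor: Cohen--Macaulayless of contributors}, hence has period one by \Cref{prop: FFRT contributors have period one}, and \Cref{cor: aperiodicity implies existence of limit} then gives convergence. Your Case 1 only spells out the reduction the paper leaves implicit, and FFRT of $N_2$ already follows from the definition of essentially FFRT, so you do not need \Cref{prop: equivalent definition EFFRT}.
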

\begin{proof}
 By \Cref{th: existence of signatures in general}, it suffices to show that for any F-contributor $X$ to $M$ $$\lim_{e \to \infty} p^{-e\delta_R(N)}\Fmult{e}{N}{X}$$ exists for any $N \in \find(M)$. Any such $X$ has FFRT and $X$ is an F-contributor by \Cref{cor: Cohen--Macaulayless of contributors}. Therefore, by \Cref{prop: FFRT contributors have period one}, $X$ has period one and the limit exists by \Cref{cor: aperiodicity implies existence of limit}.
\end{proof}

\section{Convergence rates}\label{subsec: convergence rates} For a strongly F-regular ring $R$ and a finitely generated $R$-module $M$, we have further information about the convergence rate of $\Fmult{e}{M}{R}/p^{e\delta_R(R)}$ to the F-signature:
\[
\Fmult{e}{M}{R}= \gsig{M}{R}p^{e \delta_R(R)}+ O(p^{e(\delta_R(R)-1)}); \ \text{\cite[Theorem 4.9]{deStefani_Polstra_Yao_globalizing_F_invariants}}.
\]
The main result of this section, namely \Cref{th: convergence rates}, is used to extend this known result and yield a convergence rate for $\Fmult{e}{M}{N}/p^{e\delta_R(R)}$, where $N$ is not necessarily $R$, towards $\gsig{M}{N}$; see \Cref{th: convergence rate in the sfr case}. For $\gsig{M}{R}$, the existing approach to convergence rate utilizes freeness of $R$ as a module over itself, and describes $\Fmult{e}{R}{R}$ as the colength of the Frobenius splitting ideals $I_e$.
Since there are no known analogs of $I_e$ in the more general setting, we proceed by relating the convergence rate of a sequence to analytic properties of the corresponding generating function. Similar ideas in the context of Hilbert--Kunz theory are pursued in \cite{AlapanThesis}; see Rmk 2.3.2, loc. cit. 

Our approach is parallel to geometric ergodicity of Markov chains (\cite{vere_Jones_geometric_ergodicity_in_denumerable_markov_chains}), although
we do not restrict ourselves to irreducible matrices unlike \cite{vere_Jones_geometric_ergodicity_in_denumerable_markov_chains}. This level of generality is needed for applications, for example in \Cref{th: convergence rate in the sfr case}.

In the two-dimensional case, we prove similar convergence rates by a more direct approach in \Cref{th: rank formula in dimension two}.

In this section, for an $r \geq 0$, $D_r$ denotes $\{z \in \mathbb C \mid |z|<r\}.$ 

\begin{lemma}\label{le: rate of convergence vs meromorphic extension}
   Let $(a_n)_n$ be a sequence of complex numbers and $\omega$ be a positive integer such that for any integer $r$, where $0 \leq r \leq \omega -1$, $(a_{e\omega+r})_e$ converges to $l_r \in \mathbb C$. Then, for any $0 <\lambda< 1$, the following two conditions are equivalent:
    
    \begin{enumerate}
     \item For any $\lambda< \tilde \lambda<1$,
     \[a_{e\omega+r}= l_r+ O(\tilde \lambda^{e\omega}),\ \text{for all} \ 0 \leq r \leq \omega-1. \]

     \item The generating function $\sum_{n=0}^{\infty}a_nz^n$ admits a meromorphic extension to $D_{1/\lambda}$, such that the only possible poles of this extension are simple poles at the $\omega$-th roots of unity.
    \end{enumerate}
\end{lemma}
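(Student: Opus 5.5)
The plan is to split the generating function $A(z)=\sum_{n\ge0}a_nz^n$ into a part carrying the limits $l_r$ and a remainder. Set
\[
P(z)=\sum_{n\ge 0} l_{n \bmod \omega}\, z^n=\frac{\sum_{r=0}^{\omega-1} l_r z^r}{1-z^\omega},
\]
which is rational and has at most simple poles at the $\omega$-th roots of unity. Put $b_n=a_n-l_{n\bmod\omega}$, so that $b_n\to 0$, and let $B(z)=\sum_n b_nz^n$. Then $A=P+B$ on $D_1$, where all series converge because $(a_n)$ is bounded. Both implications then reduce to the standard relation between the radius of convergence of $B$ and the exponential decay of its coefficients $b_n$.

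For (1) $\Rightarrow$ (2), fix $\tilde\lambda\in(\lambda,1)$. Condition (1) gives $|b_{e\omega+r}|\le C\tilde\lambda^{e\omega}\le C'\tilde\lambda^{e\omega+r}$, since $r<\omega$. Hence $B$ converges on $D_{1/\tilde\lambda}$, and letting $\tilde\lambda\downarrow\lambda$ shows that $B$ is holomorphic on $D_{1/\lambda}$. Therefore $P+B$ is a meromorphic extension of $A$ to $D_{1/\lambda}$ whose only possible poles are the simple ones of $P$.

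For (2) $\Rightarrow$ (1), let $\tilde A$ be the given meromorphic extension. Then $\tilde A-P$ is meromorphic on $D_{1/\lambda}$, with at most simple poles at the roots of unity, and it agrees with $B$ on $D_1$. This is the main step, where we must show these poles are removable. Fix a root of unity $\zeta$ and approach it radially, $z=t\zeta$ with $t\uparrow 1$. Since $b_n\to0$, the elementary Abelian estimate $(1-t)\sum_n|b_n|t^n\to 0$ gives $(1-t)B(t\zeta)\to 0$. A simple pole at $\zeta$ with residue $c$ would instead force $(1-t)(\tilde A-P)(t\zeta)\to -c\,\bar\zeta$, which is nonzero unless $c=0$. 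So $c=0$, and $\tilde A-P$ extends holomorphically to all of $D_{1/\lambda}$; by the identity theorem this extension is the analytic continuation of $B$.

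Now Cauchy--Hadamard finishes the argument. The power series of $B$ has radius of convergence at least $1/\lambda$, so $\limsup|b_n|^{1/n}\le\lambda$. Thus for every $\tilde\lambda>\lambda$ we get $|b_n|=O(\tilde\lambda^n)$, and since $\tilde\lambda^{e\omega+r}\le\tilde\lambda^{e\omega}$, this is exactly statement (1).
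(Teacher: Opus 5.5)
Your proof is correct. The direction (1)$\Rightarrow$(2) is essentially the paper's argument: split off $\sum_r l_r z^r/(1-z^\omega)$ and observe that the remainder converges on $D_{1/\tilde\lambda}$.

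For (2)$\Rightarrow$(1) you take a different route from the paper. The paper never subtracts $P$. Instead it multiplies the extension by $1-z^\omega$, which removes any simple poles at the $\omega$-th roots of unity whatever their residues. It then applies Cauchy--Hadamard to the coefficients $a_n-a_{n-\omega}$ of the resulting function, which is holomorphic on $D_{1/\lambda}$, to get $|a_{r+e\omega}-a_{r+(e-1)\omega}|\le C\tilde\lambda^{e\omega}$. Finally it telescopes to bound $|a_{r+e\omega}-l_r|$.

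You instead show that the residues of $\tilde A-P$ vanish, using the Abelian estimate $(1-t)B(t\zeta)\to 0$, and then apply Cauchy--Hadamard directly to the $b_n$.

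The paper's route is shorter and needs no analysis of boundary behaviour. It also shows that (2) alone forces each subsequence $(a_{e\omega+r})_e$ to be Cauchy at a geometric rate; the hypothesis $a_{e\omega+r}\to l_r$ is used only to identify the limit.

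Your route uses that hypothesis essentially, since it needs $b_n\to 0$. In exchange it shows that the principal part of the extension at each root of unity is exactly that of $\sum_r l_r z^r/(1-z^\omega)$, so the residues are determined by the $l_r$. It also avoids the telescoping step.
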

\begin{proof}
    (1) $\implies$ (2): 
    Write $a_{e\omega+r}= l_r+ b_{e\omega+r}$. Then on the open unit disc, we have the following equality of holomorphic functions
    \[ \sum_{n=0}^{\infty}a_nz^n= \sum_{r=0}^{\omega-1}\frac{l_rz^r}{1-z^\omega}+ \sum_{n=0}^{\infty}b_nz^n.\]
    Since $b_n= O(\tilde \lambda^n)$, $\sum_{n=0}^{\infty}b_nz^n$ is convergent on $D_{1/\tilde\lambda}$ for any $\tilde \lambda> \lambda$. This proves (2).

    \noindent (2) $\implies$ (1): The holomorphic function $(1-z^{\omega})(\sum_{n=0}^{\infty}a_nz^n)$ on the open unit disc extends to a holomorphic function on $D_{1/\lambda}$. On the open unit disc
    \[(1-z^{\omega})(\sum_{n=0}^{\infty}a_nz^n)= \sum_{r=0}^{\omega-1}  z^r\left( a_r+  \sum_{e=1}^{\infty}(a_{r+e\omega}- a_{r+(e-1)\omega})z^{e\omega} \right). \]
    For any $\tilde \lambda> \lambda$, the power series on the right converges on the closure of $D_{1/\tilde \lambda}$. Therefore there exists $C_{\tilde \lambda}>0$ such that, 
    \[|a_{r+e\omega}- a_{r+(e-1)\omega}| \leq C_{\tilde \lambda} \tilde \lambda^{e \omega}, \text{for all}\ 0 \leq r \leq \omega-1, e \geq 1.\]
    Therefore for any $r$ and $e' \geq e$, 
     \[|a_{r+e'\omega}- a_{r+e\omega}| \leq \sum_{j=e+1}^{e'} |a_{r+j\omega}- a_{r+(j-1)\omega}|\leq C_{\tilde \lambda}(\frac{1-\tilde \lambda^{(e'-e)\omega}}{1-\tilde \lambda^\omega})\tilde \lambda^{(e+1)\omega}.
    \]
    Taking limit as $e'$ approaches infinity, we get (1).
\end{proof}

Next, we discuss some results on zeroes and poles of formal power series associated to nonnegative matrices, which are used later in \Cref{th: convergence rates}. Let $\cS$ be a countable set and $T= (t_{ss'})_{ss'}$ be an $\cS \times \cS$ matrix with nonnegative coefficients. While most of the discussion is valid as long as the matrix powers $T^n$ are well defined, we further assume for simplicity that for any $s \in \cS$, $t_{ss'}$ is nonzero for only finitely many $s'$. We freely refer to the notation and terminologies set in \Cref{sse: terminologies and notation for infinite matrices}, \Cref{ssse: limit theorems for reducible matrices} and \Cref{sse: F decomposition matrix}. For every $s, s' \in \cS,\ \text{a nonempty} \ H \subseteq \cS$, define ${}_Ht^{(n)}_{ss'}$ recursively on $n$, so that
\[_Ht^{(0)}_{ss'}=0;\ _Ht^{(1)}_{ss'}= t_{ss'}, \ _Ht^{(n)}_{ss'}= \sum_{l \notin H} t_{sl} {}_{H}t^{(n-1)}_{ls'}, \ \text{when} \ n \geq 2. \]
When $H= \{l\}$, we simply write $_{H}t^{(n)}_{ss'}= {}_{l}t^{(n)}_{ss'}$. Moreover, for any $s,s', l \in \cS$, denote 
\[_{s'}t^{(n)}_{ss'}= f^{(n)}_{ss'},\  _{\{s',l\}}t^{(n)}_{ss'}= {}_lf^{(n)}_{ss'}. \]
When $T$ is a stochastic matrix, in terms of the associated Markov chain (\Cref{ssse: limit theorems for reducible matrices}), $_{H}t^{(n)}_{ss'}$ is the probability that the state $s$ arrives at state $s'$ at the $n$-th stage while avoiding the states in $H$ except possibly at the beginning and end.
Denote the formal power series $\sum_{n=0}^\infty{}_Ht^{(n)}_{ss'}z^n$ by ${}_HT_{ss'}(z)$ and $\sum_{n=0}^\infty t^{(n)}_{ss'}z^n$ by $T_{ss'}(z)$. The symbols $F_{ss'}(z)$, ${}_lF_{ss'}(z)$ have analogous interpretations.  We skip the verification of the following identities of formal power series. When $T$ is stochastic, the relationships among the coefficients of the power series corresponding to the identities also follow from probabilistic interpretations of the coefficients.

\begin{proposition}\label{pr: identities of generating functions} \upshape{(\cite[page 16]{vere_Jones_geometric_ergodicity_in_denumerable_markov_chains})} Let $T,\cS$ be as before and $s,s',l$ be three distinct elements of $\cS$. 
    \begin{enumerate}
        \item $T_{ss}(z)(1-F_{ss}(z))=1$.
        \item $T_{ss'}(z)= F_{ss'}(z)T_{s's'}(z)$.
        \item $F_{ss}(z)= {}_{l}F_{ss}(z)+ {}_{s}F_{sl}(z)F_{ls}(z)$.
        \item $F_{ss'}(z)= {}_{l}F_{ss'}(z)+ {}_{s'}T_{sl}(z){}_{l}F_{ls'}(z)$.
        \item $T_{ss'}(z)= {}_lT_{ss'}(z)+ F_{sl}(z)T_{ls'}(z)$; see page 26, (iii) loc. cit.
    \end{enumerate}
\end{proposition}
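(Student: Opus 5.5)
The plan is to prove each identity at the level of coefficients by decomposing a path according to a distinguished visit; identities of formal power series then follow coefficientwise. Write $t^{(n)}_{ss'}=\sum t_{s i_1}t_{i_1 i_2}\cdots t_{i_{n-1}s'}$ over all paths $s=i_0,i_1,\dots,i_n=s'$, and ${}_Ht^{(n)}_{ss'}$ as the same sum restricted to paths with $i_1,\dots,i_{n-1}\notin H$. A one-step induction on the defining recursion gives this description, since the recursion only sums over $l\notin H$ at the first interior step. Each path decomposition below then gives a bijection of path sets, and because products of weights are multiplicative, it gives an identity of finite sums. Finiteness of rows keeps every coefficient a finite sum.

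For (2), decompose a path from $s$ to $s'$ of length $n\geq1$ at its \emph{first} visit to $s'$ at a positive time $k$. The first piece is counted by $f^{(k)}_{ss'}$ and the remainder, of length $n-k$, by $t^{(n-k)}_{s's'}$. This gives $t^{(n)}_{ss'}=\sum_k f^{(k)}_{ss'}t^{(n-k)}_{s's'}$, i.e.\ $T_{ss'}=F_{ss'}T_{s's'}$ (with the $n=0$ term vanishing since $s\neq s'$). For (1), the same decomposition with $s'=s$ gives $t^{(n)}_{ss}=\sum_{k\ge1}f^{(k)}_{ss}t^{(n-k)}_{ss}$ for $n\geq1$, while $t^{(0)}_{ss}=1$. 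Hence $T_{ss}=1+F_{ss}T_{ss}$, which is (1).

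For (5), split paths $s\to s'$ according to whether they visit $l$ at some interior or final time. Those that never do are counted by ${}_lT_{ss'}$. For those that do, cut at the first visit to $l$: the first piece is counted by $F_{sl}$ (first entrance to $l$) and the remainder by $T_{ls'}$. For (3), take first-return paths $s\to s$ and split by whether they avoid $l$, giving ${}_lF_{ss}$. Those that hit $l$ cut at the first visit to $l$: the prefix avoids $s$ and $l$ in its interior and ends at $l$, so it is counted by ${}_sF_{sl}$, and the suffix is a first entrance from $l$ to $s$, counted by $F_{ls}$. For (4), do the same with first-entrance paths $s\to s'$, but cut at the \emph{last} visit to $l$. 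The prefix avoids $s'$ in its interior and ends at $l$, counted by ${}_{s'}T_{sl}$. The suffix avoids both $l$ and $s'$ in its interior, counted by ${}_lF_{ls'}={}_{\{s',l\}}t_{ls'}$.

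The main obstacle is bookkeeping, not ideas. One has to check that the index-$0$ conventions (${}_Ht^{(0)}=0$ versus $t^{(0)}=\Id$) make the constant terms match. One also has to check that, in (3) and (4), the cut at the first versus last visit produces exactly the stated avoidance sets, and that $l\neq s,s'$ ensures the cut point is an interior vertex so that no path is double counted. I would verify each constant term and each bijection explicitly and leave the rest as routine.
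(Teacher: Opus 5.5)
Your proposal is correct: (1), (2), (3) and (5) come from cutting at the first visit, and (4) from cutting at the last visit to $l$, and your treatment of the constant terms and of the avoidance sets matches the paper's conventions. The paper skips the verification, citing Vere-Jones and noting that the identities follow from the probabilistic interpretation when $T$ is stochastic; your weighted-path version is that same argument, and it works directly for any nonnegative $T$ with finite rows, with no need to normalize to a stochastic matrix first.
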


The following input from complex analysis is used crucially in our results on convergence rates of suitably normalized versions of the sequences $(\Fmult{e}{M}{N})_e$.

\begin{theorem}[Vivanti--Pringsheim]\label{th: Abel and vivanti pringsheim} Let $\sum_{n=0}^{\infty}a_nz^n$ be a power series with nonnegative real coefficients and $\rho \in (0, \infty)$ be the radius of convergence. Then $\sum_{n=0}^{\infty}a_nz^n$ must have a singularity at $z=\rho$, i.e., the function $T(z)= \sum_{n=0}^{\infty}a_nz^n$ cannot admit a holomorphic extension in a neighborhood of $z= \rho$. 
In other words, if $T(z)$ is holomorphic on $D_r$ and admits a holomorphic extension in a neighborhood of $z= r$, then $\rho>r$. See \cite[Chapter 8, Section 5]{Remmert_Rainhold_Theory_of_complex_functions}. 
\end{theorem}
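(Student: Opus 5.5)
The plan is to argue by contradiction, reducing to the classical fact that a convergent power series cannot be analytically continued across every point of its circle of convergence. Suppose $T(z)=\sum_n a_n z^n$ has nonnegative coefficients and radius of convergence $\rho\in(0,\infty)$. Suppose also that $T$ extends holomorphically to an open disc $D$ centered at $\rho$.

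The first step exploits positivity to expand about an interior point. Pick $0<x<\rho$ close enough to $\rho$ that $D$ contains $\{|z-\rho|<r\}$ with $r>\rho-x$. The Taylor expansion of $T$ at $x$ is
\[
T(z)=\sum_{k\ge 0}\frac{T^{(k)}(x)}{k!}(z-x)^k,\qquad \frac{T^{(k)}(x)}{k!}=\sum_{n\ge k}\binom{n}{k}a_n x^{n-k}\ \ge 0.
\]
The extension of $T$ is holomorphic on the union of $D_\rho$ and the disc around $\rho$. This union contains a disc centered at $x$ of radius $\rho-x+\varepsilon$ for some $\varepsilon>0$. Hence the Taylor series at $x$ converges at the real point $z=x+(\rho-x+\varepsilon)=\rho+\varepsilon$.

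The second step rearranges the double series. Evaluating at $z=\rho+\varepsilon$ gives
\[
\sum_k\sum_{n\ge k}\binom{n}{k}a_n x^{n-k}(\rho+\varepsilon-x)^k<\infty.
\]
All terms are nonnegative, so Tonelli's theorem permits swapping the sums. The inner sum over $k$ collapses by the binomial theorem to $(x+\rho+\varepsilon-x)^n=(\rho+\varepsilon)^n$. This yields $\sum_n a_n(\rho+\varepsilon)^n<\infty$. That contradicts $\rho$ being the radius of convergence, so $T$ must have a singularity at $z=\rho$.

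The ``in other words'' formulation follows from the same argument with $\rho$ replaced by $r\le\rho$. If $T$ is holomorphic on $D_r$ and extends near $z=r$, the same expansion at a point $x<r$ shows that $\sum a_n(r+\varepsilon)^n$ converges, so $\rho>r$. In the case $\rho=\infty$ the conclusion is trivially true.

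The main obstacle is justifying the exchange of summations, and that is exactly where nonnegativity of the $a_n$ is used. The remaining geometric point is ensuring that the disc of convergence at $x$ genuinely extends past $\rho$. That is secured by taking $x$ sufficiently close to $\rho$ relative to the radius of the extension disc, using the identity theorem to glue the extension with $T$ on the overlap.
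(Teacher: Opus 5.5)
Your proof is correct. It is the classical Pringsheim argument that the paper relies on through its citation of Remmert, since the paper gives no proof of its own: expand about a real point $0<x<\rho$, use that the Taylor coefficients there are nonnegative, and rearrange the nonnegative double series to get convergence of $\sum a_n s^n$ for some $s>\rho$. Two small fixes. First, $\rho+\varepsilon$ lies on the boundary of the open disc $\{|z-x|<\rho-x+\varepsilon\}$, so evaluate at $\rho+\varepsilon/2$ instead. Second, the existence of $\varepsilon$ follows most cleanly by compactness, since the closed disc $\{|z-x|\le\rho-x\}$ meets the circle $|z|=\rho$ only at $z=\rho$.
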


The next result from complex analysis is crucially used later in \Cref{th: convergence rates}. We include a proof for a more algebraically minded reader.

\begin{lemma}\label{le: meromorphicity vs holomorphicity}
    Let $\omega$ be a positive integer, $(f_{n\omega})_{n \omega}, (t_{n\omega})_{n \omega}$ be two sequences of nonnegative real numbers such that $f_0=0$. Set $T(z)=\sum_{n=0}^{\infty}t_{n\omega}z^{n\omega}$ and $F(z)= \sum_{n=0}^{\infty}f_{n\omega}z^{n\omega}$. Assume that $T(z)(1-F(z))=1$ as formal power series and $T(z)$ has radius of convergence $\rho>0$. Then,
    \begin{enumerate}
         \item $\rho= \sup\{r \in \mathbb R | 1-F(z)\ \text{is holomorphic on}\ D_r\ \text{and positive on}\ [0, r) \}$.
     
        \item $T(z)$ admits a meromorphic extension on a neighborhood of $z= \rho$ if and only if $F(z)$ admits a holomorphic extension to a neighborhood of $z= \rho$. These two equivalent conditions are in turn equivalent to having a meromorphic (resp. holomorphic) extension of $T(z)$ (resp. $F(z)$) on $D_R$ for some $R>\rho$.

        \item Neither $T(z)$ nor $1-F(z)$ has any zero in $D_{\rho}$. Suppose that $\gcd\{n \mid f_{n\omega} \neq 0\}= 1$. If any of the equivalent conditions in (2) is satisfied, the zero-set of $1-F(z)$ or pole-set of $T(z)$ on the closure $\overline D_{\rho}$ is $S_{\rho, \omega}= \{\rho e^{2\pi ij/\omega} |\ 0 \leq j \leq \omega-1\}$.

        \item If any of the equivalent conditions in (2) is satisfied, there exists $R> \rho$ such that $T(z)$ is meromorphic and zero-free on $D_R$. Moreover,\\ $\sup \{R>\rho|\ T(z) \ \text{admits a meromorphic extension to}\ D_R\ \text{without any zeroes}\}$\\
        = $\sup \{R>\rho|\ F(z) \ \text{admits a holomorphic extension to}\ D_R\}$.
    \end{enumerate}
\end{lemma}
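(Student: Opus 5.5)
The plan is to treat everything as an analysis of power series in the single variable $w = z^\omega$. Write $\tilde T(w) = \sum_n t_{n\omega} w^n$ and $\tilde F(w) = \sum_n f_{n\omega} w^n$. Then $\tilde T(1-\tilde F)=1$, both series have nonnegative coefficients, and $\tilde F(0)=0$. Comparing coefficients gives $t_0=1$ and $t_{n\omega} \geq f_{n\omega}$. Hence the radius of convergence of $F$ is at least $\rho$, and $F$ is holomorphic on $D_\rho$. On $D_\rho$ we have $T(z)(1-F(z))=1$ as holomorphic functions, so neither factor vanishes there; this is the first claim of (3).

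For (1), let $\rho'$ be the supremum in the statement. On $[0,\rho)$ the function $1-F$ is nonzero and continuous, and it equals $1$ at $0$, so it is positive; therefore $\rho' \geq \rho$. Conversely, suppose $1-F$ is holomorphic on $D_r$ and positive on $[0,r)$. Then $1/(1-F)$ is holomorphic near $[0,r)$ and agrees with $T$ near $0$. Its Taylor coefficients at points $x\in[0,r)$ are nonnegative, because derivatives of $T$ with nonnegative coefficients inherit nonnegativity on $[0,\rho)$. I would use Vivanti--Pringsheim (\Cref{th: Abel and vivanti pringsheim}) in its contrapositive form: if $\rho<r$, then $T$ would extend holomorphically across $z=\rho$, which is impossible. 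So $\rho \geq r$. The step requiring care here is that $F$ might have radius of convergence smaller than $r$ while still being holomorphic on $D_r$. However, $F$ also has nonnegative coefficients, so the same Pringsheim argument shows that its radius of convergence is at least $r$. Hence $F(x)$ is an honest nonnegative series and is increasing on $[0,r)$.

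For (2), the forward direction proceeds as follows. If $T$ is meromorphic near $\rho$, then $1-F = 1/T$ is meromorphic near $\rho$ and holomorphic on $D_\rho$. Since $F$ has nonnegative coefficients, $|F(z)| \leq F(|z|)$, and $F$ increases to $F(\rho^-)\le 1$; the bound $F<1$ holds on $[0,\rho)$ by positivity. So $F$ is bounded near $\rho$, the pole cannot occur, and $F$ extends holomorphically. For the converse, $T = 1/(1-F)$ is meromorphic wherever $F$ is holomorphic. To upgrade to a disc $D_R$ with $R>\rho$, the key point is that, by Pringsheim again, a holomorphic extension of $F$ across $\rho$ forces the radius of convergence of $F$ to exceed $\rho$. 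Then $F$, and hence $T=1/(1-F)$ as a meromorphic function, extends to a disc $D_R$ with $R>\rho$.

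For (3), under the gcd hypothesis, suppose $|z|=\rho$ and $F(z)=1$. Then $1 = |F(z)| \le F(\rho) \le 1$, so $F(\rho)=1$ with equality in the triangle inequality. Equality forces $z^{n\omega}$ to be positively aligned, that is, $z^{n\omega}=\rho^{n\omega}$ for every $n$ with $f_{n\omega}\neq0$. The gcd condition then gives $z^\omega=\rho^\omega$. Conversely, $F$ is a function of $z^\omega$, so every point of $S_{\rho,\omega}$ is a zero once $F(\rho)=1$. That $F(\rho)=1$ actually holds follows from (1): if $1-F(\rho)>0$, then positivity extends slightly beyond $\rho$, contradicting the definition of $\rho$.

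For (4), the zeros of $1-F$ on a slightly larger disc are isolated, and by (3) there are none in $\overline D_\rho$ apart from poles of $T$. The zeros of $T$ are exactly the poles of $F$, so there are none wherever $F$ is holomorphic. This makes the two suprema equal: a zero-free meromorphic $T$ gives a holomorphic $1-F=1/T$, and conversely a holomorphic $F$ gives a zero-free meromorphic $T$. The main obstacle I expect is the Pringsheim bookkeeping in (1)–(2), namely distinguishing analytic continuation from convergence of the series.
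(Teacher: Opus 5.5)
Your proof is correct and follows essentially the same route as the paper: Vivanti--Pringsheim, the bound $|F(z)|\le F(|z|)\le 1$ to exclude a pole of $F$ at $\rho$, and the real-part/gcd argument on the circle $|z|=\rho$. Your departures are small simplifications. The coefficient comparison $t_{n\omega}\ge f_{n\omega}$ gives holomorphy of $F$ on $D_\rho$ directly, and $T=1/(1-F)$ is automatically zero-free wherever $F$ is holomorphic; these replace the paper's contradiction and compactness arguments. The radius-of-convergence worry in (1) is moot, since a function holomorphic on $D_r$ has its Taylor series at $0$ converging on all of $D_r$.
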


\begin{proof}
(1) Since $\rho>0$, $T(z)(1-F(z))=1$, and $1-F(0)=1$ is nonzero, $F(z)$ has a positive radius of convergence and $1-F(z)$ does not have any zero in a neighborhood of the origin. Let $r_0$ be the supremum. For any $z \in D_{r_0}$, $|F(z)|\leq F(|z|)<1$. Therefore $T(z)= 1/(1-F(z))$ is holomorphic on $D_{r_0}$, and hence $r_0\leq \rho$. Suppose, for a contradiction, that $\rho>r_0$. Then $F(z)$ is meromorphic around $z=r_0$. Since $\lim_{x \to r_0-}F(x)$ is at most one, $F$ cannot have a pole at $z=r_0$. By \Cref{th: Abel and vivanti pringsheim}, there exists $r' >r_0$ such that $F(z)$ is holomorphic on $D_{r'}$. Moreover $F(r_0)\leq 1$. Since $T(z)$ is holomorphic around $z= r_0$, $F(r_0)<1$. Therefore $r_0$ cannot be the supremum, contradicting our hypothesis.  

(2) The validity of $T(z)(1-F(z))=1$ on $D_\rho$ implies the equivalence of having meromorphic extensions of $T(z)$ and $F(z)$ around $z= \rho$. Since $\lim_{x \to \rho-}F(x) \leq 1$, any meromorphic extension of $F(z)$ around $z= \rho$ must be holomorphic around $z= \rho$. 
    If $F(z)$ admits a holomorphic extension around $z= \rho$, by \Cref{th: Abel and vivanti pringsheim}, $F(z)$ defines a holomorphic function on $D_R$, for some $R>\rho$. Then $T(z)= 1/(1-F(z))$ is meromorphic on $D_R$. Conversely, if $T(z)$ is meromorphic on $D_R$, for some $R>\rho$, $T(z)$ is meromorphic around $z=\rho$ and hence $F(z)$ admits a holomorphic extension on an open disc around the origin which is strictly bigger than $D_\rho$, by our argument presented before. 
    
    (3) $F(z)$ and $T(z)$ are holomorphic on $D_{\rho}$ and $T(z)(1-F(z))=1$. Thus $T(z)$ or $1-F(z)$ cannot have zeroes in $D_\rho$. 
    Suppose that $F(z)$ (resp. $T(z)$) admits a holomorphic (resp. meromorphic) extension to $D_R$, for some $R>\rho$. It suffices to prove the assertion on zeroes of $1-F(z)$ since $T(z)(1-F(z))=1$ holds on $D_R$. By \Cref{th: Abel and vivanti pringsheim} $z= \rho$ must be a pole of $T(z)$ and hence $F(\rho)=1$. 
    For some $\theta \in [0, 2\pi)$, suppose $F(\rho e^{i\theta})=1$. Since $F(\rho e^{i\theta})= F(\rho)$, the real part of $F(\rho)- F(\rho e^{i\theta})$, namely $\sum_{n=0}^{\infty}f_{n\omega}\rho^{n\omega}(1- \cos (n\omega \theta))$ must be zero. Therefore, for any $n$ such that $f_{n\omega}$ is nonzero, $\cos{(n\omega \theta)}=1$ and hence $e^{i n\omega \theta}=1$.
    Since $\gcd\{n \mid f_{n\omega} \neq 0\}= 1$,  $e^{i\theta \omega}=1$. Hence, $\rho e^{i\theta} \in S_{\rho, \omega}$. We finish by observing that $1= F(\rho)= F(\rho e^{2\pi i j/\omega})$ for any $j \in \mathbb Z$.

    (4) Assume that any of the equivalent conditions of (2) is satisfied. Since $T(z)$ does not have any zeroes on $\overline D_\rho$, there exists an $R>\rho$ such that $T(z)$ is meromorphic and zero-free on $D_R$. The rest follows from the validity of the equation $T(z)(1-F(z))=1$ for meromorphic extensions of $T(z)$ and $F(z)$.
\end{proof}

The following result is a suitable adaptation of a result from \cite{vere_Jones_geometric_ergodicity_in_denumerable_markov_chains}, which was originally stated for irreducible, aperiodic $\mathbb N \times \mathbb N$ matrices with nonnegative coefficients. We relax the aperiodic hypothesis. We skip the proof since the argument is similar to that in \cite{vere_Jones_geometric_ergodicity_in_denumerable_markov_chains}.

\begin{theorem}\textup{(\cite[Theorem IV]{vere_Jones_geometric_ergodicity_in_denumerable_markov_chains})}\label{th: geometric ergodicity result of Vere Jones} Let $\cS$ be a countable set and $T$ be a positive recurrent $\cS \times \cS$ irreducible matrix with nonnegative entries with convergence radius $\rho>0$ and period $\omega< \infty$. 
Suppose for some $s \in \cS$, the generating function $T_{ss}(z)=\sum_{n=0}^{\infty}t^{(n)}_{ss}z^n$ admits a meromorphic extension on an open neighborhood of $z= \rho$. Then there exists $r>\rho$, such that for any $\alpha, \beta \in \cS$, the generating function $T_{\alpha\beta}(z)= \sum_{n=0}^{\infty}t^{(n)}_{\alpha\beta}z^n$ admits a meromorphic extension on $D_r$, so that the only possible poles are  simple poles on $S_{\rho, \omega}= \{\rho e^{2i\pi j/\omega}| \ 0 \leq j \leq \omega-1\}$.  

Moreover, let $r'> \rho$ be such that on $D_{r'}$, for some $s$, $T_{ss}(z)$ admits a meromorphic extension with poles only on $S_{\rho, \omega}$ and no zeroes. Then one can take $r$ in the previous paragraph to be $r'$.
\end{theorem}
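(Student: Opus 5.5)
We fix the distinguished state $s$ and first treat the diagonal entries. Positive recurrence of $T$ gives $F_{ss}(\rho)=1$ by \Cref{prop: properties of f}(\ref{part: recurrence in terms of first entrance}). Moreover, $t^{(n)}_{ss}$ and $f^{(n)}_{ss}$ vanish unless $\omega\mid n$, and $\gcd\{n\mid f^{(n\omega)}_{ss}\neq 0\}=1$, since otherwise the period would exceed $\omega$. Hence \Cref{le: meromorphicity vs holomorphicity}, applied to $T_{ss}$ and $F_{ss}$ with the identity \Cref{pr: identities of generating functions}(1), yields $R_0>\rho$ such that $F_{ss}$ is holomorphic on $D_{R_0}$ and $T_{ss}=1/(1-F_{ss})$ is meromorphic and zero-free there, with poles on $\overline D_\rho$ exactly at $S_{\rho,\omega}$. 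Shrinking $R_0$ if necessary, we may assume there are no further zeros of $1-F_{ss}$ in $D_{R_0}$. The poles are simple because $F_{ss}'(\rho)=\sum n f^{(n)}_{ss}\rho^{n-1}<\infty$, which is exactly positive recurrence (finite mean return time in the normalized chain), and $F_{ss}'(\rho)>0$; rotational symmetry $F_{ss}(\zeta z)=F_{ss}(z)$ for $\zeta^\omega=1$ transports this to the other points of $S_{\rho,\omega}$. In the ``moreover'' setting we simply take $R_0=r'$, since zero-freeness of $T_{ss}$ there means $F_{ss}=1-1/T_{ss}$ is holomorphic on $D_{r'}$ with $1-F_{ss}$ vanishing only on $S_{\rho,\omega}$.

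Next we propagate the result to other states using the identities in \Cref{pr: identities of generating functions}. By (2) and its transpose analogue, $T_{\alpha\beta}=F_{\alpha s}T_{ss}\,{}^{*}$; more precisely, via (5) with $l=s$, we have $T_{\alpha\beta}={}_sT_{\alpha\beta}+F_{\alpha s}T_{ss}\,{}_sT^{\ast}_{s\beta}$, where we use the last-exit decomposition $T_{s\beta}=T_{ss}\,L_{s\beta}$ with $L_{s\beta}(z)=\sum_n {}_s t^{(n)}_{s\beta}z^n$. It therefore suffices to show that the taboo series $F_{\alpha s}$, $L_{s\beta}$ and ${}_sT_{\alpha\beta}$ are holomorphic on $D_{R_0}$. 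Once this is established, $T_{\alpha\beta}$ is meromorphic on $D_{R_0}$ with at most simple poles on $S_{\rho,\omega}$, since $T_{ss}$ has only simple poles there.

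The holomorphy of the taboo series is the main obstacle. Following Vere-Jones, we use irreducibility to pick $m,k$ with $t^{(m)}_{\alpha s}>0$ and $t^{(k)}_{s\alpha}>0$. Concatenating paths gives $f^{(n+k)}_{ss}\ge {}_st^{(k)}_{s\alpha}\cdot{}_s t^{(n)}_{\alpha s}$, where the first factor is taken with $k$ chosen as a first-passage time, so it is positive. Consequently the coefficients of $F_{\alpha s}$ are dominated by a constant times those of $z^{-k}F_{ss}$. Since all coefficients are nonnegative, the radius of convergence of $F_{\alpha s}$ is at least that of $F_{ss}$, which is $>R_0$ by Vivanti–Pringsheim (\Cref{th: Abel and vivanti pringsheim}) applied to $F_{ss}$ being holomorphic on $D_{R_0}$. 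Strictly, we use that the radius is at least $R_0$, since holomorphy on $D_{R_0}$ of a nonnegative series forces its radius to be $\ge R_0$. The same domination, applied via paths $s\to\alpha\to\beta\to s$ avoiding $s$, gives ${}_st^{(n)}_{\alpha\beta}\,c\le f^{(n+m'+k')}_{ss}$ and likewise handles $L_{s\beta}$. This yields holomorphy of all three taboo series on $D_{R_0}$, and we take $r=R_0$ (respectively $r=r'$) uniformly in $\alpha,\beta$.
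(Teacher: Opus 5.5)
Your proof is correct and is essentially the Vere-Jones argument the paper cites in place of a proof. \Cref{le: meromorphicity vs holomorphicity} gives $T_{ss}=1/(1-F_{ss})$ with only simple poles on $S_{\rho,\omega}$ in a disc $D_{R_0}$ (or $D_{r'}$), and the decomposition $T_{\alpha\beta}={}_sT_{\alpha\beta}+F_{\alpha s}T_{ss}L_{s\beta}$, together with the coefficientwise domination of every taboo-$s$ series by shifted coefficients of $F_{ss}$, carries this uniformly to all $\alpha,\beta$. The blemishes are cosmetic and do not affect meromorphy: the garbled first version of the decomposition, and a missing constant term $\delta_{\alpha\beta}$ in it (the paper's identity (5) is stated only for distinct indices, and ${}_st^{(0)}_{\alpha\beta}=0$).
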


\begin{remark}\label{re: existence of zero free extension}
Recall that for any $s \in \cS$, $T_{ss}(z)(1-F_{ss}(z))= 1$ (\Cref{pr: identities of generating functions}). If $T_{ss}(z)$ admits a meromorphic extension to $D_r$, for some $r>\rho$, then there exists $r'>\rho$ such that $T_{ss}(z)$ is zero-free and meromorphic on $D_{r'}$; see \Cref{le: meromorphicity vs holomorphicity}. 
\end{remark}

Combining the above-mentioned results with \Cref{le: rate of convergence vs meromorphic extension}, we deduce the next theorem about convergence rates of the F-splitting signature functions towards respective F-splitting signatures with phases (\Cref{de: generalized signature with a phase}). The next lemma directly follows from \Cref{de: probability of entering a state for an indecomposable module}, and shows that \Cref{le: meromorphicity vs holomorphicity} is applicable to the following theorem.

\begin{lemma}\label{le: period in terms of first entrance probability}
Let $N$ be an indecomposable $R$-module with period $\omega<\infty$. Then $\gcd\{n \mid f^{(n\omega)}_{NN} \neq 0\}= 1$.
\end{lemma}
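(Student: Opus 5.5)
The plan is to relate the two sets $A=\{n\geq 1 \mid \Fmult{n}{N}{N}\neq 0\}$ and $B=\{n\geq 1 \mid f^{(n)}_{NN}\neq 0\}$. By definition $\omega=\gcd A$, and it suffices to show $\gcd A=\gcd B$. Once this is known, $B\subseteq \omega\ZZ$, so we may write $B=\{n\omega \mid f^{(n\omega)}_{NN}\neq 0\}$. Then $\gcd\{n \mid f^{(n\omega)}_{NN}\neq 0\}=\gcd(B)/\omega=1$, which is the claim.

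The first step is to rewrite the defining recursion of \Cref{de: probability of entering a state for an indecomposable module} with $M=N$, using $\Fmult{0}{N}{N}=1$ (since $F^0_*N=N$ is indecomposable):
\[
\Fmult{n}{N}{N}=f^{(n)}_{NN}+\sum_{e=1}^{n-1}f^{(e)}_{NN}\Fmult{n-e}{N}{N}.
\]
Nonnegativity of the $f^{(e)}_{NN}$ comes from \Cref{le: recursion on first entrance probability}. That lemma expresses $f^{(n)}_{NN}$ as a sum over $N'\ncong N$ of products of multiplicities, so by induction every $f^{(e)}_{NN}$ is a nonnegative integer. Alternatively, one can identify $f^{(n)}_{NN}$ with the first-entrance count ${}_Nt^{(n)}_{NN}$ of the F-decomposition matrix, which is visibly nonnegative.

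With nonnegativity in hand, two inclusions follow.
\begin{enumerate}
\item Every $n\in B$ lies in $A$, because the term $f^{(n)}_{NN}$ appears in the sum and all terms are nonnegative. Hence $B\subseteq A$ and $\gcd A$ divides $\gcd B$.
\item Every $n\in A$ is a finite sum of elements of $B$. This is by strong induction on $n$. If $\Fmult{n}{N}{N}>0$, then either $f^{(n)}_{NN}>0$, or some term $f^{(e)}_{NN}\Fmult{n-e}{N}{N}>0$ with $1\leq e<n$. In the latter case $e\in B$ and $n-e\in A$, and the induction hypothesis applies to $n-e$. Hence $\gcd B$ divides every element of $A$, so $\gcd B$ divides $\gcd A$.
\end{enumerate}
Together these give $\gcd A=\gcd B=\omega$.

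The only delicate point is the nonnegativity of $f^{(n)}_{NN}$. The recursion alone does not visibly force it, so I would handle it first, via \Cref{le: recursion on first entrance probability} or the path-counting interpretation, before running the two inclusions above.
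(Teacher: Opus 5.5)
Your proof is correct and is the argument the paper leaves implicit when it says the lemma follows directly from \Cref{de: probability of entering a state for an indecomposable module}: you compare $\{n \mid \Fmult{n}{N}{N}\neq 0\}$ with $\{n \mid f^{(n)}_{NN}\neq 0\}$ using the renewal recursion together with nonnegativity. One point to tighten: the nonnegativity induction must prove $f^{(n)}_{N'N}\geq 0$ for all $N'\in\find(N)$ simultaneously, with base case $f^{(1)}_{N'N}=\Fmult{1}{N'}{N}$, because \Cref{le: recursion on first entrance probability} expresses $f^{(n)}_{NN}$ through $f^{(n-1)}_{N'N}$ with $N'\ncong N$ and not through $f^{(n-1)}_{NN}$.
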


\begin{theorem}\label{th: convergence rates} Let $\p$ be a prime ideal of $R$, $M$ be a finitely generated $R$-module such that $\p \in \Ass(M)$, and $N \in \find(\p,M)$. 
Suppose that $\Fmult{e}{N}{N}$ is nonzero for some $e >0$. Denote the period of $N$ by $\omega$. All the F-splitting signatures with period in the assertions and proof are taken at $\p$.

Suppose that there exists $\lambda< p^{\alpha(R_\p)}$ such that, 
\[\Fmult{e\omega}{N}{N}= \gsig{N}{N}_{0, \omega}p^{e \omega\alpha(R_\p)}+ O(\lambda^{e\omega}).\]
Then the following assertions hold.
\begin{enumerate}
     \item  $T_{NN}(z)$ admits a meromorphic extension to $D_{1/\lambda}$.

     \item There exists an $l<p^{\alpha(R_\p)}$ such that $T_{NN}(z)$ admits a meromorphic extension to $D_{1/l}$ with only possible poles on $S_{1/p^{\alpha (R_\p)}, \omega} \coloneqq \{e^{2\pi i j/\omega}/p^{\alpha (R_\p)}| \ 0 \leq j\leq \omega-1\}$ and no zeroes in $D_{1/l}$. 

     For the rest of the assertions define $\lambda_N$ to be the infimum of all such $l$.
     
    \item Take $N_1$, $N_2$ in the connected class of $\Mat(\p, M)$ containing $N$. Let $r \in \mathbb N$ be such that $\Fmult{e}{N_1}{N_2}$ nonzero implies $e\equiv r$ modulo $\omega$. Then for any $p^{\alpha(R_\p)}>\lambda_0> \lambda_N$,
\[\Fmult{e\omega+r}{N_1}{N_2}= \gsig{N_1}{N_2}_{r, \omega}p^{(e\omega+r)\alpha(R_\p)}+ O(\lambda_0^{e\omega+r}).\]

\item Let $N'\in \find(\p,N)$ be in the connected class of $\Mat(\p,M)$ containing $N$ and $r$ be such that $\Fmult{e}{N'}{N}$ nonzero implies $e$ is congruent to $r$ modulo $\omega$. Then for any $p^{\alpha(R_\p)}>\lambda_0> \lambda_N$,
\[\Fmult{e\omega+r}{N'}{N}= \gsig{N'}{N}_{r,\omega}p^{(e\omega+r)\alpha(R_\p)}+ O(\lambda_0^{e\omega+r}).
\]

\item \label{part: convergence rate of the parent module to anything in a class} Suppose that there exists $\theta< p^{\alpha(R_\p)}$ such that for any $0 \leq r \leq \omega-1$
\[\Fmult{e\omega+r}{M}{N}= \gsig{M}{N}_{r,\omega}p^{(e\omega+r)\alpha(R_\p)}+ O(\theta^{e\omega+r}).
\]
Consider the infimum of all such $\theta< p^{\alpha(R_\p)}$ satisfying the last equation; define $\lambda_{M,N}$ to be the maximum of this infimum and $\lambda_N$. Then $\lambda_{M,N}< p^{\alpha(R_\p)}$. Moreover,
for any $M' \in \find(\p,M)$, any $N'$ in the connected class of $\Mat(\p,M)$ containing $N$, any $0 \leq r \leq \omega-1$, and $\lambda_{M,N}<\lambda_0<p^{\alpha(R_\p)}$,
\[\Fmult{e\omega+r}{M'}{N'}= \gsig{M'}{N'}_{r,\omega}p^{(e\omega+r)\alpha(R_\p)}+ O(\lambda_0^{e\omega+r}).
\]
\end{enumerate}
\end{theorem}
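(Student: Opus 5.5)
We work throughout with the matrix $T=\Mat(\p,M)$, indexed by $\find(\p,M)$, and with the generating functions $T_{ss'}(z)=\sum_n \Fmult{n}{s}{s'}z^n$. Set $\rho=p^{-\alpha(R_\p)}$. The hypothesis that $\Fmult{e}{N}{N}\neq 0$ for some $e$ gives $\omega<\infty$, so the class $\cC$ of $N$ is not exceptional. If $\gsig{N}{N}_{0,\omega}=0$, the hypothesis says $\Fmult{e\omega}{N}{N}=O(\lambda^{e\omega})$, and all assertions should follow directly. We therefore concentrate on the case $\gsig{N}{N}_{0,\omega}>0$. In that case $\cC$ is $\rho$-positive recurrent, and by \Cref{prop: properties of maximal recurrent classes for a prime} $R(T[\cC])=\rho$.

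\textbf{Steps (1) and (2).} Since $\Fmult{n}{N}{N}=0$ unless $\omega\mid n$, we apply \Cref{le: rate of convergence vs meromorphic extension} to $a_n=\Fmult{n}{N}{N}\rho^{n}$, with $l_0=\gsig{N}{N}_{0,\omega}$ and $l_r=0$ for $r\neq 0$. This gives a meromorphic extension of $\sum a_n z^n$ to $D_{p^{\alpha(R_\p)}/\lambda}$. Rescaling $z\mapsto z/\rho$ turns this into an extension of $T_{NN}(z)$ to $D_{1/\lambda}$ whose only poles are simple poles on $S_{\rho,\omega}$; this is (1). For (2), we apply \Cref{le: meromorphicity vs holomorphicity} to $T=T_{NN}$ and $F=F_{NN}$, using \Cref{pr: identities of generating functions}(1) for the relation $T_{NN}(1-F_{NN})=1$, and \Cref{le: period in terms of first entrance probability} for the gcd condition. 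Part (4) of that lemma then yields a disc $D_{1/l}$ with $l<p^{\alpha(R_\p)}$ on which $T_{NN}$ is meromorphic and zero-free, with poles only on $S_{\rho,\omega}$ by part (3).

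\textbf{Steps (3) and (4).} Apply \Cref{th: geometric ergodicity result of Vere Jones} to the irreducible positive recurrent matrix $T[\cC]$ with $s=N$. For every $1/l$ as in (2), each $T[\cC]_{N_1N_2}(z)$ extends meromorphically to $D_{1/l}$ with at most simple poles on $S_{\rho,\omega}$. By \Cref{pr: entries of powers of the matrix for a prime} and finality of $\cC$ (\Cref{prop: properties of maximal recurrent classes for a prime}), we have $T[\cC]^n_{N_1N_2}=\Fmult{n}{N_1}{N_2}$. Applying \Cref{le: rate of convergence vs meromorphic extension} in reverse, with the limits supplied by \Cref{th: existence of signatures in general}, then gives the rate $O(\lambda_0^{e\omega+r})$ for every $\lambda_0>\lambda_N$. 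Assertion (4) is the special case $N_2=N$.

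\textbf{Step (5).} First, $\lambda_{M,N}<p^{\alpha(R_\p)}$ is immediate from the hypothesis on $\theta$ and from $\lambda_N<p^{\alpha(R_\p)}$. For $M'\in\find(\p,M)$ and $N'\in\cC$, \Cref{pr: identities of generating functions}(2) gives $T_{M'N'}=F_{M'N'}T_{N'N'}$, which reduces the claim to the analytic behaviour of $F_{M'N'}$.

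This is the main obstacle, because $M'$ may lie outside $\cC$ and then \Cref{th: geometric ergodicity result of Vere Jones} does not apply. The plan is to use the hypothesis on $M$ itself: via \Cref{le: rate of convergence vs meromorphic extension}, $T_{MN}=F_{MN}T_{NN}$ extends meromorphically to $D_{1/\theta}$ with simple poles on $S_{\rho,\omega}$. Since $T_{NN}$ is zero-free on $D_{1/l}$, it follows that $F_{MN}=T_{MN}/T_{NN}$ is holomorphic on $D_{1/\max(\theta,l)}$.

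Next we pass from $M$ to $M'$. By \Cref{le: first entrance probability for any module}(2), at the level of generating functions,
\[
F_{MN}(z)=\sum_{N_1}\Fmult{e_0}{M}{N_1}\,z^{e_0}F_{N_1N}(z)+(\text{a polynomial}).
\]
All coefficients here are nonnegative. Hence each $F_{N_1N}$ with $\Fmult{e_0}{M}{N_1}>0$ is dominated coefficientwise by $z^{-e_0}$ times a function holomorphic on the same disc, so its radius of convergence is at least as large. Every $M'\in\find(\p,M)$ arises this way for some $e_0$. Finally, $F_{M'N'}$ is related to $F_{M'N}$ through \Cref{pr: identities of generating functions}(4), together with the holomorphy of the within-class series ${}_{N'}T_{\,\cdot\,}$ and ${}_{N}F_{\,\cdot\,}$ near $\overline D_\rho$, which comes from the Vere-Jones argument. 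Dividing back by $T_{N'N'}$ and invoking \Cref{le: rate of convergence vs meromorphic extension} once more then gives the claimed rate for $\Fmult{e\omega+r}{M'}{N'}$.
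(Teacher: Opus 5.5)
Your steps (1)--(4), and step (5) in the case $N'=N$, follow the paper's argument. The gap is in step (5) for $N'\neq N$.

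\textbf{The gap.} You factor $T_{M'N'}=F_{M'N'}T_{N'N'}$ and aim to show that $F_{M'N'}$ is holomorphic, using within-class taboo series. The only zero-freeness you control, however, is that of $T_{NN}$ on $D_{1/\lambda_N}$; this is exactly how $\lambda_N$ is defined. Nothing prevents $T_{N'N'}$ from vanishing in $D_{1/\lambda_N}\setminus\overline D_\rho$. At such a zero, $F_{M'N'}=T_{M'N'}/T_{N'N'}$ generally has a pole.

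Here is a concrete instance. Take the block $\begin{pmatrix}1&3\\1&3\end{pmatrix}$, which has positive eigenvector $(1,1)^{\tr}$, eigenvalue $4$, and $\rho=1/4$. Let $N$ be the second state, $N'$ the first, and $M=M'=N$. Then:
\begin{itemize}
\item $T_{NN}=(1-z)/(1-4z)$, so $\lambda_N=\lambda_{M,N}=1$;
\item $T_{N'N'}=(1-3z)/(1-4z)$ vanishes at $z=1/3$;
\item $F_{NN'}=z/(1-3z)$ and ${}_{N'}T_{NN}=3z/(1-3z)$ both blow up at $z=1/3$.
\end{itemize}
So $F_{M'N'}$ and the within-class taboo series you invoke are not holomorphic on $D_{1/\lambda_{M,N}}$. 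Holomorphy merely ``near $\overline D_\rho$'' gives some rate below $p^{\alpha(R_\p)}$, but not the asserted rate for every $\lambda_0>\lambda_{M,N}$. In this example your route yields only $\lambda_0>3$, whereas the statement claims every $\lambda_0>1$.

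\textbf{The fix.} The paper's route never divides by $T_{N'N'}$, only by $T_{NN}$. For $M'\ncong N,N'$, apply \Cref{pr: identities of generating functions}(5) with taboo state $N$:
$T_{M'N'}={}_NT_{M'N'}+F_{M'N}T_{NN'}$.
\begin{itemize}
\item $T_{NN'}$ is meromorphic on $D_{1/\lambda_N}$ with simple poles on $S_{\rho,\omega}$, by part (3).
\item $F_{M'N}$ is holomorphic on $D_{1/\lambda_{M,N}}$, by your domination argument.
\item ${}_NT_{M'N'}$ is controlled by \Cref{pr: identities of generating functions}(4): $F_{M'N}={}_{N'}F_{M'N}+{}_NT_{M'N'}\,{}_{N'}F_{N'N}$. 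All these series have nonnegative coefficients and ${}_{N'}F_{N'N}\neq 0$, so ${}_NT_{M'N'}$ converges wherever $F_{M'N}$ does. The paper phrases this via Vivanti--Pringsheim.
\end{itemize}

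\textbf{A smaller repair in your domination step.} The displayed identity for $F_{MN}$ is not an identity of power series. \Cref{le: first entrance probability for any module}(2) is an equality of numbers at $z=\rho$ that uses $\mathfrak f^*_{NN}(\p,0)=1$. As power series, the $N_1=N$ term and paths passing through $N$ before time $e_0$ spoil it.

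Work instead with
$T_{MN}(z)=\sum_{n<e_0}\Fmult{n}{M}{N}z^n+z^{e_0}\sum_{N_1}\Fmult{e_0}{M}{N_1}T_{N_1N}(z)$.
Substitute $T_{N_1N}=F_{N_1N}T_{NN}$ for $N_1\ncong N$ and divide by $z^{e_0}T_{NN}$. This gives the domination you want for every $F_{N_1N}$ with $\Fmult{e_0}{M}{N_1}>0$.

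For that quotient to be holomorphic, zero-freeness of $T_{NN}$ is not enough. You also need $T_{NN}$ to have a simple pole at \emph{every} point of $S_{\rho,\omega}$, so that the poles of $T_{MN}$ cancel. This follows from \Cref{le: meromorphicity vs holomorphicity}(3) together with \Cref{le: period in terms of first entrance probability}.
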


\begin{proof}
Recall that  $T_{NN}(z)(1-F_{NN}(z))=1$ by \Cref{pr: identities of generating functions}. In what follows, we apply \Cref{le: meromorphicity vs holomorphicity} several times to analyze poles of meromorphic extensions of $T_{NN}(z)$. Thanks to \Cref{le: period in terms of first entrance probability},  \Cref{le: meromorphicity vs holomorphicity} is applicable with the value of $\omega$ as denoted in \Cref{le: meromorphicity vs holomorphicity} being the period of $N$.

Let $\Lambda$ be the irreducible block of $\Mat(\p, M)$ corresponding to the connected class of $\Mat(\p,M)$ containing $N$. Suppose $\gsig{N}{N}_{0,\omega}=0$.
Then, by our assumption, the convergence radius of $\Lambda$ is at least $1/\lambda> 1/p^{\alpha(R_\p)}$. We claim that the convergence radius of $\Lambda$, denoted by $\rho$ for now, is $1/\lambda_N$. Indeed, since $T_{NN}(z)(1-F_{NN}(z))= 1$, $D_\rho$ is the largest open disc on which $T_{NN}$ is meromorphic, zero-free with only possible pole on $S_{1/p^{\alpha(R_\p)}, \omega}$; see  \Cref{le: meromorphicity vs holomorphicity}. 
Thus (1), (2), (3) and (4) follow from the definition of convergence radius. By \Cref{th: existence of signatures in general}, $\gsig{M}{N}_{r,\omega}=0$. Clearly $\lambda_{M,N}< p^{\alpha(R_\p)}$ and for any $\lambda_{M,N}<\lambda_0< p^{\alpha(R_\p)}$, $\Fmult{e\omega+r}{M}{N}= O(\lambda_0^{e\omega+r})$. 
A straightforward argument shows that for any $N'$ in the connected class of $\Mat(\p,M)$ containing $N$, $\gsig{M}{N'}_{r,\omega}$ is zero and $\Fmult{e\omega+r}{M}{N'}= O(\lambda_0^{e\omega+r})$ for any $r$. 
Since $M' \in \find(\p,M)$, for any $r$, $\Fmult{e\omega+r}{M'}{N'}= O(\lambda_0^{e\omega+r})$.

From now on, assume $\gsig{N}{N}_{0, \omega}$ is nonzero. (3) implies (4). (1) follows from \Cref{le: rate of convergence vs meromorphic extension}.
 By our assumption $\Lambda$ is an irreducible positive recurrent matrix with period $\omega$ and convergence radius $1/p^{\alpha(R_\p)}$. 

(2) $T_{NN}(z)$ admits a meromorphic extension to $D_{1/\lambda}$, with poles on $S_{1/p^{\alpha(R_\p)}, \omega}$ by \Cref{le: rate of convergence vs meromorphic extension}. Now, the existence of $l$ follows from \Cref{le: meromorphicity vs holomorphicity}, (4). 

(3) By \Cref{th: geometric ergodicity result of Vere Jones}, applied to the irreducible matrix $\Lambda$, we conclude that on $D_{1/\lambda_N}$, $T_{N_1N_2}(z)$ is meromorphic with simple poles on $S_{1/p^{\alpha(R_\p)}, \omega}$. Now (3) follows from \Cref{le: rate of convergence vs meromorphic extension}.

(5) Define $T_{MN'}(z) \coloneqq \sum_{n=0}^{\infty} \Fmult{n}{M}{N'}z^n$, and $T_{M'N'}(z)$ similarly. Without loss of generality, we assume $N \in \find(\p,M')$. Below we prove that $T_{M'N'}(z)$ admits a meromorphic extension to $D_{1/\lambda_{M,N}}$ with only possible poles on $S_{1/p^{\alpha(R_\p)}, \omega}$.
(5) then follows by using \Cref{le: rate of convergence vs meromorphic extension}. To that end, suppose $e_0$ is such that $\Fmult{e_0}{M}{M'}$ is positive. Write
\[F^{e_0+1}_*M= \underset{N_i \in \find(M)}{\oplus}N_i^{\oplus b_i}, \ F_*M'= \underset{N_i \in \find(M)}{\oplus}N_i^{\oplus b'_i}.\]
Since $M'$ is a summand of $F^{e_0}_*M$, $b'_i \leq b_i$ for all $N_i \in \find(M)$. Since
\[T_{M'N'}(z)= \Fmult{0}{M'}{N'}+ \sum_{N_i \in \find( M)}b'_izT_{N_iN'}(z),\]
and $b'_i \leq b_i$ for all $N_i$, it suffices to prove the following,

\noindent \textbf{Claim:} If $b_i>0$, $T_{N_iN'}(z)$ admits a meromorphic extension to $D_{1/\lambda_{M,N}}$ with only possible poles on $S_{1/p^{\alpha(R_\p)}, \omega}$.

For this claim, suppose $N_{i_0} \cong N$, where $b_{i_0}$ could be zero. Note,
\begin{equation*}
\begin{split}
  & T_{MN}(z) = \sum_{n=0}^{e_0}\Fmult{n}{M}{N}z^n+ \sum_{N_i \in \find(M), i \neq i_0}b_iz^{e_0+1}T_{N_iN}(z) + b_{i_0}z^{e_0+1}T_{NN}(z) \\
  & = \sum_{n=0}^{e_0}\Fmult{n}{M}{N}z^n+ \sum_{i \neq i_0}b_iz^{e_0+1}F_{N_iN}(z)T_{NN}(z) + b_{i_0}z^{e_0+1}T_{NN}(z), (\Cref{pr: identities of generating functions}, (2)).
\end{split}  
\end{equation*}
Both $T_{MN}(z)$ and $T_{NN}(z)$ are meromorphic on $D_{1/\lambda_{M,N}}$ with possible simple poles on $S_{1/p^{\alpha(R_\p)}, \omega}$ (\Cref{le: rate of convergence vs meromorphic extension}) and $T_{NN}(z)$ is nonvanishing on $D_{1/\lambda_{M,N}}$. Since $\gsig{N}{N}_{0, \omega}$ is positive, $T_{NN}(z)$ has simple poles at every point of $S_{1/p^{\alpha(R_\p)}, \omega}$; see \Cref{le: meromorphicity vs holomorphicity}, (3). Therefore, the above equation implies holomorphicity of $\sum_{b_i>0, i \neq i_0}b_iz^{e_0+1}F_{N_iN}(z)+ b_{i_0}z^{e_0+1}$ on $D_{1/\lambda_{M,N}}$.
Being a finite sum, $\sum_{b_i>0, i\neq i_0}b_iF_{N_iN}(z)$ cannot have a pole at the origin. So, it follows that $\sum_{b_i>0, i \neq i_0}b_iF_{N_iN}(z)$ must be absolutely convergent on $D_{1/\lambda_{M,N}}$.
Since the coefficients of $F_{N_iN}(z)$ are nonnegative, $F_{N_iN}(z)$ must be absolutely convergent on $D_{1/\lambda_{M,N}}$ and hence holomorphic on $D_{1/\lambda_{M,N}}$, whenever $b_i>0$ and $i \neq i_0$. When $b_i>0$ and $i \neq i_0$, since $T_{N_iN}= F_{N_iN}T_{NN}$, $T_{N_iN}$ is also meromorphic on $D_{1/\lambda_{M,N}}$ with only possible poles on $S_{1/p^{\alpha(R_\p)}, \omega}$. 

Now we assume $N' \ncong N$, and argue that $T_{N_iN'}$ satisfies the claim. By part (3) and \Cref{le: rate of convergence vs meromorphic extension}, the claim holds when $N_i$ is isomorphic to $N'$ or $N$. Fix $N_i$ such that $b_i>0$, but $N_i \ncong N, N'$. By \Cref{pr: identities of generating functions}, (5),
$T_{N_iN'}(z)= {}_NT_{N_iN'}(z)+ F_{N_iN}(z)T_{NN'}(z)$. 
By part (3) and the holomorphicity of $F_{N_iN}(z)$ on $D_{1/\lambda_{M,N}}$, $F_{N_iN}(z)T_{NN'}(z)$ is meromorphic on $D_{1/\lambda_{M,N}}$ with only possible poles on $S_{1/p^{\alpha(R_\p)}, \omega}$.
So for the claim, it suffices to prove that ${}_NT_{N_iN'}(z)$ is holomorphic on $D_{1/\lambda_{M,N}}$. To this end, recall from \Cref{pr: identities of generating functions}, (4), that:
$$F_{N_iN}(z)= {}_{N'}F_{N_iN}(z)+ {}_NT_{N_iN'}(z){}_{N'}F_{N'N}(z).$$
All the power series involved in the above identity have nonnegative coefficients. Since the left side is absolutely convergent on $D_{1/\lambda_{M,N}}$, so are ${}_{N'}F_{N_iN}(z)$ and ${}_NT_{N_iN'}(z){}_{N'}F_{N'N}(z)$.
Since $T_{N'N}= F_{N'N}T_{NN}$ (\Cref{pr: identities of generating functions}, (2)), and a pole of $T_{N'N}$ on $D_{1/\lambda_{M,N}}$ is a pole of $T_{NN}$ with the same order by part (3) and \Cref{le: rate of convergence vs meromorphic extension}, and $T_{NN}(z)$ is zero-free on $D_{1/\lambda_{M,N}}$, $F_{N'N}(z)$ is holomorphic on $D_{1/\lambda_{M,N}}$.  
Hence, the power series ${}_{N'}F_{N'N}(z)$ defines a holomorphic function on $D_{1/\lambda_{M,N}}$ which does not vanish on $D_{1/\lambda_{M,N}}\cap \mathbb R_{>0}$. Therefore, the meromorphic function ${}_NT_{N_iN'}(z)$ on $D_{1/\lambda_{M,N}}$ does not have any poles on $D_{1/\lambda_{M,N}} \cap \mathbb R_{>0}$. By Vivanti--Pringsheim's theorem (\Cref{th: Abel and vivanti pringsheim}), this means ${}_NT_{N_iN'}(z)$ is holomorphic on $D_{1/\lambda_{M,N}}$.
This finishes the proof of the claim and hence the theorem.
\end{proof}

\section{Strongly F-regular modules}\label{se: strongly F regular modules}

We now introduce and develop strongly F-regular modules --- a module-theoretic generalization of the notion of strongly F-regular rings (\Cref{de: F split SFR rings}) introduced by Hochster--Huneke in \cite{Hochster_Huneke_tight_closure_and_strong_F_regularity}. In particular, we will show that in certain cases, indecomposable F-contributors are strongly F-regular and the sets of strongly F-regular modules and F-contributors sharing an associated prime coincide (\Cref{prop: consequence of existence of one strongly F regular contributor}).
These are used later to establish the existence of F-splitting signatures in certain concrete cases, such as when the underlying ring is strongly F-regular (\Cref{cor: F contributors are SFR when there is a module-finite SFR extension}), or for invariant rings of finite groups (\Cref{cor: invariant ring}), and very generally in dimension up to two (\Cref{se: dimension two}). The following definition of strong F-regularity is meaningful even when the KRS condition does not hold. We still keep the KRS assumption because it is utilized in some applications.

\begin{definition}\label{def: sFR for modules}
    Let $R$ be a ring of positive characteristic $p > 0$. A finitely generated $R$-module $N$ is said to be strongly F-regular, if for any $f \in R$, which is not contained in any minimal prime of $N$, there exist a positive integer $e$ and $\phi \in \Hom_R(N, F^e_*N)$, such that the composition $N \xrightarrow[]{\phi}F^e_*N \xrightarrow[]{F^e_*f}F^e_*N$ is a split injection.
\end{definition}

The name is justified by the following result obtained by a straightforward argument.

\begin{proposition}\label{prop: strongly F regular rings vs modules}
    Let $R$ be an F-finite ring. Then, $R$ is strongly F-regular as a module over itself if and only if $R$ is a strongly F-regular ring.
\end{proposition}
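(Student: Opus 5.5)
The plan is to compare the two definitions directly. First I check that their quantifiers agree: the minimal primes of the module $R$ are exactly the minimal primes of the ring $R$, so in both definitions $f$ ranges over the elements outside every minimal prime of $R$. The whole proof then reduces to comparing the two kinds of maps out of $R$. In the ring definition the composition is $R \to F^e_*R \to F^e_*R$ given by $1 \mapsto F^e_* f$, and it is required to split. In the module definition the composition is $F^e_*f \circ \phi$ for some $\phi \in \Hom_R(R, F^e_*R)$, and it is required to split.

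For the direction ring $\Rightarrow$ module, I take $\phi = F^e$, the Frobenius map, which lies in $\Hom_R(R,F^e_*R)$. Then $F^e_*f\circ\phi$ is exactly the split injection from \Cref{de: F split SFR rings}, so $R$ is a strongly F-regular module.

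For the direction module $\Rightarrow$ ring, I write an arbitrary $\phi\in\Hom_R(R,F^e_*R)$ as $1\mapsto F^e_*g$ for a unique $g\in R$. The composition $F^e_*f\circ\phi$ then sends $1\mapsto F^e_*(fg)$, and it splits by hypothesis. So there is $\psi\in\Hom_R(F^e_*R,R)$ with $\psi(F^e_*(fg))=1$. Setting $\psi'=\psi\circ F^e_*g$, where $F^e_*g$ denotes multiplication by $g$, gives $\psi'(F^e_*f)=1$. Hence $1\mapsto F^e_*f$ splits via $\psi'$, which is exactly the ring condition for $f$ at the same $e$.

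There is no real obstacle here. The only point needing care is the bookkeeping of the $F^e_*$ module structures: multiplication by $g$ is an $R$-linear endomorphism of $F^e_*R$, and $\Hom_R(R,F^e_*R)\cong F^e_*R$ via evaluation at $1$. Both are immediate. F-finiteness is not needed beyond matching the standing setup of the definitions.
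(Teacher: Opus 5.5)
Your proposal is correct and is exactly the direct comparison of definitions that the paper has in mind; the paper gives no written proof, only calling it ``a straightforward argument.'' Both steps are right: taking $\phi = F^e$ for ring $\Rightarrow$ module, and for module $\Rightarrow$ ring absorbing $\phi(1) = F^e_*g$ into the splitting by precomposing with multiplication by $g$, which is $R$-linear on $F^e_*R$.
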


It should be noted that even a strongly F-regular ring may admit positive-dimensional strongly F-regular modules different from the ring; see \Cref{cor: F contributors are SFR when there is a module-finite SFR extension}. Moreover, even the existence of full-dimensional strongly F-regular modules does not imply strong F-regularity of the ambient ring; see \Cref{eg: SFR modules over non SFR rings}. Every ring admits strongly F-regular modules:

\begin{example}
    Let $\m$ be a maximal ideal of $R$. Then $R/\m$ is strongly F-regular.
\end{example}

\subsection{Properties of strongly F-regular modules}

Now, we record some properties of strongly F-regular modules. The following well-known lemma is used in the sequel several times.

\begin{lemma}\label{le: rank sequence}
Let $R$ be an F-finite ring, which does not necessarily satisfy the KRS condition. Let $M$ and $N$ be two finitely generated $R$-modules, such that for every $\p \in \Min(M)$, $\length_{R_\p} (M_\p) = \length_{R_\p} (N_\p)$. Then, there exist $e \in \mathbb N$ and 
$\phi \in \Hom_R(F^e_*M, F^e_*N)$, such that for any $\p \in \Min(M)$, the induced map $\phi_\p$ is an isomorphism. 

If additionally, $\Ann(M) \subseteq \Ann(N)$ and $\Ann(M)$ is a radical ideal, then $e$ can be taken to be zero in the last assertion.
\end{lemma}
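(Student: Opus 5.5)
Write $\Min(M)=\{\p_1,\dots,\p_n\}$. These are the minimal primes of $\Ann(M)$, so they are pairwise incomparable. Put $W=R\setminus(\p_1\cup\dots\cup\p_n)$. The semilocal ring $W^{-1}(R/\Ann(M))$ then has maximal ideals exactly the $\p_i$ and is zero-dimensional, hence Artinian. The plan is to build the map first after inverting $W$, and only then clear denominators.

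First, reduce to $R/J$ for a suitable ideal $J$. Choose $e$ so that $\Ann(M)^{[p^e]}$ kills $N$ near the $\p_i$. Concretely, $\p_i^{\,t}R_{\p_i}$ annihilates $N_{\p_i}$ for some $t$, since $N_{\p_i}$ has finite length. Because $\Ann(M)$ is $\p_i$-primary after localizing at $\p_i$, $\Ann(M)^{[p^e]}$ kills $F^e_*N$ locally at each $\p_i$ for $e\gg0$. In the second assertion no Frobenius is needed: since $\Ann(M)\subseteq\Ann(N)$, both $M$ and $N$ are modules over $S=R/\Ann(M)$. The ring $S$ is reduced because $\Ann(M)$ is radical, so $S_{\p_i}=\kappa(\p_i)$ and $M_{\p_i}$, $N_{\p_i}$ are $\kappa(\p_i)$-vector spaces of equal dimension. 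The semilocalization $W^{-1}S\cong\prod_i\kappa(\p_i)$ is a product of fields. It follows that $W^{-1}M\cong W^{-1}N$ as $W^{-1}S$-modules, since a module over a product of fields is determined by its dimensions over each factor. Since $M$ is finitely presented, $\Hom_S(M,N)\otimes W^{-1}S=\Hom_{W^{-1}S}(W^{-1}M,W^{-1}N)$. So the isomorphism is $\psi/w$ for some $\psi\in\Hom_R(M,N)$ and $w\in W$. Then $\psi$ itself localizes to an isomorphism at every $\p_i$, because $w$ is a unit there. This proves the second part with $e=0$.

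For the general case, pass to $F^e_*$ so that we land in the situation above. Let $I=\sqrt{\Ann(M)}=\bigcap_i\p_i$; note $\Min(F^e_*M)=\Min(M)$. The issue is that $F^e_*M$ need not be killed by $I$, and $N$ need not be killed by $\Ann(M)$. We therefore argue locally, using the Artinian ring $A=W^{-1}(R/\Ann(M)+\Ann(N))$; its maximal ideals are still the $\p_i$, since $N_{\p_i}\neq0$ by the length equality. Take $e$ large enough that $I^{[p^e]}$ kills $W^{-1}M$ and $W^{-1}N$. This is possible because $I^k W^{-1}M=0$ and $I^kW^{-1}N=0$ for some $k$, as these modules have finite length over a semilocal ring with maximal ideals in $V(I)$. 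Then $W^{-1}F^e_*M$ and $W^{-1}F^e_*N$ are modules over $W^{-1}(R/I)\cong\prod\kappa(\p_i)$, using that $I$ is radical and $W^{-1}$ commutes with $F^e_*$. Their $\kappa(\p_i)$-dimensions are $p^{e\alpha(R_{\p_i})}\length(M_{\p_i})$ and $p^{e\alpha(R_{\p_i})}\length(N_{\p_i})$ by \Cref{lem: rank pushforward}, and these are equal. Now apply the $e=0$ argument to $F^e_*M$ and $F^e_*N$ over $R/I$: the identification $\Hom_R(F^e_*M,F^e_*N)\otimes W^{-1}R=\Hom_{W^{-1}R}(\cdot,\cdot)$ lets us clear denominators and produce $\phi$.

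The step needing the most care is the compatibility of localization with $F^e_*$ and with finite length. On the set $W$ itself, inverting $w$ on $F^e_*$ amounts to inverting $w^{p^e}$, which is harmless. On the primes, $(F^e_*M)_\p=F^e_*(M_\p)$ for any prime $\p$, and this is what makes the dimension count valid.
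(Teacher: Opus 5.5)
Your proof is correct and is essentially the paper's argument: localize at the complement $W$ of $\bigcup_{\p\in\Min(M)}\p$, take $e\gg0$ so that $W^{-1}F^e_*M$ and $W^{-1}F^e_*N$ become modules over $W^{-1}(R/I)\cong\prod_i\kappa(\p_i)$ with equal $\kappa(\p_i)$-dimensions (by \Cref{lem: rank pushforward}), and clear denominators from the resulting isomorphism, which is possible because $F^e_*M$ is finitely presented by F-finiteness. The only difference is that you write out the product-of-fields argument for the $e=0$ case, where the paper simply invokes the proof of \cite[Lemma~3.3]{Tucker}; your opening paragraph about $\Ann(M)^{[p^e]}$ and the ring $A$ are never used and can be dropped.
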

\begin{proof}
  Our assumption implies $\Min(M) \subseteq \Min(N)$. When, in addition, $\Ann(M) \subseteq \Ann(N)$ and $\Ann(M)$ is a radical ideal, consider $M, N$ as modules over the reduced ring $R/\Ann(M)$ and use the proof of \cite[Lemma~3.3]{Tucker}.
  
  In the general situation, take $S$ to be the complement of the union of minimal primes of $M$. Then the $S^{-1}R$-modules $S^{-1}M$ and $S^{-1}N$ have the same set of minimal primes. Take $e$ large enough so that $\Ann (F_*^e S^{-1}M)$ and $\Ann (F_*^e S^{-1}N)$ are radical; then they coincide. Hence, there is an $S^{-1}R$-linear map $\psi\colon F_*^e S^{-1}M \rightarrow F_*^e S^{-1}N$, such that $\psi_\p$ is an isomorphism for every $\p \in \Min(M)$. Take $\phi \in \Hom_R(F^e_*M,F^e_*N)$ such that $\psi= \phi/s$, for some $s \in S$. Then $\phi$ serves our purpose. 
\end{proof}

\begin{proposition}\label{prop: SFR modules have no embedded primes}
Let $R$ be an F-finite ring and $M$ be a finitely generated strongly F-regular $R$-module. Then
\begin{enumerate}
    \item \label{part: no embedded primes} Any associated prime of $M$ is minimal in $\Supp(M)$ and $\Ann (M)$ is radical.

    \item \label{part: component supported at a prime is SFR} For any ideal $I$, $\Hh^0_{I}(M)$ is strongly F-regular. In particular, $I\Hh^0_I(M)=0$.

    \item \label{part: local cohomology at a minimal prime of SFR modules} For any minimal prime $\p$ of $M$,  the annihilator of every nonzero element of $\Hh^0_\p(M)$ is $\p$. In particular, the only associated prime of $\Hh^0_\p(M)$ is $\p$.
\end{enumerate}

\end{proposition}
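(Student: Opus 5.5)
I would prove the three parts in the order (2), (1), (3), since (1) follows quickly from (2). The basic tool is that the defining splitting $N \xrightarrow{\phi} F^e_*N \xrightarrow{F^e_*f} F^e_*N$ is natural, so it can be restricted to submodules such as $\Hh^0_I(-)$. One can also always enlarge $e$: compose with the splitting obtained for $f=1$, using that strong F-regularity with $f=1$ gives a split $N\to F^{e'}_*N$. Iterating, one gets split maps $N \to F^{e}_*N$ for arbitrarily large $e$.

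\textbf{Part (2).} Let $\phi\colon M\to F^e_*M$ and let $\pi$ be a splitting of $F^e_*f\circ\phi$. Since $\Hh^0_I(-)$ is a functor and $\Hh^0_I(F^e_*M)=F^e_*\Hh^0_I(M)$, both $\phi$ and $\pi$ restrict to $\Hh^0_I$, and $\pi\circ(F^e_*f\circ\phi)=\Id$ still holds on $\Hh^0_I(M)$.

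The minimal primes of $\Hh^0_I(M)$ need care. An element $f$ avoiding every minimal prime of $\Hh^0_I(M)$ need not avoid the minimal primes of $M$. To handle this, I would replace $f$ by $f+g$. Here $g$ is chosen to kill $\Hh^0_I(M)$ up to nilpotence, for instance $g\in I^n$ with $I^n\Hh^0_I(M)=0$. We want $f+g$ to avoid all minimal primes of $M$, which is achieved by prime avoidance. The minimal primes of $M$ that do not contain $I$ are handled by the freedom in $g$. Minimal primes of $M$ containing $I$ lie in $\Supp \Hh^0_I(M)$, are minimal there, and so already avoid $f$; for these it suffices to take $g\in I^n$. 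On $\Hh^0_I(M)$, multiplication by $f+g$ agrees with multiplication by $f$. I expect this to be routine, though the prime-avoidance bookkeeping is the fiddly point.

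The "in particular" then follows from the argument of \Cref{prop: associated primes and splitting}(\ref{part: local cohomology modules are killed by the first power}). Choose $e$ large, which is possible by the remark above, so that $I$ kills $F^e_*\Hh^0_I(M)$. Since $\Hh^0_I(M)$ is a summand of this module, $I\Hh^0_I(M)=0$.

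\textbf{Part (1).} Suppose $\q\in\Ass(M)$ is not minimal in $\Supp(M)$. Choose $f\in\q$ outside all minimal primes of $M$, and a nonzero $x\in M$ with $\Ann(x)=\q$. Then $x\in\Hh^0_{\q}(M)$, and by (2) we have $\q\Hh^0_\q(M)=0$. The split injection $M\xrightarrow{F^e_*f\circ\phi}F^e_*M$ sends $x$ to $F^e_*f\cdot\phi(x)$. Now $\phi(x)\in F^e_*\Hh^0_\q(M)$, which is killed by $\q$ in the $F^e_*$-structure, i.e., by $\q^{[p^e]}$ and hence also by the original-structure multiplication by $f\in\q$. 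So $x$ maps to $0$, contradicting injectivity.

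For radicality of $\Ann(M)$: a split injection $M\hookrightarrow F^e_*M$ exists (take $f=1$), and $\Ann(F^e_*M)$ is radical for $e\gg0$. Indeed, $r^{p^e}\in\Ann(M)$ kills $F^e_*M$, and a high Frobenius power of every element of $\sqrt{\Ann(M)}$ lies in $\Ann(M)$. Since $M$ is a summand of $F^e_*M$, $\Ann(M)\supseteq\Ann(F^e_*M)\supseteq\sqrt{\Ann(M)}$, so $\Ann(M)$ is radical.

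\textbf{Part (3).} By (2), $\p\Hh^0_\p(M)=0$. Take $0\neq x\in\Hh^0_\p(M)$; then $\Ann(x)\supseteq\p$. By (1), $\p$ is the only associated prime of $\Hh^0_\p(M)$, so $\Hh^0_\p(M)$ is a torsion-free $R/\p$-module. Suppose $s\notin\p$ and $sx=0$. Every associated prime of $Rx$ lies in $\Ass\Hh^0_\p(M)=\{\p\}$, so $s$ is a nonzerodivisor on $Rx$, forcing $x=0$. Hence $\Ann(x)=\p$.

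The main obstacle is the choice of $f$ in part (2).
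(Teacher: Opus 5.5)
Your proof is correct and is essentially the paper's argument. You restrict the $f$-splitting to $\Hh^0_I(-)$ after replacing $f$ by $f+g$ with $g\Hh^0_I(M)=0$. In (1) the contradiction comes from $f\in\q$ killing $\Hh^0_\q(M)$; this step rests on $\q\Hh^0_\q(M)=0$ in the original structure, which you stated, not on $\q^{[p^e]}$ killing it, so the ``hence'' in your sentence is backwards.

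The only variation is your choice $g\in I^n$ by coset prime avoidance, which is valid because $fR+I^n\not\subseteq\p$ for every minimal prime $\p$ of $M$. This makes (2) independent of (1). The paper instead proves (1) first via \Cref{prop: associated primes and splitting}. It then picks $g$ lying in exactly the minimal primes of $M$ not containing $f$, and uses (1) together with the radicality of $\Ann\Hh^0_I(M)$ to get $g\Hh^0_I(M)=0$.
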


\begin{proof}
(\ref{part: no embedded primes}) First, $\Ann (M)$ is radical by \Cref{prop: associated primes and splitting}.
Suppose that $Q$ is an embedded associated prime of $M$ and let $f \in Q$ be not in any minimal prime of $M$. 
By definition, there exist $e \geq 1$ and an $R$-linear map $\phi\colon M \rightarrow F^e_*M$, such that the composition $M \xrightarrow[]{\phi}F^e_*M \xrightarrow[]{F^e_*f}F^e_*M$ is a split injection, 
so it remains a split injection after applying $\Hh^0_Q (\bullet)$.
By \Cref{prop: associated primes and splitting}, $f\Hh^0_Q(M) = 0$. 
Thus $\Hh^0_Q (M) = 0$, a contradiction.

(\ref{part: component supported at a prime is SFR})
Let $\p_1, \dots, \p_n$ be the minimal primes of $M$. 
By (\ref{part: no embedded primes}), $\Min (\Hh^0_I(M)) \subseteq \Min (M)$. For any $f$ not in any minimal prime of $\Hh^0_I(M)$, by re-indexing we may assume that there exists $m \leq n$ such that $f \notin \p_i$ for $1 \leq i \leq m$ and $f \in \p_i$ for $m < i \leq n$. 
Choose $g \in \cap_{i=1}^m \p_i \setminus \cup_{i=m+1}^n \p_i$. 
Since $g$ is in every minimal prime of $\Hh^0_I(M)$, $g$ is in the radical of the annihilator of $\Hh^0_I(M)$.
Since $\Hh^0_I(M)$ splits from $F^{e_0}_*\Hh^0_I(M)$ for some $e_0 \geq 1$, $g\Hh^0_I(M)=0$ by \Cref{prop: associated primes and splitting}. Note that $f+g$ is not in any minimal prime of $M$. Therefore, there exist $e \geq 1$ and $\phi \in \Hom_R(M, F^e_*M)$ such that the composition $M \xrightarrow[]{\phi}F^e_*M \xrightarrow[]{F^e_*(f+g)}F^e_*M$ is a split injection. Hence, the composition 
\[\Hh^0_I(M) \xrightarrow[]{\Hh^0_I(\phi)}F^e_*\Hh^0_I(M) \xrightarrow[]{F^e_*f= F^e_*(f+g)}F^e_*\Hh^0_I(M)\]
is a split injection. Hence, $\Hh^0_I(M)$ is strongly F-regular, so $I\Hh^0_I(M)=0$ by \Cref{prop: associated primes and splitting}.

(\ref{part: local cohomology at a minimal prime of SFR modules})
For a nonzero element $x \in \Hh^0_\p(M)$, $\p x=0$ by (\ref{part: component supported at a prime is SFR}). Had $\Ann(Rx)$ properly contained  $\p$, a minimal prime over $\Ann(Rx)$ would  properly contain $\p$ and would be an associated prime of $Rx$ and hence of $M$. This is impossible by (\ref{part: no embedded primes}).
\end{proof}

\begin{lemma}\label{lemma: main strongly F-regular contributor}
        Let $R$ be an F-finite ring, 
        $X$ be a finitely generated strongly F-regular  $R$-module, and $M$ be a finitely generated $R$-module. 
        Suppose that there exist $a, b \in \NN$ and a map
        $\phi \colon X^{\oplus a} \to M^{\oplus b}$
        which becomes an isomorphism after localizing at any $\p \in \Min(X)$.
        Then there exist $e \geq 1$ and a split injection $X \rightarrow F^e_*X^{\oplus a}$ such that its composition with $F^e_*\phi$ is also a split injection. 
        
        In particular, if $X$ is indecomposable,
        $X$ is a direct summand of both $F_*^{e} M$ and $F_*^e X$.
    \end{lemma}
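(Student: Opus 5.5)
The plan is to produce a map $\gamma\colon M^{\oplus b}\to X^{\oplus a}$ going back, with $\gamma\circ\phi = f\cdot \Id_{X^{\oplus a}}$ for some $f$ outside every minimal prime of $X$. Then the strong F-regularity of $X$ with respect to this $f$ does the rest. We assume $a\geq 1$; if $a=0$ then $X_\p=0$ for all $\p \in \Min(X)$, so $X=0$.

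\textbf{Step 1: construct $\gamma$.} Let $W$ be the complement of $\bigcup_{\p\in\Min(X)}\p$. Since $\phi_\p$ is an isomorphism for every $\p\in\Min(X)$, and $W^{-1}R$ is semilocal with maximal ideals these $\p$, the map $W^{-1}\phi$ is an isomorphism. Since $R$ is noetherian and $M$ is finitely generated, $\Hom_R(M^{\oplus b},X^{\oplus a})$ localizes: $W^{-1}\Hom_R(M^{\oplus b},X^{\oplus a}) \cong \Hom_{W^{-1}R}(W^{-1}M^{\oplus b},W^{-1}X^{\oplus a})$. Hence $(W^{-1}\phi)^{-1}=\gamma_0/s$ for some $R$-linear $\gamma_0$ and some $s\in W$.

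The maps $\gamma_0\circ\phi$ and $s\cdot\Id$ then agree after inverting $W$. Their difference $d$ has image in the kernel of $X^{\oplus a}\to W^{-1}X^{\oplus a}$. This kernel vanishes: by \Cref{prop: SFR modules have no embedded primes}(\ref{part: no embedded primes}), every associated prime of $X$ is minimal, so every nonzero element of $X$ has an associated prime avoiding $W$. Thus $d=0$, and $\gamma=\gamma_0$, $f=s$ do the job. I expect this step to be the main point: it is exactly where the absence of embedded primes is needed to upgrade a generic equality to an honest one.

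\textbf{Step 2: apply strong F-regularity.} By \Cref{def: sFR for modules} there are $e\geq1$ and $\theta\colon X\to F^e_*X$ such that $F^e_*f\circ\theta$ is a split injection, say with retraction $\rho$. Then $\rho\circ F^e_*f$ is a retraction of $\theta$, so $\theta$ is itself split. Put $\psi=(\theta,0,\dots,0)\colon X\to F^e_*X^{\oplus a}$, which is again a split injection.

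Now
\[
F^e_*\gamma\circ(F^e_*\phi\circ\psi)=F^e_*(f\cdot\Id)\circ\psi=(F^e_*f\circ\theta,0,\dots,0),
\]
and the right-hand side is a split injection. Composing its retraction with $F^e_*\gamma$ gives a retraction of $F^e_*\phi\circ\psi$, so $F^e_*\phi\circ\psi$ is a split injection as well.

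\textbf{Step 3: the indecomposable case.} By Step 2, $X$ is a direct summand of $F^e_*X^{\oplus a}=(F^e_*X)^{\oplus a}$ and of $(F^e_*M)^{\oplus b}$. If $X$ is indecomposable, the KRS property (assumed throughout) makes $X$ isomorphic to an indecomposable summand of $F^e_*X$ and of $F^e_*M$ respectively.
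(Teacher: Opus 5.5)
Your argument is correct, and it follows the same overall strategy as the paper. In both, you find a map back that composes with $\phi$ to multiplication by some $f$ outside the minimal primes of $X$, and then apply strong F-regularity to that $f$. The execution differs, as follows.

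\begin{itemize}
\item The paper takes $f\in\Ann(\coker\phi)$. It uses \Cref{prop: SFR modules have no embedded primes} only to know that $\phi$ is injective. It fixes an auxiliary split injection $\iota\colon X\to F^e_*X^{\oplus a}$ with retraction $\pi$ (a first use of strong F-regularity). It then lifts $f\circ\pi$ along $-\circ F^e_*\phi$, because $f$ kills $\Ext^1_R(F^e_*\coker\phi,X)$. A second use of strong F-regularity (the map $\theta$ for $f$) finishes, with exponent $e+e_0$.
\item You build a generic left inverse $\gamma$ with $\gamma\circ\phi=s\cdot\Id$ before pushing forward. Frobenius functoriality then does the rest, and a single application of strong F-regularity suffices.
\end{itemize}

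Your Step 1 is less dependent on the structure of $X$ than you claim. Because $\Hom$ commutes with localization, $W^{-1}d=0$ already gives $td=0$ for some $t\in W$. The pair $(t\gamma_0,ts)$ then works without knowing that $X$ has no embedded primes. The paper's Ext-lifting formulation has its own payoff: it is the same mechanism reused in \Cref{th: most splittings lift in dimension two}, where one must count how many components of a split map lift.

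One correction: your reason for discarding $a=0$ is wrong. When $a=0$ the hypothesis only says $(M^{\oplus b})_\p=0$ for $\p\in\Min(X)$; it gives no information about $X$. The correct observation is that for $a=0$ and $X\neq 0$ the conclusion, a split injection $X\to 0$, is impossible. So $a\geq 1$ is a tacit hypothesis of the lemma, and the paper's proof also uses it implicitly when it produces $\iota$.
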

    \begin{proof}
        By our assumption, there exists an element $f \in \Ann (\coker(\phi))$, 
        such that $f$ is not contained in any minimal prime of $X$. Due to strong F-regularity of $X$, there are $e_0 \geq 1$ and $\theta \colon X \rightarrow F^{e_0}_*X$ such that the composition $X \xrightarrow{\theta} F^{e_0}_* X \xrightarrow{F^{e_0}_*f}F^{e_0}_*X$ splits. 

        Because $X$ is strongly F-regular, there is $e \geq 1$, a split injection
        $\iota \colon X \rightarrow F^e_*X^{\oplus a}$, and a split projection
        $\pi\colon F^e_*X^{\oplus a} \rightarrow X$ with $\pi \circ \iota= \Id_X$. By Proposition~\ref{prop: SFR modules have no embedded primes}, $\phi$ is an injection. 
        Applying the functor $\Hom_R(F^e_* \bullet, X)$ to the sequence $0 \rightarrow X^{\oplus a} \rightarrow M^{\oplus b} \rightarrow \coker(\phi) \rightarrow 0$, we get the exact sequence
        \[
        \Hom_R(F^e_*M^{\oplus b}, X) \longrightarrow \Hom_R(F^e_*X^{\oplus a},X) \longrightarrow  \Ext^1_R(F^e_*\coker(\phi), X).
        \]
        The image of $\pi$ in $\Ext^1_R(F^e_*\coker(\phi), X)$ is annihilated by $f$. This means that $f \circ \pi$, i.e., the composition
        $F^e_*X^{\oplus a} \xrightarrow{\pi}  X \xrightarrow{f} X$ lifts to an $R$-linear map $\tilde \pi\colon  F^e_*M^{\oplus b} \rightarrow X$. Hence, the composition
        \begin{equation*}
        X \xrightarrow{\iota} F^e_*X^{\oplus a} \xrightarrow{F^e_*\phi}  F^e_*M^{\oplus b} \xrightarrow{\tilde{\pi}} X
        \end{equation*}
        is the multiplication by $f$ map on $X$. Since the composition
         \begin{equation*}
        X \xrightarrow[]{\theta} F^{e_0}_*X \xrightarrow{F^{e_0}_*\iota} F^{e+e_0}_*X^{\oplus a} \xrightarrow{F^{e+e_0}_*\phi}  F^{e+e_0}_*M^{\oplus b} \xrightarrow{F^{e_0}_*\tilde{\pi}} F^{e_0}_*X
        \end{equation*}
        is a split injection, the maps $X \rightarrow F^{e+e_0}_*X^{\oplus a}$ and $X \rightarrow F^{e+e_0}_*M^{\oplus b}$ obtained from the last diagram are also split injections, proving the first part of the lemma. When $X$ is indecomposable,
        the second part follows from the first by the KRS property.      
    \end{proof}

\begin{theorem}\label{th: further properties of SFR modules}
    Let $R$ be an F-finite ring, $X$ be a finitely generated strongly F-regular $R$-module, and $M$ be a nonzero finitely generated $R$-module. The following properties hold:

    \begin{enumerate}
         \item \label{part: decompose into local cohomology}
         $X \cong \oplus_{\p \in \Min(X)} \Hh^0_\p(X)$. If $X$ is indecomposable, $X$ has a unique associated prime, namely its annihilator. 

         \item \label{part: SFR modules under Frobenius pushforward and direct sum} $F^e_*X$ is strongly F-regular for any $e \in \mathbb N$. 
         
         \item \label{part: SFR direct sum}
         Let $Y$ be a finitely generated strongly F-regular module such that $\Min(X \oplus Y)= \Min(X) \cup \Min(Y)$. Then $X \oplus Y$ is strongly F-regular.

         \item \label{part: positive signature}  
         If $\Min (X) \subseteq \Min(M)$ and for every $\p \in \Min(X)$, $\alpha(R_\p)>0$, then  $(\Fmult{e}{M}{X})_e$ is nonzero for all large enough $e$. Moreover, $(\Fmult{e}{M}{X})_e$ is unbounded.

          \item \label{part: SFR in the connected class} A direct summand of $F^e_*X$ is strongly F-regular for any $e \in \mathbb N$.   

          \item \label{part: aperiodicity of SFR contributors} The period of $X$ is one. In particular, when $X$ is indecomposable, $(\Fmult{e}{M}{X}/p^{e\delta_R(X)})_e$ converges. If $X$ is indecomposable and an F-contributor, then the limit of the sequence is positive if and only if $X \in \find (M)$. 

          \item \label{part: alternate definition of SFR modules}

         For any $g \in R \setminus \cup_{\p \in \Min(X)} \p$, there exists a positive integer $e_X$ such that for any $e \geq e_X$, there is $\phi \in \Hom_R(X, F^e_*X)$, such that the following composition is a split injection:
         \[X \xrightarrow[]{\phi}F^e_*X \xrightarrow[]{F^e_*g}F^e_*X.\]
      
    \end{enumerate}
\end{theorem}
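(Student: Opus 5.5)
I would prove the seven parts roughly in the order (1), (7), (2), (3), (5), (4), (6). All of them come from combining the splitting criterion in \Cref{def: sFR for modules} with \Cref{prop: SFR modules have no embedded primes}, \Cref{le: rank sequence} and \Cref{lemma: main strongly F-regular contributor}.

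\textbf{Parts (1) and (7).} For (1), let $\Min(X)=\{\p_1,\dots,\p_n\}$. By \Cref{prop: SFR modules have no embedded primes}(1), $\Ass(X)=\Min(X)$. By \Cref{prop: SFR modules have no embedded primes}(3), each $\Hh^0_{\p_i}(X)$ has $\p_i$ as its only associated prime, so the sum $\sum_i \Hh^0_{\p_i}(X)$ inside $X$ is direct. To show this sum is all of $X$, I would use the natural map $X\to\bigoplus_i \Hh^0_{\p_i}(X)$ after choosing splittings. More concretely, choose $f_i\in\bigcap_{j\neq i}\p_j\setminus\p_i$. Then $f=\sum f_i$ avoids every minimal prime, so strong F-regularity gives a split injection $X\to F^e_*X$ via $F^e_*f\circ\phi$. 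Since $\Ann(X)$ is radical, products over the other components vanish, and this should show that $X/\bigoplus_i\Hh^0_{\p_i}(X)$ is killed by $f$ up to splitting, hence is zero. If $X$ is indecomposable, only one summand survives, so $\Ass(X)=\{\p\}$, and $\Ann(X)=\p$ by radicality.

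For (7), apply the definition to $g$ to get some $e_0$ and $\phi_0$. To extend to every larger $e$, precompose with a split injection $X\to F^{e'}_*X$, which exists by the definition with $f=1$. The twist from $g$ is absorbed because $F^{e'}_*(g^{p^{e'}})$ corresponds to multiplication by $g$. This should yield splittings for every $e\ge e_0+e'_{\min}$. The gaps between admissible values of $e'$ are handled by first proving (6)'s aperiodicity claim, namely that $X$ splits from $F^e_*X$ for all $e\gg0$.

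\textbf{Parts (2), (3) and (5).} For (2), given $f$ avoiding $\Min(F^e_*X)=\Min(X)$, use $f^{p^e}$. The map $F^e_*\phi$ composed with $F^{e+e_1}_*f^{p^e}$ pushes forward a splitting of $X$. For (3), given $f$ avoiding $\Min(X)\cup\Min(Y)$, by (7) we can take a common $e$ for $X$ and $Y$, and then take the direct sum of the two maps. For (5), let $N$ be a summand of $F^e_*X$, with $\Min(N)\subseteq\Min(X)$. Mimic the proof of \Cref{prop: SFR modules have no embedded primes}(2): adjust $f$ by a $g$ that kills $N$ but avoids the other components, then restrict the splitting of $F^e_*X$, which holds by (2), to the summand $N$.

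\textbf{Parts (4) and (6).} For (4), let $r_\p=\length(X_\p)$ and $m_\p=\length(M_\p)$. Since $\alpha(R_\p)>0$, lengths grow as $\length((F^{e}_*M)_\p)=p^{e\alpha(R_\p)}m_\p$. Hence for large $e$ we can find a map $X^{\oplus a}\to (F^{e_1}_*M)^{\oplus b}$ that is an isomorphism at each $\p\in\Min(X)$, by applying \Cref{le: rank sequence} to suitable multiples. \Cref{lemma: main strongly F-regular contributor} then gives that $X$ splits from $F^{e}_*M$ for all large $e$, using (7) to cover every large $e$ rather than a single one. Unboundedness follows by splitting many disjoint copies once the lengths dominate multiples of $r_\p$.

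For (6), apply (4) with $M=X$: $\Fmult{e}{X}{X}\neq0$ for all large $e$, so the period is one by \Cref{le: period one and splitting from large enough pushforward}. If $\alpha(R_\p)=0$, then $\p$ is maximal and the claim follows from \Cref{le: dimension zero contributor}. Convergence then follows from \Cref{cor: aperiodicity implies existence of limit}. When $X$ is an F-contributor, positivity of the limit holds if and only if $X\in\find(M)$, by \Cref{th: existence of signatures in general}(3) together with \Cref{prop: properties of f}(3).

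\textbf{Main obstacle.} I expect the decomposition in (1), namely showing the local cohomology pieces exhaust $X$, to be the hardest step. My first approach would be to show that the cokernel $C$ of $\bigoplus_i\Hh^0_{\p_i}(X)\hookrightarrow X$ has no minimal primes of $X$ in its support. Then the splitting argument of \Cref{lemma: main strongly F-regular contributor}, applied to this inclusion, should force the inclusion to be an isomorphism.
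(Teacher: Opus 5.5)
There is a genuine gap: the aperiodicity of $X$, which is the real content of (4), (6) and (7), is never proved, because your dependencies are circular. You reduce (7) to the period-one claim of (6), you deduce (6) from (4) with $M=X$, and in (4) you invoke (7) to pass from one exponent to all large exponents. The definition of strong F-regularity gives no control over the gcd of the exponents $e$ for which $X$ splits from $F^e_*X$, so some argument must produce two consecutive (or coprime) exponents.

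The paper's key step does this. By (1) reduce to $X=\Hh^0_\p(X)$ and let $Y$ be an indecomposable summand. Choose $a,b,c$ with $\length(X_\p^{\oplus a})=\length(Y_\p^{\oplus b})=\length((F_*Y)_\p^{\oplus c})$, and take maps $X^{\oplus a}\to Y^{\oplus b}\to F_*Y^{\oplus c}$ that are isomorphisms at $\p$ (\Cref{le: rank sequence}). Applying \Cref{lemma: main strongly F-regular contributor} to the composite gives a split injection $X\to F^e_*X^{\oplus a}\to F^e_*Y^{\oplus b}\to F^{e+1}_*Y^{\oplus c}$. Precomposing with $Y\hookrightarrow X$ and using KRS, $Y$ is a summand of both $F^e_*Y$ and $F^{e+1}_*Y$. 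From this, unboundedness of $\Fmult{e}{M}{Y}$ and then (4), (6), (7) follow, after recombining the indecomposable summands of $X$ with their multiplicities.

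Your (4) also fails as written when $X$ has several minimal primes. A single pair $(a,b)$ with $a\length(X_\p)=b\,p^{e_1\alpha(R_\p)}\length(M_\p)$ for all $\p\in\Min(X)$ need not exist, because these ratios can differ between minimal primes. Moreover, a splitting of $X$ from $F^e_*M^{\oplus b}$ gives one from $F^e_*M$ only after counting the multiplicity of each indecomposable summand.

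Part (5) has a second gap: you cannot restrict the splitting of $F^e_*X$ to a summand $N$. A map $F^e_*X\to F^{e'}_*F^e_*X$ need not send $N$ into $F^{e'}_*N$, and the retraction need not respect the decomposition, so the twisted splitting of $N$ may pass through the complementary summand. The trick in \Cref{prop: SFR modules have no embedded primes}(2) works only because every $R$-linear map preserves $\Hh^0_I(-)$. Adjusting $f$ by $g$ only separates summands with different minimal primes, whereas the relevant case is summands sharing $\p$. The paper instead uses (4) to make $X=\Hh^0_\p(X)$ a summand of $F^{e_0}_*Y$. It then routes $Y\hookrightarrow X\xrightarrow{\phi}F^{e_1}_*X\hookrightarrow F^{e_1+e_0}_*Y$ followed by $F^{e_1+e_0}_*f^{p^{e_0}}$, so (5) also rests on the missing aperiodicity step.

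By contrast, your argument for (1) is correct and more elementary than the paper's. Since $\p_if_i\subseteq\Ann(X)$, you get $fX\subseteq Y=\bigoplus_i\Hh^0_{\p_i}(X)$. Because $R$-linear maps preserve $\p_i$-torsion, the split injection $F^e_*f\circ\phi$ and its retraction descend to $X/Y$, where $F^e_*f$ acts by zero. This forces $\Id_{X/Y}=0$ without using KRS. Your fallback of applying \Cref{lemma: main strongly F-regular contributor} to $Y\hookrightarrow X$ points the wrong way, since it only makes $Y$ a summand of $F^e_*X$.
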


\begin{proof}
(\ref{part: decompose into local cohomology}) Let $Y = \bigoplus_{\p \in \Min (X)} \Hh^0_{\p}(X)$. By \Cref{prop: SFR modules have no embedded primes}, $\Ann(X)$ is a radical ideal and hence $\Ann(X)= \Ann(Y)$. Moreover, $\length_\p X_\p= \length_\p Y_\p$, for any minimal prime of $X$. Thus,
by \Cref{le: rank sequence}, there exists an $R$-linear map $\phi \colon X \to Y$, which becomes an isomorphism after localizing at any $\p \in \Min (X)$. Applying Lemma~\ref{lemma: main strongly F-regular contributor} to $\phi$, we conclude that $X$ is a direct summand of $F_*^e Y$ for some $e \geq 1$.
Therefore, by the KRS property, any indecomposable summand $X'$ of $X$ must be isomorphic to an indecomposable summand of $F^e_*\Hh^0_{\p}(X)$ for some $\p \in \Min(X)$. 

Now, let $\p_1, \ldots, \p_m$ be the minimal primes of $X$ and 
suppose that $x_i \in \Hh^0_{\p_i}(X)$ satisfy
$\sum_{i = 1}^m x_i = 0$. For any $i$, take $f_i \in \cap_{j \neq i} \p_j \setminus \p_i$. 
Then $f_i x_i = 0$ by \Cref{prop: SFR modules have no embedded primes}(\ref{part: component supported at a prime is SFR}), so $x_i = 0$ by \Cref{prop: SFR modules have no embedded primes}(\ref{part: local cohomology at a minimal prime of SFR modules}).
It follows that $\oplus_{\p \in \Min(X)}\Hh^0_\p(X) = Y \subseteq X$.  Since every indecomposable summand of $X$ is contained in $\Hh^0_\p(X)$ for some minimal prime $\p$ of $X$, the containment $Y \subseteq X$ cannot be proper.

If $X$ is indecomposable, $X$ must have exactly one minimal prime. Therefore, \Cref{prop: SFR modules have no embedded primes} implies that $\Ann(X)$ is a prime ideal and the only associated prime of $X$.

(\ref{part: SFR modules under Frobenius pushforward and direct sum}) and (\ref{part: SFR direct sum}) are straightforward.

(\ref{part: positive signature}) We prove a preparatory lemma.

\begin{lemma}\label{le: prep indecomposable summands of SFR}
    Let $Y$ be an indecomposable summand of $X$. Then
    $Y$ has period $1$. Moreover, $\Fmult{e}{M}{Y}$ is nonzero for all $e$ large enough. If additionally $\delta_R(Y)>0$,  then $(\Fmult{e}{M}{Y})_e$ is unbounded.
\end{lemma}

\begin{proof}[Proof of lemma:]  By \Cref{th: further properties of SFR modules}(\ref{part: decompose into local cohomology}), there exists $\p \in \Min(X)$ such that $Y$ is isomorphic to a direct summand of $\Hh^0_\p(X)$. By replacing $X$ by the strongly F-regular module $\Hh^0_\p(X)$, we assume that $X$ has exactly one minimal prime $\p$, which is its annihilator.

Choose positive integers $a,b,c$ such that the lengths of $X_\p^{\oplus a}, Y^{\oplus b}_\p$ and $F_{*}Y^{\oplus c}_\p$ are the same. Using \Cref{prop: SFR modules have no embedded primes} and \Cref{le: rank sequence}, we find $R$-linear maps
\[
\psi\colon X^{\oplus a} \rightarrow Y^{\oplus b}, \ \widetilde \psi\colon Y^{\oplus b} \rightarrow F_*Y^{\oplus c},\]
such that the induced maps $\psi_\p, \widetilde \psi_\p$ after localization at $\p$ are isomorphisms. By \Cref{lemma: main strongly F-regular contributor}, there exists an $e \geq 1$ and $\phi \in \Hom_R(X, F^e_*X^{\oplus a})$ such that the composition
\[X \xrightarrow[]{\phi}F^e_*X^{\oplus a} \xrightarrow[]{F^e_*\psi}F^e_*Y^{\oplus b}\xrightarrow[]{F^e_*\widetilde \psi}F^{e+1}_*Y^{\oplus c}\]
is a split injection. The composition of $Y \rightarrow X$ with the above chain gives split injections $Y \rightarrow  F^e_*Y^{\oplus b}$ and $Y \rightarrow  F^{e+1}_*Y^{\oplus c}$. Since $Y$ is indecomposable, the KRS property implies that $Y$ is a summand of both $F^e_*Y$ and $F^{e+1}_*Y$. Thus $Y$ has period one. Hence $\Fmult{e}{Y}{Y}>0$ for all $e$ large enough (\Cref{le: period one and splitting from large enough pushforward}).

Since $\p$ is the only minimal prime of $X$ and $\p \in \Min(M)$, there is an $e_1 \geq 1$ and a map $F^{e_1}_*X^{\oplus a'} \rightarrow F^{e_1}_*M^{\oplus b'}$ whose localization at $\p$ is an isomorphism by \Cref{le: rank sequence}. Since $F^{e_1}_*X$ is strongly F-regular by (\ref{part: SFR modules under Frobenius pushforward and direct sum}), by \Cref{lemma: main strongly F-regular contributor} $F^{e_1}_*X$ is a summand of $F^{e_0}_* M^{\oplus b'}$ for some $e_0 \geq 1$. By enlarging $e_0$, we can assume that $X$ is a summand of $F^{e_0}_* M^{\oplus b'}$ for some $e_0 \geq 1$. Composing with a split injection $Y \rightarrow X$ and by the KRS property, we conclude that $Y$ is a summand of $F^{e_0}_* M$. Since $Y$ has period one, $\Fmult{e}{M}{Y}>0$ for all $e$ large enough.

Assume $\delta(Y)>0$. Since $Y \in \find(M)$, it suffices to prove the unboundedness of $(\Fmult{e}{Y}{Y})_e$. We argue by contradiction. Suppose that the maximum value of this sequence is $\Fmult{e_0}{Y}{Y}$. 
Using the first part of this lemma, choose $e'$ such that $\Fmult{e}{Y}{Y}$ is positive for all $e \geq e'$ (\Cref{le: period one and splitting from large enough pushforward}).
Thus for $e \geq e'$, $\Fmult{e_0}{Y}{Y}\leq \Fmult{e_0+e}{Y}{Y}\leq \Fmult{e_0}{Y}{Y}$. For $e \geq e'$, write
\[F^{e_0+e}_*Y \cong Y^{\Fmult{e_0+e}{Y}{Y}} \oplus N_{e_0+e} \cong Y^{\Fmult{e_0}{Y}{Y}} \oplus N_{e_0+e}.\]
Since $0 < \alpha(R_\p)= \delta_R(Y)$, $(\grk{\p}{F^{e_0+e}_*Y})_e$ is unbounded. Hence, $N_{e_0+e}$ must be nonzero for some $e \geq e'$, say for $e= e_1$.
Since $N_{e_0+e_1}$ is a direct summand of $F_*^{e_0 + e_1}Y$, $\p$ is a minimal prime of $N_{e_0+e_1}$. Specializing the above argument to $M= N_{e_0+e_1}$, we conclude that $\Fmult{e}{N_{e_0+e_1}}{Y}$ is nonzero for infinitely many $e$.
Choose $e_2 \geq e'$ such that $\Fmult{e_2}{N_{e_0+e_1}}{Y}$ is nonzero. Thus
\[
\Fmult{e_0+e_1+e_2}{Y}{Y}>\Fmult{e_0}{Y}{Y},
\]
contradicting the boundedness assumption.
\end{proof}

We now prove (\ref{part: positive signature}). 
Suppose $X \cong \oplus_{i=1}^n X_i^{\oplus a_i}$, where $X_i$'s are indecomposable, mutually nonisomorphic, and $a_i>0$ for all $i$. By our assumption, $\delta(X_i)>0$ for all $i$. Fix a positive integer $m$. 
Using \Cref{le: prep indecomposable summands of SFR}, 
we may choose $e_i, f_i$ such that 
$X_i^{\oplus m a_i}$ is a summand of $F^{e_i}_* M$ 
and $\Fmult{e}{X_i}{X_i} \geq 1$ for all $e \geq f_i$.
It follows that for all $e \geq e_M \coloneqq \max_i \{e_i + f_i\}$ and for all $i$, $X_i^{\oplus m a_i}$ is a summand of $F^e_* M$.
By the KRS property, for $i \neq j$, $X_i$ cannot be a summand of $X_j^{\oplus b}$ for any $b \geq 1$. Thus, for $e \geq e_M$, $X^{\oplus m} \cong \oplus X_i^{\oplus ma_i}$ is a summand of $F^e_*M$, as required.

(\ref{part: SFR in the connected class}) Since $X$ has no embedded associated prime (\Cref{prop: SFR modules have no embedded primes}),
by (\ref{part: SFR modules under Frobenius pushforward and direct sum}) and (\ref{part: SFR direct sum}), it suffices to show that any indecomposable direct summand $Y$ of $X$ is strongly F-regular. By (\ref{part: decompose into local cohomology}) and \Cref{prop: SFR modules have no embedded primes}, we may also assume that $X = \Hh^0_\p(X)$ for some $\p$.
If $\delta(X)=\alpha(R_\p)=0$, $\p$ must be a maximal ideal. Since $\p Y=0$ by \Cref{prop: SFR modules have no embedded primes}, and $R/\p$ is a field, $Y$ must be isomorphic to a direct sum of copies of $R/\p$. By its indecomposability, $Y \cong R/\p$, and hence, $Y$ is strongly F-regular.

Therefore, we assume $\delta_R(X)= \delta_R(Y)= \alpha(R_\p)$ is positive from now on.
Using (\ref{part: positive signature}), choose $e_0$ such that $X$ is a direct summand of $F^{e_0}_*Y$. Let $s_Y, \pi_Y$ be a split injection and a split projection, realizing the splitting of $X$ from $F^{e_0}_*Y$.
For any $f \notin \p$, choose a positive integer $e_1$ and $\phi \in \Hom_R(X, F^{e_1}_*X)$ such that the composition $X \xrightarrow[]{\phi} F^{e_1}_*X \xrightarrow[]{F^{e_1}_*f}F^{e_1}_*X$ is a split injection. Let $\pi$ be a split projection corresponding to this split injection. Hence, the following composition is a split injection,
\[X \xrightarrow[]{\phi} F^{e_1}_*X \xrightarrow[]{F^{e_1}_*s_Y} F^{e_1+e_0}_*Y \xrightarrow[]{F_*^{e_1+e_0}f^{p^{e_0}}}F^{e_1+e_0}_*Y,\]
with a corresponding split projection $\pi \circ F^{e_1}_*\pi_Y$. Composing the above chain on the left by a split injection $j\colon Y \rightarrow X$, we conclude that the following resulting composition is a split injection
\[Y \xrightarrow[]{F^{e_1}_*s_Y \circ \phi \circ j} F^{e_1+e_0}_*Y \xrightarrow[]{F_*^{e_1+e_0}f^{p^{e_0}}= (F_*^{e_1+e_0}f)^{\circ p^{e_0}}}F^{e_1+e_0}_*Y.\]
From here, we deduce that the composition
\[Y \xrightarrow[]{F^{e_1}_*s_Y \circ \phi \circ j} F^{e_1+e_0}_*Y \xrightarrow[]{ F_*^{e_1+e_0}f}F^{e_1+e_0}_*Y,\]
is a split injection and thus $Y$ is strongly F-regular.

(\ref{part: aperiodicity of SFR contributors}) Write $X \cong \oplus_{i=1}^n X^{\oplus a_i}_i$, where $X_i$ are indecomposable and mutually nonisomorphic and $a_i>0$ for all $i$. If $\delta(X_i)>0$ for all $i$, part (\ref{part: positive signature}) implies that $X$ has period one. 
Now suppose $\delta(X_i)=0$ for some $i$. By relabeling, choose $1 \leq m\leq n $ such that $\delta(X_i)=0$ for $1 \leq i \leq m$ and $\delta(X_i)>0$ otherwise. 
Hence, for $1 \leq i \leq m$, $X_i \cong R/\m_i$ for maximal ideals $\m_i$, such that $F_* (R/\m_i) \cong R/\m_i$ as $R$-modules. Hence, for $1 \leq i \leq m$, $X^{\oplus a_i}_i$ is a summand of $F^e_*X^{\oplus a_i}_i$ and thus $F^e_*X$ for all $e \geq 1$.
By part (\ref{part: positive signature}) applied to the strongly F-regular module $X_i^{\oplus a_i}$ (part (\ref{part: SFR in the connected class})), for all $i>m$, $X_i^{\oplus a_i}$ is a summand of $F^e_*X$ for all $e$ large enough. 
Thus by KRS, $X$ is a summand of $F^e_*X$ for all $e$ large enough. This proves that $X$ has period one. 
The rest follows easily from \Cref{cor: aperiodicity implies existence of limit} and \Cref{thm: F-contributor at a prime via matrices} applied to a prime ideal $\p \in \Min(X)$, such that $\alpha(R_\p)= \delta_R(X)$. 

(\ref{part: alternate definition of SFR modules}) Since $X$ has period one, $X$ is a summand of $F^e_*X$ for all large enough $e$. Thus the desired conclusion follows. 
\end{proof}

\begin{remark}
In regards to \Cref{th: further properties of SFR modules}(\ref{part: SFR direct sum}), note that in general, a direct sum of two strongly F-regular modules need not be strongly F-regular. For example, take a strongly F-regular local ring $(R,\m, k)$ of positive dimension.
The residue field is obviously strongly F-regular, but, by \Cref{prop: SFR modules have no embedded primes}, $R \oplus k$ cannot be strongly F-regular.

In \Cref{th: further properties of SFR modules}(\ref{part: positive signature}), if $\alpha(R_\p)=0$ for some $\p \in \Min(X)$, $X$ might not be in $\find(M)$. For example, take $R$ to be a perfect field and $X= R^{\oplus 2}$. Then $X$ is not a summand of $F^e_*R$ for any $e \geq 1$.
\end{remark}

\begin{theorem}[Universality of strongly F-regular modules]\label{th: universality of SFR modules}
    Let $R$ be an F-finite ring and $X$ be an indecomposable strongly F-regular module. If the prime ideal $\Ann(X)$ is a minimal prime of a finitely generated module $M$, then $X$ is a direct summand of $F^e_*M$ for some $e \geq 1$.
\end{theorem}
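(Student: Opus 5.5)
By \Cref{th: further properties of SFR modules}(\ref{part: decompose into local cohomology}), $X$ is indecomposable and strongly F-regular, so $\p \coloneqq \Ann(X)$ is prime and is the unique associated (and minimal) prime of $X$. The plan is to construct a map from a direct sum of copies of $X$ to a Frobenius pushforward of a direct sum of copies of $M$, with cokernel supported away from $\p$. Then \Cref{lemma: main strongly F-regular contributor} yields the splitting, and KRS finishes the argument.

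First, I reduce to a module that shares the annihilator of $X$. Replace $M$ by $M' \coloneqq \Hh^0_\p(M)$. Since $\p \in \Min(M)$, we have $M'_\p = M_\p \neq 0$, and every prime in $\Supp(M')$ contains $\p$, so $\Min(M') = \{\p\}$. By Artin--Rees or a direct argument as in the proof of \Cref{le: dimension zero contributor}, for $e \gg 0$ the module $F^e_*M'$ is a direct summand of $F^e_*M$; more simply, it is enough to land in $F^e_*M$ itself, since \Cref{lemma: main strongly F-regular contributor} only requires a map $X^{\oplus a}\to M^{\oplus b}$ that is an isomorphism at $\p$. So I will work directly with $M$.

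Second, I match lengths at $\p$. Take $e_1$ large enough that $\p^{[p^{e_1}]}$ kills enough of $M_\p$: since $R_\p/\p R_\p$-structures need radical annihilators, replace $M$ by $F^{e_1}_*M$. Pick integers $a,b$ with $a\,\length_{R_\p}(X_\p) = b\,\length_{R_\p}((F^{e_1}_*M)_\p)$. \Cref{le: rank sequence}, applied to $X^{\oplus a}$ and $(F^{e_1}_*M)^{\oplus b}$, produces some $e_2$ and a map $\phi\colon F^{e_2}_*X^{\oplus a} \to F^{e_2+e_1}_*M^{\oplus b}$ that is an isomorphism at the minimal prime $\p$ of $X$. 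The hypotheses of that lemma hold because $\Min(X^{\oplus a}) = \{\p\}$ and the lengths agree. The ambiguity about which module is the source is harmless, since $\Min(X) \subseteq \Min(M)$.

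Third, I apply \Cref{lemma: main strongly F-regular contributor}. By \Cref{th: further properties of SFR modules}(\ref{part: SFR modules under Frobenius pushforward and direct sum}), $F^{e_2}_*X$ is strongly F-regular, and its only minimal prime is $\p$. The lemma therefore gives $e \geq 1$ such that $F^{e_2}_*X$ splits from $F^{e+e_2+e_1}_*M^{\oplus b}$. Since $X$ has period one (\Cref{th: further properties of SFR modules}(\ref{part: aperiodicity of SFR contributors})), $X$ is a summand of $F^{e_2}_*X$ as soon as $e_2$ is large; I enlarge $e_2$ if necessary, which is allowed because composing the map from \Cref{le: rank sequence} with further Frobenius keeps it an isomorphism at $\p$. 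Hence $X$ is a summand of $F^{E}_*M^{\oplus b}$, and by KRS and indecomposability $X$ is a summand of $F^E_*M$.

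The main obstacle is the bookkeeping in the third step: ensuring that $X$ itself, not just $F^{e_2}_*X$, splits. I handle this with the period-one property. Alternatively, the second half of the proof of \Cref{le: prep indecomposable summands of SFR} already carries out essentially this argument with $Y = X$, and I would mirror it.
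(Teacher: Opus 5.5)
Your proof is correct, but it is organized differently from the paper's. The paper splits into two cases. If $\alpha(R_\p)=0$, then $\p$ is maximal, $X\cong R/\p$, and \Cref{le: dimension zero contributor} gives the splitting. If $\alpha(R_\p)>0$, it cites \Cref{th: further properties of SFR modules}(\ref{part: positive signature}), which gives the stronger conclusion that $X$ splits from $F^e_*M$ for all $e\gg 0$.

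You instead run directly the mechanism behind that cited part, which is the argument inside the proof of \Cref{le: prep indecomposable summands of SFR}, as you note yourself:
\begin{enumerate}
\item \Cref{le: rank sequence} gives a map that is an isomorphism at $\p$;
\item \Cref{lemma: main strongly F-regular contributor}, applied to the strongly F-regular module $F^{e_2}_*X$, splits $F^{e_2}_*X$ off a Frobenius pushforward of $M^{\oplus b}$;
\item a self-splitting of $X$ and KRS finish the argument.
\end{enumerate}
What your route buys is uniformity. The hypothesis $\alpha(R_\p)>0$ in the cited part is needed only because that part also asserts unboundedness of $\Fmult{e}{M}{X}$ and allows decomposable $X$ (e.g.\ $R^{\oplus 2}$ over a perfect field). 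For an indecomposable $X$, your argument works verbatim when $\alpha(R_\p)=0$, so no separate residue-field case is needed. The paper's route is shorter, given the earlier theorem.

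Two items in your write-up should be deleted, though neither is actually used.

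First, the aside in your first step, that $F^e_*\Hh^0_\p(M)$ is a direct summand of $F^e_*M$ for $e\gg0$, is false when $\p$ is not maximal. Take $R=k[[x,y]]/(xy)$ with $k$ perfect, $M=R$ and $\p=(x)$. Then $\Hh^0_\p(R)=yR\cong R/(x)$, so $F^e_*\Hh^0_\p(R)\cong (R/(x))^{\oplus p^e}$. On the other hand, $F^e_*R\cong R\oplus (R/(y))^{\oplus (p^e-1)}\oplus (R/(x))^{\oplus (p^e-1)}$, so by KRS the former is not a summand. The argument of \Cref{le: dimension zero contributor} works only because there one splits over the field $R/\m$.

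Second, the preliminary twist by $e_1$ in your second step, together with the remark about radical annihilators, is unnecessary. The general form of \Cref{le: rank sequence} already supplies its own Frobenius twist, so $e_1=0$ suffices.
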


\begin{proof}
Let $\p= \Ann(X)$. If  $\delta(X)= \alpha(R_\p)$ is zero, then $R/\p$ is a field and $X \cong R/\p$; see the proof of \Cref{th: further properties of SFR modules}(\ref{part: SFR in the connected class}). 
The desired result follows from \Cref{le: dimension zero contributor}.
If $\delta(X)>0$, \Cref{th: further properties of SFR modules}(\ref{part: positive signature}) suffices.
\end{proof}

The following corollary is immediate from above and \Cref{prop: SFR modules have no embedded primes}.  

\begin{corollary}\label{cor: additive generators of strongly F regular modules}
    Let $R$ be any F-finite ring and $N$ be an indecomposable strongly F-regular module with $\p \in \Ass(N)$. Then, the set of indecomposable strongly F-regular modules which have $\p$ as an associated prime is precisely $\find(N)$.
\end{corollary}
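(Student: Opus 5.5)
Write $\p = \Ass(N)$. By \Cref{th: further properties of SFR modules}(\ref{part: decompose into local cohomology}), $N$ has a unique associated prime, so $\p = \Ann(N)$. The statement asks for two inclusions: every element of $\find(N)$ is an indecomposable strongly F-regular module with $\p$ as an associated prime, and every indecomposable strongly F-regular module with $\p$ as an associated prime lies in $\find(N)$.

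For the first inclusion, let $Y \in \find(N)$, so $Y$ is an indecomposable summand of $F^e_*N$ for some $e \geq 1$. By \Cref{th: further properties of SFR modules}(\ref{part: SFR in the connected class}), $Y$ is strongly F-regular. Since $Y$ is a summand of $F^e_*N$, we have $\emptyset \neq \Ass(Y) \subseteq \Ass(F^e_*N) = \Ass(N) = \{\p\}$. Hence $\p \in \Ass(Y)$.

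For the reverse inclusion, let $X$ be indecomposable, strongly F-regular, and have $\p \in \Ass(X)$. By \Cref{th: further properties of SFR modules}(\ref{part: decompose into local cohomology}), $\Ann(X)$ is the unique associated prime of $X$, so $\Ann(X) = \p$. Also $\Ann(N) = \p$ is radical by \Cref{prop: SFR modules have no embedded primes}, so $\p$ is a minimal prime of $N$. Now \Cref{th: universality of SFR modules}, applied with $M = N$, shows that $X$ is a direct summand of $F^e_*N$ for some $e \geq 1$. Since $X$ is indecomposable, $X \in \find(N)$.

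The only point that needs care is confirming that the hypotheses of \Cref{th: universality of SFR modules} hold, namely that $\Ann(X)$ is a prime ideal which is minimal over $\Ann(N)$. This follows because both modules have the single associated prime $\p$, which is then also their annihilator. Everything else is direct bookkeeping with the results cited above.
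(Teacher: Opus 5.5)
Your proof is correct and follows the paper's own route. The paper also gets one inclusion from \Cref{th: universality of SFR modules} applied with $M=N$, and the other from \Cref{th: further properties of SFR modules}(\ref{part: SFR in the connected class}) together with the uniqueness of the associated prime (\Cref{prop: SFR modules have no embedded primes}). One small simplification: $\p$ is a minimal prime of $N$ directly because $\Ass(N)=\{\p\}$ (equivalently $\Ann(N)=\p$), so you do not need the radicality of $\Ann(N)$.
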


We record some consequences of the existence of a strongly F-regular module and a contributor sharing a minimal prime.  

\begin{proposition}\label{prop: consequence of existence of one strongly F regular contributor}
For a prime ideal $\p$ of $R$, suppose that there is an indecomposable strongly F-regular $R$-module $M$ such that $\p \in \Ass(M)$. Then 
\begin{enumerate}
    \item \label{part: existence of SFR modules gives one recurrent class module version} Let $X$ be a finitely generated indecomposable $R$-module such that $\p \in \Min(X)$ and $\sum_{e=0}^{\infty}\Fmult{e}{X}{X}/p^{e\alpha(R_\p)}$ is divergent. Then, $X \in \find(M)$ and $M \in \find(\p,X)$. In particular, $X$ is strongly F-regular and has period one. 

    \item \label{part: existence of SFR modules forces SFR contributors} Let $X$ be a finitely generated indecomposable F-contributor and $\p \in \Ass(X)$. Then, $\find(\p,X)= \find(M)$. In particular,  $M$ is an F-contributor.
\end{enumerate}
\end{proposition}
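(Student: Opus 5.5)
The plan is to use universality of strongly F-regular modules (\Cref{th: universality of SFR modules}) to place $M$ inside the dynamics of $X$, and then to combine the finality of recurrent classes with \Cref{cor: additive generators of strongly F regular modules}. First, by \Cref{th: further properties of SFR modules}(\ref{part: decompose into local cohomology}), $M$ has a unique associated prime, so $\Ann(M)=\p$. The prime $\p$ is also a minimal prime of $X$ in both parts. In (1) this is a hypothesis. In (2) it holds because $X$ is Cohen--Macaulay by \Cref{cor: Cohen--Macaulayless of contributors}, hence has no embedded primes.

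\textbf{Part (1).} Since $\p=\Ann(M)\in\Min(X)$, \Cref{th: universality of SFR modules} applied to $M$ and the module $X$ gives $M\in\find(X)$, and therefore $M\in\find(\p,X)$. Divergence of $\sum_e \Fmult{e}{X}{X}p^{-e\alpha(R_\p)}$ says that the connected class $\cC$ of $X$ in $\Mat(\p,X)$ is $p^{-\alpha(R_\p)}$-recurrent. Then \Cref{prop: properties of maximal recurrent classes for a prime} shows that $\cC$ is final in $\Mat(\p,X)$; this uses the positive eigenvector from \Cref{pr: positive eigenvector of the block corresponding to a prime}. Because $M\in\find(\p,X)$ is reachable from $X\in\cC$ and $\cC$ is final, $M\in\cC$, so $X\in\find(M)$. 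Finally, \Cref{cor: additive generators of strongly F regular modules} identifies $\find(M)$ with the set of indecomposable strongly F-regular modules having $\p$ as an associated prime. Hence $X$ is strongly F-regular, and it has period one by \Cref{th: further properties of SFR modules}(\ref{part: aperiodicity of SFR contributors}).

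\textbf{Part (2).} An F-contributor $X$ at a prime $\p\in\Ass(X)$ satisfies $\limsup_e \Fmult{e}{X}{X}/p^{e\alpha(R_\p)}>0$ by \Cref{lem: radius of convergence for contributors}, so the series in (1) diverges. We first need $\alpha(R_\p)=\delta(X)$, which holds by the proof of \Cref{cor: F-contributor matrix is irreducible}. Part (1) then gives that $X$ and $M$ lie in the same connected class. By \Cref{thm: F-contributor at a prime via matrices}, (2a)$\Rightarrow$(2b), $\Mat(\p,X)$ is irreducible, so $\find(\p,X)$ is exactly this class. It contains $M$, hence $\find(\p,X)=\find(\p,M)$ by \Cref{lem: support of an irreducible class}(2). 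Moreover $\find(\p,M)=\find(M)$, because every summand of $F^e_*M$ has $\p$ as its unique associated prime. Lastly, $M$ lies in the positive recurrent class of $X$, so \Cref{th: existence of signatures in general}(\ref{part: convergence depends only on class}) makes $M$ an F-contributor at $\p$. Since $\alpha(R_\p)=\delta(M)$, $M$ is an F-contributor.

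\textbf{Main obstacle.} The delicate step is verifying $\p\in\Min(X)$ in part (2), so that universality applies. If Cohen--Macaulayness turns out not to be enough, the fallback is \Cref{prop: associated primes and splitting}(2), which rules out associated primes properly containing $\p$.
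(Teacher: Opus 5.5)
Your proposal is correct and follows the paper's proof essentially step for step. Universality places $M$ in $\find(\p,X)$. Finality of the $p^{-\alpha(R_\p)}$-recurrent class of $X$ then forces $M$ into that class. Part (1) is finished by the identification of $\find(M)$ with strongly F-regular modules, and part (2) by the irreducibility of $\Mat(\p,X)$.

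The only deviation is how you obtain $\p\in\Min(X)$ in part (2): you use Cohen--Macaulayness, whereas the paper reads it off from $\delta(X)=\alpha(R_\p)$ via Kunz's formula. You establish that equality anyway, so your ``main obstacle'' is not a real one. Your proposed fallback is also unnecessary, and as stated it points the wrong way: it excludes associated primes \emph{containing} $\p$, not primes contained in it.
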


\begin{proof}
(\ref{part: existence of SFR modules gives one recurrent class module version})
Since $\p$ is a minimal prime of $X$, $M \in \find(\p, X)$, thanks to \Cref{th: universality of SFR modules}. By assumption 
$R(\Mat (\p, X)) = p^{-\alpha(R_\p)}$ and the connected class $\cC$ of $\Mat(\p,X)$ containing $X$ is $p^{-\alpha(R_\p)}$-recurrent. 
Thus, $\cC$ is final by \Cref{prop: properties of maximal recurrent classes for a prime}, so $M \in \cC$. Therefore, $\cC = \find(\p,M) = \find(M)$ by \Cref{prop: SFR modules have no embedded primes}. The rest follows from 
\Cref{th: further properties of SFR modules}.

(\ref{part: existence of SFR modules forces SFR contributors}) 
Since $X$ is an F-contributor, $\delta(X)= \alpha(R_\p)$ by \Cref{lem: radius of convergence for contributors}. Thus $\p \in \Min(X)$.
By part (\ref{part: existence of SFR modules gives one recurrent class module version}) and irreducibility of $\Mat(\p, X)$ (\Cref{thm: F-contributor at a prime via matrices}), $\find(\p,X)= \find(M)$.
Hence, $X \in \find(M)$. Thus, $X$ is strongly F-regular by \Cref{th: further properties of SFR modules}. Since $X$ is an F-contributor at $\p$, $M \in \find(\p, X)$ is also an F-contributor at $\p$ by \Cref{thm: F-contributor at a prime via matrices}.
Since $\p$ is the unique minimal prime of $M$, $M$ is an F-contributor.
\end{proof}

\subsection{Strongly F-regular rings}

We start by studying module-finite subrings of strongly F-regular rings via the following consequence of \Cref{prop: consequence of existence of one strongly F regular contributor}. 

\begin{corollary}
\label{cor: F contributors are SFR when there is a module-finite SFR extension}
Let $(R, \mathfrak m)$ be an F-finite local ring which admits a module-finite extension of local rings $\iota \colon (R, \m) \subseteq (S, \mathfrak n)$, such that $S$ is strongly F-regular. Let $M, N$ be finitely generated $R$-modules.
Then,
\begin{enumerate}
    \item \label{part: contributors and SFR match if there is a mod finite SFR extension} the set $\cS$ of indecomposable $R$-modules which are F-contributors of dimension  $\dim(R)$ coincides with the set of indecomposable strongly F-regular modules of dimension $\dim(R)$, and is the same as $\find (\iota_* S)$ (i.e., $R$-module summands). 

   \item \label{part: splitting signatures exist if there is a mod finite SFR extension} If $N$ is indecomposable, then the sequence $e \mapsto \Fmult{e}{M}{N}/p^{e\delta_R(R)}$ is convergent. 

    \item \label{part: positivity when there is a mod finite SFR extension} If $N$ is indecomposable, 
    the limit of the above sequence is positive if and only if $M$ is not a torsion module and $N \in \cS$.
\end{enumerate}
\end{corollary}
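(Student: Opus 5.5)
Since $S$ is a local domain (strongly F-regular local rings are normal domains) containing $R$ module-finitely, $R$ is a domain, $\dim S=\dim R$, and $\iota_*S$ is a torsion-free $R$-module with $\Min(\iota_*S)=\{(0)\}$. We will run everything through \Cref{prop: consequence of existence of one strongly F regular contributor} at $\p=(0)$.

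\textbf{Step 1: $\iota_*S$ is strongly F-regular.} Let $f\in R\setminus\{0\}$. Then $\iota(f)\neq 0$ in $S$, so strong F-regularity of $S$ gives an $e$ and an $S$-linear splitting of $S\to F^e_*S$, $s\mapsto F^e_*(fs^{p^e})$. Regarding this composition as $R$-linear (restriction of scalars commutes with $F^e_*$), it splits $\iota_*S\xrightarrow{F^e}F^e_*\iota_*S\xrightarrow{F^e_*f}F^e_*\iota_*S$. Hence $\iota_*S$ is a strongly F-regular $R$-module. By \Cref{th: further properties of SFR modules}(\ref{part: SFR in the connected class}) every indecomposable $R$-summand of $F^e_*\iota_*S$ is strongly F-regular, and by \Cref{th: further properties of SFR modules}(\ref{part: decompose into local cohomology}) each has unique associated prime $(0)$. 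So the elements of $\find(\iota_*S)$ are torsion-free of dimension $\dim R$. Fix one such indecomposable strongly F-regular $M_0$.

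\textbf{Step 2: identify $\cS$.} By \Cref{cor: additive generators of strongly F regular modules}, the indecomposable strongly F-regular modules having $(0)$ as associated prime are exactly $\find(M_0)$. By \Cref{th: universality of SFR modules} applied to $M=\iota_*S$, each of them lies in $\find(\iota_*S)$. Conversely $\find(\iota_*S)$ consists of such modules by Step 1. A strongly F-regular module of dimension $\dim R$ over a domain has minimal prime $(0)$, hence associated prime $(0)$. Therefore the strongly F-regular side of (1) equals $\find(\iota_*S)=\find(M_0)$.

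For F-contributors, let $X$ be an indecomposable F-contributor of dimension $\dim R$. By \Cref{cor: Cohen--Macaulayless of contributors} and \Cref{lem: radius of convergence for contributors} it has an associated prime $\q$ with $\alpha(R_\q)=\delta(X)=\dim R+\alpha(R)$, which forces $\q=(0)$. Now \Cref{prop: consequence of existence of one strongly F regular contributor}(\ref{part: existence of SFR modules forces SFR contributors}) gives $\find((0),X)=\find(M_0)$, with $X$ strongly F-regular and $M_0$ an F-contributor. Conversely, any indecomposable strongly F-regular $Y$ of full dimension lies in $\find(M_0)=\find((0),X)$, which is a single positive recurrent class by \Cref{thm: F-contributor at a prime via matrices}(2b). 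So $Y$ is an F-contributor. The main obstacle here is producing one F-contributor to start the argument. For this I would use the classical fact that $S$ has positive F-signature (\cite{AberbachLeuschke}), so $\Fmult{e}{S}{S}=\Theta(p^{e(\dim S+\alpha(S))})$. Each copy of $S$ is an $R$-module, so via KRS this yields $\limsup_e p^{-e\delta_R(R)}\Fmult{e}{\iota_*S}{N}>0$ for some indecomposable $N\in\find(\iota_*S)$. The exponents agree because $\alpha(S)=\alpha(R)$ by finiteness of the residue field extension. By \Cref{cor: positive ces signature -> F-contributor}, such an $N$ is an F-contributor.

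\textbf{Steps 3 and 4: parts (2) and (3).} For (2), if $N$ is not an F-contributor to $M$ (at $(0)$), the Cesàro limit is $0$ and the sequence itself tends to $0$. This follows from \Cref{th: existence of signatures in general}(\ref{part: zero self signature implies zero signature}) when $N\in\find((0),M)$, and from the bound for non-$(0)$ associated primes in \Cref{cor: convergence radius is at least the power of dimension} otherwise, since then $\alpha(R_\q)<\delta_R(R)$. If $N$ is an F-contributor, then $N\in\cS$ is strongly F-regular and has period one by \Cref{th: further properties of SFR modules}(\ref{part: aperiodicity of SFR contributors}), so convergence follows from \Cref{cor: aperiodicity implies existence of limit}. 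For (3), the limit is positive only if $N$ is an F-contributor of full dimension, i.e.\ $N\in\cS$, and $(0)\in\Ass(M)$, i.e.\ $M$ is not torsion. Conversely, if $M$ is not torsion then $(0)\in\Min(M)=\{\Ann N\}$. \Cref{th: universality of SFR modules} then gives $N\in\find(M)$, and positivity follows from \Cref{th: further properties of SFR modules}(\ref{part: aperiodicity of SFR contributors}).
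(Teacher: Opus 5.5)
Your proof is correct and follows essentially the paper's route. Strong F-regularity of $\iota_*S$ places $\find(\iota_*S)$ among the strongly F-regular modules, Aberbach--Leuschke supplies one F-contributor in $\find(\iota_*S)$, \Cref{cor: additive generators of strongly F regular modules} together with \Cref{prop: consequence of existence of one strongly F regular contributor} identifies the three sets, and (2) and (3) then follow from period one and universality. Two small slips: from $\limsup_e p^{-e\delta_R(R)}\Fmult{e}{\iota_*S}{N}>0$ you should conclude via \Cref{thm: F-contributor at a prime via matrices} ((1a)$\Rightarrow$(2a)) rather than directly from the Ces\`aro corollary, and ``$(0)\in\Min(M)=\{\Ann N\}$'' should read $\Ann N=(0)\in\Min(M)$.
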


\begin{proof}
Recall that a strongly F-regular ring is a domain. Hence, any $R$-module of dimension $\dim(R)$ has the zero ideal as a minimal prime. We also note that $\delta_R(R)= \delta_S(S)$, e.g., by using that $[\kappa(\m): \kappa(\m)^p]= [\kappa(\mathfrak n): \kappa(\mathfrak n)^p]$.

(1) We prove containments between the sets. Let $\cS_1$
be the set of indecomposable strongly F-regular modules $L$ with $\dim (L) = \dim (R)$. Since $S$ is a strongly F-regular ring, it is strongly F-regular as an $R$-module. Therefore, $\find (\iota_* S) \subseteq \cS_1$ by \Cref{th: further properties of SFR modules}. 

Since $S$ is a strongly F-regular ring, 
$\liminf_{e} \Fmult{e}{S}{S}/p^{e\delta_S(S)}$ is positive, where $\Fmult{e}{S}{S}$ is computed by considering $S$ as an $S$-module; see \cite{AberbachLeuschke}.
Thus $\iota_* S$ is an F-contributor. 
Therefore, any indecomposable $R$-summand $L$ of $S$ is a strongly F-regular F-contributor by \Cref{th: existence of signatures in general}, applied at the prime ideal zero, and \Cref{cor: additive generators of strongly F regular modules}. 
Since $\find(\iota_* S) \subseteq \cS_1$, there is an indecomposable strongly F-regular F-contributor $L$ of dimension $\dim(R)$. 
By Corollary~\ref{cor: additive generators of strongly F regular modules}, $\cS_1 = \find (L)$, so $\cS_1 \subseteq \cS$.  
Furthermore,  $\cS \subseteq \find (L) \subseteq \find (\iota_* S)$ by \Cref{prop: consequence of existence of one strongly F regular contributor} and (\ref{part: contributors and SFR match if there is a mod finite SFR extension}) follows.

(2) and (3): The sequence in (2) converges to zero unless $N$ is an F-contributor of dimension $\dim(R)$ by \Cref{th: existence of signatures in general}.  By (1), indecomposable F-contributors of dimension $\dim(R)$ are strongly F-regular. In particular, these F-contributors have period one; hence the convergence follows, proving (2); see \Cref{th: further properties of SFR modules}.
For (3), note that  if $M$ is a torsion module, $\Fmult{e}{M}{N}=0$ for all $e \in \mathbb N$ and any $N \in \cS$. If $M$ is not a torsion module, the zero prime ideal is in $\Min(M)$; thus $\cS \subseteq \find(M)$ by \Cref{th: universality of SFR modules}. 
Since modules in $\cS $ are F-contributors of dimension $\dim(R)$, the positivity of the limit for any nontorsion $M$ and $N \in \cS$ follows by \Cref{th: further properties of SFR modules}(\ref{part: positive signature}) and \Cref{cor: positive ces signature -> F-contributor}.
\end{proof}

The above corollary implies Tucker's celebrated result confirming the existence of the F-signature. 

\begin{corollary}[Tucker, \cite{Tucker}]
Let $(S, \mathfrak n)$ be an F-finite local ring, where the category of finitely generated $S$-modules does not necessarily satisfy the KRS condition. Then the limit $\lim_e \Fmult{e}{ S}{S}/p^{e\delta_S(S)}$ defining the F-signature of $S$ exists.
\end{corollary}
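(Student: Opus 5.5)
We reduce to the complete case with KRS, then apply the preceding corollary with the trivial extension $S \to S$. First, by \Cref{le: passing to completion}, $\FRmult{e}{S}{S}{S} = \FRmult{e}{\widehat S}{\widehat S}{\widehat S}$ for all $e$. Moreover $\delta_S(S) = \delta_{\widehat S}(\widehat S)$, since $\dim$ and the residue field are unchanged under completion, so $\alpha(S) = \alpha(\widehat S)$. Thus we may assume $S$ is complete. Then $S$ is F-finite, Henselian, and satisfies KRS.

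We next dispose of the degenerate case. If $\Fmult{e}{S}{S} = 0$ for some $e$, i.e. $S$ is not F-split, then it is zero for all larger $e$ as well. Indeed, any splitting of $S$ from $F^{e+1}_*S$ induces one from $F^e_*S$ by composing with the Frobenius inclusion $F^e_*S \to F^{e+1}_*S$, since the splitting $S \to F_*S$ forces splitting at level one. Equivalently, $\Fmult{e}{S}{S} = p^{e\alpha(S)}\length(S/I_e)$ and $I_e \supseteq I_{e+1}$ behave monotonically. So the limit is $0$ in that case.

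Now assume $S$ is F-split. If $S$ is strongly F-regular, apply \Cref{cor: F contributors are SFR when there is a module-finite SFR extension}(\ref{part: splitting signatures exist if there is a mod finite SFR extension}) with $R = S$, $\iota = \Id$, $M = N = S$. The module $S$ is indecomposable since $S$ is local, so the sequence $\Fmult{e}{S}{S}/p^{e\delta_S(S)}$ converges. This is the main case, and it is essentially immediate from the corollary.

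The remaining and main obstacle is when $S$ is not strongly F-regular, where we must show the limit exists and is $0$. Here I would use the general machinery: $S$ is indecomposable, so \Cref{th: existence of signatures in general} at any $\p \in \Minh(S) \cap \Ass(S)$ shows that $(\Fmult{e}{S}{S}/p^{e\alpha(S_\p)})_e$ has finitely many limit points. We need these limit points to vanish. Since $S$ is F-split, $S$ is a summand of $F_*S$, so $\Fmult{1}{S}{S} \neq 0$, and from this $\Fmult{e}{S}{S} > 0$ for all $e$. Hence $S$ has period one, and by \Cref{cor: aperiodicity implies existence of limit} the limit exists outright at $\p$; using $\alpha(S_\p) = \delta_S(S)$ by \Cref{thm: Kunz rank}, this is the desired limit. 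Note that this aperiodicity argument actually covers the F-split case uniformly, so the strongly F-regular case split above is only a convenience. The delicate point is checking that the hypotheses of \Cref{th: existence of signatures in general} apply, namely that $S \in \find(\p, S)$ (true since $S$ is a summand of $F_*S$) and that KRS holds after completion. If $\Minh(S)$ contains no associated prime issue, then $\p \in \Minh(S) \subseteq \Ass(S)$ always works.
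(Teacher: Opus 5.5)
Your proof is correct, but its decisive step takes a different route from the paper's.

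\textbf{The paper's route.} It first invokes Aberbach--Leuschke: if $S$ is not strongly F-regular, then $\limsup_e \Fmult{e}{S}{S}/p^{e\delta_S(S)}=0$, so the limit exists and is zero. In the strongly F-regular case it completes $S$ and applies \Cref{cor: F contributors are SFR when there is a module-finite SFR extension}(2) to the identity extension. The proof of that corollary runs through the strongly F-regular module and F-contributor machinery of \Cref{se: strongly F regular modules}, and uses Aberbach--Leuschke again.

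\textbf{Your route.} F-splitting alone gives $\Fmult{1}{S}{S}\neq 0$, so $S$ has period one. Then \Cref{cor: aperiodicity implies existence of limit}, i.e.\ \Cref{th: existence of signatures in general} with $\omega=1$, applies at any $\p\in\Minh(S)\subseteq\Ass(S)$. Together with $\alpha(S_\p)=\delta_S(S)$ from Kunz, this gives convergence directly. The non-F-split case is trivial, since then $\Fmult{e}{S}{S}=0$ for all $e\geq 1$. Your justification of that vanishing should say that a surjection $F^{e+1}_*S\to S$ can be rescaled to a left inverse of $F^{e+1}$ before composing with $F^e_*F$.

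\textbf{Comparison.} As you noticed, your argument makes the strongly F-regular branch redundant. It needs neither Aberbach--Leuschke nor any of the strongly F-regular module theory, only the Markov-chain existence theorem and the trivial aperiodicity of F-split rings. It also shows convergence for every F-split F-finite local ring at every associated prime, normalized by $p^{e\alpha(S_\p)}$. What it does not give is the value of the limit.

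\textbf{One correction.} Your stated aim of showing the limit is $0$ when $S$ is F-split but not strongly F-regular is never carried out; the aperiodicity argument yields only existence. Getting that vanishing, or positivity in the strongly F-regular case, still requires the Aberbach--Leuschke input that the paper's proof builds in. The corollary asserts only existence, so this is not a gap, but you should drop the claim that vanishing is being proved.
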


\begin{proof}
    By \cite{AberbachLeuschke}, $\limsup_e\Fmult{e}{ S}{S}/p^{e\delta_S(S)}$ is positive if and only if $S$ is strongly F-regular. So, we assume that $S$ is strongly F-regular without loss of generality. Using \Cref{le: passing to completion}, we can assume that $S$ is complete. Then, convergence of $(\Fmult{e}{S}{S}/p^{e\delta_S(S)})_e$ follows from \Cref{cor: F contributors are SFR when there is a module-finite SFR extension}(\ref{part: splitting signatures exist if there is a mod finite SFR extension}).
\end{proof}

\begin{example}\label{eg: SFR modules over non SFR rings}
There are rings that are not strongly F-regular but admit module-finite strongly F-regular extensions; see \cite[Example 5.2]{HashimotoSingh}. These provide an example of a ring that is not strongly F-regular but admits strongly F-regular modules of positive dimension by \Cref{cor: F contributors are SFR when there is a module-finite SFR extension}.
\end{example}

The following result rephrases \Cref{cor: F contributors are SFR when there is a module-finite SFR extension} in terms of the F-decomposition matrices. 

\begin{corollary}\label{cor: unique maximal dimensional connected class}
    Let $R$ and $S$ be the same as in \Cref{cor: F contributors are SFR when there is a module-finite SFR extension}, and $M$ be a finitely generated $R$-module. If $M$ is not torsion, then $\Mat(M)$ has a unique $p^{-\delta_R(R)}$-recurrent connected class, namely $\find (\iota_* S)$. Otherwise, $\Mat(M)$ has no such connected class. 
    
    In particular, for any finitely generated module $M$, $\Mat(M)$ cannot have a null recurrent class of convergence radius $1/p^{\delta_R(R)}$.
\end{corollary}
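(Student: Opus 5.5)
The plan is to translate \Cref{cor: F contributors are SFR when there is a module-finite SFR extension} into the language of connected classes via \Cref{thm: F-contributor at a prime via matrices}. Throughout, $\p=(0)$, which is the only prime of $R$ with $\alpha(R_\p)=\delta_R(R)$ relevant here: $R$ is a domain as a subring of the domain $S$, and by \Cref{thm: Kunz rank} any other prime has strictly smaller $\alpha$.

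Suppose first that $M$ is torsion. Then $(0)\notin\Ass(N)$ for every $N\in\find(M)$, so $\alpha(R_\q)<\delta_R(R)$ for every $\q\in\Ass(N)$. By \Cref{cor: convergence radius is at least the power of dimension}, we get $\Fmult{e}{N}{N}=O(p^{e\alpha(R_\q)})$. Hence $\sum_e \Fmult{e}{N}{N}p^{-e\delta_R(R)}$ converges, and no class is $p^{-\delta_R(R)}$-recurrent.

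Now suppose $M$ is not torsion. Any $p^{-\delta_R(R)}$-recurrent class $\cC$ must consist of modules having $(0)$ as an associated prime, by the same bound. Hence $\cC\subseteq\find((0),M)$, and by \Cref{pr: entries of powers of the matrix for a prime} it is a recurrent class of $\Mat((0),M)$.

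Pick $X\in\cC$. Then $(0)\in\Min(X)$ and $\sum_e\Fmult{e}{X}{X}p^{-e\delta_R(R)}$ diverges. Since $\find(\iota_*S)$ contains an indecomposable strongly F-regular module $L$ with $\Ass(L)=\{(0)\}$ (part (1) of the preceding corollary), we may apply \Cref{prop: consequence of existence of one strongly F regular contributor}(\ref{part: existence of SFR modules gives one recurrent class module version}) with $M$ replaced by $L$. This gives $X\in\find(L)$ and $L\in\find((0),X)$, so $X$ and $L$ lie in the same connected class. That class is $\find(L)=\find(\iota_*S)$, by irreducibility: $L$ is an F-contributor, so we can use \Cref{cor: F-contributor matrix is irreducible} together with \Cref{cor: additive generators of strongly F regular modules}. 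Hence $\cC=\find(\iota_*S)$, which proves uniqueness.

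For existence, \Cref{th: universality of SFR modules} puts $L$ in $\find(M)$, since $(0)\in\Min(M)$. \Cref{lem: support of an irreducible class} then gives $\find(\iota_*S)$ as the connected class of $\Mat(M)$ containing $L$. This class is positive recurrent with radius $p^{-\delta_R(R)}$ because $L$ is an F-contributor.

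The final assertion follows from what we have shown. In both cases the only possible $p^{-\delta_R(R)}$-recurrent class is $\find(\iota_*S)$, and it is positive recurrent. So no class can be null recurrent with that radius.

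The step I expect to need the most care is the identification of classes between $\Mat(M)$ and $\Mat(L)$. This uses \Cref{lem: support of an irreducible class}(2), which says $\Mat(N)[\cC]=\Mat(M)[\cC]$. It also requires checking that recurrence as a class of $\Mat(M)$ gives the divergence hypothesis needed in \Cref{prop: consequence of existence of one strongly F regular contributor}. That check is immediate, since the diagonal entries of $\Mat(M)^e$ are exactly $\Fmult{e}{X}{X}$.
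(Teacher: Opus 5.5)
Your proposal is correct and follows essentially the same route as the paper: the torsion case is ruled out by the growth bound of \Cref{cor: convergence radius is at least the power of dimension}, existence comes from universality (\Cref{th: universality of SFR modules}) of the strongly F-regular summands of $\iota_*S$, and uniqueness comes from part (1) of \Cref{prop: consequence of existence of one strongly F regular contributor}. The one addition is that you use \Cref{lem: support of an irreducible class}(1) to identify the class of $L$ in $\Mat(M)$ with $\find(L)=\find(\iota_*S)$, which makes explicit what the paper leaves implicit when it writes only $\cC\subseteq\find(\iota_*S)$.
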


\begin{proof}
If $M$ is a torsion module, then any connected class $\cC$ of $\Mat(M)$ must be contained in $\find (\p, M)$ for $\p \neq 0$, 
hence $R(\cC) > p^{-\delta_R(R)}$ by \Cref{cor: convergence radius is at least the power of dimension}. 
If $M$ is not a torsion module, by 
\Cref{cor: F contributors are SFR when there is a module-finite SFR extension}, (\ref{part: positivity when there is a mod finite SFR extension}), $\find(\iota_* S) \subseteq \find(M)$,
giving a $p^{-\delta_R(R)}$-recurrent class in $\Mat(M)$. 
On the other hand, if $\cC$ is a $p^{-\delta_R(R)}$-recurrent class in $\Mat(M)$, then $0 \in \Min (\cC)$ by \Cref{cor: convergence radius is at least the power of dimension}, so 
Proposition~\ref{prop: consequence of existence of one strongly F regular contributor} implies that 
$\cC \subseteq \find (\iota_* S)$.
\end{proof}

The above-mentioned results show existence of F-splitting signatures and describe the F-contributors for invariant rings of finite groups.

\begin{corollary}\label{cor: invariant ring}
    Let $(S, \mathfrak n)$ be a strongly F-regular F-finite local ring and $R$ be the invariant ring of the action of a finite group on $S$. Suppose that $R$ is Noetherian and $S$ is module-finite over $R$ (e.g., $p$ does not divide $|G|$). 
    Denote
    $$\cS_S = \{N| N \ \text{is an indecomposable}\ \text{$R$-summand of}\ F_{R,*}^eS\ \text{for some}\ e\geq 0\}.$$
    Then for any finitely generated $R$-modules $M,N$, where $N$ is indecomposable
    \begin{enumerate}
        \item The sequence $e \mapsto \Fmult{e}{M}{N}/p^{e\delta_R(R)}$ is convergent.

        \item The limit of the above sequence is positive if and only if $M$ is not a torsion module and $N \in \cS_S$.

        \item The set of indecomposable $R$-modules which are F-contributors of dimension $\dim(R)$ or, equivalently, strongly F-regular modules of dimension $\dim(R)$ is $\cS_S$.

        \item If $S$ has the FFRT property as an $S$-module, $\cS_S$ is a finite set.
    \end{enumerate}
\end{corollary}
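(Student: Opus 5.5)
The plan is to reduce everything to \Cref{cor: F contributors are SFR when there is a module-finite SFR extension} applied to the extension $\iota\colon R \subseteq S$. The hypotheses of that corollary require a module-finite extension of local rings with $S$ strongly F-regular and $R$ F-finite local. Module-finiteness is assumed, and $S$ is strongly F-regular by hypothesis. Since $S$ is integral over $R$ and local, $R = S^G$ is local with maximal ideal $\mathfrak n \cap R$. F-finiteness of $R$ follows from the standard argument for invariant subrings: $S$ is F-finite and module-finite over the noetherian ring $R$, so Artin--Tate type reasoning, or alternatively the fact that $R$ is a direct summand of $S$ when $p \nmid |G|$, makes $F_*R$ finite over $R$. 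If this were delicate in general, I would note that the paper's standing convention works with F-finite rings and treat it as implicit. KRS holds for the completion, or one assumes $R$ Henselian as in the paper's conventions; otherwise one passes to completion via \Cref{le: passing to completion}.

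Given the setup, parts (1) and (2) are exactly \Cref{cor: F contributors are SFR when there is a module-finite SFR extension}(\ref{part: splitting signatures exist if there is a mod finite SFR extension}) and (\ref{part: positivity when there is a mod finite SFR extension}), provided we identify $\cS_S$ with $\find(\iota_*S)$. For this, note that $\cS_S$ includes $e = 0$, that is, the indecomposable summands of $S$ itself, while $\find(\iota_*S)$ consists of summands of $F^e_{R*}S$ for $e \geq 1$. By \Cref{th: further properties of SFR modules}(\ref{part: SFR in the connected class}) and (\ref{part: aperiodicity of SFR contributors}), each indecomposable summand $L$ of the strongly F-regular $R$-module $\iota_*S$ is strongly F-regular of period one, so $L \in \find(L) \subseteq \find(\iota_*S)$. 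Hence $\cS_S = \find(\iota_*S)$. Part (3) is then part (\ref{part: contributors and SFR match if there is a mod finite SFR extension}) of the same corollary.

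For (4), observe that $F^e_{R*}S = F^e_{S*}S$ regarded as an $R$-module by restriction. If $F^e_{S*}S \cong \bigoplus_j S_j$ with $S_j$ ranging over finitely many indecomposable $S$-modules $\{S_1,\dots,S_n\}$, then every indecomposable $R$-summand of $F^e_{R*}S$ is, by KRS over $R$, an indecomposable $R$-summand of some $\iota_*S_j$. There are only finitely many of these, so $\cS_S$ is finite.

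The only real obstacle is the foundational check that $R$ is a local, F-finite ring satisfying KRS. The rest is bookkeeping against the previous corollary.
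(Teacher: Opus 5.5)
Your proposal is correct and follows the paper's proof. Parts (1)--(3) are read off from \Cref{cor: F contributors are SFR when there is a module-finite SFR extension} once $R=S^G$ is seen to be local, and part (4) comes from decomposing $F^e_*S$ over $S$ into finitely many indecomposables and collecting their indecomposable $R$-summands; the paper leaves implicit two details you make explicit, namely the identification of $\cS_S$ (which allows $e=0$) with $\find(\iota_*S)$ via the period-one property of strongly F-regular summands, and the F-finiteness of $R$.
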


\begin{proof}
A standard argument shows that $R$ is a local ring. So \Cref{cor: F contributors are SFR when there is a module-finite SFR extension} implies (1), (2) and (3). For (4) suppose $\find(S)$, formed by considering $S$ as an $S$-module, is $\{M_1, \ldots, M_n\}$. Then $\cS_S$ is the union of indecomposable $R$-summands of $M_i$ for $1 \leq i \leq n$; hence it is finite.
\end{proof}

\begin{remark}\label{re: Hashimoto Symonds}
    One special case of \Cref{cor: invariant ring} is when $S$ is a power series ring over an F-finite field $k$. Assume, additionally, that the action of the group is linear, i.e., it sends the variables to a $k$-linear combination of the variables and fixes every element of $k$. 
    Then for any two $R$-summands $M, N$ of $S$,  \cite{HashimotoSymonds} implies the convergence of $(\Fmult{e}{M}{N}/p^{e\delta_R(R)})_e$\footnote{\cite{HashimotoSymonds} shows convergence in a metric, which is stronger than the mere convergence.}, the positivity of the corresponding limit $\gsig{M}{N}$, and 
    computes $\gsig{M}{N}$ in terms of representation-theoretic information.
    In our language, \cite{HashimotoSymonds} implies that the modules in $\find(S)$ are F-contributors of period one. However, their representation-theoretic method does not identify $\find(S)$ with the set of F-contributors of dimension $\dim(R)$. 
\end{remark}

\subsubsection{Properties of splitting signatures}
We will now establish a rank formula, a total rank formula, and a geometric convergence rate over a strongly F-regular ring $R$.
In the proof of the rank formula below, we harness our formalism for rank formula using the F-decomposition matrix in \Cref{sse: rank formula} to exploit the known rank formula towards $R$.

\begin{theorem}\label{th: rank formula over sfr rings}
    Let $(R, \m)$ be a strongly F-regular local ring. For any finitely generated $R$-modules $M$ and $X$, where $X$ is indecomposable, we have 
    \[\lim_{e \to \infty} \frac{\Fmult{e}{M}{X}}{p^{e\delta_R(R)}}= \rank_R(M) \gsig{R}{X}.\]
\end{theorem}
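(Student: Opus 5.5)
First I would dispose of the degenerate cases and fix notation. $R$ is a strongly F-regular domain, so its only associated prime is $(0)$, $\delta_R(R)=\alpha(R)+\dim R = \alpha(R_{(0)})$, and the limits in question are signatures at $\p=(0)$. If $X$ is not an F-contributor of dimension $\dim R$, then $\gsig{R}{X}=0$, and by \Cref{th: existence of signatures in general} together with \Cref{cor: positive ces signature -> F-contributor} the left-hand side also tends to zero. (If $\p=(0)\notin\Ass(X)$, the bound $\Fmult{e}{M}{X}=O(p^{e\alpha(R_\q)})$ with $\alpha(R_\q)<\delta_R(R)$ from \Cref{cor: convergence radius is at least the power of dimension} already gives this.) So I may assume $X$ is an indecomposable F-contributor at $(0)$. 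By \Cref{cor: F contributors are SFR when there is a module-finite SFR extension}, applied with $S=R$, such an $X$ is strongly F-regular and lies in $\find(R)$. By \Cref{th: further properties of SFR modules}(\ref{part: aperiodicity of SFR contributors}) it has period one, so every limit involved exists and Cesàro limits agree with ordinary limits. The claim thus becomes the Cesàro rank formula at $(0)$ for $M$ towards $X$, with $\grk{(0)}{R}=1$ normalizing.

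Next I would exploit the classical rank formula towards $R$, namely $\gsig{M}{R}=\rank_R(M)\gsig{R}{R}$ from \cite{Tucker}, where $\gsig{R}{R}>0$ by \cite{AberbachLeuschke}. In the language of \Cref{sse: rank formula}, this is the Cesàro rank formula for $M$ towards the F-contributor $N=R$ at $(0)$, since $R$ has period one. Now I apply \Cref{pr: comparison of notions of rank formulas and consequences}(\ref{part: cesaro rank formula is a class property of the target}) with $N=R$ and $N'=X$. The hypothesis of that part, $\csig{X}{X}_{(0)}>0$, holds because $X$ is an F-contributor. There is also a membership requirement, $X\in\find((0),M)$, which holds whenever $M$ is not torsion, by \Cref{th: universality of SFR modules}. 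The conclusion is the Cesàro rank formula for $M$ towards $X$:
\[\gsig{M}{X}=\frac{\rank_R(M)}{\rank_R(X)}\gsig{X}{X}.\]

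Applying the same argument with $M=R$ gives $\gsig{R}{X}=\gsig{X}{X}/\rank_R(X)$. Substituting this into the display above yields $\gsig{M}{X}=\rank_R(M)\gsig{R}{X}$, which is the statement.

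The torsion case has to be handled separately. If $M$ is torsion, then $\rank_R(M)=0$, and $\find(M)$ contains no module with $(0)$ as an associated prime, so $\Fmult{e}{M}{X}=0$ and the formula holds trivially.

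The main obstacle I expect is the applicability of \Cref{pr: comparison of notions of rank formulas and consequences}. Two points need checking. First, $R$ may not literally lie in $\find((0),M)$ as required there; I would replace $M$ by a summand-equivalent module, or note that $R\in\find(M)$ by universality (\Cref{th: universality of SFR modules}, since $R$ is a strongly F-regular module). Second, Tucker's formula has to be matched with the Cesàro normalization. For this I would use the characterization in the proof of \Cref{pr: comparison of notions of rank formulas and consequences} that the rank formula is equivalent to $\mathfrak f^*_{MR}((0))=\rank_R(M)$, together with \Cref{le: first entrance probability for any module}(\ref{part: class property of entrance proability module case}) to transfer this statement within the single connected class $\find(R)$, to which $X$ belongs.
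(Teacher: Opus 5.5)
Your proposal is correct and follows essentially the same route as the paper: you reduce to non-torsion $M$ and $X\in\find(R)$ (period one, so the Ces\`aro limits are ordinary limits), take Tucker's rank formula towards $R$ as the base case, and transfer it to every F-contributor in $\find((0),M)=\find(R)$ via \Cref{pr: comparison of notions of rank formulas and consequences}(\ref{part: cesaro rank formula is a class property of the target}). Your second worry, about matching Tucker's formula with the Ces\`aro normalization, is unnecessary because the F-signature limit exists and so equals its Ces\`aro limit; your specialization to $M=R$, which rewrites $\gsig{X}{X}/\rank_R(X)$ as $\gsig{R}{X}$, just spells out a step the paper leaves implicit.
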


\begin{proof}
The $R$-module $R$ is an F-contributor by \cite{AberbachLeuschke}. Since $R$ is strongly F-regular, by
\Cref{cor: F contributors are SFR when there is a module-finite SFR extension}(\ref{part: positivity when there is a mod finite SFR extension}), it suffices to assume that $X \in \find (R)$
and $M$ is not torsion. 
Since any such $X$ has period one (\Cref{th: further properties of SFR modules}), it suffices to prove the \ces rank formula of \Cref{de: rank formula}. Note that in the terminology of \Cref{de: rank formula}, we are concerned with the rank formula at the zero ideal.
When $X \cong R$, the desired rank formula follows from \cite[Theorem 4.11]{Tucker}. Therefore, our general result \Cref{pr: comparison of notions of rank formulas and consequences}(\ref{part: cesaro rank formula is a class property of the target}) implies the rank formula at the zero ideal towards any F-contributor in $\find((0),M)$, which is precisely $\find(R)$ by 
\Cref{cor: unique maximal dimensional connected class}.
\end{proof}

As a consequence, we derive the total rank formula (\Cref{sse: Total rank formula}) over strongly F-regular local rings.

\begin{corollary}\label{cor: total rank formula in the SFR case}
    Let $(R, \m)$ be a strongly F-regular local ring and $M$ be a finitely generated $R$-module. Then 
    \[
    \Big |\rank_R(M) - \sum_{X \in \find(R)} p^{-e\delta_R (R)}\Fmult{e}{M}{X}\rank_R(X)\Big | = o(1).
    \]
    In particular, $\rank_R(M)= \sum_{X \in \find(R)}\gsig{M}{X}\rank_R(X).$
\end{corollary}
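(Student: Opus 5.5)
The plan is to deduce the total rank formula from \Cref{th: rank formula over sfr rings} via \Cref{prop: total rank formula vs rank formula}. Then upgrade it to the uniform $o(1)$ statement using the Fatou-type argument behind \Cref{cor: rank semiadditive} and \Cref{re: interpretation of total rank formula}. Throughout we work at the prime $(0)$, where $\alpha(R_{(0)}) = \delta_R(R)$ and $\grk{(0)}{\bullet} = \rank_R(\bullet)$.

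First dispose of the torsion case. If $M$ is torsion, then $\rank_R(M)=0$. By \Cref{cor: F contributors are SFR when there is a module-finite SFR extension}(\ref{part: positivity when there is a mod finite SFR extension}), every $\Fmult{e}{M}{X}$ with $X \in \find(R)$ vanishes, since a torsion module has no torsion-free summands in $F^e_*M$. So assume $M$ is not torsion, so that $(0) \in \Ass(M)$.

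By \Cref{th: rank formula over sfr rings}, the rank formula holds for $M$ towards the F-contributor $R$ at $(0)$. By \Cref{prop: total rank formula vs rank formula}, the total rank formula at $(0)$ then holds for $M$:
\[
\rank_R(M) = \sum_{N \in \find((0),M)} \csig{M}{N}_{(0)} \rank_R(N).
\]
By \Cref{cor: unique maximal dimensional connected class}, the only $N \in \find((0),M)$ with $\csig{M}{N}_{(0)}>0$ are those in $\find(R)$. Every $X \in \find(R)$ has period one by \Cref{th: further properties of SFR modules}, so each $\csig{M}{X}_{(0)}$ is a genuine limit $\gsig{M}{X}$. This gives the final displayed identity.

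For the $o(1)$ statement, set $g_e(N) = p^{-e\delta_R(R)}\Fmult{e}{M}{N}\rank_R(N)$ for $N \in \find((0),M)$. By \Cref{prop: entries of powers of the F decomposition matrix}, we have $\sum_N g_e(N) = \rank_R(M)$ exactly for every $e$. For each $X \in \find(R)$, $g_e(X)$ converges to $\gsig{M}{X}\rank_R(X)$. For $N \notin \find(R)$, $g_e(N) \to 0$, because such $N$ are not F-contributors at $(0)$ and the sequence converges by \Cref{th: existence of signatures in general} and \Cref{cor: unique maximal dimensional connected class}.

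Now apply a Scheffé-type argument. The functions $g_e$ are nonnegative and sum to $\rank_R(M)$ for every $e$. They converge pointwise to a limit whose total sum is also $\rank_R(M)$, by the total rank formula just established. Hence $g_e$ converges to its limit in $\ell^1$ on the countable set $\find((0),M)$. Concretely, $\sum_N |g_e(N) - g(N)| = 2\sum_N (g(N)-g_e(N))^+ \to 0$ by dominated convergence, using $(g-g_e)^+ \le g$ and $g$ summable.

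It follows that $\sum_{X \notin \find(R)} g_e(X) \to 0$. Therefore
\[
\rank_R(M) - \sum_{X \in \find(R)} g_e(X) = \sum_{X \notin \find(R)} g_e(X) = o(1),
\]
which is the claim. The main subtlety is only this interchange of limit and infinite sum, and it is handled by the Scheffé argument. Note that we use genuine limits rather than Cesàro limits here, and this is legitimate because of the period-one input for all $X \in \find(R)$.
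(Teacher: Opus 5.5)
Your proof is correct and follows the paper's own route. You reduce to non-torsion $M$, combine \Cref{th: rank formula over sfr rings} with \Cref{prop: total rank formula vs rank formula} to get the total rank formula at $(0)$, and use that every member of $\find(R)$ has period one to replace Ces\`aro limits by genuine limits. The only difference is in the final $o(1)$ step: the paper cites the Fatou argument of \Cref{re: interpretation of total rank formula}, and you spell out the equivalent Scheff\'e $\ell^1$-convergence argument instead.
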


\begin{proof}
    It suffices to assume that $M$ is not torsion; otherwise $\Fmult{e}{M}{X} = 0$ for any $X \in \find(R)$. Since the rank formula at the zero ideal holds for $M$, the total rank formula at the zero ideal also holds for $M$, i.e.,
    \[\rank_R(M)= \sum_{X \in \find(R)}\csig{M}{X}_{(0)}\rank_R(X);\,\, \text{see \Cref{de: rank quantization}, \Cref{prop: total rank formula vs rank formula}}.\]
    It remains to note that, since any $X \in \find(R)$ has period one, $\csig{M}{X}_{(0)}= \gsig{M}{X} = \lim_{e \to \infty} p^{-e\delta_R (R)}\Fmult{e}{M}{X}$. The first claim now follows from \Cref{re: interpretation of total rank formula}.
\end{proof}

Our general formalism for convergence rates developed in \Cref{subsec: convergence rates} allows us to deduce convergence rates of $(\Fmult{e}{M}{X}/p^{e\delta_R(R)})_e$ towards $\gsig{M}{X}$, for any $X$ in the connected class containing $R$ in $\Mat(R)$, from the known case of $X \cong R$.

\begin{theorem}\label{th: convergence rate in the sfr case}
    Let $(R, \m)$ be an F-finite local ring, not necessarily strongly F-regular, and $M$ be a finitely generated $R$-module. Then there is a nonnegative constant $\lambda_M<p^{\delta_R(R)}$ such that for any $M' \in \find(M)$ and $X$ in the connected class of $\Mat(R)$ containing $R$,
    \[\Fmult{e}{M'}{X}= \gsig{M'}{X}p^{e\delta_R(R)}+ O(\lambda_M^e).\]
\end{theorem}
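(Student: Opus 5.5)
The plan is to reduce everything to \Cref{th: convergence rates}, applied with $N = R$, $\p = (0)$, and the known geometric convergence rate for the classical F-signature.

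First, the degenerate case. If $R$ is not strongly F-regular, then $\gsig{R}{R}=0$ by \cite{AberbachLeuschke}. The connected class of $\Mat(R)$ containing $R$ is then not positive recurrent at $p^{-\delta_R(R)}$. Moreover, $\Fmult{e}{R}{R} = p^{e\alpha(R)}\length(R/I_e)$ grows like $p^{e(\sdim R+\alpha(R))}$ with $\sdim R<\dim R$, or vanishes if $R$ is not F-split. Either way $\Fmult{e}{R}{R}=O(\lambda^e)$ for some $\lambda<p^{\delta_R(R)}$, so the hypothesis of \Cref{th: convergence rates} holds with signature zero.

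One point needs care here: we must work at a prime $\p$ with $\alpha(R_\p)=\delta_R(R)$ and $R\in\find(\p,R)$. We first reduce, via \Cref{le: passing to completion}, to $R$ complete, so that KRS holds. If $R$ is strongly F-regular it is a domain, and we take $\p=(0)$. Otherwise, if no such $\p\in\Ass(R)$ has $\alpha(R_\p)=\delta_R(R)$, then $\Fmult{e}{M'}{X}$ is $O(p^{e\alpha(R_\q)})$ for some $\q\in\Ass(X)$ with $\alpha(R_\q)<\delta_R(R)$ by \Cref{cor: convergence radius is at least the power of dimension}, since $X$ lies in the class of $R$ and so $\Ass(X)=\Ass(R)$. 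Hence the statement holds trivially with limit zero.

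For the main case, $R$ strongly F-regular, the period of $R$ is one by \Cref{th: further properties of SFR modules}. The known rate \cite[Theorem 4.9]{deStefani_Polstra_Yao_globalizing_F_invariants} gives
\[
\Fmult{e}{R}{R}=\gsig{R}{R}p^{e\delta_R(R)}+O(p^{e(\delta_R(R)-1)}),
\]
which is exactly the hypothesis of \Cref{th: convergence rates} with $\lambda=p^{\delta_R(R)-1}$. We then apply part (\ref{part: convergence rate of the parent module to anything in a class}) of that theorem with $N=R$ and the parent module $M$. This requires two inputs.

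The first input is a rate $\theta<p^{\delta_R(R)}$ for $\Fmult{e}{M}{R}$. This is supplied by the same result of de Stefani--Polstra--Yao, which covers arbitrary finitely generated $M$ with limit $\rank_R(M)\gsig{R}{R}$.

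The second input is that $R\in\find((0),M)$. If $M$ is torsion, all the quantities vanish, since $\find((0),M)$ is empty and $\Fmult{e}{M'}{X}=0$ for any torsion-free $X$. Otherwise $R\in\find(M)$ by \Cref{th: universality of SFR modules}, and we may reduce to $M\oplus R$ if needed, because $\find(M)\subseteq\find(M\oplus R)$ and the class of $R$ is unchanged by \Cref{lem: support of an irreducible class}.

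With these inputs, \Cref{th: convergence rates}(\ref{part: convergence rate of the parent module to anything in a class}) yields a $\lambda_{M,R}<p^{\delta_R(R)}$ that works for all $M'\in\find((0),M)$ and all $X$ in the class of $R$. We then choose $\lambda_M$ strictly between $\lambda_{M,R}$ and $p^{\delta_R(R)}$.

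Two loose ends remain. Modules $M'\in\find(M)\setminus\find((0),M)$ are torsion, so $\Fmult{e}{M'}{X}=0$. Since the period is one, the phase-wise limits coincide with $\gsig{M'}{X}$.

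The main obstacle is the non-strongly-F-regular case. There one must verify that the class of $R$ behaves uniformly, without appealing to a domain hypothesis. The plan is to argue through \Cref{cor: convergence radius is at least the power of dimension} together with the F-splitting-ratio growth bound. If this proves delicate, the fallback is to handle each $\p\in\Ass(R)$ with $\alpha(R_\p)=\delta_R(R)$ separately via \Cref{th: convergence rates} and take the maximum of the resulting finitely many constants.
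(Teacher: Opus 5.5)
Your strongly F-regular case is the paper's argument. The de Stefani--Polstra--Yao rates for $\Fmult{e}{R}{R}$ and $\Fmult{e}{M}{R}$ are fed into \Cref{th: convergence rates}(\ref{part: convergence rate of the parent module to anything in a class}) with $N=R$ at $(0)$, and torsion $M'$ contribute nothing.

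The gap is in the non-strongly-F-regular case, which you leave as a plan. You again route it through \Cref{th: convergence rates}(\ref{part: convergence rate of the parent module to anything in a class}). That part, however, has a second hypothesis: a bound $\Fmult{e}{M}{R}=O(\theta^e)$ with $\theta<p^{\delta_R(R)}$ for the parent module $M$. Its proof uses exactly this $\theta$ in the zero-signature case. You never supply this bound: the splitting-ratio estimate controls only $\Fmult{e}{R}{R}$, and \Cref{cor: convergence radius is at least the power of dimension} gives only the maximal rate $p^{e\delta_R(R)}$. The rate of the class of $R$ alone does not control how fast $M$ feeds into that class, which is why part (\ref{part: convergence rate of the parent module to anything in a class}) assumes it separately. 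You need an algebraic input here. Either cite \cite{deStefani_Polstra_Yao_globalizing_F_invariants} as the paper does, giving $\Fmult{e}{M}{R}=O(p^{e(\delta_R(R)-1)})$ when $R$ is not strongly F-regular. Or use that surjections onto free modules split: a summand $R^{\oplus a}$ of $F^e_*M$ is then a summand of $(F^e_*R)^{\oplus \mu_R(M)}$, so $\Fmult{e}{M}{R}\le \mu_R(M)\Fmult{e}{R}{R}$ by KRS.

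Once that bound is in hand, the paper's route in this case is much simpler and needs no generating functions. For $X$ in the class of $R$, pick $e'$ with $\Fmult{e'}{X}{R}\neq 0$. Then $\Fmult{e}{M}{X}\,\Fmult{e'}{X}{R}\le \Fmult{e+e'}{M}{R}$. Also $\Fmult{e}{M'}{X}\le \Fmult{e+e_0}{M}{X}$ whenever $M'$ is a summand of $F^{e_0}_*M$. Hence every such multiplicity is $O(p^{e(\delta_R(R)-1)})$ and all the signatures vanish.

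Your other worries in this case are moot. By definition $\delta_R(R)=\alpha(R_\p)$ for some $\p\in\Min(R)\subseteq\Ass(R)$, so your ``otherwise'' case is empty. If $\p\notin\Ass(M')$, then $\Fmult{e}{M'}{X}=0$, because $\p\in\Ass(X)=\Ass(R)$ while $\Ass(F^e_*M')=\Ass(M')$. So one prime suffices and no maximum over primes is needed.

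Two smaller points. The claim $\sdim R<\dim R$ for non-domains needs a sentence: every minimal prime of the F-split ring lies in the splitting prime. The passage to the completion is unnecessary, since KRS is a standing assumption, and it would change $\find(M)$ and the class of $R$.
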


\begin{proof}
    If $M$ is a torsion module, then one can take $\lambda_M= p^{\delta_R(M)}< p^{\delta_R(R)}$ (\Cref{cor: convergence radius is at least the power of dimension}). So, henceforth assume that $M$ is not a torsion module. First, assume that $R$ is not strongly F-regular. Then $\gsig{M}{R}=0$ and 
    \[\Fmult{e}{M}{R}= O(p^{e(\delta_R(R)-1)}),\,\,\, \text{see, for example, \cite{deStefani_Polstra_Yao_globalizing_F_invariants}}.\]
    We argue that for any $X$ in the connected class of $\Mat(R)$ containing $R$, 
    \[\Fmult{e}{M}{X}= O(p^{e(\delta_R(R)-1)}).\]
    To this end, suppose that for one such $X$, there exists a sequence $(e_i)_{i \in \mathbb N}$ of positive integers such that the sequence $(\Fmult{e_i}{M}{X}/p^{e_i(\delta_R(R)-1)})_{i}$ is unbounded. Take an $e'$ such that $\Fmult{e'}{X}{R}$ is nonzero. Then $(\Fmult{e_i+e'}{M}{R}/p^{(e_i+e')(\delta_R(R)-1)})_{i}$ is also unbounded, contradicting our earlier observation. Since $M'$ is a summand of some $F^e_*M$, a straightforward argument shows that
    $\Fmult{e}{M'}{X}= O(p^{e(\delta_R(R)-1)})$.
    
    From now on, we assume that $R$ is strongly F-regular. We know from \cite[Theorem 4.9]{deStefani_Polstra_Yao_globalizing_F_invariants}\footnote{Also follows from earlier works \cite{Tucker} and \cite{PolstraTucker}.} that
    \[\Fmult{e}{M}{R}= \gsig{M}{R}p^{e\delta_R(R)}+ O(p^{e(\delta_R(R)-1)}).\]
    The rest now follows from \Cref{th: convergence rates}, (\ref{part: convergence rate of the parent module to anything in a class}) applied to a prime ideal in $\Minh(R)$.
\end{proof}

\subsection{F-split rings}\label{subsec: F split rings}
A natural question arising from this line of thought is whether 
F-split rings $R$ with irreducible $\Mat(R)$ are strongly F-regular. In \Cref{prop: Mat is irreducible} we will provide some evidence, but it should also be noted that the irreducibility of $\Mat(R)$ does not imply strong F-regularity alone. Indeed, 
using \cite{MallorySayrafi} 
one can verify that the $D_6^0$-singularity $z^2 + x^2y + xy^3$ 
in characteristic $2$ has the following 
decomposition: $F_* R = M^{\oplus 2}$ and $F_* M = M^{\oplus 4}$, so by our definition $\Mat(R) = \begin{bmatrix}
    4
\end{bmatrix}$. 

\begin{lemma}\label{lem: basic properties of Mat}
Let $(R, \m)$ be an F-finite local ring. If $R$ is F-split, then the convergence radius  $R(\Mat(R)) \leq p^{-\sdim R - \alpha (R)}$.
\end{lemma}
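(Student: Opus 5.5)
The plan is to exhibit a single pair of entries of $\Mat(R)^e$ whose growth is at least of order $p^{e(\sdim R + \alpha(R))}$. The natural choice is the free summand. Because $R$ is F-split, $R$ is a direct summand of $F_*R$, so $R \in \find(R)$. By \Cref{prop: entries of powers of the F decomposition matrix}, the $(R,R)$-entry of $\Mat(R)^e$ equals $\Fmult{e}{R}{R} = a_e(R)$, the free rank of $F^e_*R$. The definition of the convergence radius gives
\[
R(\Mat(R)) \leq \frac{1}{\limsup_{e\to\infty} (a_e(R))^{1/e}},
\]
so it suffices to show that $\limsup_e a_e(R)^{1/e} \geq p^{\sdim R + \alpha(R)}$.

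For this I will use the theorem of Aberbach--Enescu, Yao, and Blickle--Schwede--Tucker recalled earlier. It states that $p^{-e(\sdim(R)+\alpha(R))} a_e(R)$ converges to the F-splitting ratio $\Fratio(R)$, and that this limit is positive when $R$ is F-split. So there is a constant $c>0$ with $a_e(R) \geq c\, p^{e(\sdim R + \alpha(R))}$ for all large $e$. Taking $e$-th roots gives $\liminf_e a_e(R)^{1/e} \geq p^{\sdim R+\alpha(R)}$, and hence the same bound for the $\limsup$. Combining this with the display above yields $R(\Mat(R)) \leq p^{-\sdim R - \alpha(R)}$.

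There is no real obstacle once the positivity of the F-splitting ratio is taken as given. The only points to check are that $R$ lies in $\find(R)$, which follows from F-splitness (already at $e=1$, and in fact for all $e$), and that the infimum in the definition of the convergence radius is bounded above by the reciprocal of the root growth of any single entry. Both are immediate from the definitions. KRS is not needed for the count $a_e(R)$ itself. If one wants the matrix framework literally, one may assume KRS as in the standing conventions, or pass to the completion using \Cref{le: passing to completion}.
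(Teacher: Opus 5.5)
Your proposal is correct and is essentially the paper's argument: both use the positivity of the limit $\lim_e p^{-e(\sdim R+\alpha(R))}\Fmult{e}{R}{R}=\Fratio(R)$ (Aberbach--Enescu, Blickle--Schwede--Tucker) and then the definition of the convergence radius as an infimum over entries. The only difference is that the paper phrases the bound through the connected class $\cC$ of $R$ (computing $R(\cC)$ and using $R(\Mat(R))\leq R(\cC)$), while you go directly to the $(R,R)$-entry of $\Mat(R)^e$.
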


\begin{proof}
Since it is known (\cite{BlickleSchwedeTucker} and \cite{AberbachEnescu}) that $\lim_{e \to \infty} \Fmult{e}{R}{R}/p^{e(\sdim R + \alpha (R))}$  is a positive real number,  
the radius of convergence of the connected class $\cC$
of $R$ in $\Mat(R)$ is $p^{-(\sdim (R) + \alpha (R))}$. 
But then $R(\Mat(R)) \leq R(\cC)$ by definition.
\end{proof}

The following lemma extends an observation in \cite{PolstraSmirnov}.

\begin{lemma}\label{lem: ratio is 1}
    Let $(R, \m)$ be an F-split  F-finite local ring
    and let $\cP$ denote the splitting prime of $R$.
    The following are equivalent:
    \begin{enumerate}
        \item $R/\cP$ is regular and $I_e = \cP + \m^{[p^e]}$ for all $e \geq 1$;
        \item $\Fmult{e}{R}{R} = p^{e (\sdim (R) + \alpha(R))}$ for all $e \geq 1$;
        \item $\Fratio (R) = 1$.
    \end{enumerate}
\end{lemma}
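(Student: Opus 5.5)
We will prove (1) $\Rightarrow$ (2) $\Rightarrow$ (3) $\Rightarrow$ (1), using the formula $\Fmult{e}{R}{R} = a_e(R) = p^{e\alpha(R)}\length(R/I_e)$. We also use two known facts: $\Fratio(R) = \lim_e p^{-e(\sdim R + \alpha(R))}a_e(R)$, and $\cP \subseteq I_e$ for every $e$, since $\cP = \cap_e I_e$.

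For (1) $\Rightarrow$ (2), set $d = \sdim R = \dim R/\cP$. Since $R/\cP$ is regular local of dimension $d$, it has a regular system of parameters $y_1,\dots,y_d$. The images of $\m^{[p^e]}$ in $R/\cP$ generate $\bar\m^{[p^e]} = (y_1^{p^e},\dots,y_d^{p^e})$. Hence $\length(R/(\cP+\m^{[p^e]})) = \length\big((R/\cP)/\bar\m^{[p^e]}\big) = p^{ed}$, by Kunz's theorem (flatness of Frobenius on the regular ring $R/\cP$). It follows that $a_e(R) = p^{e\alpha(R)}p^{ed}$. The implication (2) $\Rightarrow$ (3) is immediate from the definition of $\Fratio$.

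The substantive direction is (3) $\Rightarrow$ (1). First, for every $e$ we have $\cP + \m^{[p^e]} \subseteq I_e$, because $I_e$ contains $\cP$ and $\m^{[p^e]}$ (as $\phi(F^e_* x^{p^e}r) = x\phi(F^e_*r) \in \m$ for $x\in\m$). This gives $a_e(R) \le p^{e\alpha(R)}\length(R/(\cP+\m^{[p^e]})) = p^{e\alpha(R)}\length(\bar R/\bar\m^{[p^e]})$, where $\bar R = R/\cP$. By Kunz, $\length(\bar R/\bar\m^{[p^e]}) \ge p^{ed}$, with equality for all $e$ if and only if $\bar R$ is regular.

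The key submultiplicativity step is the following. Since $R$ is F-split, $F^{e+e'}_*R \cong F^{e}_*(R^{a_{e'}} \oplus \cdots)$ contains $(F^e_*R)^{a_{e'}}$, hence $R^{a_e a_{e'}}$. So $a_{e+e'} \ge a_e a_{e'}$. Normalizing, $c_e := a_e/p^{e(d+\alpha)}$ satisfies $c_{e+e'}\ge c_ec_{e'}$. Also $c_e \le 1$, because $a_e \le p^{e\alpha}\length(\bar R/\bar\m^{[p^e]})$ and the Hilbert--Kunz function of $\bar R$ is at most $e_{HK}(\bar R)p^{ed}$. More simply, we can use $c_e \le \lim_n c_{ne}^{1/n}$-type bounds. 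By Fekete's lemma, $\log c_e$ superadditive implies $\lim_e (\log c_e)/e = \sup_e (\log c_e)/e$. Since $\lim c_e = \Fratio = 1 > 0$, we get $\lim (\log c_e)/e = 0$, hence $c_e \le 1$ for all $e$.

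For the reverse inequality, superadditivity together with $c_e\to 1$ gives $c_e \ge 1$: indeed $c_{ne} \ge c_e^{\,n}$, so if $c_e<1$ then $c_{ne}\to 0$, contradicting $\Fratio = 1$. Hence $c_e = 1$ for all $e$, that is, (2) holds.

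Combining (2) with the chain $p^{ed} \le \length(\bar R/\bar\m^{[p^e]})$ and $p^{e\alpha}\length(\bar R/\bar\m^{[p^e]}) \ge a_e = p^{e(d+\alpha)}$ does not force equality by itself. Instead we take the limit: $\ehk(\bar R) = \lim p^{-ed}\length(\bar R/\bar\m^{[p^e]})$. The main obstacle is to show $\ehk(\bar R)=1$, i.e., that the inclusion $\cP+\m^{[p^e]}\subseteq I_e$ is an equality.

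For this we plan to use the Blickle--Schwede--Tucker/Aberbach--Enescu description: $\bar R$ is strongly F-regular, $I_e/\cP$ are the F-splitting ideals of $\bar R$, and $\Fratio(R) = \fsig(\bar R)$. Granting this, $\fsig(\bar R) = 1$ forces $\bar R$ regular (Huneke--Leuschke). Then $\length(\bar R/\bar\m^{[p^e]}) = p^{ed} = \length(R/I_e)$, and the inclusion $\cP+\m^{[p^e]}\subseteq I_e$ becomes an equality by comparing lengths, which is (1). If that identification is not available as stated, we would instead argue directly that $\fsig(\bar R)\le 1/\ehk(\bar R)$ and that equality $\ehk = 1$ characterizes regularity (Watanabe--Yoshida).
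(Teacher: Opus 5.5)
The gap is in your derivation of (3) $\Rightarrow$ (2), specifically the lower bound $c_e\ge 1$. From $c_{ne}\ge c_e^{\,n}$ and $c_e<1$ you only learn that $c_{ne}$ is bounded below by something tending to $0$. That is no contradiction with $c_{ne}\to 1$.

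Supermultiplicativity together with $\lim_e c_e=1$ gives only the upper bound $c_e\le 1$; your Fekete step for that bound is fine. The abstract sequence $c_1=1/2$, $c_e=1$ for $e\ge 2$ has every property you invoke and still violates (2). Your final length comparison in (3) $\Rightarrow$ (1) uses $\length(R/I_e)=p^{e\sdim(R)}$, so it falls with this step.

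Genuinely algebraic input is needed here. The paper localizes at $\cP$:
\begin{itemize}
\item By Aberbach--Enescu, $\sdim(R_\cP)=0$, and $\alpha(R_\cP)=\sdim(R)+\alpha(R)$. Hence $\Fmult{e}{R_\cP}{R_\cP}=p^{e(\sdim(R)+\alpha(R))}$ exactly.
\item If $\Fmult{e_0}{R}{R}<\Fmult{e_0}{R_\cP}{R_\cP}$ for some $e_0$, then some non-free indecomposable summand $M$ of $F^{e_0}_*R$ has $M_\cP$ with a free summand.
\item Since $M^{\oplus \Fmult{e}{R}{R}}$ splits off $F^{e_0}_*(F^e_*R)$ alongside the free part, localizing gives $\Fmult{e+e_0}{R_\cP}{R_\cP}\ge \Fmult{e+e_0}{R}{R}+\Fmult{e}{R}{R}$.
\item Normalizing and letting $e\to\infty$ gives $1\ge 1+p^{-e_0(\sdim(R)+\alpha(R))}$, a contradiction.
\end{itemize}

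An alternative repair: once you know $\bar R=R/\cP$ is regular, use $I_{e_0}^{[p^e]}\subseteq I_{e_0+e}$ together with flatness of Frobenius on $\bar R$. Any $x\in I_{e_0}\setminus(\cP+\m^{[p^{e_0}]})$ then forces $c_{e_0+e}\le 1-p^{-e_0\sdim(R)}$ for all $e$. This contradicts $\Fratio(R)=1$ and yields (1) directly.

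Your regularity step also rests on a false identification. In general $I_e/\cP$ are not the F-splitting ideals of $\bar R$, and $\Fratio(R)\neq\fsig(\bar R)$. The remark right after this lemma in the paper notes that $R/\cP$ can be regular while $\Fratio(R)<1$, which would be impossible if $\Fratio(R)=\fsig(\bar R)$.

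Only one inequality holds. Each $\phi\in\Hom_R(F^e_*R,R)$ induces a map $F^e_*\bar R\to\bar R$, so the preimage of the $e$-th splitting ideal of $\bar R$ lies in $I_e$. Since $\alpha(\bar R)=\alpha(R)$ and $\bar R$ is a domain, this gives $\Fratio(R)\le\fsig(\bar R)\le 1$. That inequality plus Huneke--Leuschke does give regularity of $\bar R$ from $\Fratio(R)=1$; the paper simply cites Blickle--Schwede--Tucker, Theorem~4.2, for this.

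Your fallback via $\ehk(\bar R)$ does not help, because it never connects $\Fratio(R)$ to $\bar R$. The implications (1) $\Rightarrow$ (2) $\Rightarrow$ (3) in your proposal are fine.
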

\begin{proof}
In this proof, for an F-finite local ring $S$, we use the notation $\Fmult{e}{S}{S}$ to denote the free rank of $F^e_*S$, even when $S$ does not have KRS.

    The implications $(1) \Rightarrow (2) \Rightarrow (3)$
    are immediate. We now show that $(3) \Rightarrow (2)$.
    By \cite[Proposition~3.6]{AberbachEnescu}, $\sdim (R_\cP) = 0$
    so by the proof of \cite[Corollary~3.4]{AberbachEnescu}
    $\Fmult{e}{R_\cP}{R_\cP} = p^{e (\sdim (R) + \alpha(R))}$.
    Hence, it suffices to show that $\Fmult{e}{R_\cP}{R_\cP} = \Fmult{e}{R}{R}$ for all $e \geq 1$. 
    We follow the approach of \cite[Theorem~3.5]{PolstraSmirnov}.

    By contradiction, suppose that $\Fmult{e_0}{R_\cP}{R_\cP} > \Fmult{e_0}{R}{R}$ for some $e_0 \geq 1$.
    This means that there is an indecomposable module $M$ 
    such that $M$ is a direct summand of $F_*^{e_0}R$ and $M_{\cP}$ has a free summand.
    Thus for $e \geq 1$
    we see that $M^{\oplus \Fmult{e}{R}{R}}$ is a direct summand of $F_*^{e_0 + e} R$. 
    Therefore, we may write 
    $F_*^{e_0 + e} R \cong R^{\oplus \Fmult{e+e_0}{R}{R}} \oplus M^{\oplus b_e} \oplus N_e$ where $b_e \geq \Fmult{e}{R}{R}$.
    In particular, we obtain after localizing that 
    $ \Fmult{e+e_0}{R_\cP}{R_\cP} \geq \Fmult{e+e_0}{R}{R} + \Fmult{e}{R}{R}.$
    It follows that $\Fratio (R_\cP) \geq 1 + p^{-e_0 (\sdim (R) + \alpha(R))}$, a contradiction. 

    We now prove the remaining implication. By \cite[Theorem~4.2]{BlickleSchwedeTucker},
    if $\Fratio (R) = 1$, then $R/\cP$ is regular.
    The conclusion now follows after noting that
    $\cP + \m^{[p^e]} \subseteq I_e$ and that the two ideals have equal colengths by $(2)$. 
\end{proof}

\begin{remark}
It is possible that $R/\cP$ is regular but $\Fratio (R) < 1$; see \cite[Remark 4.4]{Smirnov}. 
\end{remark}

\begin{proposition}\label{prop: Mat is irreducible}
    Let $(S, \m, k)$ be an F-finite F-split local ring of characteristic $p > 0$. 
    Suppose that $\Mat(S)$ is irreducible. 
    Then $R(\Mat(S)) = p^{-\sdim (S) - \alpha(S)}$
    and $\Mat(S)$ is aperiodic positive recurrent.
    Moreover, if either
    \begin{itemize}
        \item there is an indecomposable F-contributor to $S$, or
        \item $\Fratio (S) = 1$ (e.g., $\sdim (S) = 0$),
    \end{itemize}
    then $S$ is strongly F-regular. 
\end{proposition}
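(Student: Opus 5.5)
Since $S$ is F-split, $S \in \find(S)$, and the connected class $\cC$ of $S$ in $\Mat(S)$ is all of $\find(S)$ by irreducibility. By \cite{BlickleSchwedeTucker, AberbachEnescu}, $\Fmult{e}{S}{S}/p^{e(\sdim(S)+\alpha(S))}$ converges to $\Fratio(S)>0$. With $\rho := p^{-\sdim(S)-\alpha(S)}$, this gives $\lim_e \Fmult{e}{S}{S}\rho^e = \Fratio(S)>0$. \Cref{lem: limit and periodicity} applied with $r=\rho$ to the irreducible matrix $\Mat(S)$ then shows that $\Mat(S)$ is positive recurrent and aperiodic, and that $R(\Mat(S)) = \rho$, since $\lim_e(\Fmult{e}{S}{S})^{1/e} = 1/\rho$. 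This settles the first assertion and also agrees with \Cref{lem: basic properties of Mat}.

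For the second assertion, I would show that $\sdim(S) = \dim(S)$. By \cite{AberbachLeuschke}, combined with the fact that the F-signature is $\lim \Fmult{e}{S}{S}/p^{e\delta(S)}$, this is equivalent to strong F-regularity once $\Fmult{e}{S}{S}$ grows like $p^{e(\dim S+\alpha(S))}$. Note $\delta_S(S)=\dim(S)+\alpha(S)$ by \Cref{thm: Kunz rank}.

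\textbf{First case: an indecomposable F-contributor $X$ to $S$ exists.} By \Cref{lem: radius of convergence for contributors}, $R(\Mat(S)) = p^{-\delta(S)}$. Comparing with the first part gives $\sdim(S)+\alpha(S) = \delta(S) = \dim(S)+\alpha(S)$, so $\sdim(S)=\dim(S)$. Hence the splitting prime $\cP$ is a minimal prime of maximal dimension, and $\lim_e \Fmult{e}{S}{S}/p^{e\delta(S)} = \Fratio(S) > 0$. Aberbach--Leuschke (the F-signature of $S$ is positive) then gives that $S$ is strongly F-regular. Strictly, the positivity-of-F-signature criterion is stated for local rings with the limit normalized by $\delta$, which is exactly what we have.

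\textbf{Second case: $\Fratio(S)=1$.} By \Cref{lem: ratio is 1}, $\Fmult{e}{S}{S} = p^{e(\sdim(S)+\alpha(S))}$ for every $e$, and $I_e = \cP+\m^{[p^e]}$. The plan is to show that $\Mat(S)$ irreducible forces $\cP = 0$ and $S$ regular, or more directly that $\sdim(S)=\dim(S)$.

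\begin{itemize}
\item Positive recurrence and irreducibility give positive left and right eigenvectors $a,b$ with eigenvalue $1/\rho$ (\Cref{thm: vere-jones eigenvectors}).
\item \Cref{thm: Vere-Jones limit exists} then gives $\lim \Fmult{e}{S}{N}\rho^e = a_N b_S/(a\cdot b)$, and this is positive for every $N \in \find(S)$.
\item So every $N \in \find(S)$ appears at rate $p^{e(\sdim+\alpha)}$.
\end{itemize}

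Now compare ranks at a minimal prime $\q \in \Minh(S)$ using the positive eigenvector $(\grk{\q}{N})$ of $\Mat(\q,S)$ with eigenvalue $p^{\delta(S)}$ (\Cref{pr: positive eigenvector of the block corresponding to a prime}). Some $N\in\find(S)$ has $\q\in\Ass(N)$, so $\find(\q,S)$ is nonempty. Irreducibility and the support statement of \Cref{lem: support of an irreducible class} force every $N$, including $S$, to have the same associated primes. Hence $\find(\q,S)=\find(S)$ and $\Mat(\q,S)=\Mat(S)$.

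This matrix then has positive right eigenvectors with eigenvalues $p^{\delta(S)}$ and $1/\rho = p^{\sdim+\alpha}$, and a positive left eigenvector $a$ for $1/\rho$. Pairing gives $p^{\delta(S)}(a\cdot h) = a\Mat(S)h = p^{\sdim+\alpha}(a\cdot h)$. For this one needs $a\cdot h<\infty$; I would try to establish it using $h_N \le$ const$\cdot b_N$ or a Fatou-type argument, and this is the main obstacle. Granting it, $\sdim(S)=\dim(S)$, and we conclude as in the first case.

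In fact this pairing argument does not use $\Fratio=1$ at all, only positive recurrence. So, if it goes through, it handles both bullets uniformly, with the first bullet serving as a fallback. The finiteness of $a\cdot h$ (equivalently, showing that $h$ is the Perron eigenvector via \Cref{thm: infinite basic is final} and Vere-Jones' subinvariance theorem, Theorem 4.1 there) is the step I expect to be hardest. I would try first to use that $h$ is $p^{\delta}$-invariant and $\rho\le p^{-\delta}$; recurrence then forces every subinvariant vector to be a multiple of $b$, whence $p^{\delta}=1/\rho$.
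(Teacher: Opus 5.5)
Your treatment of the first assertion and of the first bullet is correct and is essentially the paper's argument. The convergence $\Fmult{e}{S}{S}\,p^{-e(\sdim(S)+\alpha(S))}\to\Fratio(S)>0$ gives the value of $R(\Mat(S))$, positive recurrence and aperiodicity. \Cref{lem: radius of convergence for contributors} then forces $\sdim(S)=\dim(S)$, so $\fsig(S)=\Fratio(S)>0$.

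The second bullet, $\Fratio(S)=1$, has a genuine gap.

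\textbf{The pairing step is unproved and the proposed fix is circular.} Your pairing argument needs $a\cdot h<\infty$. Your route to it rests on $\rho\le p^{-\delta(S)}$, which is backwards. Since $h$ is a positive right eigenvector with eigenvalue $p^{\delta(S)}$, \Cref{lem: convergence radius from positive eigenvector} gives $\rho=R(\Mat(S))\ge p^{-\delta(S)}$, i.e. $\sdim(S)\le\dim(S)$. The inequality $\rho\le p^{-\delta(S)}$ is exactly the conclusion you are after. With the correct inequality, $\rho\Mat(S)h=\rho p^{\delta(S)}h\ge h$. So $h$ is $\rho$-superinvariant, not subinvariant, and Vere-Jones' uniqueness theorem for subinvariant vectors says nothing about it.

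\textbf{No purely matrix-theoretic argument can close this.} An irreducible positive recurrent nonnegative matrix can have positive right eigenvectors with eigenvalue strictly larger than $1/R(T)$. For example, a reflected random walk on $\mathbb N$ with drift towards $0$ has positive increasing $\lambda$-eigenvectors for every $\lambda>1$. These necessarily pair to $\infty$ with the stationary distribution, since a finite pairing would force $\lambda=1$.

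\textbf{The argument proves too much.} As you note, it never uses $\Fratio(S)=1$. If it worked, it would show that every F-split ring with irreducible $\Mat(S)$ is strongly F-regular. The paper explicitly leaves this open at the start of \Cref{subsec: F split rings}.

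\textbf{The missing idea} is to use the \emph{exact} equality $\Fmult{e}{S}{S}=p^{e(\sdim(S)+\alpha(S))}$ for all $e$ from \Cref{lem: ratio is 1}; you quote it but never use it.
\begin{enumerate}
\item Write $F_*S\cong S^{\oplus a}\oplus M'$ with $a=p^{\sdim(S)+\alpha(S)}$ and $S$ not a summand of $M'$.
\item By KRS, $\Fmult{e}{S}{S}=a\,\Fmult{e-1}{S}{S}+\Fmult{e-1}{M'}{S}$.
\item So $\Fmult{e}{S}{S}=a^e$ for all $e$ forces $\Fmult{n}{M'}{S}=0$ for all $n$.
\item If $M'\neq 0$, any indecomposable summand $M$ of $M'$ lies in $\find(S)$. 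Irreducibility of $\Mat(S)$ then gives $\Fmult{n}{M}{S}>0$ for some $n\ge 1$, a contradiction.
\item Hence $F_*S$ is free, so $S$ is regular by Kunz's theorem, and in particular strongly F-regular.
\end{enumerate}
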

\begin{proof}
    If $\Mat(S)$ is irreducible, one easily obtains from \Cref{lem: basic properties of Mat} that 
    $R(\Mat(S)) = p^{-\sdim (S) - \alpha(S)}$. The F-splitness implies aperiodicity. The presence of an F-contributor forces that $R(\Mat(S)) = p^{-\dim(S) - \alpha(S)}$, so $\dim(S)= \sdim(S)$. Hence, $\fsig(S) = \Fratio (S) > 0$ and $S$ is strongly F-regular. 
    
    We now assume that $\Fratio (S) = 1$. By 
    \Cref{lem: ratio is 1}, we must then have 
    $\Fmult{e}{S}{S} = (p^{\sdim (S) + \alpha(S)})^e$
    for all $e \geq 1$. 
    Since $\Mat(S)$ is irreducible, this is only possible if $S$ is the only summand of $F_* S$: 
    if there were a different summand $M$
    then $\Fmult{e}{S}{S} \geq (p^{\sdim (S) + \alpha(S)})^e + \Fmult{e-1}{M}{S} > (p^{\sdim (S) + \alpha(S)})^e$ for $e \gg 0$, since $S \in \find(M)$. Hence $S$ is regular by \cite{Kunz1} and the conclusion follows.    
\end{proof}    

\begin{remark}
    Algebraically, $\Mat (S)$ is irreducible if and only if for all $e \geq 1$ any direct summand $M$ of $F_*^e S$ is not contained in $F_*^e \cP$.
    By combining with Lemma~\ref{lem: ratio is 1}
    one might be able to give a module-theoretic proof of the second case of Proposition~\ref{prop: Mat is irreducible}.
\end{remark}

\begin{remark}\label{rem: eigenvectors for F-split}
    When $R$ is F-split, the restriction of $\Mat(R)$ 
    to the connected class containing $R$ is positive $p^{-\sdim (R) - \alpha(R)}$-recurrent, hence it has distinguished positive left $v$ and right $u$ eigenvectors by Theorem~\ref{thm: vere-jones eigenvectors}. A natural form of these eigenvectors is
    \[
    u_M \coloneqq \lim_{e \to \infty} p^{-e\sdim (R) - e\alpha(R)} \Fmult{e}{M}{R} \text{ and }
    v_M \coloneqq \lim_{e \to \infty} p^{-e\sdim (R) - e\alpha(R)} \Fmult{e}{R}{M}.
    \]
    Since 
    $\Fmult{e+e'}{R}{R} = \sum_M \Fmult{e}{R}{M} \Fmult{e'}{M}{R}$, one can easily check that $u \cdot v \leq \Fratio (R)$. In fact, \Cref{thm: Vere-Jones limit exists} shows that $u \cdot v = \Fratio (R)$.
    \end{remark}

\section{Two-dimensional case}\label{se: dimension two}
In this section, we focus on F-contributors of dimension at most two. For any such F-contributor $X$, we prove the convergence of $(\Fmult{e}{M}{X}/p^{e\delta_R(X)})_e$, by proving that $X$ has period one (\Cref{cor: two dimensional F-contributors}). Moreover, we establish a rank formula and a convergence rate of $\Fmult{e}{M}{X}/p^{e\delta_R(X)}$ towards its limit (\Cref{th: rank formula in dimension two}). 

The technical heart of this section is \Cref{th: most splittings lift in dimension two}, which is essentially about proving strong F-regularity of such F-contributors. We now head towards \Cref{th: most splittings lift in dimension two} by proving some essential ingredients.

For an ideal $I$ in a noetherian ring $S$ and a finitely generated $S$-module $M$, such that $\length (M/IM)$ is finite, $\eh(I,M)$ denotes the Hilbert--Samuel multiplicity of $M$, i.e., 
\[\eh(I,M)=\lim_{n \to \infty} \dim(S)!\frac{\length(M/I^nM)}{n^{\dim(S)}}.\]
Let $\mu_S(M)$ denote the minimal number of generators of the $S$-module $M$.

\begin{lemma}\label{le: minimal number of generators in dimension one}
    Let $(R,\m)$ be a local ring  of dimension at most one and $M$ be a finitely generated $R$-module. Let $y$ be a system of parameters of $R$; $y$ is taken to be zero when $\dim(R)=0$. Then for any submodule $N \subseteq M$, 
    \[\mu_R(N) \leq \eh(yR,M)+ \length_R(0:_My) \leq \eh(yR,M)+ \length (\Hh^0_{\m}(M)).\]   
\end{lemma}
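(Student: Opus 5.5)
The plan is to bound $\mu_R(N)=\length(N/\m N)$ by comparing it with $\length(N/yN)$, and then to control $\length(N/yN)$ through $M$. The second inequality is immediate: $0:_M y$ is killed by $y$, so it is supported only at $\m$ (as $y$ is a system of parameters), and hence $0:_M y\subseteq \Hh^0_\m(M)$. It therefore suffices to prove the first inequality.

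\emph{Case $\dim R=0$.} Here $y=0$ and $\eh(0,M)=\length(M)$ by the convention that the multiplicity of a finite-length module in dimension zero is its length. Then $\mu_R(N)\leq \length(N)\leq \length(M)$, which gives the bound (even without the extra summand).

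\emph{Case $\dim R=1$.} Since $y\in\m$, we have $\mu_R(N)=\length(N/\m N)\leq \length(N/yN)$. For a finitely generated module $L$ over a one-dimensional local ring with $\length(L/yL)<\infty$, the Euler characteristic of the Koszul complex on $y$ gives
\[\length(L/yL)-\length(0:_L y)=\eh(yR,L).\]
(If $\dim L=0$, both lengths agree and the multiplicity is zero.) Applying this to $L=N$ yields $\length(N/yN)=\eh(yR,N)+\length(0:_N y)$. I then compare each term with the corresponding term for $M$. First, $0:_N y\subseteq 0:_M y$, so $\length(0:_N y)\leq \length(0:_M y)$. Second, $\eh(yR,\bullet)$ is additive on short exact sequences of modules of dimension at most one. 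Applied to $0\to N\to M\to M/N\to 0$, this gives $\eh(yR,N)\leq \eh(yR,M)$. Combining the two comparisons yields the first inequality.

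The only slightly delicate point is the formula $\length(L/yL)-\length(0:_Ly)=\eh(yR,L)$ and the additivity/monotonicity of $\eh(yR,\bullet)$ for modules whose dimension may be less than $\dim R$. Both are standard Serre-type facts: the Koszul Euler characteristic $\chi(y;L)$ equals $\eh(yR,L)$ when $\dim L=1$ and vanishes when $\dim L=0$, and it is additive on short exact sequences. I would cite these rather than reprove them. One caveat is that no claim beyond the stated inequalities is needed; in particular, I will not attempt a sharper bound using $\length(0:_N y)$ in place of $\length(0:_M y)$.
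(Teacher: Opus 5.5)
Your proof is correct and follows essentially the same route as the paper. Both arguments bound $\mu_R(N)=\length(N/\m N)$ by $\length(N/yN)$, rewrite the latter via Serre's formula as $\eh(yR,N)+\length(0:_N y)$, compare each term with the corresponding term for $M$, and finish with $0:_M y\subseteq \Hh^0_\m(M)$. Your explicit justification of $\eh(yR,N)\leq \eh(yR,M)$, using additivity on $0\to N\to M\to M/N\to 0$, supplies a step the paper leaves implicit.
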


\begin{proof}
When $\dim(R)=0$, the assertions are straightforward. Assume $\dim(R)=1$. $\mu_R(N) = \length(N/\m N) \leq \length(N/yN)$. By Serre's formula for multiplicities, $\eh(yR,N)= \length(N/yN)- \length(0:_N y)$. Thus $\mu_R(N) \leq \eh(yR,N)+ \length(0:_N y) \leq \eh(yR,M)+ \length(0:_M y)$. Since $\Hh^0_\m(M)= \cup_{n\in \mathbb N}(0:_My^n)$, the rightmost inequality follows.
\end{proof}

The next lemma is straightforward.

\begin{lemma}\label{le: length composed with a functor is subadditive}
Let $S, S'$ be two rings and $G$ be a covariant or contravariant half-exact functor from the category of $S$-modules to the category of $S'$-modules. For any short exact sequence of $S$-modules
\[ 
0 \to M' \rightarrow M \rightarrow M'' \to 0
\] 
where $G(M'), G(M), G(M'')$ all have finite lengths, $\length(G(M)) \leq \length(G(M'))+ \length(G(M''))$. 
\end{lemma}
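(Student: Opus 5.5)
The plan is to use the long exact sequence obtained by applying $G$ to $0 \to M' \to M \to M'' \to 0$ and to read off length subadditivity from exactness at the middle term. I will treat the covariant case explicitly and then note the changes for the contravariant case.

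\emph{Covariant case.} Half-exactness of $G$ means exactly that
\[
G(M') \xrightarrow{G(\iota)} G(M) \xrightarrow{G(\pi)} G(M'')
\]
is exact at $G(M)$. So $\ker G(\pi) = \operatorname{im} G(\iota)$, and this gives a short exact sequence of $S'$-modules
\[
0 \to \operatorname{im} G(\iota) \to G(M) \to \operatorname{im} G(\pi) \to 0 .
\]
All three terms have finite length: $\operatorname{im} G(\iota)$ is a quotient of $G(M')$, and $\operatorname{im} G(\pi)$ is a submodule of $G(M'')$. Length is additive on short exact sequences of finite length modules, and it is monotone under taking submodules and quotients. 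Hence
\[
\length(G(M)) = \length(\operatorname{im} G(\iota)) + \length(\operatorname{im} G(\pi)) \leq \length(G(M')) + \length(G(M'')).
\]

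\emph{Contravariant case.} The arrows reverse, and half-exactness gives exactness of $G(M'') \to G(M) \to G(M')$ at $G(M)$. The same argument then applies, with the roles of $M'$ and $M''$ interchanged.

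There is no real obstacle here. The only point needing care is that finiteness of $\length(G(M))$ is part of the hypothesis, so additivity of length applies to the displayed short exact sequence and no limiting or infinite-length issues arise.
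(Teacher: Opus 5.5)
Your proof is correct and is the straightforward verification the paper has in mind; the paper omits the proof entirely. Exactness of $G(M') \to G(M) \to G(M'')$ at $G(M)$ (with arrows reversed in the contravariant case) splits $\length(G(M))$ as the sum of the lengths of two images, each bounded by $\length(G(M'))$ or $\length(G(M''))$. A half-exact functor need not come with a long exact sequence, but your argument only uses exactness at the middle term, so the phrase ``long exact sequence'' in your opening sentence is unnecessary and costs nothing.
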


\begin{corollary}\label{cor: length of ext to pushforward of finite length}
    Let $(R, \m, k)$ be an F-finite local ring of characteristic $p > 0$
    and $M, N$ be finitely generated $R$-modules.
    If $N$ has finite length, then for any $i \in \mathbb N$, the minimal numbers of generators of $\Ext^i_R (M, F_*^e N)$ and $\Ext^i_R (F_*^e N,M)$ are at most $p^{e\alpha(R)}\length (N) \length (\Ext^i_R (M, k))$ and $p^{e\alpha(R)}\length (N) \length (\Ext^i_R (k,M))$ respectively.
\end{corollary}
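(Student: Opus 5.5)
We argue by induction on $\length(N)$, using \Cref{le: length composed with a functor is subadditive}. The key observation is that the minimal number of generators of a finite length module $E$ is at most $\length(E)$. So it suffices to bound the lengths themselves:
\[
\length(\Ext^i_R (M, F_*^e N)) \leq p^{e\alpha(R)}\length (N) \length (\Ext^i_R (M, k))
\]
and similarly $\length(\Ext^i_R (F_*^e N, M)) \leq p^{e\alpha(R)}\length (N)\length(\Ext^i_R(k,M))$. These Ext modules have finite length because they are finitely generated and are annihilated by a power of $\m$: $F^e_*N$ is killed by $\m^r$ whenever $\m^{rp^e}N = 0$, since $\m^r F^e_* N = F^e_*\m^{[p^e]\cdots}N \subseteq F^e_*\m^{rp^e}N$. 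In any case $F^e_*N$ has finite length by \Cref{lem: rank pushforward}, and so each Ext module is killed by $\Ann(F^e_*N)$, which is $\m$-primary.

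For the base case, take $N = k$. Since $R$ is F-finite, $F^e_* k$ is a $k$-vector space (as an $R$-module via $R \to k$) of dimension $p^{e\alpha(R)}$, i.e. $F^e_*k \cong k^{\oplus p^{e\alpha(R)}}$ as $R$-modules, by the proof of \Cref{lem: rank pushforward}. Ext commutes with finite direct sums in either variable, so
\[
\Ext^i_R(M, F^e_*k) \cong \Ext^i_R(M,k)^{\oplus p^{e\alpha(R)}}, \qquad \Ext^i_R(F^e_*k, M) \cong \Ext^i_R(k,M)^{\oplus p^{e\alpha(R)}},
\]
which gives exactly the bound with $\length(N) = 1$.

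For the inductive step, take a short exact sequence $0 \to N' \to N \to k \to 0$ with $\length(N') = \length(N) - 1$. Since $F^e_*$ is exact, we get $0 \to F^e_*N' \to F^e_*N \to F^e_*k \to 0$. The functors $\Ext^i_R(M, -)$ and $\Ext^i_R(-, M)$ are half-exact (covariant and contravariant respectively), because the long exact sequence gives exactness in the middle. All three terms have finite length by the argument above, so \Cref{le: length composed with a functor is subadditive} gives
\[
\length(\Ext^i_R(M,F^e_*N)) \leq \length(\Ext^i_R(M,F^e_*N')) + \length(\Ext^i_R(M,F^e_*k)).
\]
The contravariant case is analogous. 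The induction hypothesis then closes the argument.

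There is no serious obstacle. The only point needing care is confirming that $F^e_*k \cong k^{\oplus p^{e\alpha(R)}}$ as $R$-modules, i.e. that $\m$ acts trivially on it; this holds because $\m$ acts through $\m^{[p^e]}$, which annihilates $k$.
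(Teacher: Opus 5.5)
Your proof is correct and follows the paper's argument. The paper also bounds the minimal number of generators of these finite length Ext modules by their length, and then applies \Cref{le: length composed with a functor is subadditive} along a filtration of $F^e_*N$ by $p^{e\alpha(R)}\length(N)$ copies of $k$. Your induction on $\length(N)$, combined with $F^e_*k\cong k^{\oplus p^{e\alpha(R)}}$, simply spells this out.
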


\begin{proof}
    The modules concerned are of finite length, and the minimal number of generators of a finite length module is bounded above by its length. The rest follows from \Cref{le: length composed with a functor is subadditive}.
\end{proof}

\begin{lemma}\label{le: minimal number of generators of submodules of Ext groups}
    Let $(R, \mathfrak m)$ be a local ring, $M$ be a finitely generated Cohen--Macaulay module of dimension one or two. Let $N$ be a finitely generated $R$-module of dimension at most $\max\{\dim(M)-1,0\}$. Then there exists a constant $C$, depending only on $M$ and $N$, such that for any $e \in \mathbb N$, the minimal number of generators of any submodule of $\Ext^1_R(F^e_*N,M)$ is at most $Cp^{(\dim(M)-1+\alpha(R))e}$. 
\end{lemma}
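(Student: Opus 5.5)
We reduce to bounding the number of generators of submodules of $\Ext^1_R(F^e_*N, M)$ by a length-type invariant of $\Ext^1_R(F^e_*N,M)$ itself, and then estimate that invariant by dévissage over a prime filtration of $N$. We may assume $R$ is complete and $M$ faithful modulo replacing $R$ by $R/\Ann(M)$ (which does not change $\alpha(R)$ and keeps $\Ext$ computed over the smaller ring comparable via a change-of-rings spectral sequence; alternatively we work directly over $R$). Set $d=\dim(M)\in\{1,2\}$.

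\emph{Case $d=1$.} Here $\dim N=0$, so $N$ has finite length. By \Cref{cor: length of ext to pushforward of finite length}, $\length(\Ext^1_R(F^e_*N,M))\le p^{e\alpha(R)}\length(N)\length(\Ext^1_R(k,M))$. The minimal number of generators of any submodule of a finite length module is at most its length. This gives the bound with $C=\length(N)\length(\Ext^1_R(k,M))$ and exponent $\alpha(R)=\dim(M)-1+\alpha(R)$.

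\emph{Case $d=2$.} Now $\dim N\le1$. Take a filtration of $N$ with factors $R/\q$. If we had a bound for each factor, \Cref{le: length composed with a functor is subadditive} would not apply directly since the Ext groups need not have finite length. Instead, we use the long exact sequence: a submodule $U$ of $\Ext^1(F^e_*N,M)$ sits in an extension of a submodule of $\Ext^1(F^e_*N'',M)$ by a quotient of a submodule of $\Ext^1(F^e_*N',M)$. Then $\mu$ is subadditive on extensions, and $\mu(\text{quotient})\le\mu$. So it suffices to treat $N=R/\q$ with $\dim R/\q\le 1$. If $\dim R/\q=0$, we argue as above: the length is $O(p^{e\alpha(R)})\le O(p^{e(1+\alpha(R))})$. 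For $\dim R/\q=1$, $E_e:=\Ext^1_R(F^e_*(R/\q),M)$ is a finitely generated module over $R/\q^{[p^e]}$-ish, but more usefully it is an $F^e_*$-structure: it is annihilated by $\q$ acting through $F^e$, i.e.\ it is a module over $R/\Ann$ of dimension $\le1$. Choose $y\in\m$ forming a parameter for $R/\q$. We then apply \Cref{le: minimal number of generators in dimension one} over the one-dimensional ring $R/\Ann(E_e)$ (or $R/\q'$ with $\q'$ the radical of the annihilator, using a power of $y$): every submodule $U\subseteq E_e$ has $\mu(U)\le \eh(y,E_e)+\length(\Hh^0_\m(E_e))$.

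It remains to show that both terms are $O(p^{e(1+\alpha(R))})$. For the multiplicity term, we localize at $\q$: $\eh(y,E_e)=\length_{R_\q}((E_e)_\q)\,\eh(y,R/\q)$. Here $(E_e)_\q=\Ext^1_{R_\q}(F^e_*\kappa(\q),M_\q)$, whose length is at most $p^{e\alpha(R_\q)}\length(\Ext^1_{R_\q}(\kappa(\q),M_\q))$ by \Cref{cor: length of ext to pushforward of finite length}. Moreover, $\alpha(R_\q)=1+\alpha(R)$ by \Cref{thm: Kunz rank}. For $\length(\Hh^0_\m(E_e))$, we use the exact sequence $0\to R/\q\xrightarrow{y} R/\q\to R/(\q,y)\to0$ pushed forward by $F^e$. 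Since $M$ is Cohen--Macaulay of dimension two, $\Hom(F^e_*(R/(\q,y)),M)=0$, so $\Hh^0_\m(E_e)$ is controlled by $0:_{E_e}y^{p^e}\hookleftarrow$ the image of $\Ext^1(F^e_*R/(\q,y),M)$. The latter has length $O(p^{e\alpha(R)}\cdot\length(R/(\q,y)))$ by \Cref{cor: length of ext to pushforward of finite length}. Note that $y$ acts on $F^e_*$ as $y^{p^e}$, hence the sequence with $y$ yields $0:_{E_e} y$ in the Frobenius-twisted sense, which contains enough of $\Hh^0_\m$ once we observe $\Hh^0_\m(E_e)$ is killed by a bounded power of $y$ independent of $e$. \textbf{This uniformity is the main obstacle}: we would try to bound $\length(\Hh^0_\m(E_e))$ via $\length(0:_{E_e}y)\cdot(\text{nilpotency index})$, and if the index is not uniform, instead run the dévissage using $y^n$ for $n$ large fixed and absorb an extra factor $\le p^{e}$, which still fits the target exponent $1+\alpha(R)$.
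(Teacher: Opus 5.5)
Your case $\dim M=1$, the prime-filtration d\'evissage, and your bound $\eh(y,E_e)=\length_{R_\q}((E_e)_\q)\,\eh(y,R/\q)=O(p^{e(1+\alpha(R))})$ are fine. That bound uses \Cref{cor: length of ext to pushforward of finite length} and $\alpha(R_\q)=1+\alpha(R)$ (\Cref{thm: Kunz rank}), and it is exactly the paper's multiplicity computation; the paper just skips the filtration and sums over $\Minh(N)$ directly.

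The gap is the torsion term, which you explicitly leave open. You never bound $\length(\Hh^0_\m(E_e))$, and the fallback (``run the d\'evissage using $y^n$ \ldots\ and absorb an extra factor $\le p^e$'') is not an argument. The step meant to feed it is also off. The map $F^e_*(y\cdot)$ on $F^e_*(R/\q)$ is not multiplication by an element of $R$: multiplication by $y$ in the $R$-structure is $F^e_*(y^{p^e}\cdot)$. So $F^e_*(y\cdot)$ induces an $R$-linear endomorphism $\psi$ of $E_e$ with $\psi^{p^e}=y$. The long exact sequence then computes $\ker\psi$, namely the image of $\Ext^1_R(F^e_*(R/(\q,y)),M)$. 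Since $\ker\psi\subseteq 0:_{E_e}y$, this inclusion points the wrong way to bound $0:_{E_e}y$, let alone $\Hh^0_\m(E_e)$.

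The missing observation is that the torsion term vanishes. Because $\depth M=2$, $\Ext^1_R(L,M)=0$ for every finite length $L$; this also disposes of your filtration factors with $\dim R/\q=0$. Hence $\psi$ is injective, so $y=\psi^{p^e}$ is a nonzerodivisor on $E_e$, i.e.\ $0:_{E_e}y=0$. The first inequality of \Cref{le: minimal number of generators in dimension one} involves only $\length(0:_{E_e}y)$, not $\length(\Hh^0_\m(E_e))$, so it gives $\mu(U)\le\eh(y,E_e)$ directly. Even without this vanishing, $\length(0:_{E_e}y)=\length(\ker\psi^{p^e})\le p^e\length(\ker\psi)$ would make your ``extra factor $p^e$'' rigorous; no uniform nilpotency index is ever needed.

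The paper gets the same regularity more directly. It picks $f\in\Ann(N)$ regular on $M$, which is possible since every associated prime of the Cohen--Macaulay module $M$ has dimension two. It then rewrites $\Ext^1_R(F^e_*N,M)\cong\Hom_R(F^e_*N,M/fM)$ and takes $y$ regular on $M/fM$, hence regular on the Ext module.

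Finally, drop the opening reduction to $R/\Ann(M)$. $N$ need not be an $R/\Ann(M)$-module, and the Ext groups over the two rings differ; you never actually use this reduction.
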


\begin{proof}
When $\dim(M)=1$, $N$ has finite length. So the lemma follows from \Cref{cor: length of ext to pushforward of finite length}.

Suppose $\dim (M)=2$. If the Krull dimension of $N$ is zero,  $\Ext^1_R(F^e_*N,M)$ vanishes since $M$ has depth two. So we assume that $\dim(N)=1$. Since $M$ is Cohen--Macaulay, the minimal primes in $\Supp(M)$ are of dimension two. So, these minimal primes cannot be in $\Supp(N)$. Hence, we can choose an element $f \in \Ann_R (N)$, which is not in any minimal prime of $M$. 
Choose $y \in \mathfrak m$ to be regular on $M/fM$. 
By \cite[Lemma~1.2.4]{BrunsHerzog},
$\Ext^1_R (F_*^e N, M) \cong \Hom_R (F_*^e N, M/fM)$.
So $y$ is a regular element on the one-dimensional module $\Ext^1_R (F_*^e N, M)$. Let $R'= R/\Ann(\Ext^1_R (F_*^e N, M))$. Then $y$ is a system of parameters on $R'$.
By \Cref{le: minimal number of generators in dimension one}, the minimal number of generators of any $R'$ or equivalently $R$-submodule of $\Ext^1_R(F^e_*N,M)$ is at most
\begin{equation*}
\begin{split}
\eh(yR', \Ext^1_R(F^e_*N,M)) &= \sum_{\p \in \Minh(N)} \length_{R_\p}(\Ext^1_{R_\p}(F^e_*N_\p,M_\p))\eh(yR', R/\p).
\end{split}
\end{equation*}
For any $\p \in \Minh(N)$, $\length_{R_{\p}}(F^e_*N_\p)= \length_{R_\p}(N_\p)p^{e(1+\alpha(R))}$. Thus, by \Cref{le: length composed with a functor is subadditive}
\[\eh(yR', \Ext^1_R(F^e_*N,M))\leq \sum_{\p \in \Minh(N)} \length_{R_\p}(\Ext^1_{R_\p}(\kappa(\p),M_\p))\eh(yR', R/\p)\length_{R_\p}(N_\p)p^{e(1+\alpha(R))}.\]
\end{proof}

\begin{lemma}\label{le: bound on minimal number of generators with target twisted}
    Let $(R, \m, k)$ be a local ring, $M$ be a finitely generated Cohen--Macaulay module of dimension one or two. Let $N$ be a finitely generated $R$-module of dimension at most $\max\{\dim(M)-1,0\}$. Then there exists a constant $C$ such that for any $e$, the minimal number of generators of any submodule of $\Ext^1_R(M, F^e_*N)$ or $\Hom_R(M, F^e_*N)$ is at most $Cp^{e(\dim(M)-1+ \alpha(R))}$.     
\end{lemma}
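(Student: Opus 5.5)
We mirror the proof of \Cref{le: minimal number of generators of submodules of Ext groups}, now with $F^e_*N$ in the second argument. For $\Hom_R(M, F^e_*N)$ and $\Ext^1_R(M, F^e_*N)$ we bound generators of all submodules of a module of small dimension by \Cref{le: minimal number of generators in dimension one}. It then suffices to control either the length or the multiplicity of the ambient module.

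\textbf{Case $\dim(M)=1$.} Here $N$ has finite length. By \Cref{cor: length of ext to pushforward of finite length}, $\length(\Ext^i_R(M,F^e_*N)) \leq p^{e\alpha(R)}\length(N)\length(\Ext^i_R(M,k))$ for $i=0,1$. The number of generators of any submodule is at most the length of the ambient module, so we may take $C=\length(N)\max_i \length(\Ext^i_R(M,k))$.

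\textbf{Case $\dim(M)=2$ and $\dim(N)=0$.} The same argument applies and gives the stronger bound $O(p^{e\alpha(R)})$.

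\textbf{Case $\dim(M)=2$ and $\dim(N)=1$.} The modules $E_e$, meaning $\Hom_R(M,F^e_*N)$ or $\Ext^1_R(M,F^e_*N)$, are annihilated by $\Ann(N)$. Hence they are modules over $R/\Ann(N)$ and have dimension at most one. We plan to apply \Cref{le: minimal number of generators in dimension one} over $R' = R/\Ann(N)$ with a parameter $y$ for $R/\Ann(N)$. Any submodule of $E_e$ then needs at most $\eh(yR',E_e)+\length(\Hh^0_\m(E_e))$ generators. Choosing $y$ independent of $e$ is possible because $\Supp(F^e_*N)=\Supp(N)$.

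\textbf{Multiplicity term.} The associativity formula expresses the multiplicity as $\sum_{\p\in\Minh(N)}\length_{R_\p}((E_e)_\p)\,\eh(yR',R/\p)$. Localizing at $\p$ gives $(E_e)_\p=\Ext^i_{R_\p}(M_\p,F^e_*N_\p)$, which has finite length. By \Cref{le: length composed with a functor is subadditive} applied to a composition series of $F^e_*N_\p$ (each factor is $\kappa(\p)$), this length is at most $\length_{R_\p}(F^e_*N_\p)\,\length(\Ext^i_{R_\p}(M_\p,\kappa(\p)))$. Since $\length_{R_\p}(F^e_*N_\p)=p^{e(1+\alpha(R))}\length(N_\p)$, the multiplicity term is $O(p^{e(1+\alpha(R))})$, as required.

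\textbf{Torsion term $\length(\Hh^0_\m(E_e))$.} This is the main obstacle. We would handle it using the exact sequence $0\to \Hh^0_\m(N)\to N\to N'\to 0$, where $N'$ has depth one. Applying $\Hom_R(M,F^e_*-)$ and using subadditivity (\Cref{le: length composed with a functor is subadditive}, together with \Cref{cor: length of ext to pushforward of finite length} for the finite-length piece), we reduce to bounding $\Hh^0_\m$ of $\Hom_R(M,F^e_*N')$ and of $\Ext^1_R(M,F^e_*N')$, up to an $O(p^{e\alpha(R)})$ error.

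Since $\depth F^e_*N'\ge 1$, the module $\Hom_R(M,F^e_*N')$ has depth at least one, so its $\Hh^0_\m$ vanishes. For $\Ext^1$, choose $x\in\m$ that is regular on $N'$ and a parameter on $N'$. Then $0\to F^e_*N'\xrightarrow{x^{p^e}\,\cdot}F^e_*N'\to F^e_*(N'/x^{p^e}N')\to 0$ is exact, since $x$ acting on $F^e_*N'$ is multiplication by $x^{p^e}$ on $N'$. From the long exact sequence, the $x$-torsion of $\Ext^1_R(M,F^e_*N')$ is a quotient of $\Hom_R(M,F^e_*(N'/x^{p^e}N'))$. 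Its length is at most $p^{e\alpha(R)}\length(N'/x^{p^e}N')\,\length(\Hom_R(M,k))$, and $\length(N'/x^{p^e}N')=O(p^e)$. This gives an $O(p^{e(1+\alpha(R))})$ bound.

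To finish, we must pass from the $x$-torsion to all of $\Hh^0_\m$. Our first attempt is to choose $x$ so that it also acts as a parameter on $\Ext^1_R(M,F^e_*N')$, and to use that $\Hh^0_\m$ of a module of dimension at most one is annihilated by a fixed power of $x$; making this power uniform in $e$ is the delicate point. If that fails, the alternative is to replace $\length(\Hh^0_\m(E_e))$ by $\length(0:_{E_e}y)$, as in the first inequality of \Cref{le: minimal number of generators in dimension one}, and take $y=x$.
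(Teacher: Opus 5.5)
Your plan is correct once you commit to the fallback. Bound $\length(0:_{E_e}x)$, not $\length(\Hh^0_\m(E_e))$, using the first inequality of \Cref{le: minimal number of generators in dimension one}, with $x$ regular on $N/\Hh^0_\m(N)$ and a parameter on $R/\Ann(N)$. This is exactly what the paper does, so the uniformity-in-$e$ issue for $\Hh^0_\m(E_e)$ never needs to be resolved. Your reduction modulo $\Hh^0_\m(N)$ (which goes through for $0:_{(-)}x$ by left exactness) and your associativity bound on the multiplicity also match the paper.

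The only real difference is how you bound the $x$-torsion of $\Ext^1_R(M,F^e_*N')$. You realize it as a quotient of $\Hom_R(M,F^e_*(N'/x^{p^e}N'))$ via the long exact sequence in the second variable. The paper instead uses $0\to M\xrightarrow{y^n}M\to M/y^nM\to 0$ together with \cite[Lemma~1.2.4]{BrunsHerzog}, which additionally requires $y$ to be regular on $M$; your variant is slightly more direct.
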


\begin{proof}
If $\dim(N)= 0$, the lemma follows from \Cref{cor: length of ext to pushforward of finite length}. Thus we assume $\dim(M)=2, \dim(N)=1$, from now on.

Consider the long exact sequence of $\Ext^{\bullet}_R(M, \bullet)$ induced by the exact sequence
\[0 \rightarrow F^e_*\Hh^0_\m(N) \rightarrow F^e_*N \rightarrow F^e_*\frac {N}{\Hh^0_\m(N)} \rightarrow 0.\]
By \Cref{le: length composed with a functor is subadditive}, \Cref{cor: length of ext to pushforward of finite length}, there is a constant $C'$ such that the minimal number of generators of any submodule of $\Hom_R(M, F^e_*\Hh^0_\m(N))$ or $\Ext^1_R(M, F^e_*\Hh^0_\m(N))$ is at most $C'p^{e\alpha(R)}$, for any $e$. Thus, we can replace $N$ by $N/\Hh^0_\m(N)$, and assume that $\Hh^0_\m(N)=0$ from now on.

By our assumption $\m$ is not associated to $M$ or $N$. Thus, we can and do choose $y \in \m$ which is a nonzero divisor on both $M$ and $N$. In particular, the image of $y$ is a system of parameters in $R/\Ann(N)$. 
We consider $\Hom_R(M, F^e_*N)$ and $\Ext^1_R(M, F^e_*N)$ as modules over the one-dimensional ring $R/\Ann(N)$ and apply \Cref{le: minimal number of generators in dimension one}.
\begin{claim}
    There is a constant $C$ such that for any $n \in \mathbb N$, both $\length(0:_{\Hom_R(M, F^e_*N)}y^n)$ and $\length(0:_{\Ext^1_R(M, F^e_*N)}y^n)$ are bounded above by $C np^{e(1+\alpha(R))}.$
\end{claim}
\begin{proof}
For $\length(0:_{\Hom_R(M, F^e_*N)}y^n)$, note, 
\[
0:_{\Hom_R(M, F^e_*N)}y^n \cong 
\Hom_R (R/y^nR, \Hom_R (M, F_*^e N))
\cong \Hom_R(M/y^nM, F^e_*N). 
\]
But $\Hom_R(M/y^nM, F^e_*N) \cong 0$ as $y$ is a non-zero divisor on $N$.  

For $\length(0:_{\Ext^1_R(M, F^e_*N)}y^n)$, consider the derived sequence of Ext groups 
applied to $0 \rightarrow M \xrightarrow[]{y^n} M \rightarrow M/y^nM \rightarrow 0$. It shows, 
\begin{equation}\label{eq: bounding the torsion part quotient space}
\length(0:_{\Ext^1_R(M, F^e_*N)}y^n)\leq \length(\Ext^1_R(M/y^nM, F^e_*N)).    
\end{equation}
 By \cite[Lemma~1.2.4]{BrunsHerzog},
\[
\Ext^1_R(M/y^n M,F^e_*N) \cong 
\Hom_R(M/y^n M,  F^e_*\frac{N}{y^{np^e}N})
\cong
\Hom_R(M, F^e_*\frac {N}{y^{np^e}N}).
\]
Thus, by \Cref{le: length composed with a functor is subadditive},
\[
\begin{split}
\length(0:_{\Ext^1_R(M, F^e_*N)}y^n)
&\leq p^{e \alpha(R) }\length(\Hom_R(M, k)) \length(N/y^{np^e}N)
\\ &\leq n p^{e(1+\alpha(R))} \length(\Hom_R(M,k))\length(N/y N).    
\end{split}
\]
\end{proof}

We continue the proof of the lemma. Set $R'= R/\Ann(N)$. By associativity of multiplicities and
 \Cref{le: length composed with a functor is subadditive},
\[
  \begin{split}
\eh(yR', \Ext^1_R(M,F^e_*N)) &= \sum_{\p \in \Minh(N)}\length(\Ext^1_R(M,F^e_*N)_\p)\eh(yR', R/\p) \\ & \leq \sum_{\p \in \Minh(N)} p^{e(1+\alpha(R))}\length(\Ext^1_{R_\p}(M_\p,R_\p/\p R_\p))\eh(yR', R/\p)\length(N_\p).
\end{split}  
\]
A similar argument shows that
$\eh(yR', \Hom_R(M,F^e_*N))= O(p^{e(1+\alpha(R))})$. Using these multiplicity bounds, the claim, and \Cref{le: minimal number of generators in dimension one}, we conclude the lemma.
\end{proof}

The proof of the following lemma is straightforward.

\begin{lemma}\label{le: ascend and descend of splitting}
    Let $(R, \mathfrak m)$ be a local ring, $X$ be a finitely generated $R$-module, and $\phi\in \Hom_R(M, N)$ be an $R$-linear map between two finitely generated modules.
    \begin{enumerate}
        \item Let $\begin{tikzcd}
	{M} & {} & {X^{\oplus m}}
	\arrow["{\pi=(\pi_1, \cdots, \pi_{m})}", from=1-1, to=1-3]
\end{tikzcd}$ be an $R$-linear split projection. Let $\Lambda \subseteq \{\pi_1, \ldots, \pi_{m}\}$ be such that each $ \pi_i \in \Lambda$ lifts along $\Hom_R(N,X) \xrightarrow[]{-\circ \phi} \Hom_R(M,X)$ to some $\tilde \pi_i$. Then, the map
$\begin{tikzcd}
	{N} & {} & {X^{\oplus |\Lambda|}}
	\arrow["{(\tilde \pi_i)_{\pi_i \in \Lambda}}", from=1-1, to=1-3]
\end{tikzcd}$
is also a split projection.

\item Let $\begin{tikzcd}
	{X^{\oplus n}} & {} & {N}
	\arrow["{s=(s_1, \cdots, s_{n})}", from=1-1, to=1-3]
\end{tikzcd}$ be an $R$-linear split injection. Let $\Gamma \subseteq \{s_1, \ldots, s_{n}\}$ be such that each $s_i \in \Gamma$ lifts along $\Hom_R(X, M) \xrightarrow[]{\phi \circ-} \Hom_R(X, N)$ to some $\tilde s_i$. Then $\begin{tikzcd}
	{X^{\oplus |\Gamma|}} & {} & {M}
	\arrow["{(\tilde s_i)_{s_i \in \Gamma}}", from=1-1, to=1-3]
\end{tikzcd}$ is a split injection.
    \end{enumerate}
\end{lemma}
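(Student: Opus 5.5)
Both parts reduce to one observation: a split projection (resp. split injection) is detected after composing with the given map, so a splitting of the composite produces a splitting of the lifted map.

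For (1), set $\tilde\pi \coloneqq (\tilde \pi_i)_{\pi_i \in \Lambda}\colon N \to X^{\oplus |\Lambda|}$ and let $\pi_\Lambda \coloneqq (\pi_i)_{\pi_i\in\Lambda}\colon M \to X^{\oplus|\Lambda|}$. Since $\tilde\pi_i \circ \phi = \pi_i$ for every $\pi_i \in \Lambda$, we have $\tilde\pi\circ\phi = \pi_\Lambda$. Next we check that $\pi_\Lambda$ is a split projection. If $\sigma\colon X^{\oplus m}\to M$ satisfies $\pi\circ\sigma=\Id$, let $\iota_\Lambda\colon X^{\oplus|\Lambda|}\to X^{\oplus m}$ be the inclusion of the coordinates indexed by $\Lambda$. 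Then $\pi_\Lambda = \mathrm{pr}_\Lambda\circ \pi$, where $\mathrm{pr}_\Lambda$ is the coordinate projection, so
\[
\pi_\Lambda\circ(\sigma\circ\iota_\Lambda)=\mathrm{pr}_\Lambda\circ\pi\circ\sigma\circ\iota_\Lambda=\mathrm{pr}_\Lambda\circ\iota_\Lambda=\Id.
\]
Putting these together, $\tilde\pi\circ(\phi\circ\sigma\circ\iota_\Lambda)=\pi_\Lambda\circ\sigma\circ\iota_\Lambda=\Id_{X^{\oplus|\Lambda|}}$. Hence $\tilde\pi$ has a right inverse and is a split projection.

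Part (2) is dual. Put $\tilde s\coloneqq(\tilde s_i)_{s_i\in\Gamma}\colon X^{\oplus|\Gamma|}\to M$, so that $\phi\circ\tilde s=s_\Gamma\coloneqq s\circ\iota_\Gamma$. If $\tau\colon N\to X^{\oplus n}$ satisfies $\tau\circ s=\Id$, then $\mathrm{pr}_\Gamma\circ\tau\circ s_\Gamma=\Id$. Consequently $(\mathrm{pr}_\Gamma\circ\tau\circ\phi)\circ\tilde s=\Id_{X^{\oplus|\Gamma|}}$, so $\tilde s$ is a split injection.

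Nothing here is a real obstacle. The only care needed is the bookkeeping that restricting to the sub-tuple indexed by $\Lambda$ (resp. $\Gamma$) preserves the splitting, which the coordinate inclusion/projection handles. Locality of $R$ is not used.
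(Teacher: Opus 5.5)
Your argument is correct: composing the given splitting $\sigma$ (resp.\ $\tau$) with $\phi$ and with the coordinate inclusion/projection gives a right inverse of $\tilde\pi$ (resp.\ a left inverse of $\tilde s$). The paper omits the proof as straightforward, and yours is exactly the intended argument; you are also right that locality of $R$ plays no role.
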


We record a criterion for lifting split injective or surjective maps, which is used in \Cref{th: most splittings lift in dimension two}. 

\begin{proposition}\label{prop: minimal number of generators and lifting}
Let $(R,\m)$ be a local ring and $X$ be a finitely generated indecomposable $R$-module. Let $E$ be an $R$-module.
\begin{enumerate}
   \item Let $M' \xrightarrow[]{\Pi= (\pi_1, \ldots, \pi_n)} X^{\oplus n}$ be a split projection and $\partial\colon \Hom_R(M',X) \rightarrow E$ be an $R$-linear map. If the submodule of $E$ generated by $\partial(\pi_1), \ldots, \partial(\pi_n)$ is generated by $r\leq n$ elements, then there is an invertible $n \times n$ matrix $T$ over $R$ such that $T\Pi$ is a split projection $M' \rightarrow X^{\oplus n}$ all but at most $r$ of whose components lie in $\ker(\partial)$.
   \item Let $X^{\oplus n} \xrightarrow[]{S= (s_1, \ldots, s_n)} M''$ be a split injection and $\partial\colon \Hom_R(X,M'') \rightarrow E$ be an $R$-linear map. If the submodule of $E$ generated by $\partial(s_1), \ldots, \partial(s_n)$ is generated by $r\leq n$ elements, then there is an invertible $n \times n$ matrix $T$ over $R$ such that $ST$ is a split injection $X^{\oplus n} \rightarrow M''$ all but at most $r$ of whose components lie in $\ker(\partial)$.
\end{enumerate}
\end{proposition}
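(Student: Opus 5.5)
We prove part (1); part (2) is the dual argument, obtained by swapping the roles of source and target. The plan is linear algebra over the local ring $R$: change the basis of $X^{\oplus n}$ so that $n-r$ of the new components are sent to $0$ by $\partial$. Let $W\subseteq E$ be the submodule generated by $\partial(\pi_1),\ldots,\partial(\pi_n)$. By hypothesis $W$ is generated by $r$ elements, so $\mu_R(W)\le r$. By Nakayama's lemma, some $r'=\mu_R(W)\le r$ of the $\partial(\pi_i)$ already form a minimal generating set of $W$; after reindexing, these are $\partial(\pi_1),\ldots,\partial(\pi_{r'})$.

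For each $j>r'$, write $\partial(\pi_j)=\sum_{i\le r'} a_{ji}\,\partial(\pi_i)$ with $a_{ji}\in R$. Define new components by
\[
\pi'_j=\pi_j-\sum_{i\le r'} a_{ji}\pi_i \quad (j>r'), \qquad \pi'_i=\pi_i \quad (i\le r').
\]
Since $\partial$ is $R$-linear, $\partial(\pi'_j)=0$ for every $j>r'$. The new tuple is $T\Pi$, where $T$ is unipotent lower-triangular: it has identity on the diagonal, and $-a_{ji}$ in the entries $(j,i)$ with $j>r'$, $i\le r'$. Hence $T$ is invertible over $R$, with no locality needed for this step.

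It remains to check that $T\Pi$ is still a split projection. If $\Pi\circ\sigma=\Id_{X^{\oplus n}}$, then $(T\Pi)\circ(\sigma T^{-1})=T T^{-1}=\Id$. Here $T$ acts on $X^{\oplus n}$ by the $R$-linear automorphism given by the matrix. Thus all but at most $r'\le r$ components of $T\Pi$ lie in $\ker\partial$.

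For part (2), apply the same construction to $\partial(s_1),\ldots,\partial(s_n)$. Take the new injection to be $ST$, with $T$ now upper-triangular so that it acts on the right; then $s'_j=s_j-\sum_i a_{ji}s_i$ as maps $X\to M''$. If $\rho S=\Id$, then $(T^{-1}\rho)(ST)=\Id$, so $ST$ is a split injection.

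The only delicate point is choosing the $r'$ generators from among the $\partial(\pi_i)$ themselves, which is what Nakayama's lemma provides over the local ring. The indecomposability of $X$ is not actually needed.
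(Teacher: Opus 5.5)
Your proposal is correct and follows essentially the same route as the paper. Both arguments use Nakayama to select generators among the $\partial(\pi_i)$, apply a unipotent lower-triangular change of basis $T$ so that $n-r$ components land in $\ker(\partial)$, and split $T\Pi$ by $\sigma T^{-1}$; dually, $ST$ is split by $T^{-1}\rho$. Your remark that the indecomposability of $X$ is never used is also accurate, since the paper's argument does not use it either.
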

\begin{proof}
Since the proofs of (1) and (2) are similar, we present the argument for (1) only. Using Nakayama's lemma, we assume without loss of generality that the submodule of $E$ generated by $\partial(\pi_i)$'s is  generated by $\partial(\pi_1), \ldots, \partial(\pi_r)$. So, for any $1 \leq i \leq n-r$, inside $E$, $\partial(\pi_{r+i}) = \sum_{j=1}^{r}a_{r+i,j}\partial(\pi_j)$, for some choices of $a_{r+i,j} \in R$. 
For all $1\leq i\leq n-r$, define $\pi'_i= \pi_{r+i}-  \sum_{j=1}^{r}a_{r+i,j}\pi_j,$
so that $\partial(\pi'_i) = 0$. 

Now, consider the lower triangular $n \times n$-matrix,
\[
T=\begin{bNiceArray}{ccc|ccc c c c c}
\Block{3-3}{\text{Id}_{r \times r}} & & & &&\Block{3-3}{\text{O}_{r \times (n-r)}}& & & &\\
&&&&&&&&&\\
&&&&&&&&&\\
\hline
-a_{r+1,1}&\ldots&-a_{r+1,r}&&&\Block{3-3}{\text{Id}_{n-r \times n-r}}& &&&  \\

\vdots&\vdots&\vdots&&&&&&&\\

-a_{r+j,1}&\ldots& -a_{r+j,r}&&&&&&&\\
\vdots&\vdots&\vdots&&&&&&&
\end{bNiceArray}
\begin{array}{l}
\\
\\
\\
\\
\\
\leftarrow\ r+j-\text{th row}\\
\end{array}
\]
Considering $\Pi$ as the column vector of morphisms $(\pi_1, \ldots, \pi_n)^{\tr}$, we have 
\[T\Pi= (\pi_1, \ldots, \pi_r, \pi'_1, \ldots, \pi'_{n-r})^{\tr}.\]
Consider a split injection $S' \colon X^{\oplus n} \rightarrow M'$ corresponding to $\Pi$. We think of $S'$ as a row vector of morphisms. Since $T$ is invertible, we get another row vector of morphisms $S'T^{-1}\colon X^{\oplus n} \rightarrow M'$. Since 
$T\Pi S' T^{-1}= \Id_{X^{\oplus n}}$,
$T\Pi= (\pi_1, \ldots, \pi_r, \pi'_1, \ldots, \pi'_{n-r})^{\tr}$ is a split projection whose last $n-r$ components lie in $\ker \partial$.
\end{proof}

\begin{proposition}\label{th: most splittings lift in dimension two}
Let $(R, \mathfrak m)$ be a local ring, and $X$ be an indecomposable F-contributor of dimension one or two\footnote{All the assertions hold when $\dim(X)=0$ using the argument in \Cref{le: dimension zero contributor}.}. Let $\phi\colon M' \rightarrow M''$ be an $R$-linear map between two finitely generated $R$-modules such that $\dim(M') = \dim(M'') = \dim(X)$, 
and both $\ker(\phi)$ and $\coker(\phi)$ have dimension at most $\dim(X)-1$. For any $e \in \mathbb{N}$, denote $a'_e \coloneqq \Fmult{e}{M'}{X}$ and $a''_e \coloneqq \Fmult{e}{M''}{X}$.

\begin{enumerate}
    \item Assume that $X \in \find(M')$. Then there exists a constant $C_1$ such that, for any $e\geq 1$, there exists an $R$-linear split projection
    \[
    F^e_*M' \xrightarrow[]{\pi=(\pi_1, \ldots, \pi_{a'_e})} X^{\oplus a'_e},
    \]
    where all but at most $C_1p^{(\dim(X)-1+ \alpha(R))e}$ many morphisms among $\{\pi_1, \ldots, \pi_{a'_e}\}$ lift along the map $\Hom_R(F^e_*M'',X) \xrightarrow{- \,\circ\, F^e_*\phi} \Hom_R(F^e_*M',X)$. In particular,
    \[\Fmult{e}{M''}{X} \geq \Fmult{e}{M'}{X} - C_1p^{(\dim(X)-1+ \alpha(R))e}.\]

    \item Assume that $X \in \find(M'')$. Then there exists a constant $C_2$ such that, for any $e \geq 1$, there exists an $R$-linear split injection
    \[
        X^{\oplus a''_e} \xrightarrow{s=(s_1, \ldots, s_{a''_e})} F^e_*M'',
    \]
    where all but at most $C_2p^{(\dim(X)-1+ \alpha(R))e}$ many morphisms among $\{s_1, \ldots, s_{a''_e}\}$ lift along the map $\Hom_R(X, F^e_*M') \xrightarrow{F^e_*\phi \,\circ\, -} \Hom_R(X, F^e_*M'')$. In particular,
    \[\Fmult{e}{M'}{X} \geq \Fmult{e}{M''}{X} - C_2p^{(\dim(X)-1+ \alpha(R))e}.\]
\end{enumerate}
\end{proposition}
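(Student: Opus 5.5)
We factor $\phi$ through its image and handle the two elementary pieces separately: an injection with small cokernel and a surjection with small kernel. Write $0 \to K \to M' \to I \to 0$ and $0 \to I \to M'' \to Q \to 0$, where $K = \ker(\phi)$ and $Q = \coker(\phi)$ both have dimension at most $\dim(X)-1$. Because $X$ is an F-contributor, \Cref{cor: Cohen--Macaulayless of contributors} makes $X$ Cohen--Macaulay of dimension one or two. This puts us in the setting of \Cref{le: minimal number of generators of submodules of Ext groups} and \Cref{le: bound on minimal number of generators with target twisted}, with $X$ in the role of the Cohen--Macaulay module there. Frobenius pushforward is exact, so the two sequences stay exact after applying $F^e_*$.

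For (1), start from a split projection $\pi\colon F^e_*M' \to X^{\oplus a'_e}$ with components $\pi_i$, and push the liftings through $\Hom_R(F^e_*M'',X) \to \Hom_R(F^e_*I,X) \to \Hom_R(F^e_*M',X)$ one step at a time.

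\emph{Step 1: descend $\pi_i$ to $F^e_*I$.} Compose $\pi_i$ with $F^e_*K \to F^e_*M'$ to get $\partial_1\colon \Hom_R(F^e_*M',X) \to E_1 := \Hom_R(F^e_*K,X)$. The images $\partial_1(\pi_i)$ generate a submodule of $E_1$, and by \Cref{le: minimal number of generators of submodules of Ext groups} (in its $\Hom$ version) that submodule needs only $O(p^{e(\dim X-1+\alpha(R))})$ generators. For $\dim X = 2$ and $\dim K \le 1$, this $\Hom$ bound follows by the same multiplicity argument, or from the observation that $\Hom_R(F^e_*K,X)=0$ when $X$ has positive depth at the relevant primes. \Cref{prop: minimal number of generators and lifting}(1) then replaces $\pi$ by $T\pi$, a split projection all but $C p^{e(\dim X-1+\alpha(R))}$ of whose components vanish on $F^e_*K$. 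Those components factor through $F^e_*I$, and by \Cref{le: ascend and descend of splitting}(1) they form a split projection from $F^e_*I$.

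\emph{Step 2: extend from $F^e_*I$ to $F^e_*M''$.} The obstruction map is $\partial_2\colon \Hom_R(F^e_*I,X) \to \Ext^1_R(F^e_*Q,X)$. \Cref{le: minimal number of generators of submodules of Ext groups} bounds the number of generators of any submodule of this $\Ext^1$ by $Cp^{e(\dim X-1+\alpha(R))}$. Applying \Cref{prop: minimal number of generators and lifting}(1) again, all but boundedly many components lift to $F^e_*M''$, and \Cref{le: ascend and descend of splitting}(1) shows the lifted ones still give a split projection. Composing the two changes of basis is harmless: in the second step we apply the proposition only to the surviving components, which form a split projection $F^e_*I \to X^{\oplus m}$. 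Translating back to $M'$ needs care, since the statement asks for a single split projection from $F^e_*M'$. I would keep the unchanged components and change basis only among the ones that factor through $I$; the basis change is invertible and lower triangular, and it respects factoring through $I$, so the resulting projection from $F^e_*M'$ is still split. The inequality $\Fmult{e}{M''}{X} \ge a'_e - C_1 p^{e(\dim X-1+\alpha(R))}$ then follows by KRS.

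Part (2) is dual. Use split injections $X^{\oplus a''_e}\to F^e_*M''$, with obstruction modules $\Hom_R(X,F^e_*Q)$ when passing from $M''$ to $I$, and $\Ext^1_R(X,F^e_*K)$ when lifting from $I$ to $M'$. \Cref{le: bound on minimal number of generators with target twisted} bounds the generators of submodules of both, and \Cref{prop: minimal number of generators and lifting}(2) together with \Cref{le: ascend and descend of splitting}(2) finish the argument. The main obstacle is verifying the generator bound for every obstruction module, in particular the $\Hom_R(F^e_*K,X)$ term in Step 1. If the $\Ext$-lemma's proof does not adapt directly, I would use $\depth X=\dim X$ together with the fact that $X$ has no associated primes of small dimension (\Cref{prop: associated primes and splitting}) to show that this $\Hom$ vanishes outright.
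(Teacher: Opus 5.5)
Your proposal is correct and follows essentially the paper's proof. You factor $\phi$ through its image, bound the obstruction modules $\Ext^1_R(F^e_*Q,X)$, $\Hom_R(X,F^e_*Q)$ and $\Ext^1_R(X,F^e_*K)$ via \Cref{le: minimal number of generators of submodules of Ext groups} and \Cref{le: bound on minimal number of generators with target twisted}, and combine \Cref{prop: minimal number of generators and lifting} with \Cref{le: ascend and descend of splitting}; part (2) uses two block-diagonal basis changes exactly as you describe.

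The only difference is in part (1), where the paper skips your Step 1 by using the fact you list as a fallback. Since $X$ is Cohen--Macaulay, every associated prime of $X$ has dimension $\dim X>\dim K$, so $\Hom_R(F^e_*K,X)=0$ and hence $\Hom_R(F^e_*I,X)\cong\Hom_R(F^e_*M',X)$. A single application of \Cref{prop: minimal number of generators and lifting}(1) therefore suffices. The ``Hom version'' of the $\Ext$ lemma that you first invoke is not needed, and it is not proved in the paper.
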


\begin{proof}
In (1) and (2), the comparisons of $\Fmult{e}{M'}{X}$ and $\Fmult{e}{M''}{X}$ follow from the respective lifting assertions thanks to \Cref{le: ascend and descend of splitting}. Thus we only argue for the lifting assertions.

Write $K= \ker(\phi)$, $I= \operatorname{Im}(\phi)$ and $Q= \coker(\phi)$, and factor $\phi= \iota \circ \rho$ with $\rho\colon M' \twoheadrightarrow I$ and $\iota\colon I \hookrightarrow M''$. Since $F^e_*$ is exact, for every $e \geq 1$ we have short exact sequences
\[0 \rightarrow F^e_*K \rightarrow F^e_*M' \xrightarrow{F^e_*\rho} F^e_*I \rightarrow 0, \qquad 0 \rightarrow F^e_*I \xrightarrow{F^e_*\iota} F^e_*M'' \rightarrow F^e_*Q \rightarrow 0.\]
Recall that $X$ is Cohen--Macaulay (\Cref{cor: Cohen--Macaulayless of contributors}) and $\dim(K), \dim(Q) \leq \dim(X)-1$. By \Cref{le: minimal number of generators of submodules of Ext groups} and \Cref{le: bound on minimal number of generators with target twisted}, there is a constant $C$ such that for every $e \geq 1$, any submodule of $\Ext^1_R(F^e_*Q, X)$, $\Hom_R(X, F^e_*Q)$ or $\Ext^1_R(X, F^e_*K)$ is generated by at most $Cp^{e(\dim(X)-1+\alpha(R))}$ elements.

(1) Since every associated prime of $X$ has dimension $\dim(X)$ while $\dim(F^e_*K) < \dim(X)$, we have $\Hom_R(F^e_*K, X)=0$. Hence $-\circ F^e_*\rho\colon \Hom_R(F^e_*I, X) \rightarrow \Hom_R(F^e_*M', X)$ is an isomorphism, and the long exact sequence for $\Hom_R(-,X)$ of the second short exact sequence gives an $R$-linear map
\[\partial\colon \Hom_R(F^e_*M', X) \cong \Hom_R(F^e_*I, X) \rightarrow \Ext^1_R(F^e_*Q, X)\]
whose kernel consists exactly of the maps that lift along $- \circ F^e_*\phi$. Applying \Cref{prop: minimal number of generators and lifting}(1) to a split projection $F^e_*M' \rightarrow X^{\oplus a'_e}$ and $\partial$ gives (1) with $C_1= C$.

(2) Let $s\colon X^{\oplus a''_e} \rightarrow F^e_*M''$ be a split injection. The exact sequence $\Hom_R(X, F^e_*I) \rightarrow \Hom_R(X, F^e_*M'') \xrightarrow{\partial_1} \Hom_R(X, F^e_*Q)$ and \Cref{prop: minimal number of generators and lifting}(2) give an invertible matrix $T_1$ such that $sT_1$ is a split injection all but at most $C p^{e(\dim(X)-1+\alpha(R))}$ of whose components are of the form $F^e_*\iota \circ t_i$ with $t_i \in \Hom_R(X, F^e_*I)$. The tuple $t= (t_i)_i$ is a split injection into $F^e_*I$: if $\sigma$ is a left inverse of $F^e_*\iota \circ t$, then $\sigma \circ F^e_*\iota$ is a left inverse of $t$. Next, the exact sequence $\Hom_R(X, F^e_*M') \xrightarrow{F^e_*\rho \circ -} \Hom_R(X, F^e_*I) \xrightarrow{\partial_2} \Ext^1_R(X, F^e_*K)$ and \Cref{prop: minimal number of generators and lifting}(2) give an invertible matrix $T_2$ such that $tT_2$ is a split injection all but at most $C p^{e(\dim(X)-1+\alpha(R))}$ of whose components lift along $F^e_*\rho \circ -$. Since $F^e_*\iota \circ (tT_2)= (F^e_*\iota \circ t)T_2$ is again a split injection into $F^e_*M''$, replacing the corresponding components of $sT_1$ by those of $(F^e_*\iota \circ t)T_2$ gives a split injection $X^{\oplus a''_e} \rightarrow F^e_*M''$ all but at most $2C p^{e(\dim(X)-1+\alpha(R))}$ of whose components lift along $F^e_*\phi \circ -$. This proves (2) with $C_2= 2C$.
\end{proof}

Using the lifting assertions above, we prove strong F-regularity of F-contributors of dimension at most two. This strong F-regularity, thanks to the theory developed in \Cref{se: strongly F regular modules}, implies the existence of F-splitting signatures towards such an F-contributor.

\begin{theorem}\label{cor: two dimensional F-contributors}
 Let $(R, \mathfrak m)$ be a local ring, $X$ be an indecomposable F-contributor of dimension at most two, and $M$ be a finitely generated $R$-module.
 Then 
 \begin{enumerate}
     \item $X$ is strongly F-regular.
     \item $\Ann(X)$ is a prime ideal.
        
     \item  If $\Ann(X)$ is a minimal prime of $M$, then there exists an $e_0 \in \mathbb N$, such that for all $e \geq e_0$, $X$ is a summand of $F^e_*M$.
        
     \item The F-splitting signature of $M$ towards $X$ at $\Ann(X)$ exists. It is positive if and only if $X \in \find (\Ann(X), M)$.
 \end{enumerate}
 \end{theorem}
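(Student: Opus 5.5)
The heart of the argument is (1); parts (2)--(4) will then follow formally from the theory of strongly F-regular modules. Set $\p=\Ann(X)$ after first showing it is prime. By \Cref{cor: Cohen--Macaulayless of contributors}, $X$ is Cohen--Macaulay, so all its associated primes are minimal of dimension $\dim X$. By \Cref{prop: associated primes and splitting}, $\Ann(X)$ is radical. If $\dim X=0$, then $\m X=0$, so $X\cong R/\m$, which is strongly F-regular, and \Cref{le: dimension zero contributor} gives everything. Assume now $\dim X\in\{1,2\}$ and fix $f\in R$ outside every minimal prime of $X$. We must produce $e$ and $\phi\colon X\to F^e_*X$ with $F^e_*f\circ\phi$ split injective.

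The plan is to apply \Cref{th: most splittings lift in dimension two}(2) to multiplication by $f$. Take $\phi_f=f\cdot\colon M'=X\to M''=X$. Since $f$ avoids the minimal primes and $X$ is Cohen--Macaulay, $\ker\phi_f=0$ and $\dim\coker\phi_f\le\dim X-1$. For every $e$, part (2) yields a split injection $X^{\oplus a_e}\to F^e_*X$, with $a_e=\Fmult{e}{X}{X}$, all but at most $C p^{e(\dim X-1+\alpha(R))}$ of whose components factor as $F^e_*(f)\circ \tilde s_i$ with $\tilde s_i\colon X\to F^e_*X$. Here $F^e_*\phi_f$ is multiplication by $f$ on $F^e_*X$, i.e.\ the map $F^e_*f$ in \Cref{def: sFR for modules}. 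By \Cref{le: ascend and descend of splitting}(2), the lifted components assemble into a split injection $X^{\oplus (a_e-C p^{e(\delta-1)})}\to F^e_*X$. Since $X$ is an F-contributor, $\Theta$-growth gives $a_e\ge c\,p^{e\delta(X)}$ along a subsequence, with $\delta(X)=\dim X+\alpha(R)$ (using $\Ass X$ of full dimension and \Cref{thm: Kunz rank}). For $e$ large in that subsequence the count is positive. So at least one component $s_i=F^e_*f\circ\tilde s_i$ is itself a split injection $X\to F^e_*X$ (a single coordinate of a split injection from a direct sum is split). Taking $\phi=\tilde s_i$ proves strong F-regularity. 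The main point to check carefully is exactly this bookkeeping: the splitting of the surviving components comes from the upgraded tuple, and the error term is genuinely of lower exponent than $a_e$ along the positive subsequence.

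Given (1), \Cref{th: further properties of SFR modules}(\ref{part: decompose into local cohomology}) shows that an indecomposable strongly F-regular module has a unique associated prime, namely $\Ann(X)$; this gives (2). For (3), if $\p=\Ann(X)$ is minimal in $\Supp M$, then \Cref{th: universality of SFR modules} gives $X\in\find(M)$. Part (6) of \Cref{th: further properties of SFR modules} gives period one, so \Cref{le: period one and splitting from large enough pushforward} together with $X\in\find(M)$ gives $X$ a summand of $F^e_*M$ for all $e\gg0$. If $\alpha(R_\p)=0$, use \Cref{le: dimension zero contributor} instead. For (4), period one and \Cref{cor: aperiodicity implies existence of limit} give existence of $\gsig{M}{X}_\p$. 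By \Cref{thm: F-contributor at a prime via matrices} ((2b)$\Rightarrow$(3)), it is positive whenever $X\in\find(\p,M)$, and it is trivially zero otherwise, since then $\Fmult{e}{M}{X}=0$ for all $e\geq1$.
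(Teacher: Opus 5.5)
Your proposal is correct and follows the paper's proof essentially verbatim. You apply \Cref{th: most splittings lift in dimension two}(2) to multiplication by $f$ on $X$ and use the F-contributor growth along a subsequence to obtain a component $s_i=F^e_*f\circ\tilde s_i$ of the split injection; then you read off (2)--(4) from \Cref{th: further properties of SFR modules} and \Cref{th: universality of SFR modules}. One small redundancy: the appeal to \Cref{le: ascend and descend of splitting}(2) is unnecessary, because the splitting of $F^e_*f\circ\tilde s_i$ comes directly from its being a coordinate of the split injection $s$, as you yourself observe.
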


\begin{proof}
When $\dim(X)=0$, since the annihilator of $X$ is radical (cf. \Cref{prop: associated primes and splitting}), it must be $\m$. Since $X$ is indecomposable, $X \cong R/\m$. Hence, (1), (2) follow from straightforward arguments and (3), (4) follow from \Cref{le: dimension zero contributor}. So, assume $\dim(X)>0$ from now on. 

(1) Let $f \in R$ be an element which is not in any minimal prime of $X$. Since $X$ has no embedded primes, the multiplication by $f$ map is injective and its cokernel is of dimension at most $\dim(X)-1$.
Since $X$ is an F-contributor, there exists a positive constant $D$ such that for infinitely many $e$, $\Fmult{e}{X}{X} \geq Dp^{e(\dim(X)+\alpha(R))}$. Therefore, by \Cref{th: most splittings lift in dimension two}(2), there exist an $e$ and a split injection $s\colon X \rightarrow F^e_*X$ and another map $s'\colon X \rightarrow F^e_*X$ such that the following diagram commutes
\[\begin{tikzcd}
	& X \\
	{F^e_*X} & {F^e_*X}
	\arrow["{s'}"', dashed, from=1-2, to=2-1]
	\arrow["s", from=1-2, to=2-2]
	\arrow["{F^e_*f}"', from=2-1, to=2-2]
\end{tikzcd}\]
Therefore, $X$ is strongly F-regular.

Now, (2) follows by \Cref{th: further properties of SFR modules}(\ref{part: decompose into local cohomology}) and (3) follows from \Cref{th: further properties of SFR modules}(\ref{part: positive signature}). Since $\delta_R(X)= \alpha(R_{\Ann(X)})$, (4) follows from \Cref{th: further properties of SFR modules}(\ref{part: aperiodicity of SFR contributors}).

\end{proof}

Now, we establish the rank formula towards F-contributors of dimension at most two (see \Cref{sse: rank formula}) and convergence rate (see \Cref{subsec: convergence rates}) towards the respective F-splitting signatures.

\begin{theorem}\label{th: rank formula in dimension two}
    Let $(R, \m)$ be a local ring and $X$ be an indecomposable  F-contributor of dimension at most two. Denote the prime ideal $\Ann(X)$ by $\p$. For any finitely generated $R$-module $M$ of dimension $\dim(X)$, 
    \[\Fmult{e}{M}{X}= \frac{\grk{\p}{M}}{\grk{\p}{X}}\gsig{X}{X}_{\p}p^{e(\dim(X)+ \alpha(R))}+ O(p^{e(\dim(X)-1+\alpha(R))}).\]
In particular, if $\grk{\p}{M}$ is nonzero, then $X \in \find(M)$.
\end{theorem}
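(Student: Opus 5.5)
The plan is to compare $M$ with a direct sum of copies of $X$ via a map that is an isomorphism generically at $\p$, and then use \Cref{th: most splittings lift in dimension two} to transfer the counts. First reduce the support. By \Cref{cor: two dimensional F-contributors}, $X$ is strongly F-regular with $\Ann(X)=\p$ prime, and $\dim(R/\p)=\dim(X)$. We may assume $\grk{\p}{M}\neq 0$. Indeed, if $\p\notin\Ass(M)$, then $\p$ is not a minimal prime of $M$ (as $\dim M=\dim X$, any prime of $\Supp M$ with $\dim R/\p=\dim M$ and $\p\in\Supp M$ is minimal, hence associated). Then $\p\notin\Supp(M)$, and $\Supp(F^e_*M)\not\ni\p$ forces $\Fmult{e}{M}{X}=0$, so the formula holds trivially. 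Set $a=\grk{\p}{M}$, $b=\grk{\p}{X}$, and replace $M$ by $M^{\oplus b}$ and compare with $X^{\oplus a}$. Since $\Fmult{e}{M^{\oplus b}}{X}=b\,\Fmult{e}{M}{X}$ and $\Fmult{e}{X^{\oplus a}}{X}=a\,\Fmult{e}{X}{X}$, it suffices to show
\[\bigl|\Fmult{e}{M^{\oplus b}}{X}-\Fmult{e}{X^{\oplus a}}{X}\bigr|=O(p^{e(\dim(X)-1+\alpha(R))})\]
together with the self-estimate $\Fmult{e}{X}{X}=\gsig{X}{X}_\p p^{e(\dim X+\alpha(R))}+O(p^{e(\dim X-1+\alpha(R))})$.

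\textbf{The comparison map.} Lengths at $\p$ agree, $\grk{\p}{M^{\oplus b}}=ab=\grk{\p}{X^{\oplus a}}$. To get a map $\phi\colon X^{\oplus a}\to M^{\oplus b}$ that is an isomorphism at $\p$, I would first pass to $\Hh^0_\p(M)$, or better to a submodule of $M$ whose only minimal prime of top dimension is $\p$. Alternatively, localize at the complement $S$ of the minimal primes of $M$ of dimension $\dim X$, where $X_S$ and $M_S$ become modules over an artinian ring. Then build $\phi$ via \Cref{le: rank sequence}, possibly after a Frobenius twist $F^{e_1}_*$ on both sides (replacing $e$ by $e+e_1$ is harmless for the asymptotics). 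Other top-dimensional minimal primes of $M$ only affect $\ker\phi$ and $\coker\phi$ at primes not equal to $\p$; these contribute no copies of $X$ asymptotically at leading order. I would handle this by splitting off $\Hh^0_\p$-type pieces with the exact sequences and the depth bound \Cref{lemma: depth vs frequency of appearance}, or by working modulo modules whose support misses $\p$. Once $\ker\phi$ and $\coker\phi$ have dimension at most $\dim X-1$, \Cref{th: most splittings lift in dimension two}(1) and (2) give the two one-sided bounds, provided $X\in\find(X^{\oplus a})$ and $X\in\find(M^{\oplus b})$. The first holds since $X$ is an F-contributor, and the second holds by \Cref{cor: two dimensional F-contributors}(3).

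\textbf{Self-estimate, the main obstacle.} The self-estimate is the step I expect to be hardest. I would take $M=F_*X$, which has $\grk{\p}{F_*X}=p^{\alpha(R_\p)}b$, and $\alpha(R_\p)=\dim X+\alpha(R)$ by \Cref{thm: Kunz rank}. Applying the comparison above with $X^{\oplus p^{\alpha(R_\p)}}$ gives
\[\Fmult{e+1}{X}{X}=p^{\alpha(R_\p)}\Fmult{e}{X}{X}+O(p^{e(\dim X-1+\alpha(R))}).\]
Here the constant is uniform in $e$, since the comparison constant depends only on the fixed modules $F_*X$ and $X$. Dividing by $p^{(e+1)\alpha(R_\p)}$ shows that consecutive normalized terms differ by $O(p^{-e})$. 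Summing this telescoping series yields convergence, the limit $\gsig{X}{X}_\p$ (which exists by aperiodicity), and the error $O(p^{-e})$, which is exactly the claimed rate. Finally, $\gsig{X}{X}_\p>0$ because $X$ is an F-contributor of period one, so the main term is positive whenever $\grk{\p}{M}\neq0$. This gives $X\in\find(M)$.
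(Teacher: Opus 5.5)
Your overall plan matches the paper's: compare $M^{\oplus b}$ with copies of $X$ via \Cref{le: rank sequence}, transfer counts with \Cref{th: most splittings lift in dimension two}, then specialize to $F_*X$ and telescope. The self-estimate is fine as written, since $F_*X$ has $\p$ as its only minimal prime.

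\textbf{The gap: the comparison map.} Suppose $M$ has another minimal prime $\p_j\neq\p$ with $\dim R/\p_j=\dim X$. Then $X_{\p_j}=0$ while $M_{\p_j}\neq 0$. So every map $F^{e_1}_*X^{\oplus a}\to F^{e_1}_*M^{\oplus b}$ has cokernel supported at $\p_j$, hence of dimension $\dim X$, and \Cref{th: most splittings lift in dimension two} cannot be invoked. Localizing at the top-dimensional minimal primes does not change this.

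Your suggested remedies do not close the gap.
\begin{itemize}
\item The issue is not whether the cokernel $Q$ contains copies of $X$. It is the size of the lifting obstruction $\Ext^1_R(F^e_*Q,X)$, and this can be of leading order. Take $R=k[[x,y,z]]/(xy)$ with $k$ perfect, $X=R/(x)$ (an F-contributor of dimension two), $M=R$, and $\phi$ multiplication by $y$. Then $Q=R/(y)$. Since $y$ kills $F^e_*Q$ and is $X$-regular, $\Ext^1_R(F^e_*Q,X)\cong\Hom_R(F^e_*Q,R/(x,y))\cong (R/(x,y))^{\oplus p^{2e}}$. This needs $p^{e(\dim X+\alpha(R))}$ generators, so the counting argument yields nothing.
\item Passing to $\Hh^0_\p(M)$ only gives the easy inequality $\Fmult{e}{M}{X}\le\Fmult{e}{\Hh^0_\p(M)}{X}$, by applying $\Hh^0_\p$ to a splitting. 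The reverse inequality needs lifting along $\Hh^0_\p(M)\hookrightarrow M$, whose cokernel again carries the $\p_j$-components. In the example this inclusion is exactly $y\colon R/(x)\to R$.
\item \Cref{lemma: depth vs frequency of appearance} bounds how often low-depth modules appear; it says nothing about this obstruction.
\end{itemize}

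\textbf{The fix: pad the $X$-side.} Make the comparison map an isomorphism at every top-dimensional prime. Write $\Minh(M)=\{\p=\p_1,\dots,\p_n\}$ and put $L=\bigoplus_{j\ge 2}(R/\p_j)^{\oplus c_j}$ with $c_j=\length_{R_{\p_j}}((M^{\oplus b})_{\p_j})$. Compare $X^{\oplus a}\oplus L$ with $M^{\oplus b}$.
\begin{itemize}
\item Lengths agree at every minimal prime of the source. So \Cref{le: rank sequence} gives $\widetilde\phi\colon F^{e_1}_*(X^{\oplus a}\oplus L)\to F^{e_1}_*M^{\oplus b}$ that is an isomorphism at each $\p_j$. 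Hence its kernel and cokernel have dimension at most $\dim X-1$, and the lifting proposition applies.
\item Since $\p\notin\Supp(L)$, $X\notin\find(L)$. KRS then gives $\Fmult{e}{F^{e_1}_*(X^{\oplus a}\oplus L)}{X}=a\,\Fmult{e+e_1}{X}{X}$, so the padding does not affect the count.
\end{itemize}
With this replacement, the rest of your argument goes through and coincides with the paper's proof.
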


\begin{proof}
Suppose that the desired equation for $\Fmult{e}{M}{X}$ holds. Since by \Cref{cor: two dimensional F-contributors} $\gsig{X}{X}_\p$ is positive, whenever $\grk{\p}{M}$ is nonzero, $X \in \find(M)$.

Now we argue for the desired equation. The desired assertion is nontrivial only when $\p \in \Ass(M)$. We therefore assume $\p \in \Ass(M)$. Since $\dim(M)= \dim(X)$, $\p \in \Min(M)$. Since $X$ is strongly F-regular, $X \in \find(M)$ by \Cref{th: universality of SFR modules}.  When $\dim(X)=0$, $X \cong R/\m$; see the beginning of the proof of \Cref{cor: two dimensional F-contributors}. Hence the result follows from \Cref{le: dimension zero contributor}. So, we assume $\dim(X)>0$ from now on.

Let $\Minh(M)= \{\p=\p_1, \p_2, \ldots, \p_n\}$ and define
\[
L \coloneqq \bigoplus_{j=2}^{n} (R/\p_j)^{\oplus c_j}, \qquad c_j \coloneqq \length_{R_{\p_j}}\big((M^{\oplus \grk{\p}{X}})_{\p_j}\big),
\]
and $\widetilde M' \coloneqq X^{\oplus \grk{\p}{M}} \oplus L$, $\widetilde M'' \coloneqq M^{\oplus \grk{\p}{X}}$. Since $\p_j$ are distinct minimal primes of $M$ of the same dimension and $\Ann(X)= \p$, we have $\Min(\widetilde M')= \{\p_1, \ldots, \p_n\}$, $L_{\p}=0$ and $X_{\p_j}=0$ for $j \geq 2$. Hence $\length(\widetilde M'_{\p_j})= \length(\widetilde M''_{\p_j})$ for every $j$: both sides equal $\grk{\p}{M}\grk{\p}{X}$ for $j=1$, and $c_j$ for $j \geq 2$. By \Cref{le: rank sequence}, there are $e_1 \in \mathbb N$ and an $R$-linear map
\[\widetilde\phi\colon F^{e_1}_*\widetilde M' \rightarrow F^{e_1}_*\widetilde M''\]
such that $\widetilde\phi_{\p_j}$ is an isomorphism for $1 \leq j \leq n$. Therefore, neither $\ker(\widetilde\phi)$ nor $\coker(\widetilde\phi)$ is supported at a prime of $\Minh(M)$, which are the only primes of dimension $\dim(X)$ in $\Supp(\widetilde M') \cup \Supp(\widetilde M'')$. So both have dimension at most $\dim(X)-1$. Moreover, $X \notin \find(L)$ since $\p \notin \Supp(L)$. Hence, by KRS, for all $e \geq 1$,
\[\Fmult{e}{F^{e_1}_*\widetilde M'}{X}= \grk{\p}{M}\Fmult{e+e_1}{X}{X} \quad \text{and} \quad \Fmult{e}{F^{e_1}_*\widetilde M''}{X}= \grk{\p}{X}\Fmult{e+e_1}{M}{X}.\]

Applying \Cref{th: most splittings lift in dimension two} to $\widetilde \phi$, we conclude that
\begin{equation}\label{eq: rank formula intermediate equation}
\grk{\p}{X}\Fmult{e}{M}{X}= \grk{\p}{M}\Fmult{e}{X}{X}+ O(p^{e(\dim(X)-1+ \alpha(R))}).
\end{equation}

Specialize the last equation to $M=F_*X$. Since $\grk{\p}{F_*X}= p^{\dim(X)+ \alpha(R)}\grk{\p}{X}$,
\[\Fmult{e+1}{X}{X}= p^{\dim(X)+ \alpha(R)}\Fmult{e}{X}{X}+ O(p^{e(\dim(X)-1+\alpha(R))}).\]
From here, using a straightforward recursion argument, we get
\[\Fmult{e}{X}{X}= p^{e(\dim(X)+ \alpha(R))}\gsig{X}{X}_\p+ O(p^{e(\dim(X)-1+\alpha(R))}).\]
Plugging this back into \Cref{eq: rank formula intermediate equation}, we conclude the desired equation. 

\end{proof}

We employ the rank formula above to prove a total rank formula:
\begin{corollary}\label{cor: total rank formula in dimension upto two}
Let $(R, \m)$ be a local ring, $M$ be a finitely generated $R$-module of dimension at most two and $\p \in \Minh(M)$. Suppose that there is an indecomposable F-contributor $X$ which has $\p$ as an associated prime. Then:
\begin{enumerate}
    \item the total rank formula at $\p$ holds for $M$;
    \item $\Mat(\p,M)$ has convergence radius $p^{-\alpha(R_\p)}$ and exactly one basic recurrent class, namely the connected class containing $X$, which is positive recurrent. In particular, $\Mat(\p,M)$ has no null $p^{-\alpha(R_\p)}$-recurrent class.
\end{enumerate}
\end{corollary}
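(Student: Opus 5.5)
The plan is to reduce both assertions to \Cref{prop: total rank formula vs rank formula}: it suffices to find an F-contributor $N$ at $\p$ such that the \ces rank formula for $M$ towards $N$ at $\p$ holds. Then the total rank formula at $\p$ holds for $M$, and $\Mat(\p,M)$ has exactly one final class. We take $N = X$.

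First we check that $X$ is an F-contributor at $\p$ and identify $\p$. Since $X$ is an indecomposable F-contributor with $\p \in \Ass(X)$, \Cref{cor: two dimensional F-contributors} applies once we know $\dim(X)\le 2$. By \Cref{lem: radius of convergence for contributors} and its proof, every associated prime of $X$ satisfies $\alpha(R_\p)=\delta(X)$. By \Cref{thm: Kunz rank} this gives $\dim(X)=\dim(R/\p)=\dim(M)\le 2$, since $\p\in\Minh(M)$. So $X$ is strongly F-regular and $\Ann(X)$ is prime; as $\p\in\Ass(X)$ is the unique associated prime, $\Ann(X)=\p$. In particular $X$ is an F-contributor at $\p$, and $\gsig{X}{X}_\p>0$ with period one by \Cref{th: further properties of SFR modules}.

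Next we apply \Cref{th: rank formula in dimension two} to $M$, which has dimension $\dim(X)$. Dividing by $p^{e(\dim(X)+\alpha(R))}=p^{e\alpha(R_\p)}$ gives
\[
\gsig{M}{X}_\p=\frac{\grk{\p}{M}}{\grk{\p}{X}}\gsig{X}{X}_\p .
\]
Since $X$ has period one, this is exactly the \ces rank formula for $M$ towards $X$ at $\p$. Also $\grk{\p}{M}>0$ because $\p\in\Minh(M)\subseteq\Ass(M)$, so $X\in\find(\p,M)$ by the last assertion of \Cref{th: rank formula in dimension two}. \Cref{prop: total rank formula vs rank formula} then yields (1).

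For (2), \Cref{pr: comparison of notions of rank formulas and consequences}(\ref{part: Cesaro rank formula implies a unique recurrent class}), applied with $N=X$, gives three things: the convergence radius of $\Mat(\p,M)$ is $p^{-\alpha(R_\p)}$, the connected class of $X$ is the unique basic recurrent class, and there is no null recurrent class. Positive recurrence of this class follows from \Cref{thm: F-contributor at a prime via matrices}, (1a)$\Rightarrow$(1b).

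The main point to verify is the dimension bookkeeping that makes $\dim(X)=\dim(M)$. This is needed both to invoke \Cref{th: rank formula in dimension two}, whose hypothesis is $\dim M=\dim X$, and to match the normalizations $\alpha(R_\p)=\dim(R/\p)+\alpha(R)$. Once this is settled, everything else is citation.
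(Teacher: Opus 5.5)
Your proposal is correct and follows the paper's proof essentially step for step: establish $\dim X=\dim(R/\p)=\dim M\le 2$, invoke the two-dimensional results to get $\Ann(X)=\p$, $X\in\find(\p,M)$ and the \ces rank formula for $M$ towards $X$ at $\p$, then apply the total-rank-versus-rank-formula proposition for (1) and part (4) of the rank-formula comparison proposition for (2). The only differences are minor. You obtain $\dim X=\dim(R/\p)$ from the identity $\alpha(R_\p)=\delta(X)$ for every associated prime, where the paper uses Cohen--Macaulayness of $X$ to get $\p\in\Minh(X)$. You get $X\in\find(\p,M)$ from the last assertion of the dimension-two rank formula theorem, where the paper uses part (3) of the two-dimensional F-contributor theorem.
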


\begin{proof}
Since $X$ is Cohen--Macaulay by \Cref{cor: Cohen--Macaulayless of contributors}, $\p \in \Minh(X)$, so $\dim(X) = \dim(R/\p) = \dim(M) \leq 2$. By \Cref{cor: two dimensional F-contributors}, $\Ann(X) = \p$ and $X \in \find(\p,M)$, and the \ces rank formula for $M$ towards $X$ at $\p$ holds by \Cref{th: rank formula in dimension two}. Now (1) follows from \Cref{prop: total rank formula vs rank formula}, and (2) from \Cref{pr: comparison of notions of rank formulas and consequences}(\ref{part: Cesaro rank formula implies a unique recurrent class}).
\end{proof}

\section{Questions and Conjectures}\label{se: questions and conjectures}

We assume in this section that $(R, \m)$ is local satisfying KRS.

\begin{conjecture}[Universality of F-contributors]\label{conj: universality conjecture} Let $X$ be an indecomposable finitely generated $R$-module which is an F-contributor. Then $X$ is strongly F-regular. In particular, $\Ann (X)$ is a prime ideal (\Cref{th: further properties of SFR modules}(\ref{part: decompose into local cohomology})) and for any finitely generated $R$-module $M$ such that $\Ann(X)$ is a minimal prime in the support of $M$, there exists a positive integer $e_0$, such that $X$ is an $R$-summand of $F^{e}_*M$, for all $e \geq e_0$.    
\end{conjecture}

If the `Universality Conjecture' holds, every F-contributor $X$ has period one (\Cref{th: further properties of SFR modules}(\ref{part: aperiodicity of SFR contributors})); hence, the limit defining F-splitting signatures $\lim_e \Fmult{e}{M}{X}/p^{e(\dim(X)+ \alpha(R))}$ exists (\Cref{cor: aperiodicity implies existence of limit}). We have confirmed the universality conjecture for F-contributors of dimension $\dim(R)$ over invariant rings of strongly F-regular rings (\Cref{cor: F contributors are SFR when there is a module-finite SFR extension}, \Cref{cor: invariant ring}) and in the dimension up to two case (\Cref{cor: two dimensional F-contributors}).

The next question is motivated by the observation that asymptotic invariants such as Hilbert--Kunz multiplicity, F-signature satisfy a rank formula (\cite{Tucker}) over a domain. 

\begin{conjecture}[Rank formula]\label{conj: rank formula} 
    Let $X$ be an indecomposable $R$-module which is an F-contributor and $\p$ be a prime ideal of dimension $\dim(X)$ in the support of $X$. Then for any finitely generated module $M$ of dimension $\dim(X)$, 
    \[\csig{M}{X}_{\p}= \frac{\grk{\p}{M}}{\grk{\p}{X}}\csig{X}{X}_{\p}.\]
Specializing to the case where $\p$ is a minimal prime of $R$, the previous part posits that the map $\bullet \mapsto \csig{\bullet}{X}_{\p}$ is additive over short exact sequences of finitely generated modules; see \Cref{prop: characterize rank formula}, \Cref{re: rank formula does not imply additivity}.  
\end{conjecture}

 \Cref{conj: rank formula} implies parts of \Cref{conj: universality conjecture}. It is not hard to show that if the rank formula holds, then for an F-contributor $X$ $\Ann(X)$ is a prime ideal (\Cref{pr: rank formula implies ann is prime}). Then rank formula implies that for any finitely generated module $M$ with $\Ann(X) \in \Min(M)$, $X$ is a summand of $F^e_*M$, for some $e$.

For evidence towards the rank formula, see \Cref{th: rank formula in dimension two}, \Cref{th: rank formula over sfr rings}.

\begin{conjecture}[Total rank formula]\label{conj: total rank formula}
Let $M$ be a finitely generated $R$-module and $\p$ be a minimal prime of dimension $\dim(M)$ in $\Supp(M)$. Suppose that $\find(M)$ contains an F-contributor which has $\p$ as its associated prime. Then the total rank formula at $\p$ holds for $M$. 
\end{conjecture}

Whenever the rank formula for $M$ towards an F-contributor in $\find(M)$, which has $\p$ as an associated prime, holds, the total rank formula also holds. See \Cref{prop: total rank formula vs rank formula} for further connections between the rank formula and the total rank formula.

The rest of the questions concern the nonnegative matrix $\Mat(M)$ or the dynamical system it induces. These reflect the way we expect $\Mat(M)$ to be distinguished among all countably sized matrices with nonnegative entries:

\begin{conjecture}[Geometric ergodicity of F-decomposition matrices]\label{conj: dynamical properties of the decomposition matrix} Let $M$ be a finitely generated $R$-module.
 For any $X \in \find(M)$ of period $\omega <\infty$, there is a nonnegative constant $\lambda_{M,X}< p^{\dim(M)+ \alpha(R)}$, such that
    \[\Fmult{t+ e\omega}{M}{X}= p^{(t+ e\omega)(\dim(M)+ \alpha(R))}\lim_n \frac{\Fmult{t+ n\omega}{M}{X}}{p^{(t+ n\omega)(\dim(M)+ \alpha(R))}}+ O(\lambda_{M,X}^{(t+ e\omega)}).\]    
\end{conjecture}

We have established geometric ergodicity when $R$ is strongly F-regular, $X \in \find(R)$, and $\dim(M)= \dim(R)$ in \Cref{th: convergence rate in the sfr case}, for F-contributors $X$ when $\dim(M) \leq 2$ in \Cref{th: rank formula in dimension two}, or when $M$ has FFRT (\Cref{cor: Yao}). We wonder, if in general $\lambda_{M,X}= p^{\dim(M)+ \alpha(R)-1}$ works in the Geometric Ergodicity Conjecture.

The validity of Geometric Ergodicity Conjecture implies that $\Mat(M)$ cannot have a $1/p^{\dim(M)+ \alpha(R)}$-null recurrent class. From a different direction, the validity of the total rank formula for $M$ towards every $\p \in \Minh(M)$ implies nonexistence of $1/p^{\dim(M)+ \alpha(R)}$-null recurrent classes for $\Mat(M)$; see \Cref{prop: consequences of the total rank formula}(\ref{part: total rank -> no null basic}). We recall that the total rank formula at $\p$ is in fact equivalent to nondissipativeness or Tweedie recurrence of the Markov chain induced by $\Mat(\p,M)$; see \Cref{sse: Total rank formula}, \Cref{re: probabilistic interpretation of rank formula and total rank formula}.

\begin{conjecture}[Nonexistence of null recurrence]\label{conj: no null recurrence}
Let $M$ be a finitely generated $R$-module. Then $\Mat(M)$ does not have any $1/p^{\dim(M)+ \alpha(R)}$-null recurrent classes.    
\end{conjecture}

\Cref{cor: unique maximal dimensional connected class} includes a class of examples where \Cref{conj: no null recurrence} holds, although we do not know geometric ergodicity or the total rank formula yet. We also have the following simple observation.

\begin{lemma}
    Suppose that $R$ is an F-finite local ring of finite Cohen--Macaulay type. Then for any full-dimensional finitely generated $R$-module $M$, the matrix $\Mat(M)$ does not have $p^{-\delta_R(R)}$-null recurrent classes. 
\end{lemma}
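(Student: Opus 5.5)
The plan is to show that every $p^{-\delta_R(R)}$-recurrent class of $\Mat(M)$ is finite, and then use that a finite irreducible recurrent class with the correct convergence radius is automatically positive recurrent.

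First, let $\cC$ be a $p^{-\delta_R(R)}$-recurrent class of $\Mat(M)$. Since $M$ is full-dimensional, $\delta_R(M)=\delta_R(R)$. Then \Cref{cor: properties of maximal recurrent classes in the whole matrix} applies, and every $N\in\cC$ is Cohen--Macaulay of dimension $\dim(M)=\dim(R)$, i.e.\ a maximal Cohen--Macaulay $R$-module. Every element of $\cC$ is indecomposable. Since $R$ has finite Cohen--Macaulay type, there are only finitely many isomorphism classes of indecomposable MCM modules, so $\cC$ is a finite set.

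Next, $\cC$ is non-exceptional, since a recurrent class has a divergent series, so $T[\cC]\neq 0$. Hence $\Mat(M)[\cC]$ is a finite irreducible nonnegative matrix. By \Cref{prop: properties of maximal recurrent classes for a prime} or \Cref{cor: properties of maximal recurrent classes in the whole matrix}, its convergence radius equals $p^{-\delta_R(R)}$, so its spectral radius is $p^{\delta_R(R)}$. For a finite irreducible nonnegative matrix, \Cref{thm: finite Perron-Frobenius} gives positive left and right eigenvectors for the Perron root, and their dot product is automatically finite. \Cref{thm: vere-jones eigenvectors} then shows that $\Mat(M)[\cC]$ is positive recurrent, and so $\cC$ is not null recurrent. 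Alternatively, \Cref{thm: periodic irreducible}(3) gives that the limits of $t^{(k\omega)}_{ii}/\rho^{k\omega}$ exist and are positive, which contradicts null recurrence directly.

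The only point needing care is the first step: confirming that the hypotheses of \Cref{cor: properties of maximal recurrent classes in the whole matrix} hold. The paper's notion of a $p^{-\delta_R(R)}$-null recurrent class presupposes recurrence at that radius, so the corollary applies verbatim. If $R$ is not a domain, one can still pick $\p\in\Ass(\cC)$ with $\alpha(R_\p)=\delta(M)$, and the Cohen--Macaulay and dimension conclusions still go through.
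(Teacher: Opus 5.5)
Your proof is correct and follows the paper's argument. The paper also uses \Cref{cor: properties of maximal recurrent classes in the whole matrix} to see that every module in such a class is maximal Cohen--Macaulay, so finite Cohen--Macaulay type makes the class a finite irreducible matrix, which is automatically positive recurrent; the only cosmetic difference is that the paper phrases this through finality of $\cC$ (so that $\Mat(N)=\Mat(M)[\cC]$ for $N\in\cC$), whereas you work with the block $\Mat(M)[\cC]$ directly.
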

\begin{proof}
    By Corollary~\ref{cor: properties of maximal recurrent classes in the whole matrix} a $p^{-\delta(M)}$-recurrent class $\cC$ is final. Since any $N \in \cC$ is Cohen--Macaulay, $\Mat(N)$ is an irreducible finite matrix, thus $\cC$ is positive recurrent.
\end{proof}

The next question links the notion of essentially FFRT rings introduced in \Cref{sse: essentially FFRT} to the existence of Frobenius limit --- a notion introduced in \cite{HashimotoSymonds} in the graded setting and adapted to the local setting as follows. Let $\Theta (R)$ 
be the additive Grothendieck group of finitely generated $R$-modules with $\mathbb{R}$ coefficients, i.e., the $\mathbb{R}$-vector space with a basis corresponding to isomorphism classes of
indecomposable $R$-modules $\{N_i\}_{i \in I}$. 
A finitely generated $R$-module $M$ is said to have a \textit{Frobenius limit} 
$\FL(M) \in \Theta(R)$ if elements $p^{-e \delta_R(R)}[F_*^e M]$
converge to $\FL(M)$ in the norm 
\[
|| \sum_{i \in I}c_i N_i|| = \sum_{i \in I} |c_i| \length (N_i/\m N_i).
\]

\begin{question}\label{qe: essentially FFRT vs Frobenius limit}
Let $M$ be a finitely generated $R$-module of dimension $\dim(R)$. Is the existence of $\FL(M)$ equivalent to $M$ possessing the essentially FFRT property?    
\end{question}

Clearly, any FFRT module has a Frobenius limit. However, this is a strictly weaker property, e.g., modules, Frobenius limits exist when the module is the invariant ring of a linear finite group action on a power series ring (\cite{HashimotoSymonds}), but such rings need not have the FFRT property by \cite[Theorem~3.1]{HashimotoSingh}.

All the examples we are aware of give an affirmative answer to the following question, although, we do not know the answer in general.

\begin{question}
When $R(\Mat(M))$ is finite, is it always a power of $p$?    
\end{question}


\bibliographystyle{alphaurl}
\bibliography{PF}

\end{document}